\DeclareFontFamily{U}{mathx}{\hyphenchar\font45}
\DeclareFontShape{U}{mathx}{m}{n}{
      <5> <6> <7> <8> <9> <10>
      <10.95> <12> <14.4> <17.28> <20.74> <24.88>
      mathx10
      }{}
\DeclareSymbolFont{mathx}{U}{mathx}{m}{n}
\DeclareMathAccent{\widecheck}{0}{mathx}{"71}
\newcommand{\bk}{\Bbbk}
\newcommand{\bO}{\mathbb{O}}
\newcommand{\bK}{\mathbb{K}}
\newcommand{\bF}{\mathbb{F}}
\newcommand{\bE}{\mathbb{E}}
\newcommand{\Q}{\mathbb{Q}}
\newcommand{\Z}{\mathbb{Z}}
\DeclareFontFamily{U}{mathx}{\hyphenchar\font45}
\DeclareFontShape{U}{mathx}{m}{n}{
      <5> <6> <7> <8> <9> <10>
      <10.95> <12> <14.4> <17.28> <20.74> <24.88>
      mathx10
      }{}
\DeclareSymbolFont{mathx}{U}{mathx}{m}{n}
\DeclareMathAccent{\widecheck}{0}{mathx}{"71}
\newcommand{\bG}{\widecheck{\mathbf{G}}}
\newcommand{\bB}{\widecheck{\mathbf{B}}}
\newcommand{\bT}{\widecheck{\mathbf{T}}}
\newcommand{\Ga}{\mathbb{G}_{\mathrm{a}}}
\newcommand{\bY}{\mathbf{Y}}
\newcommand{\Fr}{\mathrm{Fr}}
\newcommand{\Ind}{\mathrm{Ind}}
\newcommand{\weyl}{\mathsf{M}}
\newcommand{\coweyl}{\mathsf{N}}
\newcommand{\cL}{\mathcal{L}}
\newcommand{\Satake}{\mathsf{Sat}}
\newcommand{\fR}{\mathfrak{R}}
\newcommand{\aff}{{\mathrm{aff}}}
\newcommand{\ext}{{\mathrm{ext}}}
\newcommand{\res}{\mathrm{res}}
\newcommand{\scO}{\mathscr{O}}
\newcommand{\IC}{\mathcal{IC}}
\newcommand{\cI}{\mathcal{I}}
\newcommand{\cR}{\mathcal{R}}
\newcommand{\cS}{\mathcal{S}}
\newcommand{\cT}{\mathcal{T}}
\newcommand{\cX}{\mathcal{X}}
\newcommand{\cC}{\mathcal{C}}
\newcommand{\cH}{\mathcal{H}}
\newcommand{\cM}{\mathcal{M}}
\newcommand{\cN}{\mathcal{N}}
\newcommand{\cP}{\mathcal{P}}
\newcommand{\cF}{\mathcal{F}}
\newcommand{\cG}{\mathcal{G}}
\newcommand{\cK}{\mathcal{K}}
\newcommand{\cQ}{\mathcal{Q}}
\newcommand{\DFl}{\mathscr{D}}
\newcommand{\NFl}{\mathscr{N}}
\newcommand{\LFl}{\mathscr{L}}
\newcommand{\TFl}{\mathscr{T}}
\newcommand{\DGr}{\Delta}
\newcommand{\NGr}{\nabla}
\newcommand{\LGr}{\mathsf{L}}
\newcommand{\TGr}{\mathsf{T}}
\newcommand{\PGr}{\mathsf{P}}
\newcommand{\fr}{{\mathrm{fr}}}
\newcommand{\LGT}{\widehat{\mathcal{L}}}
\newcommand{\cbV}{\widehat{\mathcal{Z}}'}
\newcommand{\bV}{\widehat{\mathcal{Z}}}
\newcommand{\injh}{\widehat{\mathcal{Q}}}
\newcommand{\D}{\mathbb{D}}
\newcommand{\Av}{\mathrm{Av}}
\newcommand{\AS}{{\mathrm{AS}}}
\newcommand{\unip}{{\mathrm{u}}}
\newcommand{\uH}{\underline{H}}
\newcommand{\uM}{\underline{M}}
\newcommand{\Gr}{\mathrm{Gr}}
\newcommand{\Fl}{\mathrm{Fl}}
\newcommand{\Rep}{\mathsf{Rep}}
\newcommand{\Coh}{\mathsf{Coh}}
\newcommand{\Perv}{\mathsf{Perv}}
\newcommand{\Mod}{\mathrm{Mod}}
\newcommand{\flen}{{\mathrm{flen}}}
\newcommand{\modf}{\mathrm{mod}}
\newcommand{\Db}{D^{\mathrm{b}}}
\newcommand{\simto}{\overset{\sim}{\to}}
\newcommand{\la}{\langle}
\newcommand{\ra}{\rangle}
\newcommand{\id}{\mathrm{id}}
\newcommand{\Hom}{\mathrm{Hom}}
\newcommand{\End}{\mathrm{End}}
\newcommand{\Ext}{\mathrm{Ext}}
\newcommand{\For}{\mathrm{For}}
\newcommand{\F}{\mathbb{F}}
\newcommand{\bX}{\mathbf{X}}
\newcommand{\fRs}{\mathfrak{R}_{\mathrm{s}}}
\newcommand{\Afund}{\mathfrak{A}_{\mathrm{fund}}}
\newcommand{\pH}{{}^p \hspace{-1pt} \mathcal{H}}
\def\lotimes{\@ifnextchar_{\@lotimessub}{\@lotimesnosub}}
\def\@lotimessub_#1{\mathchoice{\mathbin{\mathop{\otimes}^L}_{#1}}%
  {\otimes^L_{#1}}{\otimes^L_{#1}}{\otimes^L_{#1}}}
\def\@lotimesnosub{\mathbin{\mathop{\otimes}^L}}
\def\lboxtimes{\@ifnextchar_{\@lboxtimessub}{\@lboxtimesnosub}}
\def\@lboxtimessub_#1{\mathchoice{\mathbin{\mathop{\boxtimes}^L}_{#1}}%
  {\boxtimes^L_{#1}}{\boxtimes^L_{#1}}{\boxtimes^L_{#1}}}
\def\@lboxtimesnosub{\mathbin{\mathop{\boxtimes}^L}}
\numberwithin{equation}{section}
\newtheorem{thm}{Theorem}[section]
\newtheorem{lem}[thm]{Lemma}
\newtheorem{prop}[thm]{Proposition}
\newtheorem{cor}[thm]{Corollary}
\theoremstyle{definition}
\theoremstyle{remark}
\newtheorem{rmk}[thm]{Remark}
\title[A geometric model for blocks of Frobenius kernels]{A geometric model for blocks of Frobenius kernels}
\author{Pramod N. Achar}
\address{Department of Mathematics\\
  Louisiana State University\\
  Baton Rouge, LA 70803\\
  U.S.A.}
\email{pramod@math.lsu.edu}
\author{Simon Riche}
\address{Universit\'e Clermont Auvergne, CNRS, LMBP, F-63000 Clermont-Ferrand, France.
}
\email{simon.riche@uca.fr}
\thanks{P.A. was supported by NSF Grant No.~DMS-1802241.  This project has received
funding from the European Research Council (ERC) under the European Union's Horizon 2020
research and innovation programme (grant agreement No 101002592).
}
\begin{document}

\begin{abstract}
Building on a geometric counterpart of Steinberg's tensor product formula for simple representations of a connected reductive algebraic group $\bG$ over a field of positive characteristic proved in~\cite{projGr1}, and following an idea of Arkhipov--Bezrukavnikov--Braverman--Gaitsgory--Mirkovi{\'c}, we define and initiate the study of some categories of perverse sheaves on the affine Grassmannian of the Langlands dual group to $\bG$ that should provide geometric models for blocks of representations of the Frobenius kernel $\bG_1$ of $\bG$. In particular, we show that these categories admit enough projective and injective objects, which are closely related to some tilting perverse sheaves, and that they are highest weight categories in an appropriate generalized sense.
\end{abstract}

\maketitle

\setcounter{tocdepth}{1}
\tableofcontents

\section{Introduction}
\label{sec:intro}

\subsection{Overview}
\label{ss:intro-overview}

Let $\bk$ be an algebraically closed field of characteristic $\ell > 0$, and let $\bG$ be a connected reductive algebraic group over $\bk$. We also let $G$ be a connected reductive algebraic group over an algebraically closed field of characteristic $p \neq \ell$ such that the Langlands dual group $G^\vee_\bk$ over $\bk$ is the Frobenius twist $\bG^{(1)}$ of $\bG$. The \emph{Finkelberg--Mirkovi\'c conjecture}~\cite{fm} 
predicts\footnote{When this paper first appeared in preprint form, the Finkelberg--Mirkovi\'c conjecture was open, but a proof has recently been obtained (under assumptions slightly stronger than $\ell \geq h$) by Bezrukavnikov and the second author~\cite{br2}.} that when $\ell$ is larger than the Coxeter number $h$ for $\bG$, the category of (\'etale) Iwahori-constructible perverse $\bk$-sheaves on
the affine Grassmannian $\Gr$ of $G$, denoted by $\Perv_{I_\unip}(\Gr,\bk)$, should be equivalent to the (extended) principal block of $\bG$.  See~\cite[\S1.2]{projGr1} for a precise statement and further discussion.
In anticipation of this conjecture, one might look for ``representation-theoretic phenomena'' in $\Perv_{I_\unip}(\Gr,\bk)$,
which might hold with milder assumptions on $\ell$.  The present paper and its companion paper~\cite{projGr1} (where we established a formula for convolution products of certain simple perverse sheaves, modeled on the Steinberg tensor product formula for group representations, without any restriction on $\ell$) both pursue this idea.  

More specifically, the motivation for the present paper is as follows.  Let $\bG_1$ denote the first Frobenius kernel of $\bG$.  The representation theory of $\bG_1$ is, of course, closely related to that of $\bG$: indeed, as illustrated in, say,~\cite[Chap.~II.3]{jantzen}, it is essential to study $\bG_1$-modules even if one is primarily interested in proving results about $\bG$. It is often convenient to also study $\bG_1\bT$-modules, where $\bT \subset \bG$ is a maximal torus.
Our goal in this paper is to construct and study two new abelian categories, to be denoted by $\modf_{I_\unip}(\cR)$ and $\modf_{I_\unip}^\bY(\cR)$, that are related to $\Perv_{I_\unip}(\Gr,\bk)$ in the same way that $\bG_1$-representations and $\bG_1\bT$-representations, respectively, are related to $\bG$-representations.   

\subsection{Main results}
\label{ss:intro-main}


For most of the paper, we can work with more general fields than those considered in~\S\ref{ss:intro-overview}.  Let $p$ and $\ell$ be distinct primes, and let $\bk$ be a field of one of the following kinds: 
\begin{enumerate}
\item a finite extension of $\Q_\ell$ containing a nontrivial $p$-th root of unity;
\item an algebraic closure of $\Q_\ell$;
\item a finite extension of $\F_\ell$ containing a nontrivial $p$-th root of unity;
\item an algebraic closure of $\F_\ell$.
\end{enumerate}
We will fix a maximal torus $T$ and a Borel subgroup $B$ in $G$ such that $T \subset B$. For technical reasons, we will assume that the quotient of the character lattice of $T$ by the root lattice is torsion-free.
(This condition holds, for instance, if $G$ is semisimple of adjoint type.)
Our constructions will make use of ind-objects in categories; we recall in~\S\ref{ss:ind} the basic facts on this construction that we will use, with references to~\cite{ks}.

Let $\bY$ be the cocharacter lattice of $T$.  The category $\modf_{I_\unip}^\bY(\cR)$ consists of certain $\bY$-graded ind-objects in $\Perv_{I_\unip}(\Gr,\bk)$ equipped with an action of an algebra ind-object denoted by $\cR$ (on the right). Here $\cR$ is the ind-object in the Satake category $\Perv_{\cL^+G}(\Gr,\bk)$ of perverse sheaves on $\Gr$ which are equivariant with respect to the action of the positive loop group $\cL^+G$ that corresponds to the regular representation $\scO(\bG^{(1)})$ under the geometric Satake equivalence discussed in~\S\ref{ss:intro-FM-G1T} below; this category acts on $\Perv_{I_\unip}(\Gr,\bk)$ by convolution on the right, so that it makes sense to consider $\cR$-modules in ind-objects in this category. The precise definition will be given in Section~\ref{sec:mod-R}.  For now, we remark that there is an easy way to take an ordinary perverse sheaf $\cF \in \Perv_{I_\unip}(\Gr,\bk)$ and produce an ind-perverse sheaf $\Phi(\cF) \in \modf_{I_\unip}^\bY(\cR)$ by taking the ``free $\cR$-module on $\cF$.'' This construction yields a functor
\[
\Phi: \Perv_{I_\unip}(\Gr,\bk) \to \modf_{I_\unip}^\bY(\cR).
\]

Let $W$ be the Weyl group of $G$, and let $W_\ext := W \ltimes \bY$ be the extended affine Weyl group.  The following statement gathers some of the main results of this paper (see Theorems~\ref{thm:R-simple} and~\ref{thm:proj-Hecke-Whit} and Propositions~\ref{prop:bV-injective},~\ref{prop:bV-projective},~\ref{prop:inj-Verma-filt} and~\ref{prop:reciprocity}).  (The partial order $\preceq$ appearing in~\eqref{it:intro-thm-3} is Lusztig's ``periodic order'' on $W_\ext$, whose definition is recalled in~\S\ref{ss:periodic-order}. In~\eqref{it:intro-thm-2}, an element of $W_\ext$ is called \emph{restricted} if it sends the fundamental alcove to an alcove in the restricted region; see~\cite[\S 2.4]{projGr1} for a precise definition.)

\begin{thm}
\phantomsection
\label{thm:main-intro}
\begin{enumerate}
\item 
\label{it:intro-thm-1}
The category $\modf_{I_\unip}^\bY(\cR)$ is a finite-length abelian category.
\item 
\label{it:intro-thm-2}
For each $w \in W_\ext$, there is a simple object $\LGT_w \in \modf_{I_\unip}^\bY(\cR)$, and the assignment $w \mapsto \LGT_w$ yields a bijection
\[
W_\ext \simto \{\text{isomorphism classes of simple objects in $\modf_{I_\unip}^\bY(\cR)$} \}.
\]
If $w$ is restricted then $\LGT_w$ is the image under $\Phi$ of the simple object in $\Perv_{I_\unip}(\Gr,\bk)$ labeled by $w$.
\item 
\label{it:intro-thm-3}
In the Serre subcategory of $\modf_{I_\unip}^\bY(\cR)$ generated by the objects $\LGT_y$ with $y \not\succ w$, $\LGT_w$ admits a projective cover $\bV_w$ and an injective hull $\cbV_w$.
\item 
\label{it:intro-thm-4}
For each $w \in W_\ext$, there is an object $\injh_w$ in $\modf_{I_\unip}^\bY(\cR)$ that is both the injective hull and projective cover of $\LGT_w$.  Moreover, $\injh_w$ admits a filtration with subquotients of the form $\bV_y$, and a filtration with subquotients of the form $\cbV_y$.
\end{enumerate}
\end{thm}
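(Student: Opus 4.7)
The plan is to transport the block-theoretic structure theory of $\bG_1 \bT$-modules (as developed in~\cite[Chap.~II.9]{jantzen}) to the geometric side, using the Steinberg-type convolution formula from~\cite{projGr1} and the tilting perverse sheaves on $\Gr$ as the main geometric inputs. The organizing principle is the Steinberg-type decomposition $w = u \cdot \lambda$ of $w \in W_\ext$ into a restricted part $u$ and a $\bY$-translation $\lambda$, which reduces every construction to the restricted case combined with $\bY$-graded shifts.

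For parts~\eqref{it:intro-thm-1} and~\eqref{it:intro-thm-2}, I would set $\LGT_w := \Phi(\IC_w)$ when $w$ is restricted and extend to arbitrary $w \in W_\ext$ by $\bY$-graded shifts. Simplicity in the restricted case should follow from an adjunction property of $\Phi$ combined with the convolution formula of~\cite{projGr1}. Bijectivity of $w \mapsto \LGT_w$ is then obtained by analyzing the ``minimal'' $\bY$-graded component of an arbitrary simple object, which recovers an ordinary simple $I_\unip$-constructible perverse sheaf on $\Gr$. The finite-length statement~\eqref{it:intro-thm-1} follows from the defining finiteness condition on objects of $\modf_{I_\unip}^\bY(\cR)$, together with the observation that the $\cR$-action spreads any sub-perverse-sheaf across the $\bY$-grading in a controlled way, so that the composition series in each degree is finite.

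For part~\eqref{it:intro-thm-3}, I would take $\bV_w$ to be an appropriate $\bY$-graded shift of $\Phi(\DGr_u)$ and $\cbV_w$ the corresponding shift of $\Phi(\NGr_u)$, where $u$ is the restricted part of $w$. The projectivity of $\bV_w$ and injectivity of $\cbV_w$ in the Serre subcategory generated by the $\LGT_y$ with $y \not\succ w$ reduce, via the definition of $\Phi$, to an $\Ext^1$-vanishing between standards/costandards and simples, which in turn is controlled by Lusztig's periodic order $\preceq$ and the geometry of $I_\unip$-orbit closures in $\Gr$.

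Part~\eqref{it:intro-thm-4} is the heart of the theorem and I expect to be the main obstacle. The idea is to construct $\injh_w$ from a tilting perverse sheaf associated with the restricted part of $w$, via $\Phi$, so that the standard (resp.~costandard) filtration of the tilting sheaf induces a $\bV_y$- (resp.~$\cbV_y$-)filtration on $\injh_w$. The nontrivial point is that $\injh_w$ is simultaneously a projective cover and an injective hull of $\LGT_w$ in the whole category $\modf_{I_\unip}^\bY(\cR)$, not just in a truncated Serre subcategory: because the filtrations are generally infinite, this is not a formal consequence of the finite-length highest-weight theory, and must instead be proved by a limiting argument showing that every $\Hom$ and $\Ext^1$ involving $\injh_w$ factors through a finite-length truncation where standards and costandards are $\Ext^1$-orthogonal. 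The BGG-type reciprocity formula of Proposition~\ref{prop:reciprocity} then computes the multiplicities of $\bV_y$ and $\cbV_y$ in these filtrations in terms of composition multiplicities of simples in the standards and costandards.
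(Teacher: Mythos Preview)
Your proposal has a genuine gap in part~\eqref{it:intro-thm-3}: the objects $\bV_w$ and $\cbV_w$ are \emph{not} shifts of $\Phi(\DGr_u)$ and $\Phi(\NGr_u)$. A free module $\Phi(\NGr_u)$ has composition factors $\LGT_{ut_\nu}$ for \emph{every} weight $\nu$ of $\Satake(\cR_\mu)$ (see Lemma~\ref{lem:frobtwist-free}), so it is far too large to be the injective hull of a single simple in a truncated Serre subcategory. The correct $\cbV_w$ is the ind-object
\[
(\cbV_w)_\mu = \underset{\lambda}{``\varinjlim"}\, \NGr_{wt_{\mu+w_\circ(\lambda)}} \star^{\cL^+G} \cI_*^{-w_\circ(\lambda)},
\]
modelled on the representation-theoretic description of baby co-Verma modules as ind-$\bG$-modules in~\S\ref{ss:baby-coV-Rep}. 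Even establishing that this object is finitely generated (Proposition~\ref{prop:bV-fg}) requires real work. The baby Verma $\bV_w$ is then obtained from $\cbV_w$ by a Verdier-type duality on $\modf_{I_\unip}^\bY(\cR)$ whose very construction (\S\ref{ss:verdier}) depends on already knowing enough projectives exist. Consequently, your plan for part~\eqref{it:intro-thm-4}---that the standard/costandard filtration of a tilting sheaf passes through $\Phi$ to a $\bV_y$/$\cbV_y$-filtration---cannot work as stated.

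The paper's route to part~\eqref{it:intro-thm-4} is quite different and relies on an ingredient you do not mention: the Iwahori--Whittaker model of~\cite{bgmrr}. In the maximal Whittaker case $A=S$, the category $\modf^\bY_{(I^S_\unip,\cX_S)}(\cR)$ is \emph{semisimple} (Proposition~\ref{prop:IW-R-semisimple}), so every object there is projective and injective. One then pulls these back to $\modf_{I_\unip}^\bY(\cR)$ via the averaging functors $\Av^S_!$, $\Av^S_*$, starting from the specific object $\Phi(j^\varsigma_{!*}\cS_\varsigma)$ built out of the big tilting sheaf on $G/B$, and propagates by wall-crossing functors $\xi_s$ (Proposition~\ref{prop:inj-hull-bV}). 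The baby co-Verma filtration of $\injh_w$ arises not from a standard filtration of a tilting perverse sheaf but from the Bruhat filtration of $\cS_\varsigma$ combined with the behavior of $\xi_s$ on baby co-Vermas (Lemmas~\ref{lem:Pi-sigma-bV-filt} and~\ref{lem:bV-xi}). Finally, your remark that ``filtrations are generally infinite'' is off: all objects in $\modf_{I_\unip}^\bY(\cR)$ have finite length, and the baby (co-)Verma filtrations of $\injh_w$ are finite (Theorem~\ref{thm:proj-Hecke-Whit}\eqref{it:proj-filt}).
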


The objects $\bV_w$ and $\cbV_w$ appearing in~\eqref{it:intro-thm-3} are geometric incarnations of baby Verma and baby co-Verma $\bG_1\bT$-modules.
All of the properties in Theorem~\ref{thm:main-intro} are geometric counterparts of standard results on $\bG_1\bT$-modules using the dictionary explained in~\S\ref{ss:intro-FM-G1T} below. For instance, see~\cite[Proposition~II.9.6]{jantzen} for~\eqref{it:intro-thm-2} and~\cite[Proposition~II.11.4 and~\S II.11.5]{jantzen} for~\eqref{it:intro-thm-4}. (Note that we do not impose any restriction on $\ell$ in this theorem, although the conjectural translation to Representation Theory requires $\ell$ to be larger than the Coxeter number.)  

We will also prove similar results for the category $\modf_{I_\unip}(\cR)$ of \emph{non graded} $\cR$-modules that we do not state here, see Section~\ref{sec:ungrad-R-mod}.

\subsection{Some comments on Theorem~\ref{thm:main-intro}}

In this subsection we make a few comments on Theorem~\ref{thm:main-intro}, that are not used in the rest of the introduction.

The broad form of Theorem~\ref{thm:main-intro} is motivated by the following observation: if we replace ``finitely generated $\bY$-graded $\cR$-modules in ind-perverse sheaves on $\Gr$'' by ``finitely generated $\bY$-graded modules over the coordinate ring of $\bG^{(1)}$ in ind-finite-dimensional $\bG$-representations,'' we arrive at a category that is equivalent to the category of finite-dimensional $\bG_1\bT$-representations: see Section~\ref{sec:mod-R-rep} for a precise statement.

In the body of the paper we will define \emph{two} versions of the category $\modf_{I_\unip}^\bY(\cR)$, based on two different approaches to interpreting the phrase ``finitely generated'' in geometric terms.  Proving that these two definitions in fact give rise to the same category (see Theorem~\ref{thm:proj-Hecke-Whit}\eqref{it:proj-enough}) is what will require most of our efforts. The construction and study of projective and injective objects is essential to our approach to this question. This study also proves in passing that $\modf_{I_\unip}^\bY(\cR)$ is a highest weight category in a generalized sense recently formulated by Brundan--Stroppel~\cite{bs}, which implies the second sentence in Theorem~\ref{thm:main-intro}\eqref{it:intro-thm-4}.

We will actually prove a more general version of Theorem~\ref{thm:main-intro} that accommodates ``Whittaker perverse sheaves'' on $\Gr$, rather than just Iwahori-constructible perverse sheaves.  The Whittaker versions of this theorem are not merely generalizations for their own sake: our proof of Theorem~\ref{thm:main-intro}, even in the simplified case stated above, makes crucial use of functors that allow us to pass to and from various Whittaker versions.

Another key tool in the proof of part~\eqref{it:intro-thm-4} is the ``Iwahori--Whittaker model for the Satake category'' of~\cite{bgmrr}.  The counterpart of $\modf^{\bY}_{I_\unip}(\cR)$ in the setting of~\cite{bgmrr} turns out to be equivalent to the category of finite-dimensional representations of the torus $\bT$: in particular, it is a semisimple category, and thus has a rich supply of projective and injective objects, which give rise to projective and injective objects in our other categories via appropriate averaging functors.

\begin{rmk}
\begin{enumerate}
\item
In the case $\bk = \Q_\ell$, parts~\eqref{it:intro-thm-1}--\eqref{it:intro-thm-3} of Theorem~\ref{thm:main-intro} were previously obtained by Arkhipov--Bezrukavnikov--Braverman--Gaitsgory--Mir\-kovi{\'c}~\cite{abbgm}.
(Indeed, our definitions of the category $\modf^{\bY}_{I_\unip}(\cR)$ and of the objects $\LGT_w$, $\cbV_w$ and $\bV_w$ are essentially copied from~\cite{abbgm}.)  Thus, for $\bk = \Q_\ell$, the main new contribution of the present paper is the study of projective/injective objects in $\modf^\bY_{I_\unip}(\cR)$.
\item
Let us mention in passing that, in case $\bk$ has characteristic $0$, our methods also provide a geometric proof that the abelian category $\Perv_{I_\unip}(\Gr,\bk)$ has enough projectives and injectives, and that these two classes of objects coincide: see~\S\ref{ss:proj-covers}.
(In contrast, for $\bk$ of positive characteristic, $\Perv_{I_\unip}(\Gr,\bk)$ has no nonzero projective or injective objects unless $G$ is a torus.) This fact was previously known: it can be deduced from a representation-theoretic result of Andersen--Polo--Wen~\cite{apw} via intermediaries discussed in Remark~\ref{rmk:intro-quantum} below.  The problem of finding  a geometric proof of this property was in fact the starting point of this work. It finally allows us to answer a question that we asked ourselves (and a few colleagues) more than ten years ago.
\end{enumerate}
\end{rmk}

\subsection{A Finkelberg--Mirkovi{\'c} conjecture for the Frobenius kernel}
\label{ss:intro-FM-G1T}

In this subsection, we make precise the expected relationship between $\modf_{I_\unip}^\bY(\cR)$ and the category of $\bG_1\bT$-representations. (These considerations have obvious analogues relating the ``Whittaker'' variants of $\modf_{I_\unip}^\bY(\cR)$ considered in the body of the paper to singular blocks of $\bG_1\bT$-representations; we leave it to the reader to formulate these variants.) This subsection is for motivation only; it does not play any logical role in the rest of the paper.

We assume in this subsection that $\bk$ is an algebraic closure of $\F_\ell$.
Let $\Rep(\bG)$, resp.~$\Rep(\bG_1\bT)$, be the category of finite-dimensional rational $\bG$-, resp.~$\bG_1\bT$-, representations.  Then there is a forgetful functor
\[
\For: \Rep(\bG) \to \Rep(\bG_1\bT).
\]
Here is a brief review of the representation theory of $\bG_1\bT$, following, for instance~\cite[Chap.~II.9--11]{jantzen}. For each $\lambda \in \bY$, there is a \emph{baby Verma module} $\widehat{\mathsf{Z}}(\lambda)$ and a \emph{baby co-Verma module} $\widehat{\mathsf{Z}}'(\lambda)$, both with highest weight $\lambda$.  The socle of $\widehat{\mathsf{Z}}'(\lambda)$ can be identified with the head of $\widehat{\mathsf{Z}}(\lambda)$, and this irreducible module is denoted by $\widehat{\mathsf{L}}(\lambda)$.  Every simple $\bG_1\bT$-module is of this form (for a unique $\lambda \in \bY$).  The module $\widehat{\mathsf{L}}(\lambda)$ admits an injective hull $\widehat{\mathsf{Q}}(\lambda)$ that is also its projective cover.
Moreover, $\widehat{\mathsf{Q}}(\lambda)$ admits filtrations by both baby Verma modules and baby co-Verma modules.

Assume now that $\ell \geq h$, where $h$ is the Coxeter number for $\bG$, and that the quotient of the cocharacter lattice of $T$ by the coroot lattice has no $\ell$-torsion. (See~\cite[Remark~1.2(2)]{projGr1} for a discussion of this assumption.)
Let $\Rep_{[0]}(\bG) \subset \Rep(\bG)$ and $\Rep_{[0]}(\bG_1\bT) \subset \Rep(\bG_1\bT)$ be the \emph{extended principal blocks} of $\bG$ and of $\bG_1\bT$, respectively, i.e.~the Serre subcategories generated by simple modules whose highest weights lie in $W_\ext \cdot_\ell 0$, where 
$\cdot_\ell$ denotes the ``$\ell$-dilated dot action.'' 
The Steinberg tensor product formula implies that the forgetful functor $\For : \Rep(\bG) \to \Rep(\bG_1\bT)$ restricts to a functor
\begin{equation*}
\For_0 : \Rep_{[0]}(\bG) \to \Rep_{[0]}(\bG_1\bT).
\end{equation*}
Tensor product with the pullback of representations along the Frobenius morphism $\Fr: \bG \to \bG^{(1)}$ induces actions of the category $\Rep(\bG^{(1)})$ on both of these categories, and $\For_0$ commutes with these actions.

Consider now the Satake category $\Perv_{\cL^+G}(\Gr,\bk)$, its convolution product $\star^{\cL^+G}$, and the geometric Satake equivalence
\[
 \Satake : \Perv_{\cL^+G}(\Gr,\bk) \simto \Rep(\bG^{(1)})
\]
of~\cite{mv}. As mentioned in~\S\ref{ss:intro-main} there is a natural action of the monoidal category $\Perv_{\cL^+G}(\Gr,\bk)$ on $\Perv_{I_\unip}(\Gr,\bk)$ by convolution on the right; the corresponding bifunctor will also be denoted $\star^{\cL^+G}$.
Recall that the \emph{Finkelberg--Mirkovi\'c conjecture} asserts the existence of an equivalence of categories
\[
\mathsf{FM}: \Perv_{I_\unip}(\Gr,\bk) \simto \Rep_{[0]}(\bG)
\]
together with a natural isomorphism
\[
\mathsf{FM}(\cF \star^{\cL^+G} \cG) \cong \mathsf{FM}(\cF) \otimes \Fr^*  \bigl( \Satake(\mathrm{sw}( \cG)) \bigr)
\]
for $\cF \in \Perv_{I_\unip}(\Gr,\bk)$ and $\cG \in \Perv_{\cL^+G}(\Gr,\bk)$, where 
$\mathrm{sw}$ is a certain autoequivalence of $\Perv_{\cL^+G}(\Gr,\bk)$ induced by the inversion map in the loop group. We refer the reader to~\cite[\S1.2]{projGr1} for more precise definitions of the notation, and further discussion of this statement.

Since the convolution product in the Satake category is commutative, convolution on the right induces an action of the category $\Perv_{\cL^+G}(\Gr,\bk)$ on $\modf^\bY_{I_\unip}(\cR)$; the corresponding bifunctor will once again be denoted $\star^{\cL^+G}$.
The following statement is a consequence of the results of this paper.

\begin{prop}
\label{prop:frobenius-fm}
Assume $\bk$ is an algebraic closure of $\F_\ell$, and that the Finkelberg--Mirkovi\'c conjecture holds for $\bG$. Then there exists an equivalence of categories
\[
\mathsf{FM}_{\mathrm{Frob}} : \modf^\bY_{I_\unip}(\cR) \simto \Rep_{[0]}(\bG_1\bT)
\]
such that
\[
\mathsf{FM}_{\mathrm{Frob}} \circ \Phi \cong \mathsf{FM} \circ \For_0,
\]
which satisfies
\begin{gather*}
\mathsf{FM}_{\mathrm{Frob}}(\LGT_w) \cong \widehat{\mathsf{L}}(w^{-1} \cdot_\ell 0), \quad \mathsf{FM}_{\mathrm{Frob}}(\injh_w) \cong \widehat{\mathsf{Q}}(w^{-1} \cdot_\ell 0), \\
\mathsf{FM}_{\mathrm{Frob}}(\bV_w) \cong \widehat{\mathsf{Z}}(w^{-1} \cdot_\ell 0), \quad \mathsf{FM}_{\mathrm{Frob}}(\cbV_w) \cong \widehat{\mathsf{Z}}'(w^{-1} \cdot_\ell 0)
\end{gather*}
for any $w \in W_\ext$, and such that
\[
\mathsf{FM}_{\mathrm{Frob}}(\cF \star^{\cL^+G} \cG) \cong \mathsf{FM}_{\mathrm{Frob}}(\cF) \otimes \Fr^*  \bigl( \Satake(\mathrm{sw}^* \cG) \bigr)
\]
functorially for any $\cF$ in $\modf^{\bY}_{I_\unip}(\cR)$ and $\cG$ in $\Perv_{\cL^+G}(\Gr,\bk)$.
\end{prop}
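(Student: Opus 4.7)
My plan is to transport the defining data of $\modf^{\bY}_{I_\unip}(\cR)$ across the Finkelberg--Mirkovi\'c equivalence $\mathsf{FM}$ and then recognise the resulting category as a classical algebraic description of $\Rep_{[0]}(\bG_1\bT)$ in terms of $\bY$-graded modules over an algebra object in $\Rep_{[0]}(\bG)$. By construction $\modf^{\bY}_{I_\unip}(\cR)$ is a category of $\bY$-graded module objects, in the ind-completion of $\Perv_{I_\unip}(\Gr,\bk)$, over the algebra ind-object $\cR$, whose graded pieces are built from convolutions with explicit objects of $\Perv_{\cL^+G}(\Gr,\bk)$. Since $\mathsf{FM}$ intertwines such convolutions with the operation $\cG \mapsto \Fr^* \bigl( \Satake(\mathrm{sw}(\cG)) \bigr)$ followed by tensor product in $\Rep_{[0]}(\bG)$, one obtains an algebra ind-object $\mathcal{A} := \mathsf{FM}(\cR)$ in the ind-completion of $\Rep_{[0]}(\bG)$, together with a formal equivalence between $\bY$-graded $\cR$-modules and $\bY$-graded $\mathcal{A}$-modules, under which $\Phi$ corresponds to the free-$\mathcal{A}$-module functor composed with $\mathsf{FM}$.

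The crux is then to identify $\mathcal{A}$ with the ind-algebra in $\Rep_{[0]}(\bG)$ whose $\bY$-graded modules are, by the classical calculation reviewed in~\cite[Chap.~II.9]{jantzen}, precisely the objects of $\Rep_{[0]}(\bG_1\bT)$. Applying the intertwining property grading-piece by grading-piece to the definition of $\cR$ (from~\S\ref{sec:mod-R}), each component $\mathcal{A}_\lambda$ is described as the Frobenius pullback of an explicit object of $\Rep(\bG^{(1)})$ built via $\Satake$ from the constituents entering $\cR$, tensored by the $\bT$-weight space for $\lambda$. Matching this description with the $\bG$-algebra structure arising from the coordinate algebra of $\bG/\bG_1$ equipped with its natural $\bT$-weight decomposition identifies $\mathcal{A}$ with the expected algebra. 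Composing the formal equivalence above with this identification defines $\mathsf{FM}_{\mathrm{Frob}}$; the relation $\mathsf{FM}_{\mathrm{Frob}} \circ \Phi \cong \mathsf{FM} \circ \For_0$ is then automatic since both sides send $V \in \Rep_{[0]}(\bG)$ to the corresponding free module, and the displayed convolution/tensor compatibility is a direct consequence of the intertwining property for $\mathsf{FM}$.

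To verify the matching of named objects I would combine Theorem~\ref{thm:main-intro} with the classification of simples, baby Verma and co-Verma modules, and indecomposable projective--injectives for $\bG_1\bT$ from~\cite[Chap.~II.9--11]{jantzen}. Both labellings are indexed by $W_\ext$ via $w \mapsto w^{-1} \cdot_\ell 0$. The identification $\mathsf{FM}_{\mathrm{Frob}}(\LGT_w) \cong \widehat{\mathsf{L}}(w^{-1} \cdot_\ell 0)$ for $w$ restricted follows from the last sentence of Theorem~\ref{thm:main-intro}\eqref{it:intro-thm-2} combined with the matching of simples under $\mathsf{FM}$; the general case reduces to this one via the convolution--tensor compatibility, since every simple in $\Rep_{[0]}(\bG_1\bT)$ is obtained from a restricted simple by tensoring with the Frobenius twist of a $\bG$-module. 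The identifications for $\bV_w$, $\cbV_w$ and $\injh_w$ then follow because these objects are characterised uniquely inside their respective Serre subquotients by the head, socle and filtration properties established in Theorem~\ref{thm:main-intro}\eqref{it:intro-thm-3}--\eqref{it:intro-thm-4}, which match the defining properties of the corresponding $\bG_1\bT$-modules.

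The main obstacle is the explicit identification, in step two, of the ind-algebra $\mathcal{A} = \mathsf{FM}(\cR)$ with the algebra controlling $\bG_1\bT$-modules inside $\Rep(\bG)$. This demands careful bookkeeping of the $\bY$-grading on both sides together with a comparison of the multiplication built from geometric convolution (through $\Satake$, $\mathrm{sw}$ and $\Fr^*$) with the representation-theoretic algebra structure arising from induction from $\bG_1\bT$. A secondary subtlety is to verify that the block decompositions on either side are compatible, so that the target of $\mathsf{FM}_{\mathrm{Frob}}$ is indeed the extended principal block $\Rep_{[0]}(\bG_1\bT)$ rather than some larger category of $\bG_1\bT$-modules.
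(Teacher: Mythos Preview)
Your proposal is correct and matches the paper's approach. The paper does not give a self-contained proof of this proposition; it states that the proposition ``is a consequence of the results of this paper'' and that Section~\ref{sec:mod-R-rep} ``justifies'' it. Concretely, your ``main obstacle'' (identifying the transported algebra) is exactly what \S\ref{ss:Rep-G1T}--\S\ref{ss:reg-ind} accomplish: \S\ref{ss:Rep-G1T} establishes the equivalence between $\Rep(\bG_1\bT)$ and finitely generated $\bY$-graded $\scO(\bG^{(1)})$-modules in ind-$\Rep(\bG)$, and Lemma~\ref{lem:reg-ind} together with the remark at the end of \S\ref{ss:reg-perv-sheaf} identifies $\Satake(\mathrm{sw}(\cR_\mu))$ with $\Ind_{T^\vee_\bk}^{G^\vee_\bk}(-\mu) = \scO(G^\vee_\bk)_\mu$, so that the transported algebra is $\Fr^*(\scO(\bG^{(1)}))$ as required. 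For the matching of baby co-Verma modules the paper is slightly more direct than your characterisation argument: \S\ref{ss:baby-coV-Rep} computes $\widehat{\mathsf{Z}}'(\nu)$ explicitly as an ind-$\bG$-module, and the geometric definition of $\cbV_w$ in \S\ref{ss:cbV} is set up to mirror that formula, so the identification is by construction rather than by uniqueness. One notational quibble: writing $\mathcal{A} = \mathsf{FM}(\cR)$ is imprecise since $\cR$ lives in (ind-)$\Perv_{\cL^+G}(\Gr,\bk)$, not in the domain of $\mathsf{FM}$; what you mean (and correctly use later) is $\Fr^* \circ \Satake \circ \mathrm{sw}$ applied to $\cR$, which acts on $\Rep_{[0]}(\bG)$ by tensor product via the intertwining property of $\mathsf{FM}$.
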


A brief sketch of the proof is as follows: the equivalence $\Satake$ extends to an equivalence on ind-objects, and this extension sends $\cR$ to the $\bG^{(1)}$-representation $\scO(\bG^{(1)})$ by construction.  In Section~\ref{sec:mod-R-rep}, we will review how to describe the category $\Rep_{[0]}(\bG_1\bT)$ in terms ``finitely generated $\bY$-graded $\scO(\bG^{(1)})$-modules on ind-objects in $\Rep_{[0]}(\bG_1)$.''  The transport of this description across $\mathsf{FM}$ is precisely the definition of $ \modf^\bY_{I_\unip}(\cR)$.

\begin{rmk}
\label{rmk:intro-quantum}
For $\bk$ of characteristic $0$, the categories $\Perv_{I_\unip}(\Gr,\bk)$ and $\modf^\bY_{I_\unip}(\cR)$ are \emph{not} related to representations of $\bG$ or $\bG_1$, but rather to their quantum analogues.  Specifically, the quantum counterpart of the Finkelberg--Mirkovi\'c conjecture is a theorem of Arkhipov--Bezrukavnikov--Ginzburg~\cite{abg} relating $\Perv_{I_\unip}(\Gr,\bk)$ to the principal block of a quantum group $\mathsf{U}_\zeta(\widecheck{\mathfrak{g}})$ at a root of unity.
As observed in~\cite{abbgm}, a quantum analogue of Proposition~\ref{prop:frobenius-fm} holds in this setting: for $\bk = \Q_\ell$, the category $\modf^\bY_{I_\unip}(\cR)$ is equivalent to the principal block of graded representations of the \emph{small} quantum group $\mathsf{u}_\zeta(\widecheck{\mathfrak{g}})$.

Note that in~\cite{abbgm} the authors provide a third incarnation of the same category, in terms of perverse sheaves on a semi-infinite affine flag variety. It is likely that a similar description can be obtained in our setting of positive-characteristic coefficients; this question will be the subject of future work.
\end{rmk}

\subsection{Tilting \texorpdfstring{$\bG$}{G}-modules and projective \texorpdfstring{$\bG_1\bT$}{G1T}-modules}
\label{ss:intro-TMC}

In view of the discussion in~\S\ref{ss:intro-FM-G1T},
the second sentence in Theorem~\ref{thm:main-intro}\eqref{it:intro-thm-2} is a geometric counterpart of the fact that simple $\bG$-modules with restricted highest weight remain simple as $\bG_1\bT$-modules. There is another class of $\bG$-modules whose behaviour upon restriction to $\bG_1\bT$ is remarkable, namely the \emph{tilting} modules. The geometric counterpart of tilting modules in the principal block of $\bG$ are the tilting perverse sheaves in $\Perv_{I_\unip}(\Gr,\bk)$. 

In Propositions~\ref{prop:tilt-proj-inj-R} and~\ref{prop:injective-tilting} we determine which indecomposable tilting perverse sheaves are sent to projective/injective $\cR$-modules by the functor $\Phi$, providing a geometric counterpart of~\cite[Lemma~E.8]{jantzen}.
Another very interesting property of this operation (long known to hold if $p \geq 2h-2$, recently improved to $p \geq 2h-4$~\cite{bnps2}, and conjecturally in greater generality)
 is that
  some of these indecomposable tilting modules remain indecomposable as $\bG_1\bT$-modules. It is one of our motivations for developing this theory to provide new tools to understand this property. We study this question in~\S\ref{ss:Donkin-conj}, but we must admit that our progress towards solving this question is very modest so far.

\subsection{Contents of the paper}

In Section~\ref{sec:combinatorics} we prove a number of combinatorial results on the affine Weyl group attached to a connected reductive algebraic group $G$. In Section~\ref{sec:perv} we collect some facts on various categories of perverse sheaves on the affine Grassmannian $\Gr$ and the affine flag variety $\Fl$ of $G$. Most of these results are known to some extent, but many proofs are not available in the literature in the generality we require, so that we discuss these proofs to some extend. In Section~\ref{sec:mod-R-rep} we explain some constructions that allow us to describe modules over the Frobenius kernel (or some variants) of a connected reductive algebraic group $\bG$ in terms of representations of the whole group. These constructions will serve as guiding principles for many constructions in the rest of the paper, and justify Proposition~\ref{prop:frobenius-fm}.

In Section~\ref{sec:mod-R} we introduce our main object of study, the category $\modf_{I_\unip}^\bY(\cR)$, together with a variant $\Mod^\bY_{I_\unip}(\cR)^\flen$. We show that the second of these categories is a finite-length abelian category, in which we classify the simple objects, and define some geometric incarnations of baby co-Verma modules. (We also treat some ``Whittaker-type'' analogues in parallel.) In Section~\ref{sec:averaging}, exploiting results from~\cite{bgmrr} we study some perverse sheaves on $\Gr$ arising from the ``big tilting perverse sheaf'' on the flag variety of $G$, and derive some first applications to the study of (geometric) baby co-Verma modules. In Section~\ref{sec:proj-R-mod} we prove that the categories $\modf_{I_\unip}^\bY(\cR)$ and $\Mod^\bY_{I_\unip}(\cR)^\flen$ coincide, that these categories have enough injectives and enough projectives, and also that these classes of objects coincide and are closely related to tilting perverse sheaves on $\Gr$. (Again, all of these results are proved also in the Whittaker setting). In the course of the proof of these results, we show that these categories satisfy the ``generalized highest weight'' formalism of Brundan--Stroppel~\cite{bs}.

In Section~\ref{sec:ungrad-R-mod} we study a variant of our formalism that omits part of the structure. Finally, in Section~\ref{sec:baby-verma} we define a duality functor on $\modf_{I_\unip}^\bY(\cR)$, and use this functor to define geometric counterparts of baby Verma modules. We also prove a number of results regarding the combinatorics of the category $\modf_{I_\unip}^\bY(\cR)$ that are analogues of known results on representations of Frobenius kernels.

\subsection{Conventions on fields}

The notation $\bk$ is used in this paper as follows:
\begin{itemize}
\item In sections devoted to motivation from or analogies with algebraic groups,  $\bk$ is an algebraically closed field of characteristic $\ell$.  This applies to \S\ref{ss:intro-overview}, \S\S\ref{ss:intro-FM-G1T}--\ref{ss:intro-TMC}, and all of Section~\ref{sec:mod-R-rep}.
\item In all other parts of the paper,  $\bk$ may be any field satisfying the assumptions from~\S\ref{ss:intro-main}.  
\end{itemize}
We will denote by $\kappa$ an algebraically closed field that will be the ground field over which we define the affine Grassmannian and the affine flag variety.    Starting from~\S\ref{ss:Whittaker}, $\kappa$ is assumed to have positive characteristic.  Finally, we will write $\bK$ and $\bF$ for parts of an $\ell$-modular system in~\S\ref{ss:integral} and part of~\S\ref{ss:more-on-inj}.

\section{Combinatorics of the affine Weyl group}
\label{sec:combinatorics}

\subsection{The extended affine affine Weyl group}
\label{ss:Wext}

Let $\kappa$ be an algebraically closed field, and $G$ be a connected reductive algebraic group over $\kappa$. We fix a Borel subgroup $B \subset G$ and a maximal torus $T \subset B$. We will denote by $\bX:=X^*(T)$ the character lattice of $T$, by $\fR \subset \bX$ the root system of $(G,T)$, by $\bY:=X_*(T)$ the coweight lattice, and by $\fR^\vee \subset \bY$ the coroot system; the natural bijection from $\fR$ to $\fR^\vee$ will be denoted $\alpha \mapsto \alpha^\vee$ as usual. 

We will denote by $\fR_+ \subset \fR$ the system of positive roots consisting of the $T$-weights in $\mathrm{Lie}(G)/\mathrm{Lie}(B)$, and by $\fRs$ the associated basis of $\fR$. 
The corresponding sets of dominant coweights and strictly dominant coweights will be denoted $\bY_+$ and $\bY_{++}$ respectively. We will denote by $W$ the Weyl group of $(G,T)$. If we denote by $S \subset W$ the subset consisting of the reflections $s_{\alpha^\vee}$ for $\alpha \in \fRs$, then it is well known that $(W,S)$ is a Coxeter system. The longest element in this group will be denoted $w_\circ$.

We will assume that $\bX/\Z\fR$ has no torsion, or in other words that the restriction morphism
\[
\bY \to \Hom_\Z(\Z\fR, \Z)
\]
is surjective.  (This is equivalent to requiring that the scheme-theoretic center of $G$ be a torus.)
In particular, this condition ensures that there exists $\varsigma \in \bY$ such that $\langle \alpha,\varsigma \rangle = 1$ for all $\alpha \in \fR_{\mathrm{s}}$; we fix such an element once and for all.

The affine Weyl group associated with $G$
is the semidirect product
\[
W_\aff := W \ltimes \Z\fR^\vee,
\]
where $\Z\fR^\vee \subset \bY$ is the lattice generated by $\fR^\vee$. For $\lambda \in \Z\fR^\vee$, we will write $t_\lambda$ for the corresponding element of $W_\aff$. It is a standard fact that if we denote by $S_\aff \subset W_\aff$ the subset consisting of $S$ together with the elements $t_{\beta^\vee} s_{\beta^\vee}$ where $\beta^\vee \in \fR^\vee$ is a maximal short coroot, then the pair $(W_\aff,S_\aff)$ is a Coxeter system. Moreover, classical results of Iwahori--Matsumoto~\cite{im} show that the associated length function on $W_\aff$ can be described by the following formula for $w \in W$ and $\lambda \in \Z\fR^\vee$:
\begin{equation}
\label{eqn:formula-length}
\ell(w t_\lambda) = \sum_{\substack{\alpha \in \fR_+ \\ w(\alpha) \in \fR_+}} |\langle \lambda,\alpha \rangle| + \sum_{\substack{\alpha \in \fR_+ \\ w(\alpha) \in -\fR_+}} |1+\langle \lambda,\alpha \rangle|.
\end{equation}

The formula on the right-hand side of~\eqref{eqn:formula-length} makes sense more generally for $\lambda \in \bY$, which lets one to extend the function $\ell$ to the larger group
\[
W_\ext := W \ltimes \bY,
\]
in such a way that $\ell(ww') \leq \ell(w)+\ell(w')$ for any $w,w' \in W_\ext$.
The subgroup $W_\aff \subset W_\ext$ is normal, and if we set
\[
\Omega:=\{w \in W_\ext \mid \ell(w)=0\}
\]
then $\Omega$ is a finitely generated abelian group acting on $W_\aff$ (via conjugation) by Coxeter group automorphisms. Multiplication induces a group isomorphism
\[
\Omega \ltimes W_\aff \simto W_\ext,
\]
and $\ell(\omega w)=\ell(w \omega)=\ell(w)$ for any $w \in W_\ext$ and $\omega \in \Omega$. We can also ``extend'' the Bruhat order $\leq$ on $W_\aff$ to $W_\ext$ by declaring that for $\omega,\omega' \in \Omega$ and $w,w' \in W_\aff$ we have $\omega w \leq \omega' w'$ iff $\omega=\omega'$ and $w \leq w'$. (The same rule will then also apply when switching the order of $\omega$ and $w$.) We define a \emph{reduced expression} for an element $w \in W_\ext$ to be an expression of the form $w=s_1 \cdots s_r \omega$ or $w=\omega s_1 \cdots s_r$ with $\omega \in \Omega$, $s_i \in S_\aff$ for any $i \in \{1, \ldots, r\}$, and $r=\ell(w)$.

Given a subset $A \subset S_\aff$ , we will denote by $W_A$ the subgroup of $W_\aff$ generated by $A$. We will say that $A$ is \emph{finitary} if $W_A$ is finite; in this case 
we will denote by $w_A$ the longest element in $W_A$. If $A$ is finitary, the theory of Coxeter systems guarantees that for any $w \in W_\ext$, the coset $W_A w$, resp.~$wW_A$, admits a unique maximal, resp.~minimal, element with respect to the Bruhat order. In particular, for $A=S$, we will denote by $W_\ext^S \subset W_\ext$ the subset consisting of elements $w$ which are minimal in $wW$. The basic properties of minimal coset representatives recalled above guarantee that the composition $W_\ext^S \hookrightarrow W_\ext \to W_\ext / W$ is a bijection.

\subsection{Geometry of alcoves and restricted elements}
\label{ss:alcoves}

Consider the vector space $V:=\bY \otimes_\Z \mathbb{R}$, and the action of $W_\ext$ given by
\[
(t_\lambda w) \cdot v = w(v) + \lambda
\]
for $w \in W$ and $\lambda \in \bY$, where $W$ acts on $V$ via its natural action on $\bY$. In $V$ we have the affine hyperplanes defined by
\[
H_{\beta,n} := \{v \in V \mid \langle \beta,v \rangle = n\}
\]
for $\beta \in \fR$ and $n \in \Z$, which are permuted by the action of $W_\ext$. The connected components of the complement of the union of these hyperplanes are called alcoves; if we set
\[
\Afund := \{v \in V \mid \forall \beta \in \fR_+, \, 0 < \langle \beta,v \rangle < 1\},
\]
then $\Afund$ is an alcove (called the \emph{fundamental alcove}), and the assignment $w \mapsto w(\Afund)$ induces a bijection
from $W_\ext / \Omega$
(where $\Omega$ is as in~\S\ref{ss:Wext}) to the set of alcoves. If
\[
\mathcal{C} = \{v \in V \mid \forall \beta \in \fR_+, \, \langle \beta,v \rangle > 0\},
\]
then it is a standard fact that
\begin{equation}
\label{eqn:min-Wext-Afund}
W_\ext^S = \{w \in W_\ext \mid w^{-1}(\Afund) \subset \mathcal{C}\}.
\end{equation}

For $\mu \in \bY$ we set
\[
\Pi_\mu := \{v \in V \mid \forall \alpha \in \fR_{\mathrm{s}}, \, \langle \alpha,\mu \rangle -1 < \langle \alpha, v \rangle < \langle \alpha,\mu \rangle \};
\]
our assumption on $\bX/\Z\fR$ ensures that each alcove is contained in a subset of this form (sometimes called a \emph{box}). Of particular importance is the set
\[
\Pi_\varsigma = \{v \in V \mid \forall \alpha \in \fR_{\mathrm{s}}, \, 0 < \langle \alpha, v \rangle < 1 \}.
\]
This set (which is evidently independent of the choice of $\varsigma$) contains $\Afund$ and is sometimes called the \emph{fundamental box}.  We define the subset of \emph{restricted elements} in $W_\ext$ by setting
\[
W_\ext^\res := \{w \in W_\ext \mid w^{-1}(\Afund) \subset \Pi_\varsigma\}.
\]
Since any alcove belongs to a subset $\Pi_\mu$, any element $w$ of $W_\ext$ can be written as a product $w=yt_\lambda$ with $y \in W_\ext^\res$ and $\lambda \in \bY$. It is easy to see that
\begin{equation}
\label{eqn:WS-Wres}
W_\ext^S = \{x t_{\lambda} : x \in W_\ext^\res, \, \lambda \in -\bY_+\},
\end{equation}
see~\cite[\S 2.4]{projGr1} for details.

Let us also record the following property, proved in~\cite[Lemma~2.7]{projGr1}, which shows in particular that lengths always add in a decomposition given by~\eqref{eqn:WS-Wres}.

\begin{lem}
\label{lem:lengths-add-Wres}
For any $w \in W_\ext^S$ and $\lambda \in -\bY_+$ we have $\ell(wt_\lambda)=\ell(w)+\ell(t_\lambda)$.
\end{lem}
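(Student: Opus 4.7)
The plan is to reduce everything to the Iwahori--Matsumoto length formula~\eqref{eqn:formula-length}, once the condition $w \in W_\ext^S$ is translated into inequalities on the translation part of $w$.

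First I would write $w = vt_\mu$ with $v \in W$ and $\mu \in \bY$, so that $wt_\lambda = vt_{\mu+\lambda}$. Using~\eqref{eqn:min-Wext-Afund} and unwinding the definitions, the condition $w^{-1}(\Afund) \subset \mathcal{C}$ becomes: for every $v_0 \in \Afund$ and every $\alpha \in \fR_+$,
\[
\langle v(\alpha), v_0 \rangle > \langle \alpha, \mu \rangle.
\]
Taking $v_0$ to range through $\Afund$ and using $\langle \beta, \Afund \rangle \subset (0,1)$ for $\beta \in \fR_+$, this is equivalent to
\[
\langle \alpha, \mu \rangle \leq 0 \text{ if } v(\alpha) \in \fR_+, \qquad \langle \alpha, \mu \rangle \leq -1 \text{ if } v(\alpha) \in -\fR_+.
\]
Combined with $\langle \alpha, \lambda \rangle \leq 0$ for all $\alpha \in \fR_+$ (since $\lambda \in -\bY_+$), this gives $\langle \alpha, \mu+\lambda \rangle \leq 0$ in the first case and $1 + \langle \alpha, \mu+\lambda \rangle \leq 0$ in the second case.

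Then I would apply~\eqref{eqn:formula-length} to each of $w = vt_\mu$, $t_\lambda$, and $wt_\lambda = vt_{\mu+\lambda}$. The sign information above lets us drop all absolute values with a single overall minus sign, and the identity
\[
-\langle \alpha, \mu \rangle - \langle \alpha, \lambda \rangle = -\langle \alpha, \mu+\lambda \rangle, \qquad -(1+\langle \alpha, \mu \rangle) - \langle \alpha, \lambda \rangle = -(1+\langle \alpha, \mu+\lambda \rangle)
\]
shows term-by-term (with $\alpha$ ranging over $\fR_+$, split according to the sign of $v(\alpha)$) that $\ell(w) + \ell(t_\lambda) = \ell(wt_\lambda)$.

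The computation is essentially mechanical; the only nontrivial step is the geometric translation in the first paragraph, which is where the hypothesis $w \in W_\ext^S$ gets converted into the inequalities that make all absolute values cooperate. No further input is needed.
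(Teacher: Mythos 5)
Your proof is correct. Note that the paper itself does not prove this lemma: it simply cites \cite[Lemma~2.7]{projGr1}, so there is no in-text argument to compare against; your write-up supplies a self-contained verification of exactly the kind one would expect, namely translating $w \in W_\ext^S$ into sign conditions on the translation part and then adding up the Iwahori--Matsumoto formula~\eqref{eqn:formula-length} term by term. Two small remarks. First, in the passage from ``$\langle v(\alpha), v_0 \rangle > \langle \alpha,\mu\rangle$ for all $v_0 \in \Afund$'' to ``$\langle\alpha,\mu\rangle \le 0$, resp.\ $\le -1$'', you are implicitly using that $\langle\alpha,\mu\rangle$ is an integer (an integer strictly smaller than something in $(0,1)$, resp.\ $(-1,0)$, is $\le 0$, resp.\ $\le -1$); this is worth saying, since the infimum of $\langle v(\alpha), \cdot\rangle$ over $\Afund$ need not be $0$ or $-1$ for non-simple roots, so the integrality is what makes the step work, and it is the same mechanism the paper uses in the proof of Lemma~\ref{lem:res-complement} via \cite[Lemma~2.6]{projGr1}. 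Second, only the forward implication of your ``equivalence'' is needed for the lemma, though the converse is also true and harmless. With those points made explicit, the term-by-term identity $-\langle\alpha,\mu\rangle-\langle\alpha,\lambda\rangle=-\langle\alpha,\mu+\lambda\rangle$ and $-(1+\langle\alpha,\mu\rangle)-\langle\alpha,\lambda\rangle=-(1+\langle\alpha,\mu+\lambda\rangle)$, split according to the sign of $v(\alpha)$, does indeed give $\ell(wt_\lambda)=\ell(w)+\ell(t_\lambda)$.
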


As explained above, given $w \in W_\ext$ there exists $\mu \in \bY$ such that $w^{-1}(\Afund) \subset \Pi_\mu$. We then set
\[
w^{\triangle} = w t_\mu w_\circ t_{-\mu}.
\]
(Here $\mu$ is unique only up to addition of a coweight $\nu$ orthogonal to all roots; however the product $t_\mu w_\circ t_{-\mu}$ is independent of the choice of $\mu$, so that this definition makes sense.) 
This definition is chosen in such a way that $w^\triangle(\Afund) = \widehat{w(\Afund)}$, where the operation on alcoves $A \mapsto \widehat{A}$ is as in~\cite[p.~98]{soergel} (see also~\cite[\S 2.2]{rw-simple}).
It is easily seen from the definition that if $w \in W_\ext^S$ then $w^\triangle \in  W_\ext^S$, and that for $w \in W_\ext$ and $\lambda \in \bY$ we have
\begin{equation}
\label{eqn:triangle-translation}
(w t_\lambda)^\triangle = w^\triangle t_\lambda.
\end{equation}

Note that $(w^\triangle)^{-1}(\Afund) \subset \Pi_{\mu + \varsigma}$.  Using this observation, we can write down the inverse of the map $w \mapsto w^\triangle$:  it is given by
\begin{equation}\label{eqn:triangle-inverse}
v \mapsto v t_{\nu - \varsigma} w_\circ t_{\varsigma - \nu}
\qquad\text{where $v^{-1}(\Afund) \subset \Pi_\nu$.}
\end{equation}

\subsection{A length computation}

\begin{lem}
\label{lem:res-complement}
If $x \in W_\ext^\res$ and $y= t_{\varsigma} w_\circ x^{-1}$, then we have
\[
\ell(x)+\ell(y)=\ell(t_{\varsigma} w_\circ).
\]
\end{lem}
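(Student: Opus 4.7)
The plan is to recast the identity $\ell(x)+\ell(y)=\ell(t_\varsigma w_\circ)$ as the length-additivity statement $\ell(yx) = \ell(y)+\ell(x)$ (noting that $yx = t_\varsigma w_\circ$ by construction) and verify it via the alcove geometry of~\S\ref{ss:alcoves}. For any $w \in W_\ext$, $\ell(w)$ equals the number of affine hyperplanes $H_{\beta,n}$ that separate $\Afund$ from $w(\Afund)$. A standard three-alcove count then gives $\ell(y)+\ell(x)-\ell(yx) = 2N$, where $N$ counts the hyperplanes with $\Afund$ and $t_\varsigma w_\circ(\Afund)$ on one side and $y(\Afund)$ on the opposite side. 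Hence the identity to prove becomes the geometric assertion: for every affine hyperplane where $\Afund$ and $t_\varsigma w_\circ(\Afund)$ lie on the same side, $y(\Afund)$ lies on that side as well.

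Next I would pin down these ``common'' hyperplanes. For $v \in t_\varsigma w_\circ(\Afund) = w_\circ(\Afund) + \varsigma$ and $\alpha \in \fR_+$, a short computation using the fact that $-w_\circ$ preserves heights on $\fR_+$ (equivalently, $\langle -w_\circ(\alpha),\varsigma\rangle = \langle\alpha,\varsigma\rangle$) yields $\langle \alpha,v\rangle \in \bigl(\langle\alpha,\varsigma\rangle - 1,\langle\alpha,\varsigma\rangle\bigr)$. Combined with $\langle\alpha,\cdot\rangle \in (0,1)$ on $\Afund$, this shows that the hyperplanes $H_{\alpha,n}$ with $\alpha \in \fR_+$ having both extreme alcoves on the same side are exactly those with $n \le 0$ or $n \ge \langle\alpha,\varsigma\rangle$. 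The condition to verify therefore reduces to the inclusion
\[
y(\Afund) \subset \Xi := \{v \in V \mid \forall \alpha \in \fR_+, \ 0 < \langle\alpha,v\rangle < \langle\alpha,\varsigma\rangle\}.
\]

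The crux of the argument is then this inclusion, which is where the restricted hypothesis on $x$ enters. Since $x \in W_\ext^\res$ we have $x^{-1}(\Afund) \subset \Pi_\varsigma$, so $y(\Afund) = t_\varsigma w_\circ(x^{-1}(\Afund)) \subset t_\varsigma w_\circ(\Pi_\varsigma)$, and it suffices to show $t_\varsigma w_\circ(\Pi_\varsigma) \subset \Xi$. For $u \in \Pi_\varsigma$ and a positive root $\alpha = \sum_{\alpha_i \in \fRs} c_i\alpha_i$, expanding gives $\langle\alpha,u\rangle \in (0,\langle\alpha,\varsigma\rangle)$, and likewise $\langle -w_\circ(\alpha),u\rangle \in (0,\langle\alpha,\varsigma\rangle)$ since $-w_\circ(\alpha) \in \fR_+$ has the same height as $\alpha$. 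Then the identity $\langle\alpha, w_\circ(u)+\varsigma\rangle = \langle\alpha,\varsigma\rangle - \langle -w_\circ(\alpha), u\rangle$ immediately yields $\langle\alpha, t_\varsigma w_\circ(u)\rangle \in (0,\langle\alpha,\varsigma\rangle)$, as desired. The main technical issue is careful bookkeeping of signs and of the action of $-w_\circ$ on root heights; once those are in hand, the rest is routine.
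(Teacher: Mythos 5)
Your argument is correct, but it takes a genuinely different route from the paper. The paper proves the identity by brute force with the Iwahori--Matsumoto formula~\eqref{eqn:formula-length}: writing $x = w t_\lambda$, it uses \cite[Lemma~2.6]{projGr1} to pin down the signs of the pairings $\langle\alpha,\lambda\rangle$, computes $\ell(x) = -\langle 2\rho,\lambda\rangle - \ell(w)$ and $\ell(y) = \langle 2\rho,\varsigma\rangle + \langle 2\rho,\lambda\rangle - \ell(w_\circ w)$, and adds. You instead recast the claim as the length-additivity $\ell(yx)=\ell(y)+\ell(x)$ and verify it by alcove geometry: the defect is twice the number of hyperplanes separating $y(\Afund)$ from both $\Afund$ and $t_\varsigma w_\circ(\Afund)$, the common-side hyperplanes are identified explicitly, and the restricted hypothesis enters through the inclusion $t_\varsigma w_\circ(\Pi_\varsigma)\subset\Xi$, which you check using that $-w_\circ$ permutes $\fR_+$ preserving heights (equivalently $\langle -w_\circ(\alpha),\varsigma\rangle=\langle\alpha,\varsigma\rangle$, given the normalization of $\varsigma$). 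I verified the three-alcove count, the determination of the common-side hyperplanes, and the final inclusion; all are sound. Your approach buys a more conceptual picture (it makes visible \emph{why} lengths add: the whole box $t_\varsigma w_\circ(\Pi_\varsigma)$ sits in the region $\Xi$ cut out by the walls not separating the two extreme alcoves), at the cost of invoking the standard fact that $\ell(w)$ equals the number of hyperplanes separating $\Afund$ from $w(\Afund)$ for $w\in W_\ext$ — a fact the paper never states explicitly, though it is equivalent to~\eqref{eqn:formula-length} and entirely standard; the paper's computation stays strictly inside the formulas it has already quoted. Two cosmetic remarks: in your second paragraph the height-preservation of $-w_\circ$ is not actually needed (only $-w_\circ(\fR_+)=\fR_+$ is used there); it is needed only in the third paragraph, so the justification is slightly misplaced. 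Also, you should say a word on why hyperplane counts are $y$-equivariant (i.e.\ $\ell(x)=\#\{H : H \text{ separates } y(\Afund) \text{ and } yx(\Afund)\}$), which holds because $W_\ext$ acts by affine transformations permuting the hyperplanes $H_{\beta,n}$.
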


\begin{proof}
Using~~\eqref{eqn:formula-length} we see that $\ell(t_{\varsigma} w_\circ) = \langle 2\rho, \varsigma \rangle - \ell(w_\circ)$. On the other hand, write $x= w t_\lambda$ with $\lambda \in \bY$ and $w \in W$. Then we have
\[
\ell(x) = \sum_{\substack{\alpha \in \fR_+ \\ w(\alpha) \in \fR_+}} |\langle \alpha,\lambda \rangle| + \sum_{\substack{\alpha \in \fR_+ \\ w(\alpha) \in -\fR_+}} |1+\langle \alpha, \lambda \rangle|.
\]
By \cite[Lemma~2.6]{projGr1},
on the right-hand side we have $\langle \alpha, \lambda \rangle \leq 0$ for any $\alpha \in \fR_+$.  Moreover, if $w(\alpha) \in -\fR_+$, then at least one simple root $\gamma$ appearing in the decomposition of $\alpha$ as a sum of simple roots must satisfy $w(\gamma) \in -\fR_+$; we therefore have $\langle \alpha, \lambda \rangle \leq -1$ in this case. We deduce that
\[
\ell(x) = -\langle 2\rho, \lambda \rangle - \ell(w).
\]
Similarly we have $y= t_{\varsigma} w_\circ x^{-1}=t_{\varsigma-w_\circ(\lambda)} w_\circ w^{-1}=(w w_\circ t_{w_\circ(\lambda)-\varsigma})^{-1}$, and hence
\[
\ell(y)=\sum_{\substack{\alpha \in \fR_+ \\ w w_\circ (\alpha) \in \fR_+}} |\langle \alpha,w_\circ(\lambda)-\varsigma\rangle| + \sum_{\substack{\alpha \in \fR_+ \\ w w_\circ(\alpha) \in -\fR_+}} |1+\langle \alpha,w_\circ(\lambda)-\varsigma \rangle|.
\]
Setting $\beta=-w_\circ(\alpha)$ we obtain that
\[
\ell(y)=\sum_{\substack{\beta \in \fR_+ \\ w (\beta) \in -\fR_+}} |\langle \beta,\lambda-w_\circ(\varsigma)\rangle| + \sum_{\substack{\beta \in \fR_+ \\ w (\beta) \in \fR_+}} |1-\langle \beta,\lambda-w_\circ(\varsigma) \rangle|.
\]
For the same reason as before, 
we have $\langle \beta,\lambda-w_\circ(\varsigma) \rangle \geq 0$ for any $\beta \in \fR_+$, and $\langle \beta,\lambda-w_\circ(\varsigma) \rangle \geq 1$ if $w(\beta) \in \fR_+$. It follows that
\begin{multline*}
\ell(y)= \langle 2\rho, \varsigma \rangle + \langle 2\rho,\lambda \rangle - \#\{\beta \in \fR_+ \mid w(\beta) \in \fR_+\}  = \\
\langle 2\rho, \varsigma \rangle + \langle 2\rho,\lambda \rangle - \ell(w_\circ w) = \ell( t_{\varsigma} w_\circ) - \ell(x),
\end{multline*}
as stated in the lemma.
\end{proof}

\subsection{Coset representatives}
\label{ss:coset-representatives}

Let $A \subset S_\aff$ be a finitary subset. We will denote by ${}^A W_\ext^S \subset W_\ext$ the subset consisting of the elements $w$ such that $\ell(w_A w w_\circ)=\ell(w_A)+\ell(w)+\ell(w_\circ)$. Other characterizations of these elements are given in~\cite[Lemma~2.4]{projGr1}; in particular we have
\begin{equation}
\label{eqn:AWextS-characterization}
w \in {}^A W_\ext^S \quad \Leftrightarrow \quad \text{$w$ is minimal in $W_A w$ and $vw \in W_\ext^S$ for any $v \in W_A$.}
\end{equation}
(Of course, this shows that ${}^A W_\ext^S \subset W_\ext^S$.)
We set
\[
{}^A W_\ext^\res := {}^A W_\ext^S \cap W_\ext^\res.
\]
Then as explained in~\cite[\S 2.5]{projGr1} we have
\begin{equation}
\label{eqn:WS-Wres-Whit}
{}^A W^S_\ext = \{wt_\lambda : w \in {}^A W^\res_\ext, \, \lambda \in -\bY_+ \}.
\end{equation}

We now set
\[
{}^A W_\ext = \{wt_\lambda : w \in {}^A W^\res_\ext, \, \lambda \in \bY \}.
\]
We emphasize that ${}^A W_\ext$ is \emph{not} the set of elements $w$ which are minimal in their coset $W_A w$. However, this subset is also a set of representatives for the quotient $W_A \backslash W_\ext$, as stated in the following lemma.

\begin{lem}
\label{lem:AWext-representatives}
The composition
\[
{}^A W_\ext \to W_\ext \to W_A \backslash W_\ext
\]
is a bijection.
\end{lem}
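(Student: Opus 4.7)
The plan is to use the semidirect-product structure $W_\ext = W \ltimes \bY$, and specifically the projection $\pi : W_\ext \to W$, to reduce the bijectivity claim to a statement about $W$. Because $W_A$ is a finite subgroup of $W_\ext$ and $\ker(\pi) = \bY$ is torsion-free, $\pi|_{W_A}$ is injective; denote its image by $\overline{W}_A \subset W$. A direct computation in the semidirect product shows that $W_A$ acts freely on $W_\ext$ and that the $W$-component of $v \cdot g$ equals $\pi(v)$ times the $W$-component of $g$. Hence each $W_A$-orbit consists of $|W_A|$ elements whose $W$-components form a single left $\overline{W}_A$-coset.

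Since right multiplication by $t_\lambda$ leaves the $W$-component unchanged, ${}^A W_\ext = \pi^{-1}(\pi({}^A W_\ext^\res))$ as a subset of $W_\ext$. Consequently, the composition ${}^A W_\ext \to W_A \backslash W_\ext$ is bijective precisely when $\pi({}^A W_\ext^\res)$ is a complete set of representatives for $\overline{W}_A \backslash W$; this reformulation is the heart of the argument.

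To prove this last assertion, I would use the alcove description $w \in W_\ext^\res \Leftrightarrow w^{-1}(\Afund) \subset \Pi_\varsigma$. The torsion-freeness of $\bX/\Z\fR$ guarantees that for each $u \in W$ there is, modulo the central subgroup of length-zero translations $\{t_\nu : \nu \in \bY^\perp \cap \bY\}$, a unique restricted lift $z_u \in W_\ext^\res$ with $\pi(z_u) = u$. Because these central translations preserve the conditions cutting out ${}^A W_\ext^S$, and using the inclusion $W_\ext^\res \subset W_\ext^S$ (which follows from $\Pi_\varsigma \subset \mathcal{C}$), the characterization~\eqref{eqn:AWextS-characterization} shows that $z_u \in {}^A W_\ext^\res$ amounts to the geometric condition $z_u^{-1}(W_A \cdot \Afund) \subset \mathcal{C}$. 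The hard part will be verifying that for each $\overline{W}_A$-coset $\overline{W}_A u_0 \subset W$ exactly one element $u$ admits such a lift: this should be accessible by tracing how the cluster of $|W_A|$ alcoves $W_A \cdot \Afund$ sits relative to $\Pi_\varsigma$ and the walls of the dominant cone, and how this local picture shifts as $u$ ranges over the coset.
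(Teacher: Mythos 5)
Your reduction is sound as far as it goes: since $W_A\cap\bY=\{e\}$ (a finite subgroup of a torsion-free group), $\pi|_{W_A}$ is injective, each coset $W_A g$ maps bijectively under $\pi$ onto the coset $\overline{W}_A\pi(g)$, and ${}^A W_\ext=\pi^{-1}(\pi({}^A W_\ext^\res))$; so the lemma is indeed equivalent to the assertion that $\pi({}^A W_\ext^\res)$ meets each coset of $\overline{W}_A\backslash W$ exactly once. The problem is that this last assertion \emph{is} the lemma, and you do not prove it: the final paragraph ("the hard part will be verifying\dots this should be accessible by tracing how the cluster of $|W_A|$ alcoves $W_A\cdot\Afund$ sits relative to $\Pi_\varsigma$ and the walls of the dominant cone") is a statement of intent, not an argument. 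Both existence and uniqueness of the distinguished element in each coset are left open, so the proposal has a genuine gap exactly where the content lies. There is also a secondary inaccuracy in the setup of that step: by~\eqref{eqn:AWextS-characterization}, membership in ${}^A W_\ext^S$ requires \emph{both} that $vw\in W_\ext^S$ for all $v\in W_A$ (your condition $w^{-1}(W_A\cdot\Afund)\subset\cC$) \emph{and} that $w$ be minimal in $W_A w$; the second condition is not a consequence of the first (already for $A=\{s\}$ with $s=t_{\beta^\vee}s_{\beta^\vee}$ one has $vs\in W_\ext^S$ for all $v\in W_A$ while $s$ is not minimal in $W_A s$), and you would at least need to argue that it is automatic for elements of $W_\ext^\res$, which you do not.

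For comparison, the paper avoids the finite-Weyl-group reduction entirely and works directly with dominant translations: for surjectivity, given $w$ one chooses $\lambda\in\bY_+$ large enough that $vwt_{-\lambda}\in W_\ext^S$ for every $v\in W_A$, takes the unique minimal element of the coset $W_A wt_{-\lambda}$ (standard Coxeter theory), observes via~\eqref{eqn:AWextS-characterization} that it lies in ${}^A W_\ext^S$, and then applies~\eqref{eqn:WS-Wres-Whit} to write it as $yt_\mu$ with $y\in{}^A W_\ext^\res$; for injectivity, one translates both elements by a sufficiently antidominant $t_\nu$ into ${}^A W_\ext^S$ and invokes uniqueness of minimal coset representatives. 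If you want to salvage your approach, the analogous mechanism would have to be built into your alcove-geometric step (essentially re-proving this translation argument inside each $\overline{W}_A$-coset), so as written the proposal does not yet constitute a proof.
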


\begin{proof}
We first prove surjectivity. Let $w \in W_\ext$. Then there exists $\lambda \in \bY_+$ such that $t_\lambda w^{-1} v(\mathfrak{A}_{\mathrm{fund}}) \subset \cC$ for any $v \in W_A$. If we fix such a $\lambda$, by definition all the elements $vwt_{-\lambda}$ ($v \in W_A$) belong to $W_\ext^S$. If $v \in W_A$ is such that $vwt_{-\lambda}$ is minimal in $W_A w t_{-\lambda}$, then $vwt_{-\lambda}$ belongs to ${}^A W^S_\ext$ by~\eqref{eqn:AWextS-characterization}. By~\eqref{eqn:WS-Wres-Whit}, there exist $y \in {}^A W^\res_\ext$ and $\mu \in -\bY_+$ such that $vwt_{-\lambda} = y t_{\mu}$. Then $w = v^{-1} y t_{\lambda+\mu}$, proving surjectivity.

As for injectivity, we consider $y,y' \in {}^A W^\res_\ext$ and $\lambda,\lambda' \in \bY$ such that
\[
W_A yt_\lambda = W_A y' t_{\lambda'}.
\]
Multiplying on the right by an antidominant element we can assume that $\lambda,\lambda' \in -\bY_+$. Then $yt_\lambda$ and $y' t_{\lambda'}$ belong to ${}^A W^S_\ext$ by~\eqref{eqn:WS-Wres-Whit}; in particular these elements are minimal in their respective cosets $W_A yt_\lambda$ and $W_A y' t_{\lambda'}$, see~\eqref{eqn:AWextS-characterization}. Since these cosets coincide, this implies that $yt_\lambda=y' t_{\lambda'}$, as desired.
\end{proof}

\subsection{The periodic order}
\label{ss:periodic-order}

In this subsection we introduce an order on $W_\ext$ which is different from the Bruhat order, and which will play a crucial role in our constructions.
%
%
Recall that any $w \in W_\ext$ can be written as $yt_\mu$ for some $y \in W_\ext^\res$ and $\mu \in \bY$, see~\S\ref{ss:alcoves}; in particular, in view of~\eqref{eqn:WS-Wres}, there exists $\lambda \in \bY$ such that $wt_\lambda \in W_\ext^S$. More generally, given any finite collection $w_1, \ldots, w_r$ of elements of $W_\ext$, there exists $\lambda \in \bY$ such that $w_i t_\lambda$ belongs to $W_\ext^S$ for any $i \in \{1, \ldots, r\}$.


\begin{lem}
\label{lem:periodic-order-cond}
Let $y,y' \in W_\ext$. The following conditions are equivalent:
\begin{enumerate}
\item
\label{it:periodic-order-cond-1}
there exists $\lambda \in \bY$ such that $yt_\lambda$ and $y't_\lambda$ belong to $W_\ext^S$ and $yt_\lambda \leq y't_\lambda$ in the Bruhat order;
\item
\label{it:periodic-order-cond-2}
for any $\lambda \in \bY$ such that $yt_\lambda$ and $y't_\lambda$ belong to $W_\ext^S$ we have $yt_\lambda \leq y't_\lambda$ in the Bruhat order.
\end{enumerate}
\end{lem}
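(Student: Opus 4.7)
The implication \eqref{it:periodic-order-cond-2} $\Rightarrow$ \eqref{it:periodic-order-cond-1} is immediate from the observation (made just before the lemma) that some $\lambda \in \bY$ exists with $yt_\lambda, y't_\lambda \in W_\ext^S$; we then apply \eqref{it:periodic-order-cond-2} to this $\lambda$.

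For the main implication \eqref{it:periodic-order-cond-1} $\Rightarrow$ \eqref{it:periodic-order-cond-2}, the idea is to transport the inequality from $\lambda_0$ to an arbitrary valid $\lambda$ via right multiplication by a sufficiently antidominant translation. The key intermediate claim is: for $w, w' \in W_\ext^S$ and $\sigma \in -\bY_+$, one has $w \leq w'$ if and only if $wt_\sigma \leq w't_\sigma$. By Lemma~\ref{lem:lengths-add-Wres} lengths add on both sides in this setting, so the forward direction follows from the subword characterization of the Bruhat order (by concatenating a reduced expression for $w'$ with one for $t_\sigma$). For the reverse direction I would argue by induction on $\ell(t_\sigma)$, reducing to the case of a single simple reflection $s \in S_\aff$: assuming $\ell(ws) = \ell(w)+1$, $\ell(w's) = \ell(w')+1$, and $ws \leq w's$, a reduced subexpression of the concatenated reduced expression $\underline{w'} \cdot s$ whose product is $ws$ either uses the final $s$ (in which case dropping it yields a reduced subexpression of $\underline{w'}$ representing $w$, so $w \leq w'$) or does not (in which case $ws \leq w'$, and then $w \leq ws \leq w'$ since $w$ is a subword of any reduced expression for $ws$). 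The passage from $W_\aff$ to $W_\ext$ is routine via $W_\ext \cong \Omega \ltimes W_\aff$ and the fact that $\Omega$ acts on $W_\aff$ by length-preserving Coxeter group automorphisms.

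Granted this claim, given $\lambda_0$ as in \eqref{it:periodic-order-cond-1} and any $\lambda$ satisfying the hypothesis of \eqref{it:periodic-order-cond-2}, I would choose $\sigma \in -\bY_+$ sufficiently antidominant so that $\sigma' := \sigma + \lambda_0 - \lambda$ also lies in $-\bY_+$; such $\sigma$ exist since the required condition on $\sigma'$ amounts to finitely many linear inequalities, each satisfied once $\langle \alpha, -\sigma \rangle$ is large enough for every positive root $\alpha$. Then
\[
yt_\lambda \cdot t_{\sigma'} = yt_{\lambda_0 + \sigma} = yt_{\lambda_0} \cdot t_\sigma,
\]
and analogously for $y'$. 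Applying the key claim once to $(yt_{\lambda_0}, y't_{\lambda_0}, t_\sigma)$ and once to $(yt_\lambda, y't_\lambda, t_{\sigma'})$ yields
\[
yt_\lambda \leq y't_\lambda \iff yt_{\lambda_0} t_\sigma \leq y't_{\lambda_0} t_\sigma \iff yt_{\lambda_0} \leq y't_{\lambda_0},
\]
and the right-hand inequality is precisely the hypothesis of \eqref{it:periodic-order-cond-1}.

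The main obstacle is the reverse direction of the key claim. The forward direction is a one-line subword argument, but reflecting the Bruhat order under right multiplication (even with lengths adding) requires the lifting property of Coxeter groups, and some mild bookkeeping is needed to handle the $\Omega$-component when translating between $W_\aff$ and $W_\ext$.
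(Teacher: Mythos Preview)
Your proof is correct and follows essentially the same route as the paper: choose a common ``sufficiently antidominant'' translation and use the two-sided compatibility of the Bruhat order with right multiplication when lengths add (together with Lemma~\ref{lem:lengths-add-Wres}) to transport the inequality from $\lambda_0$ to an arbitrary $\lambda$. The only difference is packaging: the paper cites this compatibility as a known fact (\cite[Lemma~2.1]{projGr1}) rather than reproving it, and writes the bridge element as a single $\nu$ with $\nu-\lambda_0$ and $\nu-\lambda$ both antidominant, which is just your $\lambda_0+\sigma = \lambda+\sigma'$.
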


\begin{proof}
Of course~\eqref{it:periodic-order-cond-2} implies~\eqref{it:periodic-order-cond-1}, since as explained above there exists $\lambda \in \bY$ such that $yt_\lambda$ and $y't_\lambda$ belong to $W_\ext^S$. Conversely, suppose that~\eqref{it:periodic-order-cond-1} holds, and fix some $\lambda \in \bY$ which satisfies this condition. Let $\mu \in \bY$ be such that $yt_\mu$ and $y't_\mu$ belong to $W_\ext^S$, and choose $\nu \in \bY$ such that $\nu-\lambda$ and $\nu-\mu$ are antidominant. Using a standard compatibility property of the Bruhat order with multiplication when lengths add (see~\cite[Lemma~2.1]{projGr1}) and Lemma~\ref{lem:lengths-add-Wres}, from the fact that $yt_\lambda \leq y't_\lambda$, we deduce that $yt_\nu \leq y't_\nu$, and then that $yt_\mu \leq y't_\mu$, as desired.
\end{proof}

We define the periodic order $\preceq$ on $W_\ext$ by saying that $y \preceq y'$ iff $y$ and $y'$ satisfy the equivalent conditions of Lemma~\ref{lem:periodic-order-cond}. In other words, if $\lambda \in \bY$ is any element such that $yt_\lambda$ and $y' t_\lambda$ belong to $W_\ext^S$, we have $y \preceq y'$ iff $yt_\lambda \leq y't_\lambda$.

The following lemma gathers some easy properties of the periodic order.

\begin{lem}
\phantomsection
\label{lem:per-order-properties}
\begin{enumerate}
\item
\label{it:per-order-1}
If $w \in W_\ext$ and $s \in S_\aff$, then we have either $sw \preceq w$ or $w \preceq sw$.
\item 
\label{it:per-order-2}
If $y,y' \in W_\ext$ and $\mu \in \bY$ we have $y \preceq y'$ iff $yt_\mu \preceq y' t_\mu$.
\item 
\label{it:per-order-3}
If $y,y' \in W_\ext^S$ we have $y \preceq y'$ iff $y \leq y'$.
\item
\label{it:per-order-4}
If $y,y' \in W_\ext$ satisfy $y \preceq y'$, and if $s \in S_\aff$ satisfies $sy \preceq y$, then we have $sy \preceq y'$ and $sy \preceq sy'$.
\item
\label{it:per-order-5}
If $y,y' \in W_\ext$ satisfy $y \preceq y'$, and if $s \in S_\aff$ satisfies $y' \preceq sy'$, then we have $y \preceq sy'$ and $sy \preceq sy'$.
\end{enumerate}
\end{lem}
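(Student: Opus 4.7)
The overall strategy is to reduce every assertion to the Bruhat order on the Coxeter system $(W_\aff, S_\aff)$ (extended to $W_\ext = \Omega \ltimes W_\aff$ in the obvious way) by choosing a common translation $t_\lambda$ that sends the relevant finite collection of elements into $W_\ext^S$. The existence of such a $\lambda$ is precisely the observation made at the start of \S\ref{ss:periodic-order}, and Lemma~\ref{lem:periodic-order-cond} guarantees that once we have verified the Bruhat-order inequality for one such $\lambda$, it automatically holds for any valid choice of $\lambda$.

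For~\eqref{it:per-order-1}, I would fix $\lambda$ such that both $wt_\lambda$ and $swt_\lambda$ lie in $W_\ext^S$. Writing $wt_\lambda = \omega v$ with $\omega \in \Omega$ and $v \in W_\aff$, and using the fact that $\Omega$ normalizes $W_\aff$ by Coxeter automorphisms so that $s\omega = \omega s'$ for some $s' \in S_\aff$, I get $swt_\lambda = \omega s' v$. Since $(W_\aff, S_\aff)$ is a Coxeter system, left multiplication by $s'$ on $v$ either strictly increases or strictly decreases length, which translates directly into one of the inequalities $wt_\lambda \leq swt_\lambda$ or $swt_\lambda \leq wt_\lambda$ in the extended Bruhat order. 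Part~\eqref{it:per-order-2} is immediate from the definition: if $\lambda$ puts $yt_\lambda, y't_\lambda$ in $W_\ext^S$, then $\lambda-\mu$ puts $(yt_\mu)t_{\lambda-\mu} = yt_\lambda$ and $(y't_\mu)t_{\lambda-\mu} = y't_\lambda$ in $W_\ext^S$, and the Bruhat inequality to check is literally the same. Part~\eqref{it:per-order-3} is the case $\lambda=0$ of the definition.

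Parts~\eqref{it:per-order-4} and~\eqref{it:per-order-5} are the substantive cases. Here I would choose $\lambda$ large enough that all four of $yt_\lambda$, $y't_\lambda$, $syt_\lambda$, $sy't_\lambda$ lie in $W_\ext^S$, and set $u = yt_\lambda$, $v = y't_\lambda$. The hypotheses become $u \leq v$ together with either $su < u$ (for~\eqref{it:per-order-4}) or $sv > v$ (for~\eqref{it:per-order-5}), and the goals are Bruhat inequalities of the form $su \leq v$, $su \leq sv$, $u \leq sv$. I would then invoke the standard lifting property of the Bruhat order on a Coxeter system: if $u \leq v$ and $su < u$, then $su \leq v$, and moreover $su \leq sv$ whenever $sv < v$; the remaining subcase $sv > v$ is handled by the chain $su < u \leq v < sv$. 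For~\eqref{it:per-order-5} the same tool applies, either by running the lifting property with $v$ and $sv$ interchanged, or by the short argument $u \leq v \leq sv$ giving $u \leq sv$ and then another case analysis on $su$ versus $u$ for the remaining inequality.

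I do not expect any real obstacle: the only nontrivial ingredient is the lifting property, which is entirely standard for Coxeter groups and transfers to $W_\ext$ because the extended Bruhat order defined in~\S\ref{ss:Wext} is just the Bruhat order on $W_\aff$ within each $\Omega$-coset, and because left multiplication by $s \in S_\aff$ preserves $\Omega$-cosets up to the substitution $s \leftrightarrow \omega^{-1} s \omega$ described above.
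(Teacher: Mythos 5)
Your proof is correct and takes essentially the same approach as the paper: reduce everything to the Bruhat order by a common translation (using the observation before Lemma~\ref{lem:periodic-order-cond} and that lemma itself), with parts~\eqref{it:per-order-4}--\eqref{it:per-order-5} resting on the lifting property, which is precisely what the paper proves inline via reduced subwords and the exchange condition. Your explicit remark on transferring the lifting property to $W_\ext$ via conjugation by $\Omega$ is fine (the paper sidesteps it by writing reduced expressions with the $\Omega$-factor on the right).
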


\begin{proof}
\eqref{it:per-order-1} 
Fix $\lambda \in \bY$ such that $yt_\lambda$ and $syt_\lambda$ belong to $W_\ext^S$.
Then we have either $yt_\lambda \leq sy t_\lambda$ or $syt_\lambda \leq y t_\lambda$. In the first case we have $y \preceq sy$, and in the second case we have $sy \preceq y$.

\eqref{it:per-order-2}
If $\lambda \in \bY$ is such that $yt_{\lambda}$ and $y' t_{\lambda}$ belong to $W_\ext^S$, then by definition we have
$y \preceq y'$ iff $y t_{\lambda} \leq y' t_{\lambda}$. On the other hand we have $(yt_\mu) t_{\lambda-\mu}=y t_{\lambda}$ and similarly for $y'$, so that this condition holds iff $yt_\mu \preceq y't_\mu$.

\eqref{it:per-order-3}
This property is obvious from the definition (taking $\lambda=0$).

\eqref{it:per-order-4}
Of course we have $sy \preceq y \preceq y'$, and if $y' \preceq sy'$ then $sy \preceq y \preceq y' \preceq sy'$. Now assume $y' \succeq sy'$, and choose $\lambda \in \bY$ such that $yt_\lambda$, $y't_\lambda$, $syt_\lambda$ and $sy' t_\lambda$ all belong to $W_\ext^S$. Then we have $syt_\lambda \leq yt_\lambda \leq y' t_\lambda$ and $sy't_\lambda \leq y' t_\lambda$. By the last inequality, there exists a reduced expression $y't_\lambda = s s_1 \cdots s_r \omega$ with each $s_i$ in $S_\aff$ and $\omega \in \Omega$. Then $yt_\lambda$ admits a reduced expression obtained by omitting some of the simple reflections in this expression. If $s$ is not among the omitted simple reflections, then clearly $syt_\lambda \leq s_1 \cdots s_r \omega = sy' t_\lambda$. If $s$ \emph{is} omitted then we also have $syt_\lambda \leq sy't_\lambda$ by the exchange condition. Hence $sy \preceq sy'$ in all cases.

\eqref{it:per-order-5}
Of course we have $y \preceq y' \preceq sy'$, and if $sy \preceq y$ then $sy \preceq y \preceq y' \preceq sy'$. Now assume $sy \succeq y$, and choose $\lambda \in \bY$ such that $yt_\lambda$, $y't_\lambda$, $syt_\lambda$ and $sy' t_\lambda$ all belong to $W_\ext^S$. Then we have $yt_\lambda \leq y't_\lambda$, $y t_\lambda \leq syt_\lambda$ and $y' t_\lambda \leq sy't_\lambda$. Fixing a reduced expression for $y't_\lambda$, an expression for $yt_\lambda$ can be obtained by omitting some reflections. Adding $s$ on the left we obtain a reduced expression for $sy't_\lambda$, from which an expression for $syt_\lambda$ can be obtained by omitting the same reflections.
This shows that $syt_\lambda \leq sy't_\lambda$, and hence that $sy \preceq sy'$, as desired.
\end{proof}

\begin{rmk}
\label{rmk:min-preceq}
Now that we have introduced the order $\preceq$, we can reinterpret Lem\-ma~\ref{lem:AWext-representatives} (and its proof) as saying that ${}^A W_\ext$ consists of the elements $w \in W_\ext$ which are minimal for the order $\preceq$ in the coset $W_A w$ (and that each such coset contains a unique minimal element).
\end{rmk}

\begin{lem}
\label{lem:per-order-coset}
Let $y, y' \in {}^A W_\ext$.  Then $y \preceq y'$ if and only if $w_A y \preceq w_A y'$.
\end{lem}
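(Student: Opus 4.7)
The plan is to reduce the equivalence to a classical length-adding property of the Bruhat order. Since $y, y' \in {}^A W_\ext$, by~\eqref{eqn:WS-Wres-Whit} I can choose $\lambda \in \bY$ antidominant enough so that both $yt_\lambda$ and $y't_\lambda$ lie in ${}^A W_\ext^S$. By~\eqref{eqn:AWextS-characterization} together with the defining length-adding property of ${}^A W_\ext^S$ (namely $\ell(w_A w w_\circ) = \ell(w_A) + \ell(w) + \ell(w_\circ)$ for $w \in {}^A W_\ext^S$), one gets $v \cdot yt_\lambda \in W_\ext^S$ and $\ell(v \cdot yt_\lambda) = \ell(v) + \ell(yt_\lambda)$ for every $v \in W_A$, and similarly for $y'$. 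In particular, $w_A y t_\lambda$ and $w_A y' t_\lambda$ both lie in $W_\ext^S$.

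Invoking the definition of $\preceq$ (and Lemma~\ref{lem:per-order-properties}\eqref{it:per-order-3}), the equivalence $y \preceq y' \Leftrightarrow w_A y \preceq w_A y'$ then reduces to the Bruhat-order statement $yt_\lambda \leq y't_\lambda \Leftrightarrow w_A y t_\lambda \leq w_A y' t_\lambda$. This in turn is an instance of the following general fact in any Coxeter group: if $z_1, z_2$ satisfy $\ell(u z_i) = \ell(u) + \ell(z_i)$ for $i=1,2$, then $z_1 \leq z_2 \Leftrightarrow u z_1 \leq u z_2$. I would prove this by induction on $\ell(u)$, the inductive step reducing to the classical equivalence $v_1 \leq v_2 \Leftrightarrow s v_1 \leq s v_2$ under the hypothesis $s v_i > v_i$ for $i=1,2$, itself a standard consequence of the lifting/exchange properties.

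An alternative approach, staying entirely within the periodic order, is to fix a reduced expression $w_A = s_1 \cdots s_k$ and, using the length-adding chain above applied to $yt_\lambda$ and translated back via Lemma~\ref{lem:per-order-properties}\eqref{it:per-order-3}, derive the $\preceq$-chain
\[
y \preceq s_k y \preceq s_{k-1} s_k y \preceq \cdots \preceq w_A y,
\]
and similarly for $y'$. One then propagates $y \preceq y'$ up to $w_A y \preceq w_A y'$ by iterating Lemma~\ref{lem:per-order-properties}\eqref{it:per-order-5} along the $y'$-chain, and conversely propagates $w_A y \preceq w_A y'$ back down to $y \preceq y'$ by iterating Lemma~\ref{lem:per-order-properties}\eqref{it:per-order-4} along the $y$-chain (traversed in reverse). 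In either approach the only real obstacle is a careful unpacking of the definition of ${}^A W_\ext$ to locate a suitable $\lambda$; once the length-adding property is in hand, the argument is essentially bookkeeping.
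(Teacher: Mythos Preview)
Your proposal is correct and takes essentially the same approach as the paper: choose $\lambda$ so that $yt_\lambda, y't_\lambda \in {}^A W_\ext^S$, use~\eqref{eqn:AWextS-characterization} to get the length-adding property and that $w_A yt_\lambda, w_A y't_\lambda \in W_\ext^S$, and then reduce to the Bruhat-order compatibility with left multiplication when lengths add. The paper simply cites~\cite[Lemma~2.1]{projGr1} for that last step, whereas you sketch an inductive proof and offer an alternative via Lemma~\ref{lem:per-order-properties}\eqref{it:per-order-4}--\eqref{it:per-order-5}; both are fine.
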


\begin{proof}
In view of~\eqref{eqn:WS-Wres-Whit}, there exists $\lambda \in \bY$ such that $yt_\lambda$ and $y' t_\lambda$ belong to ${}^A W_\ext^S$. Then by definition $y \preceq y'$ if and only if $y t_\lambda \leq y't_\lambda$. By~\eqref{eqn:AWextS-characterization} the elements $y t_\lambda$ and $y't_\lambda$ are minimal in their respective cosets $W_A yt_\lambda$ and $W_A y' t_\lambda$, which implies that
\[
\ell(w_A y t_\lambda) = \ell(w_A)+\ell(yt_\lambda), \qquad \ell(w_A y' t_\lambda) = \ell(w_A)+\ell(y't_\lambda).
\]
By the compatibility property of the Bruhat order with multiplication when lengths add (as used in the proof of Lemma~\ref{lem:periodic-order-cond}; cf.~\cite[Lemma~2.1]{projGr1}), this implies that $y t_\lambda \leq y't_\lambda$ if and only if $w_A y t_\lambda \leq w_A y't_\lambda$. Finally, by~\eqref{eqn:AWextS-characterization} the elements $w_A y t_\lambda$ and $w_A y't_\lambda$ belong to $W_\ext^S$; we therefore have $w_A y t_\lambda \leq w_A y't_\lambda$ if and only if $w_A y \preceq w_A y'$, and the lemma follows.
\end{proof}

\subsection{The Hecke algebra and the left spherical module}

Let $v$ be an indeterminate, and let $\cH_\ext$ be the 
Hecke algebra of $W_\ext$ over $\Z[v,v^{-1}]$.  Recall that this is a $\Z[v,v^{-1}]$-algebra that is free as a $\Z[v,v^{-1}]$-module, with a basis (called the \emph{standard basis}) $( H_w : w \in W_\ext )$, and with multiplication determined by the following rules:
\begin{align*}
H_xH_y &= H_{xy} &&\text{if $\ell(xy) = \ell(x) + \ell(y)$,} \\
H_s^2 &= H_e + (v^{-1} - v)H_s &&\text{for all $s \in S_\aff$.}
\end{align*}
(Here and below we will follow the notational conventions of~\cite{soergel}.)
The algebra $\cH_\ext$ is also equipped with a \emph{canonical basis}~\cite{kl}, denoted by
\[
(\uH_w : w \in W_\ext),
\]
and uniquely characterized as follows: $\uH_w$ is fixed by a certain involution of $\cH_\ext$ (called the \emph{bar involution}), and 
\[
\uH_x \in H_x + \sum_{y < x} v\Z[v] H_y.
\]
Let us write each of these basis elements in terms of the standard basis:
\[
\uH_x = \sum_{y \in W_\ext} h_{y,x} H_y;
\]
the polynomials $h_{y,x} \in \Z[v]$ are then known as the \emph{Kazhdan--Lusztig polynomials}.  
Their geometric interpretation (in terms of perverse sheaves) will be recalled in~\S\ref{ss:kl} below.

We will denote by $\cM$ 
the left $\cH_\ext$-module obtained by taking the quotient
\[
\cM = \cH_\ext/\cH_\ext \cdot \{ H_s - v^{-1} : s \in S \}.
\]
This module is known as the \emph{left spherical module}.
(Note that much of the relevant literature, including~\cite{soergel,rw-simple}, treats a similarly defined \emph{right} $\cH_\ext$-module instead; however, the left version is better suited to the purposes of this paper. These two modules can e.g.~be related using the anti-involution of $W_\ext$ given by $w \mapsto w^{-1}$.)  This module remains free over $\Z[v,v^{-1}]$; specifically, if for $w \in W^S_\ext$ we let $M_w$
denote the image of $H_w$ in $\cM$, then
\[
( M_w : w \in W^S_\ext)
\]
is a basis for $\cM$.  This module also admits a canonical basis
\[
( \uM_w : w \in W^S_\ext)
\]
characterized similarly to the canonical basis of $\cH_\ext$. In fact, the map $h \mapsto h \uH_{w_\circ}$ factors through a morphism of left $\cH_\ext$-modules
\[
\zeta : \cM \to \cH_\ext,
\]
and this module satisfies
\[
\zeta(\uM_w)=\uH_{w w_\circ}
\]
for any $w \in W^S_\ext$, see~\cite{soergel}. Equivalently, if we define the polynomials $m_{y,w}$ by setting
\[
\uM_w = \sum_{y \in W^S_\ext} m_{y,w} M_y,
\]
then for $y,w \in W^S_\ext$ we have
\begin{equation}
\label{eqn:kl-sph}
m_{y,w} = h_{y', ww_\circ} \quad \text{for any $y' \in yW$.}
\end{equation}


We also introduce notation for the ``inverse matrix'' of $(m_{y,w})_{y,w \in W_\ext^S}$, again following~\cite{soergel}. Namely, we define the polynomials $(m^{y,w} : y,w \in W_\ext^S)$ by the condition that
\begin{equation}\label{eqn:kl-inv}
M_x = \sum_{y \in W^S_\ext} (-1)^{\ell(y)+\ell(x)} m^{x,y}\uM_y \quad \text{for any $x \in W_\ext^S$.}
\end{equation}

The only property of Kazhdan--Lusztig polynomials that will be used below and for which we do not have a geometric proof is the following.

\begin{lem}
\label{lem:m-triangle}
For any $w \in W_\ext^S$ we have 
$m^{w^\triangle,w}=v^{\ell(w_\circ)}$.
\end{lem}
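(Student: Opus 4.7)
The plan is to transport the statement into the Hecke algebra $\cH_\ext$ via the injective left $\cH_\ext$-module morphism $\zeta : \cM \hookrightarrow \cH_\ext$, $h \mapsto h\uH_{w_\circ}$, which sends $\uM_y \mapsto \uH_{y w_\circ}$ and $M_x \mapsto H_x\uH_{w_\circ}$. Applying $\zeta$ to~\eqref{eqn:kl-inv} with $x = w^\triangle$ identifies $m^{w^\triangle, w}$, up to the sign $(-1)^{\ell(w)+\ell(w^\triangle)}$, with the coefficient of $\uH_{w w_\circ}$ in $H_{w^\triangle}\uH_{w_\circ}$ relative to the basis $\{\uH_{y w_\circ}\}_{y \in W_\ext^S}$ of the image of $\zeta$.

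First, I would use~\eqref{eqn:triangle-translation} together with a translation-invariance property of the polynomials $m^{\cdot,\cdot}$ to reduce to the case $w \in W_\ext^\res$. Concretely, for $\lambda \in -\bY_+$ with $w t_\lambda, y t_\lambda \in W_\ext^S$, Lemma~\ref{lem:lengths-add-Wres} gives $H_{y t_\lambda} = H_y H_{t_\lambda}$, and the factorization $\uM_{y t_\lambda} = \uH_y \cdot \uM_{t_\lambda}$ (which follows by bar-invariance and comparison of leading terms) then implies $m^{x t_\lambda, y t_\lambda} = m^{x, y}$. Combined with~\eqref{eqn:WS-Wres}, this reduces the lemma to the case $w \in W_\ext^\res$; there one has $w^\triangle = w \cdot z_\varsigma$ with $z_\varsigma := t_\varsigma w_\circ t_{-\varsigma} = t_{\varsigma - w_\circ(\varsigma)} w_\circ$, and an argument parallel to Lemma~\ref{lem:res-complement} shows $\ell(w^\triangle) = \ell(w) + \ell(z_\varsigma)$. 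Hence $H_{w^\triangle} = H_w H_{z_\varsigma}$ in $\cH_\ext$ and consequently $M_{w^\triangle} = H_w \cdot M_{z_\varsigma}$ in $\cM$.

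Expanding $M_{z_\varsigma} = \sum_y (-1)^{\ell(y) + \ell(z_\varsigma)} m^{z_\varsigma, y} \uM_y$ and applying $H_w$ then shows that the $\uM_w$-coefficient of $M_{w^\triangle}$ equals
\[
\sum_{y \in W_\ext^S} (-1)^{\ell(y) + \ell(z_\varsigma)} \, m^{z_\varsigma, y} \, c^w_y, \qquad \text{where } c^w_y := [\uM_w\text{-coefficient of } H_w \uM_y].
\]
For $y = e$ one has $c^w_e = 1$, since $H_w \uM_e = H_w M_e = M_w \in \uM_w + \text{(lower)}$. The main obstacle is then to prove that $c^w_y = 0$ for all $y \in W_\ext^S$ with $e < y \leq z_\varsigma$ in the Bruhat order—a combinatorial vanishing that I expect to follow from the fact that left multiplication by $H_w$ (for $w \in W_\ext^\res$) preserves a suitable filtration on $\cM$ by restricted generators, so that no $\uM_w$-component can be produced from $\uM_y$ with $y \neq e$. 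Granting this, only the $y = e$ term survives, and the identity $m^{w^\triangle, w} = v^{\ell(w_\circ)}$ reduces to the base case $m^{z_\varsigma, e} = v^{\ell(w_\circ)}$, which I would verify by expanding $H_{z_\varsigma}\uH_{w_\circ} = \sum_{u \in W} v^{\ell(w_\circ) - \ell(u)} H_{z_\varsigma u}$ and extracting the $\uH_{w_\circ}$-coefficient via the inverse Kazhdan--Lusztig polynomials $h^{z_\varsigma u, w_\circ}$—a finite combinatorial computation in the Hecke algebra of $W$, reflecting the paper's remark that no geometric proof of this identity is available.
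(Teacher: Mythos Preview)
Your proposal is not a proof but a sketch with several genuine gaps, and it diverges entirely from the paper's approach.

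The paper does not attempt a self-contained argument here at all: its proof consists of a single citation to \cite[Theorem~5.1]{soergel} (specialised to $B=\widehat{A}$), after translating Soergel's conventions for the periodic module into the present ones. Indeed, the paper flags this lemma explicitly as ``the only property of Kazhdan--Lusztig polynomials that will be used below and for which we do not have a geometric proof.'' So the intended content is: this is a known, nontrivial combinatorial theorem of Soergel, and the authors import it.

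Your attempt to produce an independent argument runs into the following problems.
\begin{itemize}
\item The reduction to $w\in W_\ext^\res$ rests on the claimed factorisation $\uM_{yt_\lambda}=\uH_y\cdot\uM_{t_\lambda}$. This is false in general: products of Kazhdan--Lusztig basis elements are not single basis elements, even when lengths add; bar-invariance and the correct leading term do not force the lower coefficients to lie in $v\Z[v]$. The translation invariance $m^{xt_\lambda,yt_\lambda}=m^{x,y}$ that you need is precisely the ``periodicity'' of these polynomials, which is itself one of the substantial results in Soergel's paper and not a formality.
\item Your ``main obstacle,'' the vanishing $c^w_y=0$ for $e\prec y\preceq z_\varsigma$, is only conjectured (``I expect to follow from\ldots''). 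The suggested mechanism --- a filtration on $\cM$ preserved by left multiplication by $H_w$ for restricted $w$ --- is not made precise, and there is no reason offered why such a filtration would kill exactly these contributions.
\item The base case $m^{z_\varsigma,e}=v^{\ell(w_\circ)}$ is asserted to be ``a finite combinatorial computation'' but is not carried out; expanding $H_{z_\varsigma}\uH_{w_\circ}$ in the $\uH$-basis requires knowing the inverse Kazhdan--Lusztig polynomials $h^{z_\varsigma u,\,\cdot}$, which is not a routine calculation.
\end{itemize}
In short, each of your three steps either invokes a result of comparable depth to the lemma itself or is left as an unproved expectation. The honest route is the paper's: cite Soergel.
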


\begin{proof}
This equality can be obtained by translating~\cite[Theorem~5.1]{soergel} (in the special case $B=\widehat{A}$) in our present conventions.
\end{proof}

\section{Perverse sheaves on affine Grassmannians}
\label{sec:perv}

\subsection{The affine Grassmannian and the affine flag variety}

We now denote by $z$ an indeterminate, and consider the functor $\cL G$, resp.~$\cL^+ G$, from $\kappa$-algebras to groups, which sends $R$ to $G(R( \hspace{-1pt} (z) \hspace{-1pt} ))$, resp.~$G(R[ \hspace{-1pt} [z] \hspace{-1pt} ])$. It is well known that $\cL G$ is represented by a group ind-scheme over $\kappa$, and that $\cL^+ G$ is represented by a group scheme over $\kappa$. Moreover, the fppf quotient $(\cL G/\cL^+G)_{\mathrm{fppf}}$ is represented by an ind-projective ind-scheme, which is denoted $\Gr$ and called the \emph{affine Grassmannian} of $G$.

There is an obvious morphism of group schemes $\cL^+G \to G$ induced by the assignment $z \mapsto 0$.  Let $I \subset \cL^+G$ and $I_\unip \subset I$ be the preimages of the Borel subgroup $B \subset G$ and its unipotent radical $U \subset B$, respectively, under this map. These are both subgroup schemes of $\cL^+G$.  The group $I$ is known as an \emph{Iwahori subgroup}, and $I_\unip$ as its pro-unipotent radical. 

We will consider also the affine flag variety $\Fl$ of $G$, defined as the fppf quotient $(\cL G/I)_{\mathrm{fppf}}$. Again $\Fl$ is represented by an ind-projective ind-scheme, and the natural morphism $\pi : \Fl \to \Gr$ is a Zariski locally trivial fibration with fibers isomorphic to $G/B$. 

Let $\mathrm{N}_G(T)$ be the normalizer of the maximal torus $T \subset G$, so that $\mathrm{N}_G(T)/T = W$.  For each $w \in W$, choose a representative $\dot w \in \mathrm{N}_G(T)$.  More generally, if $w \in W_\ext$, say $w = vt_\lambda$ with $v \in W$ and $\lambda \in \bY$, we set
\[
\dot w = \dot v z^\lambda \quad \in \cL G(\bk).
\]

For $w \in W_\ext$ we will denote by $\Fl_w$ the $I$-orbit of the image of $\dot{w}$; then it is well known that $\Fl_w$ is also the $I_\unip$-orbit of $\dot{w}$, and is isomorphic to an affine space of dimension $\ell(w)$. Moreover we have
\[
\Fl_{\mathrm{red}} = \bigsqcup_{w \in W_\ext} \Fl_w,
\quad \text{and} \quad
\bigl( \overline{\Fl_w} \subset \overline{\Fl_y} \quad \Leftrightarrow \quad w \leq y \bigr).
\]
Similarly,
for $w \in W_\ext^S$ we will denote by $\Gr_w$ the $I$-orbit of the image of $\dot{w}$ in $\Gr$. It is well known that $\Gr_w$ is also the $I_\unip$-orbit of the image of $\dot{w}$, that it is isomorphic to an affine space of dimension $\ell(w)$, and that we have
\[
\Gr_{\mathrm{red}} = \bigsqcup_{w \in W_\ext} \Gr_w
\quad \text{and} \quad
\bigl( \overline{\Gr_w} \subset \overline{\Gr_y} \quad \Leftrightarrow \quad w \leq y \bigr).
\]

\subsection{\texorpdfstring{$I_\unip$}{Iu}-equivariant perverse sheaves}
\label{ss:Iu-eq-PS}

Let $\bk$ be a field that is of one of the four kinds mentioned in~\S\ref{ss:intro-main}.  For such a field, 
we can consider the $I_\unip$-equivariant derived categories $\Db_{I_\unip}(\Fl,\bk)$ and $\Db_{I_\unip}(\Gr,\bk)$ of \'etale $\bk$-sheaves on $\Fl$ and $\Gr$; see~\cite[\S 3.2]{projGr1} for details. These categories have natural perverse t-structures, whose hearts will be denoted $\Perv_{I_\unip}(\Gr,\bk)$ and $\Perv_{I_\unip}(\Fl,\bk)$ respectively.

For any $w \in W_\ext$ we have a ``standard perverse sheaf'' $\DFl_w$ in $\Perv_{I_\unip}(\Fl,\bk)$, defined as the $!$-pushforward of the complex $\underline{\bk}_{\Fl_w}[\ell(w)]$ under the embedding $\Fl_w \to \Fl$, and a ``costandard perverse sheaf'' $\NFl_w$ in $\Perv_{I_\unip}(\Fl,\bk)$, defined as the $*$-pushforward of the complex $\underline{\bk}_{\Fl_w}[\ell(w)]$ under the embedding $\Fl_w \to \Fl$. (These complexes are indeed perverse sheaves since this embedding is affine.) The image of the unique (up to scalar) nonzero morphism $\DFl_w \to \NFl_w$ is simple, and will be denoted $\LFl_w$; it is the intersection cohomology complex associated with the constant local system on ${\Fl_w}$. Then the objects $(\LFl_w : w \in W_\ext)$ are representatives for the isomorphism classes of simple objects in the abelian category $\Perv_{I_\unip}(\Fl,\bk)$.

Similarly, for $w \in W_\ext^S$ we have a ``standard perverse sheaf'' $\DGr_w$ in $\Perv_{I_\unip}(\Gr,\bk)$, defined as the $!$-pushforward of the complex $\underline{\bk}_{\Gr_w}[\ell(w)]$ under the embedding $\Gr_w \to \Gr$, and a ``costandard perverse sheaf'' $\NGr_w$ in $\Perv_{I_\unip}(\Gr,\bk)$, defined as the $*$-pushforward of the complex $\underline{\bk}_{\Gr_w}[\ell(w)]$ under the embedding $\Gr_w \to \Gr$. (Once again these complexes are indeed perverse sheaves since the embedding $\Gr_w \to \Gr$ is affine.)
The image of the unique (up to scalar) nonzero morphism $\DGr_w \to \NGr_w$ is simple, and will be denoted $\LGr_w$; it is the intersection cohomology complex associated with the constant local system on ${\Gr_w}$. Then the objects $(\LGr_w : w \in W^S_\ext)$ are representatives for the isomorphism classes of simple objects in the abelian category $\Perv_{I_\unip}(\Gr,\bk)$.

Since the morphism $\pi : \Fl \to \Gr$ is smooth with connected fibers, the functor
\[
\pi^\dag:=\pi^*[\dim(G/B)] \cong \pi^![-\dim(G/B)] : \Db_{I_\unip}(\Gr,\bk) \to \Db_{I_\unip}(\Fl,\bk)
\]
is t-exact for the perverse t-structures, its restriction to perverse sheaves is fully faithful, and it sends simple perverse sheaves to simple perverse sheaves, see~\cite[Proposition~4.2.5]{bbd}; more explicitly, in this case we have
\begin{equation}
\label{eqn:pidag-IC}
\pi^\dag \LGr_w \cong \LFl_{ww_\circ}
\end{equation}
for any $w \in W_\ext^S$. 

The results of~\cite[\S 3.3]{bgs} show that the category $\Perv_{I_\unip}(\Gr,\bk)$ admits a natural structure of a highest weight category (in the sense of~\cite[\S 7]{riche-hab}) with weight poset $(W^S_\ext, \leq)$; the standard objects are the standard perverse sheaves $(\DGr_w : w \in W_\ext^S)$ and the costandard objects are the costandard perverse sheaves $(\NGr_w : w \in W^S_\ext)$. In particular, it makes sense to consider the \emph{tilting} objects in this category, i.e.~the objects which admit both a filtration with standard subquotients and a filtration with costandard subquotients. The indecomposable tilting objects are parametrized by $W_\ext^S$; the indecomposable object associated with $w$ will be denoted $\TGr_w$. Similar comments apply to the category the category $\Perv_{I_\unip}(\Fl,\bk)$ (for the weight poset $(W_\ext, \leq)$). The indecomposable tilting object attached to $w$ will be denoted $\TFl_w$.

We will also occasionally consider the $I$-equivariant derived categories $\Db_I(\Fl,\bk)$ and $\Db_I(\Gr,\bk)$. We have forgetful functors
\[
\For^I_{I_\unip} : \Db_I(\Fl,\bk) \to \Db_{I_\unip}(\Fl,\bk), \quad \For^I_{I_\unip} : \Db_I(\Gr,\bk) \to \Db_{I_\unip}(\Gr,\bk),
\]
and the objects $\DFl_w, \NFl_w$ and $\DGr_w,\NGr_w$ naturally ``lift" to objects of $\Db_I(\Fl,\bk)$ and $\Db_I(\Gr,\bk)$ respectively (and will be denoted by the same symbol in the equivariant context). We also have ``convolution'' bifunctors
\begin{gather*}
\Db_I(\Fl,\bk) \times \Db_I(\Fl,\bk) \to \Db_I(\Fl,\bk), \quad \Db_I(\Fl,\bk) \times \Db_I(\Gr,\bk) \to \Db_I(\Gr,\bk), \\
\Db_{I_\unip}(\Fl,\bk) \times \Db_I(\Fl,\bk) \to \Db_{I_\unip}(\Fl,\bk), \quad \Db_{I_\unip}(\Fl,\bk) \times \Db_I(\Gr,\bk) \to \Db_{I_\unip}(\Gr,\bk),
\end{gather*}
which will all be denoted $\star^I$, and are compatible with one another in the expected ways.

The following lemma gathers some standard properties of convolutions of standard and costandard objects (see e.g.~\cite[\S 8.2]{abg}).

\begin{lem}
\phantomsection
\label{lem:conv-D-N}
\begin{enumerate}
\item
For $w,y \in W_\ext$ such that $\ell(wy)=\ell(w)+\ell(y)$, there exist canonical isomorphisms
\[
\DFl_w \star^I \DFl_y \simto \DFl_{wy}, \qquad \NFl_w \star^I \NFl_y \simto \NFl_{wy}.
\]
\item
For $w \in W_\ext$, there exist canonical isomorphisms
\[
\DFl_w \star^I \NFl_{w^{-1}} \cong \DFl_e \cong \NFl_{w^{-1}} \star^I \DFl_w.
\]
\item
For $w,y \in W_\ext$, the objects $\NFl_w \star^I \DFl_y$ and $\DFl_w \star^I \NFl_y$ are perverse.
\item
\label{it:conv-DNGr}
For $w,y \in W_\ext$ such that $\ell(wy)=\ell(w)+\ell(y)$ and both $wy$ and $y$ belong to $W_\ext^S$, there exist canonical isomorphisms
\[
\DFl_w \star^I \DGr_y \simto \DGr_{wy}, \qquad \NFl_w \star^I \NGr_y \simto \NGr_{wy}.
\]
\item
For $w \in W_\ext$ and $y \in W_\ext^S$, the objects $\NFl_w \star^I \DGr_y$ and $\DFl_w \star^I \NGr_y$ are perverse.
\end{enumerate}
\end{lem}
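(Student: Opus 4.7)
My plan is to prove the five parts in the order (1), (4), (2), (3), (5), using induction on length combined with the $\mathbb{P}^1$-bundle geometry of minimal parahorics.

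For parts (1) and (4), I would induct on $\ell(y)$, using associativity of convolution to reduce to the case where $y$ is either an element of $\Omega$ or a single simple reflection. The case $y=\omega\in\Omega$ is immediate, since right multiplication by $\omega$ gives an automorphism of $\Fl$ taking $\Fl_w$ isomorphically onto $\Fl_{w\omega}$, and the convolution with $\DFl_\omega$ realizes this pushforward. For $y=s\in S_\aff$ with $\ell(ws)=\ell(w)+1$, let $P_s\supset I$ denote the corresponding minimal parahoric, so that $P_s/I\cong\mathbb{P}^1$. Then on the twisted product, the multiplication morphism $\overline{\Fl_w}\widetilde\times(P_s/I)\to\Fl$ has image $\overline{\Fl_{ws}}$, and its restriction to $\Fl_w\widetilde\times\Fl_s$ is an isomorphism onto $\Fl_{ws}$; this produces the required isomorphism for $\DFl$ from the definition of convolution as $m_!$ of a twisted external product, and the analogous computation with the closed embedding (or $m_*$) gives the costandard version. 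For (4), the same geometric input works, using in addition that the projection $\pi:\Fl\to\Gr$ restricts to an isomorphism $\Fl_v\simto\Gr_v$ for $v\in W_\ext^S$, together with the compatibility of convolution with $\pi$.

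For part (2), I would induct on $\ell(w)$. The base case $w=s\in S_\aff$ is a direct local computation on $P_s/I\cong\mathbb{P}^1$: writing $\DFl_s=j_!\underline{\bk}_{\Fl_s}[1]$ and $\NFl_s=j_*\underline{\bk}_{\Fl_s}[1]$ for the open immersion $j:\Fl_s\hookrightarrow P_s/I$, the convolution $\DFl_s\star^I\NFl_s$ is computed via the multiplication $P_s/I\widetilde\times P_s/I\to P_s/I$, and the projection formula with base change gives $\DFl_e$. For the case $w=\omega\in\Omega$, the statement is obvious. For the inductive step, factor $w=w_1 s$ with $\ell(w_1 s)=\ell(w_1)+1$; then by (1), $\DFl_w\cong\DFl_{w_1}\star^I\DFl_s$ and $\NFl_{w^{-1}}\cong\NFl_s\star^I\NFl_{w_1^{-1}}$, and associativity together with the base case and the induction hypothesis finishes the argument. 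The other identity is proved identically.

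For parts (3) and (5), the key observation is that part (2) says that convolution with $\DFl_w$ on either side is an equivalence of the equivariant derived category with inverse given by convolution with $\NFl_{w^{-1}}$. To show $\NFl_w\star^I\DFl_y$ is perverse, I would argue by induction on $\ell(w)$, reducing to the case $w=s\in S_\aff$ (the case $w\in\Omega$ being immediate). For $w=s$, one analyzes two cases depending on whether $\ell(sy)$ equals $\ell(y)+1$ or $\ell(y)-1$: in the former, $\NFl_s\star^I\DFl_y$ is computed via the $\mathbb{P}^1$-geometry and is shown to fit in an exact triangle whose outer terms are perverse, forcing perversity of the middle term; in the latter, one writes $\DFl_y\cong\DFl_s\star^I\DFl_{sy}$ by (1) and uses $\NFl_s\star^I\DFl_s\cong\DFl_e$ from (2) to reduce to $\DFl_{sy}$, which is perverse. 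The argument for $\DFl_w\star^I\NFl_y$ is symmetric, and (5) follows by the same strategy, using (4) in place of (1).

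The main obstacle is the clean treatment of (3) and (5): carrying out the case analysis above requires keeping track of perverse cohomological bounds for convolution with $\DFl_s$ and $\NFl_s$ (each being concentrated in perverse degrees within $\{-1,0\}$ or $\{0,1\}$ when applied to a perverse sheaf). Once these bounds are established via the $\mathbb{P}^1$-geometry, the cancellation relation of (2) ensures the excess degree vanishes in the combinations $\NFl_w\star^I\DFl_y$ and $\DFl_w\star^I\NFl_y$, but organizing the induction so that this cancellation is transparent at every step is the delicate part.
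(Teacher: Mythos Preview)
The paper does not give a proof of this lemma at all: it is stated as a collection of ``standard properties of convolutions of standard and costandard objects'' with a reference to \cite[\S 8.2]{abg}, and no argument is supplied. Your proposal, by contrast, sketches an actual proof, and the overall strategy---induction on length reducing to simple reflections and to elements of $\Omega$, using the $\mathbb{P}^1$-geometry of minimal parahorics for the base cases---is the standard one and is essentially correct.

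One comment on your treatment of (3) and (5): the case-by-case induction you describe does work, but there is a more direct route that avoids the somewhat delicate cancellation bookkeeping you flag as the main obstacle. Namely, one shows once and for all that the functor $\DFl_w \star^I (-)$ is right t-exact and $\NFl_w \star^I (-)$ is left t-exact (and similarly for right convolution). These t-exactness bounds follow from the fact that each $\Fl_w$ is an affine space, so that the twisted external product $\DFl_w \,\widetilde\boxtimes\, \cF$ (resp.\ $\NFl_w \,\widetilde\boxtimes\, \cF$) is perverse for perverse $\cF$, and then one uses that $m$ is ind-proper together with the affineness of the relevant strata. Given these bounds, $\DFl_w \star^I \NFl_y$ lies in ${}^p D^{\leq 0}$ by right t-exactness of $\DFl_w \star^I(-)$ and in ${}^p D^{\geq 0}$ by left t-exactness of $(-)\star^I \NFl_y$, hence is perverse; no induction or case split is needed. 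This is the argument one finds in \cite{abg}, and it makes the ``excess degree'' issue disappear entirely rather than requiring you to track it through an induction.
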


\subsection{Relation with the Satake category}
\label{ss:Satake-category}

Below we will also consider the $\cL^+G$-equivariant derived category $\Db_{\cL^+ G}(\Gr,\bk)$. Once again this category has a natural perverse t-structure, whose heart will be denoted $\Perv_{\cL^+ G}(\Gr,\bk)$. For $\lambda \in \bY_+$ we will denote by $L_\lambda$ the image of $z^\lambda$ in $\Gr$, and by $\Gr^\lambda$ its $\cL^+G$-orbit; then $\Gr^\lambda$ is the union of the $I_\unip$-orbits labeled by the minimal representatives of the elements $(t_\mu : \mu \in W(\lambda))$,
and
\[
\Gr_{\mathrm{red}} = \bigsqcup_{\lambda \in \bY_+} \Gr^\lambda.
\]
We will consider these orbits in particular when $\lambda \in \bY_{++}$. It is a classical fact that, in this case, there exists a smooth $\cL^+G$-equivariant morphism
\[
p_\lambda : \Gr^\lambda \to G/B
\]
sending $L_\lambda$ to the base point $B/B$, where $\cL^+G$ acts on $G/B$ through the natural morphism $\cL^+G \to G$. Here $G/B$ has the Bruhat stratification by orbits of $B$, parametrized by $W$; its pullback to $\Gr^\lambda$ identifies with the decomposition into the $I$-orbits given by
\[
\Gr^\lambda = \bigsqcup_{w \in W} \Gr_{w t_\lambda w_\circ}.
\]
In particular, 
the unique open $I$-orbit in $\Gr^\lambda$ is $\Gr_{t_{w_\circ(\lambda)}}$.

The simple objects in the category $\Perv_{\cL^+ G}(\Gr,\bk)$ are in natural bijection with $\bY_+$, via the operation sending $\lambda$ to the intersection cohomology complex $\IC^\lambda$ associated with the constant local system on $\Gr^\lambda$. The forgetful functor
\[
\For^{\cL^+G}_{I_\unip} : \Db_{\cL^+ G}(\Gr,\bk) \to \Db_{I_\unip}(\Gr,\bk)
\]
is t-exact, and restricts to a fully faithful functor on perverse sheaves; moreover we have
\[
\For^{\cL^+G}_{I_\unip}(\IC^\lambda)=\LGr_{t_{w_\circ(\lambda)}}
\]
for any $\lambda \in \bY_+$.

To each $\lambda \in \bY_+$ one can also associate the ``standard'' and ``costandard'' objects defined respectively by
\[
\cI_!^\mu = {}^{\mathrm{p}}\hspace{-1pt} \tau^{\geq 0}(j^\mu_! \underline{\bk}_{\Gr^\mu}[\langle 2\rho,\mu \rangle]), \quad \cI_*^\mu = {}^{\mathrm{p}}\hspace{-1pt} \tau^{\leq 0}(j^\mu_* \underline{\bk}_{\Gr^\mu}[\langle 2\rho,\mu \rangle]), 
\]
where $j^\mu: \Gr^\mu \hookrightarrow \Gr$ is the inclusion and ${}^{\mathrm{p}} \hspace{-1pt} \tau^{\geq 0}, {}^{\mathrm{p}} \hspace{-1pt} \tau^{\leq 0}$ are the perverse truncation functors. (Note that $j^\mu$ is \emph{not} an affine morphism in general, so unlike in the $I_\unip$-equivariant case, if we omit the perverse truncation functors, the resulting objects are not in general perverse.) With this notation there exists (up to scalar) a unique nonzero morphism $\cI_!^\mu \to \cI_*^\mu$, and its image is $\IC^\mu$. Once again the category $\Perv_{\cL^+ G}(\Gr,\bk)$ has a natural highest weight structure with standard objects the perverse sheaves $(\cI_!^\mu : \mu \in \bY_+)$ and costandard objects the perverse sheaves $(\cI_*^\mu : \mu \in \bY_+)$, see~\cite[Proposition~1.12.4]{bar}. In particular one can consider the tilting objects in this category, and the indecomposable such objects are parametrized by $\bY_+$. For any $\lambda \in \bY_+$ we will denote by $\cT^\lambda$ the corresponding indecomposable tilting object.

As in the $I$-equivariant setting (see~\S\ref{ss:Iu-eq-PS}), we also have a canonical convolution product
\begin{equation}
\label{eqn:convol-L+G}
\star^{\cL^+G} : \Db_{\cL^+ G}(\Gr,\bk) \times \Db_{\cL^+ G}(\Gr,\bk) \to \Db_{\cL^+ G}(\Gr,\bk)
\end{equation}
which equips $\Db_{\cL^+ G}(\Gr,\bk)$ with the structure of a monoidal category. In this case it is known that this product is t-exact (in the sense that a product of perverse sheaves is perverse), and hence induces a monoidal structure on the abelian category $\Perv_{\cL^+G}(\Gr,\bk)$; see~\cite[\S 1.6.3]{bar} for details. The \emph{geometric Satake equivalence} describes the monoidal category $(\Perv_{\cL^+G}(\Gr,\bk),\star^{\cL^+G})$ in representation-theoretic terms: more explicitly, in~\cite{mv} the authors construct a canonical affine $\bk$-group scheme $G^\vee_\bk$ equipped with a split maximal torus $T^\vee_\bk$ whose group of characters is $\bY$ and a canonical equivalence of monoidal categories
\[
\Satake: (\Perv_{\cL^+G}(\Gr,\bk), \star^{\cL^+G}) \simto ( \Rep(G^\vee_\bk), \otimes).
\]
They also show that $G^\vee_\bk$ is a split connected reductive group, and that the root datum of $(G^\vee_\bk, T^\vee_\bk)$ is dual to that of $(G,T)$. Under this equivalence, $\cI_!^\mu$ corresponds to the Weyl module of highest weight $\mu$, and $\cI^*_\mu$ to the induced module of highest weight $\mu$.

Below we will use the fact that the monoidal category
\[
(\Perv_{\cL^+ G}(\Gr,\bk), \star^{\cL^+G})
\]
is rigid: every object $\cF$ has a left and right dual $\cF^\vee$. (This fact can either be checked directly or deduced from the geometric Satake equivalence.) We will not need an explicit description of this operation, but only that for $\mu \in \bY_+$ we have
\begin{equation}
\label{eqn:duals}
(\cI_!^\mu)^\vee \cong \cI_*^{-w_\circ(\mu)}, \quad (\cI_*^\mu)^\vee \cong \cI_!^{-w_\circ(\mu)}, \quad (\IC^\mu)^\vee \cong \IC_*^{-w_\circ(\mu)}.
\end{equation}

Using the geometry of spherical orbits we prove the following property of the periodic order, which will be required later.

\begin{lem}
\label{lem:per-order-weights}
Let $\mu,\nu \in \bY$ be such that $\mu-\nu$ is a sum of positive roots, and let $y \in W_\ext^\res$. Then $y t_{w_\circ(\nu)} \preceq y t_{w_\circ(\mu)}$.
\end{lem}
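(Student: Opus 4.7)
The plan is to reduce the desired periodic-order inequality to a Bruhat inequality on $W_\ext^S$, and then to the standard inclusion between closures of spherical orbits in $\Gr$. First, by Lemma~\ref{lem:per-order-properties}\eqref{it:per-order-2} applied to the translation $t_{w_\circ(\lambda)}$, we have $y t_{w_\circ(\nu)} \preceq y t_{w_\circ(\mu)}$ if and only if $y t_{w_\circ(\nu+\lambda)} \preceq y t_{w_\circ(\mu+\lambda)}$ for any $\lambda \in \bY$. Choosing $\lambda$ sufficiently dominant so that both $\mu+\lambda$ and $\nu+\lambda$ lie in $\bY_+$, we may assume without loss of generality that $\mu, \nu \in \bY_+$, so that $w_\circ(\mu), w_\circ(\nu) \in -\bY_+$. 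Combined with $y \in W_\ext^\res$, equation~\eqref{eqn:WS-Wres} then guarantees that $y t_{w_\circ(\mu)}$ and $y t_{w_\circ(\nu)}$ both lie in $W_\ext^S$, so by Lemma~\ref{lem:per-order-properties}\eqref{it:per-order-3} it suffices to show the Bruhat inequality $y t_{w_\circ(\nu)} \leq y t_{w_\circ(\mu)}$ in $W_\ext^S$.

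Next, Lemma~\ref{lem:lengths-add-Wres} applied to $y \in W_\ext^\res \subset W_\ext^S$ gives $\ell(y t_{w_\circ(\mu)}) = \ell(y) + \ell(t_{w_\circ(\mu)})$ and the analogous equality with $\nu$ in place of $\mu$. The standard compatibility of the Bruhat order with multiplication when lengths add (\cite[Lemma~2.1]{projGr1}, as used in the proof of Lemma~\ref{lem:periodic-order-cond}) therefore reduces the claim further to the case $y = e$, namely to the Bruhat inequality $t_{w_\circ(\nu)} \leq t_{w_\circ(\mu)}$.

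Finally, this last inequality amounts to $\overline{\Gr_{t_{w_\circ(\nu)}}} \subset \overline{\Gr_{t_{w_\circ(\mu)}}}$. As recalled in~\S\ref{ss:Satake-category}, for any $\lambda \in \bY_+$ the orbit $\Gr_{t_{w_\circ(\lambda)}}$ is the unique open $I$-orbit in $\Gr^\lambda$, so $\overline{\Gr_{t_{w_\circ(\lambda)}}} = \overline{\Gr^\lambda}$. The desired inclusion thus becomes $\overline{\Gr^\nu} \subset \overline{\Gr^\mu}$, which is a standard closure property of the spherical stratification: for $\mu, \nu \in \bY_+$ this holds precisely when $\mu - \nu$ is a non-negative integer combination of positive coroots, i.e., the hypothesis of the lemma. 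I do not expect a serious obstacle; the only step requiring care is the initial translation, which must be chosen so that both translates end up in $W_\ext^S$ and the definition of $\preceq$ reduces cleanly to an inequality in the ordinary Bruhat order.
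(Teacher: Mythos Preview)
Your proof is correct and follows essentially the same route as the paper's: translate so that the two elements land in $W_\ext^S$, use the Bruhat--length compatibility of \cite[Lemmas~2.1 and~2.7]{projGr1} to strip off $y$, and then invoke the standard closure relation $\overline{\Gr^\nu}\subset\overline{\Gr^\mu}$ for dominant coweights with $\mu-\nu$ a sum of positive coroots. The only cosmetic difference is that the paper shifts into $\bY_{++}$ rather than $\bY_+$, which matches exactly what \S\ref{ss:Satake-category} states about the open $I$-orbit in $\Gr^\lambda$; your version is still correct since $\overline{\Gr_{t_{w_\circ(\lambda)}}}=\overline{\Gr^\lambda}$ holds for all $\lambda\in\bY_+$, but citing \S\ref{ss:Satake-category} for this requires $\lambda$ strictly dominant.
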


\begin{proof}
Choose $\eta \in \bY$ such that $\eta+\nu$ and $\eta + \mu$ are strictly dominant. Then it is well known that our assumption implies that $\overline{\Gr^{\eta+\nu}} \subset \overline{\Gr^{\eta+\mu}}$, i.e.~that $\overline{\Gr_{t_{w_\circ(\eta+\nu)}}} \subset \overline{\Gr_{t_{w_\circ(\eta+\mu)}}}$, and hence that $t_{w_\circ(\eta+\nu)} \leq t_{w_\circ(\eta+\mu)}$ in the Bruhat order. By~\cite[Lemmas~2.1 and~2.7]{projGr1}, this implies that $y t_{w_\circ(\eta+\nu)} \leq y t_{w_\circ(\eta+\mu)}$. Since these elements belong to $W_\ext^S$, by Lemma~\ref{lem:per-order-properties}\eqref{it:per-order-3} this implies that $y t_{w_\circ(\eta+\nu)} \preceq y t_{w_\circ(\eta+\mu)}$. Using Lemma~\ref{lem:per-order-properties}\eqref{it:per-order-2} we deduce that $y t_{w_\circ(\nu)} \preceq y t_{w_\circ(\mu)}$, as desired.
\end{proof}

\subsection{Whittaker categories}
\label{ss:Whittaker}

From now on we
assume that $\kappa$ has characteristic $p > 0$.  Recall that by our assumptions from~\S\ref{ss:intro-main}, the field $\bk$ contains
a nontrivial $p$-th root of unity (which we fix once and for all).  Let $A \subset S_\aff$ be a finitary subset, and let $I_\unip^A:=\dot{w}_A I_\unip \dot{w}_A^{-1}$. In~\cite[\S 3.4]{projGr1} we have explained the construction of a ``generic" character $\psi_A : I_\unip^A \to \Ga$; as in~\cite[\S 3.5]{projGr1}
one can then consider the categories
\[
\Db_{(I_\unip^A,\cX_A)}(\Fl,\bk) \quad \text{and} \quad \Db_{(I_\unip^A,\cX_A)}(\Gr,\bk)
\]
of $(I_\unip^A,\cX_A)$-equivariant $\bk$-sheaves on $\Fl$ and $\Gr$,
where $\cX_A$ is the pullback along $\psi_A$ of an Artin--Schreier local system on $\Ga$. In the case where $A=\varnothing$, these categories are just the ordinary $I_\unip$-equivariant derived categories considered in~\S\ref{ss:Iu-eq-PS}; in this case we will omit ``$A$'' from all notations introduced below.

These categories have natural perverse t-structures, whose hearts are denoted $\Perv_{(I_\unip^A,\cX_A)}(\Fl,\bk)$ and $\Perv_{(I_\unip^A,\cX_A)}(\Gr,\bk)$ respectively. The isomorphism classes of simple objects in $\Perv_{(I_\unip^A,\cX_A)}(\Fl,\bk)$ are in canonical bijection with the subset of $W_\ext$ consisting of elements $w$ minimal in $W_A w$; for such $w$ the corresponding simple object is denoted by $\LFl^A_w$. For any $w \in W_\ext$ minimal in $W_A w$ we have a ``standard" perverse sheaf $\DFl^A_w$ and a ``costandard" perverse sheaf $\NFl^A_w$ in $\Perv_{(I_\unip^A,\cX_A)}(\Fl,\bk)$, obtained by $!$- and $*$-pushforward respectively of a local system on the $I^A_\unip$-orbit of the image of $\dot{w}$ in $\Fl$. There exists a unique (up to scalar) nonzero morphism $\DFl^A_w \to \NFl^A_w$, whose image is $\LFl^A_w$, and these objects equip $\Perv_{(I_\unip^A,\cX_A)}(\Fl,\bk)$ with the structure of a highest weight category with weight poset $\{w \in W_\ext \mid \text{$w$ minimal in $W_A w$}\}$, endowed with the restriction of the Bruhat order.

Similarly, the isomorphism classes of simple objects in $\Perv_{(I_\unip^A,\cX_A)}(\Gr,\bk)$ are in canonical bijection with ${}^A W^S_\ext$; for $w \in {}^A W^S_\ext$ the corresponding simple object is denoted by $\LGr^A_w$. For any $w \in {}^A W_\ext^S$ we have a ``standard" perverse sheaf $\DGr^A_w$ and a ``costandard" perverse sheaf $\NGr^A_w$ in $\Perv_{(I_\unip^A,\cX_A)}(\Gr,\bk)$, obtained by $!$- and $*$-pushforward respectively of a local system on the $I^A_\unip$-orbit of the image of $\dot{w}$ in $\Gr$. There exists a unique (up to scalar) nonzero morphism $\DGr^A_w \to \NGr^A_w$, whose image is $\LGr^A_w$, and these objects equip $\Perv_{(I_\unip^A,\cX_A)}(\Gr,\bk)$ with the structure of a highest weight category with weight poset ${}^A W_\ext^S$, endowed with the restriction of the Bruhat order. The indecomposable tilting objects in this category are then also parametrized by ${}^A W_\ext^S$; the object associated with $w$ will be denoted $\TGr^A_w$.
Below we will also use the fact that
for $w \in {}^A W_\ext^S$ we have
\begin{equation}
\label{eqn:pi*-D-N}
\pi_* \DFl^A_w \cong \DGr^A_w, \qquad \pi_* \NFl^A_w \cong \NGr^A_w,
\end{equation}
see e.g.~\cite[Lemma~A.1]{acr} for similar considerations.

The same construction as in~\eqref{eqn:convol-L+G} yields a canonical bifunctor
\[
\star^{\cL^+G} : \Db_{(I_\unip^A,\cX_A)}(\Gr,\bk) \times \Db_{\cL^+ G}(\Gr,\bk) \to \Db_{(I_\unip^A,\cX_A)}(\Gr,\bk)
\]
which defines a right action of the monoidal category $(\Db_{\cL^+ G}(\Gr,\bk),\star^{\cL^+G})$ on $\Db_{(I_\unip^A,\cX_A)}(\Gr,\bk)$. This bifunctor is t-exact in the sense that if $\cF \in \Perv_{(I_\unip^A,\cX_A)}(\Gr,\bk)$ and $\cG \in \Perv_{\cL^+ G}(\Gr,\bk)$ then $\cF \star^{\cL^+G} \cG$ is perverse; see~\cite[\S 4.1]{projGr1} for references.

\begin{rmk}
\label{rmk:Verdier-duality}
In order to use Verdier duality arguments, we will also have to consider the local system $\cX_A^{-1}$ on $I_\unip^A$ (i.e.~the local system defined by the inverse of the $p$-th root of unity fixed above); namely, Verdier duality induces anti-equivalences of categories 
\[
\Db_{(I_\unip^A,\cX_A)}(\Fl,\bk) \simto \Db_{(I_\unip^A,\cX^{-1}_A)}(\Fl,\bk) \quad \text{and} \quad \Db_{(I_\unip^A,\cX_A)}(\Gr,\bk) \simto \Db_{(I_\unip^A,\cX^{-1}_A)}(\Gr,\bk),
\]
which will both be denoted $\D$. None of our considerations below will depend on the choice of root of unity; in particular, they are equally valid in both the $(I_\unip^A, \cX_A)$- and $(I_\unip^A, \cX_A^{-1}$)-equivariant settings. For this reason, we may write $\DGr^A_w$ to indicate either the $(I_\unip^A, \cX_A)$-equivariant standard perverse sheaf, or the $(I_\unip^A, \cX_A^{-1}$)-equivariant one: this minor abuse of notation should not lead to any confusion.

With the comments above in mind, the behavior of $\D$ on the various objects introduced above can be summarized as follows:
for any $w \in {}^A W^S_\ext$ we have
\[
\D(\DGr^A_w) \cong \NGr^A_w, \quad \D(\NGr^A_w) \cong \DGr^A_w, \quad \D(\LGr^A_w) \cong \LGr^A_w, \quad \D(\TGr^A_w) \cong \TGr^A_w,
\]
and similarly for the corresponding objects on $\Fl$.
\end{rmk}

\subsection{Combinatorics of perverse sheaves}
\label{ss:kl}

As explained in~\S\ref{ss:Whittaker} the category $\Perv_{(I_\unip^A,\cX_A)}(\Gr,\bk)$ has a natural structure of a highest weight category. There are two kinds of numerical quantities one can compute in this setting. First, given a perverse sheaf $\cF$ in $\Perv_{(I_\unip^A,\cX_A)}(\Gr,\bk)$, one can consider the multiplicity of a simple object $\LGr^A_w$ as a composition factor of $\cF$; this number is denoted
\[
[\cF : \LGr^A_w].
\]
Next, if we assume that $\cF$ admits a standard filtration (i.e.~a filtration whose subquotients are standard perverse sheaves), one can consider the number of occurrences of a given standard object $\DGr^A_w$, which is denoted
\[
(\cF : \DGr^A_w).
\]
It is a standard fact that this number is well defined (i.e.~does not depend on the choice of filtration) and additive with respect to direct sums; in fact we have
\[
(\cF : \DGr^A_w)=\dim_\bk \Hom(\cF,\NGr^A_w).
\]
Similar comments apply to the multiplicity of a given costandard object in a costandard filtration of an object $\cF$ (assumed to admit such a filtration), which will be denoted
\[
(\cF : \NGr^A_w).
\]

Let us now consider a triple $(\mathbb{K},\mathbb{O},\bk)$ where $\mathbb{K}$ is a finite extension of $\mathbb{Q}_\ell$ containing a nontrivial $p$-th root of unity, $\mathbb{O}$ is its ring of integers, and $\bk$ is the residue field of $\mathbb{O}$. In this setting we can consider the categories $\Perv_{(I_\unip^A,\cX_A)}(\Gr,\bk)$ and $\Perv_{(I_\unip^A,\cX_A)}(\Gr,\mathbb{K})$. In both of these categories the indecomposable tilting perverse sheaves are parametrized by $W_\ext^S$; to distinguish the two cases the objects associated with $w$ will be denoted $\TGr_w^{A,\bk}$ and $\TGr_w^{A,\mathbb{K}}$ respectively. We will use similar conventions for standard objects.

\begin{lem}
\label{lem:mult-tilt-modred}
For any $w,y \in {}^A W_\ext^S$ we have
\[
(\TGr_w^{A,\bk} : \DGr_y^{A,\bk}) \geq (\TGr_w^{A,\mathbb{K}} : \DGr_y^{A,\mathbb{K}}).
\]
\end{lem}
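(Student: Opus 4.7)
The plan is to interpolate via an integral form $\mathbb{O}$-tilting perverse sheaf whose $\bk$-reduction is indecomposable and whose $\mathbb{K}$-reduction a priori is not. Concretely, I will use the category $\Perv_{(I_\unip^A,\cX_A)}(\Gr,\mathbb{O})$ of $\mathbb{O}$-perverse sheaves on $\Gr$ (defined by fixing an integral structure lifting the two scalar categories) and the fact that it is a highest weight category in the appropriate $\mathbb{O}$-linear sense, with standard objects $\DGr_w^{A,\mathbb{O}}$ and costandard objects $\NGr_w^{A,\mathbb{O}}$ satisfying the usual base-change properties $\bk \lotimes_\mathbb{O} \DGr_w^{A,\mathbb{O}} \cong \DGr_w^{A,\bk}$ and $\mathbb{K} \lotimes_\mathbb{O} \DGr_w^{A,\mathbb{O}} \cong \DGr_w^{A,\mathbb{K}}$ (and similarly for $\nabla$). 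In this setting there exists for each $w \in {}^A W_\ext^S$ an indecomposable tilting object $\TGr_w^{A,\mathbb{O}}$ whose standard filtration has $\DGr_w^{A,\mathbb{O}}$ as its unique copy of the top weight.

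The key step is the following base-change behavior. Using that $\mathbb{O}$ is a complete local ring and that tilting objects have $\mathbb{O}$-finitely-generated (hence semi-perfect) endomorphism algebras, one shows that $\mathrm{End}(\TGr_w^{A,\mathbb{O}})$ is a local $\mathbb{O}$-algebra. Reducing modulo the maximal ideal yields a local $\bk$-algebra $\mathrm{End}(\bk \lotimes_\mathbb{O} \TGr_w^{A,\mathbb{O}})$, so that $\bk \lotimes_\mathbb{O} \TGr_w^{A,\mathbb{O}}$ is indecomposable, and since it is a $\bk$-tilting with $\DGr_w^{A,\bk}$ at the top of its standard filtration, one concludes
\[
\bk \lotimes_\mathbb{O} \TGr_w^{A,\mathbb{O}} \cong \TGr_w^{A,\bk}.
\]
By contrast, $\mathbb{K} \lotimes_\mathbb{O} \TGr_w^{A,\mathbb{O}}$ is only guaranteed to be a tilting object; since its standard filtration still has $\DGr_w^{A,\mathbb{K}}$ as its unique top-weight piece, the Krull--Schmidt decomposition into indecomposable tilting objects takes the shape
\[
\mathbb{K} \lotimes_\mathbb{O} \TGr_w^{A,\mathbb{O}} \cong \TGr_w^{A,\mathbb{K}} \oplus \bigoplus_{z < w} (\TGr_z^{A,\mathbb{K}})^{\oplus n_z}
\]
for some nonnegative integers $n_z$.

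Next, standard-filtration multiplicities are invariant under flat base change: writing $(\cF : \DGr_y^{A,?}) = \dim_? \Hom(\cF, \NGr_y^{A,?})$ as in~\S\ref{ss:kl} and using that $\Hom(\TGr_w^{A,\mathbb{O}}, \NGr_y^{A,\mathbb{O}})$ is a free $\mathbb{O}$-module (because $\mathrm{Ext}^1$ between standard and costandard objects vanishes over $\mathbb{O}$), we obtain
\[
(\TGr_w^{A,\mathbb{O}} \otimes_\mathbb{O} \bk : \DGr_y^{A,\bk}) \;=\; \mathrm{rk}_\mathbb{O} \Hom(\TGr_w^{A,\mathbb{O}}, \NGr_y^{A,\mathbb{O}}) \;=\; (\TGr_w^{A,\mathbb{O}} \otimes_\mathbb{O} \mathbb{K} : \DGr_y^{A,\mathbb{K}}).
\]

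Combining these pieces we conclude
\[
(\TGr_w^{A,\bk} : \DGr_y^{A,\bk}) \;=\; (\TGr_w^{A,\mathbb{O}} \otimes_\mathbb{O} \mathbb{K} : \DGr_y^{A,\mathbb{K}}) \;\geq\; (\TGr_w^{A,\mathbb{K}} : \DGr_y^{A,\mathbb{K}}),
\]
the last inequality since $\TGr_w^{A,\mathbb{K}}$ is a direct summand of $\TGr_w^{A,\mathbb{O}} \otimes_\mathbb{O} \mathbb{K}$ and all remaining summands contribute non-negatively to $(-:\DGr_y^{A,\mathbb{K}})$. The main technical obstacle is justifying all the integral formalism (existence of $\TGr_w^{A,\mathbb{O}}$, locality of its endomorphism ring, freeness of Hom spaces) in the Whittaker-equivariant setting; but these are by now routine consequences of the $\mathbb{O}$-linear highest weight structure and should either be cited from~\cite{projGr1} and standard references, or established in a brief preliminary remark before the lemma.
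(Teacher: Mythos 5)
Your proposal is correct and follows essentially the same route as the paper: pass to the integral category $\Perv_{(I_\unip^A,\cX_A)}(\Gr,\mathbb{O})$, use the indecomposable $\mathbb{O}$-tilting object $\TGr_w^{A,\mathbb{O}}$ whose modular reduction is $\TGr_w^{A,\bk}$ (the paper cites~\cite[Appendix~B]{modrap1} for the facts you re-derive about endomorphism rings and base change), identify the $\bk$- and $\mathbb{K}$-multiplicities with the integral standard-filtration multiplicities, and conclude from the fact that $\TGr_w^{A,\mathbb{K}}$ is a direct summand of $\mathbb{K}\lotimes_{\mathbb{O}}\TGr_w^{A,\mathbb{O}}$.
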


\begin{proof}
In addition to the categories $\Perv_{(I_\unip^A,\cX_A)}(\Gr,\bk)$ and $\Perv_{(I_\unip^A,\cX_A)}(\Gr,\mathbb{K})$, we can also consider the category $\Perv_{(I_\unip^A,\cX_A)}(\Gr,\mathbb{O})$ of $(I_\unip^A,\cX_A)$-equivariant $\mathbb{O}$-perver\-se sheaves on $\Gr$. In this category we also have standard and costandard objects, denoted $\DGr_y^{A,\mathbb{O}}$ and $\NGr_y^{A,\mathbb{O}}$, respectively, and we can speak of tilting perverse sheaves. As explained in~\cite[Appendix~B]{modrap1}, the indecomposable tilting objects are again classified by ${}^A W_\ext^S$. More specifically, given $w \in {}^A W_\ext^S$ there exists an indecomposable tilting perverse sheaf $\TGr_w^{A,\mathbb{O}}$ such that
\[
\bk \lotimes_{\mathbb{O}} \TGr_w^{A,\mathbb{O}} \cong \TGr_w^{A,\bk},
\]
where
\[
\bk \lotimes_{\mathbb{O}} (-) : \Db_{(I_\unip^A,\cX_A)}(\Gr,\mathbb{O}) \to \Db_{(I_\unip^A,\cX_A)}(\Gr,\bk)
\]
is the ``modular reduction'' functor. In particular, for any $y \in {}^A W_\ext^S$ the multiplicity of $\DGr_y^{A,\mathbb{O}}$ in a standard filtration of $\TGr_w^{A,\mathbb{O}}$ is $(\TGr_w^{A,\bk} : \DGr_y^{A,\bk})$. We can also consider the tensor product functor
\[
\mathbb{K} \lotimes_{\mathbb{O}} (-) : \Db_{(I_\unip^A,\cX_A)}(\Gr,\mathbb{O}) \to \Db_{(I_\unip^A,\cX_A)}(\Gr,\mathbb{K});
\]
the perverse sheaf $\mathbb{K} \lotimes_{\mathbb{O}} \TGr_w^{A,\mathbb{O}}$ is tilting, supported on $\overline{\Gr_w}$, and satisfies $(\mathbb{K} \lotimes_{\mathbb{O}} \TGr_w^{A,\mathbb{O}} : \DGr_w^{A,\mathbb{K}}) = 1$; it therefore admits $\TGr_w^{A,\mathbb{K}}$ as a direct summand. We deduce that $(\TGr_w^{A,\mathbb{K}} : \DGr_y^{A,\mathbb{K}})$ is at most the multiplicity of $\DGr_y^{A,\mathbb{O}}$ in a standard filtration of $\TGr_w^{A,\mathbb{O}}$, which proves the desired inequality.
\end{proof}

For the rest of this subsection we assume that $A=\varnothing$ and $\bk$ has characteristic $0$.  It is a classical fact going back to~\cite{kl2} that the Kazhdan--Lusztig polynomials $(h_{y,x} : y \in W_\ext)$ encode the dimensions of stalks of the simple perverse sheaf $\LFl_x$.  Explicitly, we have
\begin{equation}\label{eqn:kl-ic-fl}
h_{y,x} = \sum_{i \in \Z} \mathrm{rk} \left( \mathscr{H}^{-\ell(y) - i}(\LFl_x{}_{|\Fl_y}) \right) \cdot v^i = \sum_{i \in \Z} \dim \Hom(\LFl_x,\NFl_y[i]) \cdot v^i.
\end{equation}
Similarly, the spherical Kazhdan--Lusztig polynomials describe stalks of simple perverse sheaves on $\Gr$: we have
\[
m_{y,x} = \sum_{i \in \Z} \mathrm{rk} \left( \mathscr{H}^{-\ell(y) - i}(\LGr_x{}_{|\Gr_y}) \right) \cdot v^i = \sum_{i \in \Z} \dim \Hom(\LGr_x,\NGr_y[i]) \cdot v^i.
\]
(In fact, this equality easily follows from~\eqref{eqn:kl-ic-fl}, comparing~\eqref{eqn:kl-sph} and~\eqref{eqn:pidag-IC}.)

Let us now work in the Grothendieck group $[\Db_{I_\unip}(\Gr,\bk)]$ of the triangulated category $\Db_{I_\unip}(\Gr,\bk)$. Since the basis $([\DGr_w] : w \in W_\ext^S)$ is dual to the basis $([\NGr_w] : w \in W_\ext^S)$ for the natural Euler pairing, the preceding equality means that
\[
[\LGr_w] = \sum_y m_{y,w}{}_{|v=-1} [\DGr_w]
\]
for any $w \in W_\ext^S$.
As a consequence, the polynomials $(m^{y,w} : y,w \in W_\ext^S)$ from~\eqref{eqn:kl-inv} have the following interpretation:
\begin{equation*}
[\DGr_w] = \sum (-1)^{\ell(y)+\ell(w)} m^{w,y}{}_{|v=-1} [\LGr_y] \quad \text{for any $w \in W_\ext^S$;}
\end{equation*}
in other words, for any $y,w \in W_\ext^S$ we have
\[
[\DGr_w : \LGr_y] = (-1)^{\ell(y)+\ell(w)} m^{w,y}{}_{|v=-1}.
\]
In particular, from Lemma~\ref{lem:m-triangle} we deduce that for any $w,y \in W_\ext$ we have
\begin{equation}
\label{eqn:mult-triangle}
[\DGr_{w^\triangle} : \LGr_w] = 1.
\end{equation}
(Here we know a priori that the left-hand side is nonnegative; but the fact that $\ell(w)+\ell(w^\triangle)+\ell(w_\circ)$ is even can also be seen directly from the computations in~\S\ref{ss:wall-crossing} below.)

\subsection{Averaging functors}
\label{ss:Av-Iw}

%


We continue to consider a finitary subset $A \subset S_\aff$.
As explained in~\cite[\S 3.6--3.8]{projGr1},
there is a t-exact ``averaging functor''
\[
\Av^A_\psi: \Db_{I_\unip}(\Gr,\bk) \to \Db_{(I_\unip^A,\cX_A)}(\Gr,\bk)
\]
with t-exact left and right adjoints, denoted by
\[
\Av^A_!, \Av^A_*: \Db_{(I_\unip^A,\cX_A)}(\Gr,\bk) \to \Db_{I_\unip}(\Gr,\bk),
\]
respectively.
By~\cite[Lemma~3.3(3)]{projGr1} the functor $\Av^A_\psi$ sends each standard object either to $0$ or to a standard object, and each costandard object either to $0$ or to a costandard object. As a consequence, it sends objects admitting a standard filtration, resp.~a costandard filtration, to objects admitting a standard filtration, resp.~a costandard filtration.

\begin{rmk}
\label{rmk:Av-notkill}
The functors $\Av^A_!$ and $\Av^A_*$ do not kill any nonzero perverse sheaf. In fact, if $\cF$ is a nonzero object in $\Perv_{(I^A_\unip, \cX_A)}(\Gr,\bk)$ and $w \in {}^A W_\ext^S$ is such that $\LGr^A_w$ is a quotient of $\cF$, then by adjunction and~\cite[Lemma~3.3(4)]{projGr1} the object $\Av^A_!(\cF)$ admits a nonzero morphism to $\LGr_w$, hence is nonzero. A dual argument applies to $\Av^A_*$.
\end{rmk}

\begin{lem}
\phantomsection
\label{lem:Av-Iw}
\begin{enumerate}
\item
\label{it:Av-Iw-2}
The functor
\[
\Av^A_! : \Perv_{(I^A_\unip, \cX_A)}(\Gr,\bk) \to \Perv_{I_\unip}(\Gr,\bk)
\]
sends objects admitting a standard filtration to objects admitting a standard filtration. More explicitly, for any $w \in {}^A W_\ext^S$ and $y \in W_\ext^S$ we have
\[
(\Av^A_!(\DGr^A_w) : \DGr_y) = \begin{cases}
1 & \text{if $y \in W_A w$;} \\
0 & \text{otherwise.}
\end{cases}
\]
\item
\label{it:Av-Iw-3}
The functor
\[
\Av^A_* : \Perv_{(I^A_\unip, \cX_A)}(\Gr,\bk) \to \Perv_{I_\unip}(\Gr,\bk)
\]
sends objects admitting a costandard filtration to objects admitting a costandard filtration. More explicitly, for any $w \in {}^A W_\ext^S$ and $y \in W_\ext^S$ we have
\[
(\Av^A_!(\NGr^A_w) : \NGr_y) = \begin{cases}
1 & \text{if $y \in W_A w$;} \\
0 & \text{otherwise.}
\end{cases}
\]
\end{enumerate}
\end{lem}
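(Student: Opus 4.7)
The overall plan is to exploit the adjunctions $(\Av^A_!, \Av^A_\psi)$ and $(\Av^A_\psi, \Av^A_*)$ together with the fact (recalled in~\S\ref{ss:Av-Iw} from~\cite[Lemma~3.3(3)]{projGr1}) that $\Av^A_\psi$ sends each standard perverse sheaf to $0$ or to a standard object, and each costandard to $0$ or to a costandard. This reduces both parts of the lemma to $\Hom$-computations inside the highest weight category $\Perv_{(I^A_\unip, \cX_A)}(\Gr, \bk)$, modulo a preliminary identification of $\Av^A_\psi(\NGr_y)$ and $\Av^A_\psi(\DGr_y)$.

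I would treat~\eqref{it:Av-Iw-2} first. For any $y \in W_\ext^S$ and any $i > 0$, the adjunction gives
\[
\Ext^i\bigl(\Av^A_!(\DGr^A_w), \NGr_y\bigr) \cong \Ext^i\bigl(\DGr^A_w, \Av^A_\psi(\NGr_y)\bigr),
\]
and the right-hand side vanishes, since $\Av^A_\psi(\NGr_y)$ is either $0$ or a costandard object and Ext groups between standards and costandards vanish in positive degrees in a highest weight category. This yields the claimed standard filtration for $\Av^A_!(\DGr^A_w)$ and reduces the multiplicity to
\[
(\Av^A_!(\DGr^A_w) : \DGr_y) = \dim \Hom\bigl(\DGr^A_w, \Av^A_\psi(\NGr_y)\bigr).
\]
The crucial remaining step is to identify $\Av^A_\psi(\NGr_y)$ exactly: I would prove that it is nonzero if and only if $W_A y \subset W_\ext^S$, in which case $\Av^A_\psi(\NGr_y) \cong \NGr^A_{y^\circ}$ with $y^\circ$ the minimum of the coset $W_A y$ (which then lies in ${}^A W^S_\ext$ by~\eqref{eqn:AWextS-characterization}). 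Granting this, $\dim \Hom(\DGr^A_w, \NGr^A_{y^\circ}) = \delta_{w, y^\circ}$, and this equals $1$ precisely when $w$ is the minimum of $W_A y$, i.e.~when $y \in W_A w$, which matches the stated formula.

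The main obstacle is this identification of $\Av^A_\psi(\NGr_y)$. It requires orbit-theoretic analysis using the explicit construction of $\Av^A_\psi$ from~\cite[\S 3.6--3.8]{projGr1}: the vanishing criterion corresponds to the Whittaker character $\psi_A$ restricting nontrivially (resp.~trivially) to the stabilizer in $I^A_\unip$ of the relevant lift of $y$, which via~\eqref{eqn:AWextS-characterization} translates precisely into the coset condition on $W_A y$; the identification of $y^\circ$ is then forced by matching support and the constraint $y^\circ \in {}^A W_\ext^S$. Once~\eqref{it:Av-Iw-2} is established, part~\eqref{it:Av-Iw-3} follows by an entirely parallel argument using the adjunction $(\Av^A_\psi, \Av^A_*)$ and the action of $\Av^A_\psi$ on standards, or alternatively by applying Verdier duality and invoking Remark~\ref{rmk:Verdier-duality}, which exchanges $\cX_A$ with $\cX_A^{-1}$, intertwines $\Av^A_!$ with $\Av^A_*$, and swaps standard with costandard objects.
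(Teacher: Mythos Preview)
Your approach is essentially the same as the paper's: both use the adjunction $(\Av^A_!, \Av^A_\psi)$, the highest-weight criterion that $\Ext^1$-vanishing against costandards characterizes objects with a standard filtration, and the behaviour of $\Av^A_\psi$ on (co)standard objects. The one difference is that what you call the ``main obstacle''---the explicit identification of $\Av^A_\psi(\NGr_y)$ (and dually $\Av^A_\psi(\DGr_y)$), including the nonvanishing criterion and the resulting label---is not something to be re-derived here: it is already part of the explicit statement of \cite[Lemma~3.3(3)]{projGr1}, which the paper simply cites. So your orbit-theoretic analysis is redundant, and the proof is shorter than you anticipate.
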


\begin{proof}
%
\eqref{it:Av-Iw-2} 
Recall that an object $X$ in a highest-weight category has a standard filtration if and only if $\Ext^1(X,{-})$ vanishes on all costandard objects, see~\cite[Proposition~7.12]{riche-hab}.  Let us apply this criterion to the categories
$\Perv_{(I^A_\unip, \cX_A)}(\Gr,\bk)$
and $\Perv_{I_\unip}(\Gr,\bk)$.
Since $\Av^A_\psi$ is exact and sends costandard objects to objects admitting a costandard filtration, its exact left adjoint $\Av^A_!$ preserves the property of having a standard filtration. The description of multiplicities also follows from adjunction, together with the explicit description of images under $\Av^A_\psi$ of the standard objects in~\cite[Lemma~3.3(3)]{projGr1}.



\eqref{it:Av-Iw-3} The proof is similar to that of~\eqref{it:Av-Iw-2}.
\end{proof}

%
%
%

\begin{rmk}
\label{rmk:Av-isom}
The functor $\Av^A_\psi$ also has $*$- and $!$-versions, which turn out to be canonically isomorphic, see~\cite[Lemma~3.2]{projGr1}. (By definition, $\Av^A_\psi$ is identified with these two versions.) Similarly there exists a canonical morphism of functors
\[
\Av^A_! \to \Av^A_*,
\]
but this map is \emph{not} an isomorphism in general.  For instance, when $A$ is a singleton $\{s\}$, it is known that both $\Av^A_!(\DFl^A_e)$ and $\Av^A_*(\DFl^A_e)$ are 
isomorphic to $\TFl_s$, but one can check by direct calculation that the image of $\Av^A_!(\DFl^A_e) \to \Av^A_*(\DFl_e)$ is isomorphic to $\LFl_e$.

Nevertheless, the philosophy of Koszul duality suggests that there should exist \emph{some} natural isomorphism $\Av^A_! \cong \Av^A_*$.  In more detail, the functor $\Av^A_\psi$ is the counterpart under Koszul duality of push-forward along the projection $\pi_A: \Fl \to \Fl_A$, where $\Fl_A$ is a partial affine flag variety (depending on $A$).  This map is proper and smooth, so the left and right adjoints of $(\pi_A)_*$---namely, $\pi_A^*$ and $\pi_A^!$---are isomorphic up to a shift.  Similarly, $\Av^A_!$ and $\Av^A_*$ should be isomorphic (up to a Tate twist in the setting of mixed sheaves).  However, we were unable to find a direct proof of this claim. (See Lemma~\ref{lem:isom-xi} and Remark~\ref{rmk:Av-Groth} below for related results.)
\end{rmk}


\subsection{Wall-crossing functors}
\label{ss:wc-functors}

For a finitary subset $A \subset S_\aff$, we consider the functors
\[
\xi_A^!, \xi^*_A :
\Db_{I_\unip}(\Gr,\bk) \to \Db_{I_\unip}(\Gr,\bk)
\qquad
\text{defined by}
\qquad
\begin{aligned}
\xi_A^! &= \Av^A_! \circ \Av^A_\psi, \\
\xi_A^* &= \Av^A_* \circ \Av^A_\psi.
\end{aligned}
\]
The results recalled in~\S\ref{ss:Av-Iw} imply that:
\begin{itemize}
\item
each of these functors is t-exact with respect to the perverse t-structure; 
\item
$\xi_A^!$ sends perverse sheaves admitting a standard filtration to perverse sheaves admitting a standard filtration;
\item
$\xi_A^*$ sends perverse sheaves admitting a costandard filtration to perverse sheaves admitting a costandard filtration;
\item
$\xi_A^!$ is left-adjoint to $\xi^*_A$. 
\end{itemize}

\begin{lem}
\label{lem:isom-xi}
There exists an isomorphism of functors $\xi_A^! \cong \xi^*_A$.
As a consequence, these functors send objects admitting a standard filtration to objects admitting a standard filtration, and objects admitting a costandard filtration to objects admitting a costandard filtration; in particular, they send tilting objects to tilting objects.
\end{lem}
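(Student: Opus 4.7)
The strategy is to realize both $\xi_A^!$ and $\xi_A^*$ as right-convolution with a common self-Verdier-dual tilting perverse sheaf on the affine flag variety $\Fl$, from which the isomorphism $\xi_A^! \cong \xi_A^*$ follows immediately. As a first step I would extend the convolution bifunctors of~\S\ref{ss:Iu-eq-PS} to the Whittaker setting of~\S\ref{ss:Whittaker} and establish convolution descriptions of the averaging functors themselves: $\Av^A_\psi(\cF) \cong \cF \star^I \mathcal{W}_A$ for an appropriate Whittaker-type kernel $\mathcal{W}_A$ on $\Fl$, with the adjoints $\Av^A_!$ and $\Av^A_*$ realized by the corresponding $!$-- and $*$--pushforwards along the parahoric projection composed with convolution. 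Composing, $\xi^!_A$ becomes $(-) \star^I T^!_A$ and $\xi^*_A$ becomes $(-) \star^I T^*_A$ for two kernels $T^!_A, T^*_A$ in $\Db_I(\Fl,\bk)$.

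A stalk computation, using Lemma~\ref{lem:Av-Iw} together with the explicit description of the $(I^A_\unip, \cX_A)$-orbits and their dimensions, would identify $T^!_A$ with the indecomposable tilting perverse sheaf $\TFl_{w_A}$ via a standard filtration of the expected shape, and would identify $T^*_A$ with its Verdier dual via the costandard analogue. Since $\TFl_{w_A}$ is Verdier self-dual (Remark~\ref{rmk:Verdier-duality}), the kernels $T^!_A$ and $T^*_A$ coincide. A parallel argument on $\Gr$, or a transport via $\pi_*$ and $\pi^\dag$, yields the desired isomorphism for functors on $\Db_{I_\unip}(\Gr,\bk)$.

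An alternative, more direct route is to verify that the canonical natural transformation $\xi^!_A \to \xi^*_A$ induced by the morphism $\Av^A_! \to \Av^A_*$ from Remark~\ref{rmk:Av-isom} is an isomorphism. Since both functors are t-exact and the standards $\DGr_w$ generate $\Db_{I_\unip}(\Gr,\bk)$ as a triangulated category, it suffices to check this on $\DGr_w$ for $w \in {}^A W^S_\ext$ (the map vanishes trivially for $w$ not minimal in $W_A w$ by~\cite[Lemma~3.3(3)]{projGr1}). On such standards, Lemma~\ref{lem:Av-Iw} computes $\xi^!_A(\DGr_w)$ as a perverse sheaf with an explicit standard filtration, while Verdier duality (Remark~\ref{rmk:Verdier-duality}) transports Lemma~\ref{lem:Av-Iw} in the $\cX_A^{-1}$-setting to produce a matching costandard filtration of $\xi^*_A(\DGr_w)$; a comparison using the top-stratum restriction shows the morphism between them is nonzero and therefore an isomorphism.

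The hard part will be executing either route rigorously: the first requires a careful extension of the convolution bifunctors of~\S\ref{ss:Iu-eq-PS} to the Whittaker/parahoric setting (standard but technical); the second requires a precise compatibility of the natural transformation $\Av^A_! \to \Av^A_*$ with Verdier duality and a non-vanishing check on the top stratum. Once the isomorphism $\xi^!_A \cong \xi^*_A$ is established, the consequence stated in the lemma is a formal corollary: the common functor inherits preservation of standard filtrations from $\xi^!_A$ and preservation of costandard filtrations from $\xi^*_A$, so it preserves both, and thus sends tilting objects to tilting objects.
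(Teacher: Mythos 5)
Both of your routes run into genuine trouble, and the second one is actually refuted by the paper itself. The canonical transformation $\Av^A_! \to \Av^A_*$ does not become invertible after composing with $\Av^A_\psi$: in Remark~\ref{rmk:Av-isom}, for $A=\{s\}$ one has $\DFl^A_e \cong \Av^A_\psi(\DFl_e)$, both $\Av^A_!(\DFl^A_e)$ and $\Av^A_*(\DFl^A_e)$ are isomorphic to $\TFl_s$, and yet the canonical map between them has image $\LFl_e$ --- so the induced map $\xi^!_A(\DFl_e)\to\xi^*_A(\DFl_e)$ is not an isomorphism, and the same phenomenon occurs for $\Gr$. The whole subtlety of the lemma is that $\xi^!_A$ and $\xi^*_A$ are isomorphic but \emph{not} via the canonical map, so no amount of checking on standard objects can make your second route work. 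Moreover your closing step there ("the morphism is nonzero on the top stratum and therefore an isomorphism") is a non sequitur: $\xi^!_A(\DGr_w)$ and $\xi^*_A(\DGr_w)$ are far from simple, so a nonzero map between them need not be invertible.

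Your first route has the right shape (realize both functors as convolution with one self-dual tilting kernel), but as written it only reproves Remark~\ref{rmk:xi-convolution}, not the lemma. Convolution of a kernel on $\Fl$ with an object of $\Gr$ is a \emph{left} convolution and requires the $\Gr$-factor to be $I$-equivariant for the twisted product to descend; in particular the formula $\Av^A_\psi(\cF)\cong\cF\star^I\mathcal{W}_A$ does not typecheck, and the identification $\xi^!_A(-)\cong \TFl_{w_A}\star^I(-)\cong\xi^*_A(-)$ is only available on the image of $\For^I_{I_\unip}$, whereas the lemma asserts an isomorphism of functors on all of $\Db_{I_\unip}(\Gr,\bk)$ (and is later applied in settings where only $I_\unip$-equivariance is available). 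The missing ingredient is precisely the paper's key tool: one must pass to Yun's free-monodromic completion of the $I_\unip$-equivariant category on $\cL G/I_\unip$, where the free-monodromic tilting pro-object attached to $w_A$ acts on $\Db_{I_\unip}(\Gr,\bk)$, and then quote~\cite[Lemma~10.1]{br} (building on~\cite{by}) to identify \emph{both} $\xi^!_A$ and $\xi^*_A$ with convolution with that single object; an ordinary perverse kernel $\TFl_{w_A}$ together with $\star^I$ cannot play this role on the full category, so the proposed stalk computation identifying the kernels is not available at the required level of generality.
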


\begin{proof}[Proof sketch]
The proof requires a different realization of the functors $\xi_A^!$ and $\xi_A^*$. Namely, following Yun (see~\cite[Appendix~A]{by}; see also~\cite{br} for a review of this construction, which explicitly allows more general coefficients) one can consider the ``free-monodromic completion'' $D^\wedge$ of the $I_\unip$-equivariant derived category of sheaves on $\cL G/I_\unip$, constructible with respect to the stratification by $I$-orbits. In this category we have a perverse t-structure, and a notion of tilting perverse sheaves, see~\cite[\S A.7]{by} or~\cite[\S 5.4]{br}; the indecomposable tilting objects are parametrized (in terms of their support) by $W_\ext$. This category also has a monoidal structure, and this monoidal category acts in a natural way on the category $\Db_{I_\unip}(\Gr,\bk)$.
 Now it follows from~\cite[Lemma~10.1]{br} that both $\xi_A^!$ and $\xi^*_A$ are both isomorphic to the functor given by convolution with the indecomposable tilting object associated with the element $w_A$. These functors are therefore isomorphic.

The second claim of the lemma is a consequence of this isomorphism and the properties of $\xi_A^!$ and $\xi_A^*$ listed above.
\end{proof}

\begin{rmk}
\label{rmk:xi-convolution}
The considerations in the proof of Lemma~\ref{lem:isom-xi} simplify in case we apply the functors $\xi_A^*$ and $\xi_A^!$ to an object of the form $\For^I_{I_\unip}(\cF)$ for some $\cF$ in $\Db_I(\Gr,\bk)$.
In this case we have
\[
\xi_A^!(\For^I_{I_\unip}(\cF)) \cong \TFl_{w_A} \star^I \cF \cong \xi_A^*(\For^I_{I_\unip}(\cF))
\]
where the convolution bifunctor $\star^I$ is as in~\S\ref{ss:Iu-eq-PS}.
\end{rmk}

Thanks to Lemma~\ref{lem:isom-xi}, we will write $\xi_A$ for $\xi_A^! \cong \xi^*_A$ when the choice among these functors does not matter. 
In case $A=\{s\}$ for some $s \in S_\aff$, we will simplify this notation even further and write $\xi_s$ for $\xi_{\{s\}}$.

\begin{cor}
\label{cor:Av-tilting}
The functors 
\[
\Av^A_!, \Av^A_* : \Perv_{(I^A_\unip, \cX_A)}(\Gr,\bk) \to \Perv_{I_\unip}(\Gr,\bk)
\]
send objects admitting a standard filtration to objects admitting a standard filtration, and similarly for costandard filtrations. In particular, these functors send tilting objects to tilting objects.
\end{cor}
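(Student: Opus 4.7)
The plan is to leverage the isomorphism $\xi_A^! \cong \xi_A^*$ from Lemma~\ref{lem:isom-xi} to bootstrap from the statements already proved in Lemma~\ref{lem:Av-Iw}. What is new in the corollary relative to that lemma is that $\Av^A_!$ also preserves costandard filtrations and that $\Av^A_*$ also preserves standard filtrations; once these are established, the ``in particular'' assertion about tilting objects is automatic, since an indecomposable tilting object admits both a standard and a costandard filtration, and both functors will then preserve each type.

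First I would treat the standard and costandard objects themselves. Recall from~\S\ref{ss:Av-Iw} (via~\cite[Lemma~3.3(3)]{projGr1}) that $\Av^A_\psi$ sends each standard, resp.~costandard, perverse sheaf either to $0$ or to a Whittaker standard, resp.~costandard, object, and an inspection of the explicit formula shows that for every $w \in {}^A W_\ext^S$ one can find $y, y' \in W_\ext^S$ with $\Av^A_\psi(\DGr_y) \cong \DGr^A_w$ and $\Av^A_\psi(\NGr_{y'}) \cong \NGr^A_w$. Unfolding the definitions of $\xi_A^!$ and $\xi_A^*$ and then invoking Lemma~\ref{lem:isom-xi}, one obtains
\[
\Av^A_!(\NGr^A_w) \cong \xi_A^!(\NGr_{y'}) \cong \xi_A^*(\NGr_{y'}) \cong \Av^A_*(\NGr^A_w),
\]
and the right-hand side admits a costandard filtration by Lemma~\ref{lem:Av-Iw}\eqref{it:Av-Iw-3}. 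The analogous chain of isomorphisms applied to $\DGr_y$ shows that $\Av^A_*(\DGr^A_w)$ is isomorphic to $\Av^A_!(\DGr^A_w)$, which admits a standard filtration by Lemma~\ref{lem:Av-Iw}\eqref{it:Av-Iw-2}.

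Next I would extend these statements to arbitrary objects admitting standard or costandard filtrations by induction on the length of such a filtration. This is formal: $\Av^A_!$ and $\Av^A_*$ are t-exact, so they take short exact sequences of perverse sheaves to short exact sequences, and the class of perverse sheaves admitting a standard (resp.~costandard) filtration is closed under extensions. The main point where care is needed is the first step of identifying appropriate preimages $y, y'$ under $\Av^A_\psi$; but this is a direct inspection of the orbit-by-orbit description of $\Av^A_\psi$ recalled in the text, and no genuine obstacle is expected beyond this bookkeeping.
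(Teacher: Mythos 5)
Your proposal is correct and takes essentially the same route as the paper: its proof also reduces to the (co)standard objects and uses the chain $\Av^A_!(\NGr^A_w) \cong \Av^A_!\Av^A_\psi(\NGr_w) = \xi_A^!(\NGr_w) \cong \xi_A^*(\NGr_w) \cong \Av^A_*(\NGr^A_w)$ (i.e.\ your $y=y'$ can simply be taken to be $w$ itself, by~\cite[Lemma~3.3(3)]{projGr1}), concluding with Lemma~\ref{lem:Av-Iw} and the isomorphism of Lemma~\ref{lem:isom-xi}. The extension to arbitrary objects with a (co)standard filtration via exactness, which you spell out, is left implicit in the paper.
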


\begin{proof}
We write the proof for $\Av^A_!$;
the other case is similar. What we have to prove is that $\Av^A_!(\DGr^A_w)$ admits a standard filtration and $\Av^A_!(\NGr^A_w)$ admits a costandard filtration for any $w \in {}^A W^S_\ext$. The case of standard filtrations has already been proved in Lemma~\ref{lem:Av-Iw}. For costandard filtrations, we observe that
\[
\Av^A_!(\NGr^A_w) \cong \Av^A_! \Av^A_\psi(\NGr_w) = \xi_A^!(\NGr_w) \cong \xi_A^*(\NGr_w) \cong \Av^A_* \Av^A_\psi(\NGr_w) =\Av^A_*(\NGr^A_w)
\]
by~\cite[Lemma~3.3(3)]{projGr1} and Lemma~\ref{lem:isom-xi}. The right-hand side admits a costandard filtration by Lemma~\ref{lem:Av-Iw}, so we are done.
\end{proof}

\begin{rmk}
\label{rmk:Av-Groth}
If we denote by $[\Perv_{(I^A_\unip, \cX_A)}(\Gr,\bk)]$ and $[\Perv_{I_\unip}(\Gr,\bk)]$ the Grothen\-dieck groups of the categories $\Perv_{(I^A_\unip, \cX_A)}(\Gr,\bk)$ and $\Perv_{I_\unip}(\Gr,\bk)$ respectively, and by $[\Av^A_!]$ and $[\Av^A_*]$ the morphisms induced by $\Av^A_!$ and $\Av^A_*$ on Grothendieck groups, then we have
\[
[\Av^A_!] = [\Av^A_*].
\]
Indeed, this follows from the observation that for any $w \in {}^A W_\ext^S$ we have
\[
\Av^A_!(\LGr^A_w) \cong \Av^A_*(\LGr^A_w),
\]
by the same considerations as in the proof of Corollary~\ref{cor:Av-tilting}.
\end{rmk}

We can in fact be more precise regarding the effect of the functors $\Av^A_!$ and $\Av^A_*$ on indecomposable tilting perverse sheaves by adapting the proof of a ``Koszul dual'' statement in~\cite[Proposition~3.5]{williamson}, as follows.

\begin{prop}
\label{prop:Av-tilting-indec}
For any $w \in {}^A W_\ext^S$ we have
\begin{gather*}
\Av^A_!(\TGr^A_w) \cong \TGr_{w_A w} \cong \Av^A_*(\TGr^A_w),\\ \Av^A_\psi(\TGr_{w_A w}) \cong (\TGr^A_w)^{\oplus \# W_A}.
\end{gather*}
\end{prop}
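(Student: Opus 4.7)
The plan is to combine Corollary~\ref{cor:Av-tilting} (which shows that $\Av^A_!(\TGr^A_w)$ is tilting), the multiplicity formula of Lemma~\ref{lem:Av-Iw}, and a description of $\Av^A_\psi$ on standards derived from~\cite[Lemma~3.3(3)]{projGr1} via adjunction, so as to reduce everything by Krull--Schmidt to a lower bound on certain tilting multiplicities.

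First, applying Lemma~\ref{lem:Av-Iw}\eqref{it:Av-Iw-2} to a standard filtration of $\TGr^A_w$, one computes
\[
(\Av^A_!(\TGr^A_w) : \DGr_y) = \begin{cases} (\TGr^A_w : \DGr^A_{z(y)}) & \text{if $y \in W_A z(y)$ with $z(y) \in {}^A W_\ext^S$,} \\ 0 & \text{otherwise.} \end{cases}
\]
Since $z \leq w$ in ${}^A W_\ext^S$ implies $w_A z \leq w_A w$ (lengths add in such products), the maximal $y$ with non-zero multiplicity is $y = w_A w$, with multiplicity one. As $\TGr_{w_A w}$ is the unique indecomposable tilting with top standard $\DGr_{w_A w}$, Krull--Schmidt yields a direct-summand decomposition
\[
\Av^A_!(\TGr^A_w) \cong \TGr_{w_A w} \oplus R
\]
with $R$ tilting, supported on $\overline{\Gr_{w_A w}} \setminus \Gr_{w_A w}$, and with the crucial property that every standard $\DGr_y$ appearing in $R$ obeys $W_A y \subset W_\ext^S$.

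Next, combining~\cite[Lemma~3.3(3)]{projGr1} with an adjunction argument using Lemma~\ref{lem:Av-Iw}\eqref{it:Av-Iw-2}, one obtains the precise identification: for $y \in W_\ext^S$, $\Av^A_\psi(\DGr_y) \cong \DGr^A_{z(y)}$ when $W_A y \subset W_\ext^S$ (with $z(y) \in {}^A W_\ext^S$ the minimum representative), and $\Av^A_\psi(\DGr_y) = 0$ otherwise; in particular $\Av^A_\psi$ preserves tilting. A direct Grothendieck-group calculation then gives $\Av^A_\psi \Av^A_!(\TGr^A_w) \cong (\TGr^A_w)^{\oplus \#W_A}$ (using Krull--Schmidt to promote the Grothendieck identity to an honest isomorphism of tilting objects). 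Applying $\Av^A_\psi$ to the decomposition above and invoking Krull--Schmidt once more provides $0 \leq n \leq \#W_A$ with
\[
\Av^A_\psi(\TGr_{w_A w}) \cong (\TGr^A_w)^{\oplus n}, \qquad \Av^A_\psi(R) \cong (\TGr^A_w)^{\oplus \#W_A - n}.
\]
Since every standard $\DGr_y$ occurring in $R$ satisfies $\Av^A_\psi(\DGr_y) \neq 0$, the vanishing $\Av^A_\psi(R) = 0$ will force, by non-negativity of standard multiplicities, each $(R : \DGr_y) = 0$, hence $R = 0$; it thus suffices to establish $n = \#W_A$.

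The remaining step, which is the main obstacle, is the lower bound
\[
n = (\Av^A_\psi(\TGr_{w_A w}) : \DGr^A_w) = \sum_{v \in W_A} (\TGr_{w_A w} : \DGr_{vw}) \geq \#W_A,
\]
i.e., $(\TGr_{w_A w} : \DGr_{vw}) \geq 1$ for every $v \in W_A$. The plan is to prove this by induction on $\ell(v)$: starting from $(\TGr_w : \DGr_w) = 1$ and using the wall-crossing functors $\xi_s$ for $s \in A$ with $sv > v$, each $\xi_s$ sends tilting to tilting (Lemma~\ref{lem:isom-xi}) and propagates a positive standard multiplicity at $vw$ to a positive standard multiplicity at $svw$, in the spirit of the argument of~\cite[Proposition~3.5]{williamson}. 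Once this lower bound is secured, the first isomorphism $\Av^A_!(\TGr^A_w) \cong \TGr_{w_A w}$ follows; the second isomorphism then follows by Verdier duality (Remark~\ref{rmk:Verdier-duality}), which swaps $\Av^A_!$ and $\Av^A_*$ and preserves the isomorphism classes of $\TGr^A_w$ and $\TGr_{w_A w}$; the third is the $n = \#W_A$ case of the Krull--Schmidt computation above.
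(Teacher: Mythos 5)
Your overall architecture matches the paper's proof almost exactly: establish $\Av^A_\psi\Av^A_!(\TGr^A_w)\cong(\TGr^A_w)^{\oplus \# W_A}$ via standard multiplicities, split off $\Av^A_!(\TGr^A_w)\cong\TGr_{w_Aw}\oplus R$ by maximality of $w_Aw$, and reduce everything to the lower bound $(\TGr_{w_Aw}:\DGr_{vw})\geq 1$ for all $v\in W_A$ (the paper's~\eqref{eqn:multiplicities-w_Aw}). The problem is that your plan for this last step --- the one you yourself identify as the main obstacle --- does not work as stated. Wall-crossing induction of the kind you describe shows that the \emph{full} tilting object $\xi_{s_r}\cdots\xi_{s_1}(\TGr_w)$ (for a reduced expression of $w_A$) has positive standard multiplicity at each $\DGr_{vw}$; it says nothing about which indecomposable summand of that object carries the standard. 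But the object you need to control is precisely the indecomposable summand $\TGr_{w_Aw}$. Indeed, at this point of the argument you already know from Lemma~\ref{lem:Av-Iw}\eqref{it:Av-Iw-2} that $(\Av^A_!(\TGr^A_w):\DGr_{vw})=1$ exactly, so the whole issue is whether this single occurrence of $\DGr_{vw}$ sits in $\TGr_{w_Aw}$ or in $R$; a propagation argument that only bounds multiplicities in a decomposable ambient tilting object cannot distinguish these two possibilities, and in positive characteristic there is no general principle forcing standard multiplicities of an indecomposable tilting object to be nonzero on its full support.

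The paper closes this gap by a genuinely different, local argument: the union $\bigsqcup_{x\in W_A}\Gr_{xw}$ is open in $\overline{\Gr_{w_Aw}}$ and is an affine-space bundle over the finite flag variety $M_A/B_A$; the restriction of $\TGr_{w_Aw}$ to this open set is again tilting and must contain (as a direct summand) the pullback of the big tilting sheaf on $M_A/B_A$ attached to $w_A$, whose standard multiplicities are all equal to $1$. This yields $(\TGr_{w_Aw}:\DGr_{xw})\geq 1$ for every $x\in W_A$. Alternatively, as the paper remarks, one can reduce to characteristic $0$ using Lemma~\ref{lem:mult-tilt-modred} and invoke Yun's Kazhdan--Lusztig description of tilting multiplicities there. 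Either of these would repair your argument; the remainder of your proposal (the Grothendieck-group computation, the Krull--Schmidt bookkeeping, the deduction $R=0$ from $\Av^A_\psi(R)=0$ via the effect of $\Av^A_\psi$ on standards, and the Verdier duality argument for the $\Av^A_*$ statement) agrees with the paper and is fine.
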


\begin{proof}
We will prove the first isomorphism on the first line above, and the isomorphism on the second line; the second isomorphism on the first line can be obtained by similar arguments, or deduced using Verdier duality.

First, we note that
\begin{equation}
\label{eqn:Avpsi-Av!}
\Av^A_\psi(\Av^A_!(\TGr^A_w)) \cong (\TGr^A_w)^{\oplus \# W_A}.
\end{equation}
Indeed, by the comments in~\S\ref{ss:Av-Iw} and Corollary~\ref{cor:Av-tilting} the left-hand side is tilting. A closer look at~\cite[Lemma~3.3(3)]{projGr1} and Lemma~\ref{lem:Av-Iw}\eqref{it:Av-Iw-2} shows that for any $y \in {}^A W_\ext^S$ we have
\[
(\Av^A_\psi(\Av^A_!(\TGr^A_w)) : \DGr_y^A) = (\# W_A) \times ( \TGr^A_w : \DGr_y^A ) = \bigl( (\TGr^A_w)^{\oplus \# W_A} : \DGr_y^A \bigr).
\]
Since a tilting object is determined (up to isomorphism) by its standard multiplicities, this implies the desired isomorphism.

Now, as justified above $\Av^A_!(\TGr^A_w)$ is tilting. From the description of multiplicities in Lemma~\ref{lem:Av-Iw} one sees that $w_A w$ is maximal among the labels of standard objects appearing in a standard filtration of this object; it follows that $\TGr_{w_A w}$ is a direct summand in it. Let us write
\[
\Av^A_!(\TGr^A_w) = \TGr_{w_A w} \oplus \cT.
\]
Then $\cT$ is tilting, and the standard objects appearing in a standard filtration of this object are of the form $xy$ with $x \in W_A$ and $y \in {}^A W_\ext^S$ (because this property holds true for $\Av^A_!(\TGr^A_w)$). We will show that $\Av^A_\psi(\cT)=0$, which will imply that $\cT=0$ (by exactness of $\Av^A_\psi$ and its effect on standard objects as described in~\cite[Lemma~3.3(3)]{projGr1}), and thereby conclude the proof. 

Using~\eqref{eqn:Avpsi-Av!} we obtain that
\[
(\TGr^A_w)^{\oplus \# W_A} \cong \Av^A_\psi(\TGr_{w_A w}) \oplus \Av^A_\psi(\cT).
\]
By the Krull--Schmidt property, this implies that $\Av^A_\psi(\TGr_{w_A w})$ and $\Av^A_\psi(\cT)$ are both direct sums of copies of $\TGr^A_w$. To determine the number of copies in $\Av^A_\psi(\TGr_{w_A w})$ it suffices to compute $(\Av^A_\psi(\TGr_{w_A w}) : \DGr^A_w)$; we will show that this number is at least $\# W_A$, which will imply that $\Av^A_\psi(\cT)=0$, as desired. For that, using once again the exactness of $\Av^A_\psi$ and~\cite[Lemma~3.3(3)]{projGr1}, it suffices to prove that for any $x \in W_A$ we have
\begin{equation}
\label{eqn:multiplicities-w_Aw}
(\TGr_{w_A w} : \DGr_{xw}) \geq 1.
\end{equation}
However, the union
\[
\bigsqcup_{x \in W_A} \Gr_{xw}
\]
is open in $\overline{\Gr_{w_A w}}$, and is an affine space bundle over $M_A / B_A$ where $M_A$ is the reductive group attached to $A$ as in~\cite[\S 3.4]{projGr1} and $B_A$ is its natural (negative) Borel subgroup. The restriction of $\TGr_{w_A w}$ to this union is again tilting, and it must admit the indecomposable tilting object with label $w_A w$ (i.e~the pullback of the indecomposable tilting object on $M_A/B_A$ attached to $w_A$) as a direct summand. It is a standard fact that the standard object with label $xw$ appears with multiplicity $1$ in the latter object for any $x$ (see~e.g.~\cite{br}), which implies~\eqref{eqn:multiplicities-w_Aw} and finishes the proof.
\end{proof}

\begin{rmk}
\begin{enumerate}
\item
From the philosophy of the Finkelberg--Mirkovi{\'c} conjecture and its ``singular'' variants, Proposition~\ref{prop:Av-tilting-indec} can be considered a geometric counterpart to~\cite[Proposition~E.11]{jantzen}.
\item
Alternatively, to prove~\eqref{eqn:multiplicities-w_Aw} one can reduce the proof to the characteristic-$0$ setting using Lemma~\ref{lem:mult-tilt-modred}, and then use the formula for multiplicities of tilting perverse sheaves in terms of Kazhdan--Lusztig polynomials in this case proved in~\cite{yun}.
\end{enumerate}
\end{rmk}


Below we will also consider some easier ``wall-crossing functors'' associated with elements of $\Omega$. Note that
if $\omega \in \Omega$ we have $\dot{\omega} I_\unip \dot{\omega}^{-1} = I_\unip$; as a consequence, left multiplication by $\dot{\omega}$ induces an autoequivalence
\[
\xi_\omega : \Perv_{I_\unip}(\Gr,\bk) \simto \Perv_{I_\unip}(\Gr,\bk)
\]
which satisfies
\begin{equation}
\label{eqn:xi-D-N}
\xi_\omega(\DGr_y) \cong \DGr_{\omega y}, \qquad \xi_\omega(\NGr_y) \cong \NGr_{\omega y}
\end{equation}
for any $y \in W_\ext^S$. From this we deduce that we also have
\begin{equation}
\label{eqn:xi-L-T}
\xi_\omega(\LGr_y) \cong \LGr_{\omega y}, \qquad \xi_\omega(\TGr_y) \cong \TGr_{\omega y},
\end{equation}
again for any $y \in W_\ext^S$.
 
\subsection{A support computation}

The following two lemmas, which describe the effect of wall-crossing functors on the support of perverse sheaves, will be needed in~\S\ref{ss:big-tilting} below.

\begin{lem}
\label{lem:xi-simples}
Let $y \in W_\ext^S$ and $s \in S_\aff$.
\begin{enumerate}
\item
\label{it:xi-simples-1}
If $sy<y$, 
or if $sy > y$ and $s y \notin W_\ext^S$, then $\xi_{s}(\LGr_y)=0$.
\item
\label{it:xi-simples-2}
If $sy > y$ and $s y \in W_\ext^S$ then $\xi_{s}(\LGr_y)$ is supported on $\overline{\Gr_{s y}}$, and admits $\LGr_{sy}$ as a composition factor with multiplicity $1$.
\end{enumerate}
\end{lem}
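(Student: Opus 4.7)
The plan is to exploit the factorization $\xi_s = \Av^{\{s\}}_! \circ \Av^{\{s\}}_\psi$ together with the description of $\Av^{\{s\}}_\psi$ on standard objects from \cite[Lemma~3.3(3)]{projGr1}. Combined with the indexing of Whittaker simples by ${}^{\{s\}}W_\ext^S$ recalled in \S\ref{ss:Whittaker}, that description reads
\[
\Av^{\{s\}}_\psi(\DGr_y) \cong \begin{cases}\DGr^{\{s\}}_y & \text{if } y \in {}^{\{s\}}W_\ext^S, \\ 0 & \text{otherwise,}\end{cases}
\]
and analogously for $\NGr_y$. Since $\LGr_y$ is the image of $\DGr_y \to \NGr_y$ and $\Av^{\{s\}}_\psi$ is exact, the same rule holds with $\DGr_y$ replaced by $\LGr_y$ and $\DGr^{\{s\}}_y$ by $\LGr^{\{s\}}_y$. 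Recall that $y \in {}^{\{s\}}W_\ext^S$ iff $y, sy \in W_\ext^S$ and $sy > y$.

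For part~(1), both stated sub-cases place $y$ outside ${}^{\{s\}}W_\ext^S$: when $sy < y$, $y$ is not minimal in the coset $W_{\{s\}}y$; and when $sy > y$ with $sy \notin W_\ext^S$, the containment $W_{\{s\}}y \subset W_\ext^S$ fails. Hence $\Av^{\{s\}}_\psi(\LGr_y) = 0$, so $\xi_s(\LGr_y) = 0$.

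For part~(2), $y \in {}^{\{s\}}W_\ext^S$, so $\xi_s(\LGr_y) = \Av^{\{s\}}_!(\LGr^{\{s\}}_y)$, which is nonzero by Remark~\ref{rmk:Av-notkill}. By Lemma~\ref{lem:Av-Iw}\eqref{it:Av-Iw-2}, $\Av^{\{s\}}_!(\DGr^{\{s\}}_y)$ carries a standard filtration with subquotients $\DGr_y$ and $\DGr_{sy}$, each appearing exactly once; it is therefore supported on $\overline{\Gr_{sy}}$ and restricts to $\underline{\bk}_{\Gr_{sy}}[\ell(sy)]$ on the open stratum $\Gr_{sy}$. Applying the exact functor $\Av^{\{s\}}_!$ to $0 \to K \to \DGr^{\{s\}}_y \to \LGr^{\{s\}}_y \to 0$, where $K$ is the kernel of the canonical surjection, realizes $\xi_s(\LGr_y)$ as the cokernel of $\Av^{\{s\}}_!(K) \hookrightarrow \Av^{\{s\}}_!(\DGr^{\{s\}}_y)$; in particular $\xi_s(\LGr_y)$ is also supported on $\overline{\Gr_{sy}}$.

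It then remains to show that $\Av^{\{s\}}_!(K)$ vanishes on $\Gr_{sy}$, which will give $\xi_s(\LGr_y)|_{\Gr_{sy}} = \underline{\bk}_{\Gr_{sy}}[\ell(sy)]$ and hence $[\xi_s(\LGr_y) : \LGr_{sy}] = 1$. The composition factors of $K$ are of the form $\LGr^{\{s\}}_z$ with $z \in {}^{\{s\}}W_\ext^S$ and $z < y$; for each such $z$, $\Av^{\{s\}}_!(\LGr^{\{s\}}_z)$ is a quotient of $\Av^{\{s\}}_!(\DGr^{\{s\}}_z)$, which by Lemma~\ref{lem:Av-Iw}\eqref{it:Av-Iw-2} is supported on $\overline{\Gr_{sz}}$. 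The standard Bruhat-order fact that $z < y$ with $sz > z$ and $sy > y$ implies $sz < sy$ yields $\Gr_{sy} \not\subset \overline{\Gr_{sz}}$, so every composition factor of $\Av^{\{s\}}_!(K)$ is supported in $\overline{\Gr_{sy}} \setminus \Gr_{sy}$, completing the argument. The main subtlety lies in invoking the precise form of the $\Av^{\{s\}}_\psi$-image of standards from \cite[Lemma~3.3(3)]{projGr1}; once that is in place, part~(1) is immediate, and part~(2) reduces to a routine support and multiplicity computation.
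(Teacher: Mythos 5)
Your proof of part~(1) rests on a misquotation of~\cite[Lemma~3.3(3)]{projGr1}, and the misquoted statement is false exactly in the first case you need. When $y \in W_\ext^S$ and $sy<y$, the element $sy$ again lies in $W_\ext^S$, hence in ${}^{\{s\}}W_\ext^S$ (both members of the coset $\{sy,y\}$ are in $W_\ext^S$), and one has $\Av^{\{s\}}_\psi(\DGr_y) \cong \DGr^{\{s\}}_{sy} \neq 0$ and $\Av^{\{s\}}_\psi(\NGr_y) \cong \NGr^{\{s\}}_{sy} \neq 0$: the averaging functor sends \emph{every} (co)standard object whose label lies in a coset $W_{\{s\}}z$ with $z \in {}^{\{s\}}W_\ext^S$ to the (co)standard labelled by $z$; it does not kill the non-minimal member of the coset. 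This is forced by the paper's own uses of that lemma: the multiplicity $(\Av^{\{s\}}_!(\DGr^{\{s\}}_z) : \DGr_{sz})=1$ in Lemma~\ref{lem:Av-Iw}\eqref{it:Av-Iw-2} reads, by adjunction, $\dim\Hom(\DGr^{\{s\}}_z, \Av^{\{s\}}_\psi(\NGr_{sz}))=1$, so $\Av^{\{s\}}_\psi(\NGr_{sz})\neq 0$ although $sz \notin {}^{\{s\}}W_\ext^S$; the factor $\# W_A$ in Proposition~\ref{prop:Av-tilting-indec} and the isomorphism $\Av^A_\psi(\cbV_{vw})\cong\cbV^A_w$ of Lemma~\ref{lem:bV-Av}\eqref{it:bV-Av-psi} make the same point. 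Consequently, in the case $sy<y$ exactness only tells you that $\Av^{\{s\}}_\psi(\LGr_y)$ is the image of an induced map $\DGr^{\{s\}}_{sy}\to\NGr^{\{s\}}_{sy}$, and proving that this image vanishes is precisely the nontrivial content of the statement about \emph{simple} objects, \cite[Lemma~3.3(4)]{projGr1}; it does not follow from the behaviour on standards. The repair is simply to cite that statement directly, which is what the paper does for both cases of part~(1) (your second case, $sy>y$ with $sy\notin W_\ext^S$, is unaffected, since then no element of the coset is relevant); you also need the same citation in part~(2) to justify $\Av^{\{s\}}_\psi(\LGr_y)\cong\LGr^{\{s\}}_y$, rather than the ``the same rule holds'' shortcut.

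Granting $\Av^{\{s\}}_\psi(\LGr_y)\cong\LGr^{\{s\}}_y$ from \cite[Lemma~3.3(4)]{projGr1}, your part~(2) is correct and genuinely different from the paper's argument: the paper invokes Remark~\ref{rmk:xi-convolution} to write $\xi_s(\LGr_y)\cong\cF\star^I\LGr_y$ for a perverse sheaf $\cF$ supported on $\overline{\Fl_s}$ with $\cF_{|\Fl_s}\cong\underline{\bk}_{\Fl_s}[1]$, and reads off the support and the restriction to $\Gr_{sy}$ directly from the convolution diagram, whereas you express $\xi_s(\LGr_y)=\Av^{\{s\}}_!(\LGr^{\{s\}}_y)$ as a quotient of $\Av^{\{s\}}_!(\DGr^{\{s\}}_y)$ and control the kernel via Lemma~\ref{lem:Av-Iw}\eqref{it:Av-Iw-2} and the Bruhat lifting property ($z<y$, $sz>z$, $sy>y$ imply $sz<sy$). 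Both computations of the restriction to $\Gr_{sy}$ are valid; yours avoids the (free-monodromic) convolution realization at the cost of a bit more bookkeeping with composition factors of the kernel.
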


\begin{proof}
\eqref{it:xi-simples-1}
According to~\eqref{eqn:AWextS-characterization}, the condition that $sy<y$ implies that $y \notin {}^{\{s\}} W_\ext^S$.  Similarly, if $sy > y$ and $s y \notin W_\ext^S$, then again we have $y \notin {}^{\{s\}} W_\ext^S$.  In view of~\cite[Lemma~3.3(4)]{projGr1}, either of these conditions therefore implies that $\Av_\psi^{\{s\}}(\LGr_y)=0$, and hence a fortiori that $\xi_{s}(\LGr_y)=0$.

\eqref{it:xi-simples-2}
By Remark~\ref{rmk:xi-convolution}, there exists a perverse sheaf $\cF$ in $\Perv_{I_\unip}(\Fl,\bk)$, supported on $\overline{\Fl_s}$ and satisfying $\cF_{|\Fl_s} \cong \underline{\bk}_{\Fl_s}[1]$, such that $\xi_{s}(\LGr_y) \cong \cF \star^I \LGr_y$. It is easily seen that the right-hand side is supported on $\overline{\Gr_{s y}}$, and that its restriction to $\Gr_{sy}$ is $\underline{\bk}[\ell(sy)]$; the multiplicity of $\LGr_{sy}$ in this perverse sheaf is therefore $1$.
\end{proof}

\begin{lem}
\label{lem:support}
Let $w \in W_\ext$, $\omega \in \Omega$ and $s_1, \ldots, s_r \in S_\aff$ be such that
\[
\ell(\omega s_1 \cdots s_r w)=\ell(w)+r
\]
and $\omega s_1 \cdots s_r w$ belongs to $W_\ext^S$. Then:
\begin{enumerate}
\item
\label{it:support-1}
 $w$ belongs to $W_\ext^S$;
\item
\label{it:support-3}
for any perverse sheaf $\cF$ supported on $\overline{\Gr_w}$, $\xi_\omega \xi_{s_1} \cdots \xi_{s_r}(\cF)$ is supported on $\overline{\Gr_{\omega s_1 \cdots s_r w}}$; moreover we have
\[
[\xi_\omega \xi_{s_1} \cdots \xi_{s_r}(\cF) : \LGr_{\omega s_1 \cdots s_r w}] = [\cF : \LGr_w].
\]
\end{enumerate}
\end{lem}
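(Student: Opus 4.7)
The plan is to prove~\eqref{it:support-1} directly from the length characterization of $W_\ext^S$, and then prove~\eqref{it:support-3} by induction on $r$, peeling off the outermost $\xi_{s_r}$ and invoking Lemma~\ref{lem:xi-simples}.

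For~\eqref{it:support-1}, I would use the characterization that $v \in W_\ext^S$ iff $\ell(vs) > \ell(v)$ for every $s \in S$. If there were an $s \in S$ with $\ell(ws) < \ell(w)$, subadditivity of $\ell$ on $W_\ext$ would give
\[
\ell(\omega s_1 \cdots s_r ws) \leq r + \ell(ws) < r + \ell(w) = \ell(\omega s_1 \cdots s_r w),
\]
contradicting $\omega s_1 \cdots s_r w \in W_\ext^S$. The very same argument applied to each suffix $s_k s_{k+1} \cdots s_r w$ (for $1 \leq k \leq r$) shows that every such suffix also lies in $W_\ext^S$; this strengthening will feed the induction in~\eqref{it:support-3}.

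For~\eqref{it:support-3}, I would induct on $r$. The base case $r = 0$ is immediate: $\xi_\omega$ is the autoequivalence induced by left translation by $\dot\omega$, it permutes the strata via $y \mapsto \omega y$ compatibly with Bruhat closures, and by~\eqref{eqn:xi-L-T} it sends $\LGr_y$ to $\LGr_{\omega y}$. For the inductive step, I first apply $\xi_{s_r}$ to $\cF$. The length hypothesis yields $\ell(s_r w) = \ell(w) + 1$, and the suffix-strengthening of~\eqref{it:support-1} yields $s_r w \in W_\ext^S$. The composition factors of $\cF$ are of the form $\LGr_y$ with $y \in W_\ext^S$ and $y \leq w$; by Lemma~\ref{lem:xi-simples} each $\xi_{s_r}(\LGr_y)$ is either zero or supported on $\overline{\Gr_{s_r y}}$ and contains $\LGr_{s_r y}$ with multiplicity one. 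Combining $y \leq w$ with $s_r w > w$ via Bruhat lifting places $s_r y \leq s_r w$ in the nonzero case, so the entire support of $\xi_{s_r}(\cF)$ sits inside $\overline{\Gr_{s_r w}}$. I would then feed $\xi_{s_r}(\cF)$ together with the reduced tuple $(\omega, s_1, \ldots, s_{r-1})$ and base element $s_r w$ into the inductive hypothesis, noting that the required length identity $\ell(\omega s_1 \cdots s_{r-1}(s_r w)) = \ell(s_r w) + (r-1)$ is precisely the original hypothesis rewritten.

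The only delicate point is the multiplicity equality at the top stratum, which must be verified to keep the induction lossless. Using t-exactness of $\xi_{s_r}$, the multiplicity $[\xi_{s_r}(\cF) : \LGr_{s_r w}]$ decomposes as a sum over composition factors $\LGr_y$ of $\cF$ of the coefficients $[\xi_{s_r}(\LGr_y) : \LGr_{s_r w}]$; for such a coefficient to be nonzero one needs $s_r w \leq s_r y$, and combining this with the inequality $s_r y \leq s_r w$ supplied by Bruhat lifting forces $s_r y = s_r w$, hence $y = w$. Lemma~\ref{lem:xi-simples}\eqref{it:xi-simples-2} then contributes coefficient one, giving $[\xi_{s_r}(\cF) : \LGr_{s_r w}] = [\cF : \LGr_w]$, and the induction propagates this equality to $\LGr_{\omega s_1 \cdots s_r w}$. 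Apart from this cancellation, the argument is essentially bookkeeping on support combined with repeated use of~\eqref{it:support-1}.
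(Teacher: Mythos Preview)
Your proof is correct and follows essentially the same strategy as the paper's: induction on $r$ combined with Lemma~\ref{lem:xi-simples} to control supports and multiplicities one reflection at a time. The only cosmetic differences are that the paper first reduces to $\omega = e$ and then peels off the \emph{outermost} reflection $s_1$ (applying the inductive hypothesis to $\xi_{s_2}\cdots\xi_{s_r}(\cF)$ with the same base element $w$), whereas you peel off the \emph{innermost} reflection $s_r$ (applying the inductive hypothesis to $\xi_{s_r}(\cF)$ with new base element $s_r w$); and the paper establishes~\eqref{it:support-1} by the same induction, invoking \cite[Lemma~2.2]{projGr1} for the step, while your direct length argument using the characterization of $W_\ext^S$ is self-contained and slightly more elementary.
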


\begin{proof}
In view of~\eqref{eqn:xi-L-T}, and since $W_\ext^S$ is stable under left multiplication by elements of $\Omega$, we can assume that $\omega=e$. Then we proceed by induction on $r$, the case $r=0$ being obvious. If $r \geq 1$, then~\cite[Lemma~2.2]{projGr1} ensures that $s_2 \cdots s_r w \in W_\ext^S$, so that by induction $w \in W_\ext^S$ (establishing~\eqref{it:support-1}).

Now let $\cF$ be a perverse sheaf supported on $\overline{\Gr_w}$. By induction, $\xi_{s_2} \cdots \xi_{s_r}(\cF)$ is supported on $\overline{\Gr_{s_2 \cdots s_r w}}$, so that its composition factors are of the form $\LGr_y$ with $y \in W_\ext^S$ satisfying $y \leq s_2 \cdots s_r w$, and in case $y=s_2 \cdots s_r w$ we have
\[
[\xi_{s_2} \cdots \xi_{s_r}(\cF) : \LGr_{s_2 \cdots s_r w}] = [\cF : \LGr_w].
\]
By exactness of $\xi_{s_1}$ (see~\S\ref{ss:wc-functors}) the object $\xi_{s_1} \cdots \xi_{s_r}(\cF)$ is then an extension of perverse sheaves $\xi_{s_1}(\LGr_y)$ for such $y$'s. If $s_1y<y$ or if $s_1y > y$ and $s_1 y \notin W_\ext^S$, then $\xi_{s_1}(\LGr_y)=0$ by Lemma~\ref{lem:xi-simples}\eqref{it:xi-simples-1}.
If $s_1y > y$ and $s_1 y \in W_\ext^S$ with $y \neq s_2 \cdots s_r w$ then by Lemma~\ref{lem:xi-simples}\eqref{it:xi-simples-2} $\xi_{s_1}(\LGr_y)$ is supported on $\overline{\Gr_{s_1 y}}$; since $s_1 y < s_1 \cdots s_r w$ this perverse sheaf is therefore supported on $\overline{\Gr_{s_1 \cdots s_r w}}$ but does not admit $\LGr_{s_1 \cdots s_r w}$ as a composition factor. Finally, for $y=s_2 \cdots s_r w$, again by Lemma~\ref{lem:xi-simples}\eqref{it:xi-simples-2} the perverse sheaf $\xi_{s_1}(\LGr_{s_2 \cdots s_r w})$ is supported on $\overline{\Gr_{s_1 \cdots s_r w}}$, and
\[
[\xi_{s_1}(\LGr_{s_2 \cdots s_r w}) : \LGr_{s_1 s_2 \cdots s_r w}] = 1.
\]
We deduce statement~\eqref{it:support-3}, which finishes the induction.
\end{proof}

\subsection{The geometric Steinberg formula}

To finish this section we state the ``geometric Steinberg formula'' proved in~\cite[Theorems~4.1 and~4.3]{projGr1}. This statement will be the starting point for all the main constructions of the present paper.

\begin{thm}
\label{thm:geometric-Steinberg}
Let $A \subset S_\aff$ be a finitary subset.
\begin{enumerate}
\item
\label{it:thm-Steinberg}
For any $w \in {}^A W_\ext^\res$ and any $\mu \in \bY_+$ we have
\[
\mathsf{L}^A_w \star^{\cL^+ G} \IC^\mu \cong \mathsf{L}^A_{wt_{w_\circ(\mu)}}.
\]
\item
\label{it:thm-Steinberg-ff}
For any $w \in {}^A W_\ext^\res$, the functor
\[
\mathsf{L}^A_w \star^{\cL^+ G} (-) : \Perv_{\cL^+G}(\Gr,\bk) \to \Perv_{(I_\unip^A, \cX_A)}(\Gr,\bk)
\]
is fully faithful.
\end{enumerate}
\end{thm}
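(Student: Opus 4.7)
The plan is to prove both parts by a geometric analysis of the convolution diagram, exploiting the compatibility between restricted elements and translations by antidominant coweights established by Lemma~\ref{lem:lengths-add-Wres} and~\eqref{eqn:WS-Wres-Whit}. For part~\eqref{it:thm-Steinberg}, I first observe that $w \in {}^A W_\ext^\res$ together with $w_\circ(\mu) \in -\bY_+$ force $wt_{w_\circ(\mu)} \in {}^A W_\ext^S$ with $\ell(wt_{w_\circ(\mu)}) = \ell(w) + \ell(t_{w_\circ(\mu)})$, so the target simple $\mathsf{L}^A_{wt_{w_\circ(\mu)}}$ is defined with the expected length. Since $t$-exactness of convolution with $\Perv_{\cL^+G}(\Gr,\bk)$ (recalled in~\S\ref{ss:Whittaker}) guarantees that $\mathsf{L}^A_w \star^{\cL^+G} \IC^\mu$ is perverse, I would then analyze its support and generic stalk through the convolution morphism $\cL G \times^{\cL^+G} \Gr \to \Gr$: the support lies in the image of $\overline{\Gr_w} \,\tilde\times\, \overline{\Gr^\mu}$, and the crucial geometric observation is that the multiplication map restricted to $\Gr_w \,\tilde\times\, \Gr_{t_{w_\circ(\mu)}}$ is an isomorphism onto $\Gr_{wt_{w_\circ(\mu)}}$ (using length additivity together with the description of $\Gr_{t_{w_\circ(\mu)}}$ as the open $I$-orbit in $\Gr^\mu$). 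This provides the correct generic shifted Whittaker local system.

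To upgrade from ``correct support and generic stalk'' to simplicity, I expect to verify that the convolution morphism $\overline{\Gr_w} \,\tilde\times\, \overline{\Gr^\mu} \to \overline{\Gr_{wt_{w_\circ(\mu)}}}$ is \emph{small} in the sense of Goresky--MacPherson; restrictedness of $w$ should force each boundary fiber to have strictly less than half the codimension of the corresponding stratum in the target, so that the pushforward of the external tensor product of two IC sheaves is again IC. This smallness estimate is the main obstacle, requiring a delicate length/dimension comparison built from~\eqref{eqn:formula-length} and Lemma~\ref{lem:lengths-add-Wres}. A possibly gentler alternative would be to induct on $\mu$ via the factorization/fusion structure on $\Perv_{\cL^+G}(\Gr,\bk)$, reducing to the case where $\mu$ is a fundamental coweight, where $\Gr^\mu$ is simple enough to verify the IC support and costalk conditions on each $\Gr_y$ with $y < wt_{w_\circ(\mu)}$ by direct stalk computation.

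For part~\eqref{it:thm-Steinberg-ff}, the plan is to first promote part~\eqref{it:thm-Steinberg} to standards and costandards, namely
$$\mathsf{L}^A_w \star^{\cL^+G} \cI_!^\mu \cong \DGr^A_{wt_{w_\circ(\mu)}}, \qquad \mathsf{L}^A_w \star^{\cL^+G} \cI_*^\mu \cong \NGr^A_{wt_{w_\circ(\mu)}},$$
which follow from the same support and generic-stalk analysis and are strictly easier since only the $!$- or $*$-extension property must be checked. Using rigidity of $\Perv_{\cL^+G}(\Gr,\bk)$ (\S\ref{ss:Satake-category}), for any $\cF, \cG \in \Perv_{\cL^+G}(\Gr,\bk)$ I rewrite
$$\Hom(\mathsf{L}^A_w \star^{\cL^+G} \cF, \mathsf{L}^A_w \star^{\cL^+G} \cG) \cong \Hom(\mathsf{L}^A_w, \mathsf{L}^A_w \star^{\cL^+G} (\cG \star^{\cL^+G} \cF^\vee)),$$
and likewise $\Hom_{\Perv_{\cL^+G}}(\cF,\cG) \cong \Hom_{\Perv_{\cL^+G}}(\IC^0, \cG \star^{\cL^+G} \cF^\vee)$. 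It therefore suffices to show that the two functors $\Hom(\mathsf{L}^A_w, \mathsf{L}^A_w \star^{\cL^+G} -)$ and $\Hom_{\Perv_{\cL^+G}}(\IC^0, -)$ agree on $\Perv_{\cL^+G}(\Gr,\bk)$. Resolving $\cH$ by a costandard filtration and applying the isomorphism $\mathsf{L}^A_w \star^{\cL^+G} \cI_*^\nu \cong \NGr^A_{wt_{w_\circ(\nu)}}$, together with the highest-weight calculation $\Hom(\mathsf{L}^A_w, \NGr^A_y) = \delta_{y,w}\bk$, reduces the claim to the identity $\Hom_{\Perv_{\cL^+G}}(\IC^0, \cI_*^\nu) = \delta_{\nu,0}\bk$, immediate from the highest-weight structure on the Satake category. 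Thus both parts ultimately rest on the factorization/smallness analysis carried out in part~\eqref{it:thm-Steinberg}.
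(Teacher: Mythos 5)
First, a point of context: this paper does not actually prove Theorem~\ref{thm:geometric-Steinberg} --- it is imported wholesale from the companion paper \cite[Theorems~4.1 and~4.3]{projGr1}, whose argument (as one can already see from the ingredients quoted here, e.g.\ \cite[Lemma~3.3, Proposition~4.8, Lemma~4.9, Remark~4.10]{projGr1} and the Iwahori--Whittaker model of \cite{bgmrr}) is categorical: averaging/wall-crossing functors, standard and costandard filtrations, and explicit $\Hom$-space computations, not a fiber-dimension analysis of the convolution morphism. So your route is genuinely different from the actual one, and unfortunately it has two serious gaps. The first is the smallness claim, which you yourself identify as ``the main obstacle'': it carries the entire weight of part~\eqref{it:thm-Steinberg} and no mechanism is offered for how restrictedness of $w$ enters the estimate. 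Note that it \emph{must} enter in an essential way: for a non-restricted element such as $w=t_{w_\circ(\nu)}$ with $\nu \in \bY_+$ large, the convolution is $\IC^\nu \star^{\cL^+G}\IC^\mu$, which is not simple, and the corresponding convolution morphism is only semismall (this is exactly why spherical convolution produces tensor-product multiplicities). Asserting that restrictedness upgrades semismall to small is essentially restating the theorem in geometric language, and I know of no proof along these lines; the fallback via fusion/induction on $\mu$ also does not reduce to fundamental coweights with modular coefficients, since $\IC^\mu$ is not a summand of a convolution of fundamental $\IC$'s in that setting.

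The second gap is in part~\eqref{it:thm-Steinberg-ff}: the intermediate isomorphisms $\LGr^A_w \star^{\cL^+G}\cI^\mu_! \cong \DGr^A_{wt_{w_\circ(\mu)}}$ and $\LGr^A_w \star^{\cL^+G}\cI^\mu_* \cong \NGr^A_{wt_{w_\circ(\mu)}}$ are false in general. Already for $A=\varnothing$ and $w=e$ (which is restricted), they assert $\cI^\mu_! \cong \DGr_{t_{w_\circ(\mu)}}$, but $\cI^\mu_!$ restricts nontrivially to \emph{every} Iwahori stratum of $\Gr^\mu$, whereas $\DGr_{t_{w_\circ(\mu)}}$ is the extension by zero from the single open $I$-orbit; the paper's map~\eqref{eqn:costd-embed} is a noninvertible comparison, not an isomorphism. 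The correct statements available are only at the level of $\Hom$-spaces, as in Corollary~\ref{cor:Hom-vanishing-Steinberg}, and they rest on the nontrivial computations of \cite[Lemma~4.9, Remark~4.10]{projGr1}. Your rigidity reduction of full faithfulness to the computation of $\Hom(\LGr^A_w, \LGr^A_w\star^{\cL^+G}\cH)$ is a sensible skeleton (and close in spirit to how such statements are handled), but as written it feeds on the false isomorphisms above; repairing it requires precisely the $\Hom$-vanishing results whose proof is the real content of the companion paper, so the proposal cannot be accepted as a proof.
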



Later we will also need the following corollary of Theorem~\ref{thm:geometric-Steinberg}.

\begin{cor}
\label{cor:Hom-vanishing-Steinberg}
Let $y,y' \in {}^A W_\ext^\res$, $\mu \in \bY_+$ and $\nu \in -\bY_+$. 

\begin{enumerate}
\item
\label{it:Hom-vanishing-Steinberg-1}
If
$\Hom(\LGr^A_y \star^{\cL^+G} \cI_!^\mu, \NGr^A_{y't_\nu}) \neq 0$,
then there exists $\lambda \in \bY$ orthogonal to all roots such that $y=y't_\lambda$.
\item
\label{it:Hom-vanishing-Steinberg-2}
If $\nu \neq w_\circ(\mu)$, then $\Hom(\LGr^A_y \star^{\cL^+G} \cI_!^\mu, \NGr^A_{yt_\nu}) = 0$.
\item
\label{it:Hom-vanishing-Steinberg-3}
The space $\Hom(\LGr^A_y \star^{\cL^+G} \cI_!^\mu, \NGr^A_{yt_{w_\circ(\mu)}})$ is $1$-dimensional, and spanned by the composition
\[
\LGr^A_{y} \star^{\cL^+G} \cI_!^{\mu} \twoheadrightarrow \LGr^A_{y} \star^{\cL^+G} \IC^{\mu} \cong \LGr^A_{y t_{w_\circ(\mu)}} \hookrightarrow \NGr^A_{yt_{w_\circ(\mu)}},
\]
where the surjection is induced by the surjection $\cI_!^{\mu} \twoheadrightarrow \IC^{\mu}$ and the isomorphism is given by Theorem~\ref{thm:geometric-Steinberg}\eqref{it:thm-Steinberg}.
\end{enumerate}
\end{cor}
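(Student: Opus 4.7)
The strategy is to exploit the canonical short exact sequence
\[
0 \to K \to \cI_!^\mu \to \IC^\mu \to 0
\]
in $\Perv_{\cL^+G}(\Gr,\bk)$, where $K$ is the radical of the standard object $\cI_!^\mu$ and hence has all composition factors of the form $\IC^{\nu'}$ with $\nu'\in\bY_+$ and $\nu'<\mu$ in the dominance order. Applying the t-exact functor $\LGr^A_y\star^{\cL^+G}(-)$ from~\S\ref{ss:Whittaker} and using Theorem~\ref{thm:geometric-Steinberg}\eqref{it:thm-Steinberg} to identify the rightmost term yields a short exact sequence
\[
0 \to \LGr^A_y\star^{\cL^+G} K \to \LGr^A_y\star^{\cL^+G}\cI_!^\mu \to \LGr^A_{yt_{w_\circ(\mu)}} \to 0
\]
in $\Perv_{(I_\unip^A,\cX_A)}(\Gr,\bk)$. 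Applying Steinberg composition-factor by composition-factor, the leftmost term has all composition factors of the form $\LGr^A_{yt_{w_\circ(\nu')}}$ with $\nu'<\mu$.

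Apply $\Hom(-,\NGr^A_{y't_\nu})$ to obtain a left-exact sequence whose outer terms are $\Hom(\LGr^A_{yt_{w_\circ(\mu)}},\NGr^A_{y't_\nu})$ and $\Hom(\LGr^A_y\star^{\cL^+G} K,\NGr^A_{y't_\nu})$. For part~\eqref{it:Hom-vanishing-Steinberg-3} (taking $y=y'$, $\nu=w_\circ(\mu)$) the first outer term is $\bk$, while the second vanishes: any nonzero morphism into $\NGr^A_{yt_{w_\circ(\mu)}}$ would force its simple socle $\LGr^A_{yt_{w_\circ(\mu)}}$ to appear among the composition factors of $\LGr^A_y\star^{\cL^+G} K$, which it does not. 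The composition described in the statement is a concrete generator. For part~\eqref{it:Hom-vanishing-Steinberg-1}, nonvanishing of an outer term gives $y't_\nu=yt_{w_\circ(\nu')}$ for some $\nu'\leq\mu$ in $\bY_+$; setting $\lambda:=\nu-w_\circ(\nu')$ gives $y=y't_\lambda$. The constraint that both $y^{-1}(\Afund)$ and $y^{-1}(\Afund)-\lambda$ lie inside the bounded box $\Pi_\varsigma$ (which follows from $y,y'\in W_\ext^\res$), combined with the integrality of $\langle\alpha,\lambda\rangle$ for simple roots $\alpha$, forces $\langle\alpha,\lambda\rangle=0$ for every simple root, so $\lambda$ is orthogonal to $\fR$.

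The subtlest case is part~\eqref{it:Hom-vanishing-Steinberg-2}, where $y=y'$ and $\nu\in-\bY_+$ with $\nu\neq w_\circ(\mu)$. The first outer term vanishes automatically, but when $\nu=w_\circ(\nu')$ for some $\nu'\in\bY_+$ with $\nu'<\mu$, the simple $\LGr^A_{yt_\nu}$ is a genuine composition factor of $\LGr^A_y\star^{\cL^+G} K$, so the crude composition-factor argument is insufficient. The plan is to pass to the Verdier dual (Remark~\ref{rmk:Verdier-duality}), rewriting the target as $\Hom(\DGr^A_{yt_\nu},\LGr^A_y\star^{\cL^+G}\cI_*^\mu)$, and to analyze the dual short exact sequence $0\to\IC^\mu\to\cI_*^\mu\to K''\to 0$ via an analogous left-exact sequence. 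The heart of the matter, and the main obstacle, is to rule out $\LGr^A_{yt_\nu}$ from appearing as a quotient of any subobject of $\LGr^A_y\star^{\cL^+G} K''$ when $\nu\neq w_\circ(\mu)$; I expect this to follow by induction on $\mu$ in the dominance order (base case: $\mu$ minimal, so $\cI_!^\mu=\IC^\mu$), combined with the full faithfulness of $\LGr^A_y\star^{\cL^+G}(-)$ from Theorem~\ref{thm:geometric-Steinberg}\eqref{it:thm-Steinberg-ff} and the highest-weight structure of the Satake category.
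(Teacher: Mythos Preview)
Your arguments for parts~\eqref{it:Hom-vanishing-Steinberg-1} and~\eqref{it:Hom-vanishing-Steinberg-3} are correct. Part~\eqref{it:Hom-vanishing-Steinberg-1} is essentially what the paper does. For part~\eqref{it:Hom-vanishing-Steinberg-3} your direct socle argument is actually cleaner than the paper's route, which passes through the reduction to $A=\varnothing$ and then invokes an external computation from~\cite{projGr1}.

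Part~\eqref{it:Hom-vanishing-Steinberg-2}, however, has a genuine gap. You correctly identify the obstacle, but your proposed repair does not work as stated. The induction on $\mu$ fails because neither $K$ nor the Verdier-dual $K''$ admits in general a filtration by smaller standard (resp.\ costandard) objects: in a highest weight category the radical of a standard object has no reason to be standardly filtered, so the inductive hypothesis on $\cI_!^{\mu'}$ gives you no purchase on $K$. And full faithfulness of $\LGr^A_y\star^{\cL^+G}(-)$ is of no direct help either, since the object $\DGr^A_{yt_\nu}$ you are mapping out of does not lie in the essential image of that functor; you cannot transport the question back to the Satake category.

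The paper's argument for part~\eqref{it:Hom-vanishing-Steinberg-2} takes a completely different route. It first reduces from general $A$ to $A=\varnothing$: writing $\LGr^A_y\cong\Av^A_\psi(\LGr_y)$ and using adjunction together with an explicit costandard filtration of $\Av^A_*(\NGr^A_{yt_\nu})\cong\TFl_{w_A}\star^I\NGr_{yt_\nu}$, one shows that the Whittaker $\Hom$ coincides with $\Hom(\LGr_y\star^{\cL^+G}\cI_!^\mu,\NGr_{yt_\nu})$. Then comes the key move you are missing: replace the simple $\LGr_y$ by the standard $\DGr_y$ via the surjection $\DGr_y\twoheadrightarrow\LGr_y$, which embeds the $\Hom$-space into $\Hom(\DGr_y\star^{\cL^+G}\cI_!^\mu,\NGr_{yt_\nu})$. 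The latter is computed by \cite[Lemma~4.9]{projGr1}: the object $\DGr_y\star^{\cL^+G}\cI_!^\mu$ admits a \emph{standard} filtration (unlike $\LGr_y\star^{\cL^+G}\cI_!^\mu$), and a direct analysis of the multiplicities shows the $\Hom$ vanishes unless $\nu=w_\circ(\mu)$. The passage from simple to standard is what converts the intractable composition-factor problem into a tractable standard-multiplicity problem.
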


\begin{proof}
\eqref{it:Hom-vanishing-Steinberg-1}
By Theorem~\ref{thm:geometric-Steinberg} and exactness of the bifunctor $\star^{\cL^+G}$, taking a composition series of $\cI_!^\mu$ we obtain a composition series of $\LGr^A_y \star^{\cL^+G} \cI_!^\mu$, all of whose subquotients are of the form $\LGr^A_{yt_{w_\circ(\eta)}}$ for $\eta \in \bY_+$. If our $\Hom$-space is nonzero, there exists such an $\eta$ which satisfies
\[
\Hom(\LGr^A_{yt_{w_\circ(\eta)}},\NGr^A_{y't_\nu}) \neq 0,
\]
i.e.~such that $yt_{w_\circ(\eta)}=y't_\nu$. By the definition of $W_\ext^\res$, this implies that $\nu - w_\circ(\eta)$ is orthogonal to all roots, and then the desired claim.

\eqref{it:Hom-vanishing-Steinberg-2}
By~\cite[Lemma~3.3(4)]{projGr1} we have $\Av^A_\psi(\LGr_y) \cong \LGr^A_y$, so
\[
\Av^A_\psi(\LGr_y \star^{\cL^+G} \cI_!^\mu) \cong \LGr^A_y \star^{\cL^+G} \cI_!^\mu.
\]
Using adjunction, we deduce that
\[
\Hom(\LGr^A_y \star^{\cL^+G} \cI_!^\mu, \NGr^A_{yt_\nu}) \cong \Hom(\LGr_y \star^{\cL^+G} \cI_!^\mu, \Av^A_*(\NGr^A_{yt_\nu})).
\]
Now by~\cite[Lemma~3.3(3)]{projGr1} and Remark~\ref{rmk:xi-convolution} we have
\[
\Av^A_*(\NGr^A_{yt_\nu}) \cong \TFl_{w_A} \star^I \NGr_{yt_\nu}.
\]
It is a standard fact that there exists an embedding $\NFl_e \hookrightarrow \TFl_{w_A}$ whose cokernel admits a costandard filtration with subquotients of the form $\NFl_x$ with $x \in W_A \smallsetminus \{e\}$, each appearing once. Since $yt_\nu$ is minimal in $W_A yt_\nu$ (see~\S\ref{ss:coset-representatives}), using Lemma~\ref{lem:conv-D-N}\eqref{it:conv-DNGr} we deduce an embedding $\NGr_{yt_\nu} \hookrightarrow \TFl_{w_A} \star^I \NGr_{yt_\nu}$ whose cokernel admits a costandard filtration with subquotients $\NGr_{xyt_\nu}$ with $x \in W_A \smallsetminus \{e\}$. Since all the composition factors of $\LGr_y \star^{\cL^+G} \cI_!^\mu$ have their label in ${}^A W_\ext^S$ (see the proof of~\eqref{it:Hom-vanishing-Steinberg-1} and~\eqref{eqn:WS-Wres-Whit}), for any such $x$ we have
\[
\Hom(\LGr_y \star^{\cL^+G} \cI_!^\mu, \NGr_{xyt_\nu})=0.
\]
We deduce that $\Hom(\LGr_y \star^{\cL^+G} \cI_!^\mu, \NGr_{yt_\nu}) \cong \Hom(\LGr_y \star^{\cL^+G} \cI_!^\mu, \TFl_{w_A} \star^I \NGr_{yt_\nu})$, and hence
\[
\Hom(\LGr^A_y \star^{\cL^+G} \cI_!^\mu, \NGr^A_{yt_\nu}) \cong \Hom(\LGr_y \star^{\cL^+G} \cI_!^\mu, \NGr_{yt_\nu}).
\]
Finally we have a surjection $\DGr_y \twoheadrightarrow \LGr_y$, which induces an embedding
\[
\Hom(\LGr_y \star^{\cL^+G} \cI_!^\mu, \NGr_{yt_\nu}) \hookrightarrow \Hom(\DGr_y \star^{\cL^+G} \cI_!^\mu, \NGr_{yt_\nu}).
\]
By~\cite[Lemma~4.9]{projGr1} the right-hand side vanishes if $\nu \neq w_\circ(\mu)$, which implies the desired claim.

\eqref{it:Hom-vanishing-Steinberg-3}
As in~\eqref{it:Hom-vanishing-Steinberg-2} we have an embedding
\[
\Hom(\LGr^A_y \star^{\cL^+G} \cI_!^\mu, \NGr^A_{yt_{w_\circ(\mu)}})
\hookrightarrow \Hom(\DGr_y \star^{\cL^+G} \cI_!^\mu, \NGr_{yt_{w_\circ(\mu)}}).
\]
By~\cite[Remark~4.10]{projGr1} the right-hand side is $1$-dimensional, so that the left-hand side has dimension $0$ or $1$. The nonzero morphism exhibited in the statement shows that this space is nonzero; it must therefore be $1$-dimensional.
\end{proof}

\section{Background from representation theory}
\label{sec:mod-R-rep}

In the next section we will introduce our main object of study, a certain category of ind-perverse sheaves which should be thought of as a geometric counterpart of the category of $\bG_1\bT$-modules for a reductive group $\bG$ (with maximal torus $\bT$) such that $\bG^{(1)}$ is Langlands dual to $G$. In order to motivate this construction, and to justify our conventions, we explain in this section the representation-theoretic version of this construction. 

We fix an algebraically closed field $\bk$ of characteristic $p>0$ and,
for any affine group scheme $H$ over $\bk$, denote by $\Rep(H)$ the category of finite-dimensional $H$-modules.

\subsection{Representations of Frobenius kernels as representations of reductive groups with additional structure}
\label{ss:Rep-G1T}

We fix a connected reductive algebraic group $\bG$ over $\bk$, with a Borel subgroup $\bB$ and a maximal torus $\bT \subset \bB$. (We use this notation because, in practice, we want to use the results below in the case $G$ and $\bG$ are related as in~\S\ref{ss:intro-overview}. But in the present section $\bG$ can be arbitrary.) We will denote by $\bY$ the lattice of characters of $\bT$, and by $\bY_+ \subset \bY$ the subset of dominant weights, with the convention that the nonzero $\bT$-weights in $\mathrm{Lie}(\bB)$ are the \emph{negative} roots. (Of course, in general these sets might differ from those denoted in the same way in~\S\ref{ss:Wext}, but they will coincide in the case we are interested in.) We have the Frobenius morphism $\Fr : \bG \to \bG^{(1)}$, which restricts to the Frobenius morphisms of $\bB$ and $\bT$; we will identify the character lattice of $\bT^{(1)}$ with $\bY$ in such a way that the morphism $\Fr^* : X^*(\bT^{(1)}) \to \bY$ identifies with $\lambda \mapsto p\lambda$.

We will be mostly interested in representations of $\bG_1\bT := \Fr^{-1}(\bT^{(1)})$. Following the point of view of~\cite{ag}, we consider the composition of equivalences
\begin{equation}
\label{eqn:equiv-G1T-Coh}
 \Rep(\bG_1\bT) \cong \Coh^{\bG_1\bT}(\mathrm{pt}) \cong \Coh^{\bG \times \bT^{(1)}}\bigl( (\bG \times \bT^{(1)}) / \bG_1\bT \bigr),
\end{equation}
where $\bG_1\bT$ is seen as a subgroup in $\bG \times \bT^{(1)}$ via $g \mapsto (g,\Fr(g))$. Now the map $(g,t) \mapsto \Fr(g)t^{-1}$ induces an isomorphism
\[
 (\bG \times \bT^{(1)}) / \bG_1\bT \simto \bG^{(1)},
\]
so that the category on the right-hand side of~\eqref{eqn:equiv-G1T-Coh} identifies with the category of $\bG \times \bT^{(1)}$-equivariant coherent sheaves on $\bG^{(1)}$, or in other words with $\bG$-equivariant $\bY$-graded $\scO(\bG^{(1)})$-modules which are finitely generated over $\scO(\bG^{(1)})$. Here $\scO(\bG^{(1)})$ is endowed with the left regular representation structure, and considered as $\bY$-graded with
\[
 \scO(\bG^{(1)})_\lambda = \Ind_{\bT^{(1)}}^{\bG^{(1)}}(-\lambda),
\]
and the equivalence sends a $\bG_1\bT$-module $M$ to the $\bY$-graded module whose degree-$\lambda$ component is
\[
 \Ind_{\bG_1\bT}^{\bG} \bigl( M \otimes \Fr^*(\bk_{\bT^{(1)}}(-\lambda)) \bigr).
\]
In particular, in view of the tensor identity (see~\cite[Proposition~I.3.6]{jantzen}), under this equivalence the restriction functor $\Rep(\bG) \to \Rep(\bG_1\bT)$ identifies with the functor sending $M$ to the $\bY$-graded module whose degree-$\lambda$ component is
\[
 M \otimes \Ind_{\bG_1\bT}^{\bG} \bigl( \Fr^*(\bk_{\bT^{(1)}}(-\lambda)) \bigr)  = M \otimes \Ind_{\bT^{(1)}}^{\bG^{(1)}}(-\lambda) = M \otimes \scO(\bG^{(1)})_{\lambda}.
\]

The $\bG$-modules considered above are typically infinite-dimensional; however the category of all (possibly infinite-dimensional) algebraic representations of an algebraic group identifies with the category of ind-objects in the category of finite-dimensional representations. (See~\S\ref{ss:ind} below for some comments and references on ind-objects. The statement above can be deduced from~\cite[Corollary~6.3.5]{ks}.) From this point of view, we therefore obtain an equivalence of categories between $\Rep(\bG_1\bT)$ and the category of $\bY$-graded ind-objects in $\Rep(\bG)$ endowed with a graded action of $\scO(\bG^{(1)})$ (seen as an algebra object in the category of $\bY$-graded ind-objects in $\Rep(\bG)$) and which are finitely generated with respect to this action (i.e.~isomorphic to a quotient of a finite direct sum of grading shifts of objects $M \otimes \scO(\bG^{(1)})$ with $M \in \Rep(\bG)$).

\begin{rmk}
\phantomsection
\label{rmk:AG-G^1}
\begin{enumerate}
\item
\label{it:AG-G^1}
The same considerations show that the category of $\bY$-graded ind-objects in $\Rep(\bG^{(1)})$ endowed with a graded action of $\scO(\bG^{(1)})$ identifies with the category of $\bY$-graded $\bk$-vector spaces (i.e.~algebraic---but not necessarily finite dimensional---representations of $\bT^{(1)}$).
\item
One can also omit the torus $\bT$ in this construction, i.e.~consider the Frobenius kernel $\bG_1$ instead of $\bG_1\bT$, and deduce an equivalence of categories between $\Rep(\bG_1)$ and the category of ind-objects in $\Rep(\bG)$ endowed with an action of $\scO(\bG^{(1)})$ and which are finitely generated with respect to this action.
\end{enumerate}
\end{rmk}

\subsection{The left regular representation as an ind-object}
\label{ss:reg-ind}

In this subsection we explain how the object $\Ind_{\bT^{(1)}}^{\bG^{(1)}}(-\lambda)$ can be explicitly represented as an ind-object in
$\Rep(\bG^{(1)})$.  (See the discussion in~\S\ref{ss:ind} below for generalities on ind-objects.)

For $\lambda \in \bY_+$, we will denote by $\weyl^{(1)}(\lambda)$, resp.~ $\coweyl^{(1)}(\lambda)$, the Weyl module, resp.~induced module, for $\bG^{(1)}$ of highest weight $\lambda$; by definition we have $\weyl^{(1)}(\lambda)=\coweyl^{(1)}(-w_\circ(\lambda))^*$.
It is a standard fact that for $\lambda,\mu \in \bY_+$ and $n \in \Z$ we have
\begin{equation}
\label{eqn:Ext-vanishing-Rep}
\Ext^n_{\Rep(\bG^{(1)})}(\weyl^{(1)}(\lambda), \coweyl^{(1)}(\mu)) \cong \begin{cases}
\bk & \text{if $\lambda=\mu$ and $n=0$;} \\
0 & \text{otherwise;}
\end{cases}
\end{equation}
see~\cite[Proposition~II.4.13]{jantzen}.
Given $\lambda,\lambda' \in \bY_+$ there exists a unique morphism of $\bG^{(1)}$-modules
\begin{equation}
\label{eqn:can-morph-rep-2}
\coweyl^{(1)}(\lambda) \otimes \coweyl^{(1)}(\lambda') \to \coweyl^{(1)}(\lambda+\lambda')
\end{equation}
sending the tensor product of the highest weight vectors in the left-hand side to the highest weight vector in the right-hand side. By duality, we deduce for any $\lambda,\lambda' \in \bY_+$ a canonical morphism
\begin{equation}
\label{eqn:can-morph-rep-2-M}
 \weyl^{(1)}(\lambda+\lambda') \to \weyl^{(1)}(\lambda) \otimes \weyl^{(1)}(\lambda').
\end{equation}

We will assume we are given, for any $\lambda \in \bY_+$, a nonzero morphism of $\bG^{(1)}$-modules
\[
 \varphi_\lambda : \weyl^{(1)}(\lambda) \to \coweyl^{(1)}(\lambda),
\]
such that for $\lambda,\lambda' \in \bY_+$ the composition
\[
\weyl^{(1)}(\lambda+\lambda') \xrightarrow{\eqref{eqn:can-morph-rep-2-M}} \weyl^{(1)}(\lambda) \otimes \weyl^{(1)}(\lambda') \xrightarrow{\varphi_\lambda \otimes \varphi_{\lambda'}} \coweyl^{(1)}(\lambda) \otimes \coweyl^{(1)}(\lambda') \xrightarrow{\eqref{eqn:can-morph-rep-2}} \coweyl^{(1)}(\lambda+\lambda')
\]
coincides with $\varphi_{\lambda+\lambda'}$. (See Remark~\ref{rmk:morphisms-lift} below for a discussion of this condition.)
By adjunction, $\varphi_\lambda$ determines a canonical morphism
 \begin{equation}
\label{eqn:can-morph-rep-1}
\bk \to \coweyl^{(1)}(-w_\circ(\lambda)) \otimes \coweyl^{(1)}(\lambda).
\end{equation}

Below we will consider various (formal) inductive limits parametrized by some subsets of $\bY_+$; in each case, this subset is endowed with the restriction of the partial order on $\bY_+$ such that $\lambda$ is smaller than $\lambda'$ iff $\lambda'-\lambda$ is dominant.
Given a weight $\mu \in \bY$, we consider the ind-object
\[
 \underset{\lambda \in \bY_+ \cap (w_\circ(\mu) + \bY_+)}{``\varinjlim{}"} \coweyl^{(1)}(\mu-w_\circ(\lambda)) \otimes \coweyl^{(1)}(\lambda)
\]
where given $\eta \in \bY_+$ the transition morphism
\[
 \coweyl^{(1)}(\mu-w_\circ(\lambda)) \otimes \coweyl^{(1)}(\lambda) \to \coweyl^{(1)}(\mu-w_\circ(\lambda+\eta)) \otimes \coweyl^{(1)}(\lambda+\eta)
\]
is the composition
\begin{multline*}
 \coweyl^{(1)}(\mu-w_\circ(\lambda)) \otimes \coweyl^{(1)}(\lambda) \to \coweyl^{(1)}(\mu-w_\circ(\lambda)) \otimes \coweyl^{(1)}(-w_\circ(\eta)) \otimes \coweyl^{(1)}(\eta) \otimes \coweyl^{(1)}(\lambda) \\
 \to \coweyl^{(1)}(\mu-w_\circ(\lambda+\eta)) \otimes \coweyl^{(1)}(\lambda+\eta)
\end{multline*}
where the first map is induced by~\eqref{eqn:can-morph-rep-1}, and the second one by~\eqref{eqn:can-morph-rep-2} (applied in the first two and last two factors).

\begin{lem}
\label{lem:reg-ind}
For any $\mu \in \bY$,
 the functor represented by the ind-object
 \[
 \underset{\lambda \in \bY_+ \cap (w_\circ(\mu) + \bY_+)}{``\varinjlim{}"} \coweyl^{(1)}(\mu-w_\circ(\lambda)) \otimes \coweyl^{(1)}(\lambda)
\]
is given by $V \mapsto \Hom_{\bG^{(1)}}(V,\Ind_{\bT^{(1)}}^{\bG^{(1)}}(\mu))$.
\end{lem}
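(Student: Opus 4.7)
The plan is to define, for each $\lambda \in \bY_+ \cap (w_\circ(\mu) + \bY_+)$, a canonical morphism
\[
\Psi_\lambda \colon \coweyl^{(1)}(\mu - w_\circ(\lambda)) \otimes \coweyl^{(1)}(\lambda) \to \Ind_{\bT^{(1)}}^{\bG^{(1)}}(\mu),
\]
to verify that this family is compatible with the transition maps of the ind-system, and then to prove that the induced morphism of ind-objects is an isomorphism.

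By Frobenius reciprocity for $\bT^{(1)} \subset \bG^{(1)}$, giving $\Psi_\lambda$ amounts to giving a $\bT^{(1)}$-equivariant morphism $\Theta_\lambda \colon \coweyl^{(1)}(\mu - w_\circ(\lambda)) \otimes \coweyl^{(1)}(\lambda) \to \bk_\mu$. I set $\Theta_\lambda$ equal to the tensor product of two canonical $\bT^{(1)}$-equivariant projections: the projection $\coweyl^{(1)}(\mu - w_\circ(\lambda)) \twoheadrightarrow \bk_{\mu - w_\circ(\lambda)}$ onto the highest weight line, obtained as the counit of the Frobenius adjunction using $\coweyl^{(1)}(\mu - w_\circ(\lambda)) = \Ind_{\bB^{(1)}}^{\bG^{(1)}}(\bk_{\mu - w_\circ(\lambda)})$, and the analogous projection $\coweyl^{(1)}(\lambda) \twoheadrightarrow \bk_{w_\circ(\lambda)}$ onto the lowest weight line, arising from the Frobenius adjunction for the opposite Borel. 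In the realization of $\coweyl^{(1)}(\nu)$ as equivariant functions on $\bG^{(1)}$, these projections correspond to evaluation at $e$ and at a representative $\dot{w}_\circ$ of the longest Weyl element, and $\Psi_\lambda(f \otimes g)$ is explicitly the function $x \mapsto f(x) \cdot g(x \dot{w}_\circ)$.

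Compatibility of $(\Psi_\lambda)$ with the transitions reduces via Frobenius reciprocity to the corresponding identity on $\bT^{(1)}$-morphisms, which in turn follows from two ingredients: multiplicativity of pointwise evaluation in $\scO(\bG^{(1)})$ (so that each evaluation map intertwines the canonical multiplication~\eqref{eqn:can-morph-rep-2}), and the fact that the family of morphisms $(\varphi_\eta)_{\eta \in \bY_+}$ can be normalized (consistently with the multiplicativity constraint imposed by the paper) so that the composition $\bk \to \coweyl^{(1)}(-w_\circ(\eta)) \otimes \coweyl^{(1)}(\eta) \xrightarrow{\mathrm{ev}^+ \otimes \mathrm{ev}^-} \bk$ is the identity. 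Equivalently, $\Psi_\eta$ must send the canonical $\bG^{(1)}$-invariant vector of $\coweyl^{(1)}(-w_\circ(\eta)) \otimes \coweyl^{(1)}(\eta)$ (the image of the unit) to the constant function $1 \in \scO(\bG^{(1)}/\bT^{(1)}) = \Ind_{\bT^{(1)}}^{\bG^{(1)}}(0)$; this is forced by $\bG^{(1)}$-invariance, since constants are the only $\bG^{(1)}$-invariant regular functions on the homogeneous space $\bG^{(1)}/\bT^{(1)}$.

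Finally, to show that the induced map of ind-objects is an isomorphism, I use the fact that $x \mapsto (x, x\dot{w}_\circ)$ identifies $\bG^{(1)}/\bT^{(1)}$ with the open $\bG^{(1)}$-orbit of pairs of opposite Borels in $\bG^{(1)}/\bB^{(1)} \times \bG^{(1)}/\bB^{(1)}$, and each $\Psi_\lambda$ is the restriction along this embedding of global sections of the line bundle $\scO(\mu - w_\circ(\lambda)) \boxtimes \scO(\lambda)$ on the compactification. Injectivity of each $\Psi_\lambda$ then follows from density of the open orbit. Surjectivity of the colimit map amounts to the statement that every $h \in \Ind_{\bT^{(1)}}^{\bG^{(1)}}(\mu) = H^0(\bG^{(1)}/\bT^{(1)}, \cL_\mu)$ lies in $\mathrm{Image}(\Psi_\lambda)$ for $\lambda$ sufficiently dominant; this is a pole-order analysis along the complement divisor in the compactification. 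The main obstacle is making this exhaustion step rigorous; the alternative algebraic route is to prove the corresponding stabilization statement that the multiplicities $(\coweyl^{(1)}(\mu - w_\circ(\lambda)) \otimes \coweyl^{(1)}(\lambda) : \coweyl^{(1)}(\nu))$ in a good filtration equal $\dim \coweyl^{(1)}(\nu)_\mu$ for $\lambda$ sufficiently dominant, and then compare good filtrations on both sides via the injections $\Psi_\lambda$.
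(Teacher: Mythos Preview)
Your construction of the maps $\Psi_\lambda$ via Frobenius reciprocity and highest/lowest weight projections is essentially the same as the paper's.  Where you diverge is in proving that the induced map of ind-objects is an isomorphism: you propose a geometric argument via the open embedding $\bG^{(1)}/\bT^{(1)} \hookrightarrow \bG^{(1)}/\bB^{(1)} \times \bG^{(1)}/\bB^{(1)}$ and pole-order analysis along the boundary, whereas the paper takes the purely algebraic route that you mention only as an alternative.  Specifically, the paper computes $\Hom_{\bG^{(1)}}(V, \coweyl^{(1)}(\mu - w_\circ(\lambda)) \otimes \coweyl^{(1)}(\lambda))$ directly for $\lambda$ large by rewriting it as $\Hom_{\bG^{(1)}}(\weyl^{(1)}(-w_\circ(\lambda)), V^* \otimes \coweyl^{(1)}(\mu - w_\circ(\lambda)))$, observing that for $\lambda$ large the tensor identity and Kempf's vanishing give $V^* \otimes \coweyl^{(1)}(\mu - w_\circ(\lambda))$ a good filtration with subquotients $(V^*)_\nu \otimes \coweyl^{(1)}(\mu + \nu - w_\circ(\lambda))$, and then applying~\eqref{eqn:Ext-vanishing-Rep} to read off the answer as $(V_\mu)^*$.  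This is short and works uniformly in all characteristics.

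Your geometric approach is correct in spirit and gives a pleasant conceptual picture, but as you yourself note, the exhaustion step needs a genuine argument: you must show that the boundary divisors $D_\alpha$ are Cartier, compute the divisor class of $\scO(-w_\circ(\eta)) \boxtimes \scO(\eta)$ in terms of the $D_\alpha$, and invoke normality of $\bG^{(1)}/\bB^{(1)} \times \bG^{(1)}/\bB^{(1)}$ to conclude.  None of this is hard, but it is more machinery than the paper's two-line good-filtration computation, and you have not actually carried it out.
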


\begin{proof}
 By Frobenius reciprocity~\cite[Proposition~II.3.4]{jantzen}, for any $V$ in $\Rep(\bG^{(1)})$ we have
 \[
 \Hom_{\bG^{(1)}}(V,\Ind_{\bT^{(1)}}^{\bG^{(1)}}(\mu)) = (V_\mu)^*,
 \]
where $V_\mu$ is the $\mu$-weight space of $V$. Now we have a canonical morphism of $\bB^{(1)}$-modules $\coweyl^{(1)}(\mu-w_\circ(\lambda)) \to \bk_{\bB^{(1)}}(\mu-w_\circ(\lambda))$, and
the morphism $\varphi_{-w_\circ(\lambda)}$ determines a morphism of $\bB^{(1)}$-modules $\weyl^{(1)}(-w_\circ(\lambda)) \to \bk_{\bB^{(1)}}(-w_\circ(\lambda))$. In turn, this morphism defines a highest weight vector in $\weyl^{(1)}(-w_\circ(\lambda))$, hence a lowest weight vector in $\coweyl^{(1)}(\lambda)$, which determines a morphism of $\bT^{(1)}$-modules $\coweyl^{(1)}(\lambda) \to \bk_{\bT^{(1)}}(w_\circ(\lambda))$. Tensoring these morphisms we obtain a morphism of $\bT^{(1)}$-modules $\coweyl^{(1)}(\mu-w_\circ(\lambda)) \otimes \coweyl^{(1)}(\lambda) \to \bk_{\bT^{(1)}}(\mu)$, hence a morphism of $\bG^{(1)}$-modules
\[
\coweyl^{(1)}(\mu-w_\circ(\lambda)) \otimes \coweyl^{(1)}(\lambda) \to \Ind_{\bT^{(1)}}^{\bG^{(1)}}(\mu),
\]
and finally a morphism of functors
\[
\Hom_{\bG^{(1)}}(-,\coweyl^{(1)}(\mu-w_\circ(\lambda)) \otimes \coweyl^{(1)}(\lambda)) \to \Hom_{\bG^{(1)}}(-, \Ind_{\bT^{(1)}}^{\bG^{(1)}}(\mu)).
\]
We deduce a morphism of functors
\[
\varinjlim_\lambda \Hom_{\bG^{(1)}}(-,\coweyl^{(1)}(\mu-w_\circ(\lambda)) \otimes \coweyl^{(1)}(\lambda)) \to \Hom_{\bG^{(1)}}(-, \Ind_{\bT^{(1)}}^{\bG^{(1)}}(\mu)),
\]
and to conclude it suffices to prove that this morphism is an isomorphism.

On the other hand, for $\lambda \in \bY_+ \cap (w_\circ(\mu) + \bY_+)$ we have
 \begin{multline}
 \label{eqn:reg-ind-proof}
  \Hom_{\bG^{(1)}} \bigl( V, \coweyl^{(1)}(\mu-w_\circ(\lambda)) \otimes \coweyl^{(1)}(\lambda) \bigr) \\
  \cong \Hom_{\bG^{(1)}} \bigl( \weyl^{(1)}(-w_\circ(\lambda)), V^* \otimes \coweyl^{(1)}(\mu-w_\circ(\lambda)) \bigr).
 \end{multline}
Assume now that $\lambda$ is large enough that $\nu + \mu-w_\circ(\lambda)$ is dominant for any weight $\nu$ of $V^*$.  Then, by the tensor identity~\cite[Proposition~I.3.6]{jantzen} and Kempf's vanishing theorem~\cite[Proposition~II.4.5]{jantzen}, the module $V^* \otimes \coweyl^{(1)}(\mu-w_\circ(\lambda))$ admits a finite filtration with associated graded
\[
 \bigoplus_\nu (V^*)_\nu \otimes \coweyl^{(1)}(\mu+\nu-w_\circ(\lambda)).
\]
In this case, in view of~\eqref{eqn:Ext-vanishing-Rep} the space in~\eqref{eqn:reg-ind-proof} identifies with
\[
 (V^*)_{-\mu} \otimes \Hom_{\bG^{(1)}} \bigl( \weyl^{(1)}(-w_\circ(\lambda)), \coweyl^{(1)}(-w_\circ(\lambda)) \bigr) \cong (V_\mu)^*,
\]
which concludes the proof.
\end{proof}

\begin{rmk}
\phantomsection
\label{rmk:Ind-coweyl}
\begin{enumerate}
\item
\label{it:reg-ind-B}
Below we will also require a variant of Lemma~\ref{lem:reg-ind}, which follows from similar arguments (using also Frobenius reciprocity for the induction from $\bB^{(1)}$ to $\bG^{(1)}$). Consider, for a given $\mu \in \bY$, the ind-object
 \[
 \underset{\lambda \in \bY_+ \cap (w_\circ(\mu) + \bY_+)}{``\varinjlim{}"} \bk_{\bB^{(1)}}(\mu-w_\circ(\lambda)) \otimes \coweyl^{(1)}(\lambda)
\]
in $\Rep(\bB^{(1)})$, where for $\eta \in \bY_+$ the transition morphism
\[
 \bk_{\bB^{(1)}}(\mu-w_\circ(\lambda)) \otimes \coweyl^{(1)}(\lambda) \to \bk_{\bB^{(1)}}(\mu-w_\circ(\lambda+\eta)) \otimes \coweyl^{(1)}(\lambda+\eta)
\]
is the composition
\begin{multline*}
 \bk_{\bB^{(1)}}(\mu-w_\circ(\lambda)) \otimes \coweyl^{(1)}(\lambda) \to \bk_{\bB^{(1)}}(\mu-w_\circ(\lambda)) \otimes \coweyl^{(1)}(-w_\circ(\eta)) \otimes \coweyl^{(1)}(\eta) \otimes \coweyl^{(1)}(\lambda) \\
 \to \bk_{\bB^{(1)}}(\mu-w_\circ(\lambda)) \otimes \bk_{\bB^{(1)}}(-w_\circ(\eta)) \otimes \coweyl^{(1)}(\eta) \otimes \coweyl^{(1)}(\lambda) \\
 \to \bk_{\bB^{(1)}}(\mu-w_\circ(\lambda+\eta)) \otimes \coweyl^{(1)}(\lambda+\eta)
\end{multline*}
where the first map is induced by~\eqref{eqn:can-morph-rep-1}, the second one by the natural morphism $\coweyl^{(1)}(-w_\circ(\eta)) \to \bk_{\bB^{(1)}}(-w_\circ(\eta))$, and the third one by~\eqref{eqn:can-morph-rep-2}. Then the functor represented by this ind-object is given by
\[
V \mapsto \Hom_{\bB^{(1)}}(V, \Ind_{\bT^{(1)}}^{\bB^{(1)}}(\mu)).
\]
\item
\label{it:Ind-coweyl-stab}
Given a property depending on a coweight $\lambda$ living in a subset $\Lambda \subset \bY$, we will say that this property holds \emph{when $\lambda$ is large enough} if there exists $\nu \in \bY$ such that the property holds for any $\lambda \in \Lambda \cap (\nu + \bY_+)$.
The proof of Lemma~\ref{lem:reg-ind} shows that given $V,V'$ in $\Rep(\bG^{(1)})$, the vector space
\[
\Hom_{\bG^{(1)}}(V, V' \otimes \coweyl^{(1)}(\mu-w_\circ(\lambda)) \otimes \coweyl^{(1)}(\lambda))
\]
does not depend on $\lambda$ (up to canonical isomorphism) if $\lambda$ is large enough.
\end{enumerate}
\end{rmk}

Since $\scO(\bG^{(1)})$ is an algebra, we have multiplication morphisms
\[
 \Ind_{\bT^{(1)}}^{\bG^{(1)}}(\mu) \otimes \Ind_{\bT^{(1)}}^{\bG^{(1)}}(\nu) \to \Ind_{\bT^{(1)}}^{\bG^{(1)}}(\mu+\nu)
\]
for any $\mu,\nu \in \bY$. Via the identification of Lemma~\ref{lem:reg-ind}, this morphism is induced by the collection of natural morphisms
\begin{multline*}
 \coweyl^{(1)}(\mu-w_\circ(\lambda)) \otimes \coweyl^{(1)}(\lambda) \otimes \coweyl^{(1)}(\nu-w_\circ(\lambda')) \otimes \coweyl^{(1)}(\lambda') \\
 \to \coweyl^{(1)}(\mu+\nu-w_\circ(\lambda+\lambda')) \otimes \coweyl^{(1)}(\lambda+\lambda')
\end{multline*}
(for $\lambda,\lambda'$ dominant and sufficiently large) induced by~\eqref{eqn:can-morph-rep-2}.

\begin{rmk}
\label{rmk:morphisms-lift}
The datum of a collection of morphisms $(\varphi_\lambda : \lambda \in \bY_+)$ as above is equivalent to the datum of a lift of the longest element $w_\circ$ of the Weyl group of $(\bG^{(1)},\bT^{(1)})$ to $\mathrm{N}_{\bG^{(1)}}(\bT^{(1)})$. 

Indeed, assume we are given a collection of morphisms as above.
For any $\mu \in \bY$, setting $\lambda'=\mu-w_\circ(\lambda)$ we obtain an isomorphism
\begin{multline*}
 \underset{\lambda \in \bY_+ \cap (w_\circ(\mu) + \bY_+)}{``\varinjlim{}"} \coweyl^{(1)}(\mu-w_\circ(\lambda)) \otimes \coweyl^{(1)}(\lambda) \\
 = \underset{\lambda' \in \bY_+ \cap (\mu + \bY_+)}{``\varinjlim{}"} \coweyl^{(1)}(w_\circ(\mu)-w_\circ(\lambda')) \otimes \coweyl^{(1)}(\lambda').
\end{multline*}
By Lemma~\ref{lem:reg-ind}, this identification provides, for any $V \in \Rep(\bG^{(1)})$, an isomorphism $V_\mu \simto V_{w_\circ(\mu)}$. One can check that these isomorphisms provide a tensor automorphism of the forgetful functor from $\Rep(\bG^{(1)})$ to finite-dimensional $\bk$-vector spaces hence, by Tannakian formalism, an element in $\bG^{(1)}$. The behaviour of this element with respect to weight spaces shows that this element is a lift of $w_\circ$ to $\mathrm{N}_{\bG^{(1)}}(\bT^{(1)})$. 

Conversely, recall that by construction the module $\coweyl^{(1)}(\lambda)$ comes with a canonical vector of weight $\lambda$. Given a lift of $w_\circ$ to $\mathrm{N}_{\bG^{(1)}}(\bT^{(1)})$, we obtain a canonical vector of weight $w_\circ(\lambda)$ in each $\coweyl^{(1)}(\lambda)$, and then a canonical vector of weight $\lambda$ in each $\weyl^{(1)}(\lambda)$. There exists then a unique morphism of $\bG^{(1)}$-modules from $\weyl^{(1)}(\lambda)$ to $\coweyl^{(1)}(\lambda)$ sending the highest weight vector of the former to the highest weight vector of the latter, which provides a construction of a morphism $\varphi_\lambda$ as above.
\end{rmk}

\subsection{Baby co-Verma modules as ind-\texorpdfstring{$\bG$}{G}-modules}
\label{ss:baby-coV-Rep}

We now consider the preimage $\bB_1\bT$ of $\bT^{(1)}$ under the Frobenius morphism $\bB \to \bB^{(1)}$.
Following the conventions of~\cite[\S II.9.1]{jantzen}, for $\nu \in \bY$ we consider the baby co-Verma module
\[
 \widehat{\mathsf{Z}}'(\nu) = \Ind_{\bB_1\bT}^{\bG_1\bT}(\nu),
\]
where on the right-hand side $\nu$ is seen as a character of $\bB$, and hence of $\bB_1 \bT$ by restriction.
In order to describe the image of this $\bG_1\bT$-module under the equivalence of~\S\ref{ss:Rep-G1T}, we need to describe, for any $\lambda \in \bY$, the representation
\[
 \Ind_{\bG_1\bT}^{\bG}(\widehat{\mathsf{Z}}'(\nu) \otimes \bk_{\bT^{(1)}}(-\lambda)) = \Ind_{\bG_1\bT}^{\bG}(\widehat{\mathsf{Z}}'(\nu-p\lambda)).
\]

For any $\mu \in \bY_+$, we will denote by $\coweyl(\mu)$ the induced $\bG$-module with highest weight $\mu$. Note that given $\mu,\mu' \in \bY_+$, there exists a canonical morphism
\begin{equation}
\label{eqn:can-morph-rep-3}
  \coweyl(\mu') \otimes \bigl( \Fr^* \coweyl^{(1)}(\mu) \bigr) \to \coweyl(p\mu+\mu')
\end{equation}
sending the tensor product of the canonical highest weight vectors on the left-hand side to the canonical highest weight vector on the right-hand side.
Given $\mu \in \bY$, we consider the ind-object
\[
 \underset{\lambda \in \bY_+ \cap \frac{1}{p}(w_\circ(\mu)+\bY_+)}{``\varinjlim{}"} \coweyl(\mu-pw_\circ(\lambda))  \otimes \Fr^*(\coweyl^{(1)}(\lambda))
\]
in $\Rep(\bG)$, where the transition morphisms are given by the compositions
\begin{multline*}
  \coweyl(\mu-pw_\circ(\lambda)) \otimes \Fr^*(\coweyl^{(1)}(\lambda)) \\
 \to \coweyl(\mu-pw_\circ(\lambda)) \otimes \Fr^*(\coweyl^{(1)}(-w_\circ(\eta))) \otimes \Fr^*(\coweyl^{(1)}(\eta)) \otimes \Fr^*(\coweyl^{(1)}(\lambda)) \\
 \to  \coweyl(\mu-pw_\circ(\lambda+\eta)) \otimes \Fr^*(\coweyl^{(1)}(\lambda+\eta))
\end{multline*}
for $\eta \in \bY_+$, where the first map is induced by~\eqref{eqn:can-morph-rep-1} and the second one by~\eqref{eqn:can-morph-rep-2} and~\eqref{eqn:can-morph-rep-3}.

\begin{lem}
 For any $\mu \in \bY$, the functor represented by the ind-object
 \[
 \underset{\lambda \in \bY_+ \cap \frac{1}{p}(w_\circ(\mu)+\bY_+)}{``\varinjlim{}"} \coweyl(\mu-pw_\circ(\lambda)) \otimes \Fr^*(\coweyl^{(1)}(\lambda))
\]
is given by $V \mapsto \Hom_{\bG}(V,\Ind_{\bG_1\bT}^{\bG}(\widehat{\mathsf{Z}}'(\mu)))$.
\end{lem}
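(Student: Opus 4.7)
The proof follows the strategy of Lemma~\ref{lem:reg-ind}, now reducing to the $\bB$-side statement in Remark~\ref{rmk:Ind-coweyl}\eqref{it:reg-ind-B}. The plan is to use transitivity of induction together with the tensor identity and the Frobenius-equivariant description of $\bB/\bB_1\bT$ to rewrite the target $\Ind_{\bG_1\bT}^\bG(\widehat{\mathsf{Z}}'(\mu))$ in the ind-form asserted in the statement.

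Applying transitivity through $\bB_1\bT \subset \bB \subset \bG$ and the definition $\widehat{\mathsf{Z}}'(\mu) = \Ind_{\bB_1\bT}^{\bG_1\bT}(\bk_\mu)$ yields $\Ind_{\bG_1\bT}^\bG(\widehat{\mathsf{Z}}'(\mu)) = \Ind_\bB^\bG(\Ind_{\bB_1\bT}^\bB(\bk_\mu))$. To identify the inner induction, the key observation is that Frobenius induces a $\bB$-equivariant isomorphism $\bB/\bB_1\bT \simto \widecheck{\mathbf{U}}^{(1)}$, where $\widecheck{\mathbf{U}}$ denotes the unipotent radical of $\bB$ and $\bB$ acts on $\widecheck{\mathbf{U}}^{(1)}$ through $\Fr : \bB \to \bB^{(1)}$. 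This produces
\[
\Ind_{\bB_1\bT}^\bB(\bk_0) \;=\; \Fr^* \scO(\widecheck{\mathbf{U}}^{(1)}) \;=\; \Fr^* \Ind_{\bT^{(1)}}^{\bB^{(1)}}(0),
\]
and the tensor identity upgrades this to $\Ind_{\bB_1\bT}^\bB(\bk_\mu) = \bk_\mu \otimes \Fr^* \Ind_{\bT^{(1)}}^{\bB^{(1)}}(0)$. Substituting the ind-object description of $\Ind_{\bT^{(1)}}^{\bB^{(1)}}(0)$ from Remark~\ref{rmk:Ind-coweyl}\eqref{it:reg-ind-B} (specialized to the trivial weight), pulling back by Frobenius, and tensoring with $\bk_\mu$ gives an ind-presentation of $\Ind_{\bB_1\bT}^\bB(\bk_\mu)$ whose terms are $\bk_\bB(\mu - pw_\circ(\lambda)) \otimes \Fr^*\coweyl^{(1)}(\lambda)$ for $\lambda \in \bY_+$.

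Applying $\Ind_\bB^\bG$ termwise and using the tensor identity once more produces
\[
\Ind_\bB^\bG \bigl( \bk_\bB(\mu - pw_\circ(\lambda)) \otimes \Fr^*\coweyl^{(1)}(\lambda)|_\bB \bigr) \;=\; \coweyl(\mu - pw_\circ(\lambda)) \otimes \Fr^*\coweyl^{(1)}(\lambda);
\]
by Kempf vanishing this is nonzero precisely for $\lambda \in \bY_+ \cap \tfrac{1}{p}(w_\circ(\mu) + \bY_+)$, and as this subset is cofinal in $\bY_+$, restriction recovers exactly the ind-object in the statement. Finally, the transition morphisms must be matched: tracing the three-step transition of Remark~\ref{rmk:Ind-coweyl}\eqref{it:reg-ind-B} through Frobenius pullback and $\Ind_\bB^\bG$, the tensor identity converts the projection $\coweyl^{(1)}(-w_\circ(\eta)) \twoheadrightarrow \bk_{\bB^{(1)}}(-w_\circ(\eta))$ composed with~\eqref{eqn:can-morph-rep-2} into the canonical morphism~\eqref{eqn:can-morph-rep-3}, reproducing the two-step transition of the statement. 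The main obstacle is the Frobenius-equivariant identification $\Ind_{\bB_1\bT}^\bB(\bk_0) = \Fr^* \Ind_{\bT^{(1)}}^{\bB^{(1)}}(0)$, which hinges on the fact that the $\bB$-action on $\bB/\bB_1\bT$ factors through $\Fr$; once this is established, the remaining steps amount to bookkeeping with transitivity, the tensor identity, and Kempf vanishing.
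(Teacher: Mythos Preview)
Your proof is correct and follows essentially the same approach as the paper: both arguments reduce to the identification $\Ind_{\bB_1\bT}^{\bB}(\bk_\mu) \cong \bk_\bB(\mu) \otimes \Fr^*\bigl(\Ind_{\bT^{(1)}}^{\bB^{(1)}}(\bk)\bigr)$, feed in the ind-presentation of Remark~\ref{rmk:Ind-coweyl}\eqref{it:reg-ind-B}, and then use transitivity of induction and the tensor identity. The only organizational difference is that the paper works on the $\Hom$ side throughout---introducing the $\bB_1$-coinvariants functor as the left adjoint to $\Fr^*$ in order to rewrite each $\Hom_\bG(V,\text{term}_\lambda)$ before passing to the limit---whereas you manipulate the target ind-object directly and apply $\Ind_\bB^\bG$ termwise; these are equivalent once one notes (via Frobenius reciprocity and compactness of $V$) that $\Ind_\bB^\bG$ commutes with filtered colimits at the level of represented functors.
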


\begin{proof}
We will consider the functor $(-)_{\bB_1}$ of ``coinvariants'' for finite-dimensional representations of the Frobenius kernel $\bB_1$ of $\bB$, given by
\[
V_{\bB_1} := ((V^*)^{\bB_1})^*.
\]
It is easily seen that this functor is left adjoint to the inclusion functor from finite-dimensional $\bk$-vector spaces to $\Rep(\bB_1)$; it also induces a functor $\Rep(\bB) \to \Rep(\bB^{(1)})$ which is left adjoint to the functor $\Fr^* : \Rep(\bB^{(1)}) \to \Rep(\bB)$.

For $V$ in $\Rep(\bG)$, we observe that by the tensor identity and Frobenius reciprocity we have
\[
\Hom_{\bG} \bigl( V,\coweyl(\mu-pw_\circ(\lambda)) \otimes \Fr^*(\coweyl^{(1)}(\lambda)) \bigr) \cong \Hom_{\bB} \bigl( V, \bk_{\bB}(\mu-pw_\circ(\lambda)) \otimes \Fr^*(\coweyl^{(1)}(\lambda)) \bigr).
\]
We deduce that this space identifies with
\[
\Hom_{\bB^{(1)}} \bigl( (V \otimes \bk_{\bB}(-\mu))_{\bB_1}, \bk_{\bB^{(1)}}(-w_\circ(\lambda)) \otimes \coweyl^{(1)}(\lambda) \bigr).
\]
Using Remark~\ref{rmk:Ind-coweyl}\eqref{it:reg-ind-B}, we deduce an isomorphism
\begin{multline*}
\varinjlim_{\lambda} \Hom_{\bG} \bigl( V,\coweyl(\mu-pw_\circ(\lambda)) \otimes \Fr^*(\coweyl^{(1)}(\lambda)) \bigr) \cong \\
\Hom_{\bB^{(1)}} \bigl( (V \otimes \bk_{\bB}(-\mu))_{\bB_1}, \Ind_{\bT^{(1)}}^{\bB^{(1)}}(\bk) \bigr).
\end{multline*}
Now we have
\begin{multline*}
\Hom_{\bB^{(1)}} \bigl( (V \otimes \bk_{\bB}(-\mu))_{\bB_1}, \Ind_{\bT^{(1)}}^{\bB^{(1)}}(\bk) \bigr) \cong \Hom_{\bB} \bigl( V \otimes \bk_{\bB}(-\mu), \Fr^*(\Ind_{\bT^{(1)}}^{\bB^{(1)}}(\bk)) \bigr) \\
\cong \Hom_{\bB} \bigl( V, \Fr^*(\Ind_{\bT^{(1)}}^{\bB^{(1)}}(\bk)) \otimes \bk_{\bB}(\mu) \bigr),
\end{multline*}
and
\[
\Fr^*(\Ind_{\bT^{(1)}}^{\bB^{(1)}}(\bk)) \otimes \bk_{\bB}(\mu) \cong \Ind_{\bB_1\bT}^{\bB}(\bk) \otimes \bk_{\bB}(\mu) \cong \Ind_{\bB_1\bT}^{\bB}(\mu)
\]
 by the tensor identity; using Frobenius reciprocity and transitivity of induction (see~\cite[\S II.3.5]{jantzen}) we deduce an isomorphism
 \[
\varinjlim_{\lambda} \Hom_{\bG} \bigl( V,\coweyl(\mu-pw_\circ(\lambda)) \otimes \Fr^*(\coweyl^{(1)}(\lambda)) \bigr) \cong
\Hom_{\bG}(V, \Ind_{\bB_1\bT}^{\bG}(\mu)).
\]
Transitivity of induction also implies that $\Ind_{\bG_1\bT}^{\bG}(\widehat{\mathsf{Z}}'(\mu)) \cong \Ind_{\bB_1\bT}^{\bG}(\mu)$, so that this provides the desired isomorphism.
\end{proof}

This lemma shows that for $\nu \in \bY$ the image of $\widehat{\mathsf{Z}}'(\nu)$ under the equivalence of~\S\ref{ss:Rep-G1T} is the $\bY$-graded ind-object with degree-$\lambda$ component given by
 \[
 \underset{\mu}{``\varinjlim{}"} \coweyl(\nu-p\lambda-pw_\circ(\mu)) \otimes \Fr^*(\coweyl^{(1)}(\mu)),
\]
where the transition morphisms are as above.
In these terms, and using the description of Lemma~\ref{lem:reg-ind}, the action of $\scO(\bG^{(1)})$ is induced by the morphisms
\begin{multline*}
 \coweyl(\nu-p\lambda-pw_\circ(\mu)) \otimes \coweyl^{(1)}(\mu) \otimes \coweyl^{(1)}(-\lambda'-w_\circ(\mu')) \otimes \coweyl^{(1)}(\mu') \\
 \to \coweyl(\nu-p(\lambda+\lambda')-pw_\circ(\mu+\mu')) \otimes \coweyl^{(1)}(\mu+\mu')
\end{multline*}
induced by~\eqref{eqn:can-morph-rep-2} and~\eqref{eqn:can-morph-rep-3} for $\lambda,\lambda' \in \bY$ and $\mu,\mu' \in \bY_+$ large enough (where we omit the functor $\Fr^*$ to lighten notation).

\section{Modules over the regular perverse sheaf}
\label{sec:mod-R}

We now come back to the setting of Sections~\ref{sec:combinatorics}--\ref{sec:perv}.

In this section, for any finitary subset $A \subset S_\aff$ we will define and initiate the study of a certain category of ind-objects in $\Perv_{(I^A_\unip, \cX_A)}(\Gr,\bk)$ equipped with additional structures: namely, a grading indexed by $\bY$, and the structure of a right module over a certain algebra ind-object $\cR$ in $\Perv_{\cL^+G}(\Gr,\bk)$. After some preliminaries in Section~\ref{sec:averaging}, in Section~\ref{sec:proj-R-mod} we will see that this category is a finite-length abelian category with enough injectives and projectives, and that it satisfies properties similar to those of the category $\Rep(\bG_1\bT)$ where $\bG$ is a connected reductive algebraic group over $\bk$ (with maximal torus $\bT$) whose Frobenius twist is $G^\vee_\bk$. One can also omit the $\bY$-grading, and obtain a similar theory that is analogous to that of $\bG_1$-modules. This theory will be reviewed in Section~\ref{sec:ungrad-R-mod}.

\subsection{Ind-objects}
\label{ss:ind}

Our constructions will make use of ind-objects in (locally small) categories; for the generalities on this construction we refer to~\cite[Chap.~6]{ks}.
We will repeatedly use the fact that any functor $F : \mathsf{C} \to \mathsf{D}$ extends in a canonical way to a functor $\mathrm{Ind}(\mathsf{C}) \to \mathrm{Ind}(\mathsf{D})$, which will be denoted by the same symbol, see~\cite[Proposition~6.1.9]{ks}. In view of~\cite[Proposition~6.1.12]{ks}, a similar comment applies to bifunctors. Recall also that the category of ind-objects in a locally small abelian category is locally small and abelian, see~\cite[Lemma~6.1.2 and Theorem~8.6.5]{ks}, and that the functor on ind-objects induced by an exact functor is exact, see~\cite[Corollary~8.6.8]{ks}.

Given a category $\mathsf{A}$ and a set $X$, by an $X$-graded object in $\mathsf{A}$ we will mean a collection $A=(A_x : x \in X)$ of objects in $\mathsf{A}$. We will write informally
\[
A = \bigoplus_{x \in X} A_x,
\]
but the symbol ``$\bigoplus$" has no formal meaning here. In particular,
we do \emph{not} assume that only finitely many objects $A_x$ are nonzero, nor that the (possibly infinite) direct sum exists in $\mathsf{A}$.

\subsection{The regular perverse sheaf}
\label{ss:reg-perv-sheaf}

Recall the objects $\cI^\lambda_*$ and $\cI^\lambda_!$ ($\lambda \in \bY_+$) introduced in~\S\ref{ss:Satake-category}. For any $\lambda \in \bY_+$, the natural morphisms for sheaf functors provide a canonical morphism
\[
 \cI^\lambda_! \to \cI^\lambda_*.
\]
Since $\cI_!^\lambda$ has rigid dual $\cI^{-w_\circ(\lambda)}_*$ (see~\eqref{eqn:duals}), this morphism induces a canonical morphism
\begin{equation}\label{eqn:costd-unit}
\IC^0 \to \cI^{\lambda}_* \star^{\cL^+G} \cI^{-w_\circ(\lambda)}_*.
\end{equation}
Next, for $\lambda,\mu \in \bY_+$, since the perverse sheaf $\cI^\lambda_* \star^{\cL^+G} \cI^\mu_*$ is supported on the closure of $\Gr^{\lambda+\mu}$ and has restriction to $\Gr^{\lambda+\mu}$ equal to $\underline{\bk}_{\Gr^{\lambda+\mu}}[\dim(\Gr^{\lambda+\mu})]$, there exists a canonical morphism
\begin{equation}\label{eqn:costd-conv-sph}
\cI^\lambda_* \star^{\cL^+G} \cI^\mu_* \to \cI^{\lambda+\mu}_*.
\end{equation}

Let us endow $\bY_+$ (and any of its subsets) with the preorder such that $\lambda$ is less or equal to $\lambda'$ iff $\lambda'-\lambda$ is dominant. For $\mu \in \bY$ we consider the ind-object
\begin{equation}
\label{eqn:description-Rmu}
\cR_\mu = \underset{\lambda \in \bY_+ \cap (-w_\circ(\mu) + \bY_+)}{``\varinjlim"} \cI^{w_\circ(\mu) + \lambda}_* \star^{\cL^+G} \cI^{-w_\circ(\lambda)}_*
\end{equation}
in $\Perv_{\cL^+ G}(\Gr,\bk)$,
where the transition maps are given by the morphisms
\begin{multline*}
 \cI_*^{w_\circ(\mu)+\lambda} \star^{\cL^+G} \cI_*^{-w_\circ(\lambda)} \to \cI_*^{w_\circ(\mu)+\lambda} \star^{\cL^+G} \cI_*^{\nu} \star^{\cL^+G} \cI_*^{-w_\circ(\nu)} \star^{\cL^+G} \cI_*^{-w_\circ(\lambda)} \\
\to \cI_*^{w_\circ(\mu)+\lambda+\nu} \star^{\cL^+G} \cI_*^{-w_\circ(\lambda+\nu)}
\end{multline*}
for $\nu \in \bY_+$.  Here, the first map comes from~\eqref{eqn:costd-unit}, and the second one from~\eqref{eqn:costd-conv-sph} (applied to the first two and last two factors).

We have an obvious ``unit map''
\begin{equation}
\label{eqn:reg-unit}
\eta: \IC^0 \to \cR_0,
\end{equation}
and for $\mu,\mu' \in \bY$ we have a ``multiplication map'' 
\begin{equation}
\label{eqn:reg-mult}
\cR_\mu \star^{\cL^+G} \cR_{\mu'} \to \cR_{\mu + \mu'}
\end{equation}
obtained
as the limit (over suitable $\lambda, \lambda'$)  of the maps 
\begin{multline*}
\bigl( \cI_*^{w_\circ(\mu)+\lambda} \star^{\cL^+G} \cI_*^{-w_\circ(\lambda)} \bigr) \star^{\cL^+G} \bigl( \cI_*^{w_\circ(\mu')+\lambda'} \star^{\cL^+G} \cI_*^{-w_\circ(\lambda')} \bigr) \\
\to \cI_*^{w_\circ(\mu+\mu')+\lambda+\lambda'} \star^{\cL^+G} \cI_*^{-w_\circ(\lambda+\lambda')}
\end{multline*}
provided by~\eqref{eqn:costd-conv-sph} and the commutativity constraint on the monoidal category $\Perv_{\cL^+G}(\Gr,\bk)$.  
%
The map~\eqref{eqn:reg-mult} satisfies an obvious associativity property, as well as an appropriate compatibility property with~\eqref{eqn:reg-unit}.  Therefore,~\eqref{eqn:reg-unit} and~\eqref{eqn:reg-mult} make the $\bY$-graded ind-object
\[
\cR := \bigoplus_{\mu \in \bY} \cR_\mu
\]
into an algebra object in the category of $\bY$-graded ind-perverse sheaves.  We call $\cR$ the \emph{regular perverse sheaf}.


Recall the autoequivalence $\mathrm{sw}$ of the category $\Perv_{\cL^+G}(\Gr,\bk)$ considered in~\cite[\S 1.2]{projGr1}. Then for any $\lambda \in \bY_+$ we have a canonical isomorphism $\mathrm{sw}(\cI_*^\lambda) \cong \cI_*^{-w_\circ(\lambda)}$. 
These isomorphisms and Lemma~\ref{lem:reg-ind}\footnote{In Section~\ref{sec:mod-R-rep} we have assumed that $\bk$ has positive characteristic; however Lemma~\ref{lem:reg-ind} also holds in characteristic $0$, if $\bG^{(1)}$ is interpreted as an abstract reductive group, without reference to another group $\bG$.} show that $\mathrm{sw}(\cR_\mu)$ corresponds, under (the extension to ind-objects of) the equivalence $\Satake$, to $\Ind_{T^{\vee}_\bk}^{G^\vee_\bk}(-\mu)$, seen as an ind-object in $\Rep(G^\vee_\bk)$. (In this case, we choose as morphism $\varphi_\lambda$ from~\S\ref{ss:reg-ind} the one induced by the canonical morphism $\cI^\lambda_! \to \cI^\lambda_*$.) This justifies our choice of convention for the definition of $\cR$, in view of the 
formulation of the Finkelberg--Mirkovi{\'c} conjecture in~\cite[Conjecture~1.1]{projGr1}.

\subsection{Splitting the unit map in characteristic $0$}
\label{ss:splitting-unit}

For later use (in~\S\ref{ss:more-on-inj}),
in this subsection we show that when $\bk$ has characteristic $0$, the unit map~\eqref{eqn:reg-unit} admits a left inverse, and hence that $\IC^0$ is a direct summand of $\cR_0$.  Recall that in this case the category $\Perv_{\cL^+G}(\Gr,\bk)$ is semisimple, and that the canonical morphism $\cI^\mu_! \to \cI^\mu_*$ is an isomorphism (and both objects can be identified with $\IC^\mu$).  We can therefore rewrite the definition of $\cR_0$ from~\eqref{eqn:description-Rmu} as
\begin{equation}\label{eqn:description-R0-char0}
\cR_0 = \underset{\lambda \in \bY_+}{``\varinjlim"} \, \IC^{\lambda} \star^{\cL^+G} \IC^{-w_\circ(\lambda)}.
\end{equation}
Next, dual to~\eqref{eqn:costd-unit}, we have a ``counit map'' $\epsilon_\lambda: \IC^\lambda \star^{\cL^+G} \IC^{-w_\circ(\lambda)} \to \IC^0$.  These maps are \emph{not} compatible with the transition maps in~\eqref{eqn:description-R0-char0}, so they do not define a map $\cR_0 \to \IC_0$.

However, in the present setting that $\mathrm{char}(\bk)=0$ we can correct this failure of compatibility by introducing the maps
\[
\textstyle
\bar\epsilon_\lambda = \frac{1}{\dim \Satake(\IC^\lambda)} \epsilon_\lambda = \frac{1}{\dim \Satake(\IC^{-w_\circ(\lambda)})} \epsilon_\lambda: \IC^\lambda \star^{\cL^+G} \IC^{-w_\circ(\lambda)} \to \IC^0.
\]
Let $\eta_\lambda: \IC^0 \to \IC^\lambda \star^{\cL^+G} \IC^{-w_\circ(\lambda)}$ be the map defined in~\eqref{eqn:costd-unit}.  The composition $\epsilon_\lambda \eta_\lambda: \IC^0 \to \IC^0$ is given by multiplication by $\dim \Satake(\IC^\lambda)$ (this can easily be seen by considering the analogous unit and counit maps in the category $\Rep(G^\vee_\bk)$), so $\bar\epsilon_\lambda \eta_\lambda = \id$.

We claim that the maps $\bar\epsilon_\lambda$ are compatible with the transition maps in~\eqref{eqn:description-R0-char0}, i.e., that the bottom square in the following diagram commutes:
\[
\begin{tikzcd}
\IC^0 \ar[d, "\eta_\lambda"']  \ar[dr, "\eta_{\lambda+\nu}"] \\
\IC^\lambda \star^{\cL^+G} \IC^{-w_\circ(\lambda)} \ar[d, "\bar\epsilon_\lambda"'] \ar[r] &
  \IC^{\lambda+\nu} \star^{\cL^+G} \IC^{-w_\circ(\lambda+\nu)} \ar[d, "\bar\epsilon_{\lambda+\nu}"]  \\
\IC^0 \ar[r, equal] & \IC^0
\end{tikzcd}
\]
Since $\dim \Hom(\IC^\lambda \star^{\cL^+G} \IC^{-w_\circ(\lambda)}, \IC^0) = 1$, the commutativity of the bottom square can be checked after composition with the unit maps in the top part of the diagram.  Commutativity follows from the observation that $\bar\epsilon_\lambda \eta_\lambda = \bar\epsilon_{\lambda+\nu} \eta_{\lambda+\nu} = \id$.  Together, the collection of maps $\bar\epsilon_\lambda$ define a map of ind-perverse sheaves
\[
\bar\epsilon: \cR_0 \to \IC^0
\]
satisfying $\bar\epsilon \circ \eta = \id$.  

\subsection{Graded \texorpdfstring{$\cR$}{R}-modules}
\label{ss:graded-R-modules}

A \emph{$\bY$-graded right $\cR$-module} is, by definition, a $\bY$-graded ind-object
\[
\cF = \bigoplus_{\lambda \in \bY} \cF_\lambda
\]
in $\Perv_{(I^A_\unip, \cX_A)}(\Gr,\bk)$,
along with a collection of maps
\[
\cF_\lambda \star^{\cL^+G} \cR_\mu \to \cF_{\lambda+\mu}
\]
for $\lambda,\mu \in \bY$, satisfying obvious unit and associativity axioms. 
Let
\[
\Mod^\bY_{(I^A_\unip, \cX_A)}(\cR)
\]
denote the category of $\bY$-graded right $\cR$-modules.  This is an abelian category.  In the special case where $A = \varnothing$, we simplify this notation to $\Mod^\bY_{I_\unip}(\cR)$.


For any $\nu \in \bY$, there is a \emph{shift-of-grading} functor on $\Mod^\bY_{(I^A_\unip, \cX_A)}(\cR)$, denoted by $\cF \mapsto \cF \langle \nu \rangle$ and defined by
\[
(\cF \langle \nu \rangle)_\mu = \cF_{\mu-\nu},
\]
with the $\cR$-module structure unchanged. Of course we have $\langle \nu \rangle \circ \langle \nu' \rangle = \langle \nu+\nu' \rangle$; in particular, $\langle \nu \rangle$ is an autoequivalence with inverse $\langle -\nu \rangle$.

Given a perverse sheaf $\cF \in \Perv_{(I^A_\unip, \cX_A)}(\Gr,\bk)$, we can construct a graded $\cR$-module by the formula
\[
\Phi^A(\cF) 
= \bigoplus_{\mu \in \bY} \cF \star^{\cL^+G} \cR_\mu,
\]
called the \emph{free $\cR$-module on $\cF$}.  This construction defines an exact functor
\[
\Phi^A: \Perv_{(I^A_\unip, \cX_A)}(\Gr,\bk) \to \Mod^{\bY}_{{(I^A_\unip, \cX_A)}}(\cR).
\]
In the case where $A = \varnothing$, we usually omit it from the notation and write
\[
\Phi: \Perv_{I_\unip}(\Gr,\bk) \to \Mod^\bY_{I_\unip}(\cR).
\]

More generally, a \emph{free graded $\cR$-module of finite type} is, by definition, a finite direct sum of objects of the form $\Phi^A(\cF)\la \nu\ra$, where $\cF \in \Perv_{(I^A_\unip, \cX_A)}(\Gr,\bk)$ and $\nu \in \bY$. Note that $\Phi^A$ is faithful; in fact by exactness this follows from the fact that it kills no nonzero object, which itself follows from the observation that the morphism $\cG \to \Phi^A(\cG)_0$ induced by the unit morphism $\eta$ (see~\eqref{eqn:reg-unit}) is injective, since $\eta$ is injective and the functor $\cG \star^{\cL^+G} (-)$ is exact.

Morphisms from free modules can be easily computed using the following lemma.

\begin{lem}
\label{lem:Rfree-hom}
For $\cF \in \Perv_{(I^A_\unip, \cX_A)}(\Gr,\bk)$
and $\cM \in \Mod^\bY_{(I^A_\unip, \cX_A)}(\cR)$, there is a natural isomorphism
\[
\Hom_{\Mod^\bY_{(I^A_\unip, \cX_A)}(\cR)}(\Phi^A(\cF), \cM) \cong \Hom_{\Perv_{(I^A_\unip, \cX_A)}(\Gr,\bk)}(\cF, \cM_{0}).
\]
\end{lem}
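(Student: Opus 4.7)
The plan is to establish this isomorphism as a standard free-forgetful adjunction, with maps constructed in both directions using the unit and action structures on $\cR$ and $\cM$.

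First, I would construct the forward map. Given a morphism $f : \Phi^A(\cF) \to \cM$ of graded $\cR$-modules, I take its degree-$0$ component $f_0 : \cF \star^{\cL^+G} \cR_0 \to \cM_0$ and precompose with the map $\cF \cong \cF \star^{\cL^+G} \IC^0 \xrightarrow{\id_\cF \star \eta} \cF \star^{\cL^+G} \cR_0$ induced by the unit $\eta : \IC^0 \to \cR_0$ from~\eqref{eqn:reg-unit}. This defines a map $f \mapsto (f_0) \circ (\id_\cF \star \eta) : \cF \to \cM_0$ in $\Perv_{(I^A_\unip, \cX_A)}(\Gr,\bk)$.

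For the reverse direction, given $g : \cF \to \cM_0$, I define a morphism $\widetilde{g} : \Phi^A(\cF) \to \cM$ by specifying its degree-$\mu$ component to be the composition
\[
\widetilde{g}_\mu : \cF \star^{\cL^+G} \cR_\mu \xrightarrow{g \star \id} \cM_0 \star^{\cL^+G} \cR_\mu \xrightarrow{a_{0,\mu}} \cM_\mu,
\]
where $a_{0,\mu}$ is the $\cR$-action map on $\cM$. Compatibility of this collection with the $\cR$-action on $\Phi^A(\cF)$ (which is induced by~\eqref{eqn:reg-mult} applied in the last two factors of $\cF \star^{\cL^+G} \cR_\mu \star^{\cL^+G} \cR_{\mu'}$) follows immediately from the associativity axiom for the $\cR$-action on $\cM$.

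Finally, I would verify that these two constructions are mutually inverse. Starting with $g : \cF \to \cM_0$, the forward map applied to $\widetilde{g}$ gives $(\widetilde{g}_0) \circ (\id_\cF \star \eta) = a_{0,0} \circ (g \star \id_{\cR_0}) \circ (\id_\cF \star \eta) = a_{0,0} \circ (g \star \eta)$, which equals $g$ by the unit axiom for the $\cR$-module $\cM$ (namely $a_{0,0} \circ (\id_{\cM_0} \star \eta)$ is the identity of $\cM_0$, under the canonical identification $\cM_0 \star^{\cL^+G} \IC^0 \cong \cM_0$). Conversely, starting with $f : \Phi^A(\cF) \to \cM$, one uses that $f$ intertwines the $\cR$-actions: the degree-$\mu$ component $f_\mu$ must equal $a_{0,\mu} \circ (f_0 \star \id_{\cR_\mu})$ since the $\cR$-action on $\Phi^A(\cF)$ sends $\cF \star \cR_0 \star \cR_\mu$ to $\cF \star \cR_\mu$ via the multiplication map applied to the last two factors; combining with the unit axiom recovers $\widetilde{f_0 \circ (\id_\cF \star \eta)} = f$. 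Naturality in both $\cF$ and $\cM$ is then automatic from the construction.

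The only mild subtlety is working inside the ind-category: all the displayed morphisms and axioms must be interpreted at the level of ind-objects, but since $\star^{\cL^+G}$ and $\Phi^A$ extend canonically to ind-objects as recalled in~\S\ref{ss:ind}, and the morphism spaces $\Hom_{\Mod^\bY_{(I^A_\unip, \cX_A)}(\cR)}$ are by convention taken in the ind-category, no genuine difficulty arises here.
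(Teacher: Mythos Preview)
Your proof is correct and follows essentially the same approach as the paper: construct the forward map by precomposing the degree-$0$ component with $\id_\cF \star \eta$, construct the reverse map by pushing along $g \star \id$ and then applying the $\cR$-action, and check these are mutually inverse. You supply more detail in the verification step (the paper simply declares it ``straightforward''), but the strategy is identical.
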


\begin{proof}
Consider the unit map $\eta: \IC^0 \to \cR_0$.  Composition with
\[
\id \star^{\cL^+G} \eta: \cF \to \cF \star^{\cL^+G} \cR_0 = \Phi(\cF)_0
\]
defines a map
\[
\Hom_{\Mod^\bY_{(I^A_\unip, \cX_A)}(\cR)}(\Phi(\cF), \cM) \to \Hom_{\Perv_{(I^A_\unip, \cX_A)}(\Gr,\bk)}(\cF, \cM_{0}).
\]
On the other hand, given a map $\phi: \cF \to \cM_{0}$ of (ind-)perverse sheaves, one can consider for any $\mu$ the following composition, which defines a map of graded $\cR$-modules:
\[
(\Phi^A(\cF))_\mu = \cF \star^{\cL^+G} \cR_{\mu} \xrightarrow{\phi \star^{\cL^+G} \id} \cM_{0} \star^{\cL^+G} \cR_{\mu} \to \cM_\mu.
\]
(Here, the second map comes from the $\cR$-module structure on $\cM$.)  It is straightforward to check that these two constructions are inverse to each other.
\end{proof}

An object of $\Mod^\bY_{(I^A_\unip, \cX_A)}(\cR)$ is said to be \emph{finitely generated} if it is a quotient of a free graded $\cR$-module of finite type.  Let
\[
\modf^\bY_{(I^A_\unip, \cX_A)}(\cR) =
\begin{array}{c}
\text{the full subcategory of $\Mod^\bY_{(I^A_\unip, \cX_A)}(\cR)$ consisting} \\
\text{of finitely generated modules.}
\end{array}
\]
At the moment, it is not clear that $\modf^\bY_{(I^A_\unip, \cX_A)}(\cR)$ is an abelian category.  This will be established later: see Theorem~\ref{thm:proj-Hecke-Whit}.

For any $G^\vee_\bk$-module $V$ and $\lambda \in \bY=X^*(T^\vee_\bk)$, we will denote by $V_\lambda$ the $\lambda$-weight space in $V$.

\begin{lem}
\label{lem:frobtwist-free}
For any $\cF \in \Perv_{(I^A_\unip, \cX_A)}(\Gr,\bk)$ and $\cG \in \Perv_{\cL^+G}(\Gr,\bk)$, the object $\Phi^A(\cF \star^{\cL^+G} \cG)$ is isomorphic to a finite direct sum of objects of the form $\Phi^A(\cF)\la \nu\ra$.  More specifically, we have a canonical isomorphism
\[
\Phi^A(\cF \star^{\cL^+G} \cG) \cong \bigoplus_{\nu \in \bY} \Satake(\cG)_{w_\circ(\nu)} \otimes \Phi^A(\cF)\la -\nu\ra.
\]
\end{lem}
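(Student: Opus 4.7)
The plan is to reduce the statement to a more primitive identity concerning only $\cR$, then prove that identity by transferring to the representation-theoretic side via the geometric Satake equivalence.

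First, unfold the definitions. By associativity of $\star^{\cL^+G}$, for any $\mu \in \bY$ we have
\[
\Phi^A(\cF \star^{\cL^+G} \cG)_\mu = (\cF \star^{\cL^+G} \cG) \star^{\cL^+G} \cR_\mu \cong \cF \star^{\cL^+G} (\cG \star^{\cL^+G} \cR_\mu).
\]
Since $\cF \star^{\cL^+G} (-)$ is exact and commutes with direct sums and filtered colimits, it suffices to establish a canonical isomorphism
\[
\cG \star^{\cL^+G} \cR_\mu \cong \bigoplus_{\nu \in \bY} \Satake(\cG)_{w_\circ(\nu)} \otimes \cR_{\mu + \nu} \qquad (\ast)
\]
in $\mathrm{Ind}(\Perv_{\cL^+G}(\Gr,\bk))$, compatible with the right $\cR$-module structures given by~\eqref{eqn:reg-mult}. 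Convolving $(\ast)$ with $\cF$ and identifying $\cF \star^{\cL^+G} \cR_{\mu+\nu}$ with $(\Phi^A(\cF)\la-\nu\ra)_\mu$ then yields the desired formula, and finiteness of the sum follows because $\Satake(\cG)$ is finite-dimensional.

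To prove $(\ast)$, I would transport the problem through the Satake equivalence (extended naturally to ind-objects). The first ingredient is the identification $\Satake(\cR_\mu) \cong \Ind_{T^\vee_\bk}^{G^\vee_\bk}(w_\circ(\mu))$: this is obtained by applying the monoidal equivalence $\Satake$ term-by-term to the colimit in~\eqref{eqn:description-Rmu}, re-indexing via $\lambda \mapsto -w_\circ(\lambda)$, and matching the result with the formula of Lemma~\ref{lem:reg-ind} (with the morphisms $\varphi_\lambda$ taken to be the images of the canonical maps $\cI^\lambda_! \to \cI^\lambda_*$ under $\Satake$). This identification is consistent with the statement in~\S\ref{ss:reg-perv-sheaf} that $\mathrm{sw}(\cR_\mu) \leftrightarrow \Ind_{T^\vee_\bk}^{G^\vee_\bk}(-\mu)$.

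The second ingredient is the tensor identity. Writing $V = \Satake(\cG)$ and using monoidality of Satake,
\[
\Satake(\cG \star^{\cL^+G} \cR_\mu) \cong V \otimes \Ind_{T^\vee_\bk}^{G^\vee_\bk}(w_\circ(\mu)) \cong \Ind_{T^\vee_\bk}^{G^\vee_\bk}(V_{|T^\vee_\bk} \otimes \bk_{w_\circ(\mu)}).
\]
Crucially, because we induce from a \emph{torus}, the restriction $V_{|T^\vee_\bk}$ is semisimple, so $V_{|T^\vee_\bk} = \bigoplus_\nu V_\nu$ splits as a direct sum of weight spaces. Additivity (exactness) of $\Ind_{T^\vee_\bk}^{G^\vee_\bk}$ then yields
\[
V \otimes \Ind_{T^\vee_\bk}^{G^\vee_\bk}(w_\circ(\mu)) \cong \bigoplus_\nu V_\nu \otimes \Ind_{T^\vee_\bk}^{G^\vee_\bk}(w_\circ(\mu) + \nu),
\]
and re-indexing $\nu \mapsto w_\circ(\nu)$ identifies this with $\bigoplus_\nu V_{w_\circ(\nu)} \otimes \Satake(\cR_{\mu+\nu})$. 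Applying $\Satake^{-1}$ yields $(\ast)$.

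The main obstacle will be checking that the isomorphism $(\ast)$ is compatible with the right $\cR$-action on both sides, so that the equivalence truly takes place in $\Mod^\bY_{(I^A_\unip,\cX_A)}(\cR)$ and not merely in the underlying category of ind-perverse sheaves. On the representation-theoretic side, the $\cR$-action corresponds to the natural multiplication $\Ind_{T^\vee_\bk}^{G^\vee_\bk}(\eta_1) \otimes \Ind_{T^\vee_\bk}^{G^\vee_\bk}(\eta_2) \to \Ind_{T^\vee_\bk}^{G^\vee_\bk}(\eta_1 + \eta_2)$ on the ``left regular representation'' of $G^\vee_\bk$; by naturality of the tensor identity in the second factor, the direct-sum decomposition is one of right $\Ind_{T^\vee_\bk}^{G^\vee_\bk}(\bullet)$-modules, which gives the compatibility after transport through $\Satake$. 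All of this can ultimately be reduced to a diagram chase at each finite stage of the colimits defining the relevant ind-objects.
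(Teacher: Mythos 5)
Your argument is correct and is essentially the paper's proof: reduce to the canonical isomorphism $\cG \star^{\cL^+G} \cR_\mu \cong \bigoplus_\nu \Satake(\cG)_{w_\circ(\nu)} \otimes \cR_{\mu+\nu}$, transport through the (ind-extension of the) geometric Satake equivalence, and apply the tensor identity for induction from the torus $T^\vee_\bk$, using that $\cR_\mu$ corresponds to an induced module $\Ind_{T^\vee_\bk}^{G^\vee_\bk}(\bullet)$ as in Lemma~\ref{lem:reg-ind}. The only difference is bookkeeping: the paper works with $V=\Satake(\mathrm{sw}(\cG))$ and converts weights via $V_\nu = \Satake(\cG)_{-w_\circ(\nu)}$, while you reindex Lemma~\ref{lem:reg-ind} to get $\Satake(\cR_\mu)\cong \Ind_{T^\vee_\bk}^{G^\vee_\bk}(w_\circ(\mu))$ directly — and your extra care about compatibility with the right $\cR$-action (via naturality of the tensor identity) is a point the paper leaves implicit.
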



\begin{proof}
To prove the lemma it suffices to provide canonical isomorphisms
\[
 \cG \star^{\cL^+G} \cR_\mu \cong \bigoplus_\nu \Satake(\cG)_{w_\circ(\nu)} \otimes \cR_{\mu+\nu}
\]
for any $\mu \in \bY$. Now if $V=\Satake(\mathrm{sw}(\cG))$, we have 
\[
 V \otimes \mathrm{Ind}_{T^\vee_\bk}^{G^\vee_\bk}(-\mu) \cong \mathrm{Ind}_{T^\vee_\bk}^{G^\vee_\bk}(V \otimes \bk_{T^\vee_\bk}(-\mu)) \cong \bigoplus_\nu V_\nu \otimes \mathrm{Ind}_{T^\vee_\bk}^{G^\vee_\bk}(-\mu+\nu),
\]
where on the right-hand side the action of $G^\vee_\bk$ is on each $\mathrm{Ind}_{T^\vee_\bk}^{G^\vee_\bk}(-\mu+\nu)$. Applying $\mathrm{sw} \circ \Satake^{-1}$, we deduce an isomorphism
\[
\cG \star^{\cL^+G} \cR_\mu \cong \bigoplus_\nu V_\nu \otimes \cR_{\mu-\nu}.
\]
Finally, by~\cite[Proposition~VI.12.1]{fs} we have $V_\nu = \Satake(\cG)_{-w_\circ(\nu)}$ for any $\nu \in \bY$, which finishes the proof.
%
%
\end{proof}

\subsection{Simple \texorpdfstring{$\cR$}{R}-modules}
\label{ss:simple-R-modules}

Recall the subset ${}^A W_\ext^\res \subset {}^A W_\ext$ introduced in~\S\ref{ss:coset-representatives}, and that (by definition)
for any $w \in {}^A W_\ext$ there exist $y \in {}^A W_\ext^\res$ and $\lambda \in \bY$ such that $w=yt_\lambda$. Choosing such $y$ and $\lambda$ we define a graded $\cR$-module $\LGT^A_w \in \Mod^\bY_{(I^A_\unip, \cX_A)}(\cR)$ by
\[
\LGT^A_w := \Phi^A(\LGr^A_y)\la -\lambda\ra.
\]
Here the elements $y$ and $\lambda$ are not unique, but if $yt_\lambda = y' t_{\lambda'}$, then the pairs $(y,\lambda)$ and $(y',\lambda')$ are related by the relations $y' = yt_\nu$, $\lambda' = \lambda - \nu$, for some $\nu \in \bY$ orthogonal to all roots.  Using the ``componentwise'' description
\[
(\LGT^A_w)_\mu = \LGr^A_y \star^{\cL^+G} \cR_{\lambda+\mu}
\qquad\text{for any $\mu \in \bY$,}
\]
along with the fact that for $\nu$ as above $\IC^\nu$ is the sky-scraper sheaf at $L_\nu$, so that we have $\LGr^A_{yt_\nu} \cong \LGr^A_y \star^{\cL^+G} \IC^\nu$ and $\IC^\nu \star^{\cL^+G} \cR_\mu \cong \cR_{\nu+\mu}$, we see that $\LGT^A_w$ is well defined. To remedy this non-uniqueness issue, it is sometimes convenient to choose a subset $({}^A W_\ext^\res)' \subset {}^A W_\ext^\res$ of representatives for the (free) action of the subgroup of $\bY$ consisting of elements orthogonal to all roots; then any element of ${}^A W_\ext$ can be written \emph{uniquely} as a product $yt_\lambda$ with $y \in ({}^A W_\ext^\res)'$ and $\lambda \in \bY$. It is clear from the definition that for $w \in {}^A W_\ext$ and $\lambda \in \bY$ we have
\begin{equation}
\label{eqn:simple-translation}
\LGT^A_{wt_\lambda} = \LGT^A_w \langle -\lambda \rangle.
\end{equation}

We will see in Theorem~\ref{thm:R-simple} below that these are the simple objects in the abelian category $\Mod^\bY_{(I^A_\unip, \cX_A)}(\cR)$.  Anticipating this, we define the category of \emph{$\cR$-modules of finite length} to be
\[
\Mod^{\bY}_{(I^A_\unip, \cX_A)}(\cR)^\flen := \begin{array}{c}
\text{the full subcategory of $\Mod^\bY_{(I^A_\unip, \cX_A)}(\cR)$} \\
\text{consisting of objects that admit a finite} \\
\text{filtration with subquotients of the form $\LGT^A_w$.}
\end{array}
\]

\begin{lem}
\label{lem:Rfree-filt}
For any (ordinary) perverse sheaf $\cF \in \Perv_{(I^A_\unip, \cX_A)}(\Gr,\bk)$, the free module $\Phi^A(\cF)$ lies in $\Mod^\bY_{(I^A_\unip, \cX_A)}(\cR)^\flen$.
\end{lem}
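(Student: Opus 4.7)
The plan is as follows. The functor $\Phi^A$ is exact, and every object of $\Perv_{(I^A_\unip,\cX_A)}(\Gr,\bk)$ has finite length (being supported on a finite union of orbits in the ind-scheme $\Gr$). One therefore reduces immediately to the case where $\cF = \LGr^A_y$ is simple for some $y\in{}^A W_\ext^S$. Using~\eqref{eqn:WS-Wres-Whit}, I would write $y = xt_\nu$ with $x\in{}^A W_\ext^\res$ and $\nu\in-\bY_+$, and set $\mu := w_\circ(\nu)$, which lies in $\bY_+$. The geometric Steinberg formula (Theorem~\ref{thm:geometric-Steinberg}\eqref{it:thm-Steinberg}) then furnishes an isomorphism
\[
\LGr^A_y \;\cong\; \LGr^A_x \star^{\cL^+G} \IC^\mu,
\]
expressing every simple perverse sheaf on $\Gr$ as a convolution of a restricted simple with a spherical IC sheaf; this is the key structural input.

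The next step is to apply Lemma~\ref{lem:frobtwist-free} to this factorization, which yields a canonical decomposition
\[
\Phi^A(\LGr^A_y) \;\cong\; \bigoplus_{\eta\in\bY} \Satake(\IC^\mu)_{w_\circ(\eta)}\otimes \Phi^A(\LGr^A_x)\la-\eta\ra,
\]
and this is a \emph{finite} direct sum because $\Satake(\IC^\mu)$ is finite-dimensional. Since $x\in{}^A W_\ext^\res\subset{}^A W_\ext$, the definition of $\LGT^A_x$ (taking the decomposition $x = x\cdot t_0$) gives $\Phi^A(\LGr^A_x) = \LGT^A_x$; combined with~\eqref{eqn:simple-translation}, each summand $\Phi^A(\LGr^A_x)\la-\eta\ra$ is identified with the simple object $\LGT^A_{xt_\eta}$. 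This exhibits $\Phi^A(\cF)$ as a finite direct sum (hence in particular an iterated extension) of objects of the form $\LGT^A_w$ with $w\in{}^A W_\ext$, so it lies in $\Mod^\bY_{(I^A_\unip,\cX_A)}(\cR)^\flen$.

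I do not anticipate a substantive obstacle here: the argument is essentially a bookkeeping exercise combining the geometric Steinberg formula with the pleasant interaction between $\Phi^A$ and spherical convolutions encoded in Lemma~\ref{lem:frobtwist-free}. The only delicate point worth spelling out explicitly is that perverse sheaves on the ind-scheme $\Gr$ do indeed have finite length, and that the exactness of $\Phi^A$ — already stated after its definition — is used to propagate the $\LGT^A_w$-filtration from simple constituents to an arbitrary $\cF$.
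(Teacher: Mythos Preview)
Your proof is correct and follows essentially the same approach as the paper's: reduce to simple $\cF$ via exactness of $\Phi^A$, then use~\eqref{eqn:WS-Wres-Whit}, Theorem~\ref{thm:geometric-Steinberg}\eqref{it:thm-Steinberg}, and Lemma~\ref{lem:frobtwist-free} to exhibit $\Phi^A(\LGr^A_y)$ as a finite direct sum of objects $\LGT^A_w$. The paper's version is simply more terse, citing the same three ingredients without writing out the explicit decomposition you obtain.
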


\begin{proof}
Any composition series for $\cF$ gives rise to a filtration of $\Phi^A(\cF)$ whose subquotients are of the form $\Phi^A(\LGr^A_x)$ with $x \in {}^A W^S_\ext$.  By~\eqref{eqn:WS-Wres-Whit}, Theorem~\ref{thm:geometric-Steinberg}\eqref{it:thm-Steinberg} and Lemma~\ref{lem:frobtwist-free}, each such $\Phi^A(\LGr^A_x)$ is a finite direct sum of objects of the form $\Phi^A(\LGr^A_y)\la \nu\ra$ with $y \in {}^A W^\res_\ext$ and $\nu \in \bY$.
\end{proof}

\begin{lem}\label{lem:Rsimple-hom}
Let $w, v\in {}^A W_\ext$.  We have
\[
\dim \Hom_{\Mod^\bY_{(I^A_\unip, \cX_A)}(\cR)}(\LGT^A_w, \LGT^A_v)
=
\begin{cases}
1 & \text{if $w = v$;} \\
0 & \text{otherwise.}
\end{cases}
\]
In particular, any nonzero endomorphism of $\LGT^A_w$ is an automorphism.
\end{lem}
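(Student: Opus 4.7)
The plan is to express the Hom between these simple $\cR$-modules as a filtered colimit of Homs in the spherical category, using Lemma~\ref{lem:Rfree-hom}, the colimit description of $\cR$, and rigidity, and then to analyze the result in two cases depending on whether the ``restricted parts'' of $w$ and $v$ coincide.

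First, I would fix a set of representatives $({}^A W^\res_\ext)' \subset {}^A W^\res_\ext$ for the free action of the sublattice of $\bY$ consisting of weights orthogonal to all roots, so that every element of ${}^A W_\ext$ has a unique expression $w = y t_\lambda$ with $y \in ({}^A W^\res_\ext)'$ and $\lambda \in \bY$. Writing $w = y t_\lambda$, $v = y' t_{\lambda'}$ and setting $\mu = \lambda' - \lambda$, the definitions together with Lemma~\ref{lem:Rfree-hom} give
\[
\Hom(\LGT^A_w, \LGT^A_v) \cong \Hom\bigl(\LGr^A_y,\; \LGr^A_{y'} \star^{\cL^+G} \cR_\mu\bigr).
\]
Unfolding~\eqref{eqn:description-Rmu}, using exactness of $\star^{\cL^+G}$ and compactness of $\LGr^A_y$, and then invoking the rigidity of $\Perv_{\cL^+G}(\Gr,\bk)$ together with~\eqref{eqn:duals}, I would rewrite this as
\[
\varinjlim_\kappa \Hom\bigl(\LGr^A_y \star^{\cL^+G} \cI^\kappa_!,\; \LGr^A_{y'} \star^{\cL^+G} \cI^{w_\circ(\mu) + \kappa}_*\bigr).
\]

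When $y = y'$, I would apply the full faithfulness of $\LGr^A_y \star^{\cL^+G}(-)$ from Theorem~\ref{thm:geometric-Steinberg}\eqref{it:thm-Steinberg-ff} to identify each term with $\Hom(\cI^\kappa_!, \cI^{w_\circ(\mu) + \kappa}_*)$. This is one-dimensional when $\mu = 0$ and zero otherwise, and inspecting the transition maps built into $\cR_\mu$ confirms that the colimit inherits the same value; this settles both $w = v$ and the subcase of $w \neq v$ where $y = y'$.

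When $y \neq y'$, I would show that the source and target in the displayed Hom share no simple composition factor, so each term already vanishes. By exactness of convolution applied to composition series of $\cI^\kappa_!$ and $\cI^{w_\circ(\mu) + \kappa}_*$ and Theorem~\ref{thm:geometric-Steinberg}\eqref{it:thm-Steinberg}, the composition factors are respectively of the forms $\LGr^A_{y t_{w_\circ(\tau)}}$ and $\LGr^A_{y' t_{w_\circ(\tau')}}$ for $\tau, \tau' \in \bY_+$; a shared factor would give $y' = y t_\rho$ for some $\rho \in \bY$, and the requirement $y^{-1}(\Afund), (y')^{-1}(\Afund) \subset \Pi_\varsigma$ would force $\rho$ to be orthogonal to all roots, contradicting the choice of representatives. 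The ``in particular'' claim then follows because $\End(\LGT^A_w) = \bk$ is a division ring. The most delicate step will be the composition-factor bookkeeping in this last case, which relies on the Steinberg-type formula Theorem~\ref{thm:geometric-Steinberg}\eqref{it:thm-Steinberg} and the geometric description of $W_\ext^\res$ via the fundamental box $\Pi_\varsigma$.
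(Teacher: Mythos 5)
Your proposal is correct and follows essentially the same route as the paper: reduce via Lemma~\ref{lem:Rfree-hom} to computing $\Hom(\LGr^A_y, \LGr^A_{y'}\star^{\cL^+G}\cR_\mu)$ for representatives in $({}^A W^\res_\ext)'$, rewrite the colimit using~\eqref{eqn:duals}, then use Theorem~\ref{thm:geometric-Steinberg}\eqref{it:thm-Steinberg} to rule out common composition factors when $y\neq y'$ and Theorem~\ref{thm:geometric-Steinberg}\eqref{it:thm-Steinberg-ff} to reduce to $\Hom(\cI^\kappa_!,\cI^{w_\circ(\mu)+\kappa}_*)$ when $y=y'$. Your explicit box/orthogonality argument for the disjointness of composition factors just spells out a step the paper leaves implicit.
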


\begin{proof}
Let us choose a subset $({}^A W_\ext^\res)' \subset {}^A W_\ext^\res$ as above.
By unwinding the definitions and using Lemma~\ref{lem:Rfree-hom}, we see that this lemma is equivalent to the claim that for $x, y \in ({}^A W^\res_\ext)'$ and $\mu \in \bY$, we have
\[
\dim \Hom_{\Perv_{(I^A_\unip, \cX_A)}(\Gr,\bk)}(\LGr^A_x, \LGr^A_y \star^{\cL^+G} \cR_{\mu})
=
\begin{cases}
1 & \text{if $x = y$ and $\mu = 0$;} \\
0 & \text{otherwise.}
\end{cases}
\]
To compute this $\Hom$-group, we must study
\begin{multline*}
\varinjlim_\lambda \Hom(\LGr^A_x, \LGr^A_y \star^{\cL^+G} \cI^{w_\circ(\mu)+\lambda}_* \star^{\cL^+G} \cI^{-w_\circ(\lambda)}_*)
\cong \\
\varinjlim_\lambda \Hom(\LGr^A_x \star^{\cL^+G} \cI^{\lambda}_!, \LGr^A_y \star^{\cL^+G} \cI^{w_\circ(\mu)+\lambda}_*),
\end{multline*}
where the isomorphism uses~\eqref{eqn:duals}. Here, by Theorem~\ref{thm:geometric-Steinberg}\eqref{it:thm-Steinberg} and exactness of $\star^{\cL^+G}$, we can obtain a composition series of $\LGr^A_x \star^{\cL^+G} \cI^{\lambda}_!$ by choosing a composition series of $\cI^{\lambda}_!$ and then convolving with $\LGr^A_x$, and likewise for $\LGr^A_y \star^{\cL^+G} \cI^{w_\circ(\mu)+\lambda}_*$. From this description we see that
if $x \ne y$, the objects $\LGr^A_x \star^{\cL^+G} \cI^{\lambda}_!$ and $\LGr^A_y \star^{\cL^+G} \cI^{w_\circ(\mu)+\lambda}_*$ have no composition factor in common, so that
\[
\Hom(\LGr^A_x \star^{\cL^+G} \cI^{\lambda}_!, \LGr^A_y \star^{\cL^+G} \cI^{w_\circ(\mu)+\lambda}_*)=0
\]
for any $\lambda$.
Assume now that $x = y$.  Then, by Theorem~\ref{thm:geometric-Steinberg}\eqref{it:thm-Steinberg-ff}, the $\Hom$-groups above can be identified with
\[
\varinjlim_\lambda \Hom(\cI^{\lambda}_!, \cI^{w_\circ(\mu)+\lambda}_*).
\]
It is easily seen that this $\Hom$-group is $1$-dimensional if $\mu = 0$, and vanishes otherwise.
\end{proof}

\begin{cor}\label{cor:Rfree-surj}
Suppose $\cM \in \Mod^\bY_{(I^A_\unip, \cX_A)}(\cR)^\flen$. For any $w \in {}^A W_\ext$, any nonzero morphism $\cM \to \LGT^A_w$ is surjective.
\end{cor}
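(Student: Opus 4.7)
The plan is to argue by induction on the minimal length $n$ of a filtration of $\cM$ whose subquotients are of the form $\LGT^A_v$ for $v \in {}^A W_\ext$. The key input will be Lemma~\ref{lem:Rsimple-hom}, which records that $\Hom(\LGT^A_v, \LGT^A_w)$ vanishes for $v \ne w$ and consists of automorphisms (together with $0$) when $v = w$.

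The base case $n = 1$ is immediate: if $\cM \cong \LGT^A_v$ then any nonzero morphism to $\LGT^A_w$ forces $v = w$ by Lemma~\ref{lem:Rsimple-hom}, and the morphism is then an automorphism, in particular surjective. For the inductive step, fix a short exact sequence
\[
0 \to \cM' \to \cM \to \LGT^A_v \to 0
\]
where $\cM'$ admits a filtration of length $n-1$ with subquotients of the desired form (so $\cM' \in \Mod^\bY_{(I^A_\unip, \cX_A)}(\cR)^\flen$), and consider a nonzero morphism $\phi: \cM \to \LGT^A_w$.

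If the restriction $\phi|_{\cM'}$ is nonzero, the induction hypothesis applied to $\cM'$ shows that $\phi|_{\cM'}: \cM' \to \LGT^A_w$ is surjective; since the image of $\phi$ contains the image of $\phi|_{\cM'}$, the map $\phi$ is surjective as well. If instead $\phi|_{\cM'} = 0$, then $\phi$ factors as $\cM \twoheadrightarrow \LGT^A_v \xrightarrow{\bar\phi} \LGT^A_w$ with $\bar\phi$ nonzero; Lemma~\ref{lem:Rsimple-hom} forces $v = w$ and $\bar\phi$ to be an isomorphism, so $\phi$ is a composition of surjections, hence surjective.

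There is no real obstacle here: the statement is a standard consequence of the Schur-type vanishing result for Hom-spaces between the $\LGT^A_w$'s, combined with a straightforward induction on filtration length. The only mild point to confirm is that $\cM'$ above indeed lies in $\Mod^\bY_{(I^A_\unip, \cX_A)}(\cR)^\flen$, which is automatic from how the class is defined via finite filtrations with subquotients of the form $\LGT^A_v$.
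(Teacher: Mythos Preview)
Your proof is correct and is precisely the approach the paper takes: the paper's proof is the single sentence ``This follows from Lemma~\ref{lem:Rsimple-hom} by induction on the length of the given filtration of $\cM$,'' and you have simply spelled out that induction in detail.
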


\begin{proof}
This follows from Lemma~\ref{lem:Rsimple-hom} by induction on the length of the given filtration of $\cM$.
\end{proof}

We can now prove the properties of the objects $\LGT^A_w$ announced above.

\begin{thm}
\label{thm:R-simple}
For $w \in {}^A W_\ext$, the object $\LGT^A_w$ is a simple object in the abelian category $\Mod^\bY_{(I^A_\unip, \cX_A)}(\cR)$.  Moreover, the assignment $w \mapsto \LGT^A_w$ gives a bijection
\[
{}^A W_\ext \simto
\left\{
\begin{array}{c}
\text{isomorphism classes of simple} \\
\text{objects in $\Mod^\bY_{(I^A_\unip, \cX_A)}(\cR)$}
\end{array}
\right\}.
\]
\end{thm}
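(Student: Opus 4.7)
The plan is to establish three things: (a) for distinct $w, v \in {}^A W_\ext$ the objects $\LGT^A_w$ and $\LGT^A_v$ are non-isomorphic; (b) each $\LGT^A_w$ is simple; (c) every simple object of $\Mod^\bY_{(I^A_\unip,\cX_A)}(\cR)$ is isomorphic to some $\LGT^A_w$. Claim (a) is immediate from Lemma~\ref{lem:Rsimple-hom}, which I will also use repeatedly for its Schur-type consequence: any nonzero endomorphism of $\LGT^A_w$ is an automorphism, and $\Hom(\LGT^A_w,\LGT^A_v)=0$ for $w \neq v$.

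For (b) I would first reduce via~\eqref{eqn:simple-translation} and the fact that grading shifts are autoequivalences to the case $w = y \in {}^A W_\ext^\res$, so that $\LGT^A_y = \Phi^A(\LGr^A_y)$, and then argue by contradiction. Assume $\cN \subsetneq \LGT^A_y$ is proper and nonzero. Then some component $\cN_\mu$ is nonzero as an ind-perverse sheaf; picking a nonzero morphism from a perverse sheaf, passing to its image (a finite-length sub-perverse sheaf of $\cN_\mu$), and selecting a simple subobject produces a simple $\LGr^A_v$ and an injection $\LGr^A_v \hookrightarrow \cN_\mu$. By Lemma~\ref{lem:Rfree-hom} this lifts to a nonzero $\cR$-module map
\[
\Phi^A(\LGr^A_v)\langle\mu\rangle \to \cN \hookrightarrow \LGT^A_y.
\]
Using~\eqref{eqn:WS-Wres-Whit} to write $v = v' t_\sigma$ with $v' \in {}^A W_\ext^\res$ and $\sigma \in -\bY_+$, Theorem~\ref{thm:geometric-Steinberg}\eqref{it:thm-Steinberg} gives $\LGr^A_v \cong \LGr^A_{v'} \star^{\cL^+G} \IC^{w_\circ(\sigma)}$, and then Lemma~\ref{lem:frobtwist-free} combined with~\eqref{eqn:simple-translation} decomposes $\Phi^A(\LGr^A_v)\langle\mu\rangle$ as a finite direct sum of objects of the form $\LGT^A_{v' t_{\nu-\mu}}$. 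Lemma~\ref{lem:Rsimple-hom} forbids any nonzero morphism between non-isomorphic $\LGT^A_{\bullet}$, so the only summand that can map nontrivially to $\LGT^A_y$ is the one with $v' = y$ and $\nu = \mu$. The existence of our nonzero lift forces this summand to be present, and the restriction of the lift to it is a nonzero morphism $\LGT^A_y \to \LGT^A_y$, hence a scalar automorphism by Schur. Its image is therefore all of $\LGT^A_y$; since the whole map factors through $\cN$, this image lies in $\cN$, contradicting $\cN \subsetneq \LGT^A_y$.

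For (c), let $\cL$ be a simple object of $\Mod^\bY_{(I^A_\unip,\cX_A)}(\cR)$. Pick $\mu \in \bY$ with $\cL_\mu \neq 0$ and an injection $\LGr^A_v \hookrightarrow \cL_\mu$ exactly as above; Lemma~\ref{lem:Rfree-hom} produces a nonzero morphism $\Phi^A(\LGr^A_v)\langle\mu\rangle \to \cL$. Decomposing the source via Lemma~\ref{lem:frobtwist-free} as a finite direct sum of modules of the form $\LGT^A_w$, at least one summand must map nontrivially to $\cL$; by part (b) that summand is simple, so this nonzero morphism of simple objects is an isomorphism, identifying $\cL$ with some $\LGT^A_w$.

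The main obstacle lies in step (b), in the passage from an injection $\LGr^A_v \hookrightarrow \cN_\mu$ of sub-ind-perverse sheaves to a nonzero $\cR$-module morphism $\Phi^A(\LGr^A_v)\langle\mu\rangle \to \cN$. One must check carefully that the morphism produced by Lemma~\ref{lem:Rfree-hom} really does restrict in degree $\mu$ to the chosen injection, so that the lift is provably nonzero and its image inside $\LGT^A_y$ can be traced back to a submodule of $\cN$. Once this is in hand, the combinatorial part of the contradiction is forced by Lemmas~\ref{lem:Rsimple-hom} and~\ref{lem:frobtwist-free}: the geometric Steinberg decomposition leaves a unique candidate summand, and the very existence of the nonzero lift guarantees that the relevant weight space is nonzero.
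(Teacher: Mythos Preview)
Your proof is correct and follows essentially the same strategy as the paper's: both produce a nonzero map from a free module $\Phi^A(\cF)\langle\mu\rangle$ into the putative proper subobject (or into an arbitrary simple), observe that the source lies in $\Mod^\bY_{(I^A_\unip,\cX_A)}(\cR)^\flen$, and conclude via Lemma~\ref{lem:Rsimple-hom} that the map must be surjective. The only cosmetic difference is that the paper works with an arbitrary perverse sheaf $\cF_i$ and packages the surjectivity step as Corollary~\ref{cor:Rfree-surj} (induction on a filtration by $\LGT^A_\bullet$'s via Lemma~\ref{lem:Rfree-filt}), whereas you choose $\cF = \LGr^A_v$ simple so that the filtration becomes a direct sum and the argument reduces directly to Schur; your ``main obstacle'' is a non-issue, since the bijection of Lemma~\ref{lem:Rfree-hom} is constructed precisely so that the degree-$\mu$ component recovers the chosen $\phi$ after precomposition with the unit.
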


\begin{proof}
We begin by showing that any object $\cM \in \Mod^\bY_{(I^A_\unip, \cX_A)}(\cR)$ admits a nonzero morphism from a free $\cR$-module.  Choose some nonzero graded component $\cM_\mu$ in $\cM$.  As an ind-perverse sheaf, $\cM_\mu$ is an inductive limit of ordinary perverse sheaves, say $\cM_\mu \cong ``\varinjlim_i{}" \cF_i$.  Choose some term $\cF_i$ in this limit such that the natural map $\cF_i \to \cM_\mu$ is nonzero.  Via Lemma~\ref{lem:Rfree-hom}, we obtain a nonzero map
\begin{equation}\label{eqn:free-map}
\phi: \Phi^A(\cF_i)\la \mu\ra \to \cM.
\end{equation}

We will now show that each $\LGT^A_w$ is simple.  If not, there is some nonzero proper subobject $\cM \subset \LGT^A_w$.  Composing with a nonzero map as in~\eqref{eqn:free-map}, we obtain a nonzero, nonsurjective map $\Phi^A(\cF_i)\la \mu\ra \to \LGT^A_w$.  In view of Lemma~\ref{lem:Rfree-filt}, this contradicts Corollary~\ref{cor:Rfree-surj}.


Next, we will show that every simple object in $\Mod^\bY_{(I^A_\unip, \cX_A)}(\cR)$ is isomorphic to some $\LGT_w$.  Let $\cM \in \Mod^\bY_{(I^A_\unip, \cX_A)}(\cR)$ be simple, and choose a nonzero map as in~\eqref{eqn:free-map}; this map is necessarily surjective.  But by Lemma~\ref{lem:Rfree-filt}, we already know that $\Phi^A(\cF_i)\la \mu\ra$ has a composition series whose terms are of the form $\LGT^A_w$ with $w \in {}^A W_\ext$, so $\cM$ must be isomorphic to one of these composition factors, proving the desired claim.

Finally, the fact that $\LGT^A_w \not\cong \LGT^A_y$ if $w \ne y$ is immediate from Lemma~\ref{lem:Rsimple-hom}.  
\end{proof}

As a consequence of Theorem~\ref{thm:R-simple}, we see that $\Mod^\bY_{(I^A_\unip, \cX_A)}(\cR)^\flen$ is stable under subquotients in $\Mod^\bY_{(I^A_\unip, \cX_A)}(\cR)$. In particular, this category is abelian, and by construction every object in this category has finite length. Using Lemma~\ref{lem:Rfree-filt} we also see that
\begin{equation}
\label{eqn:inclusion-mod}
\modf^\bY_{(I^A_\unip, \cX_A)}(\cR) \subset \Mod^\bY_{(I^A_\unip, \cX_A)}(\cR)^\flen,
\end{equation}
but we reiterate that for the moment, we do not yet know whether $\modf^\bY_{(I^A_\unip, \cX_A)}(\cR)$ is an abelian category, with one exception: when $A = S$, we have the following result.

\begin{prop}
\label{prop:IW-R-semisimple}
The category $\Mod^\bY_{(I^S_\unip, \cX_S)}(\cR)$ is canonically equivalent to the category of (all) algebraic $T^\vee_\bk$-modules; in particular, it is semisimple. The subcategories $\modf^\bY_{(I^S_\unip, \cX_S)}(\cR)$ and $\Mod^\bY_{(I^S_\unip, \cX_S)}(\cR)^\flen$ coincide, and are equivalent (via the equivalence for $\Mod^\bY_{(I^S_\unip, \cX_S)}(\cR)$) to the category of finite-dimensional algebraic $T^\vee_\bk$-modules, i.e.~to 
the category of finite-dimensional $\bY$-graded $\bk$-vector spaces.
\end{prop}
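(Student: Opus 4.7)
The approach is to apply the Iwahori--Whittaker model for the Satake category from~\cite{bgmrr}, which gives an equivalence of categories
\[
\Psi : \Perv_{(I^S_\unip, \cX_S)}(\Gr,\bk) \simto \Rep(G^\vee_\bk)
\]
intertwining the right convolution action of $\Perv_{\cL^+G}(\Gr,\bk)$ on the left-hand side with the tensor product action of $\Rep(G^\vee_\bk)$ on itself, via $\Satake \circ \mathrm{sw}$. Extending $\Psi$ to ind-objects, the discussion at the end of~\S\ref{ss:reg-perv-sheaf} then identifies $\cR$, viewed as a $\bY$-graded algebra ind-object acting on the right, with the coordinate ring
\[
\scO(G^\vee_\bk) \;\cong\; \bigoplus_{\mu \in \bY} \Ind_{T^\vee_\bk}^{G^\vee_\bk}(-\mu),
\]
endowed with its natural $\bY$-grading coming from the right $T^\vee_\bk$-translation action. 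The compatibility of the algebra structures is built into the definition of the multiplication map~\eqref{eqn:reg-mult} through~\eqref{eqn:costd-conv-sph}, which under $\Satake$ becomes the natural multiplication on the coordinate ring.

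Transporting via $\Psi$, the category $\Mod^\bY_{(I^S_\unip, \cX_S)}(\cR)$ becomes the category of $\bY$-graded ind-objects in $\Rep(G^\vee_\bk)$ endowed with a graded action of $\scO(G^\vee_\bk)$. At this point I would invoke Remark~\ref{rmk:AG-G^1}\eqref{it:AG-G^1}: the argument sketched there is essentially a descent statement along the tautological isomorphism $(G^\vee_\bk \times T^\vee_\bk)/T^\vee_\bk \simto G^\vee_\bk$, and identifies this category with the category of $\bY$-graded $\bk$-vector spaces, equivalently, the category of all algebraic $T^\vee_\bk$-modules. Since the latter is manifestly semisimple, this yields the first claim of the proposition, and exhibits the simple objects on the $T^\vee_\bk$-side as the one-dimensional representations $\bk_\lambda$ for $\lambda \in \bY$.

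For the second claim, the simple objects $\LGT^S_w$ ($w \in {}^S W_\ext$) must correspond to the $\bk_\lambda$ via some bijection ${}^S W_\ext \simeq \bY$, so $\Mod^\bY_{(I^S_\unip, \cX_S)}(\cR)^\flen$ translates to the subcategory of finite-dimensional graded vector spaces. On the other hand, by Lemma~\ref{lem:frobtwist-free}, a free $\cR$-module of finite type corresponds through the same chain of equivalences to a finite direct sum of grading shifts of objects of the form $V \otimes \scO(G^\vee_\bk)$ with $V \in \Rep(G^\vee_\bk)$; as a graded vector space, this is simply the restriction of $V$ to $T^\vee_\bk$, which is finite-dimensional, and every finite-dimensional graded vector space arises as such a quotient (by taking $V$ to be a finite-dimensional $G^\vee_\bk$-module whose restriction to $T^\vee_\bk$ surjects onto the prescribed graded space). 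Hence $\modf^\bY_{(I^S_\unip, \cX_S)}(\cR)$ also coincides with the category of finite-dimensional $\bY$-graded $\bk$-vector spaces, as required. The only real obstacle is bookkeeping: one must carefully track the $\bY$-grading on $\cR$ through $\Psi$, $\Satake$, and $\mathrm{sw}$, but both the grading and the algebra structure are matched essentially by construction.
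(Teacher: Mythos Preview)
Your proof is correct and follows essentially the same route as the paper: invoke the equivalence of~\cite{bgmrr} to pass to $\Rep(G^\vee_\bk)$, identify $\cR$ with $\scO(G^\vee_\bk)$ (with its $\bY$-grading from the right $T^\vee_\bk$-action), and then appeal to Remark~\ref{rmk:AG-G^1}\eqref{it:AG-G^1}. The paper's version is terser about the second claim (it simply asserts that both subcategories identify with finite-dimensional modules), whereas you spell out the free-module side via Lemma~\ref{lem:frobtwist-free}; this extra detail is fine but not strictly necessary once semisimplicity is established.
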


\begin{proof}
By the main result of~\cite{bgmrr}, the category $\Perv_{(I^S_\unip, \cX_S)}(\Gr,\bk)$ is equivalent to the Satake category $\Perv_{\cL^+ G}(\Gr,\bk)$, which is itself equivalent to the category $\Rep(G^\vee_\bk)$ via the geometric Satake equivalence, see~\S\ref{ss:Satake-category}. Via the latter equivalence, $\cR$ corresponds by definition to (the ind-object represented by) the algebra $\scO(G^\vee_\bk)$ (for the left regular representation structure), with the $\bY$-grading coming from the action of $T^\vee_\bk$ induced by multiplication on the right. In view of Remark~\ref{rmk:AG-G^1}\eqref{it:AG-G^1}, the category $\Mod^\bY_{(I^S_\unip, \cX_S)}(\cR)$ therefore identifies with the category of (all) algebraic $T^\vee_\bk$-modules, and both $\modf^\bY_{(I^S_\unip, \cX_S)}(\cR)$ and $\Mod^\bY_{(I^S_\unip, \cX_S)}(\cR)^\flen$ identify with the subcategory of finite-dimensional modules.
\end{proof}


We conclude this subsection with a few technical consequences of the results above that will be required later.

\begin{lem}\label{lem:Phi-order}
Let $w \in {}^A W^S_\ext$.  If $z \in {}^A W_\ext$ is such that $\LGT^A_z$ is a composition factor of $\Phi^A(\LGr^A_w)$, then $z \preceq w$.
\end{lem}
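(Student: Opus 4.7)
My plan is to reduce to the restricted case via the geometric Steinberg formula, and then translate the statement into a statement about weights of a $G^\vee_\bk$-representation that can be handled with Lemma~\ref{lem:per-order-weights}.

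First, since $w \in {}^A W^S_\ext$, I use~\eqref{eqn:WS-Wres-Whit} to write $w = yt_\eta$ with $y \in {}^A W^\res_\ext$ and $\eta \in -\bY_+$. Then $w_\circ(\eta) \in \bY_+$, and by Theorem~\ref{thm:geometric-Steinberg}\eqref{it:thm-Steinberg} we have
\[
\LGr^A_w \;\cong\; \LGr^A_y \star^{\cL^+G} \IC^{w_\circ(\eta)}.
\]

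Second, applying Lemma~\ref{lem:frobtwist-free} to this isomorphism gives
\[
\Phi^A(\LGr^A_w) \;\cong\; \bigoplus_{\nu \in \bY} \Satake(\IC^{w_\circ(\eta)})_{w_\circ(\nu)} \otimes \Phi^A(\LGr^A_y)\la -\nu\ra,
\]
and since $\Phi^A(\LGr^A_y)\la -\nu\ra = \LGT^A_{yt_\nu}$ by the definition of the objects $\LGT^A_z$, the composition factors of $\Phi^A(\LGr^A_w)$ are among the simple objects $\LGT^A_{yt_\nu}$ for those $\nu \in \bY$ such that $w_\circ(\nu)$ is a weight of $\Satake(\IC^{w_\circ(\eta)})$. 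By uniqueness of the labeling, any $z$ as in the statement must be of this form.

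Third, I translate the weight condition into the hypothesis of Lemma~\ref{lem:per-order-weights}. Since $\Satake(\IC^{w_\circ(\eta)})$ has highest weight $w_\circ(\eta)$, any weight $w_\circ(\nu)$ of this representation satisfies that $w_\circ(\eta) - w_\circ(\nu) = w_\circ(\eta - \nu)$ is a nonnegative integer combination of positive roots; equivalently, $\nu - \eta$ is a sum of positive roots. Since ${}^A W_\ext^\res \subset W_\ext^\res$, I can now invoke Lemma~\ref{lem:per-order-weights} with the substitution $(\mu,\nu) \leftrightarrow (w_\circ(\eta), w_\circ(\nu))$: the required hypothesis $w_\circ(\eta) - w_\circ(\nu) \in \Z_{\ge 0}\fR_+$ holds, and the conclusion gives $yt_\nu \preceq yt_\eta = w$, that is, $z \preceq w$. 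No obstacle here beyond bookkeeping signs through $w_\circ$; the only nontrivial input is combining Theorem~\ref{thm:geometric-Steinberg}, Lemma~\ref{lem:frobtwist-free}, and Lemma~\ref{lem:per-order-weights} in the correct order.
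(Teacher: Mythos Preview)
Your proof is correct and takes essentially the same approach as the paper: write $w = yt_\eta$ with $y$ restricted, use Theorem~\ref{thm:geometric-Steinberg} and Lemma~\ref{lem:frobtwist-free} to decompose $\Phi^A(\LGr^A_w)$, and finish with Lemma~\ref{lem:per-order-weights}. The only difference is notational (the paper writes $w = xt_{w_\circ(\mu)}$ with $\mu \in \bY_+$ rather than $w = yt_\eta$ with $\eta \in -\bY_+$), and the paper tacitly uses $W$-invariance of the weight set to phrase the weight condition as ``$\nu$ is a weight'' rather than ``$w_\circ(\nu)$ is a weight''.
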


\begin{proof}
Write $w$ as $w = xt_{w_\circ(\mu)}$ with $x \in {}^A W^\res_\ext$ and $\mu \in \bY_+$.   By Theorem~\ref{thm:geometric-Steinberg}\eqref{it:thm-Steinberg}, Lemma~\ref{lem:frobtwist-free} and~\eqref{eqn:simple-translation}, $\Phi^A(\LGr^A_w)$ is a direct sum of objects $\LGT^A_{xt_\nu}$ where $\nu$ is a weight of $\Satake(\IC^\mu)$. These weights are such that $\mu-w_\circ(\nu)$ is a sum of positive roots, so that by Lemma~\ref{lem:per-order-weights} we have $xt_\nu \preceq xt_{w_\circ(\mu)}$, i.e.~$xt_\nu \preceq w$, as desired.
\end{proof}


\begin{lem}
\label{lem:R-findim}
For $\cF, \cG \in \Mod^\bY_{(I^A_\unip, \cX_A)}(\cR)^\flen$, we have
\[
\dim \Hom_{\Mod^\bY_{(I^A_\unip, \cX_A)}(\cR)}(\cF,\cG) < \infty.
\]
In particular, $\Mod^\bY_{(I^A_\unip, \cX_A)}(\cR)^\flen$ is Krull--Schmidt in the sense of~\cite[\S A.1]{cyz}.
\end{lem}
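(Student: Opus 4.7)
The plan is to proceed in two steps: first establish Hom-finiteness by devissage, then deduce Krull--Schmidt via Fitting's lemma.

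For the finiteness claim, I would argue by induction on the combined length of the filtrations of $\cF$ and $\cG$ by the simple objects $\LGT^A_w$ given by Theorem~\ref{thm:R-simple}. The base case is when both $\cF$ and $\cG$ are simple, in which case $\dim \Hom(\cF,\cG) \leq 1$ by Lemma~\ref{lem:Rsimple-hom}. For the inductive step, given a short exact sequence
\[
0 \to \cF' \to \cF \to \cF'' \to 0
\]
in $\Mod^\bY_{(I^A_\unip, \cX_A)}(\cR)^\flen$ with $\cF''$ simple (and $\cF'$ of strictly smaller filtration length), left-exactness of $\Hom(-,\cG)$ gives
\[
\dim \Hom(\cF,\cG) \leq \dim \Hom(\cF'',\cG) + \dim \Hom(\cF',\cG),
\]
and an analogous argument applies to $\cG$ using the left-exactness of $\Hom(\cF,-)$. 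The induction then terminates, giving the finiteness assertion.

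For Krull--Schmidt, I recall that it suffices to show that every indecomposable object in $\Mod^\bY_{(I^A_\unip, \cX_A)}(\cR)^\flen$ has a local endomorphism ring (combined with the fact that every object decomposes as a finite direct sum of indecomposables, which follows from finite length). Let $\cF$ be indecomposable in $\Mod^\bY_{(I^A_\unip, \cX_A)}(\cR)^\flen$. By the first part of the lemma, $\End(\cF)$ is a finite-dimensional $\bk$-algebra. Since $\cF$ has finite length, Fitting's lemma applies: any endomorphism $\varphi$ of $\cF$ eventually stabilizes in the sense that for $n$ large, $\cF \cong \ker(\varphi^n) \oplus \mathrm{im}(\varphi^n)$. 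Indecomposability of $\cF$ forces either $\ker(\varphi^n)=0$ (so $\varphi$ is an automorphism) or $\mathrm{im}(\varphi^n)=0$ (so $\varphi$ is nilpotent). Hence the set of non-invertible endomorphisms coincides with the nilradical, making $\End(\cF)$ a local ring.

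The main (and only) obstacle is checking that finite length of $\cF$ suffices to invoke Fitting's lemma in the present abelian category; this is standard since every object of $\Mod^\bY_{(I^A_\unip, \cX_A)}(\cR)^\flen$ is noetherian and artinian by construction. With the endomorphism rings of indecomposables shown to be local, the Krull--Schmidt property in the sense of~\cite[\S A.1]{cyz} follows.
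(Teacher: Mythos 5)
Your proof is correct and follows essentially the same route as the paper: the finiteness claim is exactly the devissage from Lemma~\ref{lem:Rsimple-hom} that the paper invokes, and your Fitting-lemma argument simply unpacks the standard fact that the paper outsources to~\cite[Remark~A.2]{cyz}.
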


\begin{proof}
The first claim follows from Lemma~\ref{lem:Rsimple-hom}. The second claim follows by~\cite[Remark~A.2]{cyz}.
\end{proof}

In the following statement we use the terminology introduced in Remark~\ref{rmk:Ind-coweyl}\eqref{it:Ind-coweyl-stab}.

\begin{lem}
\label{lem:hom-stabilize}
Let $\cF, \cG \in \Perv_{(I^A_\unip, \cX_A)}(\Gr,\bk)$, and let $\mu \in \bY$.  If $\lambda \in \bY_+ \cap (-w_\circ(\mu) + \bY_+)$ is large enough, the natural map
\begin{multline}
\label{eqn:hom-stabilize}
\Hom_{\Perv_{(I^A_\unip, \cX_A)}(\Gr,\bk)}(\cF, \cG \star^{\cL^+G}  \cI^{w_\circ(\mu) + \lambda}_* \star^{\cL^+G} \cI^{-w_\circ(\lambda)}_*) \\ \to
\Hom_{\Mod^\bY_{(I^A_\unip, \cX_A)}(\cR)}(\Phi^A(\cF)\la \mu\ra, \Phi^A(\cG))
\end{multline}
is an isomorphism.
\end{lem}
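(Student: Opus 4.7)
My plan is to rewrite the right-hand side of~\eqref{eqn:hom-stabilize} as a filtered colimit of $\Hom$-groups via Lemma~\ref{lem:Rfree-hom}, observe that the colimit is finite-dimensional from results already established, and then show stabilization by exploiting rigidity together with Theorem~\ref{thm:geometric-Steinberg}.

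First I apply Lemma~\ref{lem:Rfree-hom} after shifting by $\la -\mu\ra$, which identifies $\Hom(\Phi^A(\cF)\la\mu\ra, \cM)$ with $\Hom(\cF, \cM_\mu)$ for any $\bY$-graded $\cR$-module $\cM$. Taking $\cM = \Phi^A(\cG)$ and using $(\Phi^A(\cG))_\mu = \cG \star^{\cL^+G} \cR_\mu$, the right-hand side of~\eqref{eqn:hom-stabilize} becomes $\Hom(\cF, \cG \star^{\cL^+G} \cR_\mu)$. Since $\cF$ is an ordinary perverse sheaf (hence a compact object of the ind-category) and since $\cG \star^{\cL^+G}(-)$ extends to an exact functor on ind-objects commuting with filtered colimits, the presentation~\eqref{eqn:description-Rmu} produces a canonical isomorphism
\[
\Hom(\cF, \cG \star^{\cL^+G} \cR_\mu) \cong \varinjlim_\lambda \Hom(\cF, \cG \star^{\cL^+G} \cI^{w_\circ(\mu)+\lambda}_* \star^{\cL^+G} \cI^{-w_\circ(\lambda)}_*),
\]
and~\eqref{eqn:hom-stabilize} factors as the insertion $V_\lambda \to \varinjlim_\lambda V_\lambda$ of the $\lambda$-th term into this colimit, followed by the two identifications above.

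Next, Lemma~\ref{lem:Rfree-filt} places $\Phi^A(\cF)\la\mu\ra$ and $\Phi^A(\cG)$ in $\Mod^\bY_{(I^A_\unip,\cX_A)}(\cR)^\flen$, and Lemma~\ref{lem:R-findim} then tells me that the colimit is finite-dimensional. This immediately gives surjectivity of $V_\lambda \to \varinjlim_\lambda V_\lambda$ for $\lambda$ large. The essential remaining task is to establish injectivity for $\lambda$ large enough, and this is where I expect the main obstacle.

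To achieve injectivity, I use rigidity of $\Perv_{\cL^+G}(\Gr,\bk)$: by~\eqref{eqn:duals} the object $\cI^{-w_\circ(\lambda)}_*$ is the dual of $\cI^\lambda_!$, so
\[
V_\lambda \cong \Hom(\cF \star^{\cL^+G} \cI^\lambda_!, \cG \star^{\cL^+G} \cI^{w_\circ(\mu)+\lambda}_*),
\]
under which the transitions are induced by the natural maps $\cI^\lambda_! \to \cI^{\lambda+\eta}_!$ and $\cI^{w_\circ(\mu)+\lambda}_* \to \cI^{w_\circ(\mu)+\lambda+\eta}_*$. Decomposing $\cF$ and $\cG$ into composition factors and using~\eqref{eqn:WS-Wres-Whit} to write each simple as $\LGr^A_{x'} \star^{\cL^+G} \IC^\eta$ for a restricted $x'$, the full faithfulness in Theorem~\ref{thm:geometric-Steinberg}\eqref{it:thm-Steinberg-ff} reduces the computation to one entirely inside $\Perv_{\cL^+G}(\Gr,\bk)$, once one observes that composition factors of $\LGr^A_{x'} \star^{\cL^+G} A$ and $\LGr^A_{y'} \star^{\cL^+G} B$ are disjoint when $x' \ne y'$ in a fixed set of representatives of restricted elements, hence the corresponding $\Hom$-groups vanish. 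In the spherical setting, the required stabilization becomes a direct geometric incarnation of Remark~\ref{rmk:Ind-coweyl}\eqref{it:Ind-coweyl-stab}, and follows from the tensor identity in $\Perv_{\cL^+G}(\Gr,\bk)$ together with the highest-weight-category vanishing $\Hom(\cI^\lambda_!, \cI^\eta_*) = 0$ for $\lambda \ne \eta$.
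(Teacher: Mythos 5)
Your overall route is the same as the paper's: identify the right-hand side with $\varinjlim_\lambda \Hom(\cF, \cG\star^{\cL^+G}\cI^{w_\circ(\mu)+\lambda}_*\star^{\cL^+G}\cI^{-w_\circ(\lambda)}_*)$, deduce surjectivity for large $\lambda$ from finite-dimensionality (Lemmas~\ref{lem:Rfree-filt} and~\ref{lem:R-findim}), and then treat simple objects via Theorem~\ref{thm:geometric-Steinberg} (disjointness of composition factors when the restricted parts differ, full faithfulness when they agree), concluding with the stabilization of Remark~\ref{rmk:Ind-coweyl}\eqref{it:Ind-coweyl-stab}. The one genuine gap is the passage from arbitrary $\cF,\cG$ to simple ones: ``decomposing $\cF$ and $\cG$ into composition factors'' is not by itself a legitimate operation on these $\Hom$-groups, since neither side is exact in $\cF$ or $\cG$, and the property being proved (that~\eqref{eqn:hom-stabilize} is an isomorphism for $\lambda$ large) cannot be read off composition factors without a d\'evissage. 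The paper supplies exactly this: for a short exact sequence $0\to\cF'\to\cF\to\cF''\to 0$ it compares the two three-term left-exact $\Hom$-sequences and applies the four lemma, using that the outer vertical maps are isomorphisms by induction on length and that the middle one is \emph{surjective} --- which is precisely your first step --- to conclude it is an isomorphism; the same argument is then run in $\cG$. So your ordering of the argument is right and all ingredients are in place, but this gluing step must be made explicit, as the statement does not follow from the simple case alone.

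A secondary inaccuracy: in the dualized description $V_\lambda\cong\Hom(\cF\star^{\cL^+G}\cI^\lambda_!,\ \cG\star^{\cL^+G}\cI^{w_\circ(\mu)+\lambda}_*)$ the transition maps are not induced by maps $\cI^\lambda_!\to\cI^{\lambda+\eta}_!$ and $\cI^{w_\circ(\mu)+\lambda}_*\to\cI^{w_\circ(\mu)+\lambda+\eta}_*$; no such canonical morphisms exist (on the representation side there is no natural map between Weyl modules of different highest weights). The transitions are built instead from $\cI^{\lambda+\eta}_!\to\cI^\lambda_!\star^{\cL^+G}\cI^\eta_!$, the canonical map $\cI^\eta_!\to\cI^\eta_*$, and the multiplication $\cI^{w_\circ(\mu)+\lambda}_*\star^{\cL^+G}\cI^\eta_*\to\cI^{w_\circ(\mu)+\lambda+\eta}_*$. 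This does not affect the conclusion, since the stabilization you invoke is exactly Remark~\ref{rmk:Ind-coweyl}\eqref{it:Ind-coweyl-stab} as in the paper; note also that carrying out that stabilization along the good filtration requires the full $\Ext$-vanishing~\eqref{eqn:Ext-vanishing-Rep}, not only the $\Hom$-vanishing $\Hom(\cI^\lambda_!,\cI^\eta_*)=0$ for $\lambda\ne\eta$ that you quote.
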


The map in this lemma comes from the identification of the right-hand side with $\Hom(\cF, \cG \star^{\cL^+G} \cR_\mu)$ (see Lemma~\ref{lem:Rfree-hom}), which in turn is identified with
\[
\varinjlim_{\lambda \in \bY_+ \cap (-w_\circ(\mu) + \bY_+)} \Hom (\cF, \cG \star^{\cL^+G}  \cI^{w_\circ(\mu) + \lambda}_* \star^{\cL^+G} \cI^{-w_\circ(\lambda)}_*).
\]

\begin{proof}
Any element of $\Hom(\Phi^A(\cF)\la \mu\ra, \Phi^A(\cG))$ lies in the image of~\eqref{eqn:hom-stabilize} for sufficiently large $\lambda$.  Since $\Hom(\Phi^A(\cF)\la \mu\ra, \Phi^A(\cG))$ is finite-dimensional by Lemma~\ref{lem:R-findim}, we deduce that~\eqref{eqn:hom-stabilize} is surjective for sufficiently large $\lambda$ (depending on $\cF$ and $\cG$).  Suppose now that $0 \to \cF' \to \cF \to \cF'' \to 0$ is a short exact sequence in $\Perv_{(I^A_\unip, \cX_A)}(\Gr,\bk)$, and consider the diagram
\[
\hbox{\small$
\begin{tikzcd}[column sep=tiny]
0 \ar[r] &
  \begin{array}{l}\Hom(\cF'', \cG \star {} \\ \quad \cI^{w_\circ(\mu) + \lambda}_* \star \cI^{-w_\circ(\lambda)}_*)\end{array} \ar[r] \ar[d] &
  \begin{array}{l}\Hom(\cF, \cG \star {} \\ \quad \cI^{w_\circ(\mu) + \lambda}_* \star \cI^{-w_\circ(\lambda)}_*)\end{array} \ar[r] \ar[d] &
  \begin{array}{l}\Hom(\cF', \cG \star {} \\ \quad \cI^{w_\circ(\mu) + \lambda}_* \star \cI^{-w_\circ(\lambda)}_*)\end{array} \ar[d] \\
0 \ar[r] & 
  \Hom(\Phi^A(\cF'')\la \mu\ra, \Phi^A(\cG)) \ar[r] &
  \Hom(\Phi^A(\cF)\la \mu\ra, \Phi^A(\cG)) \ar[r] &
  \Hom(\Phi^A(\cF')\la \mu\ra, \Phi^A(\cG))
\end{tikzcd}$}
\]
If the first and last columns are isomorphisms and the middle column is surjective, then the four lemma implies that the middle column is in fact also an isomorphism.  Thus, by induction on the length of $\cF$, we may reduce to the case where $\cF$ is simple.  A similar argument applies to $\cG$. It therefore suffices to prove the claim in case $\cF$ and $\cG$ are simple, which we assume from now on.

Choose a subset $({}^A W_\ext^\res)' \subset {}^A W_\ext$ as in the discussion above~\eqref{eqn:simple-translation}.
Then we may assume that
\[
\cF = \LGr^A_{x_1 t_{w_\circ(\nu_1)}} \cong \LGr^A_{x_1} \star^{\cL^+G} \IC^{\nu_1},
\qquad
\cF = \LGr^A_{x_2 t_{w_\circ(\nu_2)}} \cong \LGr^A_{x_2} \star^{\cL^+G} \IC^{\nu_2},
\]
where $x_1, x_2 \in ({}^A W_\ext^\res)'$ and $\nu_1, \nu_2 \in \bY_+$.  As in the proof of Lemma~\ref{lem:Rsimple-hom}, if $x_1 \ne x_2$, then Theorem~\ref{thm:geometric-Steinberg}\eqref{it:thm-Steinberg} implies that $\cF$ has no composition factors in common with any $\cG \star^{\cL^+G} \cI^{w_\circ(\mu) + \lambda}_* \star^{\cL^+G} \cI^{-w_\circ(\lambda)}_*$, so the left-hand side of~\eqref{eqn:hom-stabilize} vanishes for all $\lambda$, and hence so does the right-hand side.

On the other hand, if $x_1 = x_2$, then Theorem~\ref{thm:geometric-Steinberg}\eqref{it:thm-Steinberg-ff} lets us identify the left-hand side of~\eqref{eqn:hom-stabilize} with
\[
\Hom_{\Perv_{\cL^+G}(\Gr,\bk)}(\IC^{\nu_1}, \IC^{\nu_2} \star^{\cL^+G}  \cI^{w_\circ(\mu) + \lambda}_* \star^{\cL^+G} \cI^{-w_\circ(\lambda)}_*).
\]
By the geometric Satake equivalence, this is isomorphic to 
\[
\Hom_{G^\vee_\bk}(\Satake(\IC^{\nu_1}), \Satake(\IC^{\nu_2}) \otimes \coweyl^{(1)}(w_\circ(\mu)+\lambda) \otimes \coweyl^{(1)}(-w_\circ(\lambda)))
\]
where $\coweyl^{(1)}(\nu)$ is the induced $G^\vee_\bk$-module of highest weight $\nu$.
As explained in Remark~\ref{rmk:Ind-coweyl}\eqref{it:Ind-coweyl-stab}, this group is independent of $\lambda$ for $\lambda$ large enough, as desired.
\end{proof}

\begin{rmk}
\label{rmk:hom-stabilize}
If $\lambda,\lambda' \in \bY$ are such that $\lambda'-\lambda \in \bY_+$, the morphism~\eqref{eqn:hom-stabilize} factors as a composition
\begin{multline*}
\Hom_{\Perv_{(I^A_\unip, \cX_A)}(\Gr,\bk)}(\cF, \cG \star^{\cL^+G}  \cI^{w_\circ(\mu) + \lambda}_* \star^{\cL^+G} \cI^{-w_\circ(\lambda)}_*) \\ 
\to \Hom_{\Perv_{(I^A_\unip, \cX_A)}(\Gr,\bk)}(\cF, \cG \star^{\cL^+G}  \cI^{w_\circ(\mu) + \lambda'}_* \star^{\cL^+G} \cI^{-w_\circ(\lambda')}_*) \\
\to \Hom_{\Mod^\bY_{(I^A_\unip, \cX_A)}(\cR)}(\Phi^A(\cF)\la \mu\ra, \Phi^A(\cG))
\end{multline*}
where the second morphism is the analogue of~\eqref{eqn:hom-stabilize} for $\lambda'$. If $\lambda$ is large enough, this composition and its second member are isomorphisms, hence so is its first member.
\end{rmk}

\subsection{Baby co-Verma modules: definition and first properties}
\label{ss:cbV}

We now introduce geometric counterparts of the objects studied in~\S\ref{ss:baby-coV-Rep}.

For any $\mu \in \bY_+$, since $\Gr_{t_{w_\circ(\mu)}}$ is open in $\Gr^\mu$, by adjunction there exists a canonical map
\begin{equation}\label{eqn:costd-embed}
\cI^\mu_* \to \NGr_{t_{w_\circ(\mu)}}.
\end{equation}
Now let $w \in {}^A W^S_\ext$.  Then $\ell(w) + \ell(t_{w_\circ(\mu)}) = \ell(wt_{w_\circ(\mu)})$ by Lemma~\ref{lem:lengths-add-Wres}, which 
by Lemma~\ref{lem:conv-D-N}\eqref{it:conv-DNGr} implies that we have
a canonical isomorphism $\NFl_w \star^I \NGr_{t_{w_\circ(\mu)}} \cong \NGr_{wt_{w_\circ(\mu)}}$.
Applying $\Av^A_\psi$ and using~\cite[Lemma~3.3(1)--(3)]{projGr1} we deduce that 
\begin{equation}\label{eqn:costd-convolve}
\NFl^A_w \star^I \NGr_{t_{w_\circ(\mu)}} \cong \NGr^A_{wt_{w_\circ(\mu)}}.
\end{equation}
Also, for any $\cF \in \Db_{\cL^+G}(\Gr,\bk)$, we have canonical isomorphisms
\[
\NFl^A_w \star^I \cF \cong (\pi_*\NFl^A_w) \star^{\cL^+G} \cF \cong \NGr^A_w \star^{\cL^+G} \cF,
\]
see~\cite[Lemma~2.5]{bgmrr} and~\eqref{eqn:pi*-D-N}.
Thus, applying $\NFl^A_w \star^I ({-})$ to~\eqref{eqn:costd-embed}, we obtain a canonical morphism
\begin{equation}\label{eqn:costd-conv2}
\NGr^A_w \star^{\cL^+G} \cI^\mu_* \to \NGr^A_{wt_{w_\circ(\mu)}}.
\end{equation}

For $w \in {}^A W_\ext$ and $\mu \in \bY$ we set
\[
(\cbV^A_w)_\mu = \underset{\lambda}{``\varinjlim"} \, \NGr^A_{wt_{\mu+w_\circ(\lambda)}} \star^{\cL^+G} \cI_*^{-w_\circ(\lambda)},
\]
where $\lambda$ runs over the elements of $\bY_+$ such that $wt_{\mu+w_\circ(\lambda)}$ belongs to $W_\ext^S$ (which is automatic if $\lambda$ is sufficiently large, see~\S\ref{ss:periodic-order}) and where the transition morphisms in the inductive limit are given by the compositions
\begin{multline*}
\NGr^A_{wt_{\mu+w_\circ(\lambda)}} \star^{\cL^+G} \cI_*^{-w_\circ(\lambda)} \to \NGr^A_{wt_{\mu+w_\circ(\lambda)}} \star^{\cL^+G} \cI_*^{\nu} \star^{\cL^+G} \cI_*^{-w_\circ(\nu)} \star^{\cL^+G} \cI_*^{-w_\circ(\lambda)} \\
\to \NGr^A_{wt_{\mu+w_\circ(\lambda+\nu)}} \star^{\cL^+G} \cI_*^{-w_\circ(\lambda+\nu)}
\end{multline*}
for $\nu \in \bY_+$,
where the first morphism is induced by~\eqref{eqn:costd-unit} and the second one is induced by~\eqref{eqn:costd-conv2} (applied to the first two factors) and~\eqref{eqn:costd-conv-sph} (applied to the last two factors). We endow the $\bY$-graded ind-perverse sheaf
\[
\cbV^A_w := \bigoplus_{\mu \in \bY} (\cbV^A_w)_\mu
\]
with the structure of a graded $\cR$-module 
by defining the action morphism
\[
(\cbV^A_w)_\mu \star^{\cL^+G} \cR_\nu \to (\cbV^A_w)_{\mu+\nu}
\]
(for $\mu,\nu \in \bY$) as induced by the morphisms
\begin{multline*}
\bigl( \NGr^A_{wt_{\mu+w_\circ(\lambda)}} \star^{\cL^+G} \cI_*^{-w_\circ(\lambda)} \bigr) \star^{\cL^+G} \bigl( \cI_*^{w_\circ(\nu)+\lambda'} \star^{\cL^+G} \cI_*^{-w_\circ(\lambda')}  \bigr) \\
\to \NGr^A_{wt_{\mu+\nu+w_\circ(\lambda+\lambda')}} \star^{\cL^+G} \cI_*^{-w_\circ(\lambda+\lambda')}
\end{multline*}
induced by~\eqref{eqn:costd-conv2} and~\eqref{eqn:costd-conv-sph} (after application of the commutativity constraint for $\star^{\cL^+G}$ to the second and third factors), for $\lambda,\lambda'$ sufficiently dominant. 

It is clear from definition that for any $w \in {}^A W_\ext$ and $\nu \in \bY$ we have
\begin{equation}
\label{eqn:cbV-translation}
\cbV^A_{wt_\nu} = \cbV^A_w \langle -\nu \rangle.
\end{equation}

\begin{lem}
\label{lem:Hom-simple-bV}
For $w,y \in {}^A W_\ext$, we have
\[
\dim \Hom_{\Mod^\bY_{(I^A_\unip, \cX_A)}(\cR)}(\LGT^A_{y}, \cbV^A_w) = \begin{cases}
1 & \text{if $w=y$;} \\
0 & \text{otherwise.}
\end{cases}
\]
\end{lem}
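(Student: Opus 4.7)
First I would decompose $y = xt_\lambda$ and $w = w_0 t_\kappa$ with $x, w_0 \in {}^A W_\ext^\res$ and $\lambda, \kappa \in \bY$. By Lemma~\ref{lem:Rfree-hom} (after the grading shift by $-\lambda$), the Hom space in question is identified with $\Hom(\LGr^A_x, (\cbV^A_w)_{-\lambda})$. Since $\LGr^A_x$ is an ordinary perverse sheaf, hence compact in the ind-category, the functor $\Hom(\LGr^A_x, -)$ commutes with the filtered colimit defining $(\cbV^A_w)_{-\lambda}$. Applying rigidity (via~\eqref{eqn:duals}) to transpose $\cI_*^{-w_\circ(\eta)}$ to the source as $\cI_!^\eta$, one obtains
\[
\Hom(\LGT^A_y, \cbV^A_w) \cong \varinjlim_\eta \Hom\bigl(\LGr^A_x \star^{\cL^+G} \cI_!^\eta,\ \NGr^A_{w_0 t_{\kappa - \lambda + w_\circ(\eta)}}\bigr),
\]
where $\eta$ ranges over elements of $\bY_+$ sufficiently large that $\kappa - \lambda + w_\circ(\eta) \in -\bY_+$.

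Next, if $y \neq w$, my plan is to show every term of the colimit vanishes using Corollary~\ref{cor:Hom-vanishing-Steinberg}. Part~\eqref{it:Hom-vanishing-Steinberg-1} forces the existence of $\zeta \in \bY$ orthogonal to all roots with $x = w_0 t_\zeta$ (otherwise every term is zero). Granting this, rewrite the target label as $x t_{w_\circ(\eta) + (\kappa - \lambda - \zeta)}$; then part~\eqref{it:Hom-vanishing-Steinberg-2} forces $\kappa - \lambda - \zeta = 0$, whence $y = xt_\lambda = w_0 t_{\zeta + \lambda} = w_0 t_\kappa = w$, a contradiction. Thus every term of the colimit vanishes and $\Hom(\LGT^A_y, \cbV^A_w) = 0$.

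For $y = w$ (so $x = w_0$ and $\lambda = \kappa$), part~\eqref{it:Hom-vanishing-Steinberg-3} gives that each term of the colimit is at most one-dimensional; since a filtered colimit of vector spaces of dimension at most one has dimension at most one, it remains only to exhibit a nonzero morphism $\LGT^A_w \to \cbV^A_w$. I would construct such a morphism degree-wise: on the $\eta$-indexed level of the degree-$\mu$ component, compose the canonical embedding $\LGr^A_{w_0} \hookrightarrow \NGr^A_{w_0}$ with the natural map~\eqref{eqn:costd-conv2} applied to the $\cI_*^{w_\circ(\mu+\kappa)+\eta}$-factor, and convolve with $\cI_*^{-w_\circ(\eta)}$. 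At the $\mu = -\kappa, \eta = 0$ level this reduces to the canonical embedding $\LGr^A_{w_0} \hookrightarrow \NGr^A_{w_0}$, which is nonzero.

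The hard part will be the combinatorial bookkeeping in the second step around the nonuniqueness of the decomposition $w = w_0 t_\kappa$ and the compensating coweight $\zeta$ orthogonal to the roots; the rest is a direct application of Corollary~\ref{cor:Hom-vanishing-Steinberg}. A separate routine (but tedious) verification is needed to check that the morphism constructed in the third paragraph is well defined, $\cR$-equivariant, and compatible with all transition maps appearing in $\Phi^A(\LGr^A_{w_0})$ and $\cbV^A_w$.
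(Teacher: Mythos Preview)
Your proof follows essentially the same strategy as the paper's: reduce via Lemma~\ref{lem:Rfree-hom} and rigidity to a filtered colimit of $\Hom$-spaces between perverse sheaves, then apply Corollary~\ref{cor:Hom-vanishing-Steinberg}. The only divergence is in the case $y=w$. The paper observes that each term in the colimit is exactly $1$-dimensional (part~\eqref{it:Hom-vanishing-Steinberg-3} of that corollary gives equality, not just an upper bound) and that the transition morphisms are isomorphisms---which one reads off from the explicit description of the generator in part~\eqref{it:Hom-vanishing-Steinberg-3}---so the colimit is $1$-dimensional. Your route, bounding the colimit by $1$ and then exhibiting a nonzero $\cR$-module map degree-wise, is also correct but incurs exactly the compatibility checks you flag at the end; the paper's observation about transition maps sidesteps that work entirely.
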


\begin{proof}
Write $y=zt_\nu$, $w=z't_{\nu'}$ with $z,z' \in {}^A W_\ext^\res$ and $\nu,\nu' \in \bY$. From the definition of $\LGT^A_{y}$ and Lemma~\ref{lem:Rfree-hom} we see that
\[
\Hom_{\Mod^\bY_{(I^A_\unip, \cX_A)}(\cR)}(\LGT^A_{y}, \cbV^A_w) \cong \Hom_{\Perv_{(I^A_\unip, \cX_A)}(\Gr,\bk)}(\LGr^A_{z'}, (\cbV^A_{z'})_{\nu'-\nu}).
\]
It follows that
\begin{multline*}
\Hom_{\Mod^\bY_{(I^A_\unip, \cX_A)}(\cR)}(\LGT^A_{y}, \cbV^A_w) \cong \varinjlim_{\lambda} \Hom(\LGr^A_{z}, \NGr^A_{z't_{\nu'-\nu +w_\circ(\lambda)}} \star^{\cL^+G} \cI_*^{-w_\circ(\lambda)}) \\
\cong \varinjlim_{\lambda} \Hom(\LGr^A_{z} \star^{\cL^+G} \cI_!^{\lambda}, \NGr^A_{z't_{\nu'-\nu+w_\circ(\lambda)}})
\end{multline*}
where the second step uses~\eqref{eqn:duals}. By Corollary~\ref{cor:Hom-vanishing-Steinberg}\eqref{it:Hom-vanishing-Steinberg-1} the rightmost term vanishes unless $z$ and $z'$ differ by multiplication by $t_\eta$ for some coweight $\eta$ orthogonal to all roots. In this case we can assume that $z=z'$; then by Corollary~\ref{cor:Hom-vanishing-Steinberg}\eqref{it:Hom-vanishing-Steinberg-2} the $\Hom$ spaces vanish unless $\nu=\nu'$. Finally, if $z=z'$ and $\nu=\nu'$, by Corollary~\ref{cor:Hom-vanishing-Steinberg}\eqref{it:Hom-vanishing-Steinberg-3} each space $\Hom(\LGr^A_{z} \star^{\cL^+G} \cI_!^{\lambda}, \NGr^A_{zt_{w_\circ(\lambda)}})$ is $1$-dimensional.
It is easily seen that all transition morphisms are isomorphisms, so that our inductive limit is isomorphic to $\bk$.
\end{proof}

\section{Averaging and wall-crossing functors}
\label{sec:averaging}

\subsection{Averaging functors for \texorpdfstring{$\cR$}{R}-modules}
\label{ss:av-functors}

The averaging and wall-crossing functors defined in~\S\S\ref{ss:Av-Iw}--\ref{ss:wc-functors} extend to exact functors on ind-perverse sheaves. Moreover, for graded $\cR$-modules, these functors respect the $\cR$-module structure, and the induced functors commute in the obvious way with the functors $\Phi$ and $\Phi^A$, and with the shift-of-grading functors.  The properties of the functors constructed in this way, which follow directly from the results of~\S\S\ref{ss:Av-Iw}--\ref{ss:wc-functors}, are recorded in the following lemma.

\begin{lem}
\label{lem:functors-Rmod}
The functors
\begin{align*}
\Av^A_!, \Av^A_* &: \Mod^\bY_{(I^A_\unip, \cX_A)}(\cR) \to \Mod^\bY_{I_\unip}(\cR), \\
\Av^A_\psi &: \Mod^\bY_{I_\unip}(\cR) \to \Mod^\bY_{(I^A_\unip, \cX_A)}(\cR), \\
\xi^!_A, \xi^*_A &:  \Mod^\bY_{I_\unip}(\cR) \to  \Mod^\bY_{I_\unip}(\cR) 
\end{align*}
are exact, and send finitely generated, resp.~finite-length, $\cR$-modules to finitely generated, resp.~finite-length, $\cR$-modules.  Moreover, we have adjoint pairs 
\[
(\Av^A_{\psi}, \Av^A_*), \quad (\Av^A_{!}, \Av^A_{\psi}), \quad \text{and} \quad (\xi^!_A, \xi^*_A),
\]
and an isomorphism $\xi_A^! \cong \xi_A^*$.
\end{lem}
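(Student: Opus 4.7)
The plan is to derive everything from the corresponding statements on (ind-)perverse sheaves established in Sections~\ref{ss:Av-Iw}--\ref{ss:wc-functors}, together with one key compatibility: all of the functors $\Av^A_!, \Av^A_*, \Av^A_\psi, \xi^!_A, \xi^*_A$ are defined in terms of \emph{left} convolution with certain (ind-)objects supported on $\Fl$ (or, in the case of $\Av^A_\psi$, composition with such a convolution and a pull-push), while the $\cR$-action on a graded module uses \emph{right} convolution with the $\cL^+G$-equivariant ind-object $\cR$. By associativity of convolution and the fact that $\cL^+G$-equivariant objects convolve on the right in a manner compatible with left actions, one obtains canonical isomorphisms
\[
F(\cF) \star^{\cL^+G} \cR_\mu \cong F(\cF \star^{\cL^+G} \cR_\mu)
\]
for $F \in \{\Av^A_!, \Av^A_*, \Av^A_\psi, \xi^!_A, \xi^*_A\}$ and any $\cF$ in the relevant ind-perverse sheaf category. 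This lets one lift each $F$ to the corresponding category of graded $\cR$-modules by componentwise application, and it also yields natural isomorphisms $F \circ \Phi^{(A)} \cong \Phi^{(A)} \circ F$ where appropriate (with no grading shift).

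Given this compatibility, exactness on $\cR$-modules is immediate from exactness of the underlying functors on (ind-)perverse sheaves. The isomorphism $\xi^!_A \cong \xi^*_A$ likewise transports directly from Lemma~\ref{lem:isom-xi}. For the three adjunctions, one first checks the adjunction isomorphism on free modules using Lemma~\ref{lem:Rfree-hom} to reduce to the analogous adjunction at the level of (ind-)perverse sheaves (which is recorded in \S\ref{ss:Av-Iw} and \S\ref{ss:wc-functors}); the general case then follows because every $\cR$-module admits a presentation by free modules, and the functors on both sides are exact.

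It remains to verify that the finite-generation and finite-length properties are preserved. For finite generation, using exactness it suffices to treat free modules of finite type, and on those we have $F(\Phi^{(A)}(\cF)) \cong \Phi^{(A')}(F(\cF))$, which is again free of finite type since $F(\cF)$ is an ordinary perverse sheaf whenever $\cF$ is (recall that on perverse sheaves, $\Av^A_?$ and $\xi^?_A$ are t-exact functors valued in perverse sheaves, not merely ind-objects). For finite length, again by exactness it suffices to compute the image of each simple object. Writing $\LGT^{(A)}_w = \Phi^{(A)}(\LGr^{(A)}_y)\langle -\lambda\rangle$ and using the compatibility with $\Phi^{(A)}$, this reduces to showing that $F(\LGr^{(A)}_y)$ has finite length as a perverse sheaf, and that $\Phi^{(A)}$ applied to a finite-length perverse sheaf gives a finite-length $\cR$-module---but the first assertion is clear (these functors take perverse sheaves to perverse sheaves supported on finitely many orbits), and the second is exactly Lemma~\ref{lem:Rfree-filt}.

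The main obstacle, though conceptually minor, will be to make the left/right convolution compatibility of the first paragraph fully precise in the ind-category framework; once this is granted, everything else is a direct translation of the corresponding facts on (ind-)perverse sheaves, so the remaining write-up amounts to little more than careful bookkeeping.
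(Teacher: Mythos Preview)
Your proposal is essentially correct and aligns with the paper's treatment: the paper gives no proof of this lemma beyond the remark preceding it that ``the properties of the functors constructed in this way \dots\ follow directly from the results of~\S\S\ref{ss:Av-Iw}--\ref{ss:wc-functors},'' and your sketch is a faithful unpacking of that sentence via the left/right convolution compatibility $F({-}) \star^{\cL^+G} \cR_\mu \cong F({-} \star^{\cL^+G} \cR_\mu)$.

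One step deserves a small correction. For the adjunctions you write that ``the general case then follows because every $\cR$-module admits a presentation by free modules.'' This claim is not established anywhere in the paper for the full category $\Mod^\bY_{(I^A_\unip,\cX_A)}(\cR)$ (only quotients of free modules of finite type are discussed, and that defines the \emph{finitely generated} subcategory), and the adjunctions are asserted on all of $\Mod^\bY$. The cleaner and more direct argument is this: the unit and counit of each adjunction at the level of ind-perverse sheaves are natural transformations, and naturality together with the compatibility isomorphism $F({-} \star^{\cL^+G} \cR_\mu) \cong F({-}) \star^{\cL^+G} \cR_\mu$ immediately shows that the unit and counit are morphisms of $\cR$-modules (componentwise). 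Hence the adjunction lifts to $\Mod^\bY$ with no further work, and no presentation argument is needed.
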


In view of the last claim in this lemma, we will sometimes write $\xi_A$ for $\xi_A^!$ or $\xi_A^*$, and will write $\xi_s$ for $\xi_{\{s\}}$ ($s \in S_\aff$).

\begin{rmk}
\label{rmk:Av-Groth-G1T}
As in Remark~\ref{rmk:Av-isom}, it is likely that the functors $\Av^A_!$ and $\Av^A_*$ are isomorphic. At least, as in Remark~\ref{rmk:Av-Groth}, for any $w \in {}^A W_\ext$ we have
\[
\Av^A_!(\LGT^A_w) \cong \Av^A_*(\LGT^A_w).
\]
As a consequence, if we denote by $[\Mod^\bY_{(I^A_\unip, \cX_A)}(\cR)^\flen]$ and $[\Mod^\bY_{I_\unip}(\cR)^\flen]$ the Grothendieck groups of the (abelian, finite length) categories $\Mod^\bY_{(I^A_\unip, \cX_A)}(\cR)^\flen$ and $\Mod^\bY_{I_\unip}(\cR)^\flen$, and by
\[
[\Av^A_!], [\Av^A_*] : [\Mod^\bY_{(I^A_\unip, \cX_A)}(\cR)^\flen] \to [\Mod^\bY_{I_\unip}(\cR)^\flen]
\]
the maps induced by $\Av^A_!$ and $\Av^A_*$ on Grothendieck groups, we have $[\Av^A_!]= [\Av^A_*]$.
\end{rmk}

\begin{lem}
\phantomsection
\label{lem:bV-Av}
\begin{enumerate}
\item 
\label{it:simples-Av}
For any $w \in W_\ext$, we have
\[
\Av^A_{\psi}(\LGT_w) \cong \begin{cases}
\LGT^A_w & \text{if $w \in {}^A W_\ext$,} \\
0 & \text{otherwise.}
\end{cases}
\]
\item
\label{it:bV-Av-psi}
For any $w \in {}^A W_\ext$ and any $v \in W_A$, we have
\[
\Av^A_\psi(\cbV_{vw}) \cong \cbV^A_w.
\]
\item
\label{it:bV-Av-*}
Choose an enumeration $x_1, \ldots, x_r$ of $W_A$ which refines the Bruhat order (so that necessarily $x_1=e$ and $x_r=w_A$). For $w \in {}^A W_\ext$, the object $\Av^A_*(\cbV^A_w)$ admits a filtration with successive subquotients $\cbV_{x_1 w}, \cbV_{x_2 w}, \ldots, \cbV_{x_r w}$. 
\end{enumerate}
\end{lem}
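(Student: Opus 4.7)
The plan is to use that $\Av^A_\psi$ and $\Av^A_*$ are exact and commute (up to canonical isomorphism) with grading shifts, with right convolution by $\Perv_{\cL^+G}(\Gr,\bk)$, and with ind-limits. This reduces each statement to a computation on the building blocks of $\LGT_w$ and $\cbV_w$.

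Part~\eqref{it:simples-Av}: write $w = yt_\lambda$ with $y \in W_\ext^\res$, so that $\LGT_w = \Phi(\LGr_y)\langle -\lambda\rangle$. Since $\Av^A_\psi \circ \Phi \cong \Phi^A \circ \Av^A_\psi$ and $\Av^A_\psi$ commutes with $\langle -\lambda\rangle$, the problem reduces to~\cite[Lemma~3.3(4)]{projGr1}, noting that $y \in {}^A W_\ext^\res$ if and only if $w = yt_\lambda \in {}^A W_\ext$.

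Part~\eqref{it:bV-Av-psi}: applying $\Av^A_\psi$ componentwise to the ind-limit defining $(\cbV_{vw})_\mu$, I need to compute $\Av^A_\psi(\NGr_{vwt_{\mu+w_\circ(\lambda)}})$ for $\lambda$ large, in which case $wt_{\mu+w_\circ(\lambda)} \in {}^A W_\ext^S$ is the minimum of its $W_A$-coset. The key claim---for any $z \in W_\ext^S$ whose coset minimum $z_{\min}$ lies in ${}^A W_\ext^S$, $\Av^A_\psi(\NGr_z) \cong \NGr^A_{z_{\min}}$, independent of the choice of $z \in W_A z_{\min}$---is proved by combining~\cite[Lemma~3.3(3)]{projGr1} (that $\Av^A_\psi(\NGr_z)$ is either $0$ or a single costandard) with the adjunction $(\Av^A_!,\Av^A_\psi)$ and the multiplicity computation in Lemma~\ref{lem:Av-Iw}\eqref{it:Av-Iw-2}, which forces the label to be $z_{\min}$ and the costandard to be nonzero. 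The ind-limits for $\cbV_{vw}$ and $\cbV^A_w$ then match level by level, with transition morphisms and the $\cR$-action preserved by naturality.

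Part~\eqref{it:bV-Av-*}: the crux is the identification $\Av^A_*(\cbV^A_w) \cong \TFl_{w_A} \star^I \cbV_w$. From part~\eqref{it:bV-Av-psi} with $v = e$ we get $\Av^A_*(\cbV^A_w) \cong \xi_A^*(\cbV_w)$, and Remark~\ref{rmk:xi-convolution} applied componentwise (every $\NGr_y$ appearing in the defining ind-limit of $\cbV_w$ is naturally $I$-equivariant) yields the claim. Now $\TFl_{w_A}$ is supported on the fiber $\overline{\Fl_{w_A}} \cong G_A/B_A$ of $\pi_A$, where it is the big indecomposable tilting sheaf; by the standard multiplicity-one property of this tilting (the fact cited in the proof of Proposition~\ref{prop:Av-tilting-indec}), $\TFl_{w_A}$ admits a costandard filtration with subquotients $\NFl_{x_1}, \ldots, \NFl_{x_r}$, each of multiplicity one, in any Bruhat-refining order. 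Convolving this filtration from the left with $\cbV_w$ is exact and preserves the $\cR$-structure, producing a filtration of $\Av^A_*(\cbV^A_w)$ with subquotients $\NFl_{x_i} \star^I \cbV_w$; using Lemma~\ref{lem:conv-D-N}\eqref{it:conv-DNGr} componentwise (valid since $x_i \in W_A$ and $wt_{\mu+w_\circ(\lambda)} \in {}^A W_\ext^S$), each such subquotient is canonically isomorphic to $\cbV_{x_i w}$ as a graded $\cR$-module. The main potential obstacle is the multiplicity-one property of $\TFl_{w_A}$ cited above, but since this is already invoked in the proof of Proposition~\ref{prop:Av-tilting-indec}, no new input is required.
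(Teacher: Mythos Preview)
Your proposal is correct and follows essentially the same approach as the paper's proof, which is a terse one-sentence reference to~\cite[Lemma~3.3]{projGr1}, the proof of Corollary~\ref{cor:Hom-vanishing-Steinberg}\eqref{it:Hom-vanishing-Steinberg-2}, and Remark~\ref{rmk:min-preceq}; you have simply unpacked what those references amount to. The only minor variation is in part~\eqref{it:bV-Av-psi}, where you determine $\Av^A_\psi(\NGr_z)$ via adjunction with $\Av^A_!$ and Lemma~\ref{lem:Av-Iw}\eqref{it:Av-Iw-2} rather than citing the explicit description in~\cite[Lemma~3.3(3)]{projGr1} directly, but this is equivalent.
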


\begin{proof}
The claims are consequences of the behavior of the functors $\Av^A_\psi$ and $\Av^A_*$ on simple and costandard perverse sheaves (see~\cite[Lemma~3.3]{projGr1} and the proof of Corollary~\ref{cor:Hom-vanishing-Steinberg}\eqref{it:Hom-vanishing-Steinberg-2}) and the fact that for any $\lambda$ large enough, $wt_{\mu-\lambda}$ is the unique minimal element (for the Bruhat order) in $W_A w t_{\mu-\lambda}$ (see Remark~\ref{rmk:min-preceq}).
\end{proof}

\subsection{Some perverse sheaves arising from the big tilting object on the finite-dimensional flag variety}
\label{ss:big-tilting}

The considerations in this subsection are closely related to those in~\cite[\S 4.1]{bgmrr}; however, for the reader's convenience we will repeat the required proofs.

The ``big tilting object,'' denoted by $\cS$, is defined to be the unique indecomposable tilting perverse sheaf in $\Perv_U(G/B)$ with full support.  (Here, ``$\cS$'' stands for Soergel, who studied a representation-theoretic incarnation of this object.)  We will review one approach to constructing $\cS$ (following~\cite{by} in a characteristic-$0$ setting, and~\cite{modrap1} or~\cite[Lemma~10.1]{br} for general coefficients) that shows that this object is both the projective cover and the injective hull of the skyscraper sheaf at $B/B \in G/B$.  Recall that 
$\psi_S$ factors as a composition
\[
I_\unip^S \to U^+ \xrightarrow{\psi_+} \Ga,
\]
where $U^+$ is the ``positive'' unipotent subgroup of $G$ (see~\cite[\S 3.4]{projGr1}).
We can then set $\cX_+=\psi_+^* \AS$, and consider the equivariant derived category $\Db_{(U^+,\cX_+)}(G/B,\bk)$. The $*$- and $!$-pushforwards of the unique $(U^+,\cX_+)$-equivariant rank-$1$ local system on the orbit $U^+ B/B \subset G/B$ are canonically isomorphic, and will be denoted $\Delta^+$. We then have functors
\begin{align*}
\Av^U_! :& \ \Db_{(U^+,\cX_+)}(G/B,\bk) \to \Db_U(G/B,\bk), \\
 \Av^U_* :& \ \Db_{(U^+,\cX_+)}(G/B,\bk) \to \Db_U(G/B,\bk)
\end{align*}
defined as for $\Av^S_!$ and $\Av^S_*$, and we have
\[
\cS \cong \Av^U_!(\Delta^+) \cong \Av^U_*(\Delta^+).
\]

Recall from~\cite[Lemma~2.5]{projGr1} that the elements $w \in W_\ext^S$ such that $ww_\circ$ has minimal length in $Www_\circ$ are those of the form $t_\lambda w_\circ$ with $\lambda \in \bY_{++}$. 
For such $\lambda$, we have described the geometry of $\Gr^\lambda$ in~\S\ref{ss:Satake-category}, and in particular considered a morphism $p_\lambda : \Gr^\lambda \to G/B$.
We set
\[
\cS_\lambda := p_\lambda^* \cS [\dim(\Gr^\lambda)-\dim(G/B)].
\]
This is an $I_\unip$-equivariant perverse sheaf on $\Gr^\lambda$.  The following proposition describes some calculations one can carry out with $\cS_\lambda$.  

\begin{prop}
\label{prop:avS-calc}
Let $\lambda \in \bY_{++}$.
\begin{enumerate}
\item 
\label{it:avS-!}
We have
\[
j^\lambda_!\cS_\lambda \cong \left( j^{\varsigma}_{!}\cS_{\varsigma} \right) \star^{\cL^+G} \cI_!^{\lambda - \varsigma}.
\]
Moreover, this object has a standard filtration and a simple head, isomorphic to $\LGr_{t_\lambda w_\circ}$.
\item 
\label{it:avS-*}
We have
\[
j^\lambda_*\cS_\lambda \cong \left( j^{\varsigma}_{*}\cS_{\varsigma} \right) \star^{\cL^+G} \cI_*^{\lambda - \varsigma}.
\]
Moreover, this object has a costandard filtration and a simple socle, isomorphic to $\LGr_{t_\lambda w_\circ}$.
\item 
\label{it:avS-tilt}
We have
\[
j^{\varsigma}_{!}\cS_{\varsigma} \cong j^{\varsigma}_{!*}\cS_{\varsigma} \cong j^{\varsigma}_{*}\cS_{\varsigma}.
\]
As a consequence, if $\cI_!^{\lambda - \varsigma} \cong \cI_*^{\lambda -\varsigma}$, then
\[
j^\lambda_!\cS_\lambda \cong j^\lambda_*\cS_\lambda,
\]
and this object is an indecomposable tilting object, isomorphic to $\TGr_{t_{w_\circ(\lambda)}}$.
\end{enumerate}
\end{prop}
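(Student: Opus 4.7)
The plan is to prove the three parts together by combining the indecomposability of $\cS$ as the big tilting sheaf on $G/B$ with smoothness of $p_\lambda$ and a convolution computation on $\Gr$. I would first show that $\cS_\lambda$ inherits from $\cS$ both a standard and a costandard filtration in $\Perv_I(\Gr^\lambda,\bk)$, with one-dimensional subquotients indexed by $w \in W$ (the standard/costandard perverse sheaves on the $I$-orbits $\Gr_{wt_\lambda w_\circ}$). This uses that $p_\lambda$ is smooth and $\cL^+G$-equivariant, so that $p_\lambda^*[\dim \Gr^\lambda - \dim G/B]$ is t-exact and sends the $B$-orbit standards/costandards on $G/B$ to the corresponding $I$-orbit standards/costandards on $\Gr^\lambda$. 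Because each embedding $\Gr_{wt_\lambda w_\circ} \hookrightarrow \Gr$ is affine, the $!$-extensions (resp.~$*$-extensions) of these subquotients to $\Gr$ are the perverse sheaves $\DGr_{wt_\lambda w_\circ}$ (resp.~$\NGr_{wt_\lambda w_\circ}$), so $j^\lambda_!\cS_\lambda$ is perverse with a standard filtration of the claimed shape, and dually for $j^\lambda_*\cS_\lambda$.

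For the head claim in~\eqref{it:avS-!}, note that $t_{w_\circ(\lambda)}$ is the unique Bruhat-maximum of $\{wt_\lambda w_\circ : w \in W\}$ because $\Gr_{t_{w_\circ(\lambda)}}$ is the open $I$-orbit in $\Gr^\lambda$. Thus $\DGr_{t_{w_\circ(\lambda)}}$ is the top of the standard filtration of $j^\lambda_! \cS_\lambda$, giving a surjection onto $\LGr_{t_{w_\circ(\lambda)}}$. Indecomposability of $j^\lambda_!\cS_\lambda$---which follows from indecomposability of $\cS$ together with the full faithfulness of $p_\lambda^*[\dim \Gr^\lambda - \dim G/B]$ (smooth pullback with connected fibers being iterated affine bundles) and of $j^\lambda_!$ on perverse sheaves supported on the locally closed $\Gr^\lambda$---forces the head to be isotypic, so it is a power of $\LGr_{t_{w_\circ(\lambda)}}$; comparison with $\dim \Hom(\cS,\delta_{B/B})=1$ via adjunction pins the multiplicity to one. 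A Verdier-dual argument handles the socle in~\eqref{it:avS-*}.

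Next I would establish the convolution formula $(j^\varsigma_!\cS_\varsigma) \star^{\cL^+G} \cI_!^{\lambda-\varsigma} \cong j^\lambda_!\cS_\lambda$. The left-hand side is perverse with a standard filtration: convolving the standard filtration of $j^\varsigma_!\cS_\varsigma$ with the $\cL^+G$-equivariant standard $\cI_!^{\lambda-\varsigma}$, and expressing weight multiplicities via $\Satake(\cI_!^{\lambda-\varsigma})$, one computes that the multiplicity of $\DGr_{wt_\lambda w_\circ}$ equals one for each $w \in W$, matching the right-hand side. I would then construct a natural morphism from the left to the right side using the universal property of $!$-extension together with a smooth factorization of the convolution map $\overline{\Gr^\varsigma} \widetilde\times \overline{\Gr^{\lambda-\varsigma}} \to \overline{\Gr^\lambda}$ (which is proper and birational onto $\overline{\Gr^\lambda}$, an isomorphism over $\Gr^\lambda$): on the open $I$-orbit $\Gr_{t_{w_\circ(\lambda)}}$ both sides restrict to the same shifted constant sheaf, so the morphism is an isomorphism there, and by matching standard multiplicities it is an isomorphism globally. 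Verdier duality gives the analogous formula in~\eqref{it:avS-*}.

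Finally, for~\eqref{it:avS-tilt}, I would verify that $j^\varsigma_!\cS_\varsigma$ also admits a costandard filtration---so is indecomposable tilting with maximal label $t_{w_\circ(\varsigma)}$, i.e.~$\TGr_{t_{w_\circ(\varsigma)}}$---by checking that the standard and costandard filtrations of $\cS_\varsigma$ both extend without correction, equivalently that the canonical map $j^\varsigma_!\cS_\varsigma \to j^\varsigma_*\cS_\varsigma$ (an isomorphism on $\Gr^\varsigma$) has no kernel or cokernel, using that any such kernel would be a sub of an object with standard filtration supported properly on the boundary, contradicting the standard filtration. Its image is then $j^\varsigma_{!*}\cS_\varsigma$. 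When $\cI_!^{\lambda-\varsigma} \cong \cI_*^{\lambda-\varsigma}$, combining~\eqref{it:avS-!} and~\eqref{it:avS-*} gives $j^\lambda_!\cS_\lambda \cong j^\lambda_*\cS_\lambda$, which therefore has both filtrations and is indecomposable tilting with label $t_{w_\circ(\lambda)}$. The main obstacle I expect is the rigorous construction and verification of the convolution isomorphism in~\eqref{it:avS-!}, since passing from matching standard multiplicities to an actual isomorphism of objects requires careful handling of the twisted product convolution diagram, in particular checking compatibility of the natural map on the open stratum with the standard filtrations on both sides.
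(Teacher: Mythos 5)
Your proposal has two genuine gaps, both at the points where the real content of the proposition lies. First, the head computation in~\eqref{it:avS-!} is wrong in both conclusion and method: the head of $j^\lambda_!\cS_\lambda$ is $\LGr_{t_\lambda w_\circ}$, the simple object attached to the \emph{closed} $I$-orbit of $\Gr^\lambda$ (as the statement says), not $\LGr_{t_{w_\circ(\lambda)}}$. In a standard filtration the subquotient with maximal label can always be arranged as a subobject rather than the top quotient --- already for $G/B=\mathbb{P}^1$ the big tilting $\cS$ has the standard object of the open cell as a subobject and the skyscraper at the closed point as its top quotient --- so no surjection onto $\LGr_{t_{w_\circ(\lambda)}}$ is produced this way; moreover indecomposability does not force the head of a perverse sheaf to be isotypic (this is false in general), and your appeal to $\dim\Hom(\cS,\delta_{B/B})=1$ concerns the closed-orbit simple, not the open-orbit one. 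The paper instead writes $j^\lambda_!\cS_\lambda \cong \Av^S_!(\DGr^S_{t_\lambda w_\circ})$ and computes $\Hom$ into simple objects by adjunction, using the known effect of $\Av^S_\psi$ on simples; this yields the simple head with the closed-orbit label, and it also gives the convolution isomorphism for free from the identity $\DGr^S_{t_\lambda w_\circ}\cong \DGr^S_{t_\varsigma w_\circ}\star^{\cL^+G}\cI^{\lambda-\varsigma}_!$ of~\cite{bgmrr} together with the compatibility of $\Av^S_!$ with right convolution, whereas your direct construction of that isomorphism (map, restriction to the open orbit, multiplicity count) is, as you acknowledge, not completed.

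Second, your argument for $j^\varsigma_!\cS_\varsigma\cong j^\varsigma_*\cS_\varsigma$ does not work: the kernel of the canonical map $j^\varsigma_!\cS_\varsigma\to j^\varsigma_*\cS_\varsigma$ is a subobject supported on the boundary, and possessing a standard filtration does not exclude such subobjects --- on $\mathbb{P}^1$ the $!$-extension of the shifted constant sheaf on the open cell contains the boundary skyscraper as a subobject, so there is no contradiction with the filtration. This is precisely the nontrivial point of part~\eqref{it:avS-tilt}: the paper proves it by identifying $j^\varsigma_!\cS_\varsigma\cong\Av^S_!\Av^S_\psi(\LGr_{t_\varsigma w_\circ})=\xi_S^!(\LGr_{t_\varsigma w_\circ})$ and dually $j^\varsigma_*\cS_\varsigma\cong\xi_S^*(\LGr_{t_\varsigma w_\circ})$, and then invoking the isomorphism $\xi_S^!\cong\xi_S^*$ of Lemma~\ref{lem:isom-xi}, which rests on the free-monodromic tilting formalism; Remark~\ref{rmk:Av-isom} explains why some input of this kind is unavoidable, since the naive morphism $\Av^S_!\to\Av^S_*$ is not an isomorphism. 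Without an argument of comparable strength, part~\eqref{it:avS-tilt}, and hence the identification with $\TGr_{t_{w_\circ(\lambda)}}$, remains unproven.
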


In view of~\eqref{it:avS-tilt},
the isomorphisms in~\eqref{it:avS-!} and~\eqref{it:avS-*} can also be written as
\begin{equation}
\label{eqn:isoms-Slambda}
j^\lambda_!\cS_\lambda \cong \TGr_{t_{w_\circ(\varsigma)}} \star^{\cL^+G} \cI_!^{\lambda -\varsigma}
\qquad\text{and}\qquad
j^\lambda_*\cS_\lambda \cong \TGr_{t_{w_\circ(\varsigma)}} \star^{\cL^+G} \cI_*^{\lambda -\varsigma}.
\end{equation}
Note that if $\bk$ has characteristic $0$, then the condition in part~\eqref{it:avS-tilt} applies to all $\lambda \in \bY_{++}$ (by semisimplicity of the Satake category in this case).

\begin{proof}
\eqref{it:avS-!} The functors $\Av^U_!$ and $\Av^S_!$ have a counterpart for sheaves on $\Gr^\lambda$, which will also be denoted $\Av^S_!$; then we have
\[
\Av^S_! \circ j^\lambda_! \cong j^\lambda_! \circ \Av^S_!, \quad \Av^S_! \circ p_\lambda^* \cong p_\lambda^* \circ \Av^U_!.
\]
Now consider the object $\DGr^S_{t_\lambda w_\circ} \in \Perv_{(I_\unip^S,\cX_S)}(\Gr,\bk)$. From the definition we see that 
\[
\DGr^S_{t_\lambda w_\circ} = j^\lambda_! p_\lambda^* \Delta^+ [\dim(\Gr^\lambda)-\dim(G/B)];
\]
we deduce that
\begin{equation}
\label{eqn:j!S-AvD}
\Av^S_! (\DGr^S_{t_\lambda w_\circ}) \cong j^\lambda_! p_\lambda^* \Av^U_!(\Delta^+) [\dim(\Gr^\lambda)-\dim(G/B)] \cong j^\lambda_! \cS_\lambda.
\end{equation}
The claim that $j^\lambda_!\cS_\lambda$ admits a standard filtration is immediate from the fact that $\cS_\lambda$ admits a standard filtration.  Alternatively, it is a consequence of the isomorphism above and Lemma~\ref{lem:Av-Iw}\eqref{it:Av-Iw-2}.

Next, for any $\cF$ in $\Perv_{I_\unip}(\Gr,\bk)$, by adjunction we have
\begin{equation}
\label{eqn:avS-adj}
\Hom_{\Perv_{I_\unip}(\Gr,\bk)}(\Av^S_! (\DGr^S_{t_\lambda w_\circ}), \cF) \cong \Hom_{\Perv_{(I_\unip^S,\cX_S)}(\Gr,\bk)}(\DGr^S_{t_\lambda w_\circ}, \Av^S_\psi( \cF)).
\end{equation}
In case $\cF$ is simple, the explicit description of $\Av^S_\psi(\cF)$ given in~\cite[Lemma~3.3(4)]{projGr1} shows that
\[
\Hom_{\Perv_{I_\unip}(\Gr,\bk)}(\Av^S_! (\DGr^S_{t_\lambda w_\circ}), \cF) =0
\]
unless $\cF \cong \LGr_{t_\lambda w_\circ}$, in which case this space is $1$-dimensional. We deduce that $\Av^S_! (\DGr^S_{t_\lambda w_\circ}) \cong j^\lambda_! \cS_\lambda$ has a simple head, isomorphic to $\LGr_{t_\lambda w_\circ}$. 

Finally, recall from~\cite[p.~723]{bgmrr} that
we have
\[
\DGr^S_{t_\lambda w_\circ} \cong \DGr^S_{t_{\varsigma} w_\circ} \star^{\cL^+G} \cI^{\lambda -\varsigma}_!.
\]
Since $\Av^S_!$ commutes with convolution on the right by objects of $\Perv_{\cL^+G}(\Gr,\bk)$, we see that
\[
j^\lambda_! \cS_\lambda \cong \Av^S_!(\DGr^S_{t_{\varsigma} w_\circ}) \star^{\cL^+G} \cI^{\lambda - \varsigma}_! \cong j^{\varsigma}_!\cS_{\varsigma} \star^{\cL^+G} \cI^{\lambda -\varsigma}_!,
\]
which finishes the proof.

\eqref{it:avS-*} The proof is very similar and will be omitted.

\eqref{it:avS-tilt}
Since $t_{\varsigma} w_\circ$ is minimal in ${}^S W_\ext^S$, we have $\DGr^S_{t_{\varsigma} w_\circ} \cong \LGr^S_{t_{\varsigma} w_\circ}$. 
Using~\cite[Lemma~3.3(4)]{projGr1} again, we deduce that
\[
\DGr^S_{t_{\varsigma} w_\circ} \cong \Av^S_\psi(\LGr_{t_{\varsigma} w_\circ}).
\]
In view of~\eqref{eqn:j!S-AvD}, it follows that
\[
j^{\varsigma}_!\cS_{\varsigma} \cong \Av^S_!(\DGr^S_{t_{\varsigma} w_\circ}) \cong \xi_S(\LGr_{t_{\varsigma} w_\circ}).
\]
Similar (dual) considerations show that $j^{\varsigma}_*\cS_{\varsigma} \cong \Av^S_*(\NGr^S_{t_{\varsigma} w_\circ}) \cong \xi_S(\LGr_{t_{\varsigma} w_\circ})$, and hence that
\[
j^{\varsigma}_!\cS_{\varsigma} \cong j^{\varsigma}_*\cS_{\varsigma}.
\]
It is then clear that these objects are also isomorphic to $j^{\varsigma}_{!*}\cS_{\varsigma}$.

Finally, let us assume that $\cI_!^{\lambda -\varsigma} \cong \cI_*^{\lambda -\varsigma}$. Then from parts~\eqref{it:avS-!} and~\eqref{it:avS-*} we deduce that $j^\lambda_! \cS_\lambda \cong j^\lambda_*\cS_\lambda$, and that this object is tilting. Its support is clearly $\overline{\Gr^\lambda} = \overline{\Gr_{t_{w_\circ(\lambda)}}}$, and it is indecomposable because it has a simple head (and a simple socle).  It must therefore be isomorphic to $\TGr_{t_{w_\circ(\lambda)}}$.
\end{proof}

We extract the following observations from the calculations in the preceding proof.

\begin{prop}
\label{prop:projectives}
Let $\lambda \in \bY_{++}$.
\begin{enumerate}
\item 
\label{it:proj-char0}
If $\bk$ has characteristic $0$, then in $\Perv_{I_\unip}(\Gr,\bk)$, the object
\[
j^\lambda_!\cS_\lambda \cong j^\lambda_*\cS_\lambda \cong \TGr_{t_{w_\circ(\lambda)}}
\]
is both projective and injective.
\item 
\label{it:proj-R}
In $\Mod^\bY_{I_\unip}(\cR)$, the object $\Phi(j^\lambda_!\cS_\lambda)$ is projective, and the object $\Phi(j^\lambda_*\cS_\lambda)$ is injective.
\end{enumerate}
\end{prop}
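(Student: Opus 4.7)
The strategy for both parts is to transport a semisimplicity result from the $(I^S_\unip,\cX_S)$-Whittaker setting to the $I_\unip$-setting via the adjunctions $(\Av^S_!, \Av^S_\psi)$ and $(\Av^S_\psi, \Av^S_*)$. Since $\Av^S_\psi$ is exact---both as a functor on perverse sheaves, by~\cite[Lemma~3.3]{projGr1}, and on $\cR$-modules, by Lemma~\ref{lem:functors-Rmod}---its left adjoint $\Av^S_!$ preserves projectives and its right adjoint $\Av^S_*$ preserves injectives.

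For part~\eqref{it:proj-char0}, one combines~\cite{bgmrr} with geometric Satake to identify $\Perv_{(I^S_\unip,\cX_S)}(\Gr,\bk)$ with $\Rep(G^\vee_\bk)$; in characteristic $0$ this category is semisimple, so $\DGr^S_{t_\lambda w_\circ}$ is projective and $\NGr^S_{t_\lambda w_\circ}$ is injective. Applying the adjunctions and using~\eqref{eqn:j!S-AvD} (and its dual) it follows that $j^\lambda_!\cS_\lambda$ is projective and $j^\lambda_*\cS_\lambda$ is injective in $\Perv_{I_\unip}(\Gr,\bk)$. Proposition~\ref{prop:avS-calc}\eqref{it:avS-tilt} then identifies both with $\TGr_{t_{w_\circ(\lambda)}}$ (the hypothesis $\cI^{\lambda-\varsigma}_! \cong \cI^{\lambda-\varsigma}_*$ being automatic in characteristic $0$), so this common object is both projective and injective.

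For part~\eqref{it:proj-R}, the required semisimplicity now holds in arbitrary characteristic: Proposition~\ref{prop:IW-R-semisimple} identifies $\Mod^\bY_{(I^S_\unip,\cX_S)}(\cR)$ with the category of all algebraic $T^\vee_\bk$-modules, which is semisimple (every short exact sequence of graded $\bk$-vector spaces splits, even in infinite rank). In particular, $\Phi^S(\DGr^S_{t_\lambda w_\circ})$ is projective and $\Phi^S(\NGr^S_{t_\lambda w_\circ})$ is injective in $\Mod^\bY_{(I^S_\unip,\cX_S)}(\cR)$. It then suffices to check the compatibility $\Av^S_! \circ \Phi^S \cong \Phi \circ \Av^S_!$ (and its $*$-analogue). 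This holds componentwise, since for any $\cF$ and $\mu \in \bY$ one has
\[
\Av^S_!(\cF \star^{\cL^+G} \cR_\mu) \cong \Av^S_!(\cF) \star^{\cL^+G} \cR_\mu,
\]
$\Av^S_!$ being compatible with right-convolution by Satake-category objects---a fact already exploited in the proof of Proposition~\ref{prop:avS-calc}. Combined with~\eqref{eqn:j!S-AvD} and its dual, this yields $\Phi(j^\lambda_!\cS_\lambda) \cong \Av^S_!(\Phi^S(\DGr^S_{t_\lambda w_\circ}))$ (hence projective) and $\Phi(j^\lambda_*\cS_\lambda) \cong \Av^S_*(\Phi^S(\NGr^S_{t_\lambda w_\circ}))$ (hence injective).

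I do not anticipate any substantive obstacle: the argument is a direct application of adjunction to a known semisimplicity statement. The only points requiring mild care are the exactness of $\Av^S_\psi$ at the $\cR$-module level (already recorded in Lemma~\ref{lem:functors-Rmod}) and the commutation of $\Av^S_!$ with right-convolution by $\cR$, both of which follow immediately from the compatibility of averaging with convolution. The conceptual content is that part~\eqref{it:proj-char0} relies on characteristic-$0$ semisimplicity of the Satake category, while part~\eqref{it:proj-R} upgrades to arbitrary characteristic by passing to $\cR$-modules, where semisimplicity is forced by the $T^\vee_\bk$-action alone.
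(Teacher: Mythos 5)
Your proposal is correct and follows essentially the same route as the paper: both parts are proved by transporting semisimplicity of the $(I^S_\unip,\cX_S)$-equivariant category (respectively of $\Mod^\bY_{(I^S_\unip,\cX_S)}(\cR)$ via Proposition~\ref{prop:IW-R-semisimple}) through the adjunctions $(\Av^S_!,\Av^S_\psi)$ and $(\Av^S_\psi,\Av^S_*)$, using~\eqref{eqn:j!S-AvD} and its dual together with the compatibility of the averaging functors with $\Phi$, $\Phi^S$. The paper phrases this as exactness of $\Hom(\DGr^S_{t_\lambda w_\circ},\Av^S_\psi(-))$ rather than "exact right adjoint implies preservation of projectives," but the content is identical.
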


\begin{proof}
\eqref{it:proj-char0} Under our assumption the category $\Perv_{(I_\unip^S,\cX_S)}(\Gr,\bk)$ is semisimple by~\cite[Corollary~3.6]{bgmrr}. Then, since $\Av^S_\psi$ is exact, so is the right-hand side of~\eqref{eqn:avS-adj} (as a functor of $\cF$). The left-hand side is therefore also exact, which shows that $\Av^S_! (\DGr^S_{t_\lambda w_\circ}) \cong j^\lambda_! \cS_\lambda$ is projective. Dual arguments show that this object is also injective.

\eqref{it:proj-R} The proof is similar to that of~\eqref{it:proj-char0}, using the following $\cR$-module analogue of~\eqref{eqn:avS-adj}:
\begin{multline*}
\Hom_{\Mod^\bY_{I_\unip}(\cR)}(\Phi(\Av^S_! (\DGr^S_{t_\lambda w_\circ})) , \cF) \cong \Hom_{\Mod^\bY_{I_\unip}(\cR)}(\Av^S_! (\Phi^S(\DGr^S_{t_\lambda w_\circ})) , \cF) \\
\cong \Hom_{\Mod^\bY_{(I_\unip^S,\cX_S)}(\cR)}(\Phi^S(\DGr^S_{t_\lambda w_\circ}), \Av^S_\psi( \cF)).
\end{multline*}
By Lemma~\ref{lem:functors-Rmod} and Proposition~\ref{prop:IW-R-semisimple} the right-hand side is an exact functor of $\cF$, so that the object $\Phi(\Av^S_! (\DGr^S_{t_\lambda w_\circ})) \cong \Phi(j^\lambda_!\cS_\lambda)$ is projective. Dual arguments apply to $\Phi(j^\lambda_*\cS_\lambda)$.
\end{proof}

\subsection{Wall-crossing functors and objects arising from \texorpdfstring{$\cS$}{S}}
\label{ss:wall-crossing}

In the statement of the following lemma we use the fact that any element in $W_\ext^S$ can be written as a product $xt_{-\mu}$ where $x \in W^\res_\ext$ and $\mu \in \bY_{+}$, see~\eqref{eqn:WS-Wres}.

\begin{lem}
\label{lem:proj-prep}
Let $w \in W^S_\ext$, and write $w = xt_{w_\circ(\mu-\varsigma)}$ with $x \in W^\res_\ext$ and $\mu \in \bY_{++}$.  Let $y = t_{\varsigma} w_\circ x^{-1}$, and choose a reduced expression $y = \omega s_1 \cdots s_r$ with $\omega \in \Omega$ and $s_1, \ldots, s_r \in S_\aff$.

\begin{enumerate}
\item
\label{it:prep-length}
We have $yw = t_{\mu} w_\circ$ and $yw^\triangle = t_{w_\circ(\mu)}$, and moreover
\[
\ell(yw)=\ell(y) + \ell(w),
\qquad
\ell(y w^\triangle) = \ell(w^\triangle) - \ell(y).
\]
\item
\label{it:prep-std}
The object $\xi_{s_r} \cdots \xi_{s_1}\xi_{\omega^{-1}}(j^{\mu}_!\cS_{\mu})$ is supported on $\overline{\Gr_{w^\triangle}}$, and admits $\LGr_{w^\triangle}$ as a composition factor with multiplicity $1$.  Moreover, this object admits a standard filtration in which $\DGr_w$ occurs with multiplicity $1$, and $\LGr_w$ is a direct summand of its head with multiplicity~$1$.
\item 
\label{it:prep-costd}
The object $\xi_{s_r} \cdots \xi_{s_1}\xi_{\omega^{-1}}(j^{\mu}_*\cS_{\mu})$ is supported on $\overline{\Gr_{w^\triangle}}$, and admits $\LGr_{w^\triangle}$ as a composition factor with multiplicity $1$.  Moreover, this object admits a costandard filtration in which $\NGr_w$ occurs with multiplicity $1$, and $\LGr_w$ is a direct summand of its socle with multiplicity $1$.
\item 
\label{it:prep-tilt}
If $j^{\mu}_!\cS_{\mu} \cong j^{\mu}_*\cS_{\mu}$, then $\xi_{s_r} \cdots \xi_{s_1}\xi_{\omega^{-1}}(j^{\mu}_!\cS_{\mu})$ is tilting, and contains $\TGr_{w^\triangle}$ as a direct summand with multiplicity $1$.
\end{enumerate}
\end{lem}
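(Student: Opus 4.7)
For part (1), I would compute $yw$ and $yw^\triangle$ by direct substitution: using $x^\triangle = xt_\varsigma w_\circ t_{-\varsigma}$ for $x \in W^\res_\ext$, together with~\eqref{eqn:triangle-translation}, the two products telescope to $yw = t_\mu w_\circ$ and $yw^\triangle = t_{w_\circ(\mu)}$. The length additivity $\ell(yw) = \ell(y) + \ell(w)$ then follows by combining Lemma~\ref{lem:res-complement}, Lemma~\ref{lem:lengths-add-Wres} applied to $w = x \cdot t_{w_\circ(\mu-\varsigma)}$ (note $\mu - \varsigma \in \bY_+$), and the explicit length formulas derived from~\eqref{eqn:formula-length}; the relation $\ell(yw^\triangle) = \ell(w^\triangle) - \ell(y)$ is parallel, using $w^\triangle = x^\triangle \cdot t_{w_\circ(\mu-\varsigma)}$.

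For parts (2) and (3), after conjugating $\xi_{\omega^{-1}}$ past the $\xi_{s_i}$'s using that $\Omega$ normalizes $S_\aff$, I would apply Lemma~\ref{lem:support} to $j^\mu_!\cS_\mu$ (resp.\ $j^\mu_*\cS_\mu$), which is supported on $\overline{\Gr_{t_{w_\circ(\mu)}}}$ with generic rank $1$. The length-addition hypothesis of Lemma~\ref{lem:support} is precisely $\ell(w^\triangle) = \ell(y) + \ell(t_{w_\circ(\mu)})$ from part (1), giving the support $\overline{\Gr_{w^\triangle}}$ and the $\LGr_{w^\triangle}$-multiplicity. The standard filtration of $\xi_{s_r}\cdots\xi_{s_1}\xi_{\omega^{-1}}(j^\mu_!\cS_\mu)$ is inherited from the standard filtration of $\cS_\mu$ (pulled back from $\cS$ via $p_\mu$, with subquotients $\DGr_{vt_\mu w_\circ}$ for $v \in W$, each of multiplicity $1$) since wall-crossing functors are t-exact and preserve standard filtrations.

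To establish $(\cF : \DGr_w) = 1$ with $\cF = \xi_{y^{-1}}(j^\mu_!\cS_\mu)$, I would use $(\cF : \DGr_w) = \dim\Hom(\cF, \NGr_w)$ and adjunction: since each $\xi_{s_i}$ is self-adjoint and $\xi_{\omega^{-1}}$ is inverse to $\xi_\omega$, the right adjoint of $\xi_{y^{-1}}$ is $\xi_y := \xi_\omega \circ \xi_{s_1} \circ \cdots \circ \xi_{s_r}$. The costandard variant of Lemma~\ref{lem:support} applied to $\NGr_w$ (using $y \cdot w = t_\mu w_\circ$ with lengths adding, from part (1)) shows $\xi_y(\NGr_w)$ admits a costandard filtration supported on $\overline{\Gr_{t_\mu w_\circ}}$ with $\NGr_{t_\mu w_\circ}$ of multiplicity $1$; the standard--costandard $\Hom$ formula then reduces the computation to $\sum_v (\xi_y(\NGr_w) : \NGr_{vt_\mu w_\circ})$, and the length formula $\ell(vt_\mu w_\circ) = \langle 2\rho,\mu\rangle - \ell(w_\circ) + \ell(v) > \ell(t_\mu w_\circ)$ for $v \neq e$ shows that only $v = e$ contributes. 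The upper bound $\dim\Hom(\cF, \LGr_w) \leq 1$ for the head claim follows since any $\LGr_w$-quotient must come from a standard factor $\DGr_z$ with head $\LGr_w$, forcing $z = w$; the lower bound is obtained by composing the simple-head surjection $j^\mu_!\cS_\mu \twoheadrightarrow \LGr_{t_\mu w_\circ}$ (Proposition~\ref{prop:avS-calc}\eqref{it:avS-!}) with an inclusion $\LGr_{t_\mu w_\circ} \hookrightarrow \xi_y(\LGr_w)$ produced by iterating Lemma~\ref{lem:xi-simples}. The socle claim in part (3) is Verdier dual. Part (4) is then immediate: Proposition~\ref{prop:avS-calc}\eqref{it:avS-tilt} and Lemma~\ref{lem:isom-xi} make the result tilting, and its support $\overline{\Gr_{w^\triangle}}$ together with generic rank $1$ at the top stratum force $\TGr_{w^\triangle}$ to appear as a direct summand with multiplicity $1$.

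The hard part is the existence half of the head/socle claim: Lemma~\ref{lem:xi-simples} provides only the composition-factor count for $\xi_s(\LGr_z)$, but producing the inclusion $\LGr_{t_\mu w_\circ} \hookrightarrow \xi_y(\LGr_w)$ really requires $\xi_y(\LGr_w)$ to have $\LGr_{t_\mu w_\circ}$ in its socle. This reduces to the sharper statement that $\xi_s(\LGr_z)$ has simple socle $\LGr_{sz}$ (equivalently, by Verdier self-duality, simple head $\LGr_{sz}$) whenever $\ell(sz) = \ell(z)+1$ and $sz \in W^S_\ext$, a strengthening of Lemma~\ref{lem:xi-simples}\eqref{it:xi-simples-2} that would need a separate argument, plausibly via the convolution description $\xi_s(\LGr_z) \cong \TFl_s \star^I \LGr_z$ and the structure of the tilting perverse sheaf $\TFl_s$.
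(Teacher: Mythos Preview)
Your overall strategy for parts (1), (4), and the support/standard-filtration portions of (2)--(3) matches the paper's.  The divergence is in how you obtain the head/socle claim and the $\DGr_w$-multiplicity, and here the paper's argument is both simpler and complete, while yours has the gap you yourself flag.

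The paper does \emph{not} attempt to show that $\LGr_{t_\mu w_\circ}$ lies in the socle of $\xi_y(\LGr_w)$.  Instead it computes $\Hom(j^\mu_!\cS_\mu, \xi_y(\LGr_w))$ and $\Hom(j^\mu_!\cS_\mu, \xi_y(\NGr_w))$ directly, in one stroke, by the following geometric observation.  By Lemma~\ref{lem:support} and part~(1), both $\xi_y(\LGr_w)$ and $\xi_y(\NGr_w)$ are supported on $\overline{\Gr_{t_\mu w_\circ}}$ with $\LGr_{t_\mu w_\circ}$ occurring once.  Now $\Gr_{t_\mu w_\circ}$ is the unique \emph{closed} $I$-orbit in $\Gr^\mu$, so the restrictions $\xi_y(\LGr_w)_{|\Gr^\mu}$ and $\xi_y(\NGr_w)_{|\Gr^\mu}$ are both isomorphic to $\underline{\bk}_{\Gr_{t_\mu w_\circ}}[\ell(t_\mu w_\circ)]$.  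Since $j^\mu_!\cS_\mu$ is the $!$-extension of $p_\mu^*\cS[\cdots]$ from $\Gr^\mu$, adjunction and full faithfulness of $p_\mu^*$ on perverse sheaves reduce both $\Hom$-spaces to $\Hom_{G/B}(\cS, \delta)$ where $\delta$ is the skyscraper at the closed point $B/B$; this is $1$-dimensional because $\cS$ is the projective cover of $\delta$.  Adjunction then gives
\[
\dim \Hom(\xi_{s_r}\cdots\xi_{s_1}\xi_{\omega^{-1}}(j^\mu_!\cS_\mu), \LGr_w)
= \dim \Hom(\xi_{s_r}\cdots\xi_{s_1}\xi_{\omega^{-1}}(j^\mu_!\cS_\mu), \NGr_w) = 1,
\]
yielding both the head claim and the $\DGr_w$-multiplicity simultaneously.

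This completely sidesteps your ``hard part'': no sharpening of Lemma~\ref{lem:xi-simples} (simple socle of $\xi_s(\LGr_z)$) is needed, and indeed the paper never establishes such a statement.  Your route via the costandard filtration of $\xi_y(\NGr_w)$ for the $\DGr_w$-multiplicity alone can be made to work, but it is more laborious and does not help with the $\LGr_w$-head claim; the paper's single restriction-to-$\Gr^\mu$ computation handles both at once.
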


\begin{proof}
\eqref{it:prep-length}
The fact that $yw = t_{\mu} w_\circ$ is immediate from the definitions, and then
\[
y^{-1} t_{w_\circ(\mu)}=x w_\circ t_{-\varsigma} t_{w_\circ(\mu)}
= w t_{w_\circ(\varsigma-\mu)}w_\circ t_{-\varsigma} t_{w_\circ(\mu)} = w t_{-w_\circ(\mu)} w_\circ t_{w_\circ(\mu)} = w^\triangle.
\]
(Observe that $w^{-1}(\Afund) \subset \Pi_{-w_\circ(\mu)}$.)
Using~\eqref{eqn:formula-length} for the first two equalities, then Lemma~\ref{lem:res-complement}, and finally~\cite[Lemma~2.7]{projGr1} and the fact that $\ell(t_{\mu-\varsigma})=\ell(t_{w_\circ(\mu-\varsigma)})$, we see that
\[
\ell(yw) = \ell(t_\mu)-\ell(w_\circ)= \ell(t_{\mu-\varsigma})+\ell(t_{\varsigma} w_\circ) = \ell(t_{\mu-\varsigma})+\ell(x)+\ell(y) = \ell(w)+\ell(y).
\]
Finally, we have
\[
\ell(w^\triangle) = \ell(x w_\circ t_{-\varsigma} t_{w_\circ(\mu)}) = \ell(x w_\circ t_{-\varsigma}) + \ell(t_{w_\circ(\mu)}) = \ell(y) + \ell(t_{w_\circ(\mu)}),
\]
where we use~\cite[Lemma~2.7]{projGr1} for the second equality, after noticing that $x w_\circ t_{-\varsigma}  \in W_\ext^\res$ since $t_{\varsigma} w_\circ x^{-1}(A_{\mathrm{fund}}) \subset t_{\varsigma} w_\circ (\Pi_{\varsigma})=\Pi_{\varsigma}$.

\eqref{it:prep-std}
Note that $yw \in W_\ext^S$ (see e.g.~\cite[Lemma~2.5]{projGr1}).  By Lemma~\ref{lem:support}, the objects
\[
\xi_\omega \xi_{s_1} \cdots \xi_{s_r} (\LGr_w)
\qquad\text{and}\qquad
\xi_\omega \xi_{s_1} \cdots \xi_{s_r} (\NGr_w)
\]
are supported on $\overline{\Gr_{t_{\mu} w_\circ}}$ and have $\LGr_{t_{\mu} w_\circ}$ as a composition factor with multiplicity~$1$.  Now, $\Gr_{t_{\mu} w_\circ}$ in the unique closed $I$-orbit in $\Gr^{\mu}$; it follows that
\[
\xi_\omega \xi_{s_1} \cdots \xi_{s_r} (\LGr_w)_{|\Gr^{\mu}} \cong
\xi_\omega \xi_{s_1} \cdots \xi_{s_r} (\NGr_w)_{|\Gr^{\mu}} \cong \underline{\bk}_{\Gr_{t_{\mu} w_\circ}}[\ell(t_\mu w_\circ)].
\]
By definition of $\cS_\mu$ and full faithfulness of $p_\mu^* (-) [\dim(\Gr^\lambda)-\dim(G/B)]$ on perverse sheaves (since $p_\mu$ is smooth with connected fibers), we deduce that
\begin{multline}\label{eqn:prep-dim-calc}
\dim \Hom(j^{\mu}_!\cS_{\mu}, \xi_\omega \xi_{s_1} \cdots \xi_{s_r} (\LGr_w)) = \\
\dim \Hom(j^{\mu}_!\cS_{\mu}, \xi_\omega \xi_{s_1} \cdots \xi_{s_r} (\NGr_w)) = 1.
\end{multline}
By adjunction, it follows that
\begin{multline}\label{eqn:prep-factor}
\dim \Hom (\xi_{s_r} \cdots \xi_{s_1}\xi_{\omega^{-1}}(j^{\mu}_!\cS_{\mu}), \LGr_w) = \\
 \dim \Hom (\xi_{s_r} \cdots \xi_{s_1}\xi_{\omega^{-1}}(j^{\mu}_!\cS_{\mu}), \NGr_w) = 1.
\end{multline}
Thus, $\LGr_w$ occurs in the head of $\xi_{s_r} \cdots \xi_{s_1}\xi_{\omega^{-1}}(j^{\mu}_!\cS_{\mu})$ with multiplicity $1$.  By Proposition~\ref{prop:avS-calc}\eqref{it:avS-!}, $j^{\mu}_!\cS_{\mu}$ admits a standard filtration; in view of Lemma~\ref{lem:isom-xi} and~\eqref{eqn:xi-D-N} this implies that $\xi_{s_r} \cdots \xi_{s_1}\xi_{\omega^{-1}}(j^{\mu}_!\cS_{\mu})$ admits a standard filtration.  The dimension calculation above shows that $\DGr_w$ occurs in this standard filtration with multiplicity $1$.  Finally, invoke Lemma~\ref{lem:support} again to conclude that our perverse sheaf is supported on $\overline{\Gr_{w^\triangle}}$ and admits $\LGr_{w^\triangle}$ as a composition factor with multiplicity $1$.

\eqref{it:prep-costd} The proof is similar (more specifically, dual) to that of part~\eqref{it:prep-std}.  

\eqref{it:prep-tilt} If $j^{\mu}_!\cS_{\mu} \cong j^{\mu}_*\cS_{\mu}$, then~\eqref{it:prep-std} and~\eqref{it:prep-costd} imply that $\xi_{s_r} \cdots \xi_{s_1}\xi_{\omega^{-1}}(j^{\mu}_!\cS_{\mu})$ is tilting, and that this perverse sheaf is supported on $\overline{\Gr_{w^\triangle}}$ and admits $\LGr_{w^\triangle}$ as a composition factor with multiplicity $1$. It follows that $\xi_{s_r} \cdots \xi_{s_1}\xi_{\omega^{-1}}(j^{\mu}_!\cS_{\mu})$ admits $\TGr_{w^\triangle}$ as a direct summand, with multiplicity $1$.
\end{proof}

For later use we note the following corollary of Lemma~\ref{lem:proj-prep}.

\begin{lem}
\label{lem:twtri-standard}
Let $w \in W^\res_\ext$, and let $\mu \in \bY_+$.  The object $\TGr_{w^\triangle} \star^{\cL^+G} \cI^\mu_!$ admits a standard filtration, and the object $\TGr_{w^\triangle} \star^{\cL^+G} \cI^\mu_*$ admits a costandard filtration.
\end{lem}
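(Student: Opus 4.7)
The plan is to realize $\TGr_{w^\triangle}$ as a direct summand of an object obtained by applying wall-crossing functors to the ``minimal'' object $j^{\varsigma}_! \cS_{\varsigma}$, and then move $\cI^\mu_!$ (resp.\ $\cI^\mu_*$) inside the wall-crossing functors using the fact that the latter commute with right convolution by $\cL^+G$-equivariant objects.

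More precisely, first I would apply Lemma~\ref{lem:proj-prep} to $w$. Since $w \in W^\res_\ext \subset W^S_\ext$, writing $w = x t_{w_\circ(\mu'-\varsigma)}$ with $x \in W^\res_\ext$ and $\mu' \in \bY_{++}$ forces $\mu' = \varsigma$ and $x = w$. Moreover, by Proposition~\ref{prop:avS-calc}\eqref{it:avS-tilt} we always have $j^{\varsigma}_! \cS_{\varsigma} \cong j^{\varsigma}_* \cS_{\varsigma}$, so the hypothesis of Lemma~\ref{lem:proj-prep}\eqref{it:prep-tilt} is satisfied. Setting $y = t_\varsigma w_\circ w^{-1}$ and fixing a reduced expression $y = \omega s_1 \cdots s_r$, we deduce that $\TGr_{w^\triangle}$ is a direct summand of $\xi_{s_r} \cdots \xi_{s_1} \xi_{\omega^{-1}}(j^{\varsigma}_! \cS_{\varsigma})$.

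Next, I would convolve this statement on the right with $\cI^\mu_!$. The key observation is that each $\xi_s$ ($s \in S_\aff$) and each $\xi_\omega$ ($\omega \in \Omega$) commutes with right convolution by objects of $\Db_{\cL^+G}(\Gr,\bk)$: indeed, $\xi_s = \Av_!^{\{s\}} \circ \Av_\psi^{\{s\}}$ is built from averaging functors with respect to a \emph{left} group action, while $\xi_\omega$ is induced by \emph{left} multiplication by $\dot{\omega}$. Therefore
\[
\xi_{s_r} \cdots \xi_{s_1} \xi_{\omega^{-1}}(j^{\varsigma}_! \cS_{\varsigma}) \star^{\cL^+G} \cI^\mu_! \cong \xi_{s_r} \cdots \xi_{s_1} \xi_{\omega^{-1}}\bigl( j^{\varsigma}_! \cS_{\varsigma} \star^{\cL^+G} \cI^\mu_! \bigr).
\]
By Proposition~\ref{prop:avS-calc}\eqref{it:avS-!} (applied to $\lambda = \varsigma + \mu \in \bY_{++}$), the object $j^{\varsigma}_! \cS_{\varsigma} \star^{\cL^+G} \cI^\mu_!$ is canonically isomorphic to $j^{\varsigma+\mu}_! \cS_{\varsigma+\mu}$, which admits a standard filtration. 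Since the wall-crossing functors $\xi_s$ and $\xi_\omega$ preserve the property of admitting a standard filtration (see~\S\ref{ss:wc-functors} and~\eqref{eqn:xi-D-N}), the composition $\xi_{s_r} \cdots \xi_{s_1} \xi_{\omega^{-1}}(j^{\varsigma+\mu}_! \cS_{\varsigma+\mu})$ again admits a standard filtration.

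Finally, in the highest weight category $\Perv_{I_\unip}(\Gr,\bk)$, the property of admitting a standard filtration is inherited by direct summands (since it is equivalent to the vanishing of $\Ext^1$ against all costandard objects). Hence $\TGr_{w^\triangle} \star^{\cL^+G} \cI^\mu_!$, being a direct summand of an object with a standard filtration, itself admits a standard filtration. The same argument, replacing $!$ by $*$ throughout and invoking Proposition~\ref{prop:avS-calc}\eqref{it:avS-*} in place of~\eqref{it:avS-!}, yields the costandard statement. I expect no real obstacle here: the only point that deserves care is the commutation of wall-crossing with right convolution, which is essentially formal given the left-equivariant nature of the averaging functors.
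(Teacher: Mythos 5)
Your proposal is correct and follows essentially the same route as the paper: realize $\TGr_{w^\triangle}$ as a direct summand of $\xi_{s_r}\cdots\xi_{s_1}\xi_{\omega^{-1}}(j^\varsigma_!\cS_\varsigma)$ via Lemma~\ref{lem:proj-prep}\eqref{it:prep-tilt}, commute the wall-crossing functors past $\star^{\cL^+G}\cI^\mu_!$, identify $j^\varsigma_!\cS_\varsigma\star^{\cL^+G}\cI^\mu_!$ with $j^{\varsigma+\mu}_!\cS_{\varsigma+\mu}$ by Proposition~\ref{prop:avS-calc}, and use preservation of (co)standard filtrations under $\xi_s$, $\xi_\omega$ together with the direct-summand stability of such filtrations. (Only a cosmetic remark: the decomposition $w=xt_{w_\circ(\mu'-\varsigma)}$ is not literally forced to have $\mu'=\varsigma$, $x=w$ — it is unique only up to a coweight orthogonal to all roots — but choosing that decomposition is all you need.)
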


\begin{proof}
We will prove the claim for $\cI^\mu_!$; the other case is similar. Let $y = t_{\varsigma} w_\circ w^{-1}$, and choose a reduced expression $y = \omega s_1 \cdots s_r$ as in Lemma~\ref{lem:proj-prep}. Let $\lambda = \mu + \varsigma \in \bY_{++}$.  By Proposition~\ref{prop:avS-calc}\eqref{it:avS-tilt} and Lemma~\ref{lem:proj-prep}\eqref{it:prep-tilt} (applied with $\mu=\varsigma$), $\TGr_{w^\triangle}$ is a direct summand of $\xi_{s_r} \cdots \xi_{s_1} \xi_{\omega^{-1}} (\TGr_{t_{w_\circ(\varsigma)}})$; hence to prove the claim it is enough to show that the object
\begin{multline*}
\xi_{s_r} \cdots \xi_{s_1} \xi_{\omega^{-1}} (\TGr_{t_{w_\circ(\varsigma)}}) \star^{\cL^+G} \cI_!^\mu
\cong 
\xi_{s_r} \cdots \xi_{s_1} \xi_{\omega^{-1}} (\TGr_{t_{w_\circ(\varsigma)}}  \star^{\cL^+G} \cI_!^{\lambda -\varsigma}) \\
\overset{\eqref{eqn:isoms-Slambda}}{\cong}
\xi_{s_r} \cdots \xi_{s_1} \xi_{\omega^{-1}} (j^\lambda_!\cS_\lambda).
\end{multline*}
admits a standard filtration. This claim holds by Proposition~\ref{prop:avS-calc}\eqref{it:avS-!} together with Lemma~\ref{lem:isom-xi} and~\eqref{eqn:xi-D-N}.
\end{proof}

\subsection{Baby co-Verma modules are finitely generated}
\label{ss:bV-fg}

Our next task is to prove that each $\cbV^A_w$ is finitely generated, as stated in the following proposition.

\begin{prop}
\label{prop:bV-fg}
The object $\cbV^A_w$ belongs to $\modf^\bY_{(I^A_\unip, \cX_A)}(\cR)$.
\end{prop}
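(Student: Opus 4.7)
The plan is to construct an explicit epimorphism from a free $\cR$-module of finite type onto $\cbV^A_w$. By~\eqref{eqn:cbV-translation} combined with the decomposition ${}^A W_\ext = \{yt_\eta : y \in {}^A W_\ext^\res,\ \eta \in \bY\}$, and the stability of finite generation under the shift autoequivalence $\la \cdot \ra$, this reduces to the case $w \in {}^A W_\ext^\res$. In this case we have $w \in {}^A W_\ext^S \subset W_\ext^S$ by~\eqref{eqn:WS-Wres-Whit}, so the index $\lambda=0$ is admissible in the ind-system defining $(\cbV^A_w)_0$, with corresponding term $\NGr^A_w \star^{\cL^+G} \cI^0_* \cong \NGr^A_w$. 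The structural morphism into this inductive limit yields a map $\NGr^A_w \to (\cbV^A_w)_0$, which via Lemma~\ref{lem:Rfree-hom} extends uniquely to a morphism of graded $\cR$-modules
\[
\phi: \Phi^A(\NGr^A_w) \to \cbV^A_w.
\]
Since $\Phi^A(\NGr^A_w)$ is a free $\cR$-module of finite type, it remains to verify that $\phi$ is surjective.

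For this, it suffices to check that the degree-$\mu$ component $\phi_\mu$ is an epimorphism of ind-perverse sheaves for every $\mu \in \bY$. Unwinding the definitions, and noting that the $\lambda_1 = 0$ slot of the $\cR$-action on $\cbV^A_w$ causes the commutativity constraint and~\eqref{eqn:costd-conv-sph} to degenerate into identities (because $\cI^0_* = \IC^0$ is the monoidal unit of $(\Perv_{\cL^+G}(\Gr,\bk), \star^{\cL^+G})$), the level-$\lambda'$ component of $\phi_\mu$ (for $\lambda' \in \bY_+$ sufficiently large) takes the form
\[
\NGr^A_w \star^{\cL^+G} \cI_*^{w_\circ(\mu)+\lambda'} \star^{\cL^+G} \cI_*^{-w_\circ(\lambda')} \to \NGr^A_{wt_{\mu + w_\circ(\lambda')}} \star^{\cL^+G} \cI_*^{-w_\circ(\lambda')},
\]
which is the map~\eqref{eqn:costd-conv2} convolved on the right with $\cI_*^{-w_\circ(\lambda')}$. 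Since convolution by any $\cL^+G$-equivariant perverse sheaf is t-exact and in particular preserves epimorphisms, the surjectivity of each level map follows from the surjectivity of~\eqref{eqn:costd-conv2}. That surjectivity in turn reduces, via the construction recalled around~\eqref{eqn:costd-convolve} and the exactness of $\NFl^A_w \star^I ({-})$ on spherical objects, to the surjectivity of~\eqref{eqn:costd-embed}, namely of the canonical projection $\cI^\mu_* \twoheadrightarrow \NGr_{t_{w_\circ(\mu)}}$ onto the top quotient of the natural costandard filtration of $\cI^\mu_*$ corresponding to the open $I$-orbit $\Gr_{t_{w_\circ(\mu)}} \subset \Gr^\mu$.

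The principal technical obstacle will be to verify that the level-wise epimorphisms just established do assemble into a genuine epimorphism of ind-perverse sheaves $\phi_\mu$; this amounts to checking compatibility of transition maps in the two ind-systems, which intertwine the unit map~\eqref{eqn:costd-unit} with~\eqref{eqn:costd-conv2} and~\eqref{eqn:costd-conv-sph}.
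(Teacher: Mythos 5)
There is a genuine gap, and it is fatal to the strategy rather than a fixable technicality. The key false step is the claim that \eqref{eqn:costd-embed} is surjective, i.e.\ that $\cI^\mu_*$ surjects onto $\NGr_{t_{w_\circ(\mu)}}$ as ``the top quotient of a costandard filtration.'' The object $\cI^\mu_*$ is a costandard object for the \emph{spherical} stratification; it need not admit an $I_\unip$-costandard filtration at all, and the adjunction map \eqref{eqn:costd-embed} typically fails to be surjective. For instance, whenever $\Gr^\mu$ is closed in $\Gr$ (e.g.\ $\mu$ minuscule, already for a group of type $A_1$), one has $\cI^\mu_*=\IC^\mu$, and the map $\IC^\mu \to \NGr_{t_{w_\circ(\mu)}}$ is \emph{injective} with nonzero cokernel supported on $\Gr^\mu \smallsetminus \Gr_{t_{w_\circ(\mu)}}$ (for $\Gr^\mu\cong\mathbb{P}^1$ the cokernel is a skyscraper). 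Consequently the asserted levelwise surjectivity of $\phi_\mu$ collapses. In fact the conclusion itself is false: your map $\phi\colon \Phi^A(\NGr^A_w)\to\cbV^A_w$ cannot be surjective in general. Take $A=\varnothing$ and $w=e\in W_\ext^\res$: then $\NGr_e$ is the skyscraper sheaf, so $\Phi(\NGr_e)=\LGT_e$ is \emph{simple}, whereas $\cbV_e$ has length at least $2$ whenever $G$ is not a torus (its socle is $\LGT_e$ by Corollary~\ref{cor:cbV-socle}, while its head is $\LGT_x$ for the element $x$ with $x^\triangle=e$, cf.\ Corollary~\ref{cor:multiplicites-bV} and Lemma~\ref{lem:cbV-head-pre}, and $x\neq e$ since $e^\triangle\neq e$). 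So the free module on the costandard sheaf $\NGr^A_w$ is simply too small to generate $\cbV^A_w$.

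This is precisely why the paper does not use $\Phi(\NGr_w)$ but rather $\Phi(\DFl_w\,\widetilde{\star}\,j^\varsigma_{!*}\cS_\varsigma)\langle w_\circ(\varsigma)\rangle$: by Proposition~\ref{prop:avS-calc}, the perverse sheaves $j^{\nu}_*\cS_\nu$ obtained from the big tilting object on $G/B$ \emph{do} admit costandard filtrations with a genuine surjection onto $\NGr_{t_{w_\circ(\nu)}}$ whose kernel is costandardly filtered, and these objects realize the graded components $\bigl(\Phi(\DFl_w\,\widetilde\star\, j^\varsigma_{!*}\cS_\varsigma)\langle w_\circ(\varsigma)\rangle\bigr)_\mu$ up to convolution with $\DFl_w$ and $\cI_*^{-w_\circ(\lambda)}$; the surjectivity you need is exactly the property that $\cI^\mu_*$ lacks and $j^\mu_*\cS_\mu$ has. (The compatibility-of-transition-maps issue you flag at the end is real but secondary; the argument already fails before that point.) If you want to salvage your outline, you must replace $\NGr^A_w$ by a perverse sheaf that surjects onto the relevant costandard sheaves compatibly with the ind-structure, which is what the averaging/wall-crossing construction of Section~\ref{sec:averaging} provides.
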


\begin{proof}
By Lemma~\ref{lem:bV-Av}\eqref{it:bV-Av-psi} and Lemma~\ref{lem:functors-Rmod}, it is enough prove this proposition in the special case where $A = \varnothing$.  We assume this from now on.  Furthermore, in view of~\eqref{eqn:cbV-translation}, we may assume that $w \in W$. Using the formalism of the free-monodromic completion from~\cite{by,br} one can construct a canonical triangulated functor
\[
\DFl_w \, \widetilde{\star} \, (-) : \Db_{I_\unip}(\Gr,\bk) \to \Db_{I_\unip}(\Gr,\bk)
\]
such that the diagram
\[
\begin{tikzcd}[column sep=large]
\Db_I(\Gr,\bk) \ar[r, "\DFl_w \star^I ({-})"] \ar[d, "\For^I_{I_\unip}"'] &
  \Db_I(\Gr,\bk) \ar[d, "\For^I_{I_\unip}"] \\
\Db_{I_\unip}(\Gr,\bk) \ar[r, "\DFl_w \, \widetilde{\star} \, (-)"] & \Db_{I_\unip}(\Gr,\bk)
\end{tikzcd}
\]
commutes.
We consider the complex $\DFl_w \, \widetilde{\star} \, j^{\varsigma}_{!*} \cS_\varsigma$. Here $j^{\varsigma}_{!*} \cS_\varsigma$ admits a costandard filtration (see Proposition~\ref{prop:avS-calc}). By Lemma~\ref{lem:conv-D-N} the convolution of a standard and a costandard perverse sheaf is perverse; our complex is therefore a perverse sheaf. We will construct a surjection
\begin{equation}
\label{eqn:cbV-fg}
\Phi(\DFl_w \, \widetilde{\star} \, j^{\varsigma}_{!*} \cS_\varsigma) \langle w_\circ(\varsigma) \rangle \to \cbV_w,
\end{equation}
which will prove the proposition.

By Lemma~\ref{lem:Rfree-hom} we have
\[
\Hom_{\Mod^\bY_{I_\unip}(\cR)}(\Phi(\DFl_w \, \widetilde{\star} \, j^{\varsigma}_{!*} \cS_\varsigma) \langle w_\circ(\varsigma) \rangle, \cbV_w) \cong \Hom(\DFl_w \, \widetilde{\star} \, j^{\varsigma}_{!*} \cS_\varsigma, (\cbV_w)_{w_\circ(\varsigma)}).
\]
Now since $j^{\varsigma}_{!*} \cS_\varsigma$ is tilting and supported on $\overline{\Gr_{t_{w_\circ(\varsigma)}}}$ there is a canonical surjection
\[
j^{\varsigma}_{!*} \cS_\varsigma \to \NGr_{t_{w_\circ(\varsigma)}}
\]
whose kernel admits a costandard filtration, and which therefore induces a surjection
\[
\DFl_w \, \widetilde{\star} \, j^{\varsigma}_{!*} \cS_\varsigma \to \DFl_w \, \widetilde{\star} \, \NGr_{t_{w_\circ(\varsigma)}} \cong \DFl_w \star^I \NGr_{t_{w_\circ(\varsigma)}} \cong \NGr_{wt_{w_\circ(\varsigma)}}
\]
in view of Lemma~\ref{lem:conv-D-N}. (Note that $\ell(wt_{w_\circ(\varsigma)}) = \ell(t_{w_\circ(\varsigma)})-\ell(w)$.)
There is also a canonical morphism
\[
\NGr_{wt_{w_\circ(\varsigma)}} \to \underset{\lambda}{``\varinjlim"} \NGr_{wt_{w_\circ(\varsigma)+w_\circ(\lambda)}} \star^{\cL^+ G} \cI_*^{-w_\circ(\lambda)} = (\cbV_w)_{w_\circ(\varsigma)},
\]
which provides the desired map~\eqref{eqn:cbV-fg}.

Now for any $\mu \in \bY$ we have
\begin{multline*}
\bigl( \Phi(\DFl_w \, \widetilde{\star} \, j^{\varsigma}_{!*} \cS_\varsigma) \langle w_\circ(\varsigma) \rangle \bigr)_\mu = (\DFl_w \, \widetilde{\star} \, j^{\varsigma}_{!*} \cS_\varsigma) \star^{\cL^+G} \cR_{\mu-w_\circ(\varsigma)} \\
= \underset{\lambda}{``\varinjlim"} \DFl_w \, \widetilde{\star} \, \bigl( j^{\varsigma}_{!*} \cS_\varsigma \star^{\cL^+G} \cI_*^{w_\circ(\mu)+\lambda-\varsigma} \bigr) \star^{\cL^+G} \cI_*^{-w_\circ(\lambda)} \\
\cong \underset{\lambda}{``\varinjlim"} \DFl_w \, \widetilde{\star} \, j^{w_\circ(\mu)+\lambda}_{*} \cS_{w_\circ(\mu)+\lambda} \star^{\cL^+G} \cI_*^{-w_\circ(\lambda)}
\end{multline*}
by
Proposition~\ref{prop:avS-calc}. 
As in the case of $j^{\varsigma}_{!*} \cS_\varsigma$, for any $\lambda \in -w_\circ(\mu) + \bY_{++}$ we have a canonical surjection
\begin{equation}
\label{eqn:surj-Pi-lambda}
\DFl_w \, \widetilde{\star} \, j^{w_\circ(\mu)+\lambda}_{*} \cS_{w_\circ(\mu)+\lambda} \twoheadrightarrow \DFl_w \, \widetilde{\star} \, \NGr_{t_{\mu+w_\circ(\lambda)}} \cong \NGr_{wt_{\mu+w_\circ(\lambda)}},
\end{equation}
and these morphisms induce~\eqref{eqn:cbV-fg}.
It follows that this morphism is surjective, as desired.
\end{proof}

\begin{cor}
\label{cor:cbV-socle}
For any $w \in {}^A W_\ext$ the object $\cbV^A_w$ has finite length and a simple socle, isomorphic to $\LGT^A_w$.  In particular, $\cbV^A_w$ is indecomposable.
\end{cor}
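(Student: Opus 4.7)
The proof should be essentially a routine consequence of the two preceding results together with the general inclusion~\eqref{eqn:inclusion-mod}. Here is the plan.

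First, I would invoke Proposition~\ref{prop:bV-fg} to conclude that $\cbV^A_w$ belongs to $\modf^\bY_{(I^A_\unip, \cX_A)}(\cR)$. Combined with the inclusion~\eqref{eqn:inclusion-mod}, this places $\cbV^A_w$ in $\Mod^\bY_{(I^A_\unip, \cX_A)}(\cR)^\flen$, which by construction consists of objects admitting a finite filtration with simple subquotients of the form $\LGT^A_y$. This immediately gives the finite-length claim.

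Next, to identify the socle, I would use that in any abelian category with simple objects, the socle of an object $\cM$ is the sum of its simple subobjects, and each nonzero simple subobject $\LGT^A_y \hookrightarrow \cM$ contributes (at least) a nonzero element of $\Hom(\LGT^A_y, \cM)$. Since simples of the category are all of the form $\LGT^A_y$ with $y \in {}^A W_\ext$ (Theorem~\ref{thm:R-simple}), Lemma~\ref{lem:Hom-simple-bV} tells us that
\[
\dim \Hom(\LGT^A_y, \cbV^A_w) = \begin{cases} 1 & \text{if } y = w, \\ 0 & \text{otherwise}. \end{cases}
\]
Hence the only simple that admits a nonzero map into $\cbV^A_w$ is $\LGT^A_w$, and since the space of such maps is one-dimensional, there is a unique (up to scalar) nonzero morphism $\LGT^A_w \to \cbV^A_w$, which must be injective because $\LGT^A_w$ is simple. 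Therefore the socle of $\cbV^A_w$ is $\LGT^A_w$.

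Finally, indecomposability is automatic: any direct sum decomposition $\cbV^A_w = \cM_1 \oplus \cM_2$ with both summands nonzero would, by finite length, give two nonzero socles summing to the socle of $\cbV^A_w$, contradicting simplicity of the latter. No step here is an obstacle; the entire argument is a direct application of the already-established Proposition~\ref{prop:bV-fg} and Lemma~\ref{lem:Hom-simple-bV}.
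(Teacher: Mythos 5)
Your argument is correct and is essentially the paper's own proof: finite length follows from Proposition~\ref{prop:bV-fg} together with the inclusion~\eqref{eqn:inclusion-mod}, and the simple socle (hence indecomposability) is read off from the $\Hom$ computation in Lemma~\ref{lem:Hom-simple-bV}. Your write-up just spells out the standard socle/indecomposability bookkeeping that the paper leaves implicit.
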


\begin{proof}
The finite-length property is immediate from Proposition~\ref{prop:bV-fg} and~\eqref{eqn:inclusion-mod}.  The description of its socle follows from Lemma~\ref{lem:Hom-simple-bV}.
\end{proof}


\subsection{Baby co-Verma filtrations}

We will say that an object $\cF$ in $\modf^\bY_{(I^A_\unip, \cX_A)}(\cR)$ \emph{admits a baby co-Verma filtration} if it admits a finite filtration whose subquotients are isomorphic to baby co-Verma modules.

\begin{lem}
\label{lem:Pi-sigma-bV-filt}
The object $\Phi(j^{\varsigma}_{!*} \cS_\varsigma)$ admits a baby co-Verma filtration with $\cbV_{w_\circ t_{w_\circ(\varsigma)}}$ at the bottom, $\cbV_{t_{w_\circ(\varsigma)}}$ at the top, and other subquotients of the form
$\cbV_{w t_{w_\circ(\varsigma)}}$ with $w \in W \smallsetminus \{e,w_\circ\}$, each appearing once.
\end{lem}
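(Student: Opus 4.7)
Here is my plan.

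The starting point is the description used already in the proof of Proposition~\ref{prop:bV-fg}:
\[
\Phi(j^\varsigma_{!*}\cS_\varsigma)_\mu \;\cong\; \underset{\lambda}{``\varinjlim"}\; j^{\eta(\lambda)}_*\cS_{\eta(\lambda)} \star^{\cL^+G} \cI_*^{-w_\circ(\lambda)},
\qquad \eta(\lambda) := \varsigma + w_\circ(\mu) + \lambda,
\]
obtained by combining the definition of $\cR_\mu$ with Proposition~\ref{prop:avS-calc}\eqref{it:avS-*}. My plan is to produce the baby co-Verma filtration by pulling back the costandard filtration of the big tilting object $\cS$ on $G/B$, convolving with $\cI_*^{-w_\circ(\lambda)}$, and passing to the limit.

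\textbf{Step 1.} Recall that $\cS$, being the big tilting on $G/B$, is the projective cover of the skyscraper at $B/B$, so its standard/costandard filtration has each $\nabla_w$ appearing exactly once. Fix an enumeration $w_1,\ldots,w_n$ of $W$ refining the Bruhat order, with $w_1 = w_\circ$ and $w_n = e$, and fix a corresponding filtration $0 = \cS^{(0)} \subset \cdots \subset \cS^{(n)} = \cS$ with $\cS^{(i)}/\cS^{(i-1)} \cong \nabla_{w_i}$. For each $\eta \in \bY_{++}$, apply $p_\eta^*[\dim\Gr^\eta - \dim G/B]$ followed by $j^\eta_*$; using Proposition~\ref{prop:avS-calc} (and the fact that the subquotients, being costandards on $\Gr^\eta$, have no higher $j^\eta_*$), one obtains a filtration
\[
0 = \cG^{(0)}_\eta \subset \cG^{(1)}_\eta \subset \cdots \subset \cG^{(n)}_\eta = j^\eta_*\cS_\eta,
\qquad \cG^{(i)}_\eta/\cG^{(i-1)}_\eta \cong \NGr_{w_i t_{w_\circ(\eta)}}.
\]
Crucially, this filtration has exactly $|W|$ subquotients: support on the boundary $\overline{\Gr^\eta}\smallsetminus\Gr^\eta$ is packaged into the individual costandards $\NGr_{w_i t_{w_\circ(\eta)}}$ rather than contributing separate filtration pieces, because the filtration is manufactured from the fixed $|W|$-step filtration of $\cS$ on $G/B$.

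\textbf{Step 2.} Define, for each $i$ and each $\mu$,
\[
(\cM_i)_\mu := \underset{\lambda}{``\varinjlim"}\; \cG^{(i)}_{\eta(\lambda)} \star^{\cL^+G} \cI_*^{-w_\circ(\lambda)}.
\]
By exactness of convolution with $\cI_*^{-w_\circ(\lambda)}$ and of filtered colimits, these form a filtration of $\Phi(j^\varsigma_{!*}\cS_\varsigma)_\mu$ whose $i$-th subquotient is
\[
\underset{\lambda}{``\varinjlim"}\; \NGr_{w_i t_{w_\circ(\eta(\lambda))}} \star^{\cL^+G} \cI_*^{-w_\circ(\lambda)}.
\]
Using the identity $w_i t_{w_\circ(\eta(\lambda))} = (w_i t_{w_\circ(\varsigma)}) \cdot t_{\mu + w_\circ(\lambda)}$, this matches the defining formula for $(\cbV_{w_i t_{w_\circ(\varsigma)}})_\mu$ in~\S\ref{ss:cbV}.

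\textbf{Step 3.} The two remaining verifications are (i) that the chosen filtrations $\cG^{(\bullet)}_{\eta(\lambda)}$ are compatible with the transition maps in the ind-system (i.e.\ the unit $\IC^0 \to \cI_*^\gamma \star \cI_*^{-w_\circ(\gamma)}$ together with the multiplications $\cI_*^\alpha \star \cI_*^\beta \to \cI_*^{\alpha+\beta}$ send $\cG^{(i)}_{\eta(\lambda)} \star \cI_*^{-w_\circ(\lambda)}$ into $\cG^{(i)}_{\eta(\lambda+\gamma)} \star \cI_*^{-w_\circ(\lambda+\gamma)}$), and (ii) that the $\cM_i$ assembled from the degree-wise data are genuine $\cR$-submodules, i.e.\ that the $\cR$-multiplication $(\cM_i)_\mu \star \cR_\nu \to \Phi(j^\varsigma_{!*}\cS_\varsigma)_{\mu+\nu}$ lands in $(\cM_i)_{\mu+\nu}$. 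Both reduce to naturality of the $|W|$-step filtration $\cG^{(\bullet)}_\eta$ in $\eta$, which in turn follows from the fact that this filtration is uniformly obtained by pullback--pushforward from the single fixed filtration of $\cS$ on $G/B$, plus the fact that the transitions and the $\cR$-action are built exclusively from the natural maps listed above.

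The bottom subquotient is $\cbV_{w_\circ t_{w_\circ(\varsigma)}}$ and the top is $\cbV_{e\cdot t_{w_\circ(\varsigma)}} = \cbV_{t_{w_\circ(\varsigma)}}$, matching the statement; the latter surjection in particular recovers the surjection already constructed in the proof of Proposition~\ref{prop:bV-fg} (for $w=e$).

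\textbf{Main obstacle.} The computational core of the proof is Step~1 together with the naturality needed in Step~3: one must pin down a specific, functorial $|W|$-step filtration of each $j^\eta_*\cS_\eta$ and check that it behaves well under the natural maps $j^\eta_*\cS_\eta \to j^{\eta+\gamma}_*\cS_{\eta+\gamma} \star \cI_*^{-w_\circ(\gamma)}$ arising from the unit and multiplication. Once this is in place, everything else (exactness of $\Phi$, formal manipulation of ind-limits, identification of subquotients with the $\cbV$'s) is routine.
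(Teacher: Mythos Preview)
Your proof is correct and follows essentially the same approach as the paper: express each graded piece $\Phi(j^{\varsigma}_{!*}\cS_\varsigma)_\mu$ as the ind-system $j^{\eta}_*\cS_{\eta} \star^{\cL^+G} \cI_*^{-w_\circ(\lambda)}$ via Proposition~\ref{prop:avS-calc}, endow each $j^{\eta}_*\cS_{\eta}$ with the $|W|$-step costandard filtration coming from the costandard filtration of $\cS$ on $G/B$, and then pass to the limit and check compatibility with the transitions and the $\cR$-action. The paper's proof is terser on the naturality verification (your Step~3), but the argument is the same.
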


\begin{proof}
As in the proof of Proposition~\ref{prop:bV-fg}, for any $\mu \in \bY$ we have
\begin{multline*}
\Phi(j^{\varsigma}_{!*} \cS_\varsigma)_\mu = \underset{\lambda}{``\varinjlim"} j^{\varsigma}_{!*} \cS_\varsigma \star^{\cL^+G} \cI_*^{w_\circ(\mu)+\lambda} \star^{\cL^+G} \cI_*^{-w_\circ(\lambda)} \\
\cong \underset{\lambda}{``\varinjlim"} j^{w_\circ(\mu)+\lambda+\varsigma}_{*} \cS_{w_\circ(\mu)+\lambda+\varsigma} \star^{\cL^+G} \cI_*^{-w_\circ(\lambda)}.
\end{multline*}
Now the perverse sheaf $j^{w_\circ(\mu)+\lambda+\varsigma}_{*} \cS_{w_\circ(\mu)+\lambda+\varsigma}$ admits a filtration with subquotients of the form $\NGr_{wt_{\mu+w_\circ(\lambda)+w_\circ(\varsigma)}}$ with $w \in W$, each appearing once, with the case $w=w_\circ$ (corresponding to the closed stratum on $G/B$) at the bottom and the case $w=e$ (corresponding to the open stratum on $G/B$) at the top. These filtrations are compatible in a natural way with the transition morphisms in the inductive system, and therefore provide a filtration of $\Phi(j^{\varsigma}_{!*} \cS_\varsigma)_\mu$ whose subquotients are isomorphic to
\[
\underset{\lambda}{``\varinjlim"} \NGr_{wt_{\mu+w_\circ(\lambda)+w_\circ(\varsigma)}} \star^{\cL^+G} \cI_*^{-w_\circ(\lambda)} = (\cbV_{w t_{w_\circ(\varsigma)}})_\mu
\]
(for $w \in W$).
One can check that this collection of filtrations is also compatible with the $\cR$-actions, and therefore provides a filtration of $\Phi(j^{\varsigma}_{!*} \cS_\varsigma)$ with subquotients $\cbV_{w t_{w_\circ(\varsigma)}}$ for $w \in W$; in particular, this object admits a baby co-Verma filtration.
\end{proof}

\begin{lem}
\label{lem:cbV-mult-basic}
For any $w \in W_\ext$, we have
\[
\dim \Hom_{\modf^{\bY}_{I_\unip}(\cR)}(\Phi(j^\varsigma_{!*} \cS_\varsigma), \cbV_w) = \begin{cases}
1 & \text{if $w=y t_\varsigma w_\circ$ for some $y \in W$;} \\
0 & \text{otherwise.}
\end{cases}
\]
\end{lem}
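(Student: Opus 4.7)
The plan is to reduce the Hom computation to an explicit calculation in $\Perv_{I_\unip}(\Gr,\bk)$, and then invoke the standard filtration of $j^\varsigma_{!*}\cS_\varsigma$ obtained by pulling back the filtration of the big tilting object $\cS$ on $G/B$.

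First I would apply Lemma~\ref{lem:Rfree-hom} to identify
\[
\Hom_{\modf^\bY_{I_\unip}(\cR)}(\Phi(j^\varsigma_{!*}\cS_\varsigma), \cbV_w) \cong \Hom_{\Perv_{I_\unip}(\Gr,\bk)}(j^\varsigma_{!*}\cS_\varsigma, (\cbV_w)_0).
\]
Since $j^\varsigma_{!*}\cS_\varsigma$ is an ordinary (compact) perverse sheaf, the $\Hom$ into the filtered inductive limit $(\cbV_w)_0$ commutes with the colimit. Using the rigid duality $(\cI_!^\lambda)^\vee \cong \cI_*^{-w_\circ(\lambda)}$ from~\eqref{eqn:duals} together with the isomorphism $j^\varsigma_{!*}\cS_\varsigma \star^{\cL^+G} \cI_!^\lambda \cong j^{\varsigma+\lambda}_!\cS_{\varsigma+\lambda}$ supplied by Proposition~\ref{prop:avS-calc}\eqref{it:avS-!},\eqref{it:avS-tilt}, this becomes
\[
\varinjlim_{\lambda \in \bY_+} \Hom(j^{\varsigma+\lambda}_!\cS_{\varsigma+\lambda}, \NGr_{wt_{w_\circ(\lambda)}}).
\]

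By Proposition~\ref{prop:avS-calc}\eqref{it:avS-!}, $j^{\varsigma+\lambda}_!\cS_{\varsigma+\lambda}$ admits a standard filtration; because $\cS$ is the indecomposable tilting perverse sheaf on $G/B$ with full support, this filtration contains each $\DGr_{zt_{\varsigma+\lambda}w_\circ}$ (for $z \in W$) exactly once and no other standards. Hence each individual $\Hom$-space has dimension
\[
(j^{\varsigma+\lambda}_!\cS_{\varsigma+\lambda} : \DGr_{wt_{w_\circ(\lambda)}}),
\]
which equals $1$ precisely when $wt_{w_\circ(\lambda)} = zt_{\varsigma+\lambda}w_\circ$ for some $z \in W$, and $0$ otherwise. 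Using $t_\mu w_\circ = w_\circ t_{w_\circ(\mu)}$, this simplifies to the $\lambda$-independent condition $w = yt_\varsigma w_\circ$ for some $y \in W$ (take $y = zw_\circ$), matching the dichotomy in the statement.

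The main obstacle is to verify that, when $w = yt_\varsigma w_\circ$, the transition maps in the inductive system are nonzero, so that the colimit is genuinely $1$-dimensional rather than collapsing to zero. I plan to track the (unique up to scalar) generator $\phi_\lambda$ of each individual $\Hom$-space---obtained from the standard filtration as the map factoring through the copy of $\DGr_{wt_{w_\circ(\lambda)}}$ composed with the canonical morphism $\DGr_{wt_{w_\circ(\lambda)}} \to \NGr_{wt_{w_\circ(\lambda)}}$---through the adjunction, and then verify by an explicit diagram chase that the transition $\tau_{\lambda,\nu}$ (built from the rigid duality unit $\IC^0 \hookrightarrow \cI_*^\nu \star \cI_*^{-w_\circ(\nu)}$ and the multiplications~\eqref{eqn:costd-conv2} and~\eqref{eqn:costd-conv-sph}) sends the element of $\Hom(j^\varsigma_{!*}\cS_\varsigma, (\cbV_w)_0)$ corresponding to $\phi_\lambda$ to a nonzero scalar multiple of the one corresponding to $\phi_{\lambda+\nu}$. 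As a consistency check on the base case $y = w_\circ$, Lemma~\ref{lem:Pi-sigma-bV-filt} exhibits the surjection $\Phi(j^\varsigma_{!*}\cS_\varsigma) \twoheadrightarrow \cbV_{t_{w_\circ(\varsigma)}}$, directly witnessing a nonzero element; the remaining $y \in W \smallsetminus \{w_\circ\}$ come from other slots in the same baby co-Verma filtration and are handled by the diagram chase sketched above.
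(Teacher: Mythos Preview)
Your proposal is correct and follows essentially the same approach as the paper: apply Lemma~\ref{lem:Rfree-hom}, use rigid duality~\eqref{eqn:duals} together with Proposition~\ref{prop:avS-calc} to rewrite each term as $\Hom(j^{\varsigma+\lambda}_!\cS_{\varsigma+\lambda}, \NGr_{wt_{w_\circ(\lambda)}})$, and then read off the answer from the standard filtration of $j^{\varsigma+\lambda}_!\cS_{\varsigma+\lambda}$. The paper handles the transition maps with the single sentence ``All the transition morphisms in our inductive limit are isomorphisms,'' whereas you spell out a plan to verify this by chasing a generator; also note a harmless slip in your bookkeeping (the condition already reads $w = z t_\varsigma w_\circ$, so one may take $y=z$ rather than $y=zw_\circ$).
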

\begin{proof}
By Lemma~\ref{lem:Rfree-hom} we have
\[
\Hom_{\modf^{\bY}_{I_\unip}(\cR)}(\Phi(j^\varsigma_{!*} \cS_\varsigma), \cbV_w) \cong \varinjlim_{\lambda} \Hom_{\Perv_{I_\unip}(\Gr,\bk)}(j^\varsigma_{!*} \cS_\varsigma, \NGr_{wt_{w_\circ(\lambda)}} \star^{\cL^+G} \cI^{-w_\circ(\lambda)}_*).
\]
Now by~\eqref{eqn:duals} and Proposition~\ref{prop:avS-calc}, for any $\lambda \in \bY_+$ such that $wt_{w_\circ(\lambda)} \in W_\ext^S$ we have
\begin{multline*}
\Hom_{\Perv_{I_\unip}(\Gr,\bk)}(j^\varsigma_{!*} \cS_\varsigma, \NGr_{wt_{w_\circ(\lambda)}} \star^{\cL^+G} \cI^{-w_\circ(\lambda)}_*) \\
\cong \Hom_{\Perv_{I_\unip}(\Gr,\bk)} \bigl( (j^\varsigma_{!*} \cS_\varsigma) \star^{\cL^+G} \cI^{\lambda}_!, \NGr_{wt_{w_\circ(\lambda)}} \bigr) \\
\cong \Hom_{\Perv_{I_\unip}(\Gr,\bk)} ( j^{\varsigma+\lambda}_{!} \cS_{\varsigma+\lambda}, \NGr_{wt_{w_\circ(\lambda)}} ).
\end{multline*}
Here $ j^{\varsigma+\lambda}_{!} \cS_{\varsigma+\lambda}$ admits a standard filtration, with subquotients $\DGr_{yt_{\varsigma+\lambda}w_\circ}$ for $y \in W$ (each appearing once). Hence our space is $1$-dimensional if $w$ is of the form $y t_\varsigma w_\circ$ for some $y \in W$, and vanishes otherwise. All the transition morphisms in our inductive limit are isomorphisms.  The lemma follows.
\end{proof}

We now study the behavior of wall-crossing functors with respect to baby co-Verma filtrations.

\begin{lem}
\phantomsection
\label{lem:bV-xi}
\begin{enumerate}
\item
\label{it:bV-xi-1}
For any $s \in S_\aff$, the functor
\[
\xi_s : \modf^\bY_{I_\unip}(\cR) \to \modf^\bY_{I_\unip}(\cR)
\]
sends objects admitting baby co-Verma filtrations to objects admitting baby co-Verma filtrations. More specifically, if $w \in W_\ext$, then if $sw \preceq w$ we have an exact sequence
\[
\cbV_{sw} \hookrightarrow \xi_s(\cbV_w) \twoheadrightarrow \cbV_w,
\]
and if $w \preceq sw$ we have an exact sequence
\[
\cbV_{w} \hookrightarrow \xi_s(\cbV_w) \twoheadrightarrow \cbV_{sw}.
\]
\item
\label{it:bV-xi-2}
For any $\omega \in \Omega$, the functor
\[
\xi_\omega : \modf^\bY_{I_\unip}(\cR) \to \modf^\bY_{I_\unip}(\cR)
\]
sends objects admitting baby co-Verma filtrations to objects admitting baby co-Verma filtrations. More specifically, if $w \in W_\ext$, then
\[
\xi_\omega(\cbV_w) \cong \cbV_{\omega w}.
\]
\end{enumerate}
\end{lem}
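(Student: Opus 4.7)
The plan is to compute $\xi_s(\cbV_w)$ and $\xi_\omega(\cbV_w)$ by evaluating the exact functors $\xi_s$ and $\xi_\omega$ componentwise on the inductive system defining $\cbV_w$, and then to verify compatibility with transition morphisms, grading, and the $\cR$-action. Part~\eqref{it:bV-xi-2} is the easier one: the autoequivalence $\xi_\omega$ commutes with convolution on the right by objects of $\Perv_{\cL^+G}(\Gr,\bk)$ (since the left $\dot\omega$-action commutes with the right convolution), and by~\eqref{eqn:xi-D-N} sends each term $\NGr_{wt_{\mu+w_\circ(\lambda)}} \star^{\cL^+G} \cI_*^{-w_\circ(\lambda)}$ of the direct system defining $(\cbV_w)_\mu$ to $\NGr_{\omega w t_{\mu+w_\circ(\lambda)}} \star^{\cL^+G} \cI_*^{-w_\circ(\lambda)}$, which is precisely the corresponding term for $\cbV_{\omega w}$. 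The transition morphisms and multiplication maps by $\cR$ are defined using only the morphisms~\eqref{eqn:costd-unit},~\eqref{eqn:costd-conv-sph} and~\eqref{eqn:costd-conv2}, all of which are unchanged by left multiplication by $\dot\omega$; one gets $\xi_\omega(\cbV_w) \cong \cbV_{\omega w}$ as graded $\cR$-modules.

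For part~\eqref{it:bV-xi-1}, by Remark~\ref{rmk:xi-convolution} we have $\xi_s(\NGr_v) \cong \TFl_s \star^I \NGr_v$ for every $v \in W_\ext^S$. Since $\xi_s$ is t-exact and commutes with convolution on the right by $\cL^+G$-equivariant perverse sheaves, the functor $\xi_s$ applied to the inductive system defining $(\cbV_w)_\mu$ yields
\[
(\xi_s(\cbV_w))_\mu \cong \underset{\lambda}{``\varinjlim\,"} \, \TFl_s \star^I \NGr_{wt_{\mu+w_\circ(\lambda)}} \star^{\cL^+G} \cI_*^{-w_\circ(\lambda)}.
\]
Fix $\mu$ and choose $\lambda \in \bY_+$ large enough so that $v := wt_{\mu+w_\circ(\lambda)}$ and $sv = swt_{\mu+w_\circ(\lambda)}$ both lie in $W_\ext^S$; by Lemma~\ref{lem:periodic-order-cond}, the relation $sw \preceq w$ (resp.\ $w \preceq sw$) is then equivalent to $sv \leq v$ (resp.\ $v \leq sv$) in the Bruhat order. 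In the case $sw \preceq w$, we convolve the standard filtration $\DFl_s \hookrightarrow \TFl_s \twoheadrightarrow \DFl_e$ with $\NGr_v$: by Lemma~\ref{lem:conv-D-N} the middle term is perverse (being $\xi_s(\NGr_v)$), the right-hand convolution is $\NGr_v$, and
\[
\DFl_s \star^I \NGr_v \cong \DFl_s \star^I \NFl_s \star^I \NGr_{sv} \cong \NGr_{sv}
\]
using $\NGr_v \cong \NFl_s \star^I \NGr_{sv}$ (length additivity, Lemma~\ref{lem:conv-D-N}\eqref{it:conv-DNGr}) and $\DFl_s \star^I \NFl_s \cong \DFl_e$. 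Since the three terms are perverse, the distinguished triangle is a short exact sequence
\[
\NGr_{sv} \hookrightarrow \TFl_s \star^I \NGr_v \twoheadrightarrow \NGr_v.
\]
In the case $w \preceq sw$ we instead convolve the costandard filtration $\NFl_e \hookrightarrow \TFl_s \twoheadrightarrow \NFl_s$ with $\NGr_v$ and use Lemma~\ref{lem:conv-D-N}\eqref{it:conv-DNGr} to obtain
\[
\NGr_v \hookrightarrow \TFl_s \star^I \NGr_v \twoheadrightarrow \NGr_{sv}.
\]

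To conclude, one passes to the inductive limit after tensoring on the right with $\cI_*^{-w_\circ(\lambda)}$; the resulting sub- and quotient ind-objects are canonically identified with $(\cbV_{sw})_\mu$ (resp.\ $(\cbV_w)_\mu$) and $(\cbV_w)_\mu$ (resp.\ $(\cbV_{sw})_\mu$), since the transition morphisms inside $\cbV_w$ and $\cbV_{sw}$ are induced by the same maps~\eqref{eqn:costd-unit},~\eqref{eqn:costd-conv-sph} and~\eqref{eqn:costd-conv2}, all of which live on the right side of the convolutions and are therefore preserved by the short exact sequence. Gathering the sequences for all $\mu$ and checking compatibility with the multiplication $(-) \star^{\cL^+G} \cR_\nu$ (again a matter of the same right-convolution maps) gives the claimed short exact sequence in $\modf^\bY_{I_\unip}(\cR)$. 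The main delicate point is ensuring cofinality of the indexing $\lambda$'s satisfying the length conditions so that the same short exact sequence recipe applies uniformly; this is handled by Lemma~\ref{lem:periodic-order-cond} together with~\eqref{eqn:WS-Wres}.
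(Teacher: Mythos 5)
Your proposal is correct, but it takes a genuinely different route from the paper. The paper disposes of part~\eqref{it:bV-xi-1} in one line: since $\xi_s = \Av^{\{s\}}_* \circ \Av^{\{s\}}_\psi$, the two short exact sequences follow directly from Lemma~\ref{lem:bV-Av}\eqref{it:bV-Av-psi}--\eqref{it:bV-Av-*} (with $A=\{s\}$, so $W_A=\{e,s\}$ and the filtration of $\Av^{\{s\}}_*(\cbV^{\{s\}}_{w'})$ has $\cbV_{w'}$ at the bottom and $\cbV_{sw'}$ on top), and the general preservation statement then follows from exactness of $\xi_s$ together with Lemma~\ref{lem:per-order-properties}\eqref{it:per-order-1}; all of the inductive-limit bookkeeping was done once and for all in the construction of $\cbV^A_w$ and in Lemma~\ref{lem:bV-Av}. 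You instead recompute termwise in the defining inductive system, using the realization $\xi_s \cong \TFl_s \star^I(-)$ on $I$-equivariant objects (Remark~\ref{rmk:xi-convolution}) and the filtrations $\DFl_s \hookrightarrow \TFl_s \twoheadrightarrow \DFl_e$ and $\NFl_e \hookrightarrow \TFl_s \twoheadrightarrow \NFl_s$, which via Lemma~\ref{lem:conv-D-N} do give the correct termwise exact sequences $\NGr_{sv}\hookrightarrow \TFl_s\star^I\NGr_v\twoheadrightarrow\NGr_v$ (resp.\ $\NGr_v\hookrightarrow \TFl_s\star^I\NGr_v\twoheadrightarrow\NGr_{sv}$), with the correct translation between $sw\preceq w$ and $sv\le v$, and with a cofinal set of admissible $\lambda$. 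What your approach buys is self-containedness (no appeal to the Whittaker category for $A=\{s\}$); what it costs is that the real work is shifted to the compatibilities you only assert at the end: (a) the inductive system must be lifted to the $I$-equivariant category so that the identification $\xi_s(\For^I_{I_\unip}(-))\cong \TFl_s\star^I(-)$ can be applied naturally to the transition morphisms, and (b) one must check that the canonical isomorphisms $\DFl_s\star^I\NGr_v\cong\NGr_{sv}$, $\NFl_s\star^I\NGr_v\cong\NGr_{sv}$ intertwine the map~\eqref{eqn:costd-conv2} for $v$ with the one for $sv$ (and likewise for the $\cR$-action maps), so that the termwise sequences assemble into a short exact sequence of graded $\cR$-modules. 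These checks do go through — everything is convolution on the left against structure maps living on the right factors, essentially the same convolution yoga the paper leaves implicit in the proof of Lemma~\ref{lem:bV-Av} — but they are where a fully written-out version of your argument would have to spend its effort. Your treatment of part~\eqref{it:bV-xi-2} via~\eqref{eqn:xi-D-N} is essentially the paper's.
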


\begin{proof}
\eqref{it:bV-xi-1}
By Lemma~\ref{lem:per-order-properties}\eqref{it:per-order-1} and exactness of $\xi_s$ (see Lemma~\ref{lem:functors-Rmod}) it suffices to prove the second claim, which follows from Lemma~\ref{lem:bV-Av}\eqref{it:bV-Av-psi}--\eqref{it:bV-Av-*}.


\eqref{it:bV-xi-2}
The proof is similar to that of~\eqref{it:bV-xi-1}, using~\eqref{eqn:xi-D-N}.
\end{proof}

\section{Projectives and injectives}
\label{sec:proj-R-mod}

In this section, we will prove that $\modf^\bY_{(I_\unip^A, \cX_A)}(\cR)$ is an abelian category with enough projectives and injectives, that the projective and injective objects coincide and arise as direct summands of free $\cR$-modules associated with certain tilting perverse sheaves, and finally that the embedding $\modf^\bY_{(I_\unip^A, \cX_A)}(\cR) \subset \Mod^\bY_{(I_\unip^A, \cX_A)}(\cR)^\flen$ is an equality.

In the special case where $\bk$ has characteristic $0$, we will see (by nearly identical arguments) that $\Perv_{(I_\unip^A, \cX_A)}(\Gr,\bk)$ also has enough projectives and injectives, and that these coincide with certain tilting objects.  

\subsection{Projective and injective perverse sheaves}
\label{ss:proj-covers}

We start with the case of $\Perv_{(I_\unip^A, \cX_A)}(\Gr,\bk)$ (for characteristic-$0$ coefficients).

\begin{thm}
\label{thm:proj-char0-Whit}
Assume that $\bk$ has characteristic $0$. The category $\Perv_{(I^A_\unip, \cX_A)}(\Gr,\bk)$ has enough projectives and enough injectives.  Specifically, for any $w \in {}^A W^S_\ext$, we have $w_A w^\triangle \in {}^A W_\ext^S$, and the projective cover and the injective hull of $\LGr^A_w$ in $\Perv_{(I^A_\unip, \cX_A)}(\Gr,\bk)$ are both isomorphic to the tilting object $\TGr^A_{w_A w^\triangle}$.
\end{thm}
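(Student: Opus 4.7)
The plan is to prove the theorem first in the Iwahori case ($A = \varnothing$) and then transfer to general finitary $A$ via averaging functors; the combinatorial claim $w_A w^\triangle \in {}^A W_\ext^S$ for $w \in {}^A W_\ext^S$ is handled separately as a length computation in $W_\ext$ combining~\eqref{eqn:AWextS-characterization} with the explicit form of $w^\triangle$ and Lemmas~\ref{lem:res-complement} and~\ref{lem:proj-prep}\eqref{it:prep-length}.

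In the case $A = \varnothing$, I would first show that $\TGr_{w^\triangle}$ is projective and injective in $\Perv_{I_\unip}(\Gr,\bk)$. The wall-crossing functors $\xi_s$ and $\xi_\omega$ are exact and admit exact two-sided adjoints (since $\xi_s^! \cong \xi_s^*$ by Lemma~\ref{lem:isom-xi}, and $\xi_\omega$ is an autoequivalence), so they preserve both projectivity and injectivity. Starting from Proposition~\ref{prop:projectives}\eqref{it:proj-char0} (which gives that $j^\mu_!\cS_\mu \cong \TGr_{t_{w_\circ(\mu)}}$ is projective-injective for $\mu \in \bY_{++}$) and applying Lemma~\ref{lem:proj-prep}\eqref{it:prep-tilt}, $\TGr_{w^\triangle}$ appears as a direct summand of $\xi_{s_r}\cdots\xi_{s_1}\xi_{\omega^{-1}}(j^\mu_!\cS_\mu)$, and therefore inherits projectivity and injectivity.

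To identify $\TGr_{w^\triangle}$ with the projective cover of $\LGr_w$, I would work in the truncated Serre subcategory of $\Perv_{I_\unip}(\Gr,\bk)$ consisting of perverse sheaves supported on $\overline{\Gr_{y_0}}$ for some fixed $y_0 \in W^S_\ext$ with $y_0 \ge w^\triangle$. This is a finite highest weight category in which, by the previous paragraph combined with the bijectivity of $v \mapsto v^\triangle$ on $W^S_\ext$ from~\eqref{eqn:triangle-inverse}, all indecomposable tilting objects are projective-injective; hence the projective cover $P_w$ of $\LGr_w$ is tilting, say $P_w \cong \TGr_{z(w)}$. BGG reciprocity in this subcategory reads $(P_w : \DGr_y) = [\NGr_y : \LGr_w]$; specializing to $y = w^\triangle$ and using~\eqref{eqn:mult-triangle} together with the Verdier self-duality $\D(\TGr_{w^\triangle}) \cong \TGr_{w^\triangle}$ (which gives $[\NGr_{w^\triangle} : \LGr_w] = [\DGr_{w^\triangle} : \LGr_w] = 1$) yields $(P_w : \DGr_{w^\triangle}) = 1$ and so $z(w) \ge w^\triangle$. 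Conversely, $P_w$ is a summand of $\xi_{s_r}\cdots\xi_{s_1}\xi_{\omega^{-1}}(j^\mu_!\cS_\mu)$, which is supported on $\overline{\Gr_{w^\triangle}}$, so $z(w) \le w^\triangle$; combining these gives $z(w) = w^\triangle$, whence $P_w \cong \TGr_{w^\triangle}$. The injective hull assertion follows from Verdier duality, which exchanges heads and socles while fixing tilting objects and simples.

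For general $A$, the functor $\Av^A_\psi$ is exact with exact two-sided adjoints $\Av^A_!$ and $\Av^A_*$, so it preserves both projectives and injectives. Using Proposition~\ref{prop:Av-tilting-indec} (which requires the combinatorial claim), $\Av^A_\psi(\TGr_{w^\triangle}) \cong (\TGr^A_{w_A w^\triangle})^{\oplus \#W_A}$ is projective-injective, so $\TGr^A_{w_A w^\triangle}$ is as well. Combining the adjunction $(\Av^A_!, \Av^A_\psi)$ with Proposition~\ref{prop:Av-tilting-indec}, the identity $\Av^A_\psi(\LGr_w) \cong \LGr^A_w$ for $w \in {}^A W_\ext^S$ (from~\cite[Lemma~3.3(4)]{projGr1}), and the Iwahori result just obtained, I compute
\[
\Hom(\TGr^A_{w_A w^\triangle}, \LGr^A_w) \cong \Hom(\Av^A_!(\TGr^A_{w_A w^\triangle}), \LGr_w) \cong \Hom(\TGr_{w^\triangle}, \LGr_w) \cong \bk,
\]
so the simple head of the indecomposable projective $\TGr^A_{w_A w^\triangle}$ is $\LGr^A_w$, identifying it with the projective cover of $\LGr^A_w$; the injective hull case is analogous, and the existence of enough projectives and injectives follows formally from the existence of projective covers of all simples. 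The main obstacle in this plan is the combinatorial verification that $w_A w^\triangle \in {}^A W_\ext^S$, along with some care in justifying BGG reciprocity and the claim that indecomposable tilting objects are projective-injective in the truncated subcategory; the remainder essentially combines the machinery already developed in Sections~\ref{sec:perv} and~\ref{sec:averaging}.
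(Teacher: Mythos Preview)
Your proposal is essentially the same as the paper's proof, with two minor differences worth noting. First, in Case~1 your detour through a truncated Serre subcategory and the claim that all tilting objects are projective-injective there is unnecessary: the paper simply observes that the projective cover $\PGr_w$ (which exists once you know the wall-crossed object is projective and has $\LGr_w$ in its head) is a tilting direct summand supported on $\overline{\Gr_{w^\triangle}}$, and then computes $\dim\Hom(\PGr_w,\NGr_{w^\triangle})=[\NGr_{w^\triangle}:\LGr_w]=1$ directly from projectivity---this is the same formula you call BGG reciprocity, but no truncation is needed. Second, and more substantively, you propose to verify $w_Aw^\triangle\in{}^AW_\ext^S$ by a separate length computation before running the Case~2 argument (you need it to invoke Proposition~\ref{prop:Av-tilting-indec} in the direction $\Av^A_\psi(\TGr_{w^\triangle})\cong(\TGr^A_{w_Aw^\triangle})^{\oplus\#W_A}$), whereas the paper runs the argument in reverse: it first shows abstractly that the projective cover of $\LGr^A_w$ is some $\TGr^A_y$, then uses $\Av^A_!(\TGr^A_y)\cong\TGr_{w_Ay}$ together with Case~1 to deduce $w_Ay=w^\triangle$, so the combinatorial claim falls out for free (cf.~Remark~\ref{rmk:triangle-AWext}\eqref{it:triangle-AWext}). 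Your direct combinatorial approach may well work, but it is likely more laborious than you suggest, and the paper's route avoids it entirely.
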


Note that this statement implies that \emph{all} projective objects in $\Perv_{(I^A_\unip, \cX_A)}(\Gr,\bk)$ are also injective and tilting.  Dually, all injective objects are projective and tilting.  (However, there may be tilting objects that are not projective nor injective.)

\begin{proof}
We break the proof up into two cases as follows.

\textit{Case 1. $A = \varnothing$.}  We wish to show that for $w \in W^S_\ext$, the object $\TGr_{w^\triangle}$ is both the projective cover and the injective envelope of $\LGr_w$. Write $w = xt_{w_\circ(\mu-\varsigma)}$  as in Lemma~\ref{lem:proj-prep}.  By Proposition~\ref{prop:projectives}\eqref{it:proj-char0}, $\TGr_{t_{w_\circ(\mu)}}$ is both projective and injective. 
Since they have exact left and right adjoints,
the functors $\xi_{s_1}, \ldots, \xi_{s_r}, \xi_\omega$ send projectives to projectives, and injectives to injectives.  Thus, the perverse sheaf $\xi_{s_r} \cdots \xi_{s_1}\xi_{\omega^{-1}}(\TGr_{t_{w_\circ(\mu)}})$ is both projective and injective.  Lemma~\ref{lem:proj-prep} then implies that $\Perv_{I_\unip}(\Gr,\bk)$ has enough projectives and injectives.

More specifically, let $\PGr_w$ be the projective cover of $\LGr_w$.  By the construction described above, this object is a direct summand of $\xi_{s_r} \cdots \xi_{s_1}\xi_{\omega^{-1}}(\TGr_{t_{w_\circ(\mu)}})$. The latter object is tilting by Lemma~\ref{lem:isom-xi} and~\eqref{eqn:xi-L-T}, and its support is contained in $\overline{\Gr_{w^\triangle}}$ by Lemma~\ref{lem:proj-prep}. Hence $\PGr_w$ is isomorphic to $\TGr_y$ for some $y \in W_\ext^S$ such that $y \leq w^\triangle$, and to show that $\PGr_w \cong \TGr_{w^\triangle}$ it is enough to show that $(\PGr_w)_{|\Gr_{w^\triangle}} \ne 0$, which in turn is equivalent to the claim that
\[
\Hom(\PGr_w, \NGr_{w^\triangle}) \ne 0.
\]
However the left-hand side has dimension $[\NGr_{w^\triangle} : \LGr_w]=[\DGr_{w^\triangle} : \LGr_w]$, which is equal to $1$ by~\eqref{eqn:mult-triangle}; this proves that $\PGr_w \cong \TGr_{w^\triangle}$.

Finally, the injective hull of $\LGr_w$ is the Verdier dual of $\PGr_w$, which is isomorphic to $\TGr_{w^\triangle}$ since every tilting object is Verdier self-dual.

\textit{Case 2. $A \ne \varnothing$.}
For any $w \in {}^A W^S_\ext$ we have $\Av^A_\psi(\LGr_w) \cong \LGr^A_w$ by~\cite[Lemma~3.3(4)]{projGr1}, and the functor $\Av^A_\psi$ is exact (see~\S\ref{ss:Av-Iw}). Hence from the surjection  $\TGr_{w^\triangle} \twoheadrightarrow \LGr_w$ and the embedding $\LGr_w \hookrightarrow \TGr_{w^\triangle}$ obtained in Case~1 we deduce a surjection and an embedding
\[
\Av^A_\psi(\TGr_{w^\triangle}) \twoheadrightarrow \LGr_w^A, \qquad \LGr^A_w \hookrightarrow \Av^A_\psi(\TGr_{w^\triangle}).
\]
Since $\Av^A_\psi$ has exact left and right adjoints (see~\S\ref{ss:Av-Iw}), it sends projective, resp.~injective, objects to projective, resp.~injective, objects.  This already implies that $\Perv_{(I^A_\unip, \cX_A)}(\Gr,\bk)$ has enough projectives and enough injectives, and that an object is projective if and only if it is injective; these objects are therefore tilting. Using Verdier duality (see Remark~\ref{rmk:Verdier-duality}) we see that for any $w \in {}^A W_\ext^S$ the projective cover of $\LGr^A_w$ is also its injective envelope. We now need to determine the label of this object (as an indecomposable tilting object). Fix $w \in {}^A W_\ext^S$, and denote this label by $y$. 

By adjunction and~\cite[Lemma~3.3(4)]{projGr1}, the object $\Av^A_!(\TGr^A_y)$ is the projective cover of $\LGr_w$, so that we have
\[
\Av^A_!(\TGr^A_y) \cong \TGr_{w^\triangle}
\]
by Case~1.
Now, by Proposition~\ref{prop:Av-tilting-indec} the left-hand side is isomorphic to
$\TGr_{w_A y}$, so that $w_A y=w^\triangle$, which finishes the proof.
\end{proof}

\begin{rmk}
\phantomsection
\label{rmk:triangle-AWext}
\begin{enumerate}
\item
\label{it:triangle-AWext}
In the course of the proof of Theorem~\ref{thm:proj-char0-Whit} we have proved that for any $w \in {}^A W^S_\ext$, we have $w_A w^\triangle \in {}^A W_\ext^S$. By definition of ${}^A W_\ext$ (see~\S\ref{ss:coset-representatives}) and~\eqref{eqn:triangle-translation}, it then follows that for any $w \in {}^A W_\ext$ we have $w_A w^\triangle \in {}^A W_\ext$.
\item
In case $\mathrm{char}(\bk)>0$, and if $G$ is not a torus, the category $\Perv_{(I_\unip^A, \cX_A)}(\Gr,\bk)$ does not have any nonzero projective or injective object. In fact, using Verdier duality it suffices to prove this claim for projective objects, and using Remark~\ref{rmk:Av-notkill} one can assume $A=\varnothing$. In this case, if $\cP$ is a nonzero projective object and if $\LGr_w$ is a simple quotient of $\cP$, if $\omega, s_1, \ldots, s_r$ are as in Lemma~\ref{lem:proj-prep}, then $\cP':=\xi_\omega \xi_{s_1} \cdots \xi_{s_r}(\cP)$ is a projective object surjecting to $\xi_\omega \xi_{s_1} \cdots \xi_{s_r}(\LGr_w)$. Then $\Av^S_\psi(\cP')$ is a projective object in $\Perv_{(I_\unip^S,\cX_S)}(\Gr,\bk)$ surjecting to the object $\Av^S_\psi(\xi_\omega \xi_{s_1} \cdots \xi_{s_r}(\LGr_w))$, which is nonzero by Lemmas~\ref{lem:proj-prep}\eqref{it:prep-length} and~\ref{lem:support} combined with~\cite[Lemmas~2.5 and~3.3(4)]{projGr1}. Now as explained in the proof of Proposition~\ref{prop:IW-R-semisimple} we have $\Perv_{(I_\unip^S,\cX_S)}(\Gr,\bk) \cong \Rep(G^\vee_\bk)$; the category $\Rep(G^\vee_\bk)$ therefore possesses a nonzero projective object. Using~\cite[Lemma~I.4.17]{jantzen} one sees that this object is projective in the category of all algebraic $G^\vee_\bk$-modules, which is impossible by the main result of~\cite{donkin-proj}. 
\end{enumerate}
\end{rmk}

We now drop the assumption that $\bk$ has characteristic $0$, and come back to the setting of arbitrary coefficients.

\begin{cor}
\label{cor:tilt-mult}
For any $w \in {}^A W_\ext^\res$, the object $\LGr^A_w$ occurs in both the head and socle of $\TGr^A_{w_A w^\triangle}$ with multiplicity~$1$.
\end{cor}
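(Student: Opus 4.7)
My plan is to reduce to the case $A = \varnothing$ with $w \in W_\ext^\res$ using adjunctions, then apply Lemma~\ref{lem:proj-prep} to produce a tilting perverse sheaf $\cP$ that contains $\TGr_{w^\triangle}$ as a direct summand, and finally pin down the head multiplicity via an internal bound combined with a modular reduction from characteristic~$0$. For the reduction, the adjoint pairs $(\Av^A_!, \Av^A_\psi)$ and $(\Av^A_\psi, \Av^A_*)$, combined with Proposition~\ref{prop:Av-tilting-indec} (giving $\Av^A_!(\TGr^A_{w_A w^\triangle}) \cong \TGr_{w^\triangle} \cong \Av^A_*(\TGr^A_{w_A w^\triangle})$) and~\cite[Lemma~3.3(4)]{projGr1} (giving $\Av^A_\psi(\LGr_w) \cong \LGr^A_w$, valid since $w \in {}^A W_\ext^\res \subseteq {}^A W_\ext^S$), produce the isomorphisms
\[
\Hom(\TGr^A_{w_A w^\triangle}, \LGr^A_w) \cong \Hom(\TGr_{w^\triangle}, \LGr_w), \quad \Hom(\LGr^A_w, \TGr^A_{w_A w^\triangle}) \cong \Hom(\LGr_w, \TGr_{w^\triangle}),
\]
so only the case $A = \varnothing$, $w \in W_\ext^\res$ needs to be treated.

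In that reduced setting, write $t_\varsigma w_\circ w^{-1} = \omega s_1 \cdots s_r$ as a reduced expression. Proposition~\ref{prop:avS-calc}\eqref{it:avS-tilt} gives $j^\varsigma_! \cS_\varsigma \cong j^\varsigma_{!*} \cS_\varsigma \cong j^\varsigma_* \cS_\varsigma$, and Lemma~\ref{lem:proj-prep} applied with $\mu = \varsigma$ (so that $x = w$) shows that
\[
\cP := \xi_{s_r} \cdots \xi_{s_1} \xi_{\omega^{-1}}(j^\varsigma_{!*} \cS_\varsigma)
\]
is tilting, contains $\TGr_{w^\triangle}$ as a direct summand of multiplicity~$1$, satisfies $(\cP : \DGr_w) = 1$, and admits $\LGr_w$ in its head with multiplicity~$1$. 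The key intermediate step is the equality $(\TGr_{w^\triangle} : \DGr_w) = 1$: the upper bound follows from the summand decomposition, while the lower bound is obtained by combining Theorem~\ref{thm:proj-char0-Whit} with BGG reciprocity (which together yield $(\TGr^{\mathbb K}_{w^\triangle} : \DGr^{\mathbb K}_w) = [\NGr^{\mathbb K}_w : \LGr^{\mathbb K}_w] = 1$ in characteristic~$0$) and then applying Lemma~\ref{lem:mult-tilt-modred} to propagate the bound to arbitrary characteristic.

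To conclude, decompose $\cP \cong \TGr_{w^\triangle} \oplus \bigoplus_i \TGr_{z_i}$ with $z_i \neq w^\triangle$. The multiplicity identities above force $(\TGr_{z_i} : \DGr_w) = 0$ for each $i$, hence $\Hom(\TGr_{z_i}, \NGr_w) = 0$. Any nonzero map $\TGr_{z_i} \to \LGr_w$ would compose with the socle inclusion $\LGr_w \hookrightarrow \NGr_w$ to contradict this vanishing, so $\Hom(\TGr_{z_i}, \LGr_w) = 0$. The head multiplicity~$1$ of $\LGr_w$ in $\cP$ is therefore contributed entirely by $\TGr_{w^\triangle}$, yielding $\dim \Hom(\TGr_{w^\triangle}, \LGr_w) = 1$; the socle claim follows by Verdier duality from $\D \TGr_{w^\triangle} \cong \TGr_{w^\triangle}$ and $\D \LGr_w \cong \LGr_w$. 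The main obstacle is the characteristic-$0$ lower bound in the middle paragraph, as this is where the argument genuinely leverages the projective-cover information from Theorem~\ref{thm:proj-char0-Whit}; once $(\TGr_{w^\triangle} : \DGr_w) = 1$ is in hand, the rest of the proof is essentially formal.
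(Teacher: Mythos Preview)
Your proof is correct and follows essentially the same route as the paper: reduce to $A=\varnothing$ via adjunction and Proposition~\ref{prop:Av-tilting-indec}, realize $\TGr_{w^\triangle}$ as a summand of $\cP=\xi_{s_r}\cdots\xi_{s_1}\xi_{\omega^{-1}}(j^\varsigma_{!*}\cS_\varsigma)$, use the characteristic-$0$ result (via Theorem~\ref{thm:proj-char0-Whit}) together with Lemma~\ref{lem:mult-tilt-modred} to get $(\TGr_{w^\triangle}:\DGr_w)\ge 1$, and combine this with $(\cP:\DGr_w)=1$ to pin down the head multiplicity, finishing by Verdier self-duality. The only differences are organizational: you do the $A$-reduction first and invoke BGG reciprocity explicitly for the characteristic-$0$ bound, whereas the paper treats characteristic~$0$ separately upfront and phrases the key step as ``the unique summand of $\cP$ with nonzero map to $\NGr_w$ must be $\TGr_{w^\triangle}$''.
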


\begin{proof}
When $\bk$ has characteristic $0$, this claim is already part of Theorem~\ref{thm:proj-char0-Whit}. We must now treat the case when $\bk$ is finite. (This case will imply the case when $\bk$ is an algebraic closure of a finite field.)

First, assume that $A=\varnothing$, and 
continue with the notation from the proof of Lemma~\ref{lem:proj-prep} (for $\mu=\varsigma$, so that $w=x$).  Recall that $\TGr_{w^\triangle}$ is a direct summand of $\xi_{s_r} \cdots \xi_{s_1}\xi_{\omega^{-1}}(\TGr_{t_{w_\circ(\varsigma)}})$.  The dimension calculation in~\eqref{eqn:prep-factor} shows that $\xi_{s_r} \cdots \xi_{s_1}\xi_{\omega^{-1}}(\TGr_{t_{w_\circ(\varsigma)}})$ admits exactly one direct summand admitting a nonzero map to $\nabla_w$, and that this map is unique up to scalar and factors through the socle $\LGr_w$ of $\nabla_w$. Hence to prove that $\LGr_w$ occurs in the head of $\TGr_{w^\triangle}$ it suffices to prove that
\[
\dim \Hom(\TGr_{w^\triangle}, \NGr_w) \geq 1;
\]
then we will know that the multiplicity of $\LGr_w$ in the head of $\TGr_{w^\triangle}$ is exactly one, and since $\TGr_{w^\triangle}$ is Verdier self-dual the claim about its socle will also follow.
Of course, $\dim \Hom(\TGr_{w^\triangle}, \NGr_w)$ is the multiplicity of $\DGr_w$ in any standard filtration of $\TGr_{w^\triangle}$, denoted by $(\TGr_{w^\triangle}: \DGr_w)$; the statement we wish to prove is therefore equivalent to the claim that
\[
(\TGr_{w^\triangle}: \DGr_w) \ge 1.
\]

Let us consider a finite extension $\mathbb{K}$ of $\mathbb{Q}_\ell$ whose ring of integers admits $\bk$ as residue field, and adopt the notation of Lemma~\ref{lem:mult-tilt-modred}. By that lemma we have
\[
(\TGr^{\bk}_{w^\triangle}: \DGr^\bk_w) \ge (\TGr^{\mathbb{K}}_{w^\triangle}: \DGr^{\mathbb{K}}_w).
\]
By the characteristic-$0$ version of the corollary (which, as explained above, is already known) the right-hand side is at least $1$, which proves the desired inequality.

For a general $A$, one observes that
\[
\Hom(\TGr^A_{w_A w^\triangle}, \LGr^A_w) \cong \Hom(\TGr^A_{w_A w^\triangle}, \Av^A_\psi(\LGr_w)) \cong \Hom(\Av^A_!(\TGr^A_{w_A w^\triangle}), \LGr_w)
\]
by~\cite[Lemma~3.3(4)]{projGr1} and adjunction. Using Proposition~\ref{prop:Av-tilting-indec} we deduce that
\[
\Hom(\TGr^A_{w_A w^\triangle}, \LGr^A_w) \cong \Hom(\TGr_{w^\triangle}, \LGr_w),
\]
and the right-hand side is of dimension $1$ by the case $A=\varnothing$. (Note that $w \in W_\ext^\res$.) One shows similarly that $\Hom(\LGr^A_w, \TGr^A_{w_A w^\triangle})$ is $1$-dimensional, which finishes the proof.
\end{proof}

\subsection{Projective and injective \texorpdfstring{$\cR$}{R}-modules}
\label{ss:proj-inj}

We now study projective and injective objects in $\modf^\bY_{(I_\unip^A, \cX_A)}(\cR)$. In this setting, the replacement of the property of admitting a costandard filtration will be the existence of a baby co-Verma filtration. (The replacement for standard filtrations will be introduced later, in Section~\ref{sec:baby-verma}.) We start by constructing an ``explicit'' family of projective and injective objects, in the special case $A=\varnothing$, based on Proposition~\ref{prop:projectives}\eqref{it:proj-R}.

\begin{prop}
\label{prop:inj-hull-bV}
For any $x \in W_\ext$, there exists an object in $\modf^{\bY}_{I_\unip}(\cR)$ with the following properties:
\begin{enumerate}
\item It is both projective and injective as an object of $\Mod^{\bY}_{I_\unip}(\cR)^\flen$.
\item It admits $\LGT_x$ as both a subobject and a quotient.
\item It admits a baby co-Verma filtration with $\cbV_x$ at the bottom, $\cbV_{x^\triangle}$ at the top, and all other subquotients of the form $\cbV_z$ with $x \prec z \prec x^\triangle$.
\end{enumerate}
\end{prop}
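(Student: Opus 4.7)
The plan is to construct the required object explicitly by applying a sequence of wall-crossing functors to the projective-injective generator $\Phi(j^\varsigma_{!*}\cS_\varsigma)$ furnished by Proposition~\ref{prop:projectives}\eqref{it:proj-R}. Since the grading-shift autoequivalence $\la\nu\ra$ respects projectivity, injectivity, baby co-Verma filtrations, the labels of the simple $\cR$-modules and of the baby co-Verma modules (via~\eqref{eqn:cbV-translation}), and the operation $(-)^\triangle$ (via~\eqref{eqn:triangle-translation}), it suffices to handle the case $x \in W_\ext^\res$. Fix such $x$, set $y := t_\varsigma w_\circ x^{-1}$, and choose a reduced expression $y = \omega s_1 \cdots s_r$ as in Lemma~\ref{lem:proj-prep}. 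Our candidate object is
\[
Q := \Phi\bigl( \xi_{s_r} \cdots \xi_{s_1} \xi_{\omega^{-1}} (j^\varsigma_{!*}\cS_\varsigma) \bigr).
\]

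For property~(1): Proposition~\ref{prop:avS-calc}\eqref{it:avS-tilt} gives $j^\varsigma_!\cS_\varsigma \cong j^\varsigma_{!*}\cS_\varsigma \cong j^\varsigma_*\cS_\varsigma$, so Proposition~\ref{prop:projectives}\eqref{it:proj-R} makes $\Phi(j^\varsigma_{!*}\cS_\varsigma)$ both projective and injective in $\Mod^\bY_{I_\unip}(\cR)^\flen$; each wall-crossing functor commutes with $\Phi$ and, by Lemma~\ref{lem:functors-Rmod}, is exact with exact left and right adjoints, hence preserves both properties, so $Q$ is projective-injective. For property~(2): Lemma~\ref{lem:proj-prep}\eqref{it:prep-std}--\eqref{it:prep-costd} produces, respectively, a surjection from $\xi_{s_r}\cdots\xi_{\omega^{-1}}(j^\varsigma_{!*}\cS_\varsigma)$ onto $\LGr_x$ and an embedding of $\LGr_x$ into the same perverse sheaf, each with multiplicity one. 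Applying the exact functor $\Phi$ and using $\Phi(\LGr_x) = \LGT_x$ (valid since $x \in W_\ext^\res$) yields a surjection $Q \twoheadrightarrow \LGT_x$ and an embedding $\LGT_x \hookrightarrow Q$.

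For property~(3), start from the baby co-Verma filtration of $\Phi(j^\varsigma_{!*}\cS_\varsigma)$ given by Lemma~\ref{lem:Pi-sigma-bV-filt} and iterate Lemma~\ref{lem:bV-xi}: each application of $\xi_s$ refines the existing filtration by replacing every subquotient $\cbV_u$ with a two-term extension whose subobject is $\cbV_{u_{\min}}$ and whose quotient is $\cbV_{u_{\max}}$, where $\{u_{\min},u_{\max}\} = \{u,su\}$ is ordered by $\preceq$. The result is a baby co-Verma filtration of $Q$ whose subquotients are labeled by pairs $(\sigma,w)$, with $\sigma$ a subword of $s_r\cdots s_1$ and $w \in W$, corresponding to $\cbV_{\sigma \omega^{-1} w t_{w_\circ(\varsigma)}}$. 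Using Lemma~\ref{lem:res-complement} and Lemma~\ref{lem:proj-prep}\eqref{it:prep-length}, the pairs $(s_r\cdots s_1, w_\circ)$ and $(s_r\cdots s_1, e)$ produce the labels $y^{-1}\cdot t_\varsigma w_\circ = x$ and $y^{-1}t_{w_\circ(\varsigma)} = x^\triangle$ respectively; tracking the $\preceq$-extremal choice at each of the $r$ iterations, with the help of the order-theoretic machinery of Lemma~\ref{lem:per-order-properties}, then shows that $\cbV_x$ occurs at the very bottom and $\cbV_{x^\triangle}$ at the very top of the refined filtration, and that every other subquotient has a label strictly between $x$ and $x^\triangle$ in the periodic order.

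The main obstacle is the combinatorial bookkeeping in~(3): across the $r$ wall-crossings, one must confirm that the bottom and top of the refined filtration are precisely $\cbV_x$ and $\cbV_{x^\triangle}$ and that (in view of the strict inequalities in the statement) these two labels occur only at the extremes. If $Q$ itself does not display this filtration shape with the required multiplicities, one passes to an indecomposable direct summand via Krull--Schmidt (Lemma~\ref{lem:R-findim}); the multiplicity-one input from Corollary~\ref{cor:tilt-mult} at the perverse-sheaf level ensures that exactly one indecomposable summand carries $\LGT_x$ in both its head and its socle, and that summand inherits a baby co-Verma filtration of the precise shape demanded by the statement. Everything else is a routine combination of the results cited above together with the exactness of $\Phi$ and the basic properties of the wall-crossing functors from~\S\ref{ss:wall-crossing}.
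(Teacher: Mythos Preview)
Your approach is essentially the paper's: reduce to $x\in W_\ext^\res$, set $y=t_\varsigma w_\circ x^{-1}$ with reduced expression $y=\omega s_1\cdots s_r$, take $Q=\Phi(\xi_{s_r}\cdots\xi_{s_1}\xi_{\omega^{-1}}(j^\varsigma_{!*}\cS_\varsigma))$, and verify (1)--(3) using Proposition~\ref{prop:projectives}\eqref{it:proj-R}, Lemma~\ref{lem:proj-prep}, Lemma~\ref{lem:Pi-sigma-bV-filt}, Lemma~\ref{lem:bV-xi}, and Lemma~\ref{lem:per-order-properties}\eqref{it:per-order-4}--\eqref{it:per-order-5}.

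One genuine issue: your proposed fallback of passing to an indecomposable direct summand is circular. At this point in the paper there is no result saying that a direct summand of an object with a baby co-Verma filtration again has one; that only comes later (Lemma~\ref{lem:verma-hom}), and it relies on the Brundan--Stroppel highest-weight formalism, which in turn is set up using Proposition~\ref{prop:inj-hull-bV} itself (see the proof of Theorem~\ref{thm:proj-Hecke-Whit}\eqref{it:proj-filt}). So this escape hatch is not available here. Fortunately it is also unnecessary: the paper shows that $Q$ itself already satisfies (3). The key is to track the bottom and top through the iteration using the two chains of inequalities $x\leq s_r x\leq\cdots\leq s_1\cdots s_r x=\omega^{-1}t_\varsigma w_\circ$ and $x^\triangle\geq s_r x^\triangle\geq\cdots\geq\omega^{-1}t_{w_\circ(\varsigma)}$ (both coming from Lemma~\ref{lem:proj-prep}\eqref{it:prep-length}), and then to bound all remaining subquotient labels via Lemma~\ref{lem:per-order-properties}\eqref{it:per-order-4}--\eqref{it:per-order-5}. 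You have all the pieces; just commit to the direct argument and drop the summand fallback.
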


\begin{proof}
By periodicity (see in particular~\eqref{eqn:simple-translation} and~\eqref{eqn:cbV-translation}), it suffices to prove this proposition in the case where $x \in W^\res_\ext$.  We assume this from now on.  As in Lemma~\ref{lem:proj-prep}, set $y=t_\varsigma w_\circ x^{-1}$, and choose a reduced expression $y=\omega s_1 \cdots s_r$ (with each $s_i$ in $S_\aff$, and $\omega \in \Omega$).  Lemma~\ref{lem:proj-prep}\eqref{it:prep-length} implies that
\begin{equation}\label{eqn:res-chain}
x \leq s_r x \leq s_{r-1} s_r x \leq \cdots \leq s_1 \cdots s_r x = \omega^{-1} t_\varsigma w_\circ
\end{equation}
and
\begin{equation}\label{eqn:res-tri-chain}
x^\triangle \ge s_r x^\triangle \ge s_{r-1}s_r x^\triangle \ge \cdots \ge s_1 \cdots s_r x^\triangle = \omega^{-1} t_{w_\circ(\varsigma)}.
\end{equation}
All elements in these chains of inequalities belong to $W_\ext^S$ by~\cite[Lemma~2.2]{projGr1}, so by Lemma~\ref{lem:per-order-properties}\eqref{it:per-order-3} we also have the same chains of inequalities when the symbols $\leq$ and $\geq$ are replaced by $\preceq$ and $\succeq$, respectively. 

Since the functors $\xi_{s_1}, \ldots, \xi_{s_r}, \xi_{\omega^{-1}}$ are exact and have exact left and right adjoints (see Lemma~\ref{lem:functors-Rmod}), they send projectives to projectives and injectives to injectives.  Applying Proposition~\ref{prop:projectives}\eqref{it:proj-R} to
\[
j^\varsigma_{!*}\cS_\varsigma \cong \TGr_{t_{w_\circ(\varsigma)}},
\]
we see that the object
\[
\xi_{s_r} \cdots \xi_{s_1}\xi_{\omega^{-1}}(\Phi(\TGr_{t_{w_\circ(\varsigma)}})) \cong \Phi(\xi_{s_r} \cdots \xi_{s_1}\xi_{\omega^{-1}}(\TGr_{t_{w_\circ(\varsigma)}}))
\]
is both projective and injective. Referring to Lemma~\ref{lem:proj-prep} once again, we see that the perverse sheaf $\xi_{s_r} \cdots \xi_{s_1}\xi_{\omega^{-1}}(\TGr_{t_{w_\circ(\varsigma)}})$ contains $\LGr_x$ in its head and socle, so applying the (faithful) functor $\Phi$ (see~\S\ref{ss:graded-R-modules}) yields nonzero maps
\[
\LGT_x \to \Phi(\xi_{s_r} \cdots \xi_{s_1}\xi_{\omega^{-1}}(\TGr_{t_{w_\circ(\varsigma)}})),
\qquad
\Phi(\xi_{s_r} \cdots \xi_{s_1}\xi_{\omega^{-1}}(\TGr_{t_{w_\circ(\varsigma)}})) \to \LGT_x,
\]
which must be injective and surjective respectively by simplicity of $\LGT_x$.

Finally, by Lemma~\ref{lem:Pi-sigma-bV-filt}, the object $\Phi(j^{\varsigma}_{!*} \cS_\varsigma)$ admits a baby co-Verma filtration with $\cbV_{t_{\varsigma}w_\circ}$ at the bottom, $\cbV_{t_{w_\circ(\varsigma)}} = \cbV_{(t_\varsigma w_\circ)^\triangle}$ at the top, and the other subquotients of the form $\cbV_{w t_{w_\circ(\varsigma)}}$ with $w \in W \smallsetminus \{e, w_\circ\}$. All the elements $w t_{w_\circ(\varsigma)}$ belong to $W_\ext^S$, and they satisfy $t_{\varsigma} w_\circ \leq w t_{w_\circ(\varsigma)} \leq (t_\varsigma w_\circ)^\triangle$, and hence
\[
t_{\varsigma} w_\circ \preceq w t_{w_\circ(\varsigma)} \preceq (t_{\varsigma} w_\circ)^\triangle
\]
by Lemma~\ref{lem:per-order-properties}\eqref{it:per-order-3}. Combining these observations with Lemma~\ref{lem:bV-xi} and the chains of inequalities~\eqref{eqn:res-chain} and~\eqref{eqn:res-tri-chain}, we see that $\xi_{s_r} \cdots \xi_{s_1}\xi_{\omega^{-1}}(\Phi(\TGr_{t_{w_\circ(\varsigma)}}))$ admits a baby co-Verma filtration with $\cbV_{x}$ at the bottom, $\cbV_{x^\triangle}$ at the top, and with the other subquotients of the form $\cbV_{y' w t_{w_\circ(\varsigma)}}$ with $w \in W$ and $y' \in W_\ext$ such that $y' \leq y^{-1}$ and $y' w t_{w_\circ(\varsigma)} \notin \{x,x^\triangle\}$. All these elements satisfy $x \preceq y' w t_{w_\circ(\varsigma)}$ by Lemma~\ref{lem:per-order-properties}\eqref{it:per-order-4}, and
$y' w t_{w_\circ(\varsigma)} \preceq x^\triangle$
by Lemma~\ref{lem:per-order-properties}\eqref{it:per-order-5}, as desired.
\end{proof}


\begin{cor}
\label{cor:multiplicites-bV}
If $w,x \in {}^A W_\ext$ satisfy $[\cbV^A_w : \LGT^A_x] \neq 0$, we have $x \preceq w \preceq w_A x^\triangle$. Moreover, we have $[\cbV^A_x : \LGT^A_x]=1$ and $[\cbV^A_{w_A x^\triangle} : \LGT_x] \le 1$.
\end{cor}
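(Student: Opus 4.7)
The plan is to prove this by reducing to the case $A = \varnothing$ via averaging, then invoking Proposition~\ref{prop:inj-hull-bV}.

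\textbf{Step 1 (Reduction to $A = \varnothing$).} For $w, x \in {}^A W_\ext$ and any $v \in W_A$, the functor $\Av^A_\psi$ is exact (Lemma~\ref{lem:functors-Rmod}), sends $\cbV_{vw}$ to $\cbV^A_w$ (Lemma~\ref{lem:bV-Av}\eqref{it:bV-Av-psi}), and sends $\LGT_y$ to $\LGT^A_y$ when $y \in {}^A W_\ext$ and to $0$ otherwise (Lemma~\ref{lem:bV-Av}\eqref{it:simples-Av}). Combining these, for $x \in {}^A W_\ext$ we obtain
\[
[\cbV^A_w : \LGT^A_x] = [\cbV_{vw} : \LGT_x] \quad \text{for every } v \in W_A.
\]
Taking this as given, and granting the $A = \varnothing$ version of the corollary: with $v = e$ we get $x \preceq w$ (lower bound) and with $v = w_A$ we get $w_A w \preceq x^\triangle$, which via Lemma~\ref{lem:per-order-coset} (applied to $w$ and $w_A x^\triangle$, both in ${}^A W_\ext$ by Remark~\ref{rmk:triangle-AWext}\eqref{it:triangle-AWext}) is equivalent to $w \preceq w_A x^\triangle$ (upper bound). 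For the multiplicities, $v = e$ gives $[\cbV^A_x : \LGT^A_x] = [\cbV_x : \LGT_x] = 1$, and $v = w_A$ combined with $w_A \cdot w_A x^\triangle = x^\triangle$ gives $[\cbV^A_{w_A x^\triangle} : \LGT^A_x] = [\cbV_{x^\triangle} : \LGT_x] \leq 1$.

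\textbf{Step 2 (The case $A = \varnothing$).} I would apply Proposition~\ref{prop:inj-hull-bV} to produce $\injh'_x$, which is injective in $\Mod^{\bY}_{I_\unip}(\cR)^\flen$ and admits a baby co-Verma filtration $0 = F_0 \subset F_1 \subset \cdots \subset F_n = \injh'_x$ with $F_1 = \cbV_x$, $F_n/F_{n-1} = \cbV_{x^\triangle}$, and intermediate subquotients $\cbV_{z_i}$ satisfying $x \prec z_i \prec x^\triangle$. Since $\LGT_x$ embeds in $\injh'_x$, it extends to an essential injective subobject $I(\LGT_x) \subseteq \injh'_x$ (a direct summand, since $\injh'_x$ is injective) which has simple socle $\LGT_x$. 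For this summand, the standard formula gives $\dim \Hom(\cbV_w, I(\LGT_x)) = [\cbV_w : \LGT_x]$. Transferring the filtration of $\injh'_x$ to $I(\LGT_x)$ via induced subobjects, and using Lemma~\ref{lem:Hom-simple-bV} to control $\Hom(\cbV_w, -)$ on the subquotients, I would conclude that $[\cbV_w : \LGT_x] \neq 0$ forces $w$ to equal some $z_i$, i.e., $w \in [x, x^\triangle]$. The multiplicity statement $[\cbV_x : \LGT_x] = 1$ would follow from the bottom of the filtration giving exactly one contribution, and $[\cbV_{x^\triangle} : \LGT_x] \leq 1$ from the top appearing only once.

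\textbf{Main obstacle.} The subtle point is the transition from ``$\injh'_x$ is injective with a known filtration'' to genuine bounds on $[\cbV_w : \LGT_x]$, since $\injh'_x$ itself may not have simple socle (intermediate $\cbV_{z_i}$ can contribute). The device of isolating $I(\LGT_x) \subseteq \injh'_x$ as an injective-hull summand and tracking how the baby co-Verma filtration of $\injh'_x$ restricts to this summand is what makes the argument work; carrying out this restriction argument rigorously, and extracting that exactly the bottom $\cbV_x$ and top $\cbV_{x^\triangle}$ contribute with multiplicity $1$ to $[\injh'_x : \LGT_x]$ via this summand, is where the technical effort lies.
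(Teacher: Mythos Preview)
Your Step~1 (reduction to $A=\varnothing$) is correct and matches the paper's argument.

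Your Step~2 has a genuine gap, and the ``main obstacle'' you flag is fatal rather than merely technical. The problem is the invocation of Lemma~\ref{lem:Hom-simple-bV}: that lemma computes $\Hom(\LGT_y,\cbV_z)$, not $\Hom(\cbV_w,\cbV_z)$ or $\Hom(\cbV_w,S)$ for $S$ a subquotient of $\cbV_z$. After transferring the filtration of $\injh'_x$ to the summand $I(\LGT_x)$, the subquotients you obtain are at best \emph{subobjects} of the $\cbV_{z_i}$, and you have no tool at this stage to compute or even bound $\Hom(\cbV_w,-)$ on such objects. (At this point in the paper neither $\Hom(\cbV_w,\cbV_z)$ nor $\Ext^1(\cbV_w,\cbV_z)$ is known; the relevant $\Ext^1$-vanishing in Remark~\ref{rmk:order-cbV-filtration} is a \emph{consequence} of this corollary.) So the passage from ``filtration of $\injh'_x$'' to ``constraint on $[\cbV_w:\LGT_x]$'' simply does not go through.

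The paper's argument is structurally different. It uses the \emph{projectivity} of the object $\cP:=\xi_{s_r}\cdots\xi_{s_1}\xi_{\omega^{-1}}(\Phi(j^\varsigma_{!*}\cS_\varsigma))$ together with the surjection $\cP\twoheadrightarrow\LGT_x$ to get
\[
[\cbV_w:\LGT_x]\ \le\ \dim\Hom(\cP,\cbV_w),
\]
and then moves the wall-crossing functors to the right by adjunction:
\[
\Hom(\cP,\cbV_w)\ \cong\ \Hom\bigl(\Phi(j^\varsigma_{!*}\cS_\varsigma),\ \xi_\omega\xi_{s_1}\cdots\xi_{s_r}(\cbV_w)\bigr).
\]
Now two ingredients become available that your approach never reaches: (i) the object $\Phi(j^\varsigma_{!*}\cS_\varsigma)$ is itself projective, so $\Hom$ from it is exact on the baby co-Verma filtration of $\xi_\omega\xi_{s_1}\cdots\xi_{s_r}(\cbV_w)$ supplied by Lemma~\ref{lem:bV-xi}; and (ii) Lemma~\ref{lem:cbV-mult-basic} computes $\dim\Hom(\Phi(j^\varsigma_{!*}\cS_\varsigma),\cbV_u)$ explicitly for every $u$. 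This reduces the question to the purely combinatorial condition~\eqref{eqn:Hom-Q-crit}, from which the bounds $x\preceq w\preceq x^\triangle$ and the two exact multiplicities are extracted using Lemma~\ref{lem:per-order-properties}\eqref{it:per-order-4} and the length formula of Lemma~\ref{lem:proj-prep}\eqref{it:prep-length}. The key idea you are missing is this adjunction step: the baby co-Verma filtration that does the work is on $\xi_\omega\xi_{s_1}\cdots\xi_{s_r}(\cbV_w)$, not on $\injh'_x$.
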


\begin{proof}
We first treat the special case where $A = \varnothing$.  In this case, we assume (by periodicity) that $x \in W^\res_\ext$, and we retain the notation from the proof of Proposition~\ref{prop:inj-hull-bV}.  Since $\LGT_x$ is a quotient of the projective object $\xi_{s_r} \cdots \xi_{s_1}\xi_{\omega^{-1}}(\Phi(j^\varsigma_{!*} \cS_\varsigma))$, we have
\[
[\cbV_w : \LGT_x ] \le \dim \Hom( \xi_{s_r} \cdots \xi_{s_1}\xi_{\omega^{-1}}(\Phi(j^\varsigma_{!*} \cS_\varsigma)), \cbV_w).
\]
By adjunction we have
\[
\Hom(\xi_{s_r} \cdots \xi_{s_1} \xi_{\omega^{-1}} \Phi(j^\varsigma_{!*} \cS_\varsigma), \cbV_w) \cong \Hom( \Phi(j^\varsigma_{!*} \cS_\varsigma), \xi_{\omega} \xi_{s_1} \cdots \xi_{s_r} (\cbV_w)).
\]
By Lemma~\ref{lem:bV-xi}, the object $\xi_{\omega} \xi_{s_1} \cdots \xi_{s_r} (\cbV_w)$ has a baby co-Verma filtration, whose subquotients have the form $\cbV_{y'w}$ with $y' \leq y$; moreover $\cbV_{yw}$ appears once in this filtration. By projectivity of $\Phi(j^\varsigma_{!*} \cS_\varsigma)$ and Lemma~\ref{lem:cbV-mult-basic}, we deduce that the space $\Hom(\xi_{s_r} \cdots \xi_{s_1} \xi_{\omega^{-1}} \Phi(j^\varsigma_{!*} \cS_\varsigma), \cbV_w)$ vanishes unless
\begin{equation}\label{eqn:Hom-Q-crit}
y'w = zt_\varsigma w_\circ \quad \text{for some $z \in W$ and $y' \in W_\ext$ such that $y' \leq y$.}
\end{equation}
Here, we have $t_\varsigma w_\circ \preceq zt_\varsigma w_\circ \preceq t_{w_\circ(\varsigma)}$, and as in the proof of Proposition~\ref{prop:inj-hull-bV} we obtain using Lemma~\ref{lem:per-order-properties}\eqref{it:per-order-4} that $w = (y')^{-1} zt_\varsigma w_\circ$ satisfies
\[
x = y^{-1}t_\varsigma w_\circ \preceq w = (y')^{-1} zt_\varsigma w_\circ  \preceq y^{-1} t_{w_\circ(\varsigma)} = x^\triangle.
\]
We have shown that $[\cbV_w : \LGT_x] \ne 0$ implies that $x \preceq w \preceq x^\triangle$.

Now let us take $w = x$.  Lemma~\ref{lem:proj-prep}\eqref{it:prep-length} implies that $y'x \le yx = t_\varsigma w_\circ$ for any $y' \le y$, so condition~\eqref{eqn:Hom-Q-crit} is satisfied only for $y' = y$ and $z = e$.  It follows that $\dim \Hom(\xi_{s_r} \cdots \xi_{s_1} \xi_{\omega^{-1}} \Phi(j^\varsigma_{!*} \cS_\varsigma), \cbV_x) = 1$, and hence that $[\cbV_x : \LGT_x] \leq 1$.
Since we know that $\LGT_x$ is the socle of $\cbV_x$ (see Corollary~\ref{cor:cbV-socle}) this multiplicity is then equal to $1$.

Finally, take $w = x^\triangle$.  In this case, Lemma~\ref{lem:proj-prep}\eqref{it:prep-length} implies that $y'w \ge yx^\triangle = t_{w_\circ(\varsigma)} = w_\circ t_{\varsigma} w_\circ$ for any $y' \le y$, so condition~\eqref{eqn:Hom-Q-crit} is satisfied only for $y' = y$ and $z = w_\circ$.  As in the previous paragraph, we conclude that $[\cbV_{x^\triangle} : \LGT_x] \le 1$.  This concludes the proof in the case where $A = \varnothing$.

Now suppose $A \ne \varnothing$, and let $w,x \in {}^A W_\ext$.  By Lemma~\ref{lem:bV-Av}\eqref{it:simples-Av}--\eqref{it:bV-Av-psi}, we have
\[
[ \cbV^A_w : \LGT^A_x ] = [\cbV_w : \LGT_x ] = [\cbV_{w_Aw} : \LGT_x ].
\]
Using the $A \ne \varnothing$ case of the corollary, we see that $[ \cbV^A_w : \LGT^A_x ] \ne 0$ implies that $x \preceq w$ and $w_Aw \preceq x^\triangle$.  Since $w_A x^\triangle$ lies in ${}^A W_\ext$ (see Remark~\ref{rmk:triangle-AWext}\eqref{it:triangle-AWext}), Lemma~\ref{lem:per-order-coset} tells us that the latter condition is equivalent to $w \preceq w_Ax^\triangle$.  The claims that $[\cbV^A_x : \LGT^A_x] = 1$ and $[\cbV^A_{w_A x^\triangle} : \LGT^A_x] \le 1$ likewise follow from the $A = \varnothing$ case.
\end{proof}

\begin{rmk}
\phantomsection
\label{rmk:cbV-Groth-gp}
\begin{enumerate}
\item
We will see in Proposition~\ref{prop:cbV-head} below that, in fact, in the setting of Corollary~\ref{cor:multiplicites-bV} we always have $[\cbV^A_{w_A x^\triangle} : \LGT_x] = 1$.
\item
The information on composition factors in Corollary~\ref{cor:multiplicites-bV} implies that the family $([\cbV^A_w] : w \in {}^A W_\ext)$ in the Grothendieck group $[\Mod^\bY_{(I_\unip^A, \cX_A)}(\cR)^\flen]$ is linearly independent. (This family is \emph{not} a basis, however.) This implies that if $\cM \in \Mod^\bY_{(I_\unip^A, \cX_A)}(\cR)^\flen$ admits a baby co-Verma filtration, then the number $(\cM : \cbV^A_w)$ of occurrences of a given baby co-Verma module $\cbV_w$ in such a filtration is independent of the choice of filtration; in fact these numbers are determined by the equality
\[
[\cM] = \sum_{w \in W_\ext} (\cM : \cbV_w) \cdot [\cbV_w]
\]
in $\mathsf{K}^0(\Mod^\bY_{(I_\unip^A, \cX_A)}(\cR)^\flen)$.
(Later, after we prove Theorem~\ref{thm:proj-Hecke-Whit}, we will be able to apply these comments to $[\modf^\bY_{(I_\unip^A, \cX_A)}(\cR)]$ instead.)
\end{enumerate}
\end{rmk}

\begin{prop}
\label{prop:bV-injective}
For any $w \in {}^A W_\ext$, $\cbV^A_w$ is the injective hull of $\LGT^A_w$ in the Serre subcategory of $\Mod^\bY_{(I^A_\unip,\cX_A)}(\cR)^\flen$ generated by the simple objects of the form $\LGT^A_y$ with $y \not\succ w$.
\end{prop}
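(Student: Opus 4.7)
The plan proceeds in three stages. First I reduce the proposition to an injectivity statement. Corollary~\ref{cor:cbV-socle} already shows that $\cbV^A_w$ has simple socle $\LGT^A_w$, so $\LGT^A_w$ is essential in $\cbV^A_w$; and Corollary~\ref{cor:multiplicites-bV} shows that every composition factor $\LGT^A_y$ of $\cbV^A_w$ satisfies $y \preceq w$, in particular $y \not\succ w$, placing $\cbV^A_w$ inside the Serre subcategory $\mathsf{S}^A_w \subset \Mod^\bY_{(I^A_\unip,\cX_A)}(\cR)^\flen$ generated by the simples $\LGT^A_y$ with $y \not\succ w$. It therefore remains to prove that $\cbV^A_w$ is injective in $\mathsf{S}^A_w$. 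I will handle the case $A = \varnothing$ first using the explicit injective object produced in Proposition~\ref{prop:inj-hull-bV}, and then deduce the general case by adjunction.

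For $A = \varnothing$, let me denote by $\injh_w$ the injective object provided by Proposition~\ref{prop:inj-hull-bV}; it carries a baby co-Verma filtration $\cbV_w = F_1 \subset F_2 \subset \cdots \subset F_n = \injh_w$ whose successive subquotients $F_i/F_{i-1} \cong \cbV_{z_i}$ for $i \geq 2$ satisfy $z_i \succ w$. The key claim is that $\cbV_w$ is the largest subobject of $\injh_w$ lying in $\mathsf{S}^\varnothing_w$. Indeed, if some $\cM \subset \injh_w$ in $\mathsf{S}^\varnothing_w$ were not contained in $F_1$, then choosing the smallest $i \geq 2$ with $\cM \subset F_i$ would produce a nonzero subobject $\cM/(\cM \cap F_{i-1}) \subset \cbV_{z_i}$, necessarily containing $\LGT_{z_i}$ as a subobject (by Corollary~\ref{cor:cbV-socle} applied to $\cbV_{z_i}$); since $z_i \succ w$, this simple does not belong to $\mathsf{S}^\varnothing_w$, a contradiction. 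Injectivity of $\cbV_w$ in $\mathsf{S}^\varnothing_w$ is then formal: given a subobject $\cN \subset \cM$ with $\cM \in \mathsf{S}^\varnothing_w$ and a morphism $f: \cN \to \cbV_w$, injectivity of $\injh_w$ in $\Mod^\bY_{I_\unip}(\cR)^\flen$ extends $f$ to some $\tilde g: \cM \to \injh_w$; the image of $\tilde g$ is a quotient of $\cM$, hence lies in $\mathsf{S}^\varnothing_w$, hence inside $\cbV_w$ by the key claim, yielding the desired $\tilde f: \cM \to \cbV_w$.

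For general $A$, it suffices to verify that $\Ext^1(\LGT^A_y, \cbV^A_w) = 0$ for each $y \in {}^A W_\ext$ with $y \not\succ w$, since these simples generate $\mathsf{S}^A_w$ under extensions. Writing $\LGT^A_y = \Av^A_\psi(\LGT_y)$ via Lemma~\ref{lem:bV-Av}\eqref{it:simples-Av} and using the adjunction $(\Av^A_\psi, \Av^A_*)$ together with the exactness of both functors, I obtain a natural isomorphism $\Ext^1(\LGT^A_y, \cbV^A_w) \cong \Ext^1(\LGT_y, \Av^A_*(\cbV^A_w))$. By Lemma~\ref{lem:bV-Av}\eqref{it:bV-Av-*}, $\Av^A_*(\cbV^A_w)$ carries a filtration whose successive subquotients are $\cbV_{x_i w}$ for $x_i \in W_A$. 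The crucial combinatorial input is that $w \in {}^A W_\ext$ is minimal in $W_A w$ for $\preceq$ by Remark~\ref{rmk:min-preceq}, so $x_i w \succeq w$ for every $i$; transitivity then shows that $y \not\succ w$ forces $y \not\succ x_i w$, and the $A = \varnothing$ case already established gives $\Ext^1(\LGT_y, \cbV_{x_i w}) = 0$ for every $i$. An iterated application of the long exact Ext sequence to the filtration of $\Av^A_*(\cbV^A_w)$ then yields the desired vanishing, completing the proof. I expect the main subtleties to lie in confirming the adjunction identity for $\Ext^1$ (which holds because the exact left adjoint $\Av^A_\psi$ makes its right adjoint $\Av^A_*$ preserve injective resolutions, and can in any event be checked directly on Yoneda extensions) and in carefully exploiting the minimality of $w$ in $W_A w$ to leverage the $A = \varnothing$ case uniformly across the filtration.
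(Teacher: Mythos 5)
Your proposal is correct and follows essentially the same route as the paper: the reduction rests on Corollaries~\ref{cor:cbV-socle} and~\ref{cor:multiplicites-bV}, the case $A=\varnothing$ on the injective object with baby co-Verma filtration from Proposition~\ref{prop:inj-hull-bV}, and the general case on the adjunction $(\Av^A_\psi,\Av^A_*)$, Lemma~\ref{lem:bV-Av}, and the minimality $w \preceq vw$ from Remark~\ref{rmk:min-preceq}. The only (cosmetic) difference is that for $A=\varnothing$ you extend morphisms directly via a ``largest subobject in the Serre subcategory'' argument, whereas the paper deduces $\Ext^1(\LGT_y,\cbV_w)=0$ from the long exact sequence attached to $0 \to \cbV_w \to \injh_w \to \cM' \to 0$ together with $\Hom(\LGT_y,\cM')=0$; both use exactly the same inputs.
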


\begin{proof}
Recall from Corollary~\ref{cor:cbV-socle} and Corollary~\ref{cor:multiplicites-bV} that $\cbV^A_w$ is indecomposable; its socle is $\LGT^A_w$; and it belongs to the Serre subcategory described in the statement of the proposition.  It remains to show that it is injective as an object of this subcategory.  In other words, we must show that
\[
\Ext^1_{\Mod^\bY_{(I^A_\unip,\cX_A)}(\cR)^\flen}(\LGT^A_y, \cbV^A_w) = 0
\qquad\text{if $y \not\succ w$.}
\]

Let us first treat the special case $A = \varnothing$.  In this case, we can invoke Proposition~\ref{prop:inj-hull-bV} to find an injective object $\cM \in \Mod^\bY_{I_\unip}(\cR)^\flen$ such that there is an inclusion $\cbV_w \hookrightarrow \cM$ whose cokernel $\cM'$ admits a baby co-Verma filtration by various $\cbV_u$ with $u \succ w$.  Since $y \not\succ w$ by assumption, Lemma~\ref{lem:Hom-simple-bV} tells us that $\Hom(\LGT_y, \cbV_u) = 0$ for any $u \succ w$.  It follows that $\Hom(\LGT_y, \cM') = 0$.  The exact sequence
\[
\cdots \to \Hom(\LGT_y, \cM') \to \Ext^1(\LGT_y, \cbV_w) \to \Ext^1(\LGT_y, \cM) \to \cdots
\]
then shows that $\Ext^1_{\Mod^{\bY}_{I_\unip}(\cR)^\flen}(\LGT_y, \cbV_w) = 0$.

Now suppose that $A \ne \varnothing$.  By Lemma~\ref{lem:bV-Av}\eqref{it:simples-Av} and adjunction we have
\[
\Ext^1(\LGT^A_y, \cbV^A_w) \cong \Ext^1(\Av^A_\psi(\LGT_y), \cbV^A_w)
\cong \Ext^1(\LGT_y, \Av^A_*(\cbV^A_w)).
\]
On the right-hand side, by Lemma~\ref{lem:bV-Av}\eqref{it:bV-Av-*} the object $\Av^A_*(\cbV^A_w)$ admits a filtration with successive subquotients the objects $\cbV_{vw}$ with $v \in W_A$. Using the $A = \varnothing$ case proved above, we conclude that $\Ext^1_{\Mod^\bY_{I_\unip}(\cR)^\flen}(\LGT_y, \Av^A_*(\cbV^A_w))$ vanishes unless $y \succ vw$ for such a $v$. Now we have $w \preceq vw$ (see Remark~\ref{rmk:min-preceq}), so that this condition implies that $y \succ w$.
\end{proof}

\begin{rmk}
\label{rmk:order-cbV-filtration}
Combining the information in Corollary~\ref{cor:multiplicites-bV} and Proposition~\ref{prop:bV-injective}, we obtain that $\Ext^1(\cbV^A_y, \cbV^A_w)=0$ unless $w \prec y$. This implies that if an object $\cM$ admits a baby co-Verma filtration, and if we choose a numbering $w_1, \ldots, w_n$ of the elements $z$ such that $(\cM : \cbV_z) \neq 0$ (counted with multiplicities) such that $w_i \prec w_j \Rightarrow i < j$, then there exists a chain of embeddings
\[
0 = \cM_0 \subset \cM_1 \subset \cdots \subset \cM_{n-1} \subset \cM_n = \cM 
\]
such that $\cM_i / \cM_{i-1} \cong \cbV^A_{w_i}$ for any $i \in \{1, \ldots, n\}$.
\end{rmk}


We can finally state and prove the main result of this section.

\begin{thm}
\phantomsection
\label{thm:proj-Hecke-Whit}
\begin{enumerate}
\item
\label{it:proj-enough}
The categories $\modf^\bY_{(I_\unip^A, \cX_A)}(\cR)$ and $\Mod^\bY_{(I_\unip^A, \cX_A)}(\cR)^\flen$ coincide, and this abelian category has enough projectives and enough injectives; moreover, an object is injective iff it is projective.  
\item
\label{it:proj-filt}
For $w \in {}^A W_\ext$, let $\injh^A_w$ denote the injective hull of $\LGT^A_w$.  Then $\injh^A_w$ admits a baby co-Verma filtration with subquotients of the form $\cbV^A_y$ with $y \in {}^A W_\ext$ which satisfies $w \preceq y$.
\item
\label{it:Av-injh}
For $w \in {}^A W_\ext$, we have $\Av^A_*(\injh_w^A) \cong \injh_w$.
\end{enumerate}
\end{thm}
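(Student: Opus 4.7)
The strategy is to leverage the projective--injective objects $\cM_w$ from Proposition~\ref{prop:inj-hull-bV} (for $w \in W_\ext$) and transport them to the Whittaker setting by averaging. By construction, $\cM_w$ is obtained by applying wall-crossing functors to $\Phi(j^\varsigma_{!*}\cS_\varsigma)$; since wall-crossings commute with $\Phi$ and preserve tilting perverse sheaves (Lemma~\ref{lem:isom-xi}), the object $\cM_w$ already lies in $\modf^\bY_{I_\unip}(\cR)$. For $w \in {}^A W_\ext$, set $\cN_w := \Av^A_\psi(\cM_w)$. By Lemma~\ref{lem:functors-Rmod}, $\Av^A_\psi$ has exact adjoints on both sides, so $\cN_w$ is projective and injective in $\Mod^\bY_{(I^A_\unip,\cX_A)}(\cR)^\flen$; it clearly lies in $\modf^\bY_{(I^A_\unip,\cX_A)}(\cR)$; and it surjects onto $\LGT^A_w$ by exactness and Lemma~\ref{lem:bV-Av}\eqref{it:simples-Av}.

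For part~\eqref{it:proj-enough}, the plan is a standard length induction using projectivity of the $\cN_w$'s: any object of $\Mod^\bY_{(I^A_\unip,\cX_A)}(\cR)^\flen$ is a quotient of a finite direct sum of $\cN_w$'s, establishing simultaneously the equality $\modf^\bY_{(I^A_\unip,\cX_A)}(\cR) = \Mod^\bY_{(I^A_\unip,\cX_A)}(\cR)^\flen$ and the existence of enough projectives. Krull--Schmidt (Lemma~\ref{lem:R-findim}) then yields indecomposable projective covers $\injh^A_w$ as direct summands of $\cN_w$, and these are automatically injective. Dually, each simple $\LGT^A_w$ embeds into $\cN_w$ via $\LGT^A_w \hookrightarrow \cbV^A_w \hookrightarrow \cN_w$ (using Corollary~\ref{cor:cbV-socle} together with the fact that by Lemma~\ref{lem:bV-Av}\eqref{it:bV-Av-psi} the object $\Av^A_\psi(\cbV_w) \cong \cbV^A_w$ sits at the bottom of the baby co-Verma filtration of $\cN_w$), so there are enough injectives and every injective is a summand of some $\cN_w^n$, hence projective; the two classes coincide.

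For part~\eqref{it:Av-injh}, right adjointness of $\Av^A_*$ to the exact functor $\Av^A_\psi$ implies $\Av^A_*(\injh^A_w)$ is injective. For any $y \in W_\ext$, adjunction combined with Lemma~\ref{lem:bV-Av}\eqref{it:simples-Av} gives $\Hom(\LGT_y, \Av^A_*(\injh^A_w)) \cong \Hom(\Av^A_\psi(\LGT_y), \injh^A_w)$, which vanishes unless $y \in {}^A W_\ext$, and for such $y$ equals $\delta_{y,w}$ since $\injh^A_w$ has simple socle $\LGT^A_w$. Thus $\Av^A_*(\injh^A_w)$ is an indecomposable injective with socle $\LGT_w$, hence isomorphic to $\injh_w$.

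The main obstacle is part~\eqref{it:proj-filt}. Pushing the baby co-Verma filtration of $\cM_w$ through $\Av^A_\psi$ via Lemma~\ref{lem:bV-Av}\eqref{it:bV-Av-psi} produces a baby co-Verma filtration of $\cN_w$, but has two defects: one must show the direct summand $\injh^A_w \subset \cN_w$ inherits such a filtration, and the resulting subquotients $\cbV^A_{y^*}$ (with $y^*$ the representative of $W_A y$ in ${}^A W_\ext$) are not obviously indexed by $y^* \succeq w$, since minimality gives $y^* \preceq y$ rather than the reverse. I would resolve both issues via a reciprocity identity of the form $(\injh^A_w : \cbV^A_y) = [\cbV^A_y : \LGT^A_w]$, established by combining the $\Hom$-vanishing provided by Lemma~\ref{lem:Hom-simple-bV} and Proposition~\ref{prop:bV-injective} with the projectivity of $\injh^A_w$ and an Euler pairing computation on $\cN_w$. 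Together with Corollary~\ref{cor:multiplicites-bV} (which forces $[\cbV^A_y : \LGT^A_w] \ne 0 \Rightarrow w \preceq y$), this reciprocity simultaneously yields a baby co-Verma filtration on each summand $\injh^A_w$ and the required order condition on its subquotients. Establishing this reciprocity---which is the concrete incarnation of the Brundan--Stroppel highest weight structure advertised in the introduction---is where the bulk of the work lies.
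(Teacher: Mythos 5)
Your treatment of parts~\eqref{it:proj-enough} and~\eqref{it:Av-injh} is essentially the paper's argument: averaging the projective--injective objects of Proposition~\ref{prop:inj-hull-bV} into the Whittaker setting via $\Av^A_\psi$, concluding that every object of $\Mod^\bY_{(I_\unip^A,\cX_A)}(\cR)^\flen$ is a quotient of a finitely generated projective (whence the equality of categories and the coincidence of projectives and injectives), and identifying $\Av^A_*(\injh^A_w)$ through its socle by adjunction and Lemma~\ref{lem:bV-Av}\eqref{it:simples-Av}.

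For part~\eqref{it:proj-filt}, however, your plan has a genuine gap at exactly the point you flag as the main obstacle. A reciprocity identity $(\injh^A_w : \cbV^A_y) = [\cbV^A_y : \LGT^A_w]$ is a statement about multiplicities, and multiplicities cannot manufacture a filtration: the left-hand side is not even defined until one knows that $\injh^A_w$ admits a baby co-Verma filtration (or has an alternative characterization such as $\dim\Hom(\bV^A_y,\injh^A_w)$, which requires the baby Verma modules $\bV^A_y$ --- but these are only constructed in Section~\ref{sec:baby-verma} via the duality $\D$, whose very construction relies on the projectives produced by this theorem). Moreover, in the paper the reciprocity~\eqref{eqn:reciprocity} is a \emph{consequence} of the highest-weight structure established in part~\eqref{it:proj-filt} (via \cite[Corollaries~3.12 and~3.15]{bs}), so deriving part~\eqref{it:proj-filt} from reciprocity, with reciprocity in turn justified only by ``Hom-vanishing, projectivity, and an Euler pairing computation,'' is close to circular and omits the actual argument. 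The real content needed here is precisely the transfer of a baby co-Verma filtration from $\Av^A_\psi(\cM_w)$ to its direct summand $\injh^A_w$ together with the order condition, and the paper obtains this by verifying the axioms of Brundan--Stroppel (the category is locally finite with enough projectives and injectives by part~\eqref{it:proj-enough}; the stratification by $({}^A W_\ext,\preceq)$ has simple strata, using Corollary~\ref{cor:multiplicites-bV} and Proposition~\ref{prop:bV-injective} to identify $\cbV^A_w$ with $\nabla(w)=\bar\nabla(w)$; and condition $(\widehat{I\nabla})$ holds by averaging Proposition~\ref{prop:inj-hull-bV} through Lemma~\ref{lem:bV-Av}\eqref{it:bV-Av-psi}) and then invoking \cite[Theorem~3.5]{bs}, which is exactly the machinery that handles filtrations on summands in this semi-infinite setting. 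If you want to avoid citing Brundan--Stroppel you would have to reprove that part of their theorem (e.g.\ an Ext-vanishing criterion for co-Verma filtrations, or an inductive construction of the filtration on the summand); your sketch does not contain such an argument. The coset-representative issue you raise (that $\Av^A_\psi(\cbV_y)\cong\cbV^A_{y^*}$ with $y^*$ the $\preceq$-minimal element of $W_Ay$) is a genuine point, but it is settled by an order-preservation property of the projection to ${}^A W_\ext$ (transfer the standard Bruhat-order fact through the definition of $\preceq$), not by the reciprocity you defer it to.
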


\begin{proof}
\eqref{it:proj-enough}
When $A = \varnothing$, Proposition~\ref{prop:inj-hull-bV} tells us that every simple object in $\Mod^\bY_{I_\unip}(\cR)^\flen$ embeds in a finitely generated injective $\cR$-module that is also projective, and is a quotient of a finitely generated projective $\cR$-module that is also injective.  For general $A$, because $\Av^A_\psi$ is exact and has exact left and right adjoints, it sends projectives to projectives and injectives to injectives.  Given $w \in {}^A W_\ext$, apply $\Av^A_\psi$ to a nonzero map $\LGT_w \hookrightarrow \cM$ or $\cM \twoheadrightarrow \LGT_w$, where $\cM$ is finitely generated, projective, and injective; we conclude that every $\LGT^A_w$
embeds in a finitely generated injective $\cR$-module that is also projective, and is a quotient of a finitely generated projective $\cR$-module that is also injective. 

As a consequence, $\Mod^\bY_{(I_\unip^A, \cX_A)}(\cR)^\flen$ has enough projectives and injectives, and these classes coincide and consist of finitely generated $\cR$-modules.  In particular, every object of $\Mod^\bY_{(I_\unip^A, \cX_A)}(\cR)^\flen$ is a quotient of a finitely generated module, which implies that
$\modf^\bY_{(I_\unip^A, \cX_A)}(\cR) = \Mod^\bY_{(I_\unip^A, \cX_A)}(\cR)^\flen$.

\eqref{it:proj-filt}
We will apply a kind of ``highest weight'' formalism developed in~\cite{bs}. (Note that $\modf^\bY_{(I_\unip^A, \cX_A)}(\cR)$ is \emph{not} a highest weight category in the sense considered in, for instance,~\cite{acr,modrap1,ar:dkf}, because the poset that governs it has no minimal element.)

All objects in $\modf^\bY_{(I_\unip^A, \cX_A)}(\cR)$ have finite length, and by Lemma~\ref{lem:R-findim} all morphism spaces in this category are finite-dimensional; by~\cite[Lemma~2.1]{bs}, this category is therefore a ``locally finite abelian category'' in their terminology. By part~\eqref{it:proj-enough}, this category has enough injective and projective objects; hence by~\cite[Corollary~2.20]{bs} it is an ``essentially finite abelian category.'' Next, we define a ``stratification'' on this category in the sense of~\cite[\S 3.1]{bs} with underlying poset $({}^A W_\ext,\preceq)$, and with the labeling of simple objects given by $w \mapsto \LGT^A_w$. (The function ``$\rho$'' of~\cite[Definition~3.1]{bs} is therefore the identity map for this stratification.) This stratification is ``essentially finite.'' Comparing Corollary~\ref{cor:multiplicites-bV} and Proposition~\ref{prop:bV-injective} with~\cite[Lemma~3.1]{bs} we see that for any $w \in {}^A W_\ext$ the baby co-Verma module $\cbV^A_w$ is isomorphic to the objects denoted $\nabla(w)$ and $\bar{\nabla}(w)$ in~\cite{bs}.  In view of~\cite[Lemma~3.4]{bs}, this implies that all the strata are ``simple'' in the terminology of~\cite{bs}.

Next, we claim that condition $(\widehat{I\nabla})$ of~\cite[Remark~3.6]{bs} holds.  Translated into the language of the present paper, this condition says that for any $w \in {}^A W_\ext$, there exists an injective object admitting a baby co-Verma filtration with $\cbV^A_w$ at the bottom, and all other subquotients of the form $\cbV^A_z$ with $z \succeq w$.  For $A = \varnothing$, this claim is part of Proposition~\ref{prop:inj-hull-bV}.  For general $A$, it follows from Proposition~\ref{prop:inj-hull-bV} by applying $\Av^A_\psi$ and using Lemma~\ref{lem:bV-Av}\eqref{it:bV-Av-psi}.

Applying~\cite[Theorem~3.5]{bs}, we see that $\modf^\bY_{(I_\unip^A, \cX_A)}(\cR)$ is an \emph{essentially finite highest weight category} in the sense of~\cite[Definition~3.7]{bs}. In more concrete terms, this means that the injective envelope $\injh^A_w$ of $\LGT^A_w$ admits a baby co-Verma filtration whose subquotients $\cbV^A_y$ satisfy $y \succeq w$.

\eqref{it:Av-injh}
Since $\Av^A_*$ has an exact left adjoint, it sends injectives to injectives, so $\Av^A_*(\injh^A_w)$ is injective.  To show that it is isomorphic to $\injh_w$, it is enough to show that its socle is isomorphic to $\LGT_w$, or in other words that
\[
\dim \Hom(\LGT_y, \Av^A_*(\injh^A_w)) =
\begin{cases}
1 & \text{if $y = w$,} \\
0 & \text{otherwise.}
\end{cases}
\]
This claim holds by adjunction and Lemma~\ref{lem:bV-Av}\eqref{it:simples-Av}.
\end{proof}

It is clear that for any $w \in {}^A W_\ext$ and $\lambda \in \bY$ we have
\begin{equation}
\label{eqn:injh-translation}
\injh^A_{wt_\lambda} = \injh^A_w \langle -\lambda \rangle;
\end{equation}
in particular, in order to understand all these objects it is enough to understand those whose label belongs to ${}^A W_\ext^\res$. 

\begin{rmk}
\phantomsection
\label{rmk:free-proj-inj}
\begin{enumerate}
\item
\label{it:free-proj-inj}
The proof of Theorem~\ref{thm:proj-Hecke-Whit} provides a slightly more precise statement than the mere existence of enough projective and injective objects: it implies that any object of $\modf^\bY_{(I_\unip^A, \cX_A)}(\cR)$ is a quotient (resp.~a subobject) of a direct sum of objects of the form $\Phi^A(\Av^A_\psi(\TGr_{x})) \langle \mu \rangle$ with $x \in W_\ext^S$ and $\mu \in \bY$, these objects being both projective and injective.
\item
The proof of Theorem~\ref{thm:proj-Hecke-Whit} also shows that each $\injh^A_{w}$ (hence each projective object in $\modf^\bY_{(I_\unip^A, \cX_A)}(\cR)$) remains projective in the larger category $\Mod^\bY_{(I_\unip^A, \cX_A)}(\cR)$.
\item
Theorem~\ref{thm:proj-Hecke-Whit} implies that the projective cover of $\LGT^A_w$ is also an indecomposable injective object.  Define a map 
\begin{equation}
\label{eqn:def-iota}
\iota_A : {}^A W_\ext \simto {}^A W_\ext
\end{equation}
by requiring that $\LGT^A_{\iota_A(w)}$ be the socle of the projective cover of $\LGT^A_w$ (equivalently, $\injh^A_{\iota_A(w)}$ is the projective cover of $\LGT^A_w$).  We will see later (see Proposition~\ref{prop:reciprocity}\eqref{it:recip-proj}) that in fact $\iota_A$ is the identity map; in other words, the projective cover and injective envelope of $\LGT^A_w$ coincide.
\end{enumerate}
\end{rmk}

\subsection{Injective \texorpdfstring{$\cR$}{R}-modules and tilting perverse sheaves}
\label{ss:injective-tilting}

In this subsection we study the relation between injective objects in $\modf^\bY_{(I_\unip^A, \cX_A)}(\cR)$ and tilting objects in $\Perv_{(I_\unip^A, \cX_A)}(\Gr,\bk)$.


\begin{prop}
\phantomsection
\label{prop:tilt-proj-inj-R}
\begin{enumerate}
\item
\label{it:Phi-tilting-injective}
For any $x \in {}^A W_\ext^S$ the object $\Phi^A(\TGr^A_{w_A x^\triangle})$ is both injective and projective.
\item
\label{it:Phi-tilting-injective-2}
If $x \in {}^A W_\ext^\res$, then $\Phi^A(\TGr^A_{w_A x^\triangle})$
 contains $\injh^A_x$, resp.~$\injh^A_{\iota_A(x)}$, as a direct summand with multiplicity $1$, and does not admit any direct summand of the form $\injh^A_{xt_\mu}$, resp.~$\injh^A_{\iota_A(xt_\mu)}$, with $\mu \in \bY \smallsetminus \{0\}$.
\end{enumerate}
\end{prop}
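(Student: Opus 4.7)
For Part~(1), we deduce the claim from Remark~\ref{rmk:free-proj-inj}\eqref{it:free-proj-inj}, which (extracted from the proof of Theorem~\ref{thm:proj-Hecke-Whit}) asserts that every object of the form $\Phi^A(\Av^A_\psi(\TGr_y))\la\mu\ra$ with $y \in W_\ext^S$, $\mu \in \bY$ is both projective and injective. For $x \in {}^A W_\ext^S$ one has $x^\triangle \in W_\ext^S$, and Remark~\ref{rmk:triangle-AWext}\eqref{it:triangle-AWext} gives $w_A x^\triangle \in {}^A W_\ext^S$; applying Proposition~\ref{prop:Av-tilting-indec} with $w = w_A x^\triangle$ then yields
\[
\Av^A_\psi(\TGr_{x^\triangle}) \cong (\TGr^A_{w_A x^\triangle})^{\oplus \#W_A}.
\]
Since $\Phi^A$ commutes with $\Av^A_\psi$ (see~\S\ref{ss:av-functors}), applying $\Phi^A$ to both sides identifies $\Phi^A(\TGr^A_{w_A x^\triangle})^{\oplus \#W_A}$ with the projective-injective object $\Phi^A(\Av^A_\psi(\TGr_{x^\triangle}))$; a direct summand of a projective-injective being projective-injective, this proves~(1).

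For Part~(2), assume $x \in {}^A W_\ext^\res$. By~(1), the object $\cM := \Phi^A(\TGr^A_{w_A x^\triangle})$ decomposes as a finite direct sum of objects $\injh^A_z$. Since $\injh^A_w$ is the injective hull of $\LGT^A_w$, its multiplicity in $\cM$ equals $\dim \Hom(\LGT^A_w, \cM)$; dually, by the definition of $\iota_A$ (see~\eqref{eqn:def-iota}), $\injh^A_{\iota_A(w)}$ is the projective cover of $\LGT^A_w$, and so its multiplicity in $\cM$ equals $\dim \Hom(\cM, \LGT^A_w)$. The two ``resp.''~parts of~(2) therefore reduce to establishing, for every $\mu \in \bY$,
\[
\dim \Hom(\LGT^A_{xt_\mu}, \cM) = \delta_{\mu,0} \quad \text{and} \quad \dim \Hom(\cM, \LGT^A_{xt_\mu}) = \delta_{\mu,0}.
\]

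These computations use Lemma~\ref{lem:Rfree-hom}, Lemma~\ref{lem:hom-stabilize}, and the rigidity isomorphism $(\cI^{-w_\circ(\lambda)}_*)^\vee \cong \cI^\lambda_!$ from~\eqref{eqn:duals} to translate the $\cR$-module Hom spaces into perverse-sheaf Hom spaces. For instance, the ``head'' Hom becomes, for $\lambda \in \bY_+$ sufficiently dominant,
\[
\Hom\bigl(\TGr^A_{w_A x^\triangle} \star^{\cL^+G} \cI^\lambda_!,\, \LGr^A_x \star^{\cL^+G} \cI^{-w_\circ(\mu)+\lambda}_*\bigr).
\]
A Whittaker analog of Lemma~\ref{lem:twtri-standard} endows $\TGr^A_{w_A x^\triangle} \star^{\cL^+G} \cI^\lambda_!$ with a standard filtration whose subquotients $\DGr^A_{yt_{w_\circ(\eta)}}$ are indexed via the Steinberg formula (Theorem~\ref{thm:geometric-Steinberg}). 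Adjunction with the rigid dual reduces each such contribution to a Hom into a costandard, and Corollary~\ref{cor:Hom-vanishing-Steinberg}\eqref{it:Hom-vanishing-Steinberg-1}--\eqref{it:Hom-vanishing-Steinberg-3} forces these to vanish unless $y = x$ and $\mu = 0$; Corollary~\ref{cor:tilt-mult} then ensures the surviving contribution is one-dimensional, as $\DGr^A_x$ appears in the standard filtration of $\TGr^A_{w_A x^\triangle}$ with multiplicity exactly $1$. The socle computation is formally dual. The principal technical obstacle is the careful inventory of the standard/costandard subquotients of these convolutions and the systematic verification that off-diagonal contributions vanish via Corollary~\ref{cor:Hom-vanishing-Steinberg}.
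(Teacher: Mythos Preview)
Your argument for Part~(1) over-reads Remark~\ref{rmk:free-proj-inj}\eqref{it:free-proj-inj}. That remark records that the \emph{specific} objects arising in the proof of Theorem~\ref{thm:proj-Hecke-Whit} (namely, images under $\Av^A_\psi$ of the wall-crossing objects $\xi_{s_r}\cdots\xi_{s_1}\xi_{\omega^{-1}}(\TGr_{t_{w_\circ(\varsigma)}})$) are projective and injective; it does not assert this for $\Phi^A(\Av^A_\psi(\TGr_y))$ with arbitrary $y \in W_\ext^S$, and indeed Proposition~\ref{prop:injective-tilting} later shows that this fails in general. Your argument does go through when $x \in {}^A W_\ext^\res$, since Lemma~\ref{lem:proj-prep}\eqref{it:prep-tilt} exhibits $\TGr_{x^\triangle}$ as a summand of such a wall-crossing object. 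But for general $x \in {}^A W_\ext^S$ one must write $x = yt_\lambda$ with $y \in {}^A W_\ext^\res$ and $\lambda \in -\bY_+$, realise $\TGr_{y^\triangle t_\lambda}$ as a summand of $\TGr_{y^\triangle}\star^{\cL^+G}\cT^{w_\circ(\lambda)}$, and invoke Lemma~\ref{lem:frobtwist-free}; this is exactly the additional step the paper supplies.

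For Part~(2) there is a more serious gap. Your direct computation of $\Hom(\cM,\LGT^A_{xt_\mu})$ requires control over the standard subquotients of $\TGr^A_{w_A x^\triangle}\star^{\cL^+G}\cI^\lambda_!$, but Lemma~\ref{lem:twtri-standard} only asserts \emph{existence} of a standard filtration, not which standards occur; moreover Corollary~\ref{cor:tilt-mult} gives the multiplicity of $\LGr^A_x$ in the \emph{head} of $\TGr^A_{w_A x^\triangle}$, not the multiplicity of $\DGr^A_x$ in a standard filtration (these can differ in positive characteristic). Your appeal to Corollary~\ref{cor:Hom-vanishing-Steinberg} is also misplaced: that result computes $\Hom(\LGr^A_y\star\cI^\mu_!,\NGr^A_{y't_\nu})$, whereas you need $\Hom(\DGr^A_z,\LGr^A_x\star\cI^\nu_*)$, and the passage between the two is not immediate. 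The paper avoids these difficulties entirely: rather than working with $\TGr^A_{w_A x^\triangle}$ directly, it computes the relevant multiplicity in the \emph{larger} object $\Phi(\xi_{s_r}\cdots\xi_{s_1}\xi_{\omega^{-1}}(\TGr_{t_{w_\circ(\varsigma)}}))$, where adjunction with the wall-crossing functors transfers the problem to $\Hom(\TGr_{t_{w_\circ(\varsigma)}},\xi_\omega\xi_{s_1}\cdots\xi_{s_r}(\LGr_x)\star\cR_{-\mu})$; here the support estimate of Lemma~\ref{lem:support} combined with Proposition~\ref{prop:avS-calc}\eqref{it:avS-!} gives explicit vanishing for $\mu\ne 0$, and the $\mu=0$ case reduces to~\eqref{eqn:prep-dim-calc}. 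Since $\Phi(\TGr_{x^\triangle})$ is a summand of this larger object and already contains $\injh_x$ and $\injh_{\iota(x)}$ by Corollary~\ref{cor:tilt-mult}, the multiplicity-one statement for the larger object forces it for $\Phi(\TGr_{x^\triangle})$.
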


A converse to part~\eqref{it:Phi-tilting-injective} will be proved in Proposition~\ref{prop:injective-tilting}.

\begin{proof}
\eqref{it:Phi-tilting-injective}
First, assume that $A=\varnothing$ and $x \in W_\ext^\res$.
In the proof of Proposition~\ref{prop:inj-hull-bV} we have constructed a projective and injective object admitting $\LGT_x$ both as a subobject and as a quotient. By Lemma~\ref{lem:proj-prep}\eqref{it:prep-tilt} this object contains $\Phi(\TGr_{x^\triangle})$ as a direct summand; the latter object is therefore also projective and injective.

Now we continue to assume that $A=\varnothing$, but take a general $w \in W_\ext^S$. We can write $w=yt_\lambda$ for some $y \in W_\ext^\res$ and $\lambda \in -\bY_+$, see~\eqref{eqn:WS-Wres}, and then we have $w^\triangle = y^{\triangle} t_\lambda$, see~\eqref{eqn:triangle-translation}. By Lemma~\ref{lem:twtri-standard} the object $\TGr_{y^\triangle} \star^{\cL^+G} \cT^{w_\circ(\lambda)}$ is tilting, and support considerations show that it contains $\TGr_{y^\triangle t_\lambda}$ as a direct summand. On the other hand, using the formula in Lemma~\ref{lem:frobtwist-free} and the fact that $\Phi(\TGr_{y^\triangle})$ is projective and injective we see that $\Phi(\TGr_{y^\triangle} \star^{\cL^+G} \cT^{w_\circ(\lambda)})$ is also injective and projective. Hence so is $\Phi(\TGr_{w^\triangle})$.

Finally we consider a general subset $A$, and $x \in {}^A W_\ext^S$. By Theorem~\ref{thm:proj-char0-Whit} we have $w_A x^\triangle \in {}^A W_\ext^S$, and by Proposition~\ref{prop:Av-tilting-indec} we know that $\Av^A_!(\Phi^A(\TGr^A_{w_A x^\triangle})) \cong \Phi(\TGr_{x^\triangle})$. Since the functor $\Av^A_!$ has an exact right adjoint, and since $\Phi(\TGr_{x^\triangle})$ is projective (by the case already treated, and since $x \in W_\ext^S$), this shows that $\Phi^A(\TGr^A_{w_A x^\triangle})$ is projective. A similar argument using $\Av^A_*$ instead of $\Av^A_!$ shows that this object is also injective. (Alternatively, one can use the fact that projective objects are automatically injective, see Theorem~\ref{thm:proj-Hecke-Whit}.)

\eqref{it:Phi-tilting-injective-2}
First, assume that $A=\varnothing$. Corollary~\ref{cor:tilt-mult} implies that $\Phi(\TGr_{x^\triangle})$ admits $\LGT_x$ as both a subobject and a quotient. It follows that both $\injh_x$ and $\injh_{\iota(x)}$ are direct summands in $\Phi(\TGr_{x^\triangle})$.

To conclude the proof in this case, we will prove that the object constructed in the proof of Proposition~\ref{prop:inj-hull-bV} admits $\injh_{\iota(x)}$ as a direct summand with multiplicity $1$, and no other direct summand of the form $\injh_{\iota(xt_\mu)}$. (A similar argument will apply for the injective hulls; alternatively this case can be deduced using the Verdier duality constructed in~\S\ref{ss:verdier} below.) Set $y=t_\varsigma w_\circ x^{-1}$, and consider a reduced expression $y = \omega s_1 \cdots s_r$.
Using adjunction, our claim will follow if we prove that
\begin{equation}
\label{eqn:prep-dim-calc3}
\dim \Hom_{\modf^\bY_{I_\unip}(\cR)}(\Phi(\TGr_{w_\circ(\varsigma)}), \Phi(\xi_\omega \xi_{s_1} \cdots \xi_{s_r} (\LGr_x)) \langle \mu \rangle) = \begin{cases}
1 & \text{if $\mu=0$;} \\
0 & \text{otherwise}.
\end{cases}
\end{equation}

As a preparation, let us first prove that for $\nu \in \bY_+ \smallsetminus \{0\}$ we have
\begin{equation}
\label{eqn:prep-dim-calc4}
\Hom(\TGr_{t_{w_\circ(\varsigma)}}, \xi_\omega \xi_{s_1} \cdots \xi_{s_r} (\LGr_x) \star^{\cL^+G} \cI^\nu_*) = 0,
\end{equation}
or equivalently (by adjunction and~\eqref{eqn:duals})
\begin{equation}
\label{eqn:prep-dim-calc5}
\Hom(\TGr_{t_{w_\circ(\varsigma)}} \star^{\cL^+G} \cI^{-w_\circ(\nu)}_!, \xi_\omega \xi_{s_1} \cdots \xi_{s_r} (\LGr_x) ) = 0.
\end{equation}
Proposition~\ref{prop:avS-calc}\eqref{it:avS-!} tells us that $\TGr_{t_{w_\circ(\varsigma)}} \star^{\cL^+G} \cI^{-w_\circ(\nu)}_!$ has a unique simple quotient, isomorphic to $\LGr_{t_{\varsigma-w_\circ(\nu)} w_\circ}$. Now we have $yx=t_\varsigma w_\circ$, and $\ell(yx)=\ell(x)+r$ by Lemma~\ref{lem:res-complement}. By Lemma~\ref{lem:support}, it follows that $\xi_\omega \xi_{s_1} \cdots \xi_{s_r} (\LGr_x)$ is supported on $\overline{\Gr_{t_\varsigma w_\circ}}$. Now $t_{\varsigma} w_\circ$ belongs to $W_\ext^S$, so that by~\cite[Lemma~2.7]{projGr1} we have
\[
\ell(t_{\varsigma-w_\circ(\nu)} w_\circ)=\ell(t_{\varsigma} w_\circ t_{-\nu}) = \ell(t_{\varsigma} w_\circ) + \ell( t_{-\nu});
\]
if $\nu \neq 0$ the orbit $\Gr_{t_{\varsigma-w_\circ(\nu)} w_\circ}$ is therefore not contained in $\overline{\Gr_{t_\varsigma w_\circ}}$, which implies that $\LGr_{t_{\varsigma-w_\circ(\nu)} w_\circ}$ is not a composition factor of $\xi_\omega \xi_{s_1} \cdots \xi_{s_r} (\LGr_x)$. This proves~\eqref{eqn:prep-dim-calc5}.

Now, let us prove~\eqref{eqn:prep-dim-calc3} in case $\mu \neq 0$. By Lemma~\ref{lem:Rfree-hom}, the space under consideration equals
\begin{multline}
\label{eqn:dim-calc-lim}
\Hom(\TGr_{t_{w_\circ(\varsigma)}}, \xi_\omega \xi_{s_1} \cdots \xi_{s_r} (\LGr_x) \star^{\cL^+G} \cR_{-\mu}) =  \\
\varinjlim_\lambda \Hom(\TGr_{t_{w_\circ(\varsigma)}}, \xi_\omega \xi_{s_1} \cdots \xi_{s_r} (\LGr_x) \star^{\cL^+G} \cI^{-w_\circ(\mu)+\lambda}_* \star^{\cL^+G} \cI^{-w_\circ(\lambda)}_*).
\end{multline}
For any $\lambda$
the perverse sheaf $\cI^{-w_\circ(\mu)+\lambda}_* \star^{\cL^+G} \cI^{-w_\circ(\lambda)}_*$ admits a costandard filtration, see~\cite[Proposition~4.8]{projGr1}; moreover, the object $\IC^0=\cI^0_*$ does not occur in such a filtration since
\[
\Hom(\cI_!^0, \cI^{-w_\circ(\mu)+\lambda}_* \star^{\cL^+G} \cI^{-w_\circ(\lambda)}_*)=\Hom(\cI^{\lambda}_!, \cI^{-w_\circ(\mu)+\lambda}_*)=0.
\]
In view of~\eqref{eqn:prep-dim-calc4} this implies that
\[
\Hom(\TGr_{t_{w_\circ(\varsigma)}}, \xi_\omega \xi_{s_1} \cdots \xi_{s_r} (\LGr_x) \star^{\cL^+G} \cI^{-w_\circ(\mu)+\lambda}_* \star^{\cL^+G} \cI^{-w_\circ(\lambda)}_*)=0
\]
for any $\lambda$, which proves~\eqref{eqn:prep-dim-calc3} in this case.

Finally, assume $\mu=0$. In this case the space we have to consider is
\begin{multline}
\label{eqn:dim-calc-lim-2}
\Hom(\TGr_{t_{w_\circ(\varsigma)}}, \xi_\omega \xi_{s_1} \cdots \xi_{s_r} (\LGr_x) \star^{\cL^+G} \cR_{0}) =  \\
\varinjlim_\lambda \Hom(\TGr_{t_{w_\circ(\varsigma)}}, \xi_\omega \xi_{s_1} \cdots \xi_{s_r} (\LGr_x) \star^{\cL^+G} \cI^{\lambda}_* \star^{\cL^+G} \cI^{-w_\circ(\lambda)}_*).
\end{multline}
Here again $\cI^{\lambda}_* \star^{\cL^+G} \cI^{-w_\circ(\lambda)}_*$ admits a costandard filtration, and in this case we have an embedding $\IC^0 \to  \cI^{\lambda}_* \star^{\cL^+G} \cI^{-w_\circ(\lambda)}_*$ whose cokernel is an extension of objects of the form $\cI_*^\nu$ with $\nu \neq 0$. We have obtained in the course of 
the proof of Lemma~\ref{lem:proj-prep} that
\begin{equation}
\label{eqn:prep-dim-calc2}
\dim \Hom_{\Perv_{I_\unip}(\Gr,\bk)}(\TGr_{t_{w_\circ(\varsigma)}}, \xi_\omega \xi_{s_1} \cdots \xi_{s_r} (\LGr_x)) = 1,
\end{equation}
see~\eqref{eqn:prep-dim-calc}. By the same considerations as above this implies that for any $\lambda$ we have
\[
\dim \Hom(\TGr_{t_{w_\circ(\varsigma)}}, \xi_\omega \xi_{s_1} \cdots \xi_{s_r} (\LGr_x) \star^{\cL^+G} \cI^{\lambda}_* \star^{\cL^+G} \cI^{-w_\circ(\lambda)}_*)=1.
\]
One can easily check that the transition morphisms in our inductive system are nonzero, which finishes the proof of~\eqref{eqn:prep-dim-calc3}, and hence of the statement in case $A=\varnothing$.

To treat the case of a general subset $A$, one simply observes that for $x \in {}^A W_\ext^\res$ and $\mu \in \bY$ we have
\begin{multline*}
\Hom(\Phi^A(\TGr^A_{w_A x^\triangle}), \LGT^A_{xt_\mu}) \cong \Hom(\Phi^A(\TGr^A_{w_A x^\triangle}), \Av^A_\psi(\LGT_{xt_\mu})) \\
\cong \Hom(\Av^A_!(\Phi(\TGr^A_{w_A x^\triangle})), \LGT_{xt_\mu}) \cong \Hom(\Phi(\TGr_{x^\triangle}),\LGT_{xt_\mu})
\end{multline*}
by Lemma~\ref{lem:bV-Av}\eqref{it:simples-Av}, adjunction and then Proposition~\ref{prop:Av-tilting-indec}. Then the claim follows from the case $A=\varnothing$ since $x \in W_\ext^\res$.
%
\end{proof}


\section{Ungraded \texorpdfstring{$\cR$}{R}-modules}
\label{sec:ungrad-R-mod}

In this section we present a variant of the theory developed so far, which omits the $\bY$-grading. (From the representation-theoretic point of view, and using the notation of Section~\ref{sec:intro}, this means that we study a geometric model for blocks of $\bG_1$-modules rather than $\bG_1\bT$-modules.) The only part of this section that will be used later in the paper is the statement given in Remark~\ref{rmk:indecomposable}\eqref{it:indecomposable-char0}. This statement does not involve ungraded modules, so a reader who is willing to accept this claim can skip this section.

\subsection{Definitions}
\label{ss:indecomp-Rmod}

We fix a finitary subset $A \subset S_\aff$.
Up to now we have worked with $\bY$-graded ind-objects in the category $\Perv_{(I^A_\unip,\cX_A)}(\Gr,\bk)$; in particular, in~\S\ref{ss:reg-perv-sheaf} we have defined $\cR$ as a \emph{formal} direct sum of ind-objects $\cR_\mu$. But in view of~\cite[Theorem~8.6.5(v)]{ks} the category of ind-objects in $\Perv_{(I^A_\unip,\cX_A)}(\Gr,\bk)$ admits arbitrary direct sums; in particular, the ``true'' direct sum (i.e.~coproduct) $\bigoplus_\mu \cR_\mu$ in this category makes sense. For simplicity we will also denote this object $\cR$.

An \emph{ungraded $\cR$-module} is, by definition, an ind-object $\cF$ in $\Perv_{(I^A_\unip,\cX_A)}(\Gr,\bk)$, together with a map
\[
\cF \star^{\cL^+G} \cR \to \cF
\]
equipping it with the structure of a module over the algebra object $\cR$.  Let
\[
\Mod_{(I^A_\unip,\cX_A)}(\cR)
\]
denote the abelian category of ungraded $\cR$-modules.  The theory of ungraded $\cR$-modules is very similar to that of graded $\cR$-modules.  In this subsection, we summarize the main facts about them.  Most proofs are essentially identical to those in the graded case, and will be omitted.

In a minor abuse of notation, we define the functor
\[
\Phi^A: \Perv_{(I^A_\unip,\cX_A)}(\Gr,\bk) \to \Mod_{(I^A_\unip,\cX_A)}(\cR)
\]
by $\Phi^A(\cF) = \cF \star^{\cL^+G} \cR$, where $\cF \star^{\cL^+G} \cR$ is equipped with the obvious module structure. As in the graded case, $\Phi^A$ is exact and faithful. Objects in the image of this functor are called \emph{free (ungraded) $\cR$-modules of finite type}.  There is an ungraded analogue of Lemma~\ref{lem:Rfree-hom} that says that
\begin{equation}
\label{eqn:Rfree-hom-ungr}
\Hom_{\Mod_{(I^A_\unip,\cX_A)}(\cR)}(\Phi^A(\cF), \cM) \cong \Hom_{\Perv_{(I^A_\unip,\cX_A)}(\Gr,\bk)}(\cF, \cM)
\end{equation}
for any $\cF \in \Perv_{(I^A_\unip,\cX_A)}(\Gr,\bk)$ and $\cM \in \Mod_{(I^A_\unip,\cX_A)}(\cR)$.
There is also an ungraded analogue of Lemma~\ref{lem:frobtwist-free} that says that
for any $\cF \in \Perv_{(I^A_\unip,\cX_A)}(\Gr,\bk)$ and $\cG \in \Perv_{\cL^+G}(\Gr,\bk)$ we have
\begin{equation}
\label{eqn:frobtwist-free}
\Phi^A(\cF \star^{\cL^+G} \cG) \cong \Satake(\cG) \otimes \Phi^A(\cF).
\end{equation}

The classification of simple objects in $\Mod_{(I^A_\unip,\cX_A)}(\cR)$ is given in Proposition~\ref{prop:R-simple-ungr}. In this statement, we denote by $\sim$ the equivalence relation on ${}^A W^\res_\ext$ given by
\[
w \sim w' \qquad
\text{if there is a $\lambda \in \bY$ such that $w = w't_\lambda$.}
\]
(In this case, $\lambda$ is necessarily orthogonal to all roots; in particular, if $G$ is semisimple this equivalence relation is trivial.) For $w \in {}^A W_\ext^\res$ we will denote by $[w]$ its equivalence class.

\begin{prop}
\label{prop:R-simple-ungr}
For $w \in {}^A W^\res_\ext$, the object $\Phi^A(\LGr^A_w)$ is a simple object in the abelian category $\Mod_{(I^A_\unip,\cX_A)}(\cR)$.  Moreover, the assignment $w \mapsto \Phi^A(\LGr^A_w)$ induces a bijection
\[
{}^A W^\res_\ext/{\sim} \, \simto
\left\{
\begin{array}{c}
\text{isomorphism classes of simple} \\
\text{objects in $\Mod_{(I^A_\unip,\cX_A)}(\cR)$}
\end{array}
\right\}.
\]
\end{prop}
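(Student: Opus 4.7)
The proof closely follows the strategy of Theorem~\ref{thm:R-simple}, with the key input being an ungraded analogue of Lemma~\ref{lem:Rsimple-hom}. I would first establish that for $w,w' \in {}^A W^\res_\ext$,
\[
\dim \Hom_{\Mod_{(I^A_\unip,\cX_A)}(\cR)}(\Phi^A(\LGr^A_w), \Phi^A(\LGr^A_{w'})) =
\begin{cases} 1 & \text{if } w \sim w', \\ 0 & \text{otherwise.} \end{cases}
\]
By~\eqref{eqn:Rfree-hom-ungr} this Hom identifies with $\Hom(\LGr^A_w, \LGr^A_{w'} \star^{\cL^+G} \cR)$, where $\cR = \bigoplus_\mu \cR_\mu$ is now the \emph{true} (coproduct) direct sum in the ind-category of $\Perv_{\cL^+G}(\Gr,\bk)$. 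Since $\LGr^A_w$ is an ordinary (hence compact) object of the ind-category, this decomposes as $\bigoplus_\mu \Hom(\LGr^A_w, \LGr^A_{w'} \star^{\cL^+G} \cR_\mu)$. By Lemma~\ref{lem:Rfree-hom} each summand is identified with the graded Hom $\Hom_{\Mod^\bY_{(I^A_\unip,\cX_A)}(\cR)}(\LGT^A_{wt_\mu}, \LGT^A_{w'})$, which by Lemma~\ref{lem:Rsimple-hom} is $\bk$ when $wt_\mu = w'$ and $0$ otherwise. Summing over $\mu$, the existence of some such $\mu$ is precisely the condition $w \sim w'$, yielding the formula.

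Next I would establish the ungraded analogue of Lemma~\ref{lem:Rfree-filt}: for any $\cF \in \Perv_{(I^A_\unip,\cX_A)}(\Gr,\bk)$, the free module $\Phi^A(\cF)$ admits a finite filtration whose subquotients are of the form $\Phi^A(\LGr^A_y)$ with $y \in {}^A W^\res_\ext$. A composition series of $\cF$ produces, via exactness of $\Phi^A$, a filtration of $\Phi^A(\cF)$ with subquotients $\Phi^A(\LGr^A_x)$ for $x \in {}^A W^S_\ext$. Writing $x = yt_{w_\circ(\nu)}$ with $y \in {}^A W^\res_\ext$ and $\nu \in \bY_+$ via~\eqref{eqn:WS-Wres-Whit}, the Steinberg formula (Theorem~\ref{thm:geometric-Steinberg}\eqref{it:thm-Steinberg}) combined with~\eqref{eqn:frobtwist-free} then realizes $\Phi^A(\LGr^A_x)$ as a finite direct sum of copies of $\Phi^A(\LGr^A_y)$.

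With these two ingredients in hand, the classification proceeds as in Theorem~\ref{thm:R-simple}. Any nonzero $\cM \in \Mod_{(I^A_\unip,\cX_A)}(\cR)$ admits a nonzero morphism from some $\Phi^A(\cF)$: realize $\cM$ as an ind-perverse sheaf $``\varinjlim"_i \cF_i$, pick $i$ such that $\cF_i \to \cM$ is nonzero, and apply~\eqref{eqn:Rfree-hom-ungr}. Combined with the filtration above, this yields a nonzero map $\Phi^A(\LGr^A_y) \to \cM$ for some $y \in {}^A W^\res_\ext$. To prove that $\Phi^A(\LGr^A_w)$ is simple for $w \in {}^A W^\res_\ext$, let $\cN \subset \Phi^A(\LGr^A_w)$ be a nonzero subobject; applying the argument above to $\cN$ produces a nonzero morphism $\Phi^A(\LGr^A_y) \to \cN$, and its composition with $\cN \hookrightarrow \Phi^A(\LGr^A_w)$ is a nonzero element of the Hom space computed above. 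Hence $y \sim w$, and this composition is a scalar multiple of the canonical isomorphism $\Phi^A(\LGr^A_y) \simto \Phi^A(\LGr^A_w)$ induced by convolution with the skyscraper $\IC^\eta$ (for the unique $\eta$ with $w=yt_\eta$) via~\eqref{eqn:frobtwist-free}. Therefore $\cN = \Phi^A(\LGr^A_w)$. Finally, any simple $\cM$ receives such a nonzero map $\Phi^A(\LGr^A_y) \to \cM$, which is an isomorphism since both source and target are simple; and the injectivity of $w \mapsto \Phi^A(\LGr^A_w)$ modulo $\sim$ is immediate from the Hom computation.

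The main obstacle is essentially bookkeeping: the core of the argument reduces cleanly to the graded setting via the compactness of the ordinary perverse sheaf $\LGr^A_w$ in the ind-category, and the only genuinely new input is the identification of the equivalence relation $\sim$ as arising exactly from convolution with the skyscrapers $\IC^\eta$ for $\eta \in \bY$ orthogonal to all roots.
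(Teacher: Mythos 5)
Your proposal is correct and takes essentially the same approach as the paper, which omits the proof on the grounds that it is ``essentially identical'' to the graded case: you reproduce the argument of Theorem~\ref{thm:R-simple} with the ungraded analogues of Lemmas~\ref{lem:Rsimple-hom} and~\ref{lem:Rfree-filt}, and your reduction of the Hom computation to the graded case via compactness of ordinary perverse sheaves in the ind-category is exactly the mechanism the paper itself invokes for Lemma~\ref{lem:R-degrade}. The only quibble is a harmless indexing slip: the $\mu$-summand is $\Hom(\LGT^A_{wt_{-\mu}},\LGT^A_{w'})$ rather than $\Hom(\LGT^A_{wt_{\mu}},\LGT^A_{w'})$, which changes nothing since $\mu$ runs over all of $\bY$.
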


For $c \in {}^A W^\res_\ext/{\sim}$ we will denote by $\cL_c^A \in \Mod_{(I^A_\unip,\cX_A)}(\cR)$ the corresponding simple object; for any $w \in {}^A W_\ext^\res$ we therefore have $\cL_{[w]}^A \cong \Phi^A(\LGr^A_w)$.

The definitions of the categories
\[
\modf_{(I^A_\unip,\cX_A)}(\cR) \subset \Mod_{(I^A_\unip,\cX_A)}(\cR)^\flen \subset \Mod_{(I^A_\unip,\cX_A)}(\cR)
\]
are analogous to their graded counterparts.  As in Lemma~\ref{lem:R-findim} we have
\begin{equation}\label{eqn:R-findim-ungr}
\dim \Hom_{\Mod_{(I^A_\unip,\cX_A)}(\cR)}(\cF,\cG) < \infty
\end{equation}
for all $\cF, \cG \in \Mod_{(I^A_\unip,\cX_A)}(\cR)^\flen$, and $\Mod_{(I^A_\unip,\cX_A)}(\cR)^\flen$ is a Krull--Schmidt category.

\begin{thm}
\label{thm:proj-Hecke-ungr}
The category $\modf_{(I^A_\unip,\cX_A)}(\cR)$ coincides with $\Mod_{(I^A_\unip,\cX_A)}(\cR)^\flen$, and this abelian category has enough projectives and enough injectives, and these classes of objects coincide.
\end{thm}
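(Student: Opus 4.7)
The argument parallels the proof of Theorem~\ref{thm:proj-Hecke-Whit}, with minor modifications to remove the $\bY$-grading from each step. The plan is to mirror, in sequence, the ungraded analogues of Proposition~\ref{prop:IW-R-semisimple}, Proposition~\ref{prop:projectives}\eqref{it:proj-R} and Proposition~\ref{prop:inj-hull-bV}, and then conclude as in Theorem~\ref{thm:proj-Hecke-Whit}\eqref{it:proj-enough}.

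The first step is the ungraded analogue of Proposition~\ref{prop:IW-R-semisimple}: I would prove that $\Mod_{(I^S_\unip,\cX_S)}(\cR)$ is semisimple. Via the geometric Satake equivalence, an ungraded $\cR$-module in the Iwahori--Whittaker setting corresponds to an ind-object in $\Rep(G^\vee_\bk)$ equipped with an action of the algebra $\scO(G^\vee_\bk)$ (for the left regular structure). By the ungraded variant of the identification recalled in Remark~\ref{rmk:AG-G^1}\eqref{it:AG-G^1} (which amounts to the trivial fact that $\scO(G^\vee_\bk)$-modules in $\mathrm{Ind}(\Rep(G^\vee_\bk))$ equivariant for the left regular action are the same as $\bk$-vector spaces), this category identifies with the category of all $\bk$-vector spaces, and $\Mod_{(I^S_\unip,\cX_S)}(\cR)^\flen$ with finite-dimensional vector spaces; in particular the category is semisimple.

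With this semisimplicity in hand, the proofs of Proposition~\ref{prop:projectives}\eqref{it:proj-R} and Proposition~\ref{prop:inj-hull-bV} go through essentially verbatim in the ungraded setting. Concretely, for each $\lambda \in \bY_{++}$ the module $\Phi^A_{\mathrm{ungr}}(j^\lambda_! \cS_\lambda)$ is projective and $\Phi^A_{\mathrm{ungr}}(j^\lambda_* \cS_\lambda)$ is injective in $\Mod_{(I^A_\unip,\cX_A)}(\cR)^\flen$, by the same adjunction argument using $(\Av^A_!, \Av^A_\psi)$, resp.~$(\Av^A_\psi, \Av^A_*)$, now applied to the (exact, semisimple) functor to $\Mod_{(I^S_\unip,\cX_S)}(\cR)^\flen$. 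Then, given $w \in {}^A W^\res_\ext$ with reduced expression for $t_\varsigma w_\circ w^{-1}$ as in the proof of Proposition~\ref{prop:inj-hull-bV}, the wall-crossing functors $\xi_{s_r}\cdots\xi_{s_1}\xi_{\omega^{-1}}$ applied to $\Phi^A_{\mathrm{ungr}}(j^\varsigma_{!*}\cS_\varsigma)$ yield a finitely generated object which is both projective and injective and which contains $\cL^A_{[w]} = \Phi^A(\LGr^A_w)$ in both its head and socle (the support computations of Lemma~\ref{lem:proj-prep} are insensitive to the grading).

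Having produced, for every simple $\cL^A_{[w]}$, a finitely generated projective--injective module surjecting onto it and containing it as a submodule, the conclusion then follows exactly as in Theorem~\ref{thm:proj-Hecke-Whit}\eqref{it:proj-enough}: $\Mod_{(I^A_\unip,\cX_A)}(\cR)^\flen$ has enough projectives and enough injectives, these classes consist of finitely generated modules, and hence coincide with the projective/injective objects in $\modf_{(I^A_\unip,\cX_A)}(\cR)$; in particular $\modf_{(I^A_\unip,\cX_A)}(\cR) = \Mod_{(I^A_\unip,\cX_A)}(\cR)^\flen$. The main point requiring care is the ungraded semisimplicity in the first step, but once that is established the rest of the argument is a routine translation of the graded proof.
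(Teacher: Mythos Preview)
Your approach is correct and matches the paper's: the paper explicitly omits the proof, noting at the outset of Section~\ref{sec:ungrad-R-mod} that the ungraded arguments are ``essentially identical to those in the graded case,'' and your outline confirms this.

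One notational point worth tidying: in your second and third steps you write $\Phi^A_{\mathrm{ungr}}(j^\lambda_! \cS_\lambda)$ and invoke the adjunctions $(\Av^A_!, \Av^A_\psi)$ and $(\Av^A_\psi, \Av^A_*)$, but $j^\lambda_! \cS_\lambda$ lives in $\Perv_{I_\unip}(\Gr,\bk)$, and the relevant averaging at that stage is $\Av^S$ (to the fully Iwahori--Whittaker category where your semisimplicity step applies), not $\Av^A$. As in the graded proof, the construction of projective--injective objects should first be carried out for $A = \varnothing$ using $\Phi$ and the $\Av^S$ adjunctions; only at the final step does one apply $\Av^A_\psi$ to pass to general $A$, exactly as in the proof of Theorem~\ref{thm:proj-Hecke-Whit}\eqref{it:proj-enough}. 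This is surely what you intend, but the labelling obscures the two distinct roles played by the subsets $S$ and $A$.
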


For $c \in {}^A W^\res_\ext/\sim$ we will denote by $\cQ^A_c$ the injective hull of $\cL^A_c$ in the category $\modf_{(I^A_\unip,\cX_A)}(\cR)$.

The following property is more specific to the ungraded setting.

\begin{lem}
\label{lem:G-action}
For any $\cF,\cG$ in $\Perv_{(I^A_\unip,\cX_A)}(\Gr,\bk)$, the finite-dimensional vector space
\[
\Hom_{\modf_{(I^A_\unip,\cX_A)}(\cR)}(\Phi^A(\cF),\Phi^A(\cG))
\]
carries a canonical structure of an algebraic $G^\vee_\bk$-module, which is functorial in $\cF$ and $\cG$ and compatible (in the natural way) with composition, and such that the map
\[
\Hom_{\Perv_{(I^A_\unip,\cX_A)}(\Gr,\bk)}(\cF,\cG) \to \Hom_{\modf_{(I^A_\unip,\cX_A)}(\cR)}(\Phi^A(\cF),\Phi^A(\cG))
\]
identifies the left-hand side with the $G^\vee_\bk$-invariants in the right-hand side.
\end{lem}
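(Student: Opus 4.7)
By the universal property of $\Phi^A$ (see~\eqref{eqn:Rfree-hom-ungr}), there is a natural identification
\[
\Hom_{\modf_{(I^A_\unip,\cX_A)}(\cR)}(\Phi^A(\cF),\Phi^A(\cG)) \cong \Hom(\cF, \cG \star^{\cL^+G} \cR).
\]
The plan is to construct a canonical $G^\vee_\bk$-action on $\cG \star^{\cL^+G} \cR$, functorial in $\cG$, and then transport this by post-composition to a $G^\vee_\bk$-action on the Hom space. By the universal property, the result corresponds to an action on $\Hom(\Phi^A(\cF),\Phi^A(\cG))$ in $\cR$-modules.

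It suffices to equip $\cR$ itself, regarded as an ind-object of $\Perv_{\cL^+G}(\Gr,\bk)$ (forgetting the $\bY$-grading), with a natural $G^\vee_\bk$-action, and then convolve on the left with $\cG$. For this we invoke~\S\ref{ss:reg-perv-sheaf}: under $\Satake \circ \mathrm{sw}$, the ind-object $\cR$ corresponds to $\scO(G^\vee_\bk)$ regarded as a $G^\vee_\bk$-representation via left translation. The algebra $\scO(G^\vee_\bk)$ admits a second, commuting $G^\vee_\bk$-action by right translation, which therefore lies in $\Rep(G^\vee_\bk)$; transporting back through $\mathrm{sw} \circ \Satake^{-1}$ produces the desired $G^\vee_\bk$-action on $\cR$. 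Its restriction to $T^\vee_\bk$ is exactly the action encoding the $\bY$-grading on $\cR$. Functoriality in $\cF$ and $\cG$, compatibility with composition, and rationality of the resulting $G^\vee_\bk$-representation on the Hom space (which is finite-dimensional by~\eqref{eqn:R-findim-ungr}) follow routinely from the construction.

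For the identification of the invariants: the right-translation $G^\vee_\bk$-invariants of $\scO(G^\vee_\bk)$ are the constants $\bk \cdot 1$, corresponding under $\mathrm{sw} \circ \Satake$ to the monoidal unit $\IC^0$, embedded in $\cR$ via the unit map~\eqref{eqn:reg-unit}. It follows that $(\cG \star^{\cL^+G} \cR)^{G^\vee_\bk} \cong \cG \star^{\cL^+G} \IC^0 \cong \cG$, with inclusion into $\cG \star^{\cL^+G} \cR$ realized by $\id_\cG \star^{\cL^+G} \eta$. Since $\cF$ is compact (as an ordinary perverse sheaf inside the ind-category), a morphism $\cF \to \cG \star^{\cL^+G} \cR$ is $G^\vee_\bk$-invariant exactly when it factors through this subobject; these are precisely the images of morphisms $\cF \to \cG$ under the natural map of the statement, giving the required identification $\Hom(\cF,\cG) \simto \Hom(\Phi^A(\cF),\Phi^A(\cG))^{G^\vee_\bk}$.

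The principal technical point is the rigorous promotion of the right-regular $G^\vee_\bk$-action on $\scO(G^\vee_\bk)$ to an action on $\cR$ as an ind-object of the Satake category: this action preserves neither any graded component $\cR_\mu$ nor any term of the inductive system~\eqref{eqn:description-Rmu}, so one cannot see it piece by piece and must work with the identification $\mathrm{sw}(\cR) \cong \Satake^{-1}(\scO(G^\vee_\bk))$ at the level of the full limit. One should also note that for $g \in G^\vee_\bk$ the induced map $\phi_g$ on $\cR$ is an algebra automorphism, hence is not $\cR$-linear, so that $\id_\cG \star^{\cL^+G} \phi_g$ is not an $\cR$-module endomorphism of $\Phi^A(\cG)$; the $G^\vee_\bk$-action lives only at the level of the underlying ind-perverse sheaf, and it is only through the universal property of $\Phi^A$ that it induces a well-defined $G^\vee_\bk$-action on Hom-sets of $\cR$-modules.
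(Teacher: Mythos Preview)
Your underlying idea matches the paper's: the $G^\vee_\bk$-action comes from the right-regular action on $\scO(G^\vee_\bk)$, transported through geometric Satake. But there is a genuine gap precisely at the point you flag as ``the principal technical point.'' Phrasing the action in terms of individual automorphisms $\phi_g$ for $g \in G^\vee_\bk(\bk)$ yields only an action of the abstract group of $\bk$-points on the Hom space; it does not establish that this action is \emph{algebraic}, and over a non-algebraically-closed $\bk$ it need not even determine an algebraic structure. Your claim that ``rationality \ldots\ follows routinely'' is exactly the missing content. Likewise, taking ``$G^\vee_\bk$-invariants'' of the ind-perverse sheaf $\cG \star^{\cL^+G} \cR$ has no meaning until one has said what an algebraic $G^\vee_\bk$-action on such an object is.

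The paper fills this gap by working with coactions rather than group elements. The (switched) comultiplication $\scO(G^\vee_\bk) \to \scO(G^\vee_\bk) \otimes_\bk \scO(G^\vee_\bk)$ is a morphism in $\Rep(G^\vee_\bk)$, where in the target the left factor is regarded merely as a vector space. Using the formal tensor product $V \otimes_\bk X$ (available in any $\bk$-linear additive category with coproducts) and applying $\Satake^{-1}$ produces a morphism $\cR \to \scO(G^\vee_\bk) \otimes_\bk \cR$ of ind-objects in $\Perv_{\cL^+G}(\Gr,\bk)$. Applying the functor $\Hom(\cF, \cG \star^{\cL^+G}(-))$ then gives a coaction map on the Hom space directly --- this \emph{is} the definition of an algebraic $G^\vee_\bk$-module structure, so no separate rationality argument is needed. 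For the invariants, the paper transports the exact sequence $0 \to \bk \to \scO(G^\vee_\bk) \to \scO(G^\vee_\bk) \otimes_\bk \scO(G^\vee_\bk)$ (the second map being the difference of the coaction and $f \mapsto 1 \otimes f$) to $0 \to \IC^0 \to \cR \to \scO(G^\vee_\bk) \otimes_\bk \cR$ and applies $\Hom(\cF, \cG \star^{\cL^+G}(-))$; this identifies $\Hom(\cF,\cG)$ with the invariants without ever needing to speak of invariants inside the ind-category of perverse sheaves.
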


\begin{proof}
Recall that if $k$ is a field and $\mathsf{C}$ is a $k$-linear additive category which admits arbitrary coproducts, then given a $k$-vector space $V$ and an object $X$ in $\mathsf{C}$ one defines the tensor product $V \otimes_k X$ as the object representing the functor
\[
Y \mapsto \Hom_k(V, \Hom_{\mathsf{C}}(X,Y));
\]
any choice of basis $(e_i : i \in I)$ in $V$ provides an isomorphism $V \otimes_k X \cong X^{\oplus I}$. This construction is functorial, in the sense that if $\mathsf{D}$ is another $k$-linear additive category which admits arbitrary coproducts and $F : \mathsf{C} \to \mathsf{D}$ is a $k$-linear additive functor commuting with coproducts, then for any $V$ and $X$ as above there exists a canonical isomorphism $F(V \otimes_k X) \cong V \otimes_k F(X)$.

We apply this construction first in the case where $\mathsf{C}$ is the category of all $G^\vee_\bk$-modules, i.e.~the category of ind-objects in $\Rep(G^\vee_\bk)$. Here the comultiplication in the Hopf algebra $\scO(G^\vee_\bk)$ composed with switching the factors provides a canonical morphism
\begin{equation}
\label{eqn:comult-OG}
\scO(G^\vee_\bk) \to \scO(G^\vee_\bk) \otimes_\bk \scO(G^\vee_\bk),
\end{equation}
where the domain and the right-hand copy of $\scO(G^\vee_\bk)$ in the codomain are equipped with the left regular $G^\vee_\bk$-module structure, while the left-hand copy of $\scO(G^\vee_\bk)$ in the codomain is regarded just as a vector space. Next we apply the functor induced by $\Satake$ on ind-objects; the properties recalled above imply that we have $\Satake(\scO(G^\vee_\bk) \otimes_\bk \scO(G^\vee_\bk)) \cong \scO(G^\vee_\bk) \otimes_\bk \cR$, so that we obtain a canonical morphism
\begin{equation}
\label{eqn:comult-R}
\cR \to \scO(G^\vee_\bk) \otimes_\bk \cR.
\end{equation}
(Here the functor on ind-objects induced by $\Satake$ commutes with coproducts by the description of coproducts in~\cite[Theorem~8.6.5(v)]{ks}.)
Note that we have an exact sequence
\[
0 \to \bk \to \scO(G^\vee_\bk) \to \scO(G^\vee_\bk) \otimes_\bk \scO(G^\vee_\bk),
\]
where the rightmost arrow is the difference of~\eqref{eqn:comult-OG} and the map $f \mapsto 1 \otimes f$. We deduce an exact sequence
\begin{equation}
\label{eqn:inv-comult-R}
0 \to \IC^0 \to \cR \to \scO(G^\vee_\bk) \otimes_\bk \cR,
\end{equation}
where the rightmost arrow is the difference of~\eqref{eqn:comult-R} and the map corresponding to $1 \otimes \id_\cR$ under the canonical identification
\[
\Hom(\cR, \scO(G^\vee_\bk) \otimes_\bk \cR) \cong \scO(G^\vee_\bk) \otimes_\bk \Hom(\cR,\cR).
\]

We can at last use these constructions in the setting of the lemma. For $\cF,\cG$ as in the statement, by~\eqref{eqn:Rfree-hom-ungr} we have
\[
\Hom_{\modf_{(I^A_\unip,\cX_A)}(\cR)}(\Phi^A(\cF),\Phi^A(\cG)) \cong \Hom_{\Perv_{(I^A_\unip,\cX_A)}(\Gr,\bk)}(\cF,\cG \star^{\cL^+ G} \cR).
\]
Applying the considerations above to the functor $\Hom_{\Perv_{(I^A_\unip,\cX_A)}(\Gr,\bk)}(\cF,\cG \star^{\cL^+ G} (-))$ (where here $\cG \star^{\cL^+ G} (-)$ means the canonical extension of the functor $\cH \mapsto \cG \star^{\cL^+ G} \cH$ to ind-objects, and we consider morphisms of ind-perverse sheaves; the compatibility with coproducts is guaranteed by~\cite[Comments in Notation~8.6.1]{ks}) we obtain a canonical isomorphism
\begin{multline*}
\Hom_{\Perv_{(I^A_\unip,\cX_A)}(\Gr,\bk)} \bigl(\cF,\cG \star^{\cL^+ G} (\scO(G^\vee_\bk) \otimes_\bk \cR) \bigr) \\
\cong \scO(G^\vee_\bk) \otimes_\bk \Hom_{\Perv_{(I^A_\unip,\cX_A)}(\Gr,\bk)}(\cF,\cG \star^{\cL^+ G} \cR).
\end{multline*}
Hence, applying this functor to~\eqref{eqn:comult-R} we obtain a canonical morphism
\[
\Hom_{\modf_{(I^A_\unip,\cX_A)}(\cR)}(\Phi^A(\cF),\Phi^A(\cG)) \to \scO(G^\vee_\bk) \otimes_\bk \Hom_{\modf_{(I^A_\unip,\cX_A)}(\cR)}(\Phi^A(\cF),\Phi^A(\cG))
\]
which defines a structure of a (right) $\scO(G^\vee_\bk)$-comodule, i.e.~of a $G^\vee_\bk$-module, on the $\bk$-vector space $\Hom_{\modf_{(I^A_\unip,\cX_A)}(\cR)}(\Phi^A(\cF),\Phi^A(\cG))$. Moreover, from the exact sequence~\eqref{eqn:inv-comult-R} we deduce an exact sequence
\begin{multline*}
0 \to \Hom_{\Perv_{(I^A_\unip,\cX_A)}(\Gr,\bk)}(\cF,\cG) \to \Hom_{\modf_{(I^A_\unip,\cX_A)}(\cR)}(\Phi^A(\cF),\Phi^A(\cG)) \\
\to \scO(G^\vee_\bk) \otimes_\bk \Hom_{\modf_{(I^A_\unip,\cX_A)}(\cR)}(\Phi^A(\cF),\Phi^A(\cG))
\end{multline*}
which shows that $\Hom_{\Perv_{(I^A_\unip,\cX_A)}(\Gr,\bk)}(\cF,\cG)$ identifies the $G^\vee_\bk$-invariants in the $G^\vee_\bk$-module $\Hom_{\modf_{(I^A_\unip,\cX_A)}(\cR)}(\Phi^A(\cF),\Phi^A(\cG))$.
\end{proof}

\subsection{Forgetting the grading}
\label{ss:For}

We continue with the setting of~\S\ref{ss:indecomp-Rmod}.
There is an obvious exact forget-the-grading functor
\begin{equation}
\label{eqn:For-mod-R}
\For: \Mod^\bY_{(I^A_\unip,\cX_A)}(\cR) \to \Mod_{(I^A_\unip,\cX_A)}(\cR),
\end{equation}
which sends the ``formal'' direct sum $\bigoplus_\mu \cF_\mu$ to the ``true'' direct sum $\bigoplus_\mu \cF_\mu$ in the category of ind-objects in $\Perv_{(I^A_\unip,\cX_A)}(\Gr,\bk)$. This functor is exact; and it
satisfies $\For \circ \langle \lambda \rangle \cong \For$ for any $\lambda \in \bY$, commutes with the functors $\Phi^A$, and sends simple modules to simple modules. (In particular, it sends finitely generated modules to finitely generated modules.)  More specifically, for any $w \in {}^A W_\ext$, we have
\begin{equation}
\label{eqn:For-LGT}
\For(\LGT^A_w) \cong \cL^A_{[x]}
\qquad\text{if $w = xt_\lambda$ with $x \in {}^A W^\res_\ext$ and $\lambda \in \bY$.}
\end{equation}

\begin{lem}
\label{lem:R-degrade}
For $\cF \in \modf^\bY_{(I^A_\unip,\cX_A)}(\cR)$ and $\cG \in \Mod^\bY_{(I^A_\unip,\cX_A)}(\cR)$, the functor $\For$ induces an isomorphism
\[
\bigoplus_{\lambda \in \bY} \Hom_{\Mod^\bY_{(I^A_\unip,\cX_A)}(\cR)}(\cF,\cG\la \lambda\ra) \simto \Hom_{\Mod_{(I^A_\unip,\cX_A)}(\cR)}(\For(\cF), \For(\cG)).
\]
\end{lem}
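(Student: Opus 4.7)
The plan is to reduce by d\'evissage to the case of a free module on a single perverse sheaf, where the statement boils down to a compactness assertion in the ind-perverse sheaf category. The map in question is the obvious one: a family $(f_\lambda)_{\lambda \in \bY}$ of graded morphisms, with only finitely many nonzero entries, is sent to $\sum_\lambda \For(f_\lambda) : \For(\cF) \to \For(\cG)$ (recall that $\For$ kills the shift $\la \lambda\ra$). This map is natural in $\cF$, and both sides are left-exact contravariant functors of $\cF$: direct sums of left-exact functors remain left-exact, and $\For$ is itself exact.

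The first step is to check the statement for $\cF = \Phi^A(\cH)$ with $\cH$ an ordinary perverse sheaf. By Lemma~\ref{lem:Rfree-hom} applied after the shift $\cG \mapsto \cG\la\lambda\ra$, the left-hand side becomes $\bigoplus_\lambda \Hom(\cH, \cG_{-\lambda}) = \bigoplus_\nu \Hom(\cH, \cG_\nu)$, while by~\eqref{eqn:Rfree-hom-ungr} the right-hand side becomes $\Hom(\cH, \For(\cG))$. Now $\cH$ is compact in the category of ind-perverse sheaves (this is built into the construction of the ind-category in~\cite[Chap.~6]{ks}), and $\For(\cG) = \bigsqcup_\nu \cG_\nu$ is the filtered colimit of its finite sub-coproducts; compactness of $\cH$ therefore identifies $\Hom(\cH, \For(\cG))$ with $\bigoplus_\nu \Hom(\cH, \cG_\nu)$, matching the left-hand side, and an inspection of definitions shows that the induced isomorphism coincides with the natural map. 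An additivity argument then extends this to any finite direct sum of shifted free modules $\Phi^A(\cH_i)\la\mu_i\ra$; the shifts are killed on the right by $\For$, and correspond on the left to a reindexing of the outer direct sum.

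For general $\cF \in \modf^\bY_{(I^A_\unip,\cX_A)}(\cR)$, the plan is to exhibit a finite presentation $P_1 \to P_0 \to \cF \to 0$ in which both $P_0$ and $P_1$ are finite direct sums of shifted free modules. The existence of $P_0$ expresses exactly that $\cF$ is finitely generated; the existence of $P_1$ reduces to the finite generation of $\ker(P_0 \twoheadrightarrow \cF)$, which follows from Theorem~\ref{thm:proj-Hecke-Whit}\eqref{it:proj-enough} (telling us that $\modf^\bY_{(I^A_\unip,\cX_A)}(\cR)$ coincides with the finite-length subcategory, hence is abelian and closed under kernels in $\Mod^\bY_{(I^A_\unip,\cX_A)}(\cR)$). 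Applying the two left-exact functors above to this presentation and invoking the five lemma reduces the statement to the case of $P_0$ and $P_1$, already treated.

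The only delicate point I anticipate is the compactness computation in the free-module case: because each $\cG_\nu$ is itself an ind-object, writing $\bigsqcup_\nu \cG_\nu$ as a genuine filtered colimit requires indexing over pairs consisting of a finite subset $F \subset \bY$ together with, for each $\nu \in F$, a stage in a presentation of $\cG_\nu$ as a filtered colimit of ordinary perverse sheaves. Commuting $\Hom(\cH, -)$ past this double colimit using compactness of $\cH$ is routine but requires a small amount of bookkeeping.
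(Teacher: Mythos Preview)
Your proposal is correct and follows essentially the same route as the paper: reduce via a five-lemma/presentation argument to free modules $\Phi^A(\cH)$, then identify both sides using Lemma~\ref{lem:Rfree-hom} and~\eqref{eqn:Rfree-hom-ungr}, and finish with the compactness of $\cH$ in the ind-category (which the paper phrases as a citation to~\cite[Theorem~8.6.5(v) and Notation~8.6.1]{ks}). You are more explicit than the paper about the presentation step---the paper simply says ``a routine five-lemma argument'' after noting that finitely generated modules are quotients of free ones, whereas you spell out that Theorem~\ref{thm:proj-Hecke-Whit}\eqref{it:proj-enough} is needed to ensure the kernel is again finitely generated---but the substance is identical.
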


\begin{proof}
By definition,
every finitely generated graded $\cR$-module is 
a quotient of a free graded $\cR$-module of finite type; using this, a routine five-lemma argument shows that it is enough to prove the lemma in the case that $\cF$ 
is free.  
In fact, we may even assume that $\cF = \Phi^A(\cF')$ 
for some $\cF' \in \Perv_{(I^A_\unip,\cX_A)}(\Gr,\bk)$.  In this case, using Lemma~\ref{lem:Rfree-hom} and its ungraded analogue~\eqref{eqn:Rfree-hom-ungr}, we see that the left-hand side is given by
\[
\bigoplus_{\lambda \in \bY} \Hom_{\Perv_{(I^A_\unip,\cX_A)}(\Gr,\bk)}(\cF', \cG_\lambda),
\]
while the right-hand side is
\[
\Hom_{\Perv_{(I^A_\unip,\cX_A)}( \Gr,\bk)} \left(\cF', \bigoplus_{\lambda \in \bY} \cG_\lambda \right).
\]
The fact that these spaces coincide follows from~\cite[Theorem~8.6.5(v)]{ks} and the comments in~\cite[Notation~8.6.1]{ks}.
\end{proof}

\begin{rmk}
In case $\cG$ belongs to $\modf^\bY_{(I^A_\unip,\cX_A)}(\cR)$, in view of~\eqref{eqn:R-findim-ungr} the direct sum appearing in Lemma~\ref{lem:R-degrade} only has finitely many nonzero terms.
\end{rmk}

The following statement follows from the construction of projective and injective objects in the proof of Theorem~\ref{thm:proj-Hecke-Whit}, and the parallel construction that proves Theorem~\ref{thm:proj-Hecke-ungr}.

\begin{prop}
\label{prop:For-proj-inj}
The functor~\eqref{eqn:For-mod-R} sends projective (i.e.~injective) objects in $\modf^\bY_{(I^A_\unip,\cX_A)}(\cR)$ to projective (i.e.~injective) objects in $\modf_{(I^A_\unip,\cX_A)}(\cR)$.
\end{prop}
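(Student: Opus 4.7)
The plan is to reduce the statement to a calculation on an explicit family of projective/injective generators. By Remark~\ref{rmk:free-proj-inj}\eqref{it:free-proj-inj}, every projective object of $\modf^\bY_{(I^A_\unip,\cX_A)}(\cR)$ is a direct summand of a finite direct sum of modules of the form $\Phi^A(\Av^A_\psi(\TGr_x))\langle\mu\rangle$ with $x \in W_\ext^S$ and $\mu \in \bY$. Since $\For$ is additive and preserves direct summands, it suffices to verify that $\For$ sends each such generator to a projective (equivalently, injective) object in $\modf_{(I^A_\unip,\cX_A)}(\cR)$. Using the relations $\For \circ \langle\mu\rangle \cong \For$, the compatibility of $\For$ with $\Phi^A$, and its obvious compatibility with $\Av^A_\psi$ (all three built into their constructions), one obtains a canonical isomorphism
\[
\For\bigl(\Phi^A(\Av^A_\psi(\TGr_x))\langle\mu\rangle\bigr) \cong \Phi^A(\Av^A_\psi(\TGr_x))
\]
in $\modf_{(I^A_\unip,\cX_A)}(\cR)$. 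The task therefore reduces to showing that these ungraded objects are projective.

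To prove this, I would carry out the ungraded counterparts of the intermediate results used in Theorem~\ref{thm:proj-Hecke-Whit}. First, an ungraded analogue of Proposition~\ref{prop:IW-R-semisimple}: by~\cite{bgmrr} together with the geometric Satake equivalence, $\Mod_{(I^S_\unip,\cX_S)}(\cR)$ is equivalent to the category of ind-objects in $\Rep(G^\vee_\bk)$ equipped with an $\scO(G^\vee_\bk)$-action, which after forgetting the $\bY$-grading identifies (via Remark~\ref{rmk:AG-G^1}) with the category of ind-$\bk$-vector spaces; its finitely generated subcategory is that of finite-dimensional $\bk$-vector spaces, which is semisimple. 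Second, an ungraded version of Proposition~\ref{prop:projectives}\eqref{it:proj-R}: the object $\Phi(j^\lambda_!\cS_\lambda)$ is projective in $\Mod_{I_\unip}(\cR)$ for $\lambda \in \bY_{++}$, by the same adjunction argument
\[
\Hom_{\Mod_{I_\unip}(\cR)}\bigl(\Phi(\Av^S_!(\DGr^S_{t_\lambda w_\circ})), \cF\bigr) \cong \Hom_{\Mod_{(I_\unip^S,\cX_S)}(\cR)}\bigl(\Phi^S(\DGr^S_{t_\lambda w_\circ}), \Av^S_\psi(\cF)\bigr),
\]
whose right-hand side is exact in $\cF$ by the exactness of $\Av^S_\psi$ and the semisimplicity just established.

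The proof then closes as in the graded case: the wall-crossing functors $\xi_s$, $\xi_\omega$ and the averaging functor $\Av^A_\psi$ (all inherited from perverse sheaves) continue to have exact left and right adjoints in the ungraded category, so preserve projectives. Applying these functors to $\Phi(j^\varsigma_{!*}\cS_\varsigma) \cong \Phi(\TGr_{t_{w_\circ(\varsigma)}})$ and invoking Proposition~\ref{prop:avS-calc}\eqref{it:avS-tilt}, Lemma~\ref{lem:proj-prep}\eqref{it:prep-tilt}, and Proposition~\ref{prop:Av-tilting-indec} (all purely geometric statements that do not depend on the grading) yields that every $\Phi^A(\Av^A_\psi(\TGr_x))$ is a direct summand of a module of the form $\xi_{s_r}\cdots\xi_{s_1}\xi_{\omega^{-1}}(\Phi(\TGr_{t_{w_\circ(\mu)}}))$, hence projective in $\modf_{(I^A_\unip,\cX_A)}(\cR)$.

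The main obstacle is not conceptual but bookkeeping: one must verify that each of the functors $\Phi^A$, $\langle\mu\rangle$, $\Av^A_!$, $\Av^A_*$, $\Av^A_\psi$, $\xi_s$, $\xi_\omega$ admits an obvious ungraded counterpart, that these ungraded versions satisfy the same adjunctions and commutation relations as their graded counterparts, and that $\For$ intertwines the two. Once these routine compatibilities are established, the argument transcribes without change from the graded proof.
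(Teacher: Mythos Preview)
Your proposal is correct and follows essentially the same approach as the paper: the paper's proof is the single sentence ``follows from the construction of projective and injective objects in the proof of Theorem~\ref{thm:proj-Hecke-Whit}, and the parallel construction that proves Theorem~\ref{thm:proj-Hecke-ungr},'' and what you have written is precisely an unpacking of that sentence. One small slip: in your final paragraph, the object $\xi_{s_r}\cdots\xi_{s_1}\xi_{\omega^{-1}}(\Phi(\TGr_{t_{w_\circ(\mu)}}))$ lives in $\modf_{I_\unip}(\cR)$, so $\Phi^A(\Av^A_\psi(\TGr_x))$ cannot literally be a summand of it---you mean to take $\Av^A_\psi$ of the former (and $\mu$ should be $\varsigma$), but this does not affect the argument.
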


Using~\eqref{eqn:For-LGT} one can make the statement of Proposition~\ref{prop:For-proj-inj} more precise: for any $w \in {}^A W_\ext$, writing $w=xt_\lambda$ with $x \in {}^A W_\ext^\res$ and $\lambda \in \bY$ we have
\[
\For(\injh^A_w) \cong \cQ^A_{[x]}.
\]
In particular the functor $\For$ sends these indecomposable objects to indecomposable objects. In fact this property holds for general indecomposable objects, as shown in the next statement.

\begin{cor}
\label{cor:For-indec}
Let $\cF \in \modf^\bY_{(I^A_\unip,\cX_A)}(\cR)$. Then $\cF$ is indecomposable (in the category $\modf^\bY_{(I^A_\unip,\cX_A)}(\cR)$) iff $\For(\cF)$ is indecomposable (in $\modf_{(I^A_\unip,\cX_A)}(\cR)$).
\end{cor}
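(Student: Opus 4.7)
The plan is to prove the two implications separately. The direction ``$\For(\cF)$ indecomposable $\Rightarrow \cF$ indecomposable'' is immediate: the functor $\For$ is exact (see Section~\ref{ss:For}) and faithful in the strong sense that it kills no nonzero object of $\modf^\bY_{(I^A_\unip,\cX_A)}(\cR)$, since every nonzero finite-length object has a simple subquotient $\LGT^A_w$ and $\For(\LGT^A_w) = \cL^A_{[x]}$ is nonzero by~\eqref{eqn:For-LGT}. Hence a nontrivial decomposition $\cF = \cF_1 \oplus \cF_2$ yields a nontrivial decomposition $\For(\cF) = \For(\cF_1) \oplus \For(\cF_2)$.

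For the converse, since $\modf^\bY_{(I^A_\unip,\cX_A)}(\cR)$ and $\modf_{(I^A_\unip,\cX_A)}(\cR)$ are both Krull--Schmidt categories (by Lemma~\ref{lem:R-findim} and its ungraded analogue~\eqref{eqn:R-findim-ungr}), it is equivalent to prove that if $R_0 := \End^\bY(\cF)$ is local then so is $R := \End_{\modf_{(I^A_\unip,\cX_A)}(\cR)}(\For(\cF))$. By Lemma~\ref{lem:R-degrade}, $R$ carries a natural $\bY$-grading $R = \bigoplus_{\lambda \in \bY} R_\lambda$ with degree-$0$ component $R_0$, and is finite-dimensional by~\eqref{eqn:R-findim-ungr}. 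Since $\bY$ is torsion-free abelian and $R$ is finite-dimensional, every homogeneous element of nonzero degree is nilpotent in $R$.

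The crucial structural input is that for any nonzero $\lambda \in \bY$ and any $a \in R_\lambda$, $b \in R_{-\lambda}$, the product $ab \in R_0$ lies in the maximal ideal $J(R_0)$: otherwise $ab$ would be a unit in the local ring $R_0$, forcing the nonzero graded map $b \colon \cF \to \cF\langle -\lambda\rangle$ to split and thereby exhibiting $\cF$ as a direct summand of $\cF\langle-\lambda\rangle$. Since both are indecomposable, this would give an isomorphism $\cF \cong \cF\langle-\lambda\rangle$, which is impossible for nonzero finite-length $\cF$ with $\lambda \ne 0$: iterating such an isomorphism would force the finite set of simple subquotients of $\cF$ to be invariant under the infinite-order shift $\LGT^A_y \mapsto \LGT^A_{yt_\lambda}$, which cannot hold since $\bY$ is torsion-free. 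It follows easily that $J' := J(R_0) \oplus \bigoplus_{\lambda \ne 0} R_\lambda$ is a two-sided ideal of $R$ whose quotient $R/J' \cong R_0/J(R_0)$ is a division ring.

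The main obstacle is showing that $J'$ coincides with the Jacobson radical $J(R)$, whence $R$ is local. I plan to accomplish this by invoking the standard fact that, because $\bY$ is torsion-free abelian, the Jacobson radical of a finite-dimensional $\bY$-graded $\bk$-algebra is automatically a graded ideal; then $R/J(R)$ is $\bY$-graded semisimple with degree-$0$ component $R_0/J(R_0)$, a division ring. A graded version of the Wedderburn structure theorem, together with the torsion-freeness of $\bY$ and the finite-dimensionality of $R/J(R)$, will force $R/J(R)$ to equal its degree-$0$ component, completing the proof. This last graded-algebraic step is the technical crux of the argument; everything else follows mechanically from the structural lemmas already in place.
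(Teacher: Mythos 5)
Your overall route is the same as the paper's: both arguments reduce, via the Krull--Schmidt property (Lemma~\ref{lem:R-findim} and~\eqref{eqn:R-findim-ungr}) and Lemma~\ref{lem:R-degrade}, to the assertion that the finite-dimensional $\bY$-graded algebra $R=\End_{\modf_{(I^A_\unip,\cX_A)}(\cR)}(\For(\cF))$ is local if and only if its degree-zero component $R_0=\End_{\modf^\bY_{(I^A_\unip,\cX_A)}(\cR)}(\cF)$ is. Your easy direction is fine, and your structural observations are correct: homogeneous elements of nonzero degree are nilpotent (finite-dimensionality plus torsion-freeness of $\bY$); $R_\lambda R_{-\lambda}\subseteq J(R_0)$ for $\lambda\ne 0$ (the shift argument on composition factors, using~\eqref{eqn:For-LGT} and~\eqref{eqn:simple-translation}, is valid); hence $J':=J(R_0)\oplus\bigoplus_{\lambda\ne 0}R_\lambda$ is a two-sided ideal with division-ring quotient.

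However, the hard direction is not actually completed. You defer the identification $J'=J(R)$ to a ``planned'' appeal to gradedness of $J(R)$ plus a graded Wedderburn argument, and the sketch as written contains a circular step: asserting that $R/J(R)$ has degree-zero component $R_0/J(R_0)$ presupposes $J(R_0)\subseteq J(R)$, which is part of what has to be proved (gradedness of $J(R)$ only yields $J(R)\cap R_0\subseteq J(R_0)$, since $J(R)$ is nilpotent). What genuinely remains is to show $J'\subseteq J(R)$, e.g.\ that $J'$ is nilpotent; maximality of $J'$ then forces $J'=J(R)$ and locality of $R$. This missing step is exactly the standard fact the paper invokes and does not reprove: a finite-dimensional $\bY$-graded $\bk$-algebra is local as an ungraded ring iff its degree-zero component is local, cited from Gordon--Green~\cite{gg}. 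So your argument can be closed either by citing that result in place of your final step, or by supplying the (short but genuine) combinatorial proof of nilpotency of $J'$; as it stands, that step is a gap rather than a routine verification.
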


\begin{proof}
Since the categories $\modf^\bY_{(I^A_\unip,\cX_A)}(\cR)$ and $\modf_{(I^A_\unip,\cX_A)}(\cR)$ are Krull--Schmidt (see Lemma~\ref{lem:R-findim} and the comments after~\eqref{eqn:R-findim-ungr}), $\cF$, resp.~$\For(\cF)$, is indecomposable iff the ring
\[
\End_{\modf^\bY_{(I^A_\unip,\cX_A)}(\cR)}(\cF), \quad \text{resp.} \quad \End_{\modf_{(I^A_\unip,\cX_A)}(\cR)}(\For(\cF)),
\]
is local. Then the claim follows from Lemma~\ref{lem:R-degrade} and the standard fact that a finite-dimensional $\bY$-graded $\bk$-algebra is local (as a nongraded ring) iff its degree-$0$ component is local, see e.g.~\cite{gg}.
\end{proof}


We conclude this subsection with a lemma relating indecomposable objects in $\Perv_{(I^A_\unip,\cX_A)}(\Gr,\bk)$ to indecomposable ungraded $\cR$-modules.  The proof is based on arguments found in~\cite[\S2]{donkin}.  

\begin{lem}
\label{lem:indecomp-crit}
Assume that $\bk$ is algebraically closed.  Let $\cF \in \Perv_{(I^A_\unip,\cX_A)}(\Gr,\bk)$ be an indecomposable perverse sheaf such that $\Phi^A(\cF)$ remains indecomposable in $\modf_{(I^A_\unip,\cX_A)}(\cR)$. If $\cG \in \Perv_{\cL^+G}(\Gr,\bk)$ is also indecomposable, then $\cF \star^{\cL^+G} \cG \in \Perv_{(I^A_\unip,\cX_A)}(\Gr,\bk)$ is indecomposable.
\end{lem}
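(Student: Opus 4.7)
The plan is to compute the endomorphism ring of $\cF \star^{\cL^+G} \cG$ and show it is local, which (since $\bk$ is algebraically closed and $\Perv_{(I^A_\unip,\cX_A)}(\Gr,\bk)$ is Krull--Schmidt) will imply indecomposability. First, I will combine Lemma~\ref{lem:G-action} with the ungraded version of Lemma~\ref{lem:frobtwist-free} (see~\eqref{eqn:frobtwist-free}). Setting $V = \Satake(\cG)$ and $R = \End_{\modf_{(I^A_\unip,\cX_A)}(\cR)}(\Phi^A(\cF))$, we obtain canonically
\[
\Phi^A(\cF \star^{\cL^+G} \cG) \cong V \otimes_\bk \Phi^A(\cF)
\]
in $\modf_{(I^A_\unip,\cX_A)}(\cR)$, from which we deduce a canonical isomorphism of $\bk$-algebras $\End_{\modf_{(I^A_\unip,\cX_A)}(\cR)}(V \otimes_\bk \Phi^A(\cF)) \cong \End_\bk(V) \otimes_\bk R$. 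Invoking Lemma~\ref{lem:G-action}, the natural $G^\vee_\bk$-action on the left-hand side transports to an action on the right-hand side whose invariants are $\End_{\Perv_{(I^A_\unip,\cX_A)}(\Gr,\bk)}(\cF \star^{\cL^+G} \cG)$.

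The next step, which will require a small verification via the functoriality assertions in Lemma~\ref{lem:G-action}, is to identify this transported $G^\vee_\bk$-action as the diagonal action, where $G^\vee_\bk$ acts on $\End_\bk(V)$ by conjugation (via the Satake equivalence) and on $R$ by the action supplied by Lemma~\ref{lem:G-action}. Granted this, I set $S := (\End_\bk(V) \otimes_\bk R)^{G^\vee_\bk}$ and must show $S$ is local. Since $\Phi^A(\cF)$ is indecomposable by hypothesis, $R$ is a finite-dimensional local $\bk$-algebra; its Jacobson radical $J \subset R$ is a canonical (hence $G^\vee_\bk$-stable) nilpotent ideal, and $R/J \cong \bk$, necessarily with trivial $G^\vee_\bk$-action since $G^\vee_\bk$ acts by unital algebra automorphisms on a one-dimensional algebra.

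The ideal $N := (\End_\bk(V) \otimes_\bk J)^{G^\vee_\bk} \subset S$ is then nilpotent, and left-exactness of $G^\vee_\bk$-invariants applied to $0 \to \End_\bk(V) \otimes_\bk J \to \End_\bk(V) \otimes_\bk R \to \End_\bk(V) \to 0$ yields an injection
\[
S/N \hookrightarrow (\End_\bk(V))^{G^\vee_\bk} = \End_{G^\vee_\bk}(V).
\]
Since $\cG$ is indecomposable and the geometric Satake equivalence is an equivalence, $V$ is indecomposable in $\Rep(G^\vee_\bk)$, and since this latter category is Krull--Schmidt with $\bk$ algebraically closed, $\End_{G^\vee_\bk}(V)$ is local. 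In particular any idempotent in $S/N$ maps to $0$ or $1$, so $S/N$ has only trivial idempotents, and therefore $S/N$ is local by the standard idempotent-lifting argument for finite-dimensional algebras. Since $N$ is nilpotent, $S$ itself is local, finishing the proof.

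The only nontrivial step above is the verification that the $G^\vee_\bk$-action from Lemma~\ref{lem:G-action} matches the diagonal action on $\End_\bk(V) \otimes_\bk R$; this is where I expect the main technical work to lie, but it should follow formally by tracing the construction of the coaction through the Satake equivalence and the compatibility of~\eqref{eqn:frobtwist-free} with the coproduct morphism~\eqref{eqn:comult-R} in the proof of Lemma~\ref{lem:G-action}.
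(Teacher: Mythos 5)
Your proposal is correct and follows essentially the same route as the paper: both use Lemma~\ref{lem:G-action} together with~\eqref{eqn:frobtwist-free} to identify $\End(\cF \star^{\cL^+G} \cG)$ with the $G^\vee_\bk$-invariants of $\End_\bk(V) \otimes_\bk R$, and then transfer locality from $\End_{G^\vee_\bk}(V)$ across a nilpotent ideal. The only differences are cosmetic: the paper packages your last step as a separate lemma (a ring homomorphism with nilpotent kernel into a local ring has local source) instead of your idempotent-lifting argument, and the diagonal-action compatibility you defer is asserted at the same level of detail in the paper's own proof.
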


\begin{proof}
By assumption, the object $\Phi^A(\cF)$ is indecomposable in the Krull--Schmidt category $\modf_{(I^A_\unip,\cX_A)}(\cR)$; the algebra $\End_{\modf_{(I^A_\unip,\cX_A)}(\cR)}(\Phi^A(\cF))$ is therefore local. Since this algebra is also finite-dimensional (see~\eqref{eqn:R-findim-ungr}), its unique maximal ideal consists of nilpotent elements. Moreover, since $\bk$ is algebraically closed, the quotient of $\End_{\modf_{(I^A_\unip,\cX_A)}(\cR)}(\Phi^A(\cF))$ by its unique maximal ideal is identified with $\bk$. We will denote by
\[
q: \End_{\modf_{(I^A_\unip,\cX_A)}(\cR)}(\Phi^A(\cF)) \to \bk
\]
the quotient map. By Lemma~\ref{lem:G-action} there exists a canonical (algebraic) action of $G^\vee_\bk$ on $\End_{\modf_{(I^A_\unip,\cX_A)}(\cR)}(\Phi^A(\cF))$ by algebra automorphisms; the unique maximal ideal is necessarily stable under this action, so that $q$ is $G^\vee_\bk$-equivariant (for the trivial action on $\bk$).

Next, let $V=\Satake(\cG)$. Then $V$ is a finite-dimensional algebraic $G^\vee_\bk$-module, and using~\eqref{eqn:frobtwist-free} we see that
\begin{align*}
\End_{\modf_{(I^A_\unip,\cX_A)}(\cR)}(\Phi^A(\cF \star^{\cL^+G} \cG)) &\cong \End_{\modf_{(I^A_\unip,\cX_A)}(\cR)}(\Phi^A(\cF)) \otimes \End_\bk(V), \\
\End_{\modf_{(I^A_\unip,\cX_A)}(\cR)}(\Phi^A(\cG)) &\cong \End_\bk(V), \\
\End_{\Perv_{\cL^+ G}(\Gr,\bk)}(\cG) &\cong \End_{G^\vee_\bk}(V).
\end{align*}
Here the first two isomorphisms are $G^\vee_\bk$-equivariant for the actions provided by Lemma~\ref{lem:G-action} and the action on $V$.

Define a ring homomorphism
\[
\tilde a: \End_{\modf_{(I^A_\unip,\cX_A)}(\cR)}(\Phi^A(\cF \star^{\cL^+G} \cG)) \to \End_{\modf_{(I^A_\unip,\cX_A)}(\cR)}(\Phi^A(\cG))
\]
to be the map which corresponds to 
\[
q \otimes \id: \End_{\modf_{(I^A_\unip,\cX_A)}(\cR)}(\Phi^A(\cF)) \otimes \End_\bk(V) \to \End_\bk(V)
\]
under the isomorphisms above.  Since the kernel of $q$ is finite-dimensional and consists of nilpotent elements, the kernel of $\tilde a$ does as well. Since $q$ is $G^\vee_\bk$-equivariant, $\tilde a$ is also equivariant.

We claim that there is a unique ring homomorphism $a$ that makes the following diagram (of ring homomorphisms) commute:
\begin{equation}\label{eqn:donkin-argument}
\begin{tikzcd}
\End_{\Perv_{(I^A_\unip,\cX_A)}(\Gr,\bk)}(\cF \star^{\cL^+G} \cG) \ar[r, dashed, "a"] \ar[d, "\Phi^A"'] &
  \End_{\Perv_{\cL^+G}(\Gr,\bk)}(\cG) \ar[d, "\Phi^A"] \\
\End_{\modf_{(I^A_\unip,\cX_A)}(\cR)}(\Phi^A(\cF \star^{\cL^+G} \cG)) \ar[r, "\tilde a"] &   \End_{\modf_{(I^A_\unip,\cX_A)}(\cR)}(\Phi^A(\cG)).
\end{tikzcd}
\end{equation}
In fact, this follows from the fact that $\tilde a$ is $G^\vee_\bk$-equivariant, and that in each column the domain of the map identifies with the space of $G^\vee_\bk$-invariants in its target (see Lemma~\ref{lem:G-action}).
Since the kernel of $\tilde a$ consists of nilpotent elements, the same holds for the kernel of $a$.  
In view of Lemma~\ref{lem:local-rings} below and the indecomposability of $\cG$, this implies that the algebra $\End_{\Perv_{(I^A_\unip,\cX_A)}(\Gr,\bk)}(\cF \star^{\cL^+G} \cG)$ is local, and hence that the object $\cF \star^{\cL^+G} \cG$ is indecomposable.
%
\end{proof}

\begin{lem}
\label{lem:local-rings}
Let $\bk$ be an algebraically closed field, let $A,A'$ be finite-dimensional $\bk$-algebras, and let $a : A \to A'$ be an algebra homomorphism. Assume that
\begin{enumerate}
\item $\ker(a)$ consists of nilpotent elements;
\item $A'$ is local.
\end{enumerate}
Then $A$ is local.
\end{lem}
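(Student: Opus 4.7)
The plan is to reduce to a statement about idempotents. Recall the standard fact that a finite-dimensional $\bk$-algebra $A$ is local if and only if $0$ and $1$ are its only idempotents (this relies on $A$ being Artinian, so that the set of non-units is the Jacobson radical as soon as $A$ has no nontrivial idempotents; note in particular that algebraic closedness of $\bk$ is not needed here, but it is a free hypothesis).

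Given this reformulation, the argument is short. I would take an arbitrary idempotent $e \in A$ and consider its image $a(e) \in A'$, which is an idempotent in the local algebra $A'$. Hence $a(e) \in \{0,1\}$. If $a(e) = 0$, then $e \in \ker(a)$, so $e$ is nilpotent by hypothesis; being also idempotent, $e = 0$. If $a(e) = 1$, then $1 - e \in \ker(a)$ is both idempotent and nilpotent, so $1 - e = 0$, i.e.\ $e = 1$. Thus $A$ has no nontrivial idempotents, and is therefore local.

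There is no real obstacle: the only point worth stating carefully is the equivalence between locality and the absence of nontrivial idempotents for finite-dimensional algebras, which the reader can be referred to (e.g.\ in any standard text on Artinian rings / the Krull--Schmidt theorem). Everything else is immediate from the two hypotheses on $a$.
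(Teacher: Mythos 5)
Your proof is correct, and it takes a genuinely different route from the paper's. The paper argues directly with ideals: it sets $\mathfrak{m} = a^{-1}(\mathfrak{m}')$ where $\mathfrak{m}'$ is the maximal ideal of $A'$, notes that $\mathfrak{m}$ consists of nilpotents (being built from $\ker(a)$ and the nilpotent ideal $\mathfrak{m}'$), and then uses the algebraic closedness of $\bk$ to see that $A/\mathfrak{m}$ embeds in $A'/\mathfrak{m}' \cong \bk$, so that $A = \bk \cdot 1 \oplus \mathfrak{m}$ and every element outside $\mathfrak{m}$ is invertible. Your argument instead reduces locality to the absence of nontrivial idempotents: any idempotent $e \in A$ has $a(e) \in \{0,1\}$ since $A'$ is local, and in either case $e$ or $1-e$ is a nilpotent idempotent in $\ker(a)$, hence zero. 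The trade-off is clear: the paper's proof is completely self-contained and elementary, but genuinely uses the hypothesis that $\bk$ is algebraically closed (to identify the residue field of $A'$ with $\bk$); yours delegates the key step to the standard fact that a finite-dimensional (Artinian) algebra is local if and only if its only idempotents are $0$ and $1$ — a fact whose proof is usually routed through lifting idempotents modulo the nil Jacobson radical and Artin--Wedderburn — and in exchange you dispense with algebraic closedness altogether, so your version of the lemma holds over any field (indeed for any Artinian rings). Since the lemma is only invoked in the paper under an algebraic closedness assumption (in the indecomposability argument of Lemma~\ref{lem:indecomp-crit}), the extra generality is not needed there, but your observation that the hypothesis is superfluous is a valid strengthening; just make sure to state the cited equivalence with a precise reference (e.g.\ to lifting of idempotents in Artinian rings) rather than leaving it as folklore.
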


\begin{proof}
Let $\mathfrak{m}' \subset A'$ be the unique maximal ideal in $A'$, and set $\mathfrak{m}=a^{-1}(\mathfrak{m}')$. Since $\ker(a)$ and $\mathfrak{m}'$ consist of nilpotent elements (in $A$ and $A'$ respectively), so does $\mathfrak{m}$. On the other hand, $A/\mathfrak{m}$ is a $\bk$-algebra which embeds in $A' / \mathfrak{m}'$, which is isomorphic to $\bk$ since this field is algebraically closed; it follows that $\mathfrak{m}$ is a maximal ideal and that $A=\bk \cdot 1 \oplus \mathfrak{m}$. Since $\mathfrak{m}$ consists of nilpotent elements this shows that any element in $A \smallsetminus \mathfrak{m}$ is invertible, and hence that any ideal of $A$ is contained in $\mathfrak{m}$, which finishes the proof.
\end{proof}

\subsection{A geometric version of Donkin's conjecture}
\label{ss:Donkin-conj}

A celebrated conjecture of Donkin~\cite{donkin-tilting} asserts that certain indecomposable tilting modules for reductive groups should remain indecomposable upon restriction to the Frobenius kernel. (For recent developments on this question, see~\cite{bnps}.) In this subsection we study the analogue of this property in our geometric setting. We show in particular that the geometric variant of this conjecture implies a ``Steinberg-type'' formula for tilting perverse sheaves (as in the representation-theoretic context, see~\cite[\S II.E.9]{jantzen}).

\begin{thm}
\label{thm:indecomposable}
Let $w \in {}^A W^\res_\ext$.  The following conditions are equivalent:
\begin{enumerate}
\item
\label{it:head-simple}
The head of $\TGr^A_{w_A w^\triangle}$ is simple.
\item
\label{it:head-simple-2}
The socle of $\TGr^A_{w_A w^\triangle}$ is simple.
\item 
\label{it:phi-indecomp}
The object $\Phi^A(\TGr^A_{w_A w^\triangle})$ is indecomposable (in the category $\modf^\bY_{(I_\unip^A,\cX_A)}(\cR)$ or in $\modf_{(I_\unip^A, \cX_A)}(\cR)$).
\item 
\label{it:phi-projcov}
For all $\lambda \in \bY$, the object $\Phi^A(\TGr_{w_A w^\triangle})\la -\lambda\ra$ is both the projective cover and injective hull of $\LGT^A_{wt_\lambda}$ in $\modf^\bY_{(I_\unip, \cX_A)}(\cR)$.
\end{enumerate}
If the conditions above hold, then we also have the following:
\begin{enumerate}
\setcounter{enumi}{4}
\item 
\label{it:donkin-perv}
(Donkin formula for tilting sheaves) For all $\mu \in \bY_+$, we have
\[
\TGr^A_{w_A w^\triangle} \star^{\cL^+G} \cT^\mu \cong \TGr^A_{w_A w^\triangle t_{w_\circ(\mu)}}.
\]
\end{enumerate}
\end{thm}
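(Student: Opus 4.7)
The plan is to establish the equivalences by a short cycle of implications, then derive~(5) from~(3). First, (1) $\Leftrightarrow$ (2) is immediate from Verdier self-duality: by Remark~\ref{rmk:Verdier-duality}, $\D$ preserves $\TGr^A_{w_A w^\triangle}$ and $\LGr^A_w$ while swapping heads and socles. Next, (3) $\Leftrightarrow$ (4) is straightforward: under~(3), $\Phi^A(\TGr^A_{w_A w^\triangle})$ is indecomposable and contains $\injh^A_w$ as a direct summand with multiplicity one by Proposition~\ref{prop:tilt-proj-inj-R}\eqref{it:Phi-tilting-injective-2}, so it is isomorphic to $\injh^A_w$; shifting by $\la -\lambda\ra$ and invoking~\eqref{eqn:injh-translation} yields~(4), while~(4) evaluated at $\lambda = 0$ immediately recovers~(3). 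It therefore remains to prove (1) $\Leftrightarrow$ (3) and (3) $\Rightarrow$ (5).

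For (3) $\Rightarrow$ (1), I would use that an indecomposable projective object in the finite-length category $\modf^\bY_{(I^A_\unip, \cX_A)}(\cR)$ has a simple head. Writing the head of $\TGr^A_{w_A w^\triangle}$ as $\bigoplus_i \LGr^A_{y_i}$ and applying the exact functor $\Phi^A$ produces a semisimple quotient $\bigoplus_i \Phi^A(\LGr^A_{y_i})$ of $\Phi^A(\TGr^A_{w_A w^\triangle})$, which under~(3) must factor through the simple head. By Lemma~\ref{lem:frobtwist-free} combined with Theorem~\ref{thm:geometric-Steinberg}\eqref{it:thm-Steinberg}, each $\Phi^A(\LGr^A_{y_i})$ is a nonzero semisimple graded $\cR$-module whose simplicity forces $y_i \in {}^A W_\ext^\res$. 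Hence only one summand $\LGr^A_{y_1}$ survives, with $y_1 \in {}^A W_\ext^\res$; combined with the fact (Corollary~\ref{cor:tilt-mult}) that $\LGr^A_w$ appears in the head with multiplicity one, this forces $y_1 = w$, yielding~(1).

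The substantive content is (1) $\Rightarrow$ (3). I would classify the simple quotients of $\Phi^A(\TGr^A_{w_A w^\triangle})$ using Lemma~\ref{lem:Rfree-hom}: for $z = yt_\lambda$ with $y$ in a chosen system of representatives in ${}^A W_\ext^\res$, a nonzero map $\Phi^A(\TGr^A_{w_A w^\triangle}) \to \LGT^A_z$ corresponds to a nonzero map $\TGr^A_{w_A w^\triangle} \to \LGr^A_y \star^{\cL^+G} \cR_\lambda$. Under~(1) the source has simple head $\LGr^A_w$, so the image must contain $\LGr^A_w$ as a composition factor; by Theorem~\ref{thm:geometric-Steinberg}\eqref{it:thm-Steinberg}, the composition factors of $\LGr^A_y \star^{\cL^+G} \cR_\lambda$ are of the form $\LGr^A_{yt_\nu}$, and the equality $yt_\nu = w$ of two restricted elements up to a translation forces $y = w$. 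Thus the head of $\Phi^A(\TGr^A_{w_A w^\triangle})$ consists only of simples of the form $\LGT^A_{wt_\lambda}$. Decomposing into indecomposable projective-injective summands $\injh^A_y$, each contributes $\LGT^A_{\iota_A^{-1}(y)}$ to the head, so every summand is of the form $\injh^A_{\iota_A(wt_\lambda)}$; the exclusion clause of Proposition~\ref{prop:tilt-proj-inj-R}\eqref{it:Phi-tilting-injective-2} rules out all such summands with $\lambda \neq 0$, leaving only $\injh^A_{\iota_A(w)}$, which appears with multiplicity one. Therefore $\Phi^A(\TGr^A_{w_A w^\triangle}) \cong \injh^A_{\iota_A(w)}$, establishing~(3).

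Finally, for (3) $\Rightarrow$ (5), I would apply Corollary~\ref{cor:For-indec} to pass to the ungraded category, where $\For(\Phi^A(\TGr^A_{w_A w^\triangle}))$ remains indecomposable, and then invoke Lemma~\ref{lem:indecomp-crit} (base-changing to the algebraic closure if necessary) to deduce that $\TGr^A_{w_A w^\triangle} \star^{\cL^+G} \cT^\mu$ is indecomposable. This convolution is tilting: combining the filtrations of $\cT^\mu$ by various $\cI^\eta_!$ and $\cI^\eta_*$ with an $A$-equivariant analogue of Lemma~\ref{lem:twtri-standard} (obtained via Proposition~\ref{prop:Av-tilting-indec}) shows that it admits both a standard and a costandard filtration. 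A direct support computation identifies the open stratum of its support as $\Gr_{w_A w^\triangle t_{w_\circ(\mu)}}$, and invoking~\eqref{eqn:triangle-translation} to rewrite $w_A w^\triangle t_{w_\circ(\mu)} = w_A (wt_{w_\circ(\mu)})^\triangle$ yields $\TGr^A_{w_A w^\triangle} \star^{\cL^+G} \cT^\mu \cong \TGr^A_{w_A w^\triangle t_{w_\circ(\mu)}}$. The main obstacle throughout the argument is the head computation in (1) $\Rightarrow$ (3), where ruling out unwanted summands requires both the sharp multiplicity information of Proposition~\ref{prop:tilt-proj-inj-R}\eqref{it:Phi-tilting-injective-2} and careful tracking of elements in ${}^A W_\ext^\res$ modulo translations orthogonal to all roots.
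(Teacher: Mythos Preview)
Your proof is essentially correct and close to the paper's approach, with one small gap and one noteworthy variation.

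The gap is in your (3) $\Rightarrow$ (4): you only establish $\Phi^A(\TGr^A_{w_A w^\triangle}) \cong \injh^A_w$, which is the injective hull; condition~(4) also demands the projective cover. You should invoke the other half of Proposition~\ref{prop:tilt-proj-inj-R}\eqref{it:Phi-tilting-injective-2} (that $\injh^A_{\iota_A(w)}$ is also a summand with multiplicity one) to conclude $\Phi^A(\TGr^A_{w_A w^\triangle}) \cong \injh^A_{\iota_A(w)}$ as well---this is exactly what the paper does, and at this point in the paper $\iota_A$ is not yet known to be the identity.

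The variation is in (1) $\Rightarrow$ (3): you stay in the graded category and use the exclusion clause of Proposition~\ref{prop:tilt-proj-inj-R}\eqref{it:Phi-tilting-injective-2} to eliminate summands $\injh^A_{\iota_A(wt_\lambda)}$ with $\lambda \ne 0$ directly. The paper instead passes to the ungraded category via Lemma~\ref{lem:R-degrade}, observes that $\dim \Hom_{\modf_{(I^A_\unip,\cX_A)}(\cR)}(\Phi^A(\TGr^A_{w_A w^\triangle}), \Phi^A(\LGr^A_w)) = 1$, and argues by contradiction: decomposability would produce a second simple quotient $\cL^A_{[y]}$ with $[y]\ne[w]$, forcing a nonzero map $\TGr^A_{w_A w^\triangle} \to \LGr^A_y \star^{\cL^+G} \cR$, impossible since the simple head $\LGr^A_w$ is not among the composition factors there. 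Your route is slightly more direct and avoids the ungraded detour; the paper's makes the role of Lemma~\ref{lem:R-degrade} more visible. The remaining steps---(1) $\Leftrightarrow$ (2) by Verdier self-duality, (3) $\Rightarrow$ (1) via simplicity of the head of an indecomposable projective (the paper's (4) $\Rightarrow$ (1)), and the derivation of~(5) via Lemma~\ref{lem:indecomp-crit} after establishing the tilting property through $\Av^A_\psi$ and Proposition~\ref{prop:Av-tilting-indec}---match the paper.
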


Note that if the head or socle of $\TGr^A_{w_A w^\triangle}$ is simple, they must be isomorphic to $\LGr^A_w$ by Corollary~\ref{cor:tilt-mult}. In~\eqref{it:phi-indecomp}, indecomposability in $\modf^\bY_{I_\unip}(\cR)$ or in $\modf_{I_\unip}(\cR)$ are equivalent in view of Corollary~\ref{cor:For-indec}.

\begin{proof}
The equivalence of~\eqref{it:head-simple} and~\eqref{it:head-simple-2} follows from Verdier self-duality of $\TGr^A_{w_A w^\triangle}$, see Remark~\ref{rmk:Verdier-duality}.

Let us now prove that
\eqref{it:head-simple} $\Rightarrow$ \eqref{it:phi-indecomp}. Suppose that $\TGr^A_{w_A w^\triangle}$ has a simple head, but that $\Phi^A(\TGr^A_{w_A w^\triangle})$ is decomposable.  
Proposition~\ref{prop:tilt-proj-inj-R}\eqref{it:Phi-tilting-injective-2} and Lemma~\ref{lem:R-degrade}
imply that
\[
\dim \Hom_{\modf_{(I^A_\unip, \cX_A)}(\cR)}(\Phi^A(\TGr^A_{w_A w^\triangle}), \Phi^A(\LGr^A_w)) = 1,
\]
so there must be some element $y \in {}^A W^\res_\ext$ with $y \not\sim w$ (under the equivalence relation considered in Proposition~\ref{prop:R-simple-ungr}) such that
\[
\Hom_{\modf_{(I^A_\unip, \cX_A)}(\cR)}(\Phi^A(\TGr^A_{w_A w^\triangle}), \Phi^A(\LGr^A_y)) \ne 0.
\]
By~\eqref{eqn:Rfree-hom-ungr}, both sides of the following isomorphism are then nonzero:
\begin{multline*}
\Hom_{\Perv_{(I^A_\unip, \cX_A)}(\Gr,\bk)}(\TGr^A_{w_A w^\triangle}, \LGr^A_y \star^{\cL^+G} \cR)
\\
\cong \bigoplus_{\mu \in \bY} \varinjlim_\lambda \Hom(\TGr^A_{w_A w^\triangle}, \LGr^A_y \star^{\cL^+G} \cI_*^{w_\circ(\mu) +\lambda} \star^{\cL^+G} \cI_*^{-w_\circ(\lambda)}).
\end{multline*}
However, by Theorem~\ref{thm:geometric-Steinberg} every composition factor of $\LGr^A_y \star^{\cL^+G} \cI_*^{w_\circ(\mu) +\lambda} \star^{\cL^+G} \cI_*^{-w_\circ(\lambda)}$ is of the form $\LGr^A_{y t_\nu}$ for some $\nu \in -\bY_+$.  The unique simple quotient of $\TGr^A_{w_A w^\triangle}$, namely $\LGr^A_w$ (see Corollary~\ref{cor:tilt-mult}), is not of this form, so
\[
\Hom(\TGr^A_{w_A w^\triangle}, \LGr^A_y \star^{\cL^+G} \cI_*^{w_\circ(\mu) +\lambda} \star^{\cL^+G} \cI_*^{-w_\circ(\lambda)}) = 0
\]
for any $\lambda,\mu$, a contradiction.


We now show that
\eqref{it:phi-indecomp} $\Rightarrow$ \eqref{it:phi-projcov}. 
If $\Phi^A(\TGr^A_{w_A w^\triangle})$ is indecomposable, then Proposition~\ref{prop:tilt-proj-inj-R}\eqref{it:Phi-tilting-injective-2} shows that
\[
\injh^A_w \cong \Phi^A(\TGr^A_{w_A w^\triangle}) \cong \injh^A_{\iota_A(w)}.
\]
The claim in~\eqref{it:phi-projcov} follows using~\eqref{eqn:injh-translation}.

We next show that
\eqref{it:phi-projcov} $\Rightarrow$ \eqref{it:head-simple}. Assume that~\eqref{it:phi-projcov} holds, and that the head of $\TGr^A_{w_A w^\triangle}$ has more than one summand. By Corollary~\ref{cor:tilt-mult}, there exists $y \in {}^A W_\ext^S$ with $y \neq x$ such that $\LGr^A_y$ is a quotient of $\TGr^A_{w_A w^\triangle}$. Applying $\Phi^A$ we deduce a surjection $\Phi^A(\TGr^A_{w_A w^\triangle}) \twoheadrightarrow \Phi^A(\LGr^A_y)$. Theorem~\ref{thm:geometric-Steinberg}, Lemma~\ref{lem:frobtwist-free} and~\eqref{eqn:simple-translation} show that $\Phi^A(\LGr^A_y)$ surjects to $\LGT_y^A$, so that $\Phi^A(\TGr^A_{w_A w^\triangle})$ has two different simple quotients. This contradicts the fact that this object is a projective cover.


Finally we show that
\eqref{it:phi-projcov} $\Rightarrow$ \eqref{it:donkin-perv}. First, note that the perverse sheaf $\TGr^A_{w_A w^\triangle} \star^{\cL^+G} \cT^\mu$ is tilting. In fact $\TGr_{w^\triangle} \star^{\cL^+ G} \cT^\mu$ is tilting by
Lemma~\ref{lem:twtri-standard}, hence
\[
\Av^A_\psi(\TGr_{w^\triangle} \star^{\cL^+ G} \cT^\mu) \cong \Av^A_\psi(\TGr_{w^\triangle}) \star^{\cL^+ G} \cT^\mu
\]
is tilting too, see~\S\ref{ss:Av-Iw}. By
Proposition~\ref{prop:Av-tilting-indec} this object is a direct sum of copies of $\TGr^A_{w_A w^\triangle} \star^{\cL^+G} \cT^\mu$, so that the latter object is tilting.

Now that we know that this object is tilting, support considerations show that $\TGr^A_{w_A w^\triangle} \star^{\cL^+G} \cT^\mu$ admits $\TGr^A_{w_A w^\triangle t_{w_\circ(\mu)}}$ as a direct summand; we therefore only need to show that this object is indecomposable.
A routine argument (cf.~\cite[Proposition~B.3]{modrap1}) shows that this tilting perverse sheaf is indecomposable if and only if the object obtained by extension of scalars to the algebraic closure of $\bk$ is indecomposable.  Thus, we may assume without loss of generality that $\bk$ is algebraically closed. Then the claim follows from Lemma~\ref{lem:indecomp-crit}. 
\end{proof}

\begin{rmk}
\phantomsection\label{rmk:indecomposable}
\begin{enumerate}
\item
\label{it:indecomposable-char0}
In case $\bk$ has characteristic $0$, Theorem~\ref{thm:proj-char0-Whit} says in particular that condition~\eqref{it:head-simple} in Theorem~\ref{thm:indecomposable} holds for any $w \in {}^A W_\ext^\res$. Hence in this case the injective hulls $\injh^A_y$ ($y \in {}^A W_\ext$) can be described explicitly: if $y=xt_\lambda$ with $x \in {}^A W_\ext^\res$ and $\lambda \in \bY$ then
\[
\injh^A_y \cong \Phi^A(\TGr^A_{w_A x^\triangle}) \langle -\lambda \rangle.
\]
\item
\label{it:indecomposable-varsigma}
When $\bk$ has positive characteristic,
one instance in which the conditions of Theorem~\ref{thm:indecomposable} hold is for the element $w = t_\varsigma w_\circ \in W^\res_\ext$ in case $A=\varnothing$.  Indeed, by Proposition~\ref{prop:avS-calc}, the object $j^\varsigma_!\cS_\varsigma \cong j^\varsigma_*\cS_\varsigma \cong \TGr_{t_{w_\circ(\varsigma)}}$ has a simple head and socle.  (Note that $w^\triangle = t_{w_\circ(\varsigma)}$.)  Thus, by Theorem~\ref{thm:indecomposable}, for any $\lambda \in \bY$ the object $\Phi(\TGr_{t_{w_\circ(\varsigma)}})\la -\lambda \ra$ is both the injective hull and the projective cover of $\LGT_{t_\varsigma w_\circ t_\lambda}$; we thus have
\[
\injh_{t_\varsigma w_\circ t_\lambda} = \Phi(\TGr_{t_{w_\circ(\varsigma)}})\la -\lambda \ra.
\]
\end{enumerate}
\end{rmk}

\section{Baby Verma and co-Verma modules}
\label{sec:baby-verma}

In this section we introduce 
objects of $\modf^\bY_{(I^A_\unip,\cX_A)}(\cR)$ which are geometric analogues of baby Verma 
modules  (i.e.~the objects denoted $\widehat{Z}(\lambda)$ in~\cite[Chap.~II.9]{jantzen}). 
These objects will be obtained from the baby co-Verma modules of~\S\ref{ss:cbV} using a ``Verdier duality" autoequivalence.
(In the representation-theoretic context, such a relation is well known, see~\cite[Equation~(5) in~\S 9.3]{jantzen}.) 

\subsection{Verdier duality}
\label{ss:verdier}

We now explain how to define Verdier duality in the categories $\modf^\bY_{(I^A_\unip,\cX_A)}(\cR)$. 

Recall the Verdier duality functor
\[
\D : \Db_{(I^A_\unip,\cX_A)}(\Gr) \to \Db_{(I^A_\unip,\cX^{-1}_A)}(\Gr)
\]
considered in Remark~\ref{rmk:Verdier-duality}.
What we now want to do is to ``extend" this functor to the category $\modf^\bY_{(I^A_\unip,\cX_A)}(\cR)$, i.e.~to define an exact anti-equivalence
\[
\D : \modf^\bY_{(I^A_\unip,\cX_A)}(\cR) \simto \modf^\bY_{(I^A_\unip,\cX^{-1}_A)}(\cR)
\]
 which satisfies
\begin{equation}
\label{eqn:D-free}
\D(\Phi^A(\cF) \langle \lambda \rangle) \cong \Phi^A(\D(\cF)) \langle \lambda \rangle
\end{equation}
for any $\cF$ in $\Perv_{(I^A_\unip,\cX_A)}(\Gr)$ and $\lambda \in \bY$. (Here we are making an abuse of notation similar to that of Remark~\ref{rmk:Verdier-duality}, in the sense that the notation $\Phi^A$ on either side of this equation is used for two different functors: on the left-hand side the functor is defined using the local system $\cX_A$, while on the right-hand side it is defined using $\cX_A^{-1}$.)
Note that a ``naive'' extension of $\D$ to ind-objects would send ind-objects to pro-objects, and thus not give an endofunctor of $\cR$-modules. Instead, we will use the fact that~\eqref{eqn:D-free} prescribes the definition of $\D$ on free $\cR$-modules of finite type, and that the category $\modf^\bY_{(I^A_\unip,\cX_A)}(\cR)$ can be described in terms of these free modules.

We start by making formal sense of this latter idea. For this, we define the additive $\bk$-linear category $\mathsf{Free}^\bY_{(I^A_\unip,\cX_A)}(\cR)$ whose objects are formal direct sums
\[
\bigoplus_{j \in J} (\cF_j,\lambda_j)
\]
where $J$ is a finite set, each $\cF_j$ is in $\Perv_{(I^A_\unip,\cX_A)}(\Gr,\bk)$, and each $\lambda_j$ is in $\bY$, and such that the space of morphisms from $\bigoplus_{j \in J} (\cF_j,\lambda_j)$ to $\bigoplus_{k \in K} (\cG_k,\mu_k)$ is
\begin{multline*}
\Hom_{\modf^\bY_{(I^A_\unip,\cX_A)}(\cR)}\left( \bigoplus_{j \in J} \Phi^A(\cF_j) \langle \lambda_j \rangle, \bigoplus_{k \in K} \Phi^A(\cG_k) \langle \mu_k \rangle \right) \\
= \bigoplus_{\substack{j \in J \\ k \in K}} \Hom_{\modf^\bY_{(I^A_\unip,\cX_A)}(\cR)}(\Phi^A(\cF_j),\Phi^A(\cG_k)\la \mu_k - \lambda_j\ra).
\end{multline*}
By definition $\Phi^A$ factors as a composition of additive $\bk$-linear functor
\[
\Perv_{(I^A_\unip,\cX_A)}(\Gr,\bk) \xrightarrow{\Phi^A_1} \mathsf{Free}^\bY_{(I^A_\unip,\cX_A)}(\cR) \xrightarrow{\Phi^A_2} \modf^\bY_{(I^A_\unip,\cX_A)}(\cR),
\]
where $\Phi^A_2$ is fully faithful. Moreover, the objects in the essential image of $\Phi^A_2$ are exactly the free $\cR$-modules of finite type.

We now consider the homotopy category $K(\mathsf{Free}^\bY_{(I^A_\unip,\cX_A)}(\cR))$, and the triangulated subcategory $K(\mathsf{Free}^\bY_{(I^A_\unip,\cX_A)}(\cR))^{\mathrm{b}}$ of complexes whose image under $K(\Phi^A_2)$ has bounded cohomology. (This subcategory is \emph{not} the bounded homotopy category of $\mathsf{Free}^\bY_{(I^A_\unip,\cX_A)}(\cR)$.) Since $\Db \modf^\bY_{(I^A_\unip,\cX_A)}(\cR)$ identifies with the full subcategory of the unbounded derived category $D(\modf^\bY_{(I^A_\unip,\cX_A)}(\cR))$ whose objects have bounded cohomology, the composition of $K(\Phi^A_2)$ with the canonical functor $K(\modf^\bY_{(I^A_\unip,\cX_A)}(\cR)) \to D(\modf^\bY_{(I^A_\unip,\cX_A)}(\cR))$ restricts to a functor
\begin{equation}
\label{eqn:functor-KPhi}
K(\mathsf{Free}^\bY_{(I^A_\unip,\cX_A)}(\cR))^{\mathrm{b}} \to \Db(\modf^\bY_{(I^A_\unip,\cX_A)}(\cR)).
\end{equation}

Next, let $D(\mathsf{Free}^\bY_{(I^A_\unip,\cX_A)}(\cR))^{\mathrm{b}}$ be the Verdier quotient of $K(\mathsf{Free}^\bY_{(I^A_\unip,\cX_A)}(\cR))^{\mathrm{b}}$ by the kernel of \eqref{eqn:functor-KPhi}. Then, by the universal property of the Verdier quotient, the functor~\eqref{eqn:functor-KPhi} factors through a triangulated functor
\begin{equation}
\label{eqn:D-Psi}
D(\Phi^A_2) : D(\mathsf{Free}^\bY_{(I^A_\unip,\cX_A)}(\cR))^{\mathrm{b}} \to \Db(\modf^\bY_{(I^A_\unip,\cX_A)}(\cR)).
\end{equation}

\begin{lem}
The functor~\eqref{eqn:D-Psi} is an equivalence of categories.
\end{lem}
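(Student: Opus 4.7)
The plan is to verify essential surjectivity and full faithfulness of $D(\Phi^A_2)$ separately. The key input throughout is Remark~\ref{rmk:free-proj-inj}\eqref{it:free-proj-inj}, which singles out a collection of objects of $\mathsf{Free}^\bY_{(I^A_\unip,\cX_A)}(\cR)$ (namely, the finite direct sums of objects of the form $\Phi^A(\Av^A_\psi(\TGr_{x}))\langle\mu\rangle$) that happen to be projective in $\modf^\bY_{(I^A_\unip,\cX_A)}(\cR)$ and through which every object of $\modf^\bY_{(I^A_\unip,\cX_A)}(\cR)$ can be covered. I will call these \emph{projective free modules} below.

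For essential surjectivity, first consider a single object $M \in \modf^\bY_{(I^A_\unip,\cX_A)}(\cR)$. Since Theorem~\ref{thm:proj-Hecke-Whit}\eqref{it:proj-enough} identifies $\modf^\bY_{(I^A_\unip,\cX_A)}(\cR)$ with $\Mod^\bY_{(I^A_\unip,\cX_A)}(\cR)^\flen$, the category is closed under subobjects (in particular, under kernels of surjections between finitely generated modules), so one can iteratively build a left resolution
\[
\cdots \to P_1 \to P_0 \to M \to 0
\]
by projective free modules. The resulting complex $P^\bullet$ lies in $K^-(\mathsf{Free}^\bY_{(I^A_\unip,\cX_A)}(\cR))^{\mathrm{b}}$ and is sent to $M[0]$ by $D(\Phi^A_2)$. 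A general bounded complex $M^\bullet$ in $\Db(\modf^\bY_{(I^A_\unip,\cX_A)}(\cR))$ is then handled by induction on the length of its nonzero cohomology range, splicing resolutions of successive cohomology objects via the truncation triangles in the spirit of the horseshoe lemma.

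For full faithfulness, given $C^\bullet, D^\bullet \in K(\mathsf{Free}^\bY_{(I^A_\unip,\cX_A)}(\cR))^{\mathrm{b}}$, the same construction produces a quasi-isomorphism (in $K(\modf^\bY_{(I^A_\unip,\cX_A)}(\cR))$) of the form $P^\bullet \to C^\bullet$, where $P^\bullet$ is a bounded-above complex of projective free modules with bounded cohomology. By construction of the Verdier quotient, every morphism in $K(\mathsf{Free}^\bY_{(I^A_\unip,\cX_A)}(\cR))^{\mathrm{b}}$ whose cone is acyclic (as a complex in $\modf^\bY_{(I^A_\unip,\cX_A)}(\cR)$) becomes invertible in $D(\mathsf{Free}^\bY_{(I^A_\unip,\cX_A)}(\cR))^{\mathrm{b}}$; in particular $P^\bullet \to C^\bullet$ does. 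Both sides of the map induced by $D(\Phi^A_2)$ on Hom-sets can therefore be computed after replacing $C^\bullet$ by $P^\bullet$. Since morphisms out of a bounded-above complex of projectives in the derived category coincide with morphisms in the homotopy category, and since $\Phi^A_2$ is fully faithful, both sides are identified with $\Hom_{K(\mathsf{Free}^\bY_{(I^A_\unip,\cX_A)}(\cR))}(P^\bullet, D^\bullet)$, giving the desired bijection.

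The main obstacle I expect to address is that $\mathsf{Free}^\bY_{(I^A_\unip,\cX_A)}(\cR)$ contains arbitrary free modules $\Phi^A(\cF)\langle\lambda\rangle$, most of which are \emph{not} projective in $\modf^\bY_{(I^A_\unip,\cX_A)}(\cR)$ (recall that $\Perv_{(I^A_\unip,\cX_A)}(\Gr,\bk)$ itself has no nonzero projective object when $\bk$ has positive characteristic). This rules out any direct appeal to the classical equivalence $K^{-,\mathrm{b}}(\mathrm{Proj}) \simeq \Db(\mathcal{A})$ applied to $\mathsf{Free}^\bY_{(I^A_\unip,\cX_A)}(\cR)$ itself. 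The detour is to systematically resolve arbitrary complexes of frees by complexes built out of the special projective frees supplied by Remark~\ref{rmk:free-proj-inj}\eqref{it:free-proj-inj}, and to verify carefully that acyclic complexes in $K(\mathsf{Free}^\bY_{(I^A_\unip,\cX_A)}(\cR))^{\mathrm{b}}$ admit no nonzero map from such a projective resolution in the homotopy category.
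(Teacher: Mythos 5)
Your proposal is correct and follows essentially the same route as the paper: resolve by bounded-above complexes of the projective free modules supplied by Remark~\ref{rmk:free-proj-inj}\eqref{it:free-proj-inj}, lift them through the fully faithful functor $\Phi^A_2$, and use that homotopy-category maps from a bounded-above complex of projectives to an acyclic complex vanish. The only difference is organizational---you package fullness and faithfulness into a single identification of $\Hom$-spaces, while the paper runs two separate roof-diagram arguments---and the ``careful verification'' you defer is precisely the orthogonality fact the paper invokes in its faithfulness step (reduce via full faithfulness of $\Phi^A_2$ to $K(\modf^\bY_{(I^A_\unip,\cX_A)}(\cR))$, where it is the standard vanishing for bounded-above complexes of projectives), so it is routine.
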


\begin{proof}
Essential surjectivity follows from Remark~\ref{rmk:free-proj-inj}\eqref{it:free-proj-inj}. More precisely, any object in $\Db(\modf^\bY_{(I^A_\unip,\cX_A)}(\cR))$ is isomorphic to a bounded complex of objects in $\modf^\bY_{(I^A_\unip,\cX_A)}(\cR)$. Given such a complex $\cF$, this remark implies that there exists a bounded above complex $\cG$ of free $\cR$-modules of finite type which are projective and a quasi-isomorphism $\cG \to \cF$. Then $\cG$ belongs to the essential image of our functor, and is isomorphic to $\cF$ in $\Db(\modf^\bY_{(I^A_\unip,\cX_A)}(\cR))$.

Next we prove that the functor is full. Fix objects $\cF,\cG$ in $D(\mathsf{Free}^\bY_{(I^A_\unip,\cX_A)}(\cR))^{\mathrm{b}}$. A morphism $f : D(\Phi^A_2)(\cF) \to D(\Phi^A_2)(\cG)$ is represented by a diagram
\[
K(\Phi^A_2)(\cF) \xleftarrow{g} \cH \xrightarrow{h} K(\Phi^A_2)(\cG)
\]
where $\cH$ is a complex of objects in $\modf^\bY_{(I^A_\unip,\cX_A)}(\cR)$, and $g,h$ are morphisms of complexes with $g$ a quasi-isomorphism (which implies that $\cH$ has bounded cohomology). Using a truncation functor we can assume that $\cH$ is bounded above. Then, as above there exists a complex $\mathcal{K}$ of objects of $\mathsf{Free}^\bY_{(I^A_\unip,\cX_A)}(\cR)$ and a quasi-isomorphism $k : K(\Phi^A_2)(\mathcal{K}) \to \cH$. The object $\mathcal{K}$ belongs to $K(\mathsf{Free}^\bY_{(I^A_\unip,\cX_A)}(\cR))^{\mathrm{b}}$, and $f$ is also represented by the diagram
\[
K(\Phi^A_2)(\cF) \xleftarrow{g \circ k} K(\Phi^A_2)(\mathcal{K}) \xrightarrow{h \circ k} K(\Phi^A_2)(\cG)
\]
where $g \circ k$ is a quasi-isomorphism,
and thus is the image of a morphism from $\cF$ to $\cG$ in $D(\mathsf{Free}^\bY_{(I^A_\unip,\cX_A)}(\cR))^{\mathrm{b}}$.

Finally we prove faithfulness. Fix again objects $\cF,\cG$ in $D(\mathsf{Free}^\bY_{(I^A_\unip,\cX_A)}(\cR))^{\mathrm{b}}$, and consider a morphism $f : \cF \to \cG$ such that $D(\Phi^A_2)(f)=0$. Here $f$ is represented by a diagram
\[
\cF \xleftarrow{g} \cH \xrightarrow{h} \cG
\]
where $\cH$ is in $K(\mathsf{Free}^\bY_{(I^A_\unip,\cX_A)}(\cR))^{\mathrm{b}}$, $g$ and $h$ are morphisms of complexes, and $K(\Phi^A_2)(g)$ is a quasi-isomorphism. Once again there exists a bounded above complex $\mathcal{K}$ of projective free $\cR$-modules of finite type and a quasi-isomorphism $\mathcal{K} \to K(\Phi^A_2)(\cH)$. Then there exists $\mathcal{L}$ in $K(\mathsf{Free}^\bY_{(I^A_\unip,\cX_A)}(\cR))^{\mathrm{b}}$ and an isomorphism of complexes $K(\Phi^A_2)(\mathcal{L}) \simto \mathcal{K}$, and $f$ is represented by a diagram
\[
\cF \xleftarrow{g'} \mathcal{L} \xrightarrow{h'} \cG
\]
where $K(\Phi^A_2)(g')$ is a quasi-isomorphism, i.e.~$D(\Phi^A_2)(g')$ is an isomorphism. Then $D(\Phi^A_2)(h')=0$. Since $\mathcal{K}$ is a bounded above complex of projective objects, this implies that $K(\Phi^A_2)(h')=0$, and hence that $h'$ is homotopic to $0$, and finally that $f=0$.
\end{proof}

Now we address the question of defining $\mathbb{D}$ on free $\cR$-modules of finite type. More precisely, we define an additive contravariant equivalence
\[
\D_\fr : \mathsf{Free}^\bY_{(I^A_\unip,\cX_A)}(\cR) \simto \mathsf{Free}^\bY_{(I^A_\unip,\cX_A^{-1})}(\cR)
\]
as follows. On objects, this functors sends $\bigoplus_{j \in J} (\cF_j,\lambda_j)$ to $\bigoplus_{j \in J} (\D(\cF_j),\lambda_j)$. By additivity, to define this functor on morphisms it suffices to consider the case of objects of the form $(\cF,\lambda)$. We therefore consider $\cF,\cG$ in $\Perv_{(I^A_\unip,\cX_A)}(\Gr,\bk)$, and $\lambda,\mu \in \bY$. Then by definition
\[
\Hom_{\mathsf{Free}^\bY_{(I^A_\unip,\cX_A)}(\cR)}((\cF,\lambda),(\cG,\mu)) = \Hom_{\modf_{(I^A_\unip,\cX_A)}^\bY(\cR)}(\Phi^A(\cF), \Phi^A(\cG) \langle \mu-\lambda \rangle ).
\]
By Lemma~\ref{lem:Rfree-hom} and the definition of $\Phi^A$, the right-hand side identifies with
\begin{multline*}
\Hom_{\Perv_{(I^A_\unip,\cX_A)}(\Gr,\bk)}(\cF, \cG \star^{\cL^+G} \cR_{\lambda-\mu}) \\
= \varinjlim_\nu \Hom_{\Perv_{(I^A_\unip,\cX_A)}(\Gr,\bk)}(\cF, \cG \star^{\cL^+G} \cI_*^{w_\circ(\lambda-\mu)+\nu} \star^{\cL^+G} \cI_*^{-w_\circ(\nu)}).
\end{multline*}
By~\eqref{eqn:duals}, for any $\nu \in \bY_+ \cap (-w_\circ(\lambda-\mu) + \bY_+)$ we have
\begin{multline*}
\Hom_{\Perv_{(I^A_\unip,\cX_A)}(\Gr,\bk)}(\cF,\cG \star^{\cL^+G} \cI_*^{w_\circ(\lambda-\mu)+\nu} \star^{\cL^+G} \cI_*^{-w_\circ(\nu)}) \\ \cong
\Hom_{\Perv_{(I^A_\unip,\cX_A)}(\Gr,\bk)}(\cF \star^{\cL^+G} \cI_!^{\mu-\lambda - w_\circ(\nu)} \star^{\cL^+G} \cI_!^{\nu}, \cG).
\end{multline*}
We next use the fact that $\D$ commutes with convolution and sends $\cI_!^\eta$ to $\cI_*^\eta$ for any $\eta \in \bY_+$ to identify this space with
\[
\Hom_{\Perv_{(I^A_\unip,\cX_A^{-1})}(\Gr,\bk)}(\D(\cG), \D(\cF) \star^{\cL^+G} \cI_*^{\mu-\lambda - w_\circ(\nu)} \star^{\cL^+G} \cI_*^{\nu}).
\]
Following this series of identifications we have constructed a natural isomorphism from $\Hom_{\mathsf{Free}^\bY_{(I^A_\unip,\cX_A)}(\cR)}((\cF,\lambda),(\cG,\mu))$ to
\[
\varinjlim_\nu \Hom_{\Perv_{(I^A_\unip,\cX_A^{-1})}(\Gr,\bk)}(\D(\cG), \D(\cF) \star^{\cL^+G} \cI_*^{\mu-\lambda - w_\circ(\nu)} \star^{\cL^+G} \cI_*^{\nu}).
\]
Setting $\nu'=-w_\circ(\mu-\lambda)+\nu$ we see that this inductive limit identifies with
\begin{multline*}
\Hom_{\Perv_{(I^A_\unip,\cX_A^{-1})}(\Gr,\bk)}(\D(\cG), \D(\cF) \star^{\cL^+G} \cR_{\mu-\lambda}) \cong \\
\ \Hom_{\mathsf{Free}^\bY_{(I^A_\unip,\cX_A^{-1})}(\cR)}((\D(\cG),\mu),(\D(\cF),\lambda));
\end{multline*}
we have therefore completed the definition of the functor $\D_\fr$. 

In order to ``extend'' this functor to the appropriate derived categories we will use the following lemma.

\begin{lem}
\label{lem:duality-exactness}
Let 
\[
\cM_1 \xrightarrow{f} \cM_2 \xrightarrow{g} \cM_3
\]
be a sequence of objects and morphisms of $\mathsf{Free}^\bY_{(I^A_\unip,\cX_A)}(\cR)$ such that the sequence
\[
\Phi^A_2(\cM_1) \xrightarrow{\Phi^A_2(f)} \Phi^A_2(\cM_2) \xrightarrow{\Phi^A_2(g)} \Phi^A_2(\cM_3)
\]
is exact at $\Phi^A_2(\cM_2)$.  Then the sequence
\[
\Phi^A_2(\D_\fr(\cM_3)) \xrightarrow{\Phi^A_2(\D_\fr(g))} \Phi^A_2(\D_\fr(\cM_2)) \xrightarrow{\Phi^A_2(\D_\fr(f))} \Phi^A_2(\D_\fr(\cM_1))
\]
is exact at $\Phi^A_2(\D_\fr(\cM_2))$.
\end{lem}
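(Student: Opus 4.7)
The plan is to reduce exactness of the dualized sequence at its middle term to exactness of the original sequence by testing with $\Hom$-functors into a suitable family of injective objects, then exploiting the bifunctorial natural isomorphism that defines $\D_\fr$ on Hom-spaces.

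First I would set up the $\Hom$-testing criterion. Since $\modf^\bY_{(I^A_\unip,\cX_A^{-1})}(\cR)$ has enough injectives (Theorem~\ref{thm:proj-Hecke-Whit}), it suffices to show that for every injective $\cJ$ the contravariant sequence
\[
\Hom(\Phi^A_2(\D_\fr(\cM_1)),\cJ) \to \Hom(\Phi^A_2(\D_\fr(\cM_2)),\cJ) \to \Hom(\Phi^A_2(\D_\fr(\cM_3)),\cJ)
\]
is exact at the middle term. By Remark~\ref{rmk:free-proj-inj}\eqref{it:free-proj-inj} applied in the $\cX_A^{-1}$-equivariant setting, every such $\cJ$ is a direct summand of some $\Phi^A_2(\cN)$ for $\cN = (\Av^{A,-}_\psi(\TGr_x),\mu)$ in $\mathsf{Free}^\bY_{(I^A_\unip,\cX_A^{-1})}(\cR)$ (where $\Av^{A,-}_\psi$ denotes the averaging functor landing in the $\cX_A^{-1}$-equivariant category); since taking direct summands preserves exactness at the middle, it suffices to test with $\cJ = \Phi^A_2(\cN)$ of this form.

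Next I would use the bifunctorial natural isomorphism built into the construction of $\D_\fr$; double application (together with $\D_\fr \circ \D_\fr \cong \id$, which holds from the corresponding property of Verdier duality on perverse sheaves) yields a natural isomorphism
\[
\Hom(\Phi^A_2(\D_\fr(\cM)), \Phi^A_2(\cN)) \cong \Hom(\Phi^A_2(\D_\fr(\cN)), \Phi^A_2(\cM))
\]
for $\cM \in \mathsf{Free}^\bY_{(I^A_\unip,\cX_A)}(\cR)$ and $\cN \in \mathsf{Free}^\bY_{(I^A_\unip,\cX_A^{-1})}(\cR)$ (where the two occurrences of $\D_\fr$ go in opposite directions). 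Applied with $\cM = \cM_i$, this transports the test sequence into the covariant sequence
\[
\Hom(\Phi^A_2(\D_\fr(\cN)), \Phi^A_2(\cM_1)) \to \Hom(\Phi^A_2(\D_\fr(\cN)), \Phi^A_2(\cM_2)) \to \Hom(\Phi^A_2(\D_\fr(\cN)), \Phi^A_2(\cM_3)).
\]
A standard diagram chase shows that if $\Phi^A_2(\D_\fr(\cN))$ is projective in $\modf^\bY_{(I^A_\unip,\cX_A)}(\cR)$, then this covariant sequence is exact at the middle whenever the original sequence of $\Phi^A_2(\cM_i)$'s is.

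The main obstacle is therefore to verify that $\Phi^A_2(\D_\fr(\cN)) = \Phi^A(\D(\Av^{A,-}_\psi(\TGr_x)))\la\mu\ra$ is projective. The key identification is
\[
\D(\Av^{A,-}_\psi(\TGr_x)) \cong \Av^A_\psi(\TGr_x) \quad \text{in } \Perv_{(I^A_\unip,\cX_A)}(\Gr,\bk),
\]
which I would establish using the commutation of Verdier duality with averaging functors (Verdier duality swaps the Artin--Schreier local systems $\cX_A$ and $\cX_A^{-1}$ and interchanges $!$- and $*$-pushforwards, while $\Av^A_!$ and $\Av^A_*$ coincide with $\Av^A_\psi$ in both equivariant settings by~\cite[Lemma~3.2]{projGr1}) combined with the Verdier self-duality of tilting perverse sheaves (Remark~\ref{rmk:Verdier-duality}). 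Once this is in hand, $\Phi^A(\Av^A_\psi(\TGr_x))\la\mu\ra$ is projective in $\modf^\bY_{(I^A_\unip,\cX_A)}(\cR)$ by Remark~\ref{rmk:free-proj-inj}\eqref{it:free-proj-inj} applied in the $\cX_A$-equivariant setting, and the argument concludes.
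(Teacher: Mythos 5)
Your proposal is correct and rests on the same ingredients as the paper's own proof: the free modules $\Phi^A(\Av^A_\psi(\TGr_x))\langle \mu\rangle$ of Remark~\ref{rmk:free-proj-inj}, which are self-dual under $\D$ and simultaneously projective and injective in both the $\cX_A$- and $\cX_A^{-1}$-equivariant settings, combined with the Hom-level anti-equivalence (and involutivity) built into the definition of $\D_\fr$. The paper packages this as a proof by contradiction—covering $\ker \Phi^A_2(\D_\fr(f))$ by such a projective object and dualizing the resulting factorization diagram—whereas you test exactness by mapping into these injectives and transport the test through the duality isomorphism; this is only a cosmetic reorganization of the same argument.
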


\begin{proof}
Suppose the latter sequence is not exact, or in other words that
\[
\mathrm{im}(\Phi^A_2(\D_\fr(g))) \subsetneq \ker(\Phi^A_2(\D_\fr(f))).
\]
By Remark~\ref{rmk:free-proj-inj}\eqref{it:free-proj-inj}, there exists an object $\cP$ in $\mathsf{Free}^\bY_{(I^A_\unip,\cX_A)}(\cR)$ which is a direct sum of objects of the form $(\cF,\lambda)$ with $\lambda \in \bY$ and $\cF$ such that $\D(\cF)\cong\cF$ and $\Phi^A(\cF)$ projective and injective, and a morphism $q: \cP \to \D_\fr(\cM_2)$ such that the image of $\Phi^A_2(q)$ is $\ker (\Phi^A_2(\D_\fr(f)))$. Then $q$ does not factor through $\D_\fr(g)$:
\[
\begin{tikzcd}
& \cP \ar[d, "q"] \ar[dr, "\D_\fr(f) \circ q = 0"] \ar[dl, dashed, "\text{does not exist}"'] \\
\D_\fr(\cM_3) \ar[r, "\D_\fr(g)"] & \D_\fr(\cM_2) \ar[r, "\D_\fr(f)"] & \D_\fr(\cM_1).
\end{tikzcd}
\]
The object $\Phi^A_2(\D_\fr(\cP))$ is
injective.  Applying $\D_\fr$ to the diagram above we obtain a diagram 
\[
\begin{tikzcd}
\cM_1 \ar[r, "f"] \ar[dr, "\D_\fr(q) \circ f = 0"'] & \cM_2 \ar[r, "g"] \ar[d, "\D_\fr(q)"] & \cM_3 \ar[dl, dashed, "\text{does not exist}"] \\
& \D_\fr(\cP).
\end{tikzcd}
\]
This diagram implies that the image under $\Phi^A_2$ of the top row is not exact at $\cM_2$, a contradiction.
\end{proof}

Lemma~\ref{lem:duality-exactness} implies that the functor
\[
K(\D_\fr) : K(\mathsf{Free}^\bY_{(I^A_\unip,\cX_A)}(\cR)) \to K(\mathsf{Free}^\bY_{(I^A_\unip,\cX^{-1}_A)}(\cR))
\]
restricts to an endofunctor of $K(\mathsf{Free}^\bY_{(I^A_\unip,\cX_A)}(\cR))^{\mathrm{b}}$ that preserves the kernel of~\eqref{eqn:functor-KPhi} (in the sense that it sends the kernel of the version for $\cX_A$ to the kernel of the version for $\cX_A^{-1}$).  It therefore induces a contravariant triangulated functor 
\[
D(\mathsf{Free}^\bY_{(I^A_\unip,\cX_A)}(\cR))^{\mathrm{b}} \to D(\mathsf{Free}^\bY_{(I^A_\unip,\cX^{-1}_A)}(\cR))^{\mathrm{b}}.
\]
Conjugating this functor with the equivalence~\eqref{eqn:D-Psi} (more precisely, the version for $\cX_A$ and that for $\cX_A^{-1}$) we obtain a contravariant triangulated functor
\[
\D : \Db \modf^\bY_{(I^A_\unip,\cX_A)}(\cR) \to \Db \modf^\bY_{(I^A_\unip,\cX_A^{-1})}(\cR).
\]
It is clear from this construction that $\D$ is involutive, in the sense that the composition of the version for $\cX_A$ and that for $\cX_A^{-1}$ is the identity and vice versa (in particular, $\D$ is an equivalence of categories), and that~\eqref{eqn:D-free} holds for any $\cF$ in $\Perv_{(I^A_\unip,\cX_A)}(\Gr,\bk)$ and $\lambda \in \bY$. In particular, 
for any $w \in {}^A W_\ext$ we have
\begin{equation}
\label{eqn:D-simples}
\D(\LGT^A_w) \cong \LGT^A_{w}.
\end{equation}
Since any simple object in $\modf^\bY_{(I^A_\unip,\cX_A)}(\cR)$ is of this form (see Theorem~\ref{thm:R-simple}), we deduce that $\D$ is exact for the natural t-structures on the categories $\Db \modf^\bY_{(I^A_\unip,\cX_A)}(\cR)$ and $\Db \modf^\bY_{(I^A_\unip,\cX_A^{-1})}(\cR)$. The restriction of this functor to $\cR$-modules therefore possesses all the required properties.

\begin{rmk}
\phantomsection
\label{rmk:D}
\begin{enumerate}
\item
The construction of the anti-autoequivalence $\D$ given here follows the one suggested in~\cite[Top of p.~297]{abbgm}. Note however that in \emph{loc}.~\emph{cit}.~the authors do not justify that this construction does not depend on the choices one has to make, and defines a functor. Our study of projective objects exactly fills this gap.
\item
\label{it:Groth-gp-duality}
Note that the isomorphisms~\eqref{eqn:D-simples} show that for any $\cF$ in $\modf^\bY_{(I^A_\unip,\cX_A)}(\cR)$ and $w \in {}^A W_\ext$ we have
\[
[\cF : \LGT^A_w] = [\D(\cF) : \LGT^A_w].
\]
In particular, the endomorphism induced by $\D$ on the Grothendieck group $[\modf^\bY_{(I^A_\unip,\cX_A)}(\cR)]$  is trivial.
\end{enumerate}
\end{rmk}

Recall the map $\iota_A$ from~\eqref{eqn:def-iota}, which is defined so that $\injh^A_{\iota_A(w)}$ is the projective cover of $\LGT^A_w$. The properties of $\D$ stated above imply that $\D(\injh^A_{w})$ is also the projective cover of $\LGT^A_w$; it follows that
\begin{equation}
\label{eqn:D-injh}
\D(\injh^A_{w}) \cong \injh^A_{\iota_A(w)} \quad \text{for any $w \in W_\ext$.}
\end{equation}

\subsection{Baby Verma modules}
\label{ss:baby-Verma}

Given $w \in {}^A W_\ext$, 
we define the \emph{baby Verma module} with label $w$ to be the $\cR$-module $\bV^A_w$ given by
\[
\bV^A_w := \D(\cbV^A_{w})
\qquad \in \modf^\bY_{(I^A_\unip,\cX_A)}(\cR).
\]
We will say that an object $\cF$ in $\modf^\bY_{(I^A_\unip,\cX_A)}(\cR)$ \emph{admits a baby Verma filtration} if it admits a finite filtration whose subquotients are isomorphic to baby Verma modules. It is clear from the definition that $\cF$ admits a baby Verma filtration iff $\D(\cF)$ admits a baby co-Verma filtration.

The following properties of baby Verma modules are immediate consequences of the corresponding facts for baby co-Verma modules (see~\S\ref{ss:cbV} and~\S\ref{ss:proj-inj}), together with 
the properties of Verdier duality stated in~\S\ref{ss:verdier}.

\begin{lem}
\phantomsection
\label{lem:verma-basic}
\begin{enumerate}
\item
For $w \in {}^A W_\ext$ and $\lambda \in \bY$ we have
\[
\bV^A_{wt_\lambda} \cong \bV^A_w \langle -\lambda \rangle.
\]
\item For $w \in {}^A W_\ext$, $\bV^A_w$ has a simple head, isomorphic to $\LGT^A_w$.
\item For $w, y \in {}^A W_\ext$, we have $[\bV^A_y : \LGT^A_w] = [\cbV^A_y : \LGT^A_w]$.
\end{enumerate}
\end{lem}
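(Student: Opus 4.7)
The plan is to derive each of the three claims directly from the definition $\bV^A_w = \D(\cbV^A_w)$ together with the exactness, involutivity, and simple-preserving properties of $\D$ established in~\S\ref{ss:verdier}. Since the statement of the lemma itself signals that these are immediate consequences, I expect no real obstacle; the only thing to check carefully is that the manipulations made for free $\cR$-modules in~\eqref{eqn:D-free} propagate to all finitely generated modules, but this is already built into the construction of $\D$ as an endofunctor of $\modf^\bY_{(I^A_\unip,\cX_A)}(\cR)$.

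For part~(1), I would first observe that the formula~\eqref{eqn:cbV-translation} gives $\cbV^A_{wt_\lambda} \cong \cbV^A_w\la -\lambda\ra$. Next I would note that the compatibility~\eqref{eqn:D-free} of $\D$ with the grading shift $\la \lambda \ra$ on free $\cR$-modules of finite type extends, via the construction of~\S\ref{ss:verdier} (in particular the equivalence~\eqref{eqn:D-Psi}, which expresses every object of $\Db\modf^\bY_{(I^A_\unip,\cX_A)}(\cR)$ as a complex of free modules), to a natural isomorphism $\D(\cF\la \lambda\ra) \cong \D(\cF)\la \lambda\ra$ for every $\cF$ in $\modf^\bY_{(I^A_\unip,\cX_A)}(\cR)$. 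Applying this with $\cF=\cbV^A_w$ and $\lambda$ replaced by $-\lambda$ gives the desired formula.

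For part~(2), I would invoke Corollary~\ref{cor:cbV-socle}, which asserts that $\cbV^A_w$ has a simple socle isomorphic to $\LGT^A_w$. Since $\D$ is an exact anti-equivalence (in particular contravariant and exact), it converts socles into heads. Combined with $\D(\LGT^A_w) \cong \LGT^A_w$ from~\eqref{eqn:D-simples}, this shows that $\bV^A_w = \D(\cbV^A_w)$ has a simple head isomorphic to $\LGT^A_w$.

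For part~(3), I would use the observation from Remark~\ref{rmk:D}\eqref{it:Groth-gp-duality} that the endomorphism of the Grothendieck group induced by $\D$ is trivial, since $\D$ is exact and preserves each simple isomorphism class. Applying this to $\cbV^A_y$ yields
\[
[\bV^A_y : \LGT^A_w] = [\D(\cbV^A_y) : \LGT^A_w] = [\cbV^A_y : \LGT^A_w],
\]
which is the required equality. No further argument is needed.
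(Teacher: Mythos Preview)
Your proposal is correct and follows exactly the approach the paper indicates: the paper does not give a detailed proof but simply states that these properties are immediate consequences of the corresponding facts for baby co-Verma modules together with the properties of Verdier duality from~\S\ref{ss:verdier}. Your write-up spells out precisely those deductions (using~\eqref{eqn:cbV-translation}, Corollary~\ref{cor:cbV-socle}, \eqref{eqn:D-simples}, and Remark~\ref{rmk:D}\eqref{it:Groth-gp-duality}), so there is nothing to add.
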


\begin{prop}
\phantomsection
\label{prop:bV-projective}
\begin{enumerate}
\item If $w,x \in {}^A W_\ext$ satisfy $[\bV^A_w : \LGT^A_x] \neq 0$, we have $x \preceq w \preceq w_A x^\triangle$. Moreover, we have $[\bV^A_x : \LGT^A_x]=1$ and $[\bV^A_{w_A x^\triangle} : \LGT_x] \le 1$.
\item For any $w \in {}^A W_\ext$, $\bV^A_w$ is the projective cover of $\LGT^A_w$ in the Serre subcategory of $\modf^{\bY}_{(I^A_\unip,\cX_A)}(\cR)$ generated by the simple objects $\LGT^A_y$ with $y \not\succ w$.
\end{enumerate}
\end{prop}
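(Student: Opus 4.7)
The strategy is to deduce both assertions directly from the corresponding statements about baby co-Verma modules (namely Corollary~\ref{cor:multiplicites-bV} and Proposition~\ref{prop:bV-injective}) by transport through the Verdier duality anti-autoequivalence $\D$ constructed in~\S\ref{ss:verdier}. No new geometric input should be required.

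For part~(1), the identity $[\bV^A_w : \LGT^A_x] = [\cbV^A_w : \LGT^A_x]$ is already recorded in Lemma~\ref{lem:verma-basic}, where it is itself an immediate consequence of Remark~\ref{rmk:D}\eqref{it:Groth-gp-duality} (which says that $\D$ acts trivially on the Grothendieck group, since $\D(\LGT^A_z) \cong \LGT^A_z$ for every $z$). Once this identification is in hand, the first statement of part~(1) as well as the precise multiplicities $[\bV^A_x : \LGT^A_x]=1$ and $[\bV^A_{w_Ax^\triangle} : \LGT^A_x] \le 1$ follow verbatim from Corollary~\ref{cor:multiplicites-bV} applied to $\cbV^A_w$.

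For part~(2), the key observation is that $\D$ is an exact anti-auto\-equi\-va\-lence of $\modf^\bY_{(I^A_\unip,\cX_A)}(\cR)$ (after identifying the $\cX_A$- and $\cX_A^{-1}$-equivariant settings as in Remark~\ref{rmk:Verdier-duality}) which fixes every simple object, by~\eqref{eqn:D-simples}. Consequently, for any subset $\Sigma \subset {}^A W_\ext$, the Serre subcategory generated by $\{\LGT^A_y : y \in \Sigma\}$ is stable under $\D$; in particular this applies to the subcategory $\cC_w$ generated by the simples $\LGT^A_y$ with $y \not\succ w$. Since $\D$ is an exact anti-equivalence, it interchanges injective hulls in $\cC_w$ with projective covers in $\cC_w$. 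Applying $\D$ to the conclusion of Proposition~\ref{prop:bV-injective}, which asserts that $\cbV^A_w$ is the injective hull of $\LGT^A_w$ in $\cC_w$, one obtains that $\D(\cbV^A_w) = \bV^A_w$ is the projective cover of $\D(\LGT^A_w) \cong \LGT^A_w$ in $\cC_w$, as desired.

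I do not foresee any substantive obstacle here: both parts are essentially formal consequences of the Verdier-type duality built in~\S\ref{ss:verdier} together with work already done for baby co-Verma modules. The only minor point worth verifying carefully is that $\D$ indeed preserves the Serre subcategory $\cC_w$ (rather than merely some larger or smaller subcategory), which is automatic because $\D$ fixes the isomorphism class of every simple object and is exact.
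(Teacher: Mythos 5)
Your proposal is correct and matches the paper's own treatment: the paper states this proposition as an immediate consequence of Corollary~\ref{cor:multiplicites-bV} and Proposition~\ref{prop:bV-injective} transported through the Verdier duality $\D$ of~\S\ref{ss:verdier}, exactly as you argue (with the multiplicity identification being Lemma~\ref{lem:verma-basic}). Your extra check that $\D$ preserves the relevant Serre subcategory because it is exact and fixes all simples is the right point to verify, and it is all that is needed.
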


\begin{rmk}
\label{rmk:bV-Groth-gp}
As in Remark~\ref{rmk:cbV-Groth-gp}, Proposition~\ref{prop:bV-projective} implies that if $\cM \in \modf^{\bY}_{(I^A_\unip,\cX_A)}(\cR)$ admits a baby Verma filtration, then the number $(\cM : \bV^A_w)$ of occurrences of a given baby Verma module $\bV^A_w$ in such a filtration is independent of the choice of filtration; in fact these numbers are determined by the equality
\[
[\cM] = \sum_{w \in {}^A W_\ext} (\cM : \bV^A_w) \cdot [\bV^A_w]
\]
in $[\modf^{\bY}_{(I^A_\unip,\cX_A)}(\cR)]$.

Suppose now that $\cM$ admits both a baby Verma filtration and baby co-Verma filtration.  By Remark~\ref{rmk:D}\eqref{it:Groth-gp-duality}, we have $[\bV^A_w] = [\cbV^A_w]$ for all $w \in {}^A W_\ext$, so comparing the equation above with that in Remark~\ref{rmk:cbV-Groth-gp}, we deduce that
\begin{equation}
\label{eqn:equality-multiplicites}
(\cM: \bV^A_w) = (\cM:\cbV^A_w)
\qquad\text{for all $w \in {}^A W_\ext$.}
\end{equation}
\end{rmk}

Recall that in the proof of Theorem~\ref{thm:proj-Hecke-Whit}, we showed that $\modf^{\bY}_{(I^A_\unip,\cX_A)}(\cR)$ is an ``essentially finite highest weight category'' in the sense of~\cite[Definition~3.7]{bs}.  In that setting, comparing Proposition~\ref{prop:bV-projective} with~\cite[Lemma~3.1]{bs}, we see that for any $w \in {}^A W_\ext$ the baby Verma module $\bV^A_w$ is isomorphic to the objects denoted $\Delta(w)$ and $\bar{\Delta}(w)$ in~\cite{bs}. Now that these objects are identified, we can state property ``$(P\Delta_\varepsilon)$'' from~\cite{bs} in our setting, which does hold by~\cite[Theorem~3.5]{bs}.

\begin{prop}
\label{prop:inj-Verma-filt}
For any $w \in {}^A W_\ext$, the object $\injh^A_w$ admits a baby Verma filtration.
\end{prop}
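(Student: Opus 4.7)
The strategy is to deduce the result from Theorem~\ref{thm:proj-Hecke-Whit}\eqref{it:proj-filt} by applying the Verdier duality $\D$ of~\S\ref{ss:verdier}; equivalently, one may observe that once the identification of $\bV^A_w$ with the standard object $\Delta(w) = \bar\Delta(w)$ of~\cite{bs} (as recorded just before the proposition) is in place, the result is simply property $(P\Delta_\varepsilon)$ of~\cite[Theorem~3.5]{bs}, which is guaranteed by the essentially finite highest weight structure established in the proof of Theorem~\ref{thm:proj-Hecke-Whit}. Below I spell out the first, more self-contained approach.

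More explicitly, by~\eqref{eqn:D-injh} there is an isomorphism $\D(\injh^A_w) \cong \injh^A_{\iota_A(w)}$ in $\modf^\bY_{(I^A_\unip,\cX^{-1}_A)}(\cR)$. Applying Theorem~\ref{thm:proj-Hecke-Whit}\eqref{it:proj-filt} in that category (the construction being entirely symmetric between the $\cX_A$- and $\cX_A^{-1}$-equivariant settings, cf.~Remark~\ref{rmk:Verdier-duality}), one obtains a filtration
\[
0 = \cF_0 \subset \cF_1 \subset \cdots \subset \cF_n = \injh^A_{\iota_A(w)}
\]
whose successive subquotients are of the form $\cbV^A_{y_i}$ for appropriate $y_i \in {}^A W_\ext$. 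Applying the contravariant exact equivalence $\D$ back to $\modf^\bY_{(I^A_\unip,\cX_A)}(\cR)$ (and using its involutivity, so that $\D(\injh^A_{\iota_A(w)}) \cong \injh^A_w$), I would set $\cG_i := \ker\bigl(\injh^A_w \twoheadrightarrow \D(\cF_{n-i})\bigr)$ to obtain an increasing filtration $0 = \cG_0 \subset \cdots \subset \cG_n = \injh^A_w$. By the definition $\bV^A_y := \D(\cbV^A_y)$ from~\S\ref{ss:baby-Verma} and the involutivity of $\D$, the subquotients satisfy $\cG_i/\cG_{i-1} \cong \bV^A_{y_{n-i+1}}$; this is the required baby Verma filtration.

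The argument is essentially formal: the hard work has already been carried out, both in constructing $\D$ in~\S\ref{ss:verdier} and in proving Theorem~\ref{thm:proj-Hecke-Whit}\eqref{it:proj-filt} via the hypotheses of~\cite[Theorem~3.5]{bs}. There is no substantive obstacle at this stage; the only mildly delicate point is the bookkeeping required to track the passage between the $(I^A_\unip,\cX_A)$- and $(I^A_\unip,\cX_A^{-1})$-equivariant settings, which is handled by the notational convention recalled in Remark~\ref{rmk:Verdier-duality}.
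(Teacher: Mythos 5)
Your proposal is correct, and it in fact contains two arguments. The parenthetical observation --- that once $\bV^A_w$ is identified with the objects $\Delta(w)=\bar{\Delta}(w)$ of~\cite{bs}, the statement is exactly property $(P\Delta_\varepsilon)$, which holds by~\cite[Theorem~3.5]{bs} thanks to the essentially finite highest weight structure established in the proof of Theorem~\ref{thm:proj-Hecke-Whit} --- is precisely the paper's proof. Your spelled-out argument is a genuine alternative: instead of citing~\cite{bs} again, you dualize the baby co-Verma filtration of $\injh^A_{\iota_A(w)}$ furnished by Theorem~\ref{thm:proj-Hecke-Whit}\eqref{it:proj-filt} in the $(I^A_\unip,\cX_A^{-1})$-equivariant category, using~\eqref{eqn:D-injh}, the involutivity and exactness of $\D$, and the definition $\bV^A_y=\D(\cbV^A_y)$; the filtration-reversing bookkeeping with $\cG_i=\ker\bigl(\injh^A_w\twoheadrightarrow\D(\cF_{n-i})\bigr)$, giving $\cG_i/\cG_{i-1}\cong\D(\cF_{n-i+1}/\cF_{n-i})\cong\bV^A_{y_{n-i+1}}$, is carried out correctly. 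Both routes are available at this point of the paper (the duality $\D$ and~\eqref{eqn:D-injh} are constructed in~\S\ref{ss:verdier} before the proposition is stated), so there is no circularity. As for what each buys: the paper's appeal to $(P\Delta_\varepsilon)$ is a one-line consequence of machinery already in place and stays within a single Whittaker setting, whereas your duality argument is more self-contained in that it does not re-invoke the abstract highest weight formalism, at the mild cost of passing between the $\cX_A$- and $\cX_A^{-1}$-equivariant categories, which is harmless by Remark~\ref{rmk:Verdier-duality}.
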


The following lemma is a restatment of~\cite[Theorems~3.11 and 3.14]{bs} in our context.

\begin{lem}
\phantomsection
\label{lem:verma-hom}
Let $\cM \in \modf^{\bY}_{(I^A_\unip,\cX_A)}(\cR)$.
\begin{enumerate}
\item
The object $\cM$ admits a baby co-Verma filtration if and only if it satisfies $\Ext^1_{\modf^{\bY}_{(I^A_\unip,\cX_A)}(\cR)}(\bV^A_w, \cM)=0$ for any $w \in {}^A W_\ext$. Moreover, in this case, for any $w \in {}^A W_\ext$ we have
\[
\Ext^n_{\modf^{\bY}_{(I^A_\unip,\cX_A)}(\cR)}(\bV^A_w, \cM)=0 \quad \text{for any $n \geq 1$}
\]
and
\[
\dim \Hom_{\modf^{\bY}_{(I^A_\unip,\cX_A)}(\cR)}(\bV^A_w, \cM) = ( \cM : \cbV^A_w).
\]
\item 
The object $\cM$ admits a baby Verma filtration if and only if it satisfies $\Ext^1_{\modf^{\bY}_{(I^A_\unip,\cX_A)}(\cR)}(\cM, \cbV^A_w)=0$ for any $w \in {}^A W_\ext$. Moreover, in this case, for any $w \in {}^A W_\ext$ we have
\[
\Ext^n_{\modf^{\bY}_{(I^A_\unip,\cX_A)}(\cR)}(\cM, \cbV^A_w) \quad \text{for any $n \geq 1$}
\]
and
\[
\dim \Hom_{\modf^{\bY}_{(I^A_\unip,\cX_A)}(\cR)}(\cM,\cbV^A_w) = ( \cM : \bV^A_w).
\]
\end{enumerate}
\end{lem}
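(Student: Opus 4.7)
The plan is to deduce this lemma directly from Theorems~3.11 and~3.14 of~\cite{bs}, after verifying that the framework of that paper applies to our setting. Most of this verification has already been done in the proof of Theorem~\ref{thm:proj-Hecke-Whit}: the category $\modf^{\bY}_{(I^A_\unip,\cX_A)}(\cR)$ is an essentially finite highest weight category with underlying poset $({}^A W_\ext, \preceq)$, all strata simple, standard objects $\bV^A_w$ identified with the objects $\Delta(w) = \bar\Delta(w)$ of~\cite{bs}, and costandard objects $\cbV^A_w$ identified with $\nabla(w) = \bar\nabla(w)$.

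Granting this identification, the characterization of objects admitting a baby co-Verma filtration via $\Ext^1$-vanishing against baby Verma modules (and the dual statement), together with the vanishing of higher $\Ext$-groups in the presence of such a filtration, is exactly the content of~\cite[Theorems~3.11 and~3.14]{bs}. The two parts of the lemma are related by applying Verdier duality (cf.~\S\ref{ss:verdier}), so it suffices to treat one of them.

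It remains to establish the Hom-dimension formulas. For this I first compute, for $w,y \in {}^A W_\ext$, that $\dim \Hom(\bV^A_w, \cbV^A_y) = \delta_{w,y}$: indeed, any nonzero morphism $\bV^A_w \to \cbV^A_y$ factors through the head $\LGT^A_w$ of $\bV^A_w$ (Lemma~\ref{lem:verma-basic}), and its image must have simple socle isomorphic to $\LGT^A_y$ since it embeds into $\cbV^A_y$ whose socle is $\LGT^A_y$ (Corollary~\ref{cor:cbV-socle}), forcing $w = y$; conversely the composition $\bV^A_w \twoheadrightarrow \LGT^A_w \hookrightarrow \cbV^A_w$ is nonzero and unique up to scalar. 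Now for $\cM$ admitting a baby co-Verma filtration, the vanishing $\Ext^1(\bV^A_w, \cbV^A_y) = 0$ (a special case of the first part already established) implies that $\Hom(\bV^A_w, -)$ is exact on such filtrations, so by induction on the length of the filtration $\dim \Hom(\bV^A_w, \cM) = (\cM : \cbV^A_w)$. The dual statement for baby Verma filtrations is obtained by applying $\D$ and using that $\D$ interchanges $\bV^A_w$ and $\cbV^A_w$.

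The main subtlety in this plan is bookkeeping rather than conceptual: one must check carefully that the identification of our standard and costandard objects with those of Brundan--Stroppel is consistent with the poset $({}^A W_\ext, \preceq)$ that we use (not the Bruhat order), so that the conditions $(\widehat{I\nabla})$ and its dual $(P\Delta_\varepsilon)$ are verified with respect to this order. This was done in the proof of Theorem~\ref{thm:proj-Hecke-Whit} using Proposition~\ref{prop:inj-hull-bV} and its dual (via $\D$), together with Corollary~\ref{cor:multiplicites-bV} and Proposition~\ref{prop:bV-injective}, so no further work is needed here.
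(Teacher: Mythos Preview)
Your proposal is correct and takes essentially the same approach as the paper, which simply states that the lemma ``is a restatement of~\cite[Theorems~3.11 and~3.14]{bs} in our context'' without further proof. Your added explanation of why the Brundan--Stroppel framework applies, and your direct verification of the $\Hom$-dimension formula, are accurate elaborations; the only minor imprecision is that the identification of $\bV^A_w$ with the objects $\Delta(w)=\bar\Delta(w)$ of~\cite{bs} is not literally carried out in the proof of Theorem~\ref{thm:proj-Hecke-Whit} (which only treats the costandard side), but rather in the paragraph just before the lemma via Proposition~\ref{prop:bV-projective}.
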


Lemma~\ref{lem:verma-hom} implies in particular that the property of admitting a baby Verma filtration is stable under direct summands, and similarly for baby co-Verma filtrations. It also implies that we have a ``reciprocity formula'' (see~\cite[Corollaries~3.12 and 3.15]{bs}): for $w,y \in {}^A W_\ext$ we have
\begin{equation}
\label{eqn:reciprocity}
(\injh^A_w : \cbV^A_y) = [\bV^A_y : \LGT^A_w], \qquad (\injh^A_{\iota_A(w)} : \bV^A_y) = [\cbV^A_y : \LGT^A_w],
\end{equation}
where $\iota_A(w)$ is defined in~\eqref{eqn:def-iota}.
(Here, the numbers on both sides of the first equation are equal to $\dim \Hom(\bV^A_y, \injh^A_w)$, and those in the second equation are equal to $\dim \Hom(\injh^A_{\iota_A(w)}, \cbV^A_y)$.)

\begin{rmk}
\label{rmk:labels-injh}
The first equality in~\eqref{eqn:reciprocity} and Proposition~\ref{prop:bV-projective} show that $(\injh^A_w : \cbV^A_y)$ vanishes unless $w \preceq y$, and is equal to $1$ if $y=w$. In particular, given an indecomposable injective object $\injh$ in $\modf_{(I^A_\unip,\cX_A)}(\cR)$, to determine the socle of $\injh$ (i.e.~its label) it suffices to determine the smallest element $w$ (for the order $\preceq$) such that $(\injh : \cbV^A_w) \neq 0$. Since the classes $([\cbV^A_w] : w \in {}^A W_\ext)$ in $[\modf_{(I^A_\unip,\cX_A)}(\cR)]$ are linearly independent (see Remark~\ref{rmk:cbV-Groth-gp}), this also implies that the classes $([\injh^A_w] : w \in {}^A W_\ext)$ are linearly independent.
\end{rmk}

\begin{cor}
\label{cor:dimhom-verma}
Let $\cM, \cN \in \modf^{\bY}_{(I^A_\unip,\cX_A)}(\cR)$.  If $\cM$ admits a baby Verma filtration and $\cN$ admits a baby co-Verma filtration, then
\[
\dim \Hom_{ \modf^{\bY}_{(I^A_\unip,\cX_A)}(\cR)}(\cM,\cN) = \sum_{y \in {}^A W_\ext} (\cM : \bV^A_y) (\cN: \cbV^A_y).
\]
\end{cor}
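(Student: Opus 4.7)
The plan is to prove this by induction on the length of a baby Verma filtration of $\cM$, using the $\Ext$-vanishing statement from Lemma~\ref{lem:verma-hom} as the crucial input.

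First I would handle the base case where $\cM \cong \bV^A_y$ for some $y \in {}^A W_\ext$. In that case, $(\cM : \bV^A_z) = \delta_{y,z}$, so the right-hand side of the claimed equality reduces to $(\cN : \cbV^A_y)$. On the other hand, since $\cN$ admits a baby co-Verma filtration, Lemma~\ref{lem:verma-hom} tells us that $\dim \Hom(\bV^A_y, \cN) = (\cN : \cbV^A_y)$, matching the right-hand side.

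For the inductive step, suppose $\cM$ admits a baby Verma filtration of length $n \geq 2$. Choose a short exact sequence
\[
0 \to \cM' \to \cM \to \bV^A_y \to 0
\]
where $\cM'$ admits a baby Verma filtration of length $n-1$ and $\bV^A_y$ appears at the top. Applying $\Hom(-, \cN)$ yields a long exact sequence, and the key point is that $\Ext^1(\bV^A_y, \cN) = 0$ by Lemma~\ref{lem:verma-hom} (since $\cN$ has a baby co-Verma filtration). Consequently the sequence
\[
0 \to \Hom(\bV^A_y, \cN) \to \Hom(\cM, \cN) \to \Hom(\cM', \cN) \to 0
\]
is short exact. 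Taking dimensions and using additivity of the multiplicities $(\cdot : \bV^A_z)$ in short exact sequences (see Remark~\ref{rmk:bV-Groth-gp}) together with the induction hypothesis applied to $\cM'$ and the base case applied to $\bV^A_y$ gives the desired formula.

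There is no real obstacle here: the entire argument is a formal consequence of Lemma~\ref{lem:verma-hom}, and the only thing to verify is the combinatorial accounting that the multiplicity function $(\cM : \bV^A_y)$ is additive in short exact sequences of objects admitting baby Verma filtrations, which follows from the linear independence of the classes $[\bV^A_y]$ in the Grothendieck group (Remark~\ref{rmk:bV-Groth-gp}). One could equivalently phrase the proof by symmetric induction on the length of a baby co-Verma filtration of $\cN$, using the $\Ext^1$-vanishing on the other side from Lemma~\ref{lem:verma-hom}; either direction works identically.
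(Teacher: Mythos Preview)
Your proof is correct and takes essentially the same approach as the paper: both argue by induction using the $\Ext^1$-vanishing from Lemma~\ref{lem:verma-hom}, differing only in that the paper inducts on the length of a baby co-Verma filtration of $\cN$ while you induct on the length of a baby Verma filtration of $\cM$ --- a symmetry you yourself note at the end.
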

\begin{proof}
We proceed by induction on the number of steps in a baby co-Verma filtration of $\cN$.  If $\cN$ is itself a baby co-Verma module, say $\cN = \cbV^A_w$, then the lemma reduces to the claim that $\dim \Hom(\cM, \cbV^A_w) = (\cM: \bV^A_w)$, which is part of Lemma~\ref{lem:verma-hom}.  Otherwise, choose a short exact sequence $0 \to \cbV^A_w \to \cN \to \cN' \to 0$, where $\cN'$ has a baby co-Verma filtration with fewer steps.  Since $\Ext^1(\cM, \cbV^A_w) = 0$ by Lemma~\ref{lem:verma-hom}, we get a short exact sequence
\[
0 \to \Hom(\cM, \cbV^A_w) \to \Hom(\cM, \cN) \to \Hom(\cM, \cN') \to 0,
\]
and hence
\[
\dim \Hom(\cM, \cN) = (\cM: \bV^A_w) + \dim \Hom(\cM,\cN').
\]
The lemma follows by induction.
\end{proof}

\begin{cor}
\label{cor:endtilt-indep}
Let $\omega \in \Omega$, and let $s_1, \ldots, s_r \in S_\aff$.  The integer
\[
\dim \End(\Phi(\xi_{s_r} \cdots \xi_{s_1} \xi_{\omega^{-1}}(j^\varsigma_{!*} \cS_\varsigma)))
\]
is independent of the field $\bk$.
\end{cor}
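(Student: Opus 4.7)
Set $\cM := \Phi(\xi_{s_r} \cdots \xi_{s_1} \xi_{\omega^{-1}}(j^\varsigma_{!*} \cS_\varsigma))$. The plan is to show that $\cM$ admits both a baby Verma filtration and a baby co-Verma filtration whose multiplicities are determined combinatorially (by the word $\omega^{-1} s_1 \cdots s_r$ alone), and then to compute $\dim \End(\cM)$ in terms of these multiplicities via Corollary~\ref{cor:dimhom-verma}.

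First I would verify that $\cM$ admits a baby co-Verma filtration with combinatorial multiplicities. By Lemma~\ref{lem:Pi-sigma-bV-filt}, $\Phi(j^\varsigma_{!*}\cS_\varsigma)$ admits a baby co-Verma filtration with subquotients $\cbV_{wt_{w_\circ(\varsigma)}}$ for $w \in W$, each occurring exactly once. Applying Lemma~\ref{lem:bV-xi} iteratively (and using that $\xi_\omega$ and $\xi_{s_i}$ are exact), each application of $\xi_{s_i}$ replaces a subquotient $\cbV_z$ by a two-step extension whose graded pieces are $\cbV_z$ together with $\cbV_{s_i z}$ (in an order determined by the relation between $z$ and $s_i z$ under $\preceq$), and $\xi_{\omega^{-1}}$ simply relabels. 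Iterating, $\cM$ acquires a baby co-Verma filtration whose subquotients form a multiset depending only on $\omega, s_1, \ldots, s_r$ and the purely combinatorial data of $(W_\ext, \preceq)$. In particular, the numbers $(\cM : \cbV_y)$ are independent of the field $\bk$.

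Next, I would observe that $\cM$ admits a baby Verma filtration. Indeed, by Proposition~\ref{prop:projectives}\eqref{it:proj-R}, $\Phi(j^\varsigma_{!*}\cS_\varsigma) = \Phi(\TGr_{t_{w_\circ(\varsigma)}})$ is both injective and projective in $\modf^\bY_{I_\unip}(\cR)$; since $\xi_\omega$, $\xi_{s_i}$ commute with $\Phi$ and preserve injectivity (having exact biadjoints, cf.\ Lemma~\ref{lem:functors-Rmod}), $\cM$ is injective. Hence $\cM$ is a direct sum of objects $\injh_u$, each of which admits a baby Verma filtration by Proposition~\ref{prop:inj-Verma-filt}, so $\cM$ does too. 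Moreover, by the equality of multiplicities~\eqref{eqn:equality-multiplicites} applied to $\cM$ (which admits both kinds of filtration), $(\cM : \bV_y) = (\cM : \cbV_y)$ for every $y$, and this common value is independent of $\bk$ by the previous paragraph.

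Finally, I apply Corollary~\ref{cor:dimhom-verma} to get
\[
\dim \End(\cM) = \sum_{y \in W_\ext} (\cM : \bV_y)(\cM : \cbV_y) = \sum_{y \in W_\ext} (\cM : \cbV_y)^2,
\]
the right-hand side being independent of $\bk$ by the combinatorial description above, which finishes the argument. The only potentially subtle step is the combinatorial tracking of the filtration through the iterated $\xi_{s_i}$, but this is handled uniformly by Lemma~\ref{lem:bV-xi} and requires no input specific to $\bk$; the rest of the argument is bookkeeping via the results already established.
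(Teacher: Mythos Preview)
Your proof is correct and follows essentially the same approach as the paper's: both arguments show that $\cM$ is projective and injective (hence admits both filtrations), use Lemma~\ref{lem:Pi-sigma-bV-filt} and the combinatorial rules of Lemma~\ref{lem:bV-xi} to see that the baby co-Verma multiplicities are field-independent, invoke~\eqref{eqn:equality-multiplicites} to transfer this to baby Verma multiplicities, and conclude via Corollary~\ref{cor:dimhom-verma}. The only difference is the order of exposition.
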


\begin{proof}
As in the proof of Proposition~\ref{prop:inj-hull-bV}
the object $\Phi(\xi_{s_r} \cdots \xi_{s_1} \xi_{\omega^{-1}}(j^\varsigma_{!*} \cS_\varsigma))$ is both injective and projective, so it admits both a baby Verma filtration and a baby co-Verma filtration (see Theorem~\ref{thm:proj-Hecke-Whit}\eqref{it:proj-filt} and Proposition~\ref{prop:inj-Verma-filt}). By Corollary~\ref{cor:dimhom-verma}, it is therefore enough to show that the baby (co-)Verma multiplicities are independent of the field of coefficients $\bk$.  The baby co-Verma multiplicities in $\Phi(j^\varsigma_{!*}\cS_\varsigma)$ are given explicitly in Lemma~\ref{lem:Pi-sigma-bV-filt} (and are manifestly independent of $\bk$).  Then, the baby co-Verma multiplicities in $\Phi(\xi_{s_r} \cdots \xi_{s_1} \xi_{\omega^{-1}}(j^\varsigma_{!*} \cS_\varsigma))$ can be computed by the combinatorial rules from Lemma~\ref{lem:bV-xi}, from which we see that these multiplicities are again independent of $\bk$. We deduce the same property for baby Verma multiplicities using~\eqref{eqn:equality-multiplicites}.
\end{proof}

\subsection{Integral \texorpdfstring{$\cR$}{R}-modules}
\label{ss:integral}

In order to prove some further properties of baby Verma and co-Verma modules, we need to make a detour through a version of our categories over the ring of integers $\bO$ of a finite extension of $\Q_\ell$. (Our understanding of this theory is quite limited, and the results obtained below are clearly not fully satisfactory. They will still be sufficient for the applications we have in mind.)
For clarity, in this subsection we will sometimes add the coefficients in parentheses to various notations introduced above.

Let $\bO$ be the ring of integers in a finite extension $\bK$ of $\Q_\ell$ containing a nontrivial $p$-th root of unity, and let $\bF$ be its residue field. Then we can consider the categories $\Db_{(I^A_\unip, \cX_A)}(\Gr,\bE)$ and $\Db_{\cL^+G}(\Gr,\bE)$ for any $\bE \in \{\bK, \bO, \bF\}$, and we have change-of-scalars functors
\begin{align*}
\bK := \bK \lotimes_\bO (-) : \Db_{(I^A_\unip, \cX_A)}(\Gr,\bO) \to \Db_{(I^A_\unip, \cX_A)}(\Gr,\bK), \\
\bF := \bF \lotimes_\bO (-) : \Db_{(I^A_\unip, \cX_A)}(\Gr,\bO) \to \Db_{(I^A_\unip, \cX_A)}(\Gr,\bF),
\end{align*}
and similarly for the $\cL^+G$-equivariant categories. (Here the symbol $(I_\unip^A,\cX_A)$ refers to the Whittaker condition over $\bO$, $\bF$ or $\bK$ depending on the coefficients appearing elsewhere in the notation.) 


The definition of the ind-perverse sheaf $\cR$ with coefficients in $\bO$ (denoted $\cR(\bO)$ below) can be copied verbatim from~\S\ref{ss:reg-perv-sheaf}.
Note that the geometric Satake equivalence is also known over $\bO$; however, in that setting
the definition of the convolution product on $\Perv_{\cL^+G}(\Gr,\bO)$ involves a perverse truncation, see~\cite[Equation~(4.2)]{mv}. Here we continue to define the bifunctors
\begin{align*}
\star^{\cL^+ G} &: \Db_{\cL^+G}(\Gr,\bO) \times \Db_{\cL^+G}(\Gr,\bO) \to \Db_{\cL^+G}(\Gr,\bO), \\
\star^{\cL^+ G} &: \Db_{(I^A_\unip, \cX_A)}(\Gr,\bO) \times \Db_{\cL^+G}(\Gr,\bO) \to \Db_{(I^A_\unip, \cX_A)}(\Gr,\bO)
\end{align*}
as in~\eqref{eqn:convol-L+G} or~\S\ref{ss:Whittaker}, i.e.~without incorporating the perverse truncation; with this notation, the convolution product on $\Perv_{\cL^+G}(\Gr,\bO)$ used in the construction of the geometric Satake equivalence is therefore given by
\[
(\cF, \cG) \mapsto \pH^0 \bigl( \cF \star^{\cL^+ G} \cG \bigr).
\]

\begin{lem}
\label{lem:exactness-conv-O}
For any $\cF$ in $\Perv_{(I^A_\unip, \cX_A)}(\Gr,\bO)$ and any $\cG \in \Perv_{\cL^+G}(\Gr,\bO)$ such that $\bF(\cG)$ is perverse, the complex $\cF \star^{\cL^+ \cG} \cG$ is perverse. In particular, for any $\cF$ in $\Perv_{(I^A_\unip, \cX_A)}(\Gr,\bO)$ and $\lambda \in \bY_+$, the complex
$\cF \star^{\cL^+G} \cI^\lambda_*$ is perverse.
\end{lem}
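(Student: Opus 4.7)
The plan is to verify perversity of $\cF \star^{\cL^+G} \cG$ after change of scalars to $\bK$ and $\bF$, using the standard criterion that $\cM \in \Db(\Gr,\bO)$ lies in $\Perv(\Gr,\bO)$ if and only if $\bK(\cM) \in \Perv(\Gr,\bK)$ and $\bF(\cM) \in {}^{\mathrm{p}} D^{[-1,0]}(\Gr,\bF)$. I would derive (or cite) this criterion via the long exact sequence of perverse cohomology associated with the triangle $\cM \xrightarrow{\pi} \cM \to \bF(\cM) \xrightarrow{+1}$ for a uniformizer $\pi$ of $\bO$: the $\bF$-bound forces $\pi$ to act as an isomorphism on $\pH^i(\cM)$ for $i \neq -1, 0$, whence these vanish by Nakayama for constructible $\bO$-sheaves; the $\bK$-condition then forces the remaining $\pi$-torsion-free potential $\pH^{-1}(\cM)$ to vanish as well.

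Next I would establish the compatibility of the convolution bifunctor with change of scalars,
\[
\bK(\cF \star^{\cL^+G} \cG) \cong \bK(\cF) \star^{\cL^+G} \bK(\cG), \qquad \bF(\cF \star^{\cL^+G} \cG) \cong \bF(\cF) \star^{\cL^+G} \bF(\cG),
\]
which follows from the definition of $\star^{\cL^+G}$ as a pushforward along the convolution morphism (employed here without any perverse truncation), together with the compatibility of proper pushforward and the twisted external product with the exact functors $\bK \lotimes_\bO (-)$ and $\bF \lotimes_\bO (-)$.

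With these two ingredients, the proof becomes essentially formal. Over $\bK$, both $\bK(\cF)$ and $\bK(\cG)$ are perverse and convolution on the right by $\Perv_{\cL^+G}(\Gr,\bK)$ is t-exact (recalled in~\S\ref{ss:Whittaker}), so $\bK(\cF \star^{\cL^+G} \cG) \in \Perv(\Gr,\bK)$. Over $\bF$, the hypothesis supplies $\bF(\cG) \in \Perv(\Gr,\bF)$, while a direct inspection of the long exact sequence applied to $\cM = \cF$ shows that $\bF(\cF) \in {}^{\mathrm{p}} D^{[-1,0]}(\Gr,\bF)$ for any $\cF$ perverse over $\bO$. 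T-exactness of convolution on the right over $\bF$ then keeps $\bF(\cF \star^{\cL^+G} \cG)$ inside ${}^{\mathrm{p}} D^{[-1,0]}(\Gr,\bF)$, and the criterion yields the first claim.

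For the ``in particular'' statement, one specializes to $\cG = \cI^\lambda_*(\bO)$ and checks that $\bF(\cI^\lambda_*(\bO)) \cong \cI^\lambda_*(\bF)$, the latter being perverse by the construction of the (co)standard objects in the $\bF$-coefficient Satake category. This identification is the compatibility of modular reduction with the $!$-pushforward and perverse truncation used to define $\cI^\lambda_*$, which is standard given that $\cI^\lambda_*(\bO)$ is torsion-free along the open orbit $\Gr^\lambda$. The main obstacle is essentially bookkeeping: carefully setting up the $\bO$-perversity criterion and verifying the integral change-of-scalars compatibility for the convolution bifunctor as defined in this section.
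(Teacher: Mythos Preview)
Your argument is correct and takes a genuinely different route from the paper. The paper does not pass through the criterion ``perverse over $\bO$ $\Leftrightarrow$ perverse over $\bK$ and in ${}^{\mathrm{p}}D^{[-1,0]}$ over $\bF$''; instead it invokes Gaitsgory's nearby-cycles description of $\star^{\cL^+G}$ to reduce to showing that the external product $\cF \lboxtimes_\bO \cG$ is perverse on $\Gr \times \Gr$, and then checks the co-restriction condition stratum by stratum: on $X \times Y$ (with $X$ an $I^A_\unip$-orbit and $Y$ a $\cL^+G$-orbit) one writes $i^!_X \cF$ as an iterated extension of constant sheaves $\underline{M}_X[n]$ with $n \le \dim X$, and uses the hypothesis that $\bF(\cG)$ is perverse to ensure each $\underline{M}_X[n] \lboxtimes_\bO \cG$ lies in nonnegative perverse degrees. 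Your approach has the advantage of being entirely black-box, reducing everything to the already-known $t$-exactness of convolution over fields; the paper's approach is more intrinsic and makes visible exactly where the torsion-freeness hypothesis on $\cG$ enters (namely, to control $M \lotimes_\bO \cG$ for possibly torsion $M$).

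One small correction in your treatment of the ``in particular'': $\cI^\lambda_*$ is defined via $*$-pushforward, not $!$-pushforward, and torsion-freeness on the open orbit $\Gr^\lambda$ alone does not suffice to conclude that $\bF(\cI^\lambda_*(\bO))$ is perverse (or that it agrees with $\cI^\lambda_*(\bF)$). What is actually needed is that $\cI^\lambda_*(\bO)$ is globally torsion-free, equivalently that its stalks are free $\bO$-modules; this is a nontrivial result of Mirkovi\'c--Vilonen (\cite[Proposition~8.1]{mv}), which is what the paper cites. With that input your deduction goes through.
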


\begin{proof}
To prove this lemma we will use the description of the product $\star^{\cL^+ G}$ in terms of nearby cycles first made explicit by Gaitsgory~\cite{gaitsgory}; see~\cite[Proposition~3.3.1]{ar-book} for more details. By t-exactness of nearby cycles, this description shows that to prove the first statement in the lemma it suffices to prove that the complex
$\cF \lboxtimes_\bO \cG$
on $\Gr \times \Gr$ is perverse.
By right exactness of the derived tensor product, this complex belongs to the nonpositive part of the perverse t-structure. To check that it belongs to the nonnegative part, we have to check that for any $I^A_\unip$-orbit $X \subset \Gr$ and any $\cL^+ G$-orbit $Y \subset \Gr$, the corestriction of our complex $\cF \lboxtimes_\bO \cG$ to $X \times Y$ is concentrated in nonnegative perverse degrees. Now the embedding of $X \times Y$ in $\Gr \times \Gr$ can be written as a composition
\[
X \times Y \hookrightarrow X \times \Gr \xrightarrow{i \times \id} \Gr \times \Gr,
\]
where $i : X \to \Gr$ is the embedding. It therefore suffices to show that the complex
\[
(i \times \id)^! (\cF \lboxtimes_\bO \cG) = (i^! \cF) \lboxtimes_\bO \cG
\]
is concentrated in nonnegative perverse degrees. Now $i^! \cF$ is an extension of constant sheaves $\underline{M}_X[n]$ with $M$ a finitely generated $\bO$-module and $n \leq \dim(X)$. Our assumption on $\cG$ ensures that each $\underline{M}_X[n] \lboxtimes_\bO \cG$ is in nonnegative perverse degrees, which implies our claim.

The second assertion of the lemma follows from the first one, since $\cI^\lambda_*$ satisfies the required assumption by~\cite[Proposition~8.1]{mv}.
\end{proof}

Lemma~\ref{lem:exactness-conv-O} shows that for any $\cF$ in $\Perv_{(I^A_\unip, \cX_A)}(\Gr,\bO)$ and any $\lambda,\mu \in \bY$ appearing in~\eqref{eqn:description-Rmu} the convolution $\cF \star^{\cL^+ G} \cI^{w_\circ(\mu) + \lambda}_* \star^{\cL^+G} \cI^{-w_\circ(\lambda)}_*$ is perverse. For any ind-object $\cF$ in $\Perv_{(I^A_\unip, \cX_A)}(\Gr,\bO)$ and any $\mu \in \bY$ we can therefore consider the ind-object $\cF \star^{\cL^+ G} \cR_\mu$ in $\Perv_{(I^A_\unip, \cX_A)}(\Gr,\bO)$, and then make sense
of the abelian category $\Mod^\bY_{(I^A_\unip, \cX_A)}(\cR(\bO))$ as in~\S\ref{ss:graded-R-modules}. The functor
\[
\Phi^A: \Perv_{(I^A_\unip, \cX_A)}(\Gr,\bO) \to \Mod^\bY_{(I^A_\unip, \cX_A)}(\cR(\bO)),
\]
and the notions of \emph{free $\cR(\bO)$-module of finite type} and \emph{finitely generated $\cR(\bO)$-module} can also be copied. Lemma~\ref{lem:Rfree-hom} holds unchanged.  The definitions of the various averaging functors from~\S\ref{ss:av-functors} carry over to this setting, and one can show that these functors are still exact;
in particular, there is an exact functor
\[
\xi_A: \Mod^\bY_{I_\unip}(\cR(\bO)) \to \Mod^\bY_{I_\unip}(\cR(\bO))
\]
that sends finitely generated $\cR(\bO)$-modules to finitely generated $\cR(\bO)$-modules.

\begin{rmk}\label{rmk:fingen-O}
As with field coefficients, it is not immediately obvious that the category of finitely generated $\cR(\bO)$-modules, which one may denote $\modf^\bY_{(I^A_\unip, \cX_A)}(\cR(\bO))$, is abelian.  We will not address this question in this paper, as working in the category $\Mod^\bY_{(I^A_\unip, \cX_A)}(\cR(\bO))$ will be sufficient for our purposes.
\end{rmk}

The change-of-scalars functor $\bF$ induces a right-exact functor
\[
\bF^0 : \Mod^\bY_{(I^A_\unip, \cX_A)}(\cR(\bO)) \to \Mod^\bY_{(I^A_\unip, \cX_A)}(\cR(\bF))
\]
defined as follows. If $\cM \in \Mod^\bY_{(I^A_\unip, \cX_A)}(\cR(\bO))$, the underlying graded ind-perverse sheaf of $\bF^0(\cM)$ is $\pH^0(\bF(\cM))$, with the obvious grading. The morphisms 
\[
\pH^0(\bF(\cM_\lambda)) \star^{\cL^+G} \cR_\mu(\bF) \to \pH^0(\bF(\cM_{\lambda+\mu}))
\]
are obtained from the morphisms $\cM_\lambda \star^{\cL^+ G} \cR_\mu(\bO) \to \cM_{\lambda+\mu}$ by application of the functor $\pH^0(\bF(-))$, using the fact that for any $\cF$ in $\Perv_{(I^A_\unip, \cX_A)}(\Gr,\bO)$ we have
\begin{multline*}
\pH^0(\bF(\cF \star^{\cL^+G} \cI^{w_\circ(\mu) + \lambda}_*(\bO) \star^{\cL^+G} \cI^{-w_\circ(\lambda)}_*(\bO))) \cong \\
\pH^0(\bF(\cF)) \star^{\cL^+G} \cI^{w_\circ(\mu) + \lambda}_*(\bF) \star^{\cL^+G} \cI^{-w_\circ(\lambda)}_*(\bF)
\end{multline*}
by commutation of $\bF$ with convolution, the fact that $\bF(\cI^\nu_*(\bO)) \cong \cI^\nu_*(\bF)$ for any $\nu$ (see the proof of Lemma~\ref{lem:exactness-conv-O}), and exactness of convolution over $\bF$. Similar considerations show that the functor $\bK$ induces in the natural way an exact functor
\[
\bK : \Mod^\bY_{(I^A_\unip, \cX_A)}(\cR(\bO)) \to \Mod^\bY_{(I^A_\unip, \cX_A)}(\cR(\bK)).
\]
(Here there is no perverse cohomology involved in the construction since $\bK$ is t-exact; we therefore do not add any superscript in the notation.) It is clear from this definition that for $\cF$ in $\Perv_{(I^A_\unip, \cX_A)}(\Gr,\bO)$ we have canonical isomorphisms
\[
\bF^0 \circ \Phi^A(\cF) \cong \Phi^A(\pH^0(\bF(\cF))), \quad \bK \circ \Phi^A(\cF) \cong \Phi^A(\bK(\cF)).
\]
The functors $\bK$ and $\bF^0$ also commute in the natural way with the averaging functors.

We will now analyze the effect of the functors $\bK$ and $\bF^0$ on morphisms. We start with the much easier case of $\bK$.

\begin{lem}
\label{lem:scalars-R-mod-K}
Let $\cF, \cG \in \Perv_{(I^A_\unip, \cX_A)}(\Gr,\bO)$. 
For any $\nu \in \bY$, the functor $\bK$ induces an isomorphism
\begin{multline*}
\bK \otimes_\bO \Hom_{\Mod^\bY_{(I^A_\unip, \cX_A)}(\cR(\bO))}(\Phi(\cF), \Phi(\cG)\la \nu\ra) \simto \\
\Hom_{\modf^\bY_{(I^A_\unip, \cX_A)}(\cR(\bK))}(\Phi(\bK(\cF)), \Phi(\bK(\cG))\la \nu \ra).
\end{multline*}
\end{lem}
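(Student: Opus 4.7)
The plan is to reduce the claim, via Lemma~\ref{lem:Rfree-hom} (whose proof applies verbatim over $\bO$), to a universal coefficient statement for perverse $\bO$-sheaves on $\Gr$. Using Lemma~\ref{lem:Rfree-hom} together with the defining inductive system~\eqref{eqn:description-Rmu} for $\cR_{-\nu}$, both sides of the claimed isomorphism can be rewritten as follows. First,
\[
\Hom_{\Mod^\bY_{(I^A_\unip, \cX_A)}(\cR(\bO))}(\Phi(\cF), \Phi(\cG)\la \nu\ra) \cong \Hom(\cF, \cG \star^{\cL^+G} \cR_{-\nu}(\bO)),
\]
and since $\cF$ is an ordinary perverse sheaf (hence compact in the category of ind-perverse sheaves), the right-hand side identifies with the filtered colimit
\[
\varinjlim_\lambda \Hom \bigl( \cF, \, H_\lambda(\bO) \bigr), \quad \text{where } H_\lambda(\bE):=\cG(\bE) \star^{\cL^+G} \cI^{-w_\circ(\nu)+\lambda}_*(\bE) \star^{\cL^+G} \cI^{-w_\circ(\lambda)}_*(\bE),
\]
the index $\lambda$ running over $\bY_+ \cap (w_\circ(\nu) + \bY_+)$. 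By Lemma~\ref{lem:exactness-conv-O}, each $H_\lambda(\bO)$ is an ordinary perverse sheaf on $\Gr$ (supported on a closed sub-ind-scheme of finite type). An analogous description applies on the $\bK$ side, and the commutation of $\bK$ with convolution together with t-exactness of $\bK$ provides canonical isomorphisms $\bK(H_\lambda(\bO)) \cong H_\lambda(\bK)$ and $\bK(\cF) = \bK \otimes_\bO \cF$.

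Since $\bK = \varinjlim_n \ell^{-n}\bO$ is a filtered colimit of flat $\bO$-modules, the functor $\bK\otimes_\bO(-)$ is exact and commutes with filtered colimits. It therefore suffices to establish that for each fixed $\lambda$ in the indexing set the natural base-change morphism
\[
\bK \otimes_\bO \Hom_{\Perv_{(I^A_\unip, \cX_A)}(\Gr,\bO)}(\cF, H_\lambda(\bO)) \;\longrightarrow\; \Hom_{\Perv_{(I^A_\unip, \cX_A)}(\Gr,\bK)}(\bK(\cF), H_\lambda(\bK))
\]
is an isomorphism. This is a standard universal coefficient statement for $\bO$-constructible perverse sheaves on a variety (both $\cF$ and $H_\lambda(\bO)$ are supported on a closed sub-ind-scheme of finite type): the $\Hom$ group on the left is a finitely generated $\bO$-module (by constructibility and properness of the support), and inverting $\ell$ in the $\bO$-linear $\Hom$ complex computes the $\bK$-linear $\Hom$ because $\bO \to \bK$ is flat. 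Applying $\varinjlim_\lambda$ to this collection of isomorphisms yields the isomorphism stated in the lemma.

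The manipulations above are essentially formal once the key flatness input is in hand; the only potentially delicate point is justifying the universal coefficient isomorphism in the last displayed line, but it reduces immediately to the classical statement for constructible $\bO$-sheaves on a finite-type scheme by restricting to a closed sub-ind-scheme supporting $\cF$ and $H_\lambda(\bO)$, so no new difficulty arises.
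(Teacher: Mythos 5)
Your proposal is correct and takes essentially the same route as the paper's proof: both sides are identified, via the $\bO$-analogue of Lemma~\ref{lem:Rfree-hom} (with perversity of the terms ensured by Lemma~\ref{lem:exactness-conv-O}), with filtered colimits of $\Hom$-spaces between ordinary perverse sheaves, and one then uses that $\bK \otimes_\bO (-)$ commutes with filtered colimits together with the standard base-change isomorphism $\bK \otimes_\bO \Hom(\cH_1,\cH_2) \simto \Hom(\bK(\cH_1),\bK(\cH_2))$ for ordinary $\bO$-perverse sheaves, which is exactly the fact~\eqref{eqn:scalars-perv-O-K} invoked in the paper. Your extra justification of that base-change fact (flatness of $\bO \to \bK$ and finite generation of the $\Hom$-modules) is fine and merely spells out what the paper takes as standard.
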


\begin{proof}
By (the analogue of) Lemma~\ref{lem:Rfree-hom} we have
\begin{multline*}
\Hom_{\Mod^\bY_{(I^A_\unip, \cX_A)}(\cR(\bO))}(\Phi(\cF), \Phi(\cG)\la \nu\ra) \cong \\
\varinjlim_\lambda \Hom_{\Perv_{(I^A_\unip, \cX_A)}(\Gr,\bO)}(\cF, \cG \star^{\cL^+G} \cI_*^{w_\circ(\nu)+\lambda}(\bO) \star^{\cL^+G} \cI_*^{-w_\circ(\lambda)}(\bO)),
\end{multline*}
and similarly over $\bK$. The claim follows, using the fact that inductive limits commute with tensor product, and that for $\cH_1, \cH_2$ in $\Perv_{(I^A_\unip, \cX_A)}(\Gr,\bO)$ the morphism
\begin{equation}
\label{eqn:scalars-perv-O-K}
\bK \otimes_\bO \Hom_{\Perv_{(I^A_\unip, \cX_A)}(\Gr,\bO)}(\cH_1, \cH_2) \to \Hom_{\Perv_{(I^A_\unip, \cX_A)}(\Gr,\bK)}(\bK(\cH_1), \bK(\cH_2))
\end{equation}
induced by $\bK$ is an isomorphism.
\end{proof}

We now consider the more delicate case of $\bF^0$.
Recall that a perverse sheaf $\cF$ in $\Perv_{(I^A_\unip, \cX_A)}(\Gr,\bO)$ or $\Perv_{\cL^+ G}(\Gr,\bO)$ is called \emph{torsion-free} if multiplication by a uniformizer of $\bO$ is injective on $\cF$, or equivalently if $\bF(\cF)$ is perverse. Using this characterization and t-exactness of convolution over $\bF$, one easily sees that if $\cF$ in $\Perv_{(I^A_\unip, \cX_A)}(\Gr,\bO)$ or $\Perv_{\cL^+ G}(\Gr,\bO)$ and $\cG$ in $\Perv_{\cL^+ G}(\Gr,\bO)$ are torsion-free, then $\cF \star^{\cL^+G} \cG$ is torsion-free. Recall also that the $\bO$-module of morphisms between two torsion-free perverse sheaves is finitely generated and torsion-free, hence free of finite rank. 
The following lemma comprises an analogue of Lemma~\ref{lem:hom-stabilize} over $\bO$.

\begin{lem}
\label{lem:scalars-R-mod-F}
Let $\cF, \cG \in \Perv_{(I^A_\unip, \cX_A)}(\Gr,\bO)$ be torsion-free, and let $\nu \in \bY$.

\begin{enumerate}
\item
\label{it:scalars-R-mod-F-1}
If $\lambda \in \bY_+ \cap (w_\circ(\nu) + \bY_+)$
is sufficiently large, the natural map
\[
\Hom(\cF, \cG \star^{\cL^+G} \cI^{-w_\circ(\nu)+\lambda}_*(\bO) \star^{\cL^+G} \cI^{-w_\circ(\lambda)}_*(\bO)) \to \Hom(\Phi(\cF), \Phi(\cG)\la \nu\ra)
\]
is an isomorphism.  In particular, $\Hom(\Phi(\cF), \Phi(\cG)\la \nu\ra)$ is a free $\bO$-module of finite rank.
\item
\label{it:scalars-R-mod-F-2}
The functor $\bF^0$ induces an injection
\[
\bF \otimes_\bO \Hom_{\modf^\bY_{I_\unip}(\cR(\bO))}(\Phi(\cF), \Phi(\cG)\la \nu \ra) \hookrightarrow \Hom_{\modf^\bY_{I_\unip}(\cR(\bF))}(\Phi(\bF(\cF)), \Phi(\bF(\cG))\la \nu \ra),
\]
which is an isomorphism if and only if
\begin{multline*}
\dim_\bK \Hom_{\modf^\bY_{I_\unip}(\cR(\bK))}(\Phi(\bK(\cF)), \Phi(\bK(\cG))\la \nu \ra)
= \\
 \dim_\bF \Hom_{\modf^\bY_{I_\unip}(\cR(\bF))}(\Phi(\bF(\cF)), \Phi(\bF(\cG))\la \nu \ra).
\end{multline*}
\end{enumerate}
\end{lem}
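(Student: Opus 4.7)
The plan is to deduce (1) from Lemma~\ref{lem:hom-stabilize} applied separately over $\bK$ and over $\bF$, combined with a Nakayama-type argument, and to derive (2) from a universal-coefficient calculation.

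For (1), the $\bO$-analogue of Lemma~\ref{lem:Rfree-hom} identifies the target of the displayed map with $\varinjlim_\lambda V_\lambda$, where
\[
V_\lambda := \Hom\bigl(\cF,\ \cG \star^{\cL^+G} \cI^{-w_\circ(\nu)+\lambda}_*(\bO) \star^{\cL^+G} \cI^{-w_\circ(\lambda)}_*(\bO)\bigr).
\]
Since $\cF$, $\cG$ and each $\cI^\mu_*(\bO)$ are torsion-free, iterated application of Lemma~\ref{lem:exactness-conv-O} shows that the target object appearing in $V_\lambda$ is torsion-free perverse, so each $V_\lambda$ is a finitely generated torsion-free (hence free of finite rank) $\bO$-module. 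It therefore suffices to show that the transition maps $V_\lambda \to V_{\lambda+\eta}$ are isomorphisms for $\lambda$ large. After applying $\bK \otimes_\bO -$, these maps identify (via~\eqref{eqn:scalars-perv-O-K} and the compatibility of $\bK$ with convolution) with the analogous transitions in the $\bK$-setting, so they stabilize by Lemma~\ref{lem:hom-stabilize} over $\bK$. After applying $\bF \otimes_\bO -$, the short exact sequence $0 \to \cH \xrightarrow{\varpi} \cH \to \bF\cH \to 0$ (with $\cH$ the target perverse sheaf appearing in $V_\lambda$) together with the adjunction $\Hom_\bO(\cF, \bF\cH) \cong \Hom_\bF(\bF\cF, \bF\cH)$ produces an injection $\bF \otimes_\bO V_\lambda \hookrightarrow W_\lambda$, where $W_\lambda$ denotes the analogue of $V_\lambda$ over $\bF$; passing to the filtered colimit preserves injectivity, so $\bF \otimes_\bO \varinjlim V_\lambda$ embeds into $\varinjlim W_\lambda \cong \Hom(\Phi(\bF\cF), \Phi(\bF\cG)\la\nu\ra)$, which is finite-dimensional by Lemma~\ref{lem:R-findim}, and therefore the transitions $\bF \otimes V_\lambda \to \bF \otimes V_{\lambda+\eta}$ also stabilize. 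For $\lambda$ large enough that both stabilizations hold, the map $V_\lambda \to V_{\lambda+\eta}$ is an $\bO$-linear map between free $\bO$-modules of the same finite rank that is an isomorphism after $\bK \otimes -$ and after $\bF \otimes -$; hence its (finitely generated) cokernel vanishes by Nakayama and its (torsion-free) kernel vanishes by flatness of $\bO \to \bK$, so it is already an isomorphism over $\bO$. This proves (1).

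For (2), fix $\lambda$ large enough that (1) and Lemma~\ref{lem:hom-stabilize} over $\bF$ both apply, so that the two Hom-groups in the statement identify with $V_\lambda$ and $W_\lambda$ respectively. The universal-coefficient injection $\bF \otimes_\bO V_\lambda \hookrightarrow W_\lambda$ constructed above is precisely the map induced by $\bF^0$, proving the injectivity assertion. Since $V_\lambda$ is free of finite rank we have
\[
\dim_\bF \bigl( \bF \otimes_\bO V_\lambda \bigr) = \mathrm{rk}_\bO V_\lambda = \dim_\bK \bigl( \bK \otimes_\bO V_\lambda \bigr),
\]
and Lemma~\ref{lem:scalars-R-mod-K} identifies the rightmost quantity with $\dim_\bK \Hom_{\modf^\bY_{I_\unip}(\cR(\bK))}(\Phi(\bK\cF), \Phi(\bK\cG)\la\nu\ra)$, so the injection is an isomorphism exactly when the $\bK$- and $\bF$-dimensions coincide, as claimed. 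The principal technical obstacle in this plan is the simultaneous-stabilization step in (1) — passing from stabilization over $\bK$ and over $\bF$ to stabilization over $\bO$ — and it is resolved by the Nakayama/torsion-freeness argument above, which depends crucially on the prior knowledge that each $V_\lambda$ is a free $\bO$-module of finite rank.
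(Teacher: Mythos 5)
Your overall strategy is the same as the paper's: compare the Hom-systems over $\bK$, $\bO$, $\bF\otimes_\bO(-)$ and $\bF$, stabilize the outer rows via Lemma~\ref{lem:hom-stabilize}, and conclude over $\bO$ by a Nakayama/torsion-freeness argument; your part~(2) is essentially identical to the paper's. There is, however, one step in your part~(1) whose justification does not stand as written: from the embedding of $\bF\otimes_\bO\varinjlim_\lambda V_\lambda$ into the finite-dimensional space $\varinjlim_\lambda W_\lambda$ you conclude that the transition maps $\bF\otimes_\bO V_\lambda\to\bF\otimes_\bO V_{\lambda+\eta}$ eventually become isomorphisms. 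Finite-dimensionality (even vanishing) of a filtered colimit does not force the transition maps to stabilize: a system of nonzero spaces with zero transition maps has zero colimit, and this pathology is not excluded by what you invoke at this point, since the reduction modulo a uniformizer of an injective map between free $\bO$-modules (e.g.\ multiplication by $\varpi$) need not be injective. So eventual injectivity of the reduced transitions, which is what would make your dimension-count work, is neither automatic nor established by the colimit argument.

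The repair is exactly the paper's argument, and uses only ingredients you already have on the table. First, the ranks of the free modules $V_\lambda$ are eventually constant, because $\mathrm{rk}_\bO V_\lambda=\dim_\bK(\bK\otimes_\bO V_\lambda)$ and the $\bK$-system stabilizes; hence $\dim_\bF(\bF\otimes_\bO V_\lambda)$ is eventually constant as well. Second, the reduced transition $\bF\otimes_\bO V_\lambda\to\bF\otimes_\bO V_{\lambda+\eta}$ is injective for $\lambda$ large: compose it with your injection $\bF\otimes_\bO V_{\lambda+\eta}\hookrightarrow W_{\lambda+\eta}$ and use the commutative square, noting that $W_\lambda\to W_{\lambda+\eta}$ is an isomorphism for $\lambda$ large by the $\bF$-case of Lemma~\ref{lem:hom-stabilize} and Remark~\ref{rmk:hom-stabilize} (which you invoke only in part~(2), but need already here). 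An injection between $\bF$-vector spaces of equal finite dimension is an isomorphism, and then your final step --- cokernel killed by Nakayama, kernel killed by torsion-freeness and flatness of $\bO\to\bK$ --- goes through verbatim and coincides with the paper's conclusion. So the gap is a single unjustified ``therefore,'' easily filled, rather than a wrong approach.
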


\begin{proof}
\eqref{it:scalars-R-mod-F-1}
Recall that if $\cH_1$ and $\cH_2$ are torsion-free perverse sheaves, then there is a natural injective map
\begin{equation}
\label{eqn:scalars-perv-O}
\bF \otimes_\bO \Hom(\cH_1,\cH_2) \hookrightarrow \Hom(\bF \lotimes_\bO \cH_1, \bF \lotimes_\bO \cH_2),
\end{equation}
whose cokernel identifies with $\mathrm{Tor}^\bO_1(\bF, \Ext^1(\cH_1,\cH_2))$.

Let $\lambda \in \bY_+ \cap (w_\circ(\nu) + \bY_+)$, let $\mu \in \bY_+$, and set $\lambda':=\lambda+\mu$.  In the following diagram, the vertical maps come from the morphisms~\eqref{eqn:scalars-perv-O-K} and~\eqref{eqn:scalars-perv-O}, and the horizontal maps are defined by the considerations in~\S\ref{ss:reg-perv-sheaf}.  Note that the second row consists of free $\bO$-modules, while the top and bottom rows consist of $\bK$- and $\bF$-vector spaces, respectively.
{\small
\[
\begin{tikzcd}[column sep=small]
\Hom(\bK(\cF), \bK(\cG) \star \cI^{-w_\circ(\nu)+\lambda}_* \star \cI^{-w_\circ(\lambda)}_*) \ar[r, "\text{(i)}"] &
\Hom(\bK (\cF), \bK(\cG) \star \cI^{-w_\circ(\nu)+\lambda'}_* \star \cI^{-w_\circ(\lambda')}_*) \\
\Hom(\cF, \cG \star \cI^{-w_\circ(\nu)+\lambda}_* \star \cI^{-w_\circ(\lambda)}_*) \ar[r, "\text{(ii)}"] \ar[u, hook]\ar[d, two heads] &
\Hom(\cF, \cG \star \cI^{-w_\circ(\nu)+\lambda'}_* \star \cI^{-w_\circ(\lambda')}_*) \ar[u, hook]\ar[d, two heads] \\
\bF \otimes \Hom(\cF, \cG \star \cI^{-w_\circ(\nu)+\lambda}_* \star \cI^{-w_\circ(\lambda)}_*) \ar[r, "\text{(iii)}"] \ar[d, hook] &
\bF \otimes \Hom(\cF, \cG \star \cI^{-w_\circ(\nu)+\lambda'}_* \star \cI^{-w_\circ(\lambda')}_*) \ar[d, hook] \\
\Hom(\bF(\cF), \bF(\cG) \star \cI^{-w_\circ(\nu)+\lambda}_* \star \cI^{-w_\circ(\lambda)}_*) \ar[r, "\text{(iv)}"] &
\Hom(\bF (\cF), \bF(\cG) \star \cI^{-w_\circ(\nu)+\lambda'}_* \star \cI^{-w_\circ(\lambda')}_*).
\end{tikzcd}
\]
}Our goal is to prove that when $\lambda$ is large enough, arrow~(ii) is an isomorphism.  
Lemma~\ref{lem:hom-stabilize} and Remark~\ref{rmk:hom-stabilize} show that when $\lambda$ is large enough, arrows~(i) and~(iv) are both isomorphisms.  The topmost and bottommost commutative squares then show that arrows~(ii) and~(iii) are both injective.

The free $\bO$-modules in the second row must have equal (finite) ranks, because the $\bK$-vector spaces they give rise to in the first row have equal dimensions.  It then follows that the $\bF$-vector spaces in the third row also have equal dimensions.  Since arrow~(iii) is an injective map between $\bF$-vector spaces of equal dimension, it is actually an isomorphism.  In view of this, Nakayama's lemma implies that the middle one is surjective, which finishes the proof.

\eqref{it:scalars-R-mod-F-2}
If $\lambda$ is large enough, by~\eqref{it:scalars-R-mod-F-1} and Lemma~\ref{lem:hom-stabilize} we have identifications
\[
\Hom(\cF, \cG \star^{\cL^+G} \cI^{-w_\circ(\nu)+\lambda}_*(\bO) \star^{\cL^+G} \cI^{-w_\circ(\lambda)}_*(\bO)) \simto \Hom(\Phi(\cF), \Phi(\cG)\la \nu\ra)
\]
and
\begin{multline*}
\Hom(\bE(\cF), \bE(\cG) \star^{\cL^+G} \cI^{-w_\circ(\nu)+\lambda}_*(\bE) \star^{\cL^+G} \cI^{-w_\circ(\lambda)}_*(\bE)) \\
\simto \Hom(\Phi(\bE(\cF)), \Phi(\bE(\cG))\la \nu\ra)
\end{multline*}
for $\bE=\bK$ or $\bF$. Via these identifications, the morphism under consideration identifies with the morphism
\begin{multline*}
\Hom(\cF, \cG \star^{\cL^+G} \cI^{-w_\circ(\nu)+\lambda}_*(\bO) \star^{\cL^+G} \cI^{-w_\circ(\lambda)}_*(\bO)) \to \\
\Hom(\bF(\cF), \bF(\cG) \star^{\cL^+G} \cI^{-w_\circ(\nu)+\lambda}_*(\bF) \star^{\cL^+G} \cI^{-w_\circ(\lambda)}_*(\bF)) 
\end{multline*}
induced by $\bF$. The latter morphism is injective, as noted in~\eqref{eqn:scalars-perv-O}. It is an isomorphism if and only if these vector spaces have equal dimension, which in view of the identifications above and the isomorphism~\eqref{eqn:scalars-perv-O-K} is equivalent to the condition in the statement.
\end{proof}

\begin{rmk}
\label{rmk:morph-O-P-tilting}
For later use, let us record a special case in which the condition in Lemma~\ref{lem:scalars-R-mod-F}\eqref{it:scalars-R-mod-F-2} is satisfied. We assume that $A=\varnothing$, and fix some simple reflections $s_1, \ldots, s_r \in S_\aff$ and some $\omega \in \Omega$. For $\bE \in \{\bK, \bO, \bF\}$ we set
\[
\cP(\bE) := \xi_{s_1} \cdots \xi_{s_r} \xi_{\omega}(\TGr_{t_{w_\circ(\varsigma)}}(\bE)).
\]
(See~\S\ref{ss:kl} for generalities on tilting $\bO$-perverse sheaves.) Here we have $\TGr_{t_{w_\circ(\varsigma)}}(\bO) \cong j^{\varsigma}_!\cS_{\varsigma}(\bO) \cong j^{\varsigma}_*\cS_{\varsigma}(\bO)$. In particular we have
\[
\bF(\TGr_{t_{w_\circ(\varsigma)}}(\bO)) \cong \TGr_{t_{w_\circ(\varsigma)}}(\bF), \quad \bK(\TGr_{t_{w_\circ(\varsigma)}}(\bO)) \cong \TGr_{t_{w_\circ(\varsigma)}}(\bK),
\]
which implies that
\[
\bK(\cP(\bO)) \cong \cP(\bK)
\qquad\text{and}\qquad
\bF(\cP(\bO)) \cong \cP(\bF).
\]
In particular, this shows that $\bF(\cP(\bO))$ is tilting, which by standard arguments implies that $\cP(\bO)$ is tilting.

If $\cF \in \Perv_{I_\unip}(\Gr, \bO)$ is tilting, then the condition in Lemma~\ref{lem:scalars-R-mod-F}\eqref{it:scalars-R-mod-F-2} is satisfied for the pairs of objects $(\cF, \cP(\bO))$ and $(\cP(\bO),\cF)$ and any $\nu \in \bY$.
Indeed, for $\lambda \in \bY_+ \cap (w_\circ(\nu) + \bY_+)$ the object $\cI^{-w_\circ(\nu)+ \lambda}_*(\bF) \star^{\cL^+G} \cI^{-w_\circ(\lambda)}_*(\bF)$ has a costandard filtration (in $\Perv_{\cL^+ G}(\Gr,\bF)$) by~\cite[Proposition~4.8]{projGr1}, hence
\begin{multline*}
\cP(\bF) \star^{\cL^+G} \cI^{-w_\circ(\nu)+ \lambda}_*(\bF) \star^{\cL^+G} \cI^{-w_\circ(\lambda)}_*(\bF) \cong \\
\bF(\cP(\bO) \star^{\cL^+G} \cI^{-w_\circ(\nu)+ \lambda}_*(\bO) \star^{\cL^+G} \cI^{-w_\circ(\lambda)}_*(\bO))
\end{multline*}
admits a costandard filtration (in $\Perv_{I_\unip}(\Gr,\bF)$) by Lemma~\ref{lem:twtri-standard} and the considerations in~\S\ref{ss:wc-functors}. By standard arguments, this implies that the perverse sheaf $\cP(\bO) \star^{\cL^+G} \cI^{-w_\circ(\nu)+ \lambda}_*(\bO) \star^{\cL^+G} \cI^{-w_\circ(\lambda)}_*(\bO)$ admits a costandard filtration in the category $\Perv_{I_\unip}(\Gr,\bO)$. We deduce the equality
\begin{multline*}
\dim \Hom(\bF(\cF), \cP(\bF) \star^{\cL^+G} \cI^{-w_\circ(\nu)+ \lambda}_*(\bF) \star^{\cL^+G} \cI^{-w_\circ(\lambda)}_*(\bF)) = \\
\dim \Hom(\bK(\cF), \cP(\bK) \star^{\cL^+G} \cI^{-w_\circ(\nu)+ \lambda}_*(\bK) \star^{\cL^+G} \cI^{-w_\circ(\lambda)}_*(\bK))
\end{multline*}
by the usual formula calculating dimensions of morphism spaces from a standardly-filtered object to a costandardly-filtered object in terms of multiplicities. Similar arguments apply in the second setting, using the isomorphism
\begin{multline*}
\Hom(\cP(\bE), \bE(\cF) \star^{\cL^+G} \cI^{-w_\circ(\nu)+ \lambda}_*(\bE) \star^{\cL^+G} \cI^{-w_\circ(\lambda)}_*(\bE)) \cong \\
\Hom(\cP(\bE) \star^{\cL^+G} \cI^{\nu- w_\circ(\lambda)}_!(\bE) \star^{\cL^+G} \cI^{\lambda}_!(\bE), \bE(\cF))
\end{multline*}
for $\bE=\bK$ or $\bF$.
\end{rmk}

We finally come to the main result of this subsection.

\begin{thm}
\label{thm:injh-integral}
For each $w \in W_\ext$, there exists an indecomposable finitely generated $\cR(\bO)$-module $\injh_w(\bO)$ with the following properties:
\begin{enumerate}
\item 
\label{it:qO-free}
it is a direct summand of an object $\Phi(\xi_{s_1} \cdots \xi_{s_r} \xi_{\omega}(\TGr_{t_{w_\circ(\varsigma)}}(\bO))) \langle \nu \rangle$ for some $s_1, \cdots, s_r \in S_\aff$, $\omega \in \Omega$, $\nu \in \bY$;
\item 
\label{it:qO-reduce}
we have $\bF^0( \injh_w(\bO)) \cong \injh_w(\bF)$;
\item 
\label{it:qO-frac}
the $\cR(\bK)$-module $\bK ( \injh_w(\bO))$ is projective and injective, and contains the indecomposable object $\injh_w(\bK)$ as a direct summand with multiplicity $1$.
\end{enumerate}
\end{thm}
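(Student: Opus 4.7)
The strategy is to realize $\injh_w(\bO)$ by lifting a primitive idempotent from the $\bF$-reduction of a suitable endomorphism algebra that is free of finite rank over $\bO$ with compatible base change to both $\bK$ and $\bF$. Write $w = xt_\mu$ with $x \in W_\ext^\res$ and $\mu \in \bY$, and set $\nu = -\mu$; following the proof of Proposition~\ref{prop:inj-hull-bV}, choose $s_1, \ldots, s_r \in S_\aff$ and $\omega \in \Omega$ so that for each $\bE \in \{\bO, \bF, \bK\}$ the object $\cP(\bE) := \xi_{s_1} \cdots \xi_{s_r} \xi_\omega(\TGr_{t_{w_\circ(\varsigma)}}(\bE))$ contains $\TGr_{x^\triangle}(\bE)$ as a direct summand with multiplicity one (via Lemma~\ref{lem:proj-prep}\eqref{it:prep-tilt} over the two fields, and the classification of indecomposable tilting $\bO$-perverse sheaves from Lemma~\ref{lem:mult-tilt-modred}; that $\cP(\bO)$ itself is tilting is Remark~\ref{rmk:morph-O-P-tilting}). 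In particular, $\Phi(\TGr_{x^\triangle}(\bO)) \langle \nu \rangle$ is a direct summand of $\Phi(\cP(\bO)) \langle \nu \rangle$, and its $\bK$-reduction is $\Phi(\TGr_{x^\triangle}(\bK)) \langle -\mu \rangle = \injh_w(\bK)$ by Remark~\ref{rmk:indecomposable}\eqref{it:indecomposable-char0}; let $f_\bO \in E(\bO) := \End_{\cR(\bO)}(\Phi(\cP(\bO)) \langle \nu \rangle)$ denote the corresponding projector.

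Lemma~\ref{lem:scalars-R-mod-F}\eqref{it:scalars-R-mod-F-1} identifies $E(\bO)$ with a $\Hom$-space between torsion-free perverse sheaves for $\lambda$ sufficiently large, so $E(\bO)$ is $\bO$-free of finite rank; Lemma~\ref{lem:scalars-R-mod-K} then gives $\bK \otimes_\bO E(\bO) \cong E(\bK)$. For the $\bF$-reduction, Remark~\ref{rmk:morph-O-P-tilting} supplies the hypothesis of Lemma~\ref{lem:scalars-R-mod-F}\eqref{it:scalars-R-mod-F-2}, so $\bF \otimes_\bO E(\bO) \cong E(\bF)$ provided $\dim_\bK E(\bK) = \dim_\bF E(\bF)$, and this dimension equality is exactly Corollary~\ref{cor:endtilt-indep}. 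Conjugating by $f_\bO$, these base-change identifications transfer to the $\bO$-subalgebra $f_\bO E(\bO) f_\bO \cong \End_{\cR(\bO)}(\Phi(\TGr_{x^\triangle}(\bO)) \langle \nu \rangle)$. By Proposition~\ref{prop:tilt-proj-inj-R}\eqref{it:Phi-tilting-injective-2}, $\Phi(\TGr_{x^\triangle}(\bF)) \langle \nu \rangle$ contains $\injh_w(\bF) = \injh_x(\bF) \langle -\mu \rangle$ as a direct summand with multiplicity one; let $\bar{e}_\bF$ be the corresponding primitive idempotent in $\End_{\cR(\bF)}(\Phi(\TGr_{x^\triangle}(\bF)) \langle \nu \rangle)$. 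Since $f_\bO E(\bO) f_\bO$ is finite over the complete local ring $\bO$ with $\bF$-reduction equal to the (local) target of $\bar{e}_\bF$, idempotents lift: choose $e_\bO \in f_\bO E(\bO) f_\bO$ lifting $\bar{e}_\bF$, and define $\injh_w(\bO) := e_\bO \cdot \Phi(\cP(\bO)) \langle \nu \rangle$.

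Property~\eqref{it:qO-free} holds by construction. For indecomposability, $\End(\injh_w(\bO)) = e_\bO E(\bO) e_\bO$ is $\bO$-free of finite rank with $\bF$-reduction $\bar{e}_\bF E(\bF) \bar{e}_\bF = \End_{\cR(\bF)}(\injh_w(\bF))$, a local ring; since $\bO$ is complete local, this forces $\End(\injh_w(\bO))$ itself to be local. Property~\eqref{it:qO-reduce} follows from $\bF^0(\injh_w(\bO)) = \bar{e}_\bF \cdot \Phi(\cP(\bF)) \langle \nu \rangle \cong \injh_w(\bF)$. For~\eqref{it:qO-frac}, the element $\bK(e_\bO)$ lies in $\bK(f_\bO) E(\bK) \bK(f_\bO) \cong \End_{\cR(\bK)}(\injh_w(\bK))$, a local ring (indecomposability in characteristic zero), and is nonzero since $e_\bO \ne 0$ and $f_\bO E(\bO) f_\bO$ is $\bO$-free. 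Hence $\bK(e_\bO)$ equals the identity of $\End_{\cR(\bK)}(\injh_w(\bK))$, giving $\bK(\injh_w(\bO)) = \bK(f_\bO) \cdot \Phi(\cP(\bK)) \langle \nu \rangle = \injh_w(\bK)$, which is projective and injective by Theorem~\ref{thm:proj-char0-Whit}. The main obstacle in this plan is the integral base-change isomorphism $\bF \otimes_\bO E(\bO) \cong E(\bF)$, which ties together the tilting property of $\cP(\bO)$ via Remark~\ref{rmk:morph-O-P-tilting} with the field-independence of the endomorphism dimension from Corollary~\ref{cor:endtilt-indep}.
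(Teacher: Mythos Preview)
Your idempotent-lifting strategy is sound, and the arguments for indecomposability and for parts~\eqref{it:qO-free}--\eqref{it:qO-reduce} go through. The gap is in part~\eqref{it:qO-frac}. You assert that the $\bK$-reduction of $\Phi(\TGr_{x^\triangle}(\bO))\langle\nu\rangle$ equals $\Phi(\TGr_{x^\triangle}(\bK))\langle\nu\rangle = \injh_w(\bK)$, and then that $\bK(f_\bO) E(\bK) \bK(f_\bO) \cong \End_{\cR(\bK)}(\injh_w(\bK))$ is local. Both rest on the identification $\bK(\TGr_{x^\triangle}(\bO)) \cong \TGr_{x^\triangle}(\bK)$, and that identification is false in general. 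The integral tilting object $\TGr_{x^\triangle}(\bO)$ is pinned down by the condition $\bF(\TGr_{x^\triangle}(\bO)) \cong \TGr_{x^\triangle}(\bF)$; its $\bK$-reduction is only known to be a tilting perverse sheaf containing $\TGr_{x^\triangle}(\bK)$ as one summand (this is exactly what is used in the proof of Lemma~\ref{lem:mult-tilt-modred}), and it typically decomposes further when $\TGr_{x^\triangle}(\bF)$ is strictly larger than its characteristic-zero counterpart. Consequently $\bK(f_\bO) E(\bK) \bK(f_\bO)$ need not be local, and you cannot conclude $\bK(e_\bO) = \bK(f_\bO)$. All that survives is that $\bK(\injh_w(\bO))$ is \emph{some} nonzero direct summand of $\Phi(\bK(\TGr_{x^\triangle}(\bO)))\langle\nu\rangle$, with no control over which $\injh_y(\bK)$ occur in it or with what multiplicities. (A minor side issue: Theorem~\ref{thm:proj-char0-Whit} concerns perverse sheaves, not $\cR$-modules; projectivity of $\injh_w(\bK)$ comes instead from Theorem~\ref{thm:proj-Hecke-Whit}.)

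The paper's proof avoids this by never isolating $\TGr_{x^\triangle}(\bO)$. It decomposes $\Phi(\cP(\bO))$ directly via Krull--Schmidt over $\bO$ (semiperfectness of the endomorphism ring), picks out $\injh_w(\bO)$ as the unique indecomposable summand whose $\bF$-reduction is $\injh_w(\bF)$, and then establishes~\eqref{it:qO-frac} by a genuinely different computation: it shows $\dim_\bK \Hom(\LGT_w(\bK), \bK(\injh_w(\bO))) = 1$ by base-changing the free $\bO$-module $\Hom(\Phi(\LGr_w(\bO)), \injh_w(\bO))$ to both residue fields, after first checking that $\Hom(\Phi(\bF(\LGr_w(\bO))), \Phi(\cP(\bF)))$ is one-dimensional. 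This last step uses that the composition factors of the kernel of $\bF(\LGr_w(\bO)) \twoheadrightarrow \LGr_w(\bF)$ have labels $\prec w$ (via Lemma~\ref{lem:Phi-order}), while the other indecomposable summands of $\Phi(\cP(\bF))$ have labels $\succ w$. That $\Hom$ computation is the missing ingredient in your argument.
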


\begin{proof}
By periodicity, it is enough to prove this claim when $w \in W^\res_\ext$.  Assume this from now on.  Choose $s_1, \ldots, s_r \in S_\aff$ and $\omega \in \Omega$ as in Lemma~\ref{lem:proj-prep}, and for $\bE \in \{\bK, \bO, \bF\}$, let
\[
\cP(\bE) := \xi_{s_r} \cdots \xi_{s_1} \xi_{\omega^{-1}}(\TGr_{t_{w_\circ(\varsigma)}}(\bE)).
\]
(See Remark~\ref{rmk:morph-O-P-tilting} for comments on this definition.)

When $\bE=\bK$ or $\bF$, the proof of Proposition~\ref{prop:inj-hull-bV} shows that $\Phi(\cP(\bE))$ is injective and projective, and that it contains the injective envelope $\injh_w(\bE)$ of $\LGT_w(\bE)$ as a direct summand.  More precisely, considering the baby co-Verma multiplicities described in that proposition and comparing with Theorem~\ref{thm:proj-Hecke-Whit}, we see that in this case:
\begin{itemize}
\item $\Phi(\cP(\bE))$ contains $\injh_w(\bE)$ as a direct summand with multiplicity $1$;
\item for any other indecomposable injective object $\injh_y(\bE)$ occuring as a direct summand of $\Phi(\cP(\bE))$, we have $y \succ w$.
\end{itemize}
The first item above implies that we then have
\begin{equation}\label{eqn:scp-mult-field}
\dim \Hom(\Phi(\LGr_w(\bE)), \Phi(\cP(\bE))) = 1.
\end{equation}

We now consider the case $\bE=\bO$.
Let $\LGr_w(\bO)$ be the intersection cohomology complex associated with the constant $\bO$-local system on $\Gr_w$. It is well known that $\bK(\LGr_w(\bO)) \cong \LGr_w(\bK)$, so that~\eqref{eqn:scp-mult-field} implies that
\begin{equation}
\label{eqn:scp-mult-char0}
\dim \Hom(\Phi(\bK(\LGr_w(\bO))), \Phi(\cP(\bK))) = 1.
\end{equation}
Next, let us study the analogous problem over $\F$.  The modular reduction $\F(\LGr_w(\bO))$ is a perverse sheaf which is not simple in general; instead, there is a short exact sequence
\[
0 \to \cK \to \bF(\LGr_w(\bO)) \to \LGr_w(\bF) \to 0
\]
where $\cK$ is a perverse sheaf with composition factors of the form $\LGr_z(\bF)$ with $z \in W_\ext^S$ such that $z < w$ (and hence $z \prec w$ by Lemma~\ref{lem:per-order-properties}\eqref{it:per-order-3}).  By Lemma~\ref{lem:Phi-order}, the $\cR(\bF)$-module $\Phi(\cK)$ has all of its composition factors of the form $\LGT_{z'}(\bF)$ with $z' \prec w$.


Apply the exact functor $\Hom(\Phi({-}), \Phi(\cP(\bF)))$ to the exact sequence above to get a short exact sequence
\begin{multline*}
0 \to \Hom (\Phi(\LGr_w(\bF)), \Phi(\cP(\bF))) \to \Hom(\bF(\Phi(\LGr_w(\bO))), \Phi(\cP(\bF))) \\
\to \Hom(\Phi(\cK), \Phi(\cP(\bF))) \to 0.
\end{multline*}
By~\eqref{eqn:scp-mult-field}, the first term has dimension~$1$.  The claim above on the composition factors of $\Phi(\cK)$ 
and the description of the summands $\injh_y(\bF)$ that can occur in $\Phi(\cP(\bF))$ imply that $\Hom(\Phi(\cK), \Phi(\cP(\bF))) = 0$.  We conclude that
\begin{equation}
\label{eqn:scp-mult-charp}
\dim \Hom(\Phi(\bF(\LGr_w(\bO))), \cP(\bF)) = 1.
\end{equation}

Combining~\eqref{eqn:scp-mult-char0}, \eqref{eqn:scp-mult-charp}, and Lemma~\ref{lem:scalars-R-mod-F}, we see that the functor $\bF^0$ induces an isomorphism
\begin{equation}\label{eqn:homlp-reduce}
\bF \otimes_\bO \Hom(\Phi(\LGr_w(\bO)), \Phi(\cP(\bO))) \simto \Hom(\Phi(\bF(\LGr_w(\bO))), \Phi(\cP(\bF))).
\end{equation}
Similarly, according to Corollary~\ref{cor:endtilt-indep}, the rings $\End(\Phi(\cP(\bK)))$ and $\End(\Phi(\cP(\bF)))$ have equal dimensions, so by Lemma~\ref{lem:scalars-R-mod-F} again, the functor $\bF^0$ induces an isomorphism
\begin{equation}
\label{eqn:endp-reduce}
\bF \otimes_\bO \End(\Phi(\cP(\bO))) \simto \End(\Phi(\cP(\bF))).
\end{equation}
Of course, analogues of both of these isomorphisms hold if $\Phi(\cP(\bO))$ is replaced by some direct summand.

Let us now study the direct summands of $\Phi(\cP(\bO))$.  Because $\bO$ is a complete noetherian local ring, the finite $\bO$-algebra $\End(\Phi(\cP(\bO)))$ is a semiperfect ring by~\cite[Example~23.3]{lam}, and similarly for direct sums of copies of $\Phi(\cP(\bO))$ or direct summands of such objects. Then, by~\cite[Corollary~4.4]{krause} applied to the full subcategory of $\Mod^\bY_{I_\unip}(\cR(\bO))$ generated by $\Phi(\cP(\bO))$ under direct sums and direct summands, the Krull--Schmidt theorem applies to $\Phi(\cP(\bO))$: this object has a unique (up to isomorphism and reordering) decomposition into indecomposable summands
\[
\Phi(\cP(\bO)) \cong \cM_1 \oplus \cdots \oplus \cM_n
\]
where each $\cM_i$ has a local endomorphism ring. 
As explained above, for any $i$ we have a natural isomorphism
\[
\bF \otimes_\bO \End(\cM_i) \simto \End(\bF(\cM_i)).
\]
In particular, $\End(\bF(\cM_i))$ is again a local ring.  In other words, each $\cR(\bF)$-module $\bF(\cM_i)$ is indecomposable, and these are all the indecomposable summands of $\Phi(\cP(\bF))$.  Exactly one of these summands is isomorphic to $\injh_w(\bF)$; we can therefore assume without loss of generality that $\injh_w(\bF) \cong \bF(\cM_1)$.  We define
\[
\injh_w(\bO) := \cM_1.
\]

Parts~\eqref{it:qO-free} and~\eqref{it:qO-reduce} of the theorem hold by construction.  It remains to prove part~\eqref{it:qO-frac}.  This is equivalent to showing that
\[
\dim \Hom(\LGT_w(\bK), \bK(\injh_w(\bO))) = 1.
\]
To see this, observe first that since $\bK(\LGr_w(\bO)) \cong \LGr_w(\bK)$, by Lemmas~\ref{lem:scalars-R-mod-K} and~\ref{lem:scalars-R-mod-F} and the construction of $\injh_w(\bO)$ we have
\[
\dim \Hom(\LGT_w(\bK), \bK(\injh_w(\bO))) = \dim \bF \otimes_\bO \Hom(\Phi(\LGr_w(\bO)), \injh_w(\bO)).
\]
Next, it follows from~\eqref{eqn:homlp-reduce} that
\[
\dim \bF \otimes_\bO \Hom(\Phi(\LGr_w(\bO)), \injh_w(\bO))
= \dim \Hom(\Phi(\bF(\LGr_w(\bO))), \injh_w(\bF)),
\]
and the right-hand side is equal to $1$ by~\eqref{eqn:scp-mult-charp}.
\end{proof}

\begin{rmk}
\phantomsection
\label{rmk:Hom-Q-O}
\begin{enumerate}
\item
It is possible to adapt the reasoning of Propositions~\ref{prop:IW-R-semisimple}, \ref{prop:projectives}, and~\ref{prop:inj-hull-bV} to the setting of $\cR(\bO)$-modules, to show that the object $\cP(\bO)$ appearing in the preceding proof is a projective object in $\Mod_{I_\unip}^\bY(\cR(\bO))$.  Therefore, its direct summand $\injh_w(\bO)$ is also projective.  (Of course, it would also be projective in $\modf_{I_\unip}^\bY(\cR(\bO))$ if one knew that the latter category was abelian: see Remark~\ref{rmk:fingen-O}.) Note, however, that $\injh_w(\bO)$ is \emph{not} injective, unlike its field counterparts: this essentially comes down to the fact that $\bO$ is not an injective $\bO$-module.
\item
\label{it:Hom-Q-O}
For any $y,w \in W_\ext$, the $\bO$-module $\Hom(\injh_y(\bO), \injh_w(\bO))$ is free of finite rank, and the functors $\bK$ and $\bF^0$ induce isomorphisms
\[
\bK \otimes_{\bO} \Hom(\injh_y(\bO), \injh_w(\bO)) \simto \Hom(\bK(\injh_y(\bO)), \bK(\injh_w(\bO)))
\]
and
\[
\bF \otimes_{\bO} \Hom(\injh_y(\bO), \injh_w(\bO)) \simto \Hom(\injh_y(\bF), \injh_w(\bF)).
\]
In fact this follows from part~\eqref{it:qO-free} in Theorem~\ref{thm:injh-integral},
combined with Lemmas~\ref{lem:scalars-R-mod-K} and~\ref{lem:scalars-R-mod-F} and Remark~\ref{rmk:morph-O-P-tilting}.
\end{enumerate}
\end{rmk}


\begin{cor}
\label{cor:vanishing-F-K}
Let $\cF$ be a tilting object in $\Perv_{I_\unip}(\Gr,\bO)$. If $\cM$ is a direct summand of $\Phi(\cF)$ which satisfies $\bF^0(\cM)=0$, we have $\bK(\cM)=0$.
\end{cor}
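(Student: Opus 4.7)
The plan is to prove the stronger statement that $\cM = 0$, from which $\bK(\cM) = 0$ is immediate. Since $\cM$ is a direct summand of $\Phi(\cF)$, there is an idempotent $e \in E := \End_{\Mod^\bY_{I_\unip}(\cR(\bO))}(\Phi(\cF))$ whose image is $\cM$, so that $\End(\cM) = eEe$. If I can show that $\End(\cM) = 0$, then $\id_\cM = 0$ and hence $\cM \cong 0$.

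First, I would observe that $\End(\cM)$ is a free $\bO$-module of finite rank. Since a tilting perverse sheaf is torsion-free, Lemma~\ref{lem:scalars-R-mod-F}\eqref{it:scalars-R-mod-F-1} (applied with $\nu = 0$ and $\cF = \cG$) gives that $E$ itself is free of finite rank over $\bO$; the orthogonal Peirce decomposition
$$E = eEe \oplus eE(1-e) \oplus (1-e)Ee \oplus (1-e)E(1-e)$$
then exhibits $\End(\cM) = eEe$ as an $\bO$-module direct summand of $E$, hence as a finitely generated projective---and therefore free---$\bO$-module.

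The decisive step is to invoke the injectivity half of Lemma~\ref{lem:scalars-R-mod-F}\eqref{it:scalars-R-mod-F-2}, which asserts that the natural map $\bF \otimes_\bO E \hookrightarrow \End(\bF^0(\Phi(\cF)))$ is injective. This map sends $1 \otimes e$ to the idempotent $\bF^0(e) \in \End(\bF^0(\Phi(\cF)))$ whose image is $\bF^0(\cM)$, and therefore restricts to an injection
$$\bF \otimes_\bO \End(\cM) \hookrightarrow \End(\bF^0(\cM)).$$
By hypothesis $\bF^0(\cM) = 0$, so the right-hand side vanishes, whence $\bF \otimes_\bO \End(\cM) = 0$. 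Since $\End(\cM)$ is free of finite rank over the local ring $\bO$, Nakayama's lemma forces $\End(\cM) = 0$, which finishes the proof. I do not anticipate any serious technical obstacle here: the corollary turns out to be an essentially formal consequence of Lemma~\ref{lem:scalars-R-mod-F}, and the mere injectivity in part~\eqref{it:scalars-R-mod-F-2} is enough---no refined dimension-matching argument (such as the one used in Remark~\ref{rmk:morph-O-P-tilting}) needs to be reproduced.
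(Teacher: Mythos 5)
Your argument is correct, and it actually establishes the stronger conclusion $\cM=0$ (from which $\bK(\cM)=0$ is immediate), by a genuinely different route from the paper's. You work entirely inside the endomorphism ring $E=\End(\Phi(\cF))$: Lemma~\ref{lem:scalars-R-mod-F}\eqref{it:scalars-R-mod-F-1} (applicable because a tilting object of $\Perv_{I_\unip}(\Gr,\bO)$ is torsion-free) shows $E$, hence the corner $eEe\cong\End(\cM)$, is $\bO$-free of finite rank; additivity of $\bF^0$ sends the idempotent $e$ cutting out $\cM$ to the idempotent cutting out $\bF^0(\cM)=0$ in $\End(\bF^0(\Phi(\cF)))\cong\End(\Phi(\bF(\cF)))$; and the unconditional injectivity in Lemma~\ref{lem:scalars-R-mod-F}\eqref{it:scalars-R-mod-F-2} together with Nakayama then forces $e=0$. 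The paper instead tests $\cM$ against the integral objects $\injh_y(\bO)$ of Theorem~\ref{thm:injh-integral}: it uses Remark~\ref{rmk:morph-O-P-tilting} to verify the dimension-matching criterion and so upgrades the injection of Lemma~\ref{lem:scalars-R-mod-F}\eqref{it:scalars-R-mod-F-2} to an isomorphism $\bF\otimes_\bO\Hom(\cM,\injh_y(\bO))\simto\Hom(\bF^0(\cM),\injh_y(\bF))=0$, deduces $\Hom(\cM,\injh_y(\bO))=0$, transports this to $\bK$ via Lemma~\ref{lem:scalars-R-mod-K} and Theorem~\ref{thm:injh-integral}\eqref{it:qO-frac}, and concludes that the finitely generated module $\bK(\cM)$ has no composition factors. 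Your proof is therefore more self-contained (it needs neither Theorem~\ref{thm:injh-integral} nor the isomorphism half of Lemma~\ref{lem:scalars-R-mod-F}\eqref{it:scalars-R-mod-F-2}) and yields the sharper fact that no nonzero direct summand of $\Phi(\cF)$ is killed by $\bF^0$; the paper's argument only gives the vanishing over $\bK$, which is all that is used in Proposition~\ref{prop:injective-tilting}, and fits the surrounding development of the objects $\injh_y(\bO)$. One small point worth making explicit in your write-up: you should record that $\bF^0(\Phi(\cF))\cong\Phi(\bF(\cF))$ for torsion-free $\cF$, so that the target of the injection in Lemma~\ref{lem:scalars-R-mod-F}\eqref{it:scalars-R-mod-F-2} is indeed the endomorphism ring in which $\bF^0(e)$ lives.
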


\begin{proof}
Let $y \in W_\ext$. By part~\eqref{it:qO-free} in Theorem~\ref{thm:injh-integral} and by Lemma~\ref{lem:scalars-R-mod-F} and Remark~\ref{rmk:morph-O-P-tilting}, the $\bO$-module $\Hom(\cM, \injh_y(\bO))$ is free of finite rank, and the functor $\bF^0$ induces an isomorphism
\[
\bF \otimes_\bO \Hom(\cM, \injh_y(\bO)) \to \Hom(\bF^0(\cM), \injh_y(\bF)).
\]
Since the right-hand side vanishes, this implies that we have $\Hom(\cM, \injh_y(\bO))=0$. Using Lemma~\ref{lem:scalars-R-mod-K} we deduce that
$\Hom(\bK(\cM), \bK(\injh_y(\bO)))=0$, and hence that $\Hom(\bK(\cM), \injh_y(\bK))=0$ by part~\eqref{it:qO-frac} in Theorem~\ref{thm:injh-integral}.

Since $\bK(\cM)$ is finitely generated, what we have shown implies that this object has no composition factor, i.e.~is trivial.
\end{proof}

The following application of Theorem~\ref{thm:injh-integral} will be needed in~\S\ref{ss:app-mult-proj} (where we will show that, in fact, the dimension in question is equal to $1$).

\begin{prop}
\label{prop:injh-tri-mult}
If $\bk$ is any field satisfying the assumptions of~\S\ref{ss:intro-main}, then for any $w \in W_\ext$ the dimension of the $\bk$-vector space $\Hom_{\modf^\bY_{I_\unip}(\cR(\bk))}(\injh_w(\bk), \injh_{w^\triangle}(\bk))$ is at least $1$.
\end{prop}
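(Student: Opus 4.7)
The plan is to reduce the claim to characteristic zero via the integral theory developed in Theorem~\ref{thm:injh-integral}, and then to prove it over $\bK$ by combining the explicit description of injective hulls with reciprocity and a direct geometric multiplicity computation.

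First, I would reduce to $\bk=\bK$. By Remark~\ref{rmk:Hom-Q-O}\eqref{it:Hom-Q-O}, the $\bO$-module $\Hom(\injh_w(\bO),\injh_{w^\triangle}(\bO))$ is free of some rank $r$, and combined with Theorem~\ref{thm:injh-integral}\eqref{it:qO-reduce} this rank equals $\dim_\bF\Hom(\injh_w(\bF),\injh_{w^\triangle}(\bF))$ exactly. On the other hand, $r=\dim_\bK\Hom(\bK(\injh_w(\bO)),\bK(\injh_{w^\triangle}(\bO)))$, and by Theorem~\ref{thm:injh-integral}\eqref{it:qO-frac} the modules $\injh_w(\bK)$ and $\injh_{w^\triangle}(\bK)$ appear as direct summands (with multiplicity one) of $\bK(\injh_w(\bO))$ and $\bK(\injh_{w^\triangle}(\bO))$ respectively. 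Hence $\dim_\bF\Hom(\injh_w(\bF),\injh_{w^\triangle}(\bF))\geq \dim_\bK\Hom(\injh_w(\bK),\injh_{w^\triangle}(\bK))$. It therefore suffices to establish the inequality for $\bk=\bK$; the case of a general $\bk$ of the allowed type will follow by a standard base-change argument.

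Second, I would set up the computation in characteristic zero. By Remark~\ref{rmk:indecomposable}\eqref{it:indecomposable-char0}, write $w=xt_\lambda$ with $x\in W_\ext^\res$ and $\lambda\in\bY$; then $\injh_w(\bK)\cong\Phi(\TGr_{x^\triangle}(\bK))\la-\lambda\ra$, and the analogous description holds for $\injh_{w^\triangle}(\bK)$. Combining Proposition~\ref{prop:tilt-proj-inj-R}\eqref{it:Phi-tilting-injective-2} with the indecomposability provided by Remark~\ref{rmk:indecomposable}\eqref{it:indecomposable-char0}, the bijection $\iota=\iota_\varnothing$ of~\eqref{eqn:def-iota} is the identity over $\bK$. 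Using Corollary~\ref{cor:dimhom-verma} together with Theorem~\ref{thm:proj-Hecke-Whit}\eqref{it:proj-filt}, Proposition~\ref{prop:inj-Verma-filt}, reciprocity~\eqref{eqn:reciprocity}, and the equality $[\bV_y:\LGT_z]=[\cbV_y:\LGT_z]$ from Remark~\ref{rmk:D}\eqref{it:Groth-gp-duality}, I obtain
\[
\dim_\bK\Hom(\injh_w(\bK),\injh_{w^\triangle}(\bK))=\sum_{y\in W_\ext}[\cbV_y(\bK):\LGT_w(\bK)]\cdot[\cbV_y(\bK):\LGT_{w^\triangle}(\bK)].
\]
Specializing at $y=w^\triangle$ and using $[\cbV_{w^\triangle}:\LGT_{w^\triangle}]=1$ from Corollary~\ref{cor:multiplicites-bV}, we see that the statement is reduced to the inequality $[\cbV_{w^\triangle}(\bK):\LGT_w(\bK)]\geq 1$.

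Third, I would prove this key multiplicity inequality. Here the starting point is the characteristic-zero identity $[\DGr_{w^\triangle}(\bK):\LGr_w(\bK)]=1$ from~\eqref{eqn:mult-triangle}, which (together with $[\NGr:\LGr]=[\DGr:\LGr]$) shows that $\LGr_w$ appears with multiplicity one in each $\NGr_{w^\triangle t_{\mu+w_\circ(\lambda)}}$ (up to a twist by $\IC^\lambda$ via Theorem~\ref{thm:geometric-Steinberg}) occurring in the colimit defining $(\cbV_{w^\triangle})_\mu$ from~\S\ref{ss:cbV}. The hard part will be to verify that, after passing to the colimit in $\lambda$ and the $\cR$-module structure, this instance of $\LGT_w$ genuinely survives as a composition factor of $\cbV_{w^\triangle}$ rather than being annihilated in the limit. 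This requires a careful tracking of the transition maps (which involve the Steinberg-type isomorphism $\LGr_{wt_{w_\circ(\lambda)}}\cong\LGr_z\star^{\cL^+G}\IC^\lambda$ for $w=zt_{w_\circ(\eta)}$ with $z\in W_\ext^\res$) and checking compatibility with the grading shifts entering the definition of $\LGT_w$, using Corollary~\ref{cor:Hom-vanishing-Steinberg} to rule out unwanted contributions from other strata.
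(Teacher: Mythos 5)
Your reduction to characteristic zero (via Theorem~\ref{thm:injh-integral} and Remark~\ref{rmk:Hom-Q-O}\eqref{it:Hom-Q-O}) is essentially the paper's argument and is fine. The problem is your characteristic-zero step. After rewriting $\dim\Hom(\injh_w(\bK),\injh_{w^\triangle}(\bK))$ via Corollary~\ref{cor:dimhom-verma} and reciprocity, you reduce everything to the inequality $[\cbV_{w^\triangle}(\bK):\LGT_w(\bK)]\geq 1$ --- and then you do not prove it: you only sketch a strategy of tracking $\LGr_w$ through the transition maps of the inductive system defining $\cbV_{w^\triangle}$ and explicitly label the survival of this composition factor in the colimit as ``the hard part.'' That survival is exactly the nontrivial content here (a priori composition factors of the finite-length module $\cbV_{w^\triangle}$ need not match those of the individual terms $\NGr_{w^\triangle t_{\mu+w_\circ(\lambda)}}\star^{\cL^+G}\cI_*^{-w_\circ(\lambda)}$), so as written the proof has a genuine gap. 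Note also that the statement you reduce to is precisely~\eqref{eqn:recip-mult-top}, which the paper only establishes later, in Proposition~\ref{prop:reciprocity}, \emph{using} the present proposition; so you cannot quote that result without circularity, and you would have to supply an independent argument.

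The paper's characteristic-zero argument avoids all of this and is much shorter: over $\bK$ one has $\injh_w\cong\Phi(\TGr_{w^\triangle})$ by Remark~\ref{rmk:indecomposable}\eqref{it:indecomposable-char0}; since $\LGr_{w^\triangle}$ is a composition factor of $\TGr_{w^\triangle}$ and $\Phi$ is exact, $\Phi(\LGr_{w^\triangle})$ is a subquotient of $\injh_w$, and by Theorem~\ref{thm:geometric-Steinberg} together with Lemma~\ref{lem:frobtwist-free} the simple module $\LGT_{w^\triangle}$ is a direct summand of $\Phi(\LGr_{w^\triangle})$. Hence $[\injh_w:\LGT_{w^\triangle}]\geq 1$, and injectivity of $\injh_{w^\triangle}$ (the injective hull of $\LGT_{w^\triangle}$) then produces a nonzero map $\injh_w\to\injh_{w^\triangle}$. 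If you want to salvage your route, the same observation closes your gap: combining $[\injh_w:\LGT_{w^\triangle}]\geq 1$ with your support constraints from Corollary~\ref{cor:multiplicites-bV} forces $[\cbV_{w^\triangle}(\bK):\LGT_w(\bK)]\geq 1$; but some such argument must be supplied --- the colimit-tracking sketch in your third step is not a proof.
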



\begin{proof}
By periodicity (see~\eqref{eqn:triangle-translation}), it is enough to prove this property when $w \in W^\res_\ext$.  We assume this from now on.

Suppose first that $\bk$ is a field of characteristic $0$.  In this case, according to Remark~\ref{rmk:indecomposable}\eqref{it:indecomposable-char0} we have $\injh_w \cong \Phi(\TGr_{w^\triangle})$.  The object $\LGr_{w^\triangle}$ occurs as a composition factor of $\TGr_{w^\triangle}$, so $\Phi(\LGr_{w^\triangle})$ is a subquotient of $\injh_w$.  It follows from Theorem~\ref{thm:geometric-Steinberg} and Lemma~\ref{lem:frobtwist-free} that $\LGT_{w^\triangle}$ is a direct summand of $\Phi(\LGr_{w^\triangle})$, and hence a composition factor of $\injh_w$.  The result follows.

We now consider the case where $\bk$ has positive characteristic. We can assume that $\bk$ is finite. Let $\bK$ be a finite extension of $\Q_\ell$ whose ring of integers $\bO$ has $\bk$ as residue field.  
Since $\bK ( \injh_w(\bO))$ contains $\injh_w(\bK)$ as a direct summand, and likewise for $w^\triangle$ (part~\eqref{it:qO-frac} in Theorem~\ref{thm:injh-integral}), the previous paragraph and Remark~\ref{rmk:Hom-Q-O}\eqref{it:Hom-Q-O} imply that the free $\bO$-module $\Hom(\injh_w(\bO), \injh_{w^\triangle}(\bO))$ has rank${}\ge 1$. Using again Remark~\ref{rmk:Hom-Q-O}\eqref{it:Hom-Q-O}, we deduce that $\dim \Hom(\injh_w(\bk), \injh_{w^\triangle}(\bk)) \ge 1$ as well.
\end{proof}

\subsection{Application: multiplicities and projective covers}
\label{ss:app-mult-proj}

We now return to the setting of field coefficients.  In the following pro\-position we gather the main properties of the objects $\injh^A_w$, some of which have already appeared in earlier statements.

\begin{prop}
\label{prop:reciprocity}
Let $w, x \in {}^A W_\ext$.
\begin{enumerate}
\item
\label{it:recip-proj} The object $\injh^A_x$ is both the injective envelope and the projective cover of $\LGT^A_x$.
\item
\label{it:recip-dual} We have $\D(\injh^A_x) \cong \injh^A_x$.
\item
\label{it:recip-Av} We have $\Av^A_*(\injh^A_x) \cong \injh_x$ and $\Av^A_!(\injh^A_x) \cong \injh_x$.
\item
\label{it:recip-mult} 
The multiplicities in baby Verma and baby co-Verma filtrations of $\injh^A_x$ satisfy the following ``reciprocity laws'':
\[
(\injh^A_x : \cbV^A_w) = [\bV^A_w : \LGT^A_x ] = [\cbV^A_w : \LGT^A_x ] = (\injh^A_x : \bV^A_w) .
\]
Moreover, these numbers are zero unless $x \preceq w \preceq w_Ax^\triangle$, and they are equal to $1$ when $w = x$ or $w = w_Ax^\triangle$.
\end{enumerate}
\end{prop}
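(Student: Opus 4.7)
The plan is to establish the chain of reciprocity equalities in part~(4) first, use them to obtain parts~(1) and~(2), then handle~(3) via averaging, and finally pin down the multiplicity at the top of the range in~(4).

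For the reciprocity, I would compute $\dim \Hom(\bV^A_w, \injh^A_x)$ in two ways. On one hand, injectivity of $\injh^A_x$ as the injective hull of $\LGT^A_x$ yields $[\bV^A_w : \LGT^A_x]$; on the other, $\injh^A_x$ admits a baby co-Verma filtration by Theorem~\ref{thm:proj-Hecke-Whit}\eqref{it:proj-filt}, so Lemma~\ref{lem:verma-hom} produces $(\injh^A_x : \cbV^A_w)$. Lemma~\ref{lem:verma-basic} gives $[\bV^A_w : \LGT^A_x] = [\cbV^A_w : \LGT^A_x]$, and since $\injh^A_x$ also has a baby Verma filtration by Proposition~\ref{prop:inj-Verma-filt}, equation~\eqref{eqn:equality-multiplicites} yields $(\injh^A_x : \cbV^A_w) = (\injh^A_x : \bV^A_w)$, completing the four-fold equality in~(4). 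Combining this with the second reciprocity formula $(\injh^A_{\iota_A(w)} : \bV^A_y) = [\cbV^A_y : \LGT^A_w]$ from~\eqref{eqn:reciprocity} forces $[\injh^A_{\iota_A(w)}] = [\injh^A_w]$ in the Grothendieck group; linear independence of these classes (Remark~\ref{rmk:labels-injh}) then gives $\iota_A = \id$, which is~(1), and~(2) follows immediately from~\eqref{eqn:D-injh}. For~(3), the claim about $\Av^A_*$ is Theorem~\ref{thm:proj-Hecke-Whit}\eqref{it:Av-injh}; for $\Av^A_!$, exactness of its right adjoint $\Av^A_\psi$ implies that $\Av^A_!$ preserves projectives, so $\Av^A_!(\injh^A_x)$ is projective by~(1), and an adjunction computation with Lemma~\ref{lem:bV-Av}\eqref{it:simples-Av} shows its head is $\LGT_x$ with multiplicity one, whence $\Av^A_!(\injh^A_x) \cong \injh_x$.

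It remains to verify the support and multiplicity-one statements in~(4). The vanishing outside $x \preceq w \preceq w_A x^\triangle$ and the equality at $w = x$ both follow from Corollary~\ref{cor:multiplicites-bV} through the reciprocity already proved. The crux is showing $[\cbV^A_{w_A x^\triangle} : \LGT^A_x] = 1$, and this is where Proposition~\ref{prop:injh-tri-mult} becomes essential. Treating $A = \varnothing$ first, Proposition~\ref{prop:injh-tri-mult} furnishes $\dim \Hom(\injh_x, \injh_{x^\triangle}) \geq 1$, which by injectivity of $\injh_{x^\triangle}$ equals $[\injh_x : \LGT_{x^\triangle}]$. Expanding this multiplicity along the baby co-Verma filtration of $\injh_x$ yields $\sum_y (\injh_x : \cbV_y) \cdot [\cbV_y : \LGT_{x^\triangle}]$, and two applications of Corollary~\ref{cor:multiplicites-bV}---to $(\injh_x : \cbV_y) = [\cbV_y : \LGT_x]$ (forcing $y \preceq x^\triangle$) and to $[\cbV_y : \LGT_{x^\triangle}] \neq 0$ (forcing $y \succeq x^\triangle$)---collapse the sum to the single term $y = x^\triangle$, giving $[\cbV_{x^\triangle} : \LGT_x] = (\injh_x : \cbV_{x^\triangle}) \geq 1$, hence equal to~$1$ in view of the a priori upper bound. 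For general $A$, applying the exact functor $\Av^A_\psi$ to a composition series of $\cbV_{x^\triangle}$ together with Lemma~\ref{lem:bV-Av} transfers this to $[\cbV^A_{w_A x^\triangle} : \LGT^A_x] = 1$. The main obstacle is precisely this final equality: the purely modular categorical arguments yield only the $\leq 1$ bound, and the matching lower bound truly requires the integral-coefficient input of Proposition~\ref{prop:injh-tri-mult}.
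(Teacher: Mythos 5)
Your proof is correct and follows essentially the same strategy as the paper: the four-fold equality comes from \eqref{eqn:reciprocity}, Lemma~\ref{lem:verma-basic} and Remark~\ref{rmk:bV-Groth-gp}; parts (1)--(3) rest on Remark~\ref{rmk:labels-injh}, \eqref{eqn:D-injh} and Theorem~\ref{thm:proj-Hecke-Whit}\eqref{it:Av-injh}; and the key equality $[\cbV^A_{w_Ax^\triangle}:\LGT^A_x]=1$ is obtained, exactly as in the paper, from the integral-coefficient input Proposition~\ref{prop:injh-tri-mult} combined with the bounds of Corollary~\ref{cor:multiplicites-bV}. The only deviations are local and equivalent in substance: you get $\iota_A=\id$ from equality of baby Verma multiplicities plus linear independence rather than by applying $\D$ and the labeling criterion, you prove the top multiplicity by directly collapsing the co-Verma expansion of $[\injh_x:\LGT_{x^\triangle}]$ instead of arguing by contradiction, and for general $A$ you transfer the result via the isomorphism $\Av^A_\psi(\cbV_{x^\triangle})\cong\cbV^A_{w_Ax^\triangle}$ and exactness of $\Av^A_\psi$ (Lemma~\ref{lem:bV-Av}) instead of the paper's adjunction chain through $\Av^A_!$ and part~(3).
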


Recall the map $\iota_A$ defined in~\eqref{eqn:def-iota}, characterized by the property that $\injh^A_{\iota_A(x)}$ is the projective cover of $\LGT^A_x$.  In view of~\eqref{eqn:D-injh}, parts~\eqref{it:recip-proj} and~\eqref{it:recip-dual} are both equivalent to the claim that
\begin{equation}\label{eqn:iota-id}
\iota_A(x) = x \qquad\text{for all $x \in {}^A W_\ext$.}
\end{equation}

\begin{proof}
Let us start with part~\eqref{it:recip-mult}. The first equality has already been noted in~\eqref{eqn:reciprocity}. The second one is part of Lemma~\ref{lem:verma-basic}. Finally, since $\injh^A_x$ admits both a baby Verma filtration and a baby co-Verma filtration (see Theorem~\ref{thm:proj-Hecke-Whit}\eqref{it:proj-filt} and Proposition~\ref{prop:inj-Verma-filt}), the equality $(\injh^A_x : \cbV^A_w) = (\injh^A_x: \bV^A_w)$ holds by Remark~\ref{rmk:bV-Groth-gp}.
%
%
Most of the last assertion of this part
has already been established in Corollary~\ref{cor:multiplicites-bV}; it only remains to show that
\begin{equation}\label{eqn:recip-mult-top}
[ \cbV^A_{w_Ax^\triangle} : \LGT^A_x ] = 1 \qquad\text{for all $x \in {}^A W_\ext$.}
\end{equation}
We will return to this later in the proof.  

We will now prove parts~\eqref{it:recip-proj} and~\eqref{it:recip-dual}.  Observe that by applying $\D$, we have
\[
(\D(\injh^A_x) : \cbV^A_z) = (\injh^A_x : \bV^A_z)
\]
for all $z \in {}^A W_\ext$. Now Remark~\ref{rmk:labels-injh} and the equalities from part~\eqref{it:recip-mult} show that there exists a unique minimal element $z$ (with respect to $\preceq$) such that the right-hand side is nonzero, namely $x$. Since $\D(\injh^A_x)$ is an indecomposable injective object, in view of Remark~\ref{rmk:labels-injh} again this implies that $\D(\injh^A_x)\cong \injh^A_x$, proving part~\eqref{it:recip-dual}. As noted above, this claim is equivalent to that in~\eqref{it:recip-proj}.


For part~\eqref{it:recip-Av}, the first isomorphism was already established in Theorem~\ref{thm:proj-Hecke-Whit}\eqref{it:Av-injh}.  The second one follows by Verdier duality; alternatively, since we now know that $\injh_x$ is the projective cover of $\LGT_x$, one can repeat the argument from the proof of Theorem~\ref{thm:proj-Hecke-Whit}\eqref{it:Av-injh} to show that $\Av^A_!(\injh^A_x) \cong \injh_x$.

It remains to prove~\eqref{eqn:recip-mult-top}.  By Corollary~\ref{cor:multiplicites-bV}, we at least know that $[ \cbV^A_{w_Ax^\triangle} : \LGT^A_x ] \le 1$.  Suppose in fact that this multiplicity is $0$ for some $x$. By the portion of part~\eqref{it:recip-mult} that is already proved, we see that the baby co-Verma modules $\cbV^A_y$ appearing in a baby co-Verma filtration of $\injh_x^A$ all satisfy $x \preceq y \prec w_Ax^\triangle$.  Using Corollary~\ref{cor:multiplicites-bV} again, we see that the composition factors of $\injh^A_x$ are of the form
$\LGT^A_z $ with $z \prec w_Ax^\triangle$;
in particular, $[\injh^A_x : \LGT^A_{w_Ax^\triangle}] = 0$, and hence $\Hom(\injh^A_x, \injh^A_{w_Ax^\triangle}) = 0$.  If $A = \varnothing$, this contradicts Proposition~\ref{prop:injh-tri-mult}, and we are done.
Before passing to the case where $A \ne \varnothing$, let us note that once we know that $(\injh_x : \cbV_{x^\triangle}) = 1$, then by Remark~\ref{rmk:order-cbV-filtration} we know that $\cbV_{x^\triangle}$ is actually a quotient of $\injh_x$.

Let us now finish the proof of~\eqref{eqn:recip-mult-top} in the case where $A \ne \varnothing$.  Since $\LGT^A_{w_Ax^\triangle}$ is the socle of $\cbV^A_{w_Ax^\triangle}$, the equation $[\injh^A_x : \LGT^A_{w_Ax^\triangle}] = 0$ implies that $\Hom(\injh^A_x, \cbV^A_{w_Ax^\triangle}) = 0$.  We have
\begin{multline*}
0 = \Hom(\injh^A_x, \cbV^A_{w_Ax^\triangle}) = \Hom(\injh^A_x, \Av^A_\psi(\cbV_{x^\triangle}))  \\
= \Hom(\Av^A_!(\injh^A_x), \cbV_{x^\triangle})
= \Hom(\injh_x, \cbV_{x^\triangle}),
\end{multline*}
where the second equality uses Lemma~\ref{lem:bV-Av}\eqref{it:bV-Av-psi}, the third one follows from adjunction, and the last one uses part~\eqref{it:recip-Av}.
This contradicts the previous paragraph, and thus finishes the proof.
\end{proof}

\subsection{Further properties of baby Verma and baby co-Verma modules}

In this subsection, we prove a number of statements exhibiting a symmetry between baby Verma modules and baby co-Verma modules.


We start with the following corollary of Proposition~\ref{prop:reciprocity}.

\begin{lem}
\label{lem:cbV-head-pre}
For any $w \in {}^A W_\ext$, the object $\cbV^A_{w_A w^\triangle}$ has a simple head, and the object $\bV^A_{w_Aw^\triangle}$ has a simple socle, both isomorphic to $\LGT^A_w$.
\end{lem}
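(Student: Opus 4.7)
The plan is to establish the statement about $\cbV^A_{w_A w^\triangle}$ first, and then deduce the statement about $\bV^A_{w_A w^\triangle}$ by applying the Verdier duality functor $\D$ of~\S\ref{ss:verdier}, using the relation $\bV^A_{w_A w^\triangle} = \D(\cbV^A_{w_A w^\triangle})$ and the fact that $\D$ interchanges heads and socles while satisfying $\D(\LGT^A_w) \cong \LGT^A_w$ (cf.~\eqref{eqn:D-simples}).

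To show that $\cbV^A_{w_A w^\triangle}$ has simple head $\LGT^A_w$, I would first exhibit a surjection $\injh^A_w \twoheadrightarrow \cbV^A_{w_A w^\triangle}$. For this, I would use Proposition~\ref{prop:reciprocity}\eqref{it:recip-mult}, which tells us that $(\injh^A_w : \cbV^A_y) = [\bV^A_y : \LGT^A_w]$, combined with Proposition~\ref{prop:bV-projective}\eqref{it:recip-mult} (really the statement from Corollary~\ref{cor:multiplicites-bV} restated for $\bV$), which says that this multiplicity vanishes unless $w \preceq y \preceq w_A w^\triangle$, and is equal to $1$ when $y = w_A w^\triangle$. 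Therefore $w_A w^\triangle$ is the $\preceq$-maximal element among the labels $y$ of the baby co-Verma modules $\cbV^A_y$ occurring in a (hence, in any) baby co-Verma filtration of $\injh^A_w$, and $\cbV^A_{w_A w^\triangle}$ occurs with multiplicity exactly $1$.

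By Remark~\ref{rmk:order-cbV-filtration}, we can then arrange a baby co-Verma filtration of $\injh^A_w$ in which $\cbV^A_{w_A w^\triangle}$ appears as the top quotient; this provides the desired surjection $\injh^A_w \twoheadrightarrow \cbV^A_{w_A w^\triangle}$. Since by Proposition~\ref{prop:reciprocity}\eqref{it:recip-proj} the object $\injh^A_w$ is the projective cover of $\LGT^A_w$, its head is the simple object $\LGT^A_w$. The induced surjection on heads shows that the head of $\cbV^A_{w_A w^\triangle}$ is a nonzero quotient of $\LGT^A_w$, and hence is isomorphic to $\LGT^A_w$ itself.

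There is no real obstacle in the argument, as all the combinatorial input has already been assembled in Proposition~\ref{prop:reciprocity} and Remark~\ref{rmk:order-cbV-filtration}; the only point that requires a moment's care is invoking Remark~\ref{rmk:order-cbV-filtration} to place $\cbV^A_{w_A w^\triangle}$ at the top of the filtration, which is legitimate precisely because $w_A w^\triangle$ is $\preceq$-maximal among the labels occurring in the filtration.
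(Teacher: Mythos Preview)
Your proof is correct and follows essentially the same approach as the paper: use Proposition~\ref{prop:reciprocity}\eqref{it:recip-mult} together with Remark~\ref{rmk:order-cbV-filtration} to realize $\cbV^A_{w_A w^\triangle}$ as a quotient of the projective cover $\injh^A_w$, deduce that its head is $\LGT^A_w$, and then apply Verdier duality for the statement about $\bV^A_{w_A w^\triangle}$. One small remark: the detour through Proposition~\ref{prop:bV-projective} is unnecessary, since the multiplicity information you need (vanishing unless $w \preceq y \preceq w_A w^\triangle$, and value $1$ at the endpoints) is already contained in the last sentence of Proposition~\ref{prop:reciprocity}\eqref{it:recip-mult}.
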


\begin{proof}
Proposition~\ref{prop:reciprocity}\eqref{it:recip-mult} and Remark~\ref{rmk:order-cbV-filtration} imply that $\cbV^A_{w_A w^\triangle}$ is a quotient of $\injh^A_w$, so like $\injh^A_w$, it has a simple head, isomorphic to $\LGT^A_w$.  The claim for $\bV^A_{w_A w^\triangle}$ follows by Verdier duality.
\end{proof}

Below we will require the following combinatorial lemma.

\begin{lem}
\label{lem:tri-bijection}
The map ${}^A W_\ext \to {}^A W_\ext$ given by $w \mapsto w_A w^\triangle$ is a bijection.
\end{lem}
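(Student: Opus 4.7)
My plan is to establish the two directions of the bijection separately, deriving injectivity from the bijectivity of $\triangle$ itself and surjectivity by a more delicate argument.

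For \textbf{injectivity}, the map $w \mapsto w^\triangle$ is a bijection $W_\ext \to W_\ext$ with inverse given explicitly by~\eqref{eqn:triangle-inverse}, so the composition with left multiplication by $w_A$ is a bijection on all of $W_\ext$. In particular, its restriction to ${}^A W_\ext$ is injective; the restriction is also well defined as a map into ${}^A W_\ext$ by Remark~\ref{rmk:triangle-AWext}\eqref{it:triangle-AWext}.

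For \textbf{surjectivity}, fix $v \in {}^A W_\ext$. Since the map $\sigma : W_\ext \to W_\ext$ given by $\sigma(w) = w_A w^\triangle$ is a bijection on $W_\ext$, there is a unique $w_0 \in W_\ext$ with $\sigma(w_0) = v$, namely $w_0 = \triangle^{-1}(w_A v)$. The whole task reduces to showing $w_0 \in {}^A W_\ext$. By Lemma~\ref{lem:AWext-representatives} there is a unique decomposition $w_0 = x \cdot z$ with $x \in W_A$ and $z \in {}^A W_\ext$, and the goal is to show $x = e$.

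The key observation is that by~\eqref{eqn:triangle-translation}, the map $\sigma$ commutes with right translation by $\bY$, so combined with the decomposition ${}^A W_\ext = {}^A W_\ext^\res \cdot \bY$ of~\eqref{eqn:WS-Wres-Whit}, the problem reduces to the case $v \in {}^A W_\ext^\res$. In that case I would compute $(w_A v)^{-1}(\Afund)$ using the alcove-theoretic description of $W_A$ and the assumption $v^{-1}(\Afund) \subset \Pi_\varsigma$, to pin down the box $\Pi_\nu$ containing this alcove and thus obtain the explicit formula $w_0 = w_A v \cdot t_{\nu - \varsigma} w_\circ t_{\varsigma - \nu}$ from~\eqref{eqn:triangle-inverse}. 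I would then verify via the length formula~\eqref{eqn:formula-length} and Lemma~\ref{lem:per-order-properties} that $w_0$ is minimal under $\preceq$ in its $W_A$-coset, which by Remark~\ref{rmk:min-preceq} is equivalent to $w_0 \in {}^A W_\ext$.

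\textbf{The main obstacle} is this last verification that $w_0 \in {}^A W_\ext$. A naive guess---that $\triangle$ preserves $W_A$-cosets---is \emph{false}; for instance, when $A = S$ and $w = e$, a direct computation shows that $w_\circ w^\triangle = t_{w_\circ(\varsigma) - \varsigma}$ is not in the same $W$-coset as $(w_\circ w)^\triangle = w_\circ^\triangle$. Consequently one cannot simply argue that $\sigma$ descends to a bijection on $W_A \backslash W_\ext$, and the verification of minimality of $w_0$ must exploit more carefully the interaction between the particular $W_A$-action on alcoves, the box-shifting nature of the operation $\widehat{(-)}$ of~\S\ref{ss:alcoves}, and the characterization of ${}^A W_\ext$ as the set of elements whose associated alcove sits in a distinguished position relative to the chamber structure. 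I expect this computation to require a case-by-case alcove analysis of the sort already used in the proof of Lemma~\ref{lem:res-complement}.
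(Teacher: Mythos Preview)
Your injectivity argument is correct and matches the paper's. However, your surjectivity argument is not a proof: you yourself identify ``the main obstacle'' as the verification that $w_0 \in {}^A W_\ext$, and then say only that you ``expect this computation to require a case-by-case alcove analysis.'' You have outlined where the difficulty lies but not resolved it. Your observation that $\triangle$ does not respect $W_A$-cosets is exactly right, and it is precisely this that makes a direct combinatorial attack nontrivial.

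The paper takes a completely different route for the case $A \ne \varnothing$, and in fact explicitly remarks that although a purely combinatorial or alcove-geometric proof may exist, the argument given instead relies on the $\cR$-module machinery developed earlier and is deliberately intertwined with the proofs of Propositions~\ref{prop:cbV-head} and~\ref{prop:opposite-hwc}. Concretely: given $v \in {}^A W_\ext$, choose any simple quotient $\LGT^A_w$ of $\cbV^A_v$. By Lemma~\ref{lem:cbV-head-pre}, $\LGT^A_w$ is the socle of $\bV^A_{w_A w^\triangle}$, so $\Hom(\cbV^A_v, \bV^A_{w_A w^\triangle}) \ne 0$. Rewriting this via the adjunctions for $\Av^A_\psi$, $\Av^A_*$ and Lemma~\ref{lem:bV-Av}, one obtains $\Hom(\Av^A_*(\cbV^A_v), \bV_{w_A w^\triangle}) \ne 0$; the already-established $A = \varnothing$ case of Proposition~\ref{prop:opposite-hwc} then forces $w_A w^\triangle \in W_A v$, and Lemma~\ref{lem:AWext-representatives} gives $v = w_A w^\triangle$.

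So the paper sidesteps the alcove computation you flagged entirely, at the cost of invoking the representation-theoretic structure of $\modf^\bY_{(I^A_\unip,\cX_A)}(\cR)$. If you can actually carry out the alcove analysis you describe, that would be an independent and more elementary proof, and would decouple this lemma from the logical chain in which the paper embeds it; but as written, your proposal stops short of doing so.
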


Note that $w_A w^\triangle$ at least lies in ${}^A W_\ext$ by Remark~\ref{rmk:triangle-AWext}\eqref{it:triangle-AWext}.  This lemma may have a purely combinatorial or alcove-geometric proof, but we will give an argument that relies on properties of $\cR$-modules and is intertwined with the proofs of the next two statements in the following way:
\[
\hbox{\scriptsize$
\begin{array}{c}
\text{Lemma~\ref{lem:tri-bijection}} \\ \text{for $A = \varnothing$}
\end{array}
\Rightarrow
\begin{array}{c}
\text{Prop.~\ref{prop:cbV-head}} \\ \text{for $A = \varnothing$}
\end{array}
\Rightarrow
\begin{array}{c}
\text{Prop.~\ref{prop:opposite-hwc}} \\ \text{for $A = \varnothing$}
\end{array}
\Rightarrow
\begin{array}{c}
\text{Lemma~\ref{lem:tri-bijection}} \\ \text{for any $A$}
\end{array}
\Rightarrow
\begin{array}{c}
\text{Prop.~\ref{prop:cbV-head}} \\ \text{for any $A$}
\end{array}
\Rightarrow
\begin{array}{c}
\text{Prop.~\ref{prop:opposite-hwc}} \\ \text{for any $A$}
\end{array}$}.
\]

\begin{proof}[Proof of Lemma~\ref{lem:tri-bijection} for $A = \varnothing$]
The inverse map is given by~\eqref{eqn:triangle-inverse}.
\end{proof}

\begin{prop}
\label{prop:cbV-head}
Let $w \in {}^A W_\ext$.
\begin{enumerate}
\item The object 
$\cbV^A_{w_A w^\triangle}$ is the projective cover of $\LGT^A_w$ in the Serre subcategory of $\modf^{\bY}_{(I^A_\unip,\cX_A)}(\cR)$ generated by the simple objects $\LGT^A_y$ with $w_A y^\triangle \not\prec w_A w^\triangle$.\label{it:cbV-head}

\item The object 
$\bV_{w_A w^\triangle}$ is the injective hull of $\LGT^A_w$ in the Serre subcategory of $\modf^{\bY}_{(I^A_\unip,\cX_A)}(\cR)$ generated by the simple objects $\LGT^A_y$ with $w_A y^\triangle \not\prec w_A w^\triangle$.
\end{enumerate}
\end{prop}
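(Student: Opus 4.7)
The plan is to exhibit a second essentially finite highest weight structure on $\mathsf{A} := \modf^\bY_{(I^A_\unip,\cX_A)}(\cR)$, in the sense of~\cite{bs}, in which the baby co-Verma module $\cbV^A_{w_Aw^\triangle}$ and the baby Verma module $\bV^A_{w_Aw^\triangle}$ play the roles of the standard and costandard objects attached to the label $w$. Both parts of the proposition will then follow at once from the general characterization of standards and costandards as projective covers and injective envelopes in the appropriate Serre subcategories (\cite[Lemma~3.1]{bs}). I would define a partial order $\preceq^\circ$ on ${}^A W_\ext$ by declaring $y \preceq^\circ w$ iff $w_A w^\triangle \preceq w_A y^\triangle$; antisymmetry rests on the bijectivity of $\tau : w \mapsto w_A w^\triangle$ supplied by Lemma~\ref{lem:tri-bijection}. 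A direct check then identifies the Serre subcategory $\{\LGT^A_y : w_A y^\triangle \not\prec w_A w^\triangle\}$ appearing in the statement with $\{\LGT^A_y : y \not\succ^\circ w\}$, which is the canonical Serre subcategory attached to $w$ in the highest weight formalism for $\preceq^\circ$.

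Taking $\Delta^\circ(w) := \cbV^A_{w_A w^\triangle}$ and $\nabla^\circ(w) := \bV^A_{w_A w^\triangle}$, the axioms required by~\cite[\S 3]{bs} are checked in close analogy with the proof of Theorem~\ref{thm:proj-Hecke-Whit}\eqref{it:proj-filt}. The category $\mathsf{A}$ is already known to be essentially finite abelian with enough projectives and injectives, and the new strata are simple by construction. Lemma~\ref{lem:cbV-head-pre} gives that $\Delta^\circ(w)$ has simple head $\LGT^A_w$, and Corollary~\ref{cor:multiplicites-bV} shows that every composition factor $\LGT^A_y$ of $\Delta^\circ(w)$ satisfies $y \preceq w_A w^\triangle \preceq w_A y^\triangle$; in particular, $y \prec^\circ w$ whenever $y \ne w$. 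The dual assertions for $\nabla^\circ(w)$ follow by applying the Verdier duality functor of~\S\ref{ss:verdier}.

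The key axiom to verify is condition $(\widehat{I\nabla})$ of~\cite[Remark~3.6]{bs}: for every $w$, one needs an injective object of $\mathsf{A}$ admitting a $\nabla^\circ$-filtration with $\nabla^\circ(w)$ as a subobject and all other subquotients of the form $\nabla^\circ(z)$ with $z \succ^\circ w$. I would take this injective to be $\injh^A_w$ itself. By Proposition~\ref{prop:inj-Verma-filt} it admits a baby Verma filtration, and Proposition~\ref{prop:reciprocity}\eqref{it:recip-mult} ensures that the subquotients $\bV^A_u$ satisfy $w \preceq u \preceq w_A w^\triangle$, with $\bV^A_{w_A w^\triangle}$ occurring exactly once. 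Writing $u = \tau(z)$ via Lemma~\ref{lem:tri-bijection}, each such $\bV^A_u$ becomes $\nabla^\circ(z)$, and the inequality $u \preceq w_A w^\triangle$ translates to $z \succeq^\circ w$. The final ingredient is to arrange the filtration so that $\bV^A_{w_A w^\triangle} = \nabla^\circ(w)$ sits at the bottom; this is possible because $w_A w^\triangle$ is the unique maximum (for $\preceq$) among the labels in the filtration, and it follows by applying the Verdier duality of~\S\ref{ss:verdier} to Remark~\ref{rmk:order-cbV-filtration} via the isomorphism $\D(\injh^A_w) \cong \injh^A_w$ of Proposition~\ref{prop:reciprocity}\eqref{it:recip-dual}.

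With $(\widehat{I\nabla})$ verified, \cite[Theorem~3.5]{bs} produces the desired second highest weight structure on $\mathsf{A}$, and both parts of the proposition then follow immediately from~\cite[Lemma~3.1]{bs}. The hard part in this outline is precisely the verification of $(\widehat{I\nabla})$, and specifically the rearrangement of the baby Verma filtration of $\injh^A_w$ so that $\bV^A_{w_A w^\triangle}$ appears as a subobject; this step depends crucially on the sharp multiplicity information of Proposition~\ref{prop:reciprocity}. All other verifications are formal and mirror the argument already used to establish the original highest weight structure in the proof of Theorem~\ref{thm:proj-Hecke-Whit}\eqref{it:proj-filt}.
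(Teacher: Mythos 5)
Your plan inverts the paper's logical order and, in doing so, begs the question. In the Brundan--Stroppel formalism the (proper) standard and costandard objects attached to a stratification are defined intrinsically from the category and the poset, and conditions such as $(\widehat{I\nabla})$ of \cite[Remark~3.6]{bs} are statements about those intrinsic objects. You simply decree $\Delta^\circ(w) := \cbV^A_{w_Aw^\triangle}$ and $\nabla^\circ(w) := \bV^A_{w_Aw^\triangle}$; but to check the axioms of \cite{bs} with these objects---and a fortiori to conclude via \cite[Lemma~3.1]{bs}---you must first identify them with the intrinsic ones for the new order, and by that very lemma this identification is equivalent to the projective-cover/injective-hull statements you are trying to prove. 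The simple head (Lemma~\ref{lem:cbV-head-pre}) and the composition-factor bounds (Corollary~\ref{cor:multiplicites-bV}) do not suffice: one also needs the vanishing $\Ext^1(\cbV^A_{w_Aw^\triangle}, \LGT^A_y)=0$ whenever $w_Ay^\triangle \not\prec w_Aw^\triangle$ (equivalently, the relevant injectivity of $\bV^A_{w_Aw^\triangle}$ in the Serre subcategory), and this is established nowhere in your proposal. Note that the paper's own application of \cite{bs} for the original order required exactly the analogous input, namely Proposition~\ref{prop:bV-injective}, before $(\widehat{I\nabla})$ could even be translated into a statement about baby co-Vermas; and in the paper the second highest weight structure (with co-Vermas as standards) is set up only in Proposition~\ref{prop:opposite-hwc}, \emph{using} Proposition~\ref{prop:cbV-head} as an input, not the other way around.

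The missing vanishing is precisely what the paper proves, by a short direct argument for which you already have all the ingredients: by Proposition~\ref{prop:reciprocity}\eqref{it:recip-mult} and Remark~\ref{rmk:order-cbV-filtration} (the dual of the rearrangement you perform on the baby Verma filtration) there is a surjection $\injh^A_w \twoheadrightarrow \cbV^A_{w_Aw^\triangle}$ whose kernel $\cK$ admits a baby co-Verma filtration with subquotients $\cbV^A_{w_Az^\triangle}$ satisfying $w \preceq w_Az^\triangle \prec w_Aw^\triangle$ (Lemma~\ref{lem:tri-bijection} is what allows one to write the labels in this form); projectivity of $\injh^A_w$ then gives a surjection $\Hom(\cK,\LGT^A_y) \twoheadrightarrow \Ext^1(\cbV^A_{w_Aw^\triangle},\LGT^A_y)$, and $\Hom(\cK,\LGT^A_y)=0$ when $w_Ay^\triangle \not\prec w_Aw^\triangle$ because each $\cbV^A_{w_Az^\triangle}$ has simple head $\LGT^A_z$ by Lemma~\ref{lem:cbV-head-pre}. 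Combined with Lemma~\ref{lem:cbV-head-pre} and Corollary~\ref{cor:multiplicites-bV} this gives part (1), part (2) follows by Verdier duality, and the second highest weight structure you describe can then be obtained afterwards, exactly as in Proposition~\ref{prop:opposite-hwc}.
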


\begin{proof}[Proof assuming that Lemma~\ref{lem:tri-bijection} holds for $A$]
We will prove the claims for $\cbV^A_{w_A w^\triangle}$.  Those for $\bV^A_{w_A w^\triangle}$ follow by Verdier duality.  

We have already seen in Lemma~\ref{lem:cbV-head-pre} that $\cbV^A_{w_A w^\triangle}$ has a unique simple quotient, isomorphic to $\LGT^A_w$.
Next, we must show that $\cbV^A_{w_A w^\triangle}$ lies in the Serre subcategory described in the statement.  This is a consequence of Corollary~\ref{cor:multiplicites-bV}, which tells us that
\[
[ \cbV^A_{w_A w^\triangle} : \LGT^A_v ] \ne 0
\qquad\text{implies}\qquad
w_A v^\triangle \succeq w_A w^\triangle.
\]

Finally, to show that $\cbV^A_{w_A w^\triangle}$ is the projective cover of $\LGT^A_w$ in this Serre subcategory, we must show that
\[
\Ext^1(\cbV^A_{w_A w^\triangle}, \LGT^A_y) = 0
\qquad\text{if $w_A y^\triangle \not\prec w_A w^\triangle$.}
\]
As noted in the proof of Lemma~\ref{lem:cbV-head-pre} there is a surjective map $\injh^A_w \twoheadrightarrow \cbV^A_{w_A w^\triangle}$ whose kernel $\mathcal{K}$ admits a baby co-Verma filtration.
Since $\injh^A_w$ is projective, we have a surjective map $\Hom(\mathcal{K},\LGT^A_y) \to \Ext^1(\cbV^A_{w_A w^\triangle}, \LGT^A_y)$.  So it is enough to show that
\begin{equation}\label{eqn:kernel-injh}
\Hom(\mathcal{K}, \LGT^A_y) = 0
\qquad\text{if $w_A y^\triangle \not\prec w_A w^\triangle$.}
\end{equation}
By Proposition~\ref{prop:reciprocity}\eqref{it:recip-mult}, the baby co-Verma modules which occur in a baby co-Verma filtration of $\mathcal{K}$ have their labels that lie between $w$ and $w_A w^\triangle$ in the period order.  More precisely, by Lemma~\ref{lem:tri-bijection}, the terms of the baby co-Verma filtration can be written as
$\cbV^A_{w_A z^\triangle}$ with $w \preceq w_A z^\triangle \prec w_A w^\triangle$. By Lemma~\ref{lem:cbV-head-pre}, the unique simple quotient of $\cbV^A_{w_A z^\triangle}$ is $\LGT^A_z$, so
$\Hom(\cbV^A_{w_A z^\triangle}, \LGT^A_y) = 0$ if $y \neq z$, i.e.~if $w_A y^\triangle \ne w_A z^\triangle$.
This implies~\eqref{eqn:kernel-injh}.
\end{proof}

\begin{prop}
\label{prop:opposite-hwc}
For $w, w' \in {}^A W_\ext$, we have
\begin{multline*}
\Ext^n_{\modf^{\bY}_{(I^A_\unip,\cX_A)}(\cR)}(\bV^A_w, \cbV^A_{w'}) \cong
\Ext^n_{\modf^{\bY}_{(I^A_\unip,\cX_A)}(\cR)}(\cbV^A_{w'}, \bV^A_w)\\
 \cong
\begin{cases}
\bk & \text{if $w = w'$ and $n = 0$,} \\
0 & \text{otherwise.}
\end{cases}
\end{multline*}
\end{prop}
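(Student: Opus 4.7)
I would prove the two isomorphisms separately. The identity $\Ext^n(\bV^A_w, \cbV^A_{w'}) \cong \bk\delta_{w,w'}\delta_{n,0}$ is a direct application of part~(1) of Lemma~\ref{lem:verma-hom} to $\cM = \cbV^A_{w'}$, which trivially admits a baby co-Verma filtration of length one: the lemma yields $\Ext^n(\bV^A_w, \cbV^A_{w'}) = 0$ for $n \geq 1$ and $\dim \Hom(\bV^A_w, \cbV^A_{w'}) = (\cbV^A_{w'} : \cbV^A_w) = \delta_{w,w'}$.

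For the second isomorphism, the Hom computation proceeds via a head/socle analysis. Using Lemma~\ref{lem:tri-bijection} write $w = w_A v^\triangle$ and $w' = w_A u^\triangle$; by Proposition~\ref{prop:cbV-head} and Lemma~\ref{lem:cbV-head-pre}, $\cbV^A_{w'}$ has simple head $\LGT^A_u$ and $\bV^A_w$ has simple socle $\LGT^A_v$. Given a nonzero $f : \cbV^A_{w'} \to \bV^A_w$, its image has simple head $\LGT^A_u$ and simple socle $\LGT^A_v$. The former being a composition factor of $\bV^A_w$ forces $u \preceq w \preceq w_A u^\triangle = w'$ by Proposition~\ref{prop:bV-projective}, giving $w \preceq w'$; symmetrically, the latter being a composition factor of $\cbV^A_{w'}$ gives $v \preceq w' \preceq w_A v^\triangle = w$ by Corollary~\ref{cor:multiplicites-bV}, so $w' \preceq w$. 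Hence $w = w'$, and the Hom vanishes for $w \ne w'$. For $w = w'$, the module $\cbV^A_w$ is the projective cover of $\LGT^A_v$ in the Serre subcategory $\mathcal{S}_v$ of Proposition~\ref{prop:cbV-head}, and $\bV^A_w$ lies in $\mathcal{S}_v$ (its composition factors $\LGT^A_x$ satisfy $w_A x^\triangle \succeq w = w_A v^\triangle$ by Proposition~\ref{prop:bV-projective}), so
\[
\dim \Hom(\cbV^A_w, \bV^A_w) = \dim \Hom_{\mathcal{S}_v}(\cbV^A_w, \bV^A_w) = [\bV^A_w : \LGT^A_v] = 1,
\]
the last equality being one of the reciprocity identities of Proposition~\ref{prop:reciprocity}(iv).

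For the higher Ext vanishing, I would use the short exact sequence $0 \to K \to \injh^A_u \to \cbV^A_{w'} \to 0$ coming from the projective cover map (which exists since $\LGT^A_u$ is the simple head of $\cbV^A_{w'}$); here $K$ inherits a baby co-Verma filtration with subquotients $\cbV^A_y$ for $u \preceq y \prec w_A u^\triangle = w'$, obtained by truncating the baby co-Verma filtration of $\injh^A_u$ at the top. The long exact sequence from $\Hom(-, \bV^A_w)$, together with $\Ext^n(\injh^A_u, \bV^A_w) = 0$ for $n \geq 1$, yields $\Ext^n(\cbV^A_{w'}, \bV^A_w) \cong \Ext^{n-1}(K, \bV^A_w)$ for $n \geq 2$, and a five-term sequence in degree one. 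An induction on the length of the baby co-Verma filtration of $\injh^A_u$ then reduces the higher Ext vanishing for $\cbV^A_{w'}$ to the corresponding vanishing for $\cbV^A_y$ with $y \prec w'$; the base case is when $K = 0$, where $\cbV^A_{w'} \cong \injh^A_u$ is projective and the claim is immediate. The hardest step is the five-term sequence verifying $\Ext^1(\cbV^A_{w'}, \bV^A_w) = 0$, which requires a dimension count matching $\dim \Hom(\injh^A_u, \bV^A_w) = [\bV^A_w : \LGT^A_u]$ against $\dim \Hom(K, \bV^A_w)$ computed from the co-Verma multiplicities of $K$ and the Hom computation of the previous paragraph, together with the reciprocity formula $(\injh^A_u : \cbV^A_w) = [\bV^A_w : \LGT^A_u]$ from Proposition~\ref{prop:reciprocity}(iv).
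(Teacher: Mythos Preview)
Your treatment of $\Ext^n(\bV^A_w,\cbV^A_{w'})$ is correct and coincides with the paper's: both invoke Lemma~\ref{lem:verma-hom} (which packages~\cite[Theorems~3.11 and~3.14]{bs}). Your $\Hom$ computation for $\Hom(\cbV^A_{w'},\bV^A_w)$ via the head/socle analysis is also correct, and is a pleasant direct argument.

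The gap is in your induction for the higher $\Ext$ groups. You say you induct on the length of the baby co-Verma filtration of $\injh^A_u$, with base case $K=0$. But the reduction step replaces $\cbV^A_{w'}$ by the subquotients $\cbV^A_y$ of $K$ (with $y\prec w'$), and each such $y$ has its own $u_y$ with $y=w_A u_y^\triangle$; there is no reason the filtration length of $\injh^A_{u_y}$ should be smaller than that of $\injh^A_u$. The periodic order has no minimal elements, so descending along $y\prec w'$ is not well-founded either. Worse, your $\Ext^1$ step is circular: to compute $\dim\Hom(K,\bV^A_w)$ from the baby co-Verma filtration of $K$ you implicitly need $\Ext^1(\cbV^A_y,\bV^A_w)=0$ for those subquotients, which is exactly what you are trying to prove. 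A partial fix via Serre subcategories (using that $\bV^A_w$ is injective in $\mathcal{S}_v$ and $\cbV^A_{w'}$ is projective in $\mathcal{S}_u$) handles the cases $w\preceq w'$ and $w'\preceq w$, but the incomparable case does not drop out.

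The paper avoids this by a structural observation: Proposition~\ref{prop:cbV-head} says precisely that, for the \emph{new} partial order $\unlhd$ on ${}^A W_\ext$ defined by $w\unlhd y \Leftrightarrow w_A w^\triangle \succeq w_A y^\triangle$, the $\cbV^A$'s are standard and the $\bV^A$'s are costandard. One checks (exactly as in the proof of Theorem~\ref{thm:proj-Hecke-Whit}) that $\modf^{\bY}_{(I^A_\unip,\cX_A)}(\cR)$ is again an essentially finite highest weight category for $({}^A W_\ext,\unlhd)$, and then the vanishing of $\Ext^n(\cbV^A_{w'},\bV^A_w)$ is just the analogue of Lemma~\ref{lem:verma-hom} for this second highest-weight structure. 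In other words, the well-foundedness issue is absorbed once and for all into the \cite{bs} machinery, applied twice with the two orders.
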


\begin{proof}[Proof assuming that Proposition~\ref{prop:cbV-head} holds for $A$]
The claims for $\Ext^n(\bV^A_w, \cbV^A_{w'})$ are contained in Lemma~\ref{lem:verma-hom}.  Recall that this lemma comes from the general theory developed in~\cite{bs}; it is available here because, as shown in the proof of Theorem~\ref{thm:proj-Hecke-Whit}, $\modf^{\bY}_{(I^A_\unip,\cX_A)}(\cR)$ is an essentially finite highest weight category with respect to the poset $({}^A W_\ext, \preceq)$.

Define a new order $\unlhd$ on ${}^A W_\ext$ by declaring that $w \unlhd y$ if $w_Aw^\triangle \succeq w_Ay^\triangle$.  Using Proposition~\ref{prop:cbV-head}, one can show that $\modf^{\bY}_{(I^A_\unip,\cX_A)}(\cR)$ admits a second structure as an essentially finite highest weight category, this time with respect to the poset $({}^A W_\ext, \unlhd)$.  In this context, baby co-Verma modules are standard objects, while baby Verma modules are costandard objects.  We omit the details, as they are very similar to those in the proof of Theorem~\ref{thm:proj-Hecke-Whit}.

The claims in the proposition for $\Ext^n(\cbV^A_{w'}, \bV^A_w)$ then follow from the analogue of Lemma~\ref{lem:verma-hom} for this new highest weight structure.
\end{proof}

\begin{proof}[Proof of Lemma~\ref{lem:tri-bijection} for $A \ne \varnothing$]
In view of~\eqref{eqn:triangle-inverse}, the rule $w \mapsto w_A w^\triangle$ gives a bijection $W_\ext \to W_\ext$, so the induced map ${}^A W_\ext \to {}^A W_\ext$ (see Remark~\ref{rmk:triangle-AWext}\eqref{it:triangle-AWext}) is at least injective.  We must prove that it is also surjective.

Let $v \in {}^A W_\ext$, and choose a simple quotient of $\cbV^A_v$, say $\LGT^A_w$.  Since $\LGT^A_w$ is the socle of $\bV^A_{w_A w^\triangle}$ by Lemma~\ref{lem:cbV-head-pre}, the following groups are all nonzero:
\begin{multline*}
\Hom(\cbV^A_v, \bV^A_{w_A w^\triangle}) \cong \Hom(\Av^A_\psi(\cbV_v), \Av^A_\psi(\bV_{w_A w^\triangle})) \\
\cong \Hom(\xi_A(\cbV_v), \bV_{w_A w^\triangle})
\cong \Hom(\Av^A_*(\cbV^A_v), \bV_{w_A w^\triangle}),
\end{multline*}
where the isomorphisms follow from Lemma~\ref{lem:bV-Av}\eqref{it:bV-Av-psi} and adjunction.
The description of the baby co-Verma filtration of $\Av^A_*(\cbV^A_v)$ in Lemma~\ref{lem:bV-Av}\eqref{it:bV-Av-*}, together with the ``$\varnothing$'' case of Proposition~\ref{prop:opposite-hwc}, imply that
\[
\dim \Hom (\Av^A_*(\cbV^A_v), \bV_{w_A w^\triangle}) =
\begin{cases}
1 & \text{if $w_A w^\triangle \in W_A v$,} \\
0 & \text{otherwise.}
\end{cases}
\]
So we must have $W_A v = W_A w^\triangle$.  Since both $v$ and $w_A w^\triangle$ belong to ${}^A W_\ext$, using Lemma~\ref{lem:AWext-representatives} we conclude that $v = w_A w^\triangle$, as desired.
\end{proof}

\subsection{More on injective \texorpdfstring{$\cR$}{R}-modules and tilting perverse sheaves}
\label{ss:more-on-inj}

We conclude with a complement to Proposition~\ref{prop:tilt-proj-inj-R}\eqref{it:Phi-tilting-injective}.  The following statement can be regarded as a geometric counterpart of~\cite[Lemma~E.8]{jantzen}.

\begin{prop}
\label{prop:injective-tilting}
Let $w \in {}^A W^S_\ext$.  The object $\Phi^A(\TGr^A_w)$ is a projective (equivalently, injective) $\cR$-module if and only if $w = w_A x^\triangle$ for some $x \in {}^A W^S_\ext$.
\end{prop}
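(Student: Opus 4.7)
My plan is as follows. The ``if'' direction is exactly Proposition~\ref{prop:tilt-proj-inj-R}\eqref{it:Phi-tilting-injective}, so all the work is in the converse: assuming $\Phi^A(\TGr^A_w)$ is injective (equivalently projective, by Theorem~\ref{thm:proj-Hecke-Whit}\eqref{it:proj-enough}), show that $w = w_Ax^\triangle$ for some $x \in {}^A W^S_\ext$.

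I would first reduce to the case $A = \varnothing$. Since $\Av^A_!$ is left adjoint to the exact functor $\Av^A_\psi$, it preserves projective objects; because projectives and injectives coincide in $\modf^\bY_{(I^A_\unip,\cX_A)}(\cR)$ and in $\modf^\bY_{I_\unip}(\cR)$, applying $\Av^A_!$ to an injective yields an injective. Combined with Proposition~\ref{prop:Av-tilting-indec}, this gives
\[
\Phi(\TGr_{w_Aw}) \;\cong\; \Av^A_!\bigl(\Phi^A(\TGr^A_w)\bigr),
\]
which is injective in $\modf^\bY_{I_\unip}(\cR)$. The element $w_Aw$ lies in $W_\ext^S$ by~\eqref{eqn:AWextS-characterization}. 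Granting the $A=\varnothing$ case, we obtain $x \in W_\ext^S$ with $w_Aw = x^\triangle$, and hence $w = w_Ax^\triangle$. To promote $x$ from $W_\ext^S$ to ${}^A W^S_\ext$, I would use Lemma~\ref{lem:tri-bijection}: the bijection $y \mapsto w_Ay^\triangle$ maps ${}^A W_\ext$ to itself, so (since $w \in {}^A W_\ext$) the corresponding $x$ is automatically in ${}^A W_\ext$; together with $x \in W_\ext^S$ and the criterion~\eqref{eqn:AWextS-characterization} this forces $x \in {}^A W^S_\ext$.

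It remains to handle $A = \varnothing$. Suppose $\Phi(\TGr_v)$ is injective, with $v \in W_\ext^S$ written as $v = yt_\lambda$, $y \in W_\ext^\res$, $\lambda \in -\bY_+$ (cf.~\eqref{eqn:WS-Wres}), and decompose $\Phi(\TGr_v) \cong \bigoplus_i \injh_{z_i}$ with $z_i \in W_\ext$. The surjection $\TGr_v \twoheadrightarrow \LGr_v$ (coming from the open stratum of $\overline{\Gr_v}$) induces a surjection $\Phi(\TGr_v) \twoheadrightarrow \Phi(\LGr_v)$; by Theorem~\ref{thm:geometric-Steinberg} and Lemma~\ref{lem:frobtwist-free} the target is the semisimple object $\bigoplus_\nu \dim \Satake(\IC^{w_\circ(\lambda)})_{w_\circ(\nu)} \otimes \LGT_{yt_\nu}$, and the summand $\LGT_v = \LGT_{yt_\lambda}$ (coming from the highest weight $w_\circ(\lambda)$, multiplicity one) lies in its head. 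Consequently $\injh_v = \injh_y\la -\lambda\ra$ must occur among the $\injh_{z_i}$.

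The crux is to upgrade the statement ``$\injh_v$ is a direct summand of $\Phi(\TGr_v)$'' to the combinatorial statement ``$v = x^\triangle$ for some $x \in W_\ext^S$.'' For this, I would compute
\[
\dim \Hom_{\modf^\bY_{I_\unip}(\cR)}\bigl(\Phi(\TGr_v),\injh_y\la -\lambda\ra\bigr)
\]
using Lemma~\ref{lem:Rfree-hom} and Lemma~\ref{lem:hom-stabilize}, after replacing $\injh_y$ by a free module containing it as a direct summand. Concretely, write $t_\varsigma w_\circ y^{-1} = \omega s_1\cdots s_r$ as a reduced expression; by the construction in the proof of Proposition~\ref{prop:inj-hull-bV}, $\injh_y$ is a direct summand of $\Phi(\xi_{s_r}\cdots\xi_{s_1}\xi_{\omega^{-1}}(\TGr_{t_{w_\circ(\varsigma)}}))$ with multiplicity one. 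Using the adjunction $\xi_s^! \dashv \xi_s^*$ together with Lemma~\ref{lem:isom-xi} and Proposition~\ref{prop:avS-calc}, the problem translates into evaluating $\Hom(\TGr_{t_{w_\circ(\varsigma)}}, \xi_{s_1}\cdots\xi_{s_r}\xi_\omega(\TGr_v) \star^{\cL^+G} \cR_{-\lambda})$, which by the stabilization of Lemma~\ref{lem:hom-stabilize} and the support/multiplicity analysis of Lemma~\ref{lem:support} reduces to a condition on the alcove $v^{-1}(\Afund)$. The main obstacle — and the step I expect to be delicate — is verifying that this nonvanishing condition is precisely the condition that $v(\Afund) = \widehat{x(\Afund)}$ for some $x \in W_\ext^S$, that is, $v = x^\triangle$; this is an alcove-geometric computation paralleling Lemma~\ref{lem:proj-prep}\eqref{it:prep-length} and comparing the support of $\xi_{s_r}\cdots \xi_{s_1}\xi_{\omega^{-1}}(\TGr_{t_{w_\circ(\varsigma)}})$ against the open stratum of $\overline{\Gr_v}$.
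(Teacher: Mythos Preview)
Your reduction from general $A$ to $A=\varnothing$ is correct and matches the paper's argument (the paper uses $\Av^A_*$ rather than $\Av^A_!$, but this is immaterial), including the final step invoking Lemma~\ref{lem:tri-bijection} to promote $x$ to ${}^A W^S_\ext$.

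The genuine gap is in your treatment of the case $A=\varnothing$. You correctly observe that if $\Phi(\TGr_v)$ is injective then $\injh_v$ occurs as a direct summand; but this fact alone places no constraint on $v$. Your proposed computation of $\Hom(\Phi(\TGr_v),\injh_y\la -\lambda\ra)$ cannot help: since $\injh_v$ is injective with socle $\LGT_v$, this $\Hom$-space has dimension $[\Phi(\TGr_v):\LGT_v]\ge 1$ for \emph{every} $v\in W_\ext^S$, regardless of whether $\Phi(\TGr_v)$ is injective. Replacing $\injh_y$ by the larger object $\Phi(\xi_{s_r}\cdots\xi_{s_1}\xi_{\omega^{-1}}(\TGr_{t_{w_\circ(\varsigma)}}))$ only makes matters worse, since you then lose track of which summand you are mapping into. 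There is no ``alcove-geometric computation'' at the end that extracts the condition $v=x^\triangle$ from nonvanishing of such a $\Hom$-space, because nonvanishing holds unconditionally.

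The paper's argument is essentially different. In characteristic~$0$, the unit map $\IC^0\to\cR_0$ splits (\S\ref{ss:splitting-unit}), so the exactness of $\Hom(\Phi^A(-),\Phi^A(\TGr^A_w))\cong\Hom(-,\TGr^A_w\star^{\cL^+G}\cR_0)$ forces $\Hom(-,\TGr^A_w)$ itself to be exact; thus $\TGr^A_w$ is injective in $\Perv_{(I^A_\unip,\cX_A)}(\Gr,\bk)$, and Theorem~\ref{thm:proj-char0-Whit} gives $w=w_Ax^\triangle$. In positive characteristic the unit does not split, and the paper instead uses the integral theory of~\S\ref{ss:integral}: one lifts the decomposition $\Phi(\TGr_w(\bk))\cong\bigoplus_i\injh_{w_i}(\bk)$ to an isomorphism over $\bO$ (via Theorem~\ref{thm:injh-integral}, Lemma~\ref{lem:scalars-R-mod-F}, and Remark~\ref{rmk:morph-O-P-tilting}), applies $\bK$ and Corollary~\ref{cor:vanishing-F-K} to deduce that $\Phi(\TGr_w(\bK))$ is injective, and then invokes the characteristic-$0$ case. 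This passage through characteristic~$0$ is the missing mechanism in your proposal.
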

\begin{proof}
The ``if'' direction is Proposition~\ref{prop:tilt-proj-inj-R}\eqref{it:Phi-tilting-injective}; we need only prove the ``only if'' direction.

First we treat the case when $\bk$ has characteristic $0$.  Assume that $\Phi^A(\TGr^A_w)$ is injective.  Then the functor
\[
\Hom_{\modf^\bY_{(I_\unip^A, \cX_A)}(\cR)}(\Phi^A({-}), \Phi^A(\TGr^A_w)) \cong \Hom_{\Perv_{(I^A_\unip, \cX_A)}(\Gr,\bk)} ({-}, \TGr^A_w \star^{\cL^+G} \cR_0)
\]
(where the identification follows from Lemma~\ref{lem:Rfree-hom})
is exact.  As explained in~\S\ref{ss:splitting-unit}, since $\bk$ has characteristic $0$, the skyscraper sheaf $\IC^0$ is a direct summand of $\cR_0$, so $\Hom({-}, \TGr^A_w \star^{\cL^+G} \IC^0) \cong \Hom({-}, \TGr^A_w)$ is an exact functor.  That is, $\TGr^A_w$ is an injective object in $\Perv_{(I^A_\unip, \cX_A)}(\Gr,\bk)$. This object is also indecomposable. From the classification of indecomposable injective objects in Theorem~\ref{thm:proj-char0-Whit}, we see that $\TGr^A_w \cong \TGr^A_{w_A x^\triangle}$ for some $x \in {}^A W^S_\ext$; we then have $w = w_A x^\triangle$.

Now suppose $\bk$ has positive characteristic, but that $A = \varnothing$.  We may assume that $\bk$ is finite.  Choose a ring $\bO$ as in~\S\ref{ss:integral} that has $\bk$ as its residue field, and let $\bK$ be its fraction field.  Let $w \in W^S_\ext$, and assume that $\Phi(\TGr_w(\bk)) \cong \Phi(\bk(\TGr_w(\bO)))$ is injective. Then there exist $w_1, \ldots, w_k \in W_\ext$ and an isomorphism
\begin{equation}
\label{eqn:Phi-T-inj}
\Phi(\TGr_w(\bk)) \cong \injh_{w_1}(\bk) \oplus \injh_{w_2}(\bk) \oplus \cdots \oplus \injh_{w_k}(\bk).
\end{equation}
In view of part~\eqref{it:qO-free} in Theorem~\ref{thm:injh-integral}, Lemma~\ref{lem:scalars-R-mod-F} and Remark~\ref{rmk:morph-O-P-tilting} imply that the morphisms
\begin{multline*}
\bk \otimes_\bO \Hom(\Phi(\TGr_w(\bO)), \injh_{w_1}(\bO) \oplus  \cdots \oplus \injh_{w_k}(\bO)) \to \\
\Hom(\Phi(\TGr_w(\bk)), \injh_{w_1}(\bk) \oplus  \cdots \oplus \injh_{w_k}(\bk))
\end{multline*}
and
\begin{multline*}
\bk \otimes_\bO \Hom(\injh_{w_1}(\bO) \oplus  \cdots \oplus \injh_{w_k}(\bO), \Phi(\TGr_w(\bO))) \to \\
\Hom(\injh_{w_1}(\bk) \oplus  \cdots \oplus \injh_{w_k}(\bk), \Phi(\TGr_w(\bk)))
\end{multline*}
induced by $\bk^0$
are isomorphisms. Using~\eqref{eqn:Phi-T-inj}, we deduce that there exist morphisms
\[
f : \Phi(\TGr_w(\bO)) \to \injh_{w_1}(\bO) \oplus  \cdots \oplus \injh_{w_k}(\bO)
\]
and
\[
g : \injh_{w_1}(\bO) \oplus  \cdots \oplus \injh_{w_k}(\bO) \to \Phi(\TGr_w(\bO))
\]
such that $\bk^0(f)$ and $\bk^0(g)$ are mutually inverse isomorphisms. Similarly, by Remark~\ref{rmk:Hom-Q-O}\eqref{it:Hom-Q-O}, $\End(\injh_{w_1}(\bO) \oplus  \cdots \oplus \injh_{w_k}(\bO))$ has finite rank over $\bO$, and the functor $\bk^0$ induces an isomorphism
\[
\bk \otimes_\bO \End(\injh_{w_1}(\bO) \oplus  \cdots \oplus \injh_{w_k}(\bO)) \to \End(\injh_{w_1}(\bk) \oplus  \cdots \oplus \injh_{w_k}(\bk)).
\]
In view of Lemma~\ref{lem:invertible-F} below, this implies that $f \circ g$ is an isomorphism, and hence that $\injh_{w_1}(\bO) \oplus  \cdots \oplus \injh_{w_k}(\bO)$ is a direct summand in $\Phi(\TGr_w(\bO))$. In other words, there exists $\cM$ in $\Mod^{\bY}_{I_\unip}(\cR(\bO))$ and an isomorphism
\[
\Phi(\TGr_w(\bO)) \cong \bigl( \injh_{w_1}(\bO) \oplus  \cdots \oplus \injh_{w_k}(\bO) \bigr) \oplus \cM.
\]
We have $\bk^0(\cM)=0$, so by Corollary~\ref{cor:vanishing-F-K} we have
$\bK(\cM)=0$ as well.
This implies that
the object $\Phi(\bK (\TGr_w(\bO)))$ is also injective.  This object contains $\Phi(\TGr_w(\bK))$ as a direct summand, so $\Phi(\TGr_w(\bK))$ is injective.  The field $\bK$ has characteristic $0$, so by the previous paragraph, we conclude that $w = x^\triangle$ for some $x \in W^S_\ext$, which completes the proof in this case.


Finally, we consider the case where $\bk$ has positive characteristic, but $A \ne \varnothing$.  Let $w \in {}^A W^S_\ext$, and suppose that $\Phi^A(\TGr^A_w)$ is injective.  Then $\Av^A_*(\Phi^A(\TGr^A_w)) \cong \Phi(\Av^A_*(\TGr^A_w))$ is also injective.  By Proposition~\ref{prop:Av-tilting-indec}, the latter is isomorphic to $\Phi(\TGr_{w_A w})$.  By the previous paragraph, we must have $w_A w = x^\triangle$, or $w = w_A x^\triangle$, for some $x \in W^S_\ext$.  On the other hand, Lemma~\ref{lem:tri-bijection} implies that there exists $y \in {}^A W_\ext$ such that $w=w_A y^\triangle$. The injectivity of the map $z \mapsto z^\triangle$ implies that $x=y$, so that this element belongs to $W_\ext^S \cap {}^A W_\ext = {}^A W_\ext^S$, which finishes the proof.
\end{proof}

\begin{lem}
\label{lem:invertible-F}
Let $\bO$ be the ring of integers in a finite extension of $\Q_\ell$, let $\bF$ be its residue field and let $A$ be a finite $\bO$-algebra. If $a \in A$ is such that its image in $\bF \otimes_\bO A$ is invertible, then $a$ is invertible.
\end{lem}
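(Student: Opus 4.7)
The plan is to use the fact that $\bO$ is a complete discrete valuation ring and $A$ is a finitely generated $\bO$-module, so $A$ is $\pi$-adically complete (where $\pi$ is a uniformizer of $\bO$). The hypothesis that $a$ has invertible image in $\bF \otimes_\bO A = A/\pi A$ provides an element $b \in A$ with $1 - ab \in \pi A$ and $1 - ba \in \pi A$, and the strategy is then to invert $ab$ and $ba$ by a geometric series argument, showing that $a$ has both a right and a left inverse in $A$.

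In more detail, I would first observe that $\bO$, being the ring of integers in a finite extension of $\Q_\ell$, is a complete noetherian local ring, and that any finitely generated $\bO$-module is automatically $\pi$-adically complete and separated. In particular $A$ has this property. Next, lift an inverse of the image of $a$ in $\bF \otimes_\bO A$ to some $b \in A$; then both $c := 1 - ab$ and $c' := 1 - ba$ lie in $\pi A$. Since $c^n \in \pi^n A$ and $A$ is $\pi$-adically complete, the series $\sum_{n \geq 0} c^n$ converges to an element $u \in A$ satisfying $(1-c) u = u (1-c) = 1$, i.e.\ $ab$ is a unit in $A$; analogously $ba$ is a unit. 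Hence $a \cdot (bu) = 1$ and $(u' b) \cdot a = 1$ for the corresponding element $u'$ associated with $c'$, so $a$ admits both a left and a right inverse, and is therefore invertible.

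I do not anticipate any real obstacle: the only point requiring a moment of care is the $\pi$-adic completeness of $A$, which follows from the fact that finitely generated modules over a complete noetherian local ring are complete with respect to the maximal-ideal-adic topology (see e.g.\ \cite[Theorem~8.7]{matsumura} or standard commutative algebra references). Everything else is the standard Nakayama-style geometric series trick.
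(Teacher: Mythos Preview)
Your proposal is correct and takes essentially the same approach as the paper: both arguments use that $A$ is $\pi$-adically complete (as a finite module over the complete local ring $\bO$) to deduce that an element whose image is a unit modulo $\pi$ is itself a unit. The paper phrases this as an induction showing invertibility modulo $\pi^n$ for all $n$, whereas you package it as a geometric series inverting $1-(1-ab)$; these are two standard presentations of the same idea.
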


\begin{proof}
Let $\varpi$ be a uniformizer in $\bO$. Then by completeness it suffices to prove that the image of $a$ in each $A / \varpi^n A$ ($n \geq 1$) is invertible. This is checked by induction, the case $n=1$ being true by assumption. If we know that $a$ is invertible in $A / \varpi^n A$, and if $b \in A$ has image in $A / \varpi^n A$ the inverse of $a$, then $ab = 1 + \varpi^n c$ for some $c \in A$. If $d \in A$ has image in $A/\varpi A$ the inverse of $a$, then $ad \in 1 + \varpi A$, hence $a(b-\varpi^n dc) \in 1 + \varpi^{n+1} A$, which shows that $a$ is invertible in $A / \varpi^{n+1} A$, as desired.
\end{proof}

\end{document}